\numberwithin{equation}{subsection}
\theoremstyle{plain}
\newtheorem{theorem}[subsection]{Theorem}
\newtheorem{conj}[subsection]{Conjecture}
\newtheorem{cor}[subsection]{Corollary}
\newtheorem{lemma}[subsection]{Lemma}
\newtheorem{prop}[subsection]{Proposition}
\theoremstyle{definition}
\newtheorem{defn}[subsection]{Definition}
\newtheorem{example}[subsection]{Example}
\newtheorem{notation}[subsection]{Notation}
\newtheorem{hypo}[subsection]{Hypothesis}
\theoremstyle{remark}
\newtheorem{remark}[subsection]{Remark}
\newcommand{\ra}{\rightarrow}
\newcommand{\xra}{\xrightarrow}
\newcommand{\hra}{\hookrightarrow}
\def\AAA{\mathbb{A}}
\def\CC{\mathbb{C}}
\def\FF{\mathbb{F}}
\def\GG{\mathbb{G}}
\def\HH{\mathbb{H}}
\def\II{\mathbb{I}}
\def\NN{\mathbb{N}}
\def\PP{\mathbb{P}}
\def\QQ{\mathbb{Q}}
\def\RR{\mathbb{R}}
\def\SSS{\mathbb{S}}
\def\TT{\mathbb{T}}
\def\ZZ{\mathbb{Z}}
\def\bff{\mathbf{f}}
\def\bfi{\mathbf{i}}
\def\bfA{\mathbf{A}}
\def\bfB{\mathbf{B}}
\def\bfC{\mathbf{C}}
\def\bfX{\mathbf{X}}
\def\calA{\mathcal{A}}
\def\calD{\mathcal{D}}
\def\calE{\mathcal{E}}
\def\calG{\mathcal{G}}
\def\calI{\mathcal{I}}
\def\calL{\mathcal{L}}
\def\calO{\mathcal{O}}
\def\calP{\mathcal{P}}
\def\calS{\mathcal{S}}
\def\calT{\mathcal{T}}
\def\calX{\mathcal{X}}
\def\gothd{\mathfrak{d}}
\def\gothe{\mathfrak{e}}
\def\gothh{\mathfrak{h}}
\def\gothl{\mathfrak{l}}
\def\gothp{\mathfrak{p}}
\def\gothq{\mathfrak{q}}
\def\gotht{\mathfrak{t}}
\def\gothF{\mathfrak{F}}
\def\gothH{\mathfrak{H}}
\def\gothR{\mathfrak{R}}
\def\scrD{\mathscr{D}}
\def\scrL{\mathscr{L}}
\def\rmM{\mathrm{M}}
\def\ttS{\mathtt{S}}
\def\ttT{\mathtt{T}}
\def\ii{\mathbf{i}}
\DeclareMathOperator{\Art}{Art}
\DeclareMathOperator{\Aut}{Aut}
\DeclareMathOperator{\End}{End}
\DeclareMathOperator{\Gal}{Gal}
\DeclareMathOperator{\Ker}{Ker}
\DeclareMathOperator{\Coker}{Coker}
\DeclareMathOperator{\Hom}{Hom}
\DeclareMathOperator{\Lie}{Lie}
\DeclareMathOperator{\Pic}{Pic}
\DeclareMathOperator{\Rec}{Rec}
\DeclareMathOperator{\Res}{Res}
\DeclareMathOperator{\Spec}{Spec}
\newcommand{\Q}{\mathbb{Q}}
\newcommand{\Z}{\mathbb{Z}}
\newcommand{\R}{\mathbb{R}}
\newcommand{\C}{\mathbb{C}}
\newcommand{\G}{\mathbb{G}}
\newcommand{\cO}{\mathcal{O}}
\newcommand{\kb}{\underline{k}}
\newcommand{\Fpb}{\overline{\FF}_p}
\newcommand{\tSigma}{\widetilde{\Sigma}}
\newcommand{\cD}{\mathcal{D}} %Dieudonne module of A[p]
\newcommand{\cl}{\mathrm{cl}}
\newcommand{\tcD}{\tilde{\cD}}% Dieudonne module over W(k)
\newcommand{\bSh}{\mathbf{Sh}}
\newcommand{\dR}{\mathrm{dR}}
\newcommand{\Sym}{\mathrm{Sym}} %Symmetric power
\newcommand{\es}{\mathrm{es}}
\newcommand{\pr}{\mathrm{pr}}
\newcommand{\ord}{\mathrm{ord}}
\newcommand{\lra}{\mathrm{\longrightarrow}}
\newcommand{\im}{\mathrm{Im}}
\newcommand{\Ver}{\mathrm{Ver}}
\newcommand{\ab}{\mathrm{ab}}
\newcommand{\ad}{\mathrm{ad}}
\newcommand{\bbalpha}{\boldsymbol{\alpha}}
\newcommand{\bfSh}{\mathbf{Sh}}
\newcommand{\der}{\mathrm{der}}
\newcommand{\et}{\mathrm{et}}
\newcommand{\Fp}{\FF_p}
\newcommand{\Fr}{\mathrm{Fr}}
\newcommand{\Frob}{\mathrm{Frob}}
\newcommand{\geom}{\mathrm{geom}}
\newcommand{\gothRec}{\gothR\mathrm{ec}}
\newcommand{\GL}{\mathrm{GL}}
\newcommand{\Iw}{\mathrm{Iw}}
\newcommand{\Nm}{\mathrm{Nm}}
\newcommand{\cris}{\mathrm{cris}}
\newcommand{\PGL}{\mathrm{PGL}}
\newcommand{\Qp}{\QQ_p}
\newcommand{\rig}{\mathrm{rig}}
\newcommand{\sym}{\mathrm{sym}}
\newcommand{\Sh}{\mathrm{Sh}}
\newcommand{\SL}{\mathrm{SL}}
\newcommand{\tr}{\mathrm{tr}}
\newcommand{\Tr}{\mathrm{Tr}}
\newcommand{\ttAV}{\mathtt{AV}}
\newcommand{\ur}{\mathrm{ur}}
\newcommand{\Zp}{\ZZ_p}
\newcommand{\cHom}{\mathcal{H}om}
\begin{document}

\title{On Goren-Oort Stratification for quaternionic Shimura varieties}

\author{Yichao Tian}
\email{yichaot@math.ac.cn}
\address{Morningside Center of Mathematics, Chinese Academy of Sciences,  55 Zhong Guan Cun East Road, Beijing, 100190, China.}

\author{Liang Xiao}

\email{liang.xiao@uconn.edu}

\address{196 Auditorium Road, UConn Department of Mathematics, Unit 3009, Storrs, CT 06269-3009, U.S.A.}

%\classification{14G35, 11G18}
\keywords{quaternionic Shimura varieties, Goren-Oort stratification, $p$-divisible groups}
\thanks{Y.T. is partially supported by the National Natural Science Foundation of China (No. 11321101), and L.X.  by the Simons Collaboration Grant \#278433.}

\begin{abstract}
Let $F$ be a totally real field in which a prime $p$ is unramified.
We define the Goren-Oort stratification of the characteristic $p$ fiber of a quaternionic Shimura variety of maximal level at $p$.
We show that each stratum is a $(\PP^1)^r$-bundle over other quaternionic Shimura varieties (for an appropriate integer $r$). 
As an application, we give a necessary condition for the ampleness of a modular line bundle on a quaternionic Shimura variety in characteristic $p$. % The universal isocrystal is iso-trivial along the fiber direction.
%As an immediate application, we give a criterion for an  automorphic line bundle to be ample on the special fiber.
\end{abstract}

\maketitle
\tableofcontents

\section{Introduction}
This paper is intended as the first in a series \cite{tian-xiao2, tian-xiao3}, in which we study the Goren-Oort stratification for quaternionic Shimura varieties.  The purpose of this paper is to give a global description of the strata, saying  that they are in fact $(\PP^1)^r$-bundles over (the special fiber of) other quaternionic Shimura varieties for a certain integer $r$.
 % Instead of giving a chronic expository of the topic of this paper, we prefer to introduce the story in the most natural way.
We fix $p > 2$ a prime number.

\subsection{Motivation: the case of modular curves}
\label{S:modular curve}
Let $N \geq 5$ be an integer prime to $p$.  Let $\calX$ denote the  modular curve with level $\Gamma_1(N)$; it admits a smooth integral model $\bfX$ over $\ZZ[1/N]$.
We are interested in the special fiber $X: = \bfX \otimes_{\ZZ[1/N]} \FF_p$. The curve $X$ has a natural stratification by the supersingular locus $X^\mathrm{ss}$ and the ordinary locus $X^\ord$.  In concrete terms, $X^\mathrm{ss}$ is defined as the zero locus of the Hasse invariant $h \in  H^0(X, \omega^{\otimes(p-1)})$, where $\omega^{\otimes(p-1)}$ is the sheaf for weight $p-1$ modular forms.
The following deep result of Deuring and Serre (see e.g. \cite{serre}) gives an intrinsic description of $X^\mathrm{ss}$.

\begin{theorem}[(Deuring, Serre)]
\label{T:Deuring-Serre}
Let  $\AAA^\infty$  denote the ring of finite adeles over $\QQ$, and $\AAA^{\infty, p}$ its prime-to-$p$ part.
We have a bijection of sets:
\[
\big\{\overline \FF_p\textrm{-points of } X^\mathrm{ss} \big\} \longleftrightarrow B^\times_{p,\infty} \backslash B_{p,\infty}^\times(\AAA^{\infty}) / K_1(N) B_{p,\infty}^\times(\Zp)
\]
equivariant under the prime-to-$p$ Hecke correspondences,
where $B_{p, \infty}$ is the quaternion algebra over $\QQ$ which ramifies at exactly two places: $p$ and $\infty$, $B_{p, \infty}^\times(\Zp)$ is the maximal open compact subgroup of $B_{p, \infty}^\times(\Qp)$, and $K_1(N)$ is an open compact subgroup of $\GL_2(\AAA^{\infty,p}) = B^\times_{p, \infty}(\AAA^{\infty,p})$ given by
\[
K_1(N) = \Big\{\big(\begin{smallmatrix}
a & b \\ c & d
\end{smallmatrix}\big) \in \GL_2(\widehat \ZZ^{(p)})
\; \Big|\;
c \equiv 0, d\equiv 1 \pmod N
 \Big\}, \textrm{ where } \widehat \ZZ^{(p)} = \prod_{l \neq p} \ZZ_l.
\]
\end{theorem}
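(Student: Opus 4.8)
The plan is to identify the supersingular locus $X^{\mathrm{ss}}$ with a double coset space via the theory of the moduli problem. The key geometric input is that the modular curve $\bfX$ represents a moduli functor of elliptic curves with $\Gamma_1(N)$-level structure, so an $\overline{\FF}_p$-point of $X^{\mathrm{ss}}$ is the same as a pair $(E, \alpha)$ where $E/\overline{\FF}_p$ is a supersingular elliptic curve and $\alpha$ is a point of exact order $N$ on $E$. The crucial fact I would invoke is that all supersingular elliptic curves over $\overline{\FF}_p$ are isogenous (indeed, there is a unique supersingular isogeny class), so fixing a base supersingular elliptic curve $E_0$, any supersingular $E$ is of the form given by a quasi-isogeny from $E_0$; the endomorphism algebra $\End^0(E_0)$ is precisely the quaternion algebra $B_{p,\infty}$ ramified exactly at $p$ and $\infty$ (this is the classical computation of Deuring).

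The main steps are as follows. First, I would set up the standard Tate-module/Dieudonn\'e-module dictionary: away from $p$, a supersingular $E$ with level structure corresponds to a lattice chain (or level structure) inside $V^p := \big(\varprojlim_n E_0[n^{(p)}]\big) \otimes \QQ = \AAA^{\infty,p}$-module of rank $2$ over $B_{p,\infty}(\AAA^{\infty,p}) \cong M_2(\AAA^{\infty,p})$, while at $p$, because $E$ is supersingular, the $p$-divisible group is isoclinic of slope $1/2$ and its isomorphism type is rigid — there is essentially one choice, accounting for the factor $B_{p,\infty}^\times(\Zp)$ with $B_{p,\infty}^\times(\Zp)$ the unique maximal order. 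Second, I would phrase this as: the groupoid of supersingular elliptic curves with $\Gamma_1(N)$-structure over $\overline{\FF}_p$ is equivalent to the groupoid of $B_{p,\infty}^\times(\QQ)$-lattices with level structure, which by the usual adelic reformulation (Weil, or the description of the set of lattices in a fixed rational vector space as a quotient of $\GL$) yields the double coset space $B^\times_{p,\infty}\backslash B_{p,\infty}^\times(\AAA^\infty)/K_1(N)B_{p,\infty}^\times(\Zp)$. Third, I would check finiteness and the Hecke-equivariance: the prime-to-$p$ Hecke correspondences on $X$ act by modifying the level structure, i.e.\ by right translation on $B_{p,\infty}^\times(\AAA^{\infty,p})$, which matches the action on the double coset space.

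The main obstacle — and the genuinely deep part, due to Deuring and Serre rather than formal nonsense — is establishing that $\End^0(E_0) = B_{p,\infty}$ and that the supersingular locus is \emph{nonempty and a single isogeny class} with the \emph{right} mass, equivalently that the map from the moduli groupoid to the adelic double coset is a bijection and not merely an injection. Concretely, surjectivity requires knowing that every class in $B^\times_{p,\infty}\backslash B_{p,\infty}^\times(\AAA^\infty)/K_1(N)B_{p,\infty}^\times(\Zp)$ is realized by an actual supersingular elliptic curve with level structure; this is where one needs the Honda-Tate theory (existence of abelian varieties with prescribed endomorphisms) or, classically, Deuring's explicit construction, together with the fact that supersingular elliptic curves can be defined over $\overline{\FF}_p$ with CM by every relevant order. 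A secondary technical point is the passage between the functor-of-points description of $\bfX$ (which requires $N \ge 5$, hence rigidity and representability, which is why that hypothesis appears) and the $\overline{\FF}_p$-points of the special fiber, and checking that $X^{\mathrm{ss}}$ as the vanishing locus of the Hasse invariant coincides set-theoretically with the supersingular points — this is the statement that an elliptic curve in characteristic $p$ is supersingular if and only if its Hasse invariant vanishes, which is standard.
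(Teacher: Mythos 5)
The paper does not actually prove Theorem~\ref{T:Deuring-Serre}; it cites it as a classical result (Serre~\cite{serre}) and records only a one-sentence summary of the classical proof: all supersingular elliptic curves over $\overline\FF_p$ are isogenous, with common quasi-endomorphism algebra $B_{p,\infty}$. Your proposal is a correct and well-organized expansion of exactly that classical route --- fix a supersingular base point $E_0$, identify $\End^0(E_0) = B_{p,\infty}$, parametrize pairs $(E,\alpha)$ in the isogeny class by a lattice with $K_1(N)$-level data in the prime-to-$p$ rational Tate module (where $B_{p,\infty}\otimes\AAA^{\infty,p}$ splits to $\rmM_2(\AAA^{\infty,p})$) and a Dieudonn\'e lattice at $p$ (where $B_{p,\infty}(\Qp)$ is division with a unique maximal order), and convert to the adelic double coset; you also correctly single out the genuine content (nonemptiness and uniqueness of the supersingular isogeny class, and surjectivity of the uniformization, via Deuring or Honda--Tate) from the formal bookkeeping. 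The paper's stated preference is to reinterpret the statement --- as an isomorphism between a special cycle on the special fiber of the $\GL_2$-Shimura variety and the special fiber of the $B_{p,\infty}^\times$-Shimura variety --- and this is the viewpoint it generalizes in Theorem~\ref{T:main-thm}, but that reinterpretation is not deployed to reprove the $F=\QQ$ case (the body of the paper assumes $[F:\QQ]>1$). So your proof is sound and matches the classical approach the paper references rather than conflicting with it.
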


The original proof of this theorem uses the fact that all supersingular elliptic curves over $\overline\FF_p$ are isogenous and the quasi-endomorphism ring is exactly $B_{p, \infty}$.
We however prefer to understand the result as: certain special cycles of the special fiber of the Shimura variety for $\GL_2$ is just the special fiber of the Shimura variety for $B_{p, \infty}^\times$.

The aim of this paper is to generalize this theorem  to the case of quaternionic Shimura varieties.  For the purpose of simple presentation in this introduction, we focus on the case of Hilbert modular varieties.  We will indicate how to modify the result to adapt to general cases.

\subsection{Goren-Oort Stratification}
\label{S:stratification of HMV}
Let $F$ be a totally real field, and let $\calO_F$ denote its ring of integers.  We assume that $p$ is \emph{unramified} in $F$.
Goren and Oort \cite{goren-oort} defined a stratification of the special fiber of the Hilbert modular variety $X_{\GL_2}$.
More precisely, let $\AAA_F^\infty$ denote the ring of finite adeles of $F$ and $\AAA_F^{\infty, p}$ its prime-to-$p$ part.
We fix an open compact subgroup $K^p \subset \GL_2(\AAA_F^{\infty, p})$.
Let $\calX_{\GL_2}$ denote the {\it Hilbert modular variety} (over $\QQ$) with tame level $K^p$. Its complex points are given by
\[
\calX_{\GL_2}(\CC) = \GL_2(F)\; \backslash\; \big({\gothh^\pm}^{[F:\QQ]} \times \GL_2(\AAA_F^{\infty})\big) \;/\; \big(K^p \times \GL_2(\calO_{F, p})\big),
\]
where $\gothh^\pm :=\CC \backslash \RR$ and $\calO_{F, p} := \calO_F \otimes_\ZZ \Zp$.
The Hilbert modular variety $\calX_{\GL_2}$ admits an integral model $\bfX_{\GL_2}$ over $\ZZ_{(p)}$ and let $X_{\GL_2}$ denote its special fiber over $\overline \FF_p$.

Since $p$ is unramified in $F$, we may and will identify the $p$-adic embeddings of $F$ with the homomorphisms of $\calO_F$ to $\overline \FF_p$, i.e. $\Hom(F, \overline \QQ_p) \cong \Hom(\calO_F, \overline \FF_p)$.  Let $\Sigma_\infty$ denote this set. (We shall later identify the  $p$-adic embeddings with the real embeddings, hence the subscript $\infty$.)
Under the latter description,
the absolute Frobenius $\sigma$ acts on $\Sigma_\infty$ by taking an element $\tau$ to the composite $\sigma \tau:\calO_F \xrightarrow{\tau} \overline \FF_p \xrightarrow{x\mapsto x^p} \overline \FF_p$.
This action decomposes  $\Sigma_\infty$ into a disjoint union of cycles, parametrized by all $p$-adic places of $F$.

Let $\calA$ denote the universal abelian variety over $X_{\GL_2}$.  The sheaf of invariant differential 1-forms $\omega_{\calA/X_{\GL_2}}$ is then locally free of rank one as a module over
\[
\calO_F \otimes_\ZZ \calO_{X_{\GL_2}} \cong \bigoplus_{\tau \in \Sigma_\infty} \calO_{X_{\GL_2, \tau}},
\]
where  $\cO_{X_{\GL_2, \tau}}$ is the direct summand on which $\calO_F$ acts through $\tau : \calO_F \to \overline \FF_p$.
We then write accordingly $\omega_{\calA/X_{\GL_2}} = \bigoplus_{\tau \in \Sigma_\infty} \omega_\tau$;  each $\omega_\tau$ is locally free of rank one over $\cO_{X_{\GL_2}}$.

\begin{defn}
The Verschiebung map induces an $\calO_F$-morphism $\omega_{A/X_{\GL_2}} \to \omega_{A^{(p)}/ X_{\GL_2}}$, which further induces a homomorphism $h_\tau: \omega_\tau \to \omega^{\otimes p}_{\sigma^{-1}\tau}$ for each $\tau \in \Sigma_\infty$.
This map then defines a global section $h_\tau \in H^0(X_{\GL_2}, \omega_\tau^{\otimes -1} \otimes \omega^{\otimes p}_{\sigma^{-1}\tau})$, which we call the \emph{partial Hasse invariant at} $\tau$.
We use $X_\tau$ to denote the zero locus of $h_\tau$.
For a subset $\ttT \subseteq \Sigma_\infty$, we put $X_\ttT = \bigcap_{\tau \in \ttT} X_\tau$.  These $X_\ttT$'s give the \emph{Goren-Oort stratification} of $X_{\GL_2}$.

An alternative definition of $X_\ttT$ is given as follows: $z \in X_\ttT(\overline \FF_p)$ if and only if $\Hom(\alpha_p, A_z[p])$ under the action of $\calO_F$ has eigenvalues given by those embeddings $\tau \in \ttT$.  We refer to \cite{goren-oort} for the proof of equivalence and a more detailed discussion.
\end{defn}

It is proved in \cite{goren-oort} that each $X_\tau$ is a smooth and proper divisor and these divisors intersect transversally.  Hence $X_\ttT$ is smooth of codimension $\#\ttT$ for any subset $\ttT \subseteq \Sigma_\infty$; it is  proper if $\ttT \neq \emptyset$.

\subsection{Description of the Goren-Oort strata}
\label{S:intro GO-strata}
The goal of this paper is to give a \emph{global} description of the Goren-Oort strata.

Prior to our paper, most works focus on the $p$-divisible groups of the abelian varieties, which often provides a good access to the local structure of Goren-Oort strata, e.g. dimensions and smoothness.  Unfortunately, there have been little understanding of the global geometry of $X_\ttT$, mostly in low dimension.  We refer to the survey article \cite{andreatta-goren} for a historical account.
Recently, 
D. Helm made a break-through progress  in
\cite{helm, helm-PEL} by taking advantage of the moduli problem; he was able to describe the global geometry of certain analogous strata of the special fibers of  Shimura varieties of type $U(2)$.

Our proof of the main theorem of this paper is, roughly speaking, to complete Helm's argument to cover all cases for the $U(2)$-Shimura varieties and then transfer the results from the unitary side to the quaternionic side.

Rather than stating our main theorem in an abstract way, we prefer to give more examples to indicate the general pattern.

When $F=\QQ$, this is discussed in Section~\ref{S:modular curve}.
For $F \neq \QQ$, we fix an isomorphism $\CC \cong \overline \QQ_p$ and hence identify $\Sigma_\infty = \Hom(F, \overline \QQ_p)$ with the set of real embeddings of $F$.

\subsubsection{$F$ real quadratic and $p$ inert in $F$}
Let $\infty_1$ and $\infty_2$ denote the two real embeddings of $F$, and $\tau_1$ and $\tau_2$ the corresponding $p$-adic embeddings (via the fixed isomorphism $ \CC \cong \overline \QQ_p$).
Then our main Theorem~\ref{T:main-thm} says that each $X_{\tau_i}$ is a $\PP^1$-bundle over the special fiber of the discrete Shimura variety $\overline\Sh_{B_{\infty_1, \infty_2}^\times}$, where $B_{\infty_1, \infty_2}$ stands for the quaternion algebra over $F$ which ramifies at both archimedean places. The intersection $X_{\tau_1} \cap X_{\tau_2}$ is isomorphic to the special fiber of the discrete Shimura variety $\overline\Sh_{ B_{\infty_1, \infty_2}^\times}(\Iw_p)$, where $(\Iw_p)$ means to take Iwahori level structure at $p$ instead.
The two natural embeddings of $X_{\tau_1} \cap X_{\tau_2}$ into $X_{\tau_1}$ and $X_{\tau_2}$ induces two morphisms $\overline\Sh_{ B_{\infty_1, \infty_2}^\times}(\Iw_p) \to \overline\Sh_{ B_{\infty_1, \infty_2}^\times}$; this gives (certain variant of) the Hecke correspondence at $p$ (see Theorem~\ref{T:link and Hecke operator}).

We remark here that it was first proved in \cite{bachmat-goren} that the one-dimensional strata are disjoint unions of $\PP^1$ and the number of such $\PP^1$'s is also computed in \cite{bachmat-goren}.  This computation relies on the intersection theory and does not provide a natural parametrization as we gave above.  Our proof will be  different from theirs.  One can easily recover their counting from our natural parametrization.

\subsubsection{Quaternionic Shimura varieties}
Before proceeding, we clarify our convention on quaternionic Shimura varieties.

For $\ttS$ an even subset of archimedean and $p$-adic places of $F$, we use $B_\ttS$ to denote the quaternion algebra over $F$ which ramifies exactly at those places in $\ttS$.
We fix an identification $B_\ttS^\times(\AAA^{\infty, p}) \cong \GL_2(\AAA^{\infty, p})$.
We fix a maximal open compact subgroup $B_\ttS^\times(\calO_{F,p})$ of $B_\ttS^\times(F \otimes_\QQ \Qp)$.
We use $\ttS_\infty$ to denote the subset of archimedean places of $\ttS$.
%{\color{blue}[In fact, at $p$-adic places where $B_\ttS$ splits, we need to choose the maximal compact open compatible; shall we bother to say that?]}
The Shimura variety $\calS h_{B_\ttS^\times}$ for the algebraic group $\Res_{F/\QQ} B_\ttS^\times$ has complex points
\[
\calS h_{B_\ttS^\times}(\CC) = B_\ttS^\times(F) \; \backslash \; \big({\gothh^\pm}^{[F:\QQ]-\#\ttS_\infty} \times B_\ttS^\times(\AAA_F^{\infty}) \big)\; /  \; \big( K^p \times B_\ttS^\times(\calO_{F,p})\big).
\]
Here and later, the tame level $K^p$ is uniformly matched up for all quaternionic Shimura varieties.

Unfortunately, $\calS h_{B_\ttS^\times}$ itself does not possess a moduli interpretation.
We follow the construction of Carayol \cite{carayol} to relate it with a unitary Shimura variety $Y$ and ``carry over" the integral model of $Y$.
The assumption $p >2$ comes from the verification of the extension property for the integral canonical model following Moonen \cite[Corollary~3.8]{moonen96}.

In any case, we use $\overline\Sh_{B_\ttS^\times}$ to denote the special fiber of the Shimura variety over $\overline \FF_p$.
When we take the Iwahori level structure at $p$ instead, we write $\overline\Sh_{B_\ttS^\times}(\Iw_p)$.

\subsubsection{$F$ real quartic and $p$ inert in $F$}
Let $\infty_1, \dots, \infty_4$ denote the four real embeddings of $F$, labeled so that the corresponding $p$-adic embeddings $\tau_1, \dots, \tau_4$ satisfy $\sigma \tau_i = \tau_{i+1}$ with the convention that $\tau_i = \tau_{i\,(\mathrm{mod}\; 4)}$.  We list the description of the strata as follows.

\begin{center}
\begin{tabular}{|c|c|}
\hline
Strata & Description\\
\hline
$X_{\tau_i}$ for each $i$ & $\PP^1$-bundle over $\overline\Sh_{B^\times_{\{\infty_{i-1}, \infty_i\}}}$\\

\hline
$X_{\{\tau_{i-1},\tau_i\}}$ for each $i$ &  $\overline\Sh_{B^\times_{\{\infty_{i-1}, \infty_i\}}}$\\

\hline
$X_{\{\tau_1, \tau_3\}}$ and $X_{\{\tau_2, \tau_4\}}$ & $(\PP^1)^2$-bundle over $\overline\Sh_{B^\times_{\{\infty_1, \infty_2, \infty_3, \infty_4\}}}$\\
\hline
$X_{\ttT}$ with $\#\ttT = 3$ & $\PP^1$-bundle over $\overline\Sh_{B^\times_{\{\infty_1, \infty_2, \infty_3, \infty_4\}}}$\\
\hline
$X_{\{\tau_1, \tau_2, \tau_3, \tau_4\}}$ & $\overline\Sh_{B^\times_{\{\infty_1, \infty_2, \infty_3, \infty_4\}}}(\Iw_p)$\\
\hline
\end{tabular}
\end{center}

In particular, we point out that for a codimension $2$ stratum, its shape depends on whether the two chosen $\tau_i$'s are adjacent in the cycle  $\tau_1 \to \dots \to \tau_4 \to \tau_1$.

\medskip

\subsubsection{$F$ general totally real of degree $g$ over $\QQ$ and $p$ inert in $F$}
\label{SS:p inert case}
As before, we label the real embeddings of $F$ by $\infty_1, \dots, \infty_g$ such that the corresponding $p$-adic embeddings $\tau_1, \dots, \tau_g$ satisfy $\sigma \tau_i = \tau_{i+1}$ with the convention that $\tau_i = \tau_{i \,(\mathrm{mod}\; g)}$.
The general statement for Goren-Oort strata takes the following form: for a subset $\ttT \subseteq \Sigma_\infty$, the stratum $X_\ttT$ is isomorphic to a $(\PP^1)^r$-bundle over the special fiber of another quaternion Shimura variety $\overline \Sh_{B_{\ttS(\ttT)}^\times}$.  We now explain, given $\ttT$, what $\ttS(\ttT)$ and $r$ are.
\begin{itemize}
\item
When $\ttT \subsetneq \Sigma_\infty$, we construct $\ttS(\ttT)$ as follows: if $\tau \notin \ttT$ and $\sigma^{-1}\tau, \dots, \sigma^{-m}\tau \in \ttT$, we put $\sigma^{-1}\tau, \dots, \sigma^{-2 \lceil m/2 \rceil}\tau$ into $\ttS(\ttT)$.  In other words, we always have $\ttT \subseteq \ttS(\ttT)$, and $\ttS(\ttT)$ contains the additional element $\sigma^{-m-1}\tau$ if and only if the corresponding $m$ is odd.
The number $r$ is the cardinality of $\ttS(\ttT) - \ttT$.
\item
When $\ttT = \Sigma_\infty$, $r$ is always $0$; for $\ttS(\ttT)$, we need to distinguish the parity:
\begin{itemize}
\item if $\#\Sigma_\infty$ is odd, we put $\ttS(\ttT) = \Sigma \cup \{p\}$;
\item if $\#\Sigma_\infty$ is even, we put $\ttS(\ttT) = \Sigma$ and we put an Iwahori level structure at $p$.
\end{itemize}
\end{itemize}

\subsubsection{$F$ general totally real and $p$ is unramified  in $F$}
\label{S:F general p unramified}
%:
The general principle is: \emph{different places above $p$ work ``independently'' in the recipe of describing the strata (e.g. which places of the quaternion algebra are ramified); so we just take the ``product" of all recipes for different $p$-adic places.}

More concretely, let $p\calO_F = \gothp_1 \cdots \gothp_d$ be the prime ideal factorization.
We use $\Sigma_\infty$ to denote the set of all archimedean embeddings of $F$, which is identified with the set of  $p$-adic embeddings.  We use $\Sigma_{\infty/\gothp_i}$ to denote those archimedean embeddings or equivalently $p$-adic embeddings that give rise to the $p$-adic place $\gothp_i$.
Given any subset $\ttT \in \Sigma_\infty$, we put $\ttT_{\gothp_i} = \ttT \cap \Sigma_{\infty/\gothp_i}$.
Applying the recipe in \ref{SS:p inert case} to each $\ttT_{\gothp_i}$ viewed as a subset of $\Sigma_{\infty/\gothp_i}$, we get a set of places $\ttS(\ttT_{\gothp_i})$ and a nonnegative number $r_{\gothp_i}$.
We put $\ttS(\ttT) = \cup_{i=1}^d \ttS(\ttT_{\gothp_i})$ and $r = \sum_{i=1}^d r_{\gothp_i} =  \sum_{i=1}^d \#(\ttS(\ttT_{\gothp_i}) - \ttT_{\gothp_i})$.  Then $X_\ttT$ is a 
$(\PP^1)^r$-bundle over $\overline\Sh_{B_{\ttS(\ttT)}^\times}$ (with possibly some Iwahori level structure at appropriate places above $p$).

We also prove analogous results on the global description of the Goren-Oort strata on general quaternionic Shimura varieties (Theorem~\ref{T:main-thm}).  We refer to the content of the paper for the statement.
The modification we need to do in the general quaternionic case is that one just ``ignores" all ramified archimedean places and apply the above recipe formally to the set $\Sigma_\infty$ after ``depriving all ramified archimedean places".

%We also point out that  the $p$-divisible groups of the universal abelian varieties are isotrivial along all the $\PP^1$-bundles above.

\subsection{Method of the proof}
We briefly explain the idea behind the proof.
The first step is to translate the question into an analogous question on (the special fiber of) unitary Shimura varieties.
We use $X'$ to denote the special fiber of the unitary Shimura variety we start with, over which we have the universal abelian variety $A'$.  Similar to the Hilbert case,   we have naturally defined analogous Goren-Oort stratification given by divisors $X'_\tau$.  We consider $X'_\ttT = \cap_{\tau \in \ttT}X'_\tau$.

The idea is to prove the following sequence of isomorphisms $X'_\ttT \xleftarrow\cong Y'_\ttT \xrightarrow{\cong} Z'_\ttT$, where
$Z'_\ttT$ is the $(\PP^1)$-power bundle over the special fiber of another unitary Shimura variety, which comes with a universal abelian variety $B'$; $Y'_\ttT$ is the moduli space classifying both $A'$ and $B'$ together with a quasi-isogeny $A' \to B'$ of certain fixed type (with very small $p$-power degree); and the two morphisms are just simply forgetful morphisms.  We defer the characterization of the quasi-isogeny to the content of the paper.
To prove the two isomorphisms above, we simply check that the natural forgetful morphisms are bijective on the closed points and induce isomorphisms on the tangent spaces.

\begin{remark}
We point out that we have been deliberately working with the special fiber over the algebraic closure $\overline \FF_p$.  This is because the description of the stratification is \emph{not} compatible with the action of the Frobenius.
In fact, a more rigorous way to formulate theorem is to compare the special fiber of the Shimura variety associated to $\GL_2(F) \times_{F^\times} E^\times$ and that to $B_{\ttS(\ttT)}^\times \times_{F^\times} E^\times$.  The homomorphism from the Deligne torus into the two $E^\times$'s are in fact different, causing the incompatibility of the Frobenius action.  (See Corollary~\ref{C:main-thm-product} for the corresponding statement.)  The result about quaternionic Shimura variety is obtained by comparing geometric connected components of the corresponding Shimura varieties, in which we lose the Frobenius action.  See Remark~\ref{R:quaternionic Shimura reciprocity not compatible} for more discussion.
\end{remark}

\subsection{Ampleness of automorphic line bundles}

An immediate application of the study of the global geometry of the Goren-Oort stratification is to give a necessary condition (hopefully also sufficient) for an automorphic line bundle to be ample.
As before, we take $F$ to be a totally real field of degree $g$ in which $p$ is inert for simplicity.
Let $X^*_{\GL_2}$ denote the special fiber of the minimal compactification of the  Hilbert modular variety.
We label all $p$-adic embeddings as $\tau_1, \dots, \tau_g$ with subindices considered modulo $g$, such that $\sigma \tau_i = \tau_{i+1}$.
Then $\omega_{\tau_1}, \dots, \omega_{\tau_g}$ form a basis of the group of automorphic line bundles.  The class $[\omega_{\tau_i}]$ in $\Pic(X)_\QQ : = \Pic(X) \otimes_\ZZ \Q$ of each $\omega_{\tau_i}$ extends to a class in $\Pic(X^*_{\GL_2})_\QQ$, still denoted by $[\omega_{\tau_i}]$.
For a $g$-tuple $\underline k = (k_1, \dots, k_g) \in \ZZ^g$, we put $[\omega^{\underline k}]  = \sum_{i=1}^g k_i[\omega_{\tau_i}]$.  Probably slightly contrary to the common intuition from the case of modular forms, we prove the following.

\begin{theorem}
\label{T:introduction ample}
If the rational class of line bundle $[\omega^{\underline k}]$ is ample, then
\begin{equation}
\label{E:ampleness condition GL2}
p k_i > k_{i-1} \quad \textrm{for all }i; \quad \textrm{(and all }k_i >0).
\end{equation}
\end{theorem}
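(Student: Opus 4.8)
The plan is to derive the necessary inequality from the existence of the Goren-Oort strata as $\PP^1$-bundles, using the fact that the restriction of an ample class to any proper closed subvariety must be ample, and that an ample class restricted to the fibers of a $\PP^1$-bundle must have positive degree on each $\PP^1$. The key input is the main theorem (in the form discussed in \ref{SS:p inert case}): for each $i$, the divisor $X_{\tau_i}$ is a $\PP^1$-bundle $\pi_i : X_{\tau_i} \to \overline\Sh_{B^\times_{\{\infty_{i-1},\infty_i\}}}$. The fibers of $\pi_i$ are rational curves $C \cong \PP^1$ sitting inside $X_{\tau_i} \subset X_{\GL_2}$, and if $[\omega^{\underline k}]$ is ample then its restriction to $X^*_{\GL_2}$ is ample, hence $\deg\big([\omega^{\underline k}]|_C\big) = \sum_j k_j \deg\big([\omega_j]|_C\big) > 0$.

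So the heart of the argument is the computation of the intersection numbers $\deg\big([\omega_j]|_C\big)$ for $C$ a fiber of $\pi_i$. First I would recall that the partial Hasse invariant $h_j \in H^0(X_{\GL_2}, \omega_j^{\otimes -1} \otimes \omega_{\sigma^{-1}\tau_j}^{\otimes p}) = H^0(X_{\GL_2}, \omega_j^{\otimes -1}\otimes \omega_{j-1}^{\otimes p})$ (using $\sigma^{-1}\tau_j = \tau_{j-1}$) vanishes exactly on $X_{\tau_j}$, so the line bundle $\omega_j^{\otimes -1}\otimes\omega_{j-1}^{\otimes p}$ has a section vanishing on the divisor $X_{\tau_j}$; equivalently $[\omega_{j-1}^{\otimes p}\otimes\omega_j^{\otimes -1}] = [X_{\tau_j}]$ in $\Pic(X_{\GL_2})_\QQ$, at least up to the subtlety of extending to the compactification (which is handled by the fact that all the $[\omega_i]$ extend). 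Then I would restrict to the fiber $C$ of $\pi_i: X_{\tau_i}\to \overline\Sh_{B^\times_{\{\infty_{i-1},\infty_i\}}}$. Since $C$ is a fiber of the bundle structure on $X_{\tau_i}$ and the base is (the special fiber of) a quaternionic Shimura variety, one expects that the automorphic bundles $\omega_j$ for $j \neq i, i-1$ restrict from the base and hence are trivial (degree $0$) on $C$; and that on $C$ itself the self-intersection $X_{\tau_i}\cdot C$ and the degrees of $\omega_i, \omega_{i-1}$ can be read off from the explicit $\PP^1$-bundle — here I would anticipate $\deg(\omega_i|_C) = -1$ and $\deg(\omega_{i-1}|_C) = p$ (so that $\deg((\omega_{i-1}^{\otimes p}\otimes\omega_i^{\otimes -1})|_C) = p^2 + 1 > 0$ is consistent, and more importantly the Hasse invariant relation at $\tau_i$ pinches $C$ to a point under Verschiebung in one embedding). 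Plugging these into $\sum_j k_j\deg(\omega_j|_C) > 0$ gives $p\, k_{i-1} - k_i > 0$, i.e. $p k_{i-1} > k_i$ for all $i$, which after reindexing is exactly \eqref{E:ampleness condition GL2}. The parenthetical assertion that all $k_i > 0$ I would get separately, e.g. from ampleness on a complete curve in the ordinary locus where $\omega_i$ is (relatively) ample, or by a Nakai–Moishezon / Seshadri-type positivity on suitable test curves, or simply by noting that $\omega^{\underline k}$ ample forces $\omega^{\underline k}$ ample on the full Hilbert modular variety and each $\omega_i$ is nef with the product $\prod\omega_i$ ample.

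The steps, in order: (1) reduce ampleness of $[\omega^{\underline k}]$ on $X^*_{\GL_2}$ to positivity of $\deg([\omega^{\underline k}]|_C)$ on every proper curve $C$, in particular on the $\PP^1$-fibers of the Goren-Oort divisors $X_{\tau_i} \to \overline\Sh_{B^\times_{\{\infty_{i-1},\infty_i\}}}$; (2) identify, via the partial Hasse invariants, the classes $[X_{\tau_j}] = [\omega_{j-1}^{\otimes p}\otimes \omega_j^{\otimes -1}]$ in $\Pic_\QQ$; (3) compute the restriction degrees $\deg(\omega_j|_C)$ for $C$ a fiber of $\pi_i$, showing $\omega_j|_C$ is trivial for $j\neq i$ and computing $\deg(\omega_i|_C) = -1$; (4) conclude $\sum_j k_j\deg(\omega_j|_C) = p k_{i-1} - k_i > 0$ by unwinding (2)–(3) on $C$; (5) obtain $k_i > 0$ from a complementary positivity argument. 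The main obstacle I expect is step (3): pinning down the precise way the automorphic line bundles $\omega_j$ restrict to the fibers of the $\PP^1$-bundle $X_{\tau_i}\to\overline\Sh_{B^\times_{\{\infty_{i-1},\infty_i\}}}$. This requires going back into the construction of the $\PP^1$-bundle in the main theorem — understanding that $X_{\tau_i}$ parametrizes (after the unitary-to-quaternionic transfer) a quasi-isogeny $A \to B$ whose kernel is controlled by the embedding $\tau_i$, so that $\omega_i$ pulled back to $X_{\tau_i}$ is related to a sub-line-bundle of the rank-$2$ crystal in the $\tau_i$-part, whose $\PP^1$ of choices is exactly the fiber — and tracking how $\omega_i$ becomes $\calO_{\PP^1}(-1)$ on that fiber while $\omega_{i-1}$ comes from the base. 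One then has to be careful that these identities, computed on the open Shimura variety, extend across the boundary of the minimal compactification, which is where the hypothesis that $[\omega_i]$ extends to $\Pic(X^*_{\GL_2})_\QQ$ is used.
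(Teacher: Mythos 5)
Your overall strategy is precisely the one the paper uses: restrict the candidate ample class to the $\PP^1$-fibers of the Goren-Oort divisors $X_{\tau_i}$ and demand positive degree. However, step (3) of your plan contains a genuine error that reverses the final inequality, and the inequality $pk_{i-1}>k_i$ is \emph{not} a reindexing of $pk_i>k_{i-1}$; these are different (and in general incompatible) systems of cyclic inequalities.

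The $\PP^1$ in the fibration $\pi_{\tau_i}:X_{\tau_i}\to\overline\Sh_{B^\times_{\{\infty_{i-1},\infty_i\}}}$ parametrizes a choice of sub-line-bundle $J^\circ\subset H_1^\dR(B/S)^\circ$ at the embedding $\sigma^{-1}\tau_i=\tau_{i-1}$ (not at $\tau_i$ itself); this is the set $\tilde I_\ttT=\{\sigma^{-n_\tau}\tilde\tau\}$ in the moduli description of $Z'_\ttT$. Via the isogeny $A\to B$, that $J^\circ$ is exactly $\omega^\circ_{A^\vee,\tau_{i-1}}$, so it is $\omega_{i-1}$ (not $\omega_i$) that is the tautological subbundle $\calO_{\PP^1}(-1)$ along a fiber $C$. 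The bundle $\omega_i$ along $X_{\tau_i}$ is then determined by the \emph{vanishing} of $h_{\tau_i}$: on $X_{\tau_i}$ the Verschiebung identifies $\omega_i$ with $H_1^\dR(\cdots)^{(p)}_{\tau_i}/(\omega_{i-1})^{\otimes p}$, giving $[\omega_i]=-p[\omega_{i-1}]$ there, hence $\omega_i|_C\cong\calO_{\PP^1}(p)$. Plugging in, $\deg(\omega^{\underline k}|_C)=pk_i-k_{i-1}$, which is the paper's conclusion. Your ``consistency check'' actually signals the error: with your degrees, the normal bundle $\omega_{i-1}^{\otimes p}\otimes\omega_i^{\otimes -1}$ of $X_{\tau_i}$ would restrict to $\calO(p^2+1)$ on $C$, whereas the correct degrees give $\calO(-2p)$, which is what Proposition~\ref{P:normal bundle} computes; a positive normal degree here is geometrically implausible and should have been a red flag. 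Everything else in your outline (reduction to positivity on proper curves, extension of $[\omega_i]$ across the cusps, the auxiliary remark that $k_i>0$ follows) is fine and matches the paper's argument.
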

Here we put the second condition in parentheses because it automatically follows from the first condition.
This theorem is proved in Theorem~\ref{T:ampleness}.  
When $F$ is a real quadratic field, Theorem~\ref{T:introduction ample} is proved in \cite[Theorem~8.1.1]{andreatta-goren}. 

To see that the condition \eqref{E:ampleness condition GL2} is necessary, we simply restrict to each of the GO-strata $X_{\tau_i}$, which is a $\PP^1$-bundle as we discussed before.  Along each of the $\PP^1$-fiber, the line bundle $\omega^{\underline k}$ restricts to $\calO(pk_i - k_{i-1})$.  The condition~\eqref{E:ampleness condition GL2} is clear.

We do expect the condition in Theorem~\ref{T:introduction ample}
to be necessary (which was proved for Hilbert modular surface in \cite{andreatta-goren}), but we are not able to prove it due to a combinatorics complication.

\subsection{Forthcoming works in this series}
We briefly advertise the other papers of this series to indicate the potential applications of the technical result in this paper.
In the subsequent paper \cite{tian-xiao2}, we discuss an application to the classicality of overconvergent Hilbert modular forms, following the original proof of R. Coleman.
In the third paper \cite{tian-xiao3}, we
show that certain generalizations of the Goren-Oort strata realize Tate classes of the special fiber of  certain Hilbert modular varieties, and hence verify the Tate Conjecture under some genericity hypothesis.

\subsection{Structure of the paper}
In Section~\ref{Section:Sh Var}, we review some basic facts about integral models of Shimura varieties, which will be used to relate the quaternionic Shimura varieties with the unitary Shimura varieties.
One novelty is that we include a discussion about the ``canonical model" of certain discrete Shimura varieties, this can be treated uniformly together with usual Shimura varieties.
In Section~\ref{Section:Integral-model},
we construct the integral canonical model for quaternionic Shimura varieties, following the Carayol \cite{carayol}.  However, we tailor many of the choices (e.g. the auxiliary CM field, signatures) for our later application.
In Section~\ref{Section:defn of GOstrata}, we define the Goren-Oort stratification for the unitary Shimura varieties and transfer them to the quaternionic Shimura varieties; this is a straightforward generalization of the work of Goren and Oort \cite{goren-oort}.
In Section~\ref{Section:GO-geometry}, we give the global description of Goren-Oort stratification.  The method is very close to that was used in \cite{helm}.
In Section~\ref{Section:GO divisors}, we give more detailed description for Goren-Oort divisors, including a necessary condition for an automorphic line bundle to be ample, and a structure theorem relating the Goren-Oort stratification along a $\PP^1$-bundle morphism provided by Theorem~\ref{T:main-thm-unitary}.
In Section~\ref{Section:links}, we further study some structure of the Goren-Oort strata which will play an important role in the forthcoming paper \cite{tian-xiao3}.

\subsection*{Acknowledgments}

We thank Ahmed Abbes, Matthew Emerton, and David Helm for useful discussions.
We started working on this project when we were attending a workshop held at the Institute of Advance Study at Hongkong University of Science and Technology in December 2011.
The  hospitality of the institution and the well-organization provided us a great environment for brainstorming ideas.
We especially thank the organizers Jianshu Li and Shou-wu Zhang, as well as the staff at IAS of HKUST.
We also thank Fields Institute and the Morningside Center; the authors discussed the project while both attending conferences at these two institutes.

\subsection{Notation}\label{S:Notation-for-the-paper}

\subsubsection{}
%Let $\SSS = \Res_{\CC/\RR}\GG_m$ be the \emph{Deligne torus}.  A homomorphism from $\SSS$ to a real reductive group is called a \emph{Deligne homomorphism}. For an algebraic group $G$, we use $G^\ad$ to denote the adjoint group and use $G^\der$ to denote the derived group.

For a scheme $X$ over a ring $R$ and a ring homomorphism $R \to R'$, we use $X_{R'}$ to denote the base change $X \times_{\Spec R} \Spec R'$.

For a field $F$, we use $\Gal_F$ to denote its Galois group.

%Our convention for varieties is as follows.  We use Roman letters for varieties over a field of characteristic zero.
%We use bold face letters for varieties over the ring of integers of a local field.

For a number field $F$, we use $\AAA_F$ to denote its ring of adeles, and $\AAA_F^\infty$ (resp. $\AAA_F^{\infty, p}$) to denote its finite adeles (resp. prime-to-$p$ finite adeles).
When $F = \QQ$, we suppress the subscript $F$ from the notation.
We use superscript $\cl$ to mean closure in certain topological groups. For example, $F^{\times, \cl}$ means the closure of $F^\times$ inside $\AAA_F^{\infty, \times}$ or $ \AAA_F^{\infty, p}$ (depending the situation).
We put $\widehat \ZZ = \prod_l \ZZ_l$ and $\widehat \ZZ^{(p)} = \prod_{l \neq p} \ZZ_l$.

For each finite place $\gothp$ of $F$, let $F_\gothp$ denote the completion of $F$ at $\gothp$ and $\calO_\gothp$  its valuation ring, which has uniformizer $\varpi_\gothp$ and residue field $k_\gothp$. (When $F_\gothp$ is unramified over $\QQ_p$, we take $\varpi_\gothp$ to be $p$.)

We normalize the Artin map $\Art_F: F^\times \backslash \AAA^\times_F \to \Gal_F^\ab$ so that for each finite prime $\gothp$, the local uniformizer at $\gothp$ is mapped to a geometric Frobenius at $\gothp$.

%Let $\gothp$ be a nonarchimedean place of $F$.
%For a $p$-adic de Rham or $l$-adic representation $\rho$ of $\Gal_F$, we use $\WD_\gothp(\rho)$ denote the associated Weil-Deligne representation of $\rho|_{\Gal_{F_\gothp}}$.

\subsubsection{} For $A$  an abelian scheme over a scheme $S$, we denote by $A^{\vee}$ the dual abelian scheme, by $\Lie(A)$ the Lie algebra of $A$, and by $\omega_{A/S}$ the module of invariant $1$-differential forms of $A$ relative to $S$.
We sometimes omit $S$ from the notation when the base is clear.

For a finite $p$-group scheme or a $p$-divisible group $G$ over a perfect field $k$ of characteristic $p$, we use $\calD(G)$ to denote its \emph{covariant} Dieudonn\'e module.
For an abelian variety $A$ over $k$, we write $\calD_A$ for $\calD(A[p])$ and write $\tilde \calD_A$ for $\calD(A[p^\infty])$.

\subsubsection{}
\label{SS:notation-F}
Throughout this paper, we fix a totally real field $F$ of degree $g>1$ over $\QQ$.
Let $\Sigma$ denote the set of places of $F$, and $\Sigma_\infty$  the subset of archimedean  places, or equivalently, all real embeddings of $F$.
%The group $\Gal_\QQ$ acts (on the left) on $\Sigma_\infty$ by post-composition, identifying $\Sigma_\infty$ as the set of left cosets $\Gal_\QQ/\Gal_F$.

 We fix a prime number $p$ which is unramified in $F$. Let $\Sigma_p$ denote the set of places of $F$ above $p$.
We fix
% embeddings $i_p: \bar{\Q}\hra \bar{\Q}_p$ and $i_{\infty}:\bar{\Q}\hra \C$, and choose
an isomorphism $\iota_p: \CC \xrightarrow{\cong} \overline \QQ_p$ so that we identify $\Sigma_\infty$ as the set of $p$-adic embeddings of $F$.
% such that $i_p=\iota_p\circ i_{\infty}$.
%For each $\tau \in \Sigma_\infty$, we use $\tau' = \iota_p \circ \tau: F  \to \CC\to \overline \QQ_p$ to denote the corresponding $p$-adic embedding.\liang{I propose to remove this convention and just abuse the notation.}
For each $\gothp \in \Sigma_p$, we use $\Sigma_{\infty/\gothp}$ to denote the subset of $p$-adic embeddings  $\tau\in \Sigma_{\infty}$ which induce the $p$-adic place $\gothp$.
Since $p$ is unramified, each $\tau$ induces an embedding $\calO_F \hookrightarrow W(\overline \FF_p)$.  
Post-composition with the Frobenius $\sigma$ on the latter induces an action of $\sigma$ on the set of $p$-adic embeddings and makes each $\Sigma_{\infty/\gothp}$ into one cycle. We use $\sigma \tau$ to denote this action, i.e. $\sigma  \tau = \sigma\circ  \tau$.

\subsubsection{}
\label{SS:notation for E}
We will consider a CM extension $E$ over $F$, in which all places above $p$ are unramified.  Let $\Sigma_{E, \infty}$ denote the set of complex embeddings of $E$.
For $\tau \in \Sigma_\infty$, we often use $\tilde \tau$ to denote a/some complex embedding of $E$ extending $\tau$, and we write $\tilde \tau^c$ for its complex conjugate.
Using the isomorphism $\iota_p$ above, we view $\tilde \tau$ and $\tilde \tau^c$ as $p$-adic embeddings of $E$.

Under the natural two-to-one map $\Sigma_{E, \infty} \to \Sigma_\infty$, we use $\Sigma_{E, \infty/\gothp}$ to denote the preimage of $\Sigma_{\infty/\gothp}$.
In case when $\gothp$ splits as $\gothq \gothq^c$ in $E/F$, we use $\Sigma_{E, \infty/\gothq}$ to denote the set of complex embeddings $\tilde \tau$ such that $\iota_p \circ \tilde \tau$ induces the $p$-adic place $\gothq$.

\subsubsection{}\label{SS:notation-S}
For $\ttS$ an even subset of places of $F$, we denote by $B_\ttS$  the quaternion algebra over $F$ ramified at $\ttS$.
Let $\Nm_{B_\ttS/F}: B_\ttS \to F$ denote the reduced norm and $\Tr_{B_\ttS/F}: B_\ttS \to F$ the reduced trace.

We use the following lists of algebraic groups.  Let $G_\ttS$ denote the algebraic group $\Res_{F/\Q}B_\ttS^\times$.
Let $E$ be the CM extension of $F$ above and we put $T_{E,\tilde \ttS} =\Res_{E/\Q}\GG_m$; see Subsection~\ref{S:CM extension} for the meaning of subscript $\tilde \ttS$.
We put $\widetilde G_{\tilde \ttS} = G_\ttS \times T_{E, \tilde \ttS}$ and $G''_{\tilde \ttS} = G_\ttS \times_Z T_{E,\tilde \ttS}$, which is the quotient of $\widetilde G_{\tilde \ttS}$ by the subgroup $Z = \Res_{F/\Q}\GG_m$ embedded as $z \mapsto (z, z^{-1})$. et $G'_{\tilde \ttS}$ denote the subgroup of $G''_{\tilde \ttS}$ consisting of elements $(g, e)$ such that $\Nm_{B_\ttS/F}(g)\cdot \Nm_{E/F}(e) \in \GG_m$.

We put $\ttS_{\infty}=\Sigma_{\infty}\cap \ttS$.
%, and choose a partition $\ttS_{\infty}=\ttS_{\infty, +}\coprod \ttS_{\infty, -}$. This auxiliary partition will be used to determine the signatures of the unitary Shimura variety at archimidean places.
For each place $\gothp\in \Sigma_p$, we set $\ttS_{\infty/\gothp}=\Sigma_{\infty/\gothp}\cap \ttS$.

\section{Basics of Shimura Varieties}
\label{Section:Sh Var}
We first collect some basic facts on integral canonical models of Shimura varieties.
Our main references are \cite{deligne1, deligne2,  milne-book,kisin}. (Our convention follows \cite{milne-book,kisin}.)   We focus on how to transfer integral canonical models of Shimura varieties from one group to another group.
This is  well known to the experts. We include the discussion here for completeness.
One novelty of this section, however, is that we give an appropriate definition of ``canonical model" for certain discrete Shimura varieties, so that the construction holds uniformly for both usual Shimura varieties and these zero-dimensional ones.
This will be important for later applications to transferring  the description of the Goren-Oort strata between Shimura varieties for different groups.

\begin{notation}
\label{N:condition on G}

Fix a prime number $p$.  Fix an isomorphism $\iota: \CC \xrightarrow{\cong }\overline \QQ_p$.
We use $\overline \QQ$ to denote the algebraic closure of $\QQ$ inside
$\CC$ (which is then identified with the algebraic closure of $\QQ$ inside $\overline \QQ_p$ via $\iota$).

In this section, let $G$ be a connected reductive group over $\QQ$.
We use $G(\RR)^+$ to denote the neutral connected component of $G(\RR)$.
We put $G(\QQ)^+ = G(\RR)^+ \cap G(\QQ)$.
We use $G^\ad$ to denote the adjoint group and $G^\der$ its derived subgroup.
We use $G(\RR)_+$ to denote the preimage of $G^\ad(\RR)^+$ under the natural homomorphism $G(\RR) \to G^\ad(\RR)$. We put $G(\QQ)_+ = G(\RR)_+ \cap G(\QQ)$.

For $S$ a torus over $\Qp$, let $S(\Zp)$ denote the maximal open compact subgroup of $S(\Qp)$.

%\subsection{Representation of  Deligne torus}

\end{notation}

\subsection{Shimura varieties over $\CC$}
\label{A:Shimura varieties}
Put $\SSS =\Res_{\CC/\RR}\GG_m$.
%The character group $\Hom(\SSS_\CC, \GG_{m, \CC})$ is a free abelian group with generators $z$ and $\bar z$ such that the compositions
%\[\CC^\times \cong \SSS(\RR) \hookrightarrow \SSS(\CC) \xrightarrow{z, \bar z} \GG_{m,\CC}\]
%is given by the identity map and the complex conjugation, respectively.
For a real vector space $V$, a \emph{Deligne homomorphism} $h: \SSS \to \GL(V)$ induces a direct sum decomposition $V_\CC = \oplus_{a, b \in \ZZ} V^{a,b}$ such that $z \in \SSS(\RR) \cong \CC^\times$ acts on $V^{a,b}$ via the character $z^{-a} \bar z^{-b}$.
Let $r$ denote the $\CC$-homomorphism $\GG_{m, \CC} \to \SSS_\CC$ such that $z^{-a} \bar z^{-b} \circ r = (x \mapsto x^{-a})$.

A \emph{Shimura datum} is a pair $(G, X)$ consisting of a connected reductive  group $G$ over $\QQ$ and a $G(\RR)$-conjugacy class $X$ of homomorphisms $h: \SSS \to G_\RR$ satisfying the following conditions:
\begin{itemize}
\item[(SV1)]
for $h \in X$, only characters $z/\bar z, 1, \bar z / z$ occur in the representation of $\SSS(\RR) \cong \CC^\times$ on $\Lie(G^\ad)_\CC$ via $\mathrm{Ad} \circ h$;
\item[(SV2)]
for $h \in X$,
$\mathrm{Ad}(h(i))$ is a Cartan involution on $G^\ad_\RR$; and
\item[(SV3)]
$G^\ad$ has no $\QQ$-factor $H$ such that $H(\RR)$ is compact.
\end{itemize}
The $G(\RR)$-conjugacy class $X$ of $h$
admits the structure of a complex manifold.
Let $X^+$ denote a fixed connected component of $X$.

A pair $(G,X)$ satisfying only (SV1) (SV2) and the following (SV3)' is called a \emph{weak Shimura datum}. 
\begin{itemize}
\item[(SV3)']
$G^\ad(\RR)$ is compact (and hence connected by \cite[p.277]{borel}; this forces the image of $h$ to land in the center $Z_\RR$ of $G_\RR$).
\end{itemize}
%In this case, $X$ consists of a single point.

For an open compact subgroup $K \subseteq G(\AAA^\infty)$, we define the \emph{Shimura variety} for $(G, X)$ with level $K$ to be the quasi-projective variety $\Sh_K(G,X)_\CC$, whose $\CC$-points are
\[
\Sh_K(G, X)(\CC) := G(\QQ) \backslash
X \times G(\AAA^\infty) / K \cong
G(\QQ)_+ \backslash X^+ \times G(\AAA^\infty) / K.
\]
When $(G,X)$ is a weak Shimura datum, $\Sh_K(G,X)_\CC$ is just a finite set of points.

\subsection{Reflex field}
Let $(G,X)$ be a (weak) Shimura data.
The \emph{reflex field}, denoted by $E= E(G, X)$, is the field of definition of the conjugacy class of the composition $h \circ r: \GG_{m, \CC} \to \SSS_\CC \to G_\CC$.
It is a subfield of $\CC$, finite over $\QQ$.
We refer to   \cite{deligne1} for the definition of the canonical model $\Sh_K(G,X)$ of $\Sh_K(G,X)_\CC$ over this reflex field $E$.
We assume from  now on, all (weak) Shimura varieties we consider in this section admit canonical models.
(In fact, {\it loc. cit.} excludes the case when $(G,X)$ is a weak Shimura datum. We will give the meaning of the canonical model in this case in Subsection~\ref{S:integral model weak Shimura datum} later.)

We will always assume that $K$ is the product $K^p K_p$ of an open compact subgroup $K^p$ of $G(\AAA^{\infty, p})$ and an open compact subgroup $K_p$ of $G(\Qp)$.
Taking the inverse limit over the open compact subgroups $K^p$, we have $\Sh_{K_p}(G, X) := \varprojlim_{K^p} \Sh_{K^pK_p}(G, X)$. This is actually a scheme locally of finite type over $E$ carrying  a natural (right) action of $G(\AAA^{\infty,p})$.

\subsection{Extension property}
\label{S:extension property}
Let $\calO$ be the ring of integers in a finite extension of $\Q_p$.  A  scheme  $X$ over $\calO$
is said to have the \emph{extension property} over $\calO$ if for
any smooth $\calO$-scheme $S$, a map $S \otimes \mathrm{Frac}(\calO) \to X$ extends to $S$.  (Such an extension is automatically unique if it exists by the normality of $S$.)  Note that this condition is weaker than the one given in \cite[2.3.7]{kisin} but is enough to ensure the uniqueness.
%We remark that if $X$ satisfies the extension property, then any finite \'etale over $Y$ of $X$ satisfies the extension property.

 The chosen isomorphism $\CC \cong \overline \QQ_p$ identifies $E$ as a subfield of $\overline \QQ_p$. Let $E_\wp$ denote the $p$-adic completion of $E$, $\calO_\wp$ its valuation ring with $\FF_\wp$ as the residue field.  Let $E_\wp^\ur$ be the maximal unramified extension of $E_\wp$ and $\calO_\wp^\ur$ its valuation ring.

An \emph{integral canonical  model} $\Sh_{K_p}(G, X)_{\calO_\wp}$ of $\Sh_{K_p}(G,X)$ over $\calO_{\wp}$ is  an $\cO_{\wp}$-scheme  $\Sh_{K_p}(G,X)_{\calO_\wp}$, which is an inverse limit of smooth $\cO_{\wp}$-schemes $\Sh_{K_{p}K^p}(G,X)_{\calO_{\wp}}$ with finite \'etale transition maps as $K^p$ varies, such that
\begin{itemize}
\item there is an isomorphism $\Sh_K(G,X)_{\calO_\wp} \otimes_{\calO_\wp} E_\wp \cong \Sh_K(G,X) \otimes_{E} E_\wp$ for each $K$ compatible with transition maps as $K$ varies, and 
\item $\Sh_{K_p}(G,X)_{\calO_\wp}=\varprojlim_{K^p}\Sh_{K^pK_p}(G,X)_{\calO_{\wp}}$ satisfies the extension property.
\end{itemize}

\vspace{10pt}

Existence of integral canonical models of Shimura varieties of abelian type with hyperspecial level structure was proved by Kisin \cite{kisin}.
Unfortunately, our application requires, in some special cases, certain non-hyperspecial level structures, as well as certain non-quasi-split groups at $p$.
We have to establish the integral canonical models in two steps: we first prove the existence for some group $G'$ with the same derived and adjoint groups as $G$ (as is done in Section~\ref{Section:Integral-model}); we then reproduce a  variant of an argument of Deligne to show that the integral canonical model for the Shimura variety for $G'$ gives that of $G$.  The second step is well known at least for regular Shimura varieties when $K_p$ is hyperspecial (\cite{kisin}). Our limited contribution here is to include some non-hyperspecial case and to cover the case of discrete Shimura varieties, in a uniform way.

\begin{hypo}
\label{H:hypo on G}
Let $(G,X)$ be a (weak) Shimura datum.
From now on, we assume that the derived subgroup $G^\mathrm{der}$ is simply-connected, which will be the case when we apply the theory later.
Let $Z$ denote the center of $G$.
Let $\nu: G \twoheadrightarrow T$ denote the maximal abelian quotient of $G$.
We fix an open compact subgroup $K_p$ of $G(\Qp)$ such that $\nu(K_p) = T(\Zp)$ and $K_p \cap Z(\Qp) = Z(\Zp)$.
%We assume that $\Sh_{K_p}(G,X)$ admits a integral canonical model.
\end{hypo}

\subsection{Geometric connected components}
\label{A:geometric connected components}
We put $T(\RR)^\dagger = \mathrm{Im}(Z(\RR) \to T(\RR))$ and $T(\QQ)^\dagger = T(\RR)^\dagger \cap T(\QQ)$.  Put $T(\QQ)^{?, (p)} = T(\QQ)^? \cap T(\Zp)$ for $? = \emptyset$ or $\dagger$.
Let $Y$ denote the finite quotient $T(\RR) / T(\RR)^\dagger$,
which is isomorphic to $T(\QQ) / T(\QQ)^\dagger$ because $T(\QQ)$ is dense in $T(\RR)$.
The morphism $\nu: G \to T$ then induces a natural map
\begin{align}
\label{E:nu map}
&
\xymatrix@C=15pt{\nu\colon
G(\QQ)_+ \backslash
X^+ \times G(\AAA^\infty) / K
\ar[r]^-\nu &
T(\QQ)^\dagger \backslash
T(\AAA^\infty) / \nu(K) \cong
T(\QQ)^{\dagger, (p)} \backslash 
T(\AAA^{\infty, p}) / \nu(K^p)
}\\
\nonumber
&\qquad\qquad\qquad \qquad\qquad\qquad\qquad\qquad\qquad\cong T(\QQ)^{(p)}\backslash Y \times T(\AAA^{\infty, p}) / \nu (K^p).
\end{align}

If $(G, X)$ is a Shimura datum, this map induces an isomorphism \cite[Theorem~5.17]{milne-book} on the set of geometric connected components $\pi_0\big(
\Sh_K(G, X)_{\overline \QQ} \big)\cong
T(\QQ)^{(p)}\backslash Y \times T(\AAA^{\infty, p}) / \nu (K^p)$.

Taking inverse limit gives a bijection
\[
\pi_0(\Sh_{K_p}(G, X)_{\overline \QQ}) \cong T(\QQ)^{\dagger, (p), \cl} \backslash T(\AAA^{\infty,p})   \cong T(\QQ)^{(p), \cl} \backslash Y \times T(\AAA^{\infty,p}),
\]
Here and later, the superscript $\cl$ means taking closure in $T(\AAA^{\infty,p})$ (or in appropriate topological groups).
%This is a bijection if $(G,X)$ is a Shimura datum.

\subsection{Reciprocity law}
\label{A:reciprocity law}
Let $(G,X)$ be a (weak) Shimura datum.
The composite
\[
\xymatrix{
\nu hr: \GG_{m, \CC} \ar[r]^-{r} & \SSS_\CC \ar[r]^-{h} &
G_\CC \ar[r]^-{\nu} & T_\CC
}
\]
does not depend on the choice of $h$ (in the conjugacy class) and is defined over the reflex field $E$.
The \emph{Shimura reciprocity map} is given by
\[
\Rec(G, X):\
\Res_{E/\QQ}(\GG_m) \xrightarrow{\Res_{E/\QQ}(\nu hr)} T_E \xrightarrow{N_{E/\QQ}} T.
\]

We normalize the Artin reciprocity map $\Art_E: \AAA^\times_E / (E^{\times}E_\RR^{\times, +})^\cl \xrightarrow{\cong} \Gal_E^\ab$ so that the local parameter at a finite place $\gothl$ is mapped to a geometric Frobenius at $\gothl$, where $E_\RR^{\times, +}$ is the identity connected component of $E_\RR^\times$.
We denote the unramified Artin map at $\wp$ by $\Art_\wp: E_\wp^\times / \calO_\wp^\times \to \Gal_{E_\wp}^{\ab, \mathrm{ur}}$ (again normalized so that a uniformizer is mapped to the geometric Frobenius).

The morphism $\Rec(G, X)$ induces a natural homomorphism
\[
\xymatrix@C=10pt{\gothRec=
\gothRec(G, X): \
\Gal_E^{\ab} &&
\ar[ll]_-{\Art_E}^-\cong
(E^\times E_\RR^{\times, +})^\cl \backslash \AAA^\times_E\ar[rrr]^-{\Rec(G,X)} &&& T(\QQ)^\cl \backslash Y \times T(\AAA^{\infty}).
}
\]
When $(G,X)$ is a Shimura datum, the Shimura reciprocity law \cite[Section~13]{milne-book} says that the action of $\sigma \in \Gal_E$ on $\pi_0(\Sh_{K_p}(G,X)_{\overline \QQ}) \cong T(\QQ)^\cl \backslash Y \times T(\AAA^\infty) / T(\Zp)$ is given by multiplication by $\gothRec(G,X)(\sigma)$.  As a corollary, $\pi_0(\Sh_{K_p}(G,X)_{\overline \QQ}) = \pi_0(\Sh_{K_p}(G,X)_{E_\wp^\ur})$, i.e. the geometric connected components are seen over an unramified extension of $E_\wp$.

The action of $\Gal_{E_\wp}^{\ab, \ur} = \Gal_{\FF_\wp}$ on the geometric connected component is then given by multiplication by the image of the Galois group element under the following map:
\begin{align}
\nonumber
\gothRec_\wp = \gothRec_\wp(G, X)\colon&
\Gal_{\FF_\wp}
\xrightarrow[\cong]{\Art_\wp}
\widehat{E_\wp^\times / \calO_\wp^\times}\longrightarrow \\
\label{E:reciprocity-at-p} &(E^\times E_\RR^{\times, +})^\cl \backslash \AAA^\times_E / \calO_\wp^\times \xrightarrow{\Rec(G,X)} T(\QQ)^{(p), \cl} \backslash Y \times T(\AAA^{\infty,p}),
\end{align}
where $\widehat{E_\wp^\times / \calO_\wp^\times}$ denotes the profinite completion of $E_\wp^\times / \calO_\wp^\times$.

\subsection{Integral canonical model for weak Shimura datum}
\label{S:integral model weak Shimura datum}
When $(G,X)$ is a weak Shimura datum, the associated Shimura variety is, geometrically, a finite set of points. We define its canonical model by specifying the action of $\Gal_E$.
The key observation is that condition (SV3)' ensures that the morphism $\Rec(G,X)$ factors as
\[
\xymatrix@C=50pt{
\Res_{E/\QQ}(\GG_m) \ar[r]^-{\Res_{E/\QQ}(hr)} \ar@{-->}[dr]_{\Rec_Z(G,X)}
&
 Z_E \ar[r]^{\Res_{E/\QQ}(\nu)} \ar[d]^{N_{E/\QQ}}
& T_E \ar[d]^{N_{E/\QQ}}
\\
& Z \ar[r]^{\nu} & T.
}
\]
We consider the natural homomorphism
\[
\xymatrix@C=10pt{\gothRec_Z: \
\Gal_E^{\ab} &&
\ar[ll]_-{\Art_E}^-\cong
(E^\times E_\RR^{\times, +})^\cl \backslash \AAA^\times_E\ar[rrr]^-{\Rec_Z(G,X)} &&& Z(\QQ)^\cl Z(\RR) \backslash Z(\AAA) \cong Z(\QQ)^\cl \backslash Z(\AAA^{\infty}).
}
\]
We define the \emph{canonical model} $\Sh_K(G,X)$ to be the (pro-)$E$-scheme whose base change to $\CC$ is isomorphic to $\Sh_K(G,X)_\CC$, such that every $\sigma \in \Gal_E$ acts on its $\overline \QQ$-points by multiplication by $\gothRec_Z(\sigma)$.
In comparison to Subsection~\ref{A:reciprocity law}, we have $\nu(\sigma(x)) = \gothRec(G,X)(\sigma)\cdot \nu( x)$ for any $x \in \Sh_K(G,X)(\overline \QQ)$.

Since $\Sh_K(G,X)$ is just a finite union of spectra of some finite extensions of $E$, it naturally admits an integral canonical model over $\calO_\wp$ by taking the corresponding valuation rings. With the map $\gothRec_\wp$ as defined in \eqref{E:reciprocity-at-p}, we have $\nu(\sigma(x)) = \gothRec_\wp(G,X)(\sigma)\cdot \nu(x)$ for any closed point $x \in \Sh_K(G,X)_{\calO_\wp}$ and $\sigma \in \Gal_{\FF_\wp}$.

\begin{notation}
\label{N:hypo on G}

We put $K^\der_p = K_p \cap G^\der(\Qp)$. Let $K^\ad_p$ denote the image of $K_p$ in $G^\ad(\Qp)$.
Set $G^?(\QQ)^{(p)} = G^?(\QQ) \cap K^?_p$ for $? = \emptyset, \ad$ and $\der$; they are the subgroups of $p$-integral elements.  Put $G^\ad(\QQ)^{+, (p)} = G^\ad(\RR)^+ \cap G^\ad(\QQ)^{(p)}$ and $G^?(\QQ)_+^{(p)} = G^?(\RR)_+ \cap G^?(\QQ)^{(p)}$ for $? = \emptyset$ or $\der$.

\end{notation}

\subsection{A group theoretic construction}
Before proceeding, we recall a pure group theoretic construction.
See \cite[\S~2.0.1]{deligne2} for more details.

Let $H$ be a group equipped with an action $r$ of a group $\Delta$, and $\Gamma \subset H$ a $\Delta$-stable subgroup.  Suppose given a $\Delta$-equivariant map $\varphi: \Gamma \to \Delta$, where $\Delta$ acts on itself by inner automorphisms, and suppose that for $\gamma \in \Gamma$, $\varphi(\gamma)$ acts on $H$ as inner conjugation by $\gamma$.

Given the data above, we can first define the semi-product $H \rtimes \Delta$ using the action $r$.
The conditions above imply that the natural map $\gamma \mapsto (\gamma, \varphi(\gamma)^{-1})$ embeds $\Gamma$ as a normal subgroup of $H \rtimes \Delta$.
We define the \emph{star extension} $H \ast_\Gamma \Delta$ to be the quotient of $H \rtimes \Delta$ by this subgroup.

Two typical examples we will encounter later are
\[
G^\der(\AAA^{\infty, p}) \ast _{G^\der(\QQ)^{(p)}} G(\QQ)^{(p)} \cong  G^\der(\AAA^{\infty, p}) \cdot G(\QQ)^{(p)} \quad \textrm{and} \quad G^\der(\AAA^{\infty,p}) \ast _{G^\der(\QQ)^{(p)}} G^\ad(\QQ)^{(p)}.
\]

\subsection{The connected components of the integral model}
\label{A:connected integral model}
Let $(G,X)$ be a (weak) Shimura datum.
Suppose that there exists an integral canonical model $\Sh_{K_p}(G,X)_{\calO_\wp}$.
For $K^p$ an open compact subgroup of $G(\AAA^{\infty, p})$, let $\Sh_{K^pK_p}(G, X)^\circ_{\calO_\wp^\ur}$ denote 
the open and closed subscheme whose $\CC$-points consists of the preimage of $\{1\}$ under the $\nu$-map in \eqref{E:nu map}.  When $(G,X)$ is a Shimura datum, this gives a connected component of $\Sh_{K^pK_p}(G,X)_{\calO_\wp^\ur}$.
We put 
\begin{equation}
\label{E:connected components of Shimura varieties}
\Sh_{K_p}(G, X)^\circ_{\calO_\wp^\ur} = \varprojlim_{K^p} \Sh_{K^pK_p}(G,X)^\circ_{\calO_\wp^\ur} \quad \textrm{and} \quad \Sh_{K_p}(G, X)^\circ_{\overline \FF_\wp} = \Sh_{K_p}(G, X)^\circ_{\calO_\wp^\ur} \otimes_{\calO_\wp^\ur} \overline \FF_\wp.
\end{equation}
Note that the set of $\CC$-points of $\Sh_{K_p}(G, X)^\circ_{\calO_\wp^\ur}$ is nothing but $G^\der(\QQ)^{(p), \cl}_+ \backslash (X^+ \times G^\der(\AAA^{\infty, p}))$.
When $(G,X)$ is a Shimura datum, strong approximation shows that this is a projective limit of connected complex manifold.  In any case, this implies that $\Sh_{K_p}(G,X)^\circ_{\calO_\wp^\ur}$ depends only on $X$, the groups $G^\der$ and $G^\ad$ (as opposed to the group $G$), and the subgroups $K_p^\der$ and $K_p^\ad$ (as opposed to $K_p$).

We also point out that \eqref{E:nu map} gives rise to a natural map
\begin{equation}
\label{E:nu-map on Sh var}
\nu\colon
\pi_0(\Sh_{K_p}(G, X)_{\calO_\wp^\ur}) =
\pi_0(\Sh_{K_p}(G, X)_{\overline \FF_\wp}) =  \pi_0(\Sh_{K_p}(G,X)_{\overline \QQ})
\longrightarrow T(\QQ)^{(p), \cl} \backslash Y \times T(\AAA^{\infty, p}).
\end{equation}
By abuse of language, we call \eqref{E:connected components of Shimura varieties} the geometric connected components of the Shimura varieties, and the target of \eqref{E:nu-map on Sh var} the set of connected components (although this is not the case if $(G,X)$ is a weak Shimura datum).

The Shimura varieties
$\Sh_{K_p}(G,X)_{{\boldsymbol ?}}$ for ${\boldsymbol ?} = \calO_\wp^\ur$ and $\overline \FF_\wp$ admit the following actions.
\begin{enumerate}
\item
The natural right action of $G(\AAA^{\infty, p})$ on $\Sh_{K_p}(G,X)_{\overline \QQ}$ extends to a right action on $\Sh_{K_p}(G,X)_\textbf{?}$.
The subgroup $Z(\QQ)^{(p)} := Z(\QQ) \cap G(\Zp)$ acts trivially.
So the right multiplication action above factors through $ G(\AAA^{\infty, p})\big/ Z(\QQ)^{(p), \cl}$.
The induced action on the set of connected components is given by $\nu: G(\AAA^{\infty,p})\big/ Z(\QQ)^{(p),\cl}\to T(\QQ)^{\dagger,(p),\cl} \backslash T(\AAA^{\infty,p})$.

\item There is a right action $\rho$ of $G^\ad(\QQ)^{+,(p)}$ on $\Sh_{K_p}(G,X)_{\calO_\wp^\ur}$ such that the induced map on $\CC$-points is given by, for $g \in G^\ad(\QQ)^{+,(p)}$,
\[
\xymatrix@C=40pt@R=0pt{
\rho(g)\colon
G(\QQ)_+^\cl \backslash X^+ \times G(\AAA^\infty) / K_p
\ar[r] &
G(\QQ)_+^\cl \backslash X^+ \times G(\AAA^\infty)  /K_p
\\
 [x, a] \ar@{|->}[r]
& [g^{-1}x, \mathrm{int}_{g^{-1}}(a)].
}
\]
Here note that $K_p$ is stable under the conjugation action of $K^\ad_p$ and hence of $G^\ad(\QQ)^{+,(p)}$.
One extends the action $\rho(g)$ to the integral model and hence to the special fiber using the extension property.  Moreover, this action preserves the connected component $\Sh_{K_p}(G, X)^\circ_{\boldsymbol ?}$.

\item
For an element $g \in G(\QQ)_+^{(p)}$, the two actions above coincide.  Putting  them together, we have a right action of the group
\begin{equation}
\label{E:calG}
\calG := \big(G(\AAA^{\infty,p}) \big/ Z(\QQ)^{(p), \cl}\big) \ast_{G(\QQ)^{(p)}_+/Z(\QQ)^{(p)}} G^\ad(\QQ)^{+,(p)}
\end{equation}
on
$\Sh_{K_p}(G, X)^\circ_{\boldsymbol ?}$.
The induced action on the set of connected components is given by 
\[
\nu \ast \mathrm{triv}: \calG \twoheadrightarrow T(\QQ)^{\dagger,(p),\cl} \backslash T(\AAA^{\infty, p}),
\] i.e., $\nu$ on the first factor and trivial on the second factor.

\item The Galois group $\Gal(E_\wp^\ur/E_\wp)$ acts on $\Sh_{K_p}(G)_{\boldsymbol ?}$, according to \eqref{E:reciprocity-at-p} (and Subsection~\ref{S:integral model weak Shimura datum}).

\end{enumerate}

Let $\calE_{G, \wp}$ denote the subgroup of $\calG \times \Gal(E_\wp^\ur / E_\wp)$ consisting of pairs $(g, \sigma)$ such that $(\nu \ast \mathrm{triv})(g)$ is equal to $\gothRec_\wp(\sigma)^{-1}$ in $T(\QQ)^{\dagger,(p),\cl} \backslash T(\AAA^{\infty,p})$.
Then by the discussion above, the group $\calE_{G,\wp}$ acts on the connected component $\Sh_{K_p}(G,X)^\circ_{\boldsymbol ?}$.
%This group admits a natural homomorphism to $\Gal(E_\gothq^\ur / E_\gothq)$.

%For a finite extension $E'$ of $E$ inside $\CC$, the identification $\iota: \CC \cong \overline \QQ_p$ defines a place $\gothq'$ of $E'$ above $\gothq$.
%Assume that $E'_{\gothq'}$ is unramified over $E_\gothq$.
%The group $\calE_{G, E', \gothq'}$ is the pullback of $\calE_{G, E, \gothq}$ along the natural map $\Gal(E'^\ur_{\gothq'} / E'_{\gothq'}) \to \Gal(E_\gothq^\ur / E_\gothq)$.

Conversely, knowing $\Sh_{K_p}(G,X)^\circ_{\boldsymbol ?}$ together with the action of $\calE_{G,\wp}$, we can recover the integral model $\Sh_{K_p}(G,X)_{\calO_\wp}$ or its special fiber $\Sh_{K_p}(G,X)_{\FF_\wp}$ of the Shimura variety as follows.
We consider the (pro-)scheme
$\Sh_{K_p}(G,X)^\circ_{\calO_\wp^\ur} \times_{\calE_{G,\wp}} \big(\calG \times \Gal(E_\wp^\ur / E_\wp) \big)$.
Since this is a projective limit of \emph{quasi-projective} varieties, by Galois descent,
it is the base change of a projective system of varieties
$\Sh_{K_p}(G,X)_{\calO_\wp}$ from  $\calO_\wp$ to $\calO_\wp^\ur$.
The same argument applies to the special fiber.

In general, for a finite unramified extension $\tilde E_{\tilde \wp}$ of $E_\wp$, we put $\calE_{G, \tilde E_{\tilde \wp}}$ to be the subgroup of $\calE_{G, \wp}$ consisting of elements whose second coordinate lives in $\Gal(E_\wp^\ur/\tilde E_{\tilde \wp})$.  Knowing the action of $\calE_{G, \tilde E_{\tilde \wp}}$ on $\Sh_{K_p}(G,X)^\circ_{\calO_\wp^\ur}$ or $\Sh_{K_p}(G,X)^\circ_{\overline \FF_\wp}$ allows one to descend the integral model to $\Sh_{K_p}(G,X)_{\calO_{\tilde E_{\tilde \wp}}}$.

\subsection{Transferring mathematical objects}
\label{S:transfer math obj}
One can slightly generalize the discussion above to $\calE_{G, \tilde E_{\tilde \wp}}$-equivariant mathematical objects over the Shimura variety.
More precisely, for $? = \FF_\wp, \calO_\wp$,  by a \emph{mathematical object} $\calP$ over $\Sh_{K^p}(G,X)_?$, we mean,
for each sufficiently small open compact subgroups $K^p$ of $G(\AAA^{\infty,p})$, we have a (pro-)scheme or a vector bundle (with a section) $\calP_{K^p}$ over $\Sh_{K_pK^p}(G,X)_{\boldsymbol ?}$, such that, for any subgroup $K_1^p \subseteq K_2^p$, $\calP_{K_1^p}$ is the base change of $\calP_{K_2^p}$ along the natural morphism $\Sh_{K_pK_1^p}(G,X)_{\boldsymbol ?} \to \Sh_{K_pK_2^p}(G,X)_{\boldsymbol ?}$.
We say $\calP$ is \emph{$\calG \times \Gal(E_\wp^\ur/\tilde E_{\tilde \wp})$-equivariant} if $\calP$ carries an action of   $\calG \times \Gal(E_\wp^\ur/\tilde E_{\tilde \wp})$ that is compatible with the actions on the Shimura varieties.

Similarly, a \emph{mathematical object} $\calP^\circ$ over $\Sh_{K^p}(G, X)^\circ_{?^\ur}$ is a (pro-)scheme or a vector bundle (with a section) as above, over the connected Shimura variety $\Sh_{K^p}(G, X)^\circ_{?^\ur}$, viewed as a pro-scheme.  It is called $\calE_{G, \tilde E_{\tilde \wp}}$-equivariant, if it carries an action of the group compatible with the natural 
group action on the base Shimura variety.

Similar to the discussion above, we have the following.
\begin{cor}
\label{C:mathematical objects equivalence}
There is a natural equivalence of categories between the category of $\calG \times \Gal(E^\ur_\wp / \tilde E_{\tilde \wp})$-equivariant mathematical objects $\calP$ over the tower of Shimura varieties $\Sh_{K_pK^p}(G,X)_?$, and the category of mathematical objects $\calP^\circ$ over $\Sh_{K^p}(G,X)^\circ_{?^\ur}$, equivariant for the action of $\calE_{G, \tilde E_{\tilde \wp}}$.
\end{cor}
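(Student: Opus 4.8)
The plan is to reduce the statement to the reconstruction principle already established in Subsection~\ref{A:connected integral model}: namely, that the tower $\Sh_{K_pK^p}(G,X)_{?}$ is recovered from the connected component $\Sh_{K^p}(G,X)^\circ_{?^{\ur}}$ together with its $\calE_{G,\tilde E_{\tilde\wp}}$-action by the induction-type construction $\Sh_{K_p}(G,X)^\circ_{\calO_\wp^\ur}\times_{\calE_{G,\wp}}\bigl(\calG\times\Gal(E_\wp^\ur/\tilde E_{\tilde\wp})\bigr)$, followed by Galois descent. I would first make the two functors explicit. In one direction, given a $\calG\times\Gal(E_\wp^\ur/\tilde E_{\tilde\wp})$-equivariant object $\calP=(\calP_{K^p})$ over the tower, one restricts each $\calP_{K^p}$ along the open and closed immersion $\Sh_{K^pK_p}(G,X)^\circ_{\calO_\wp^\ur}\hookrightarrow\Sh_{K^pK_p}(G,X)_{\calO_\wp^\ur}$ (the preimage of $\{1\}$ under the $\nu$-map of \eqref{E:nu map}) and passes to the limit over $K^p$; the residual structure is exactly an action of the stabilizer $\calE_{G,\tilde E_{\tilde\wp}}$ of this component inside $\calG\times\Gal(E_\wp^\ur/\tilde E_{\tilde\wp})$, because, as recorded in Subsection~\ref{A:connected integral model}, $\calG$ acts transitively on the set of connected components through $\nu\ast\mathrm{triv}$ and the component indexed by $1$ has stabilizer $\calE_{G,\tilde E_{\tilde\wp}}$. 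This yields $\calP^\circ$. In the other direction, given $\calP^\circ$ over $\Sh_{K^p}(G,X)^\circ_{?^{\ur}}$ with its $\calE_{G,\tilde E_{\tilde\wp}}$-action, one forms the quotient (pro-)scheme or vector bundle $\calP^\circ\times_{\calE_{G,\wp}}\bigl(\calG\times\Gal(E_\wp^\ur/\tilde E_{\tilde\wp})\bigr)$ lying over the correspondingly induced scheme $\Sh_{K_p}(G,X)^\circ_{\calO_\wp^\ur}\times_{\calE_{G,\wp}}(\calG\times\Gal)$; since everything in sight is a projective limit of quasi-projective varieties with the Galois action, Galois descent produces a $\calG\times\Gal(E_\wp^\ur/\tilde E_{\tilde\wp})$-equivariant object over the tower $\Sh_{K_pK^p}(G,X)_{?}$.

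Next I would check that these two functors are quasi-inverse. The composite in one order sends $\calP$ to the object obtained by restricting to the identity component, inducing back up, and descending; the identification $\Sh_{K_p}(G,X)_{?}\cong\Sh_{K_p}(G,X)^\circ_{?^{\ur}}\times_{\calE_{G,\wp}}(\calG\times\Gal)$ of Subsection~\ref{A:connected integral model} and the compatibility of $\calP$ with the $\calG$-action give a canonical isomorphism back to $\calP$; here one uses that $\calP$ being a mathematical object means it is compatible with the transition maps as $K^p$ shrinks, so the whole tower is determined by finite levels and the limit behaves well. The composite in the other order sends $\calP^\circ$ to its induction, then restricts back to the component over $1$; since the $\calE_{G,\wp}$-orbit of that component is precisely one copy in the induced object, one recovers $\calP^\circ$ with its action on the nose. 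All the needed naturality in $K^p$ and the compatibility of the actions with the transition morphisms are formal once one has set up the functors carefully.

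The main obstacle I expect is purely bookkeeping rather than conceptual: one must verify that forming quotients by the (generally infinite, profinite-by-discrete) group $\calE_{G,\wp}$ and the induction along $\calG\times\Gal(E_\wp^\ur/\tilde E_{\tilde\wp})$ are well-behaved at the level of vector bundles with a section — i.e. that a vector bundle on a single connected component, equivariant for the stabilizer, really does glue to a vector bundle on the whole tower, and that a section glues to a section — and, dually, that the $\calG\times\Gal$-equivariant structure on $\calP$ is exactly the data of an $\calE_{G,\tilde E_{\tilde\wp}}$-equivariant structure on the restriction plus the transitivity of the $\calG$-action on $\pi_0$. This is where one invokes that each finite-level piece is quasi-projective so that Galois descent along $\calO_\wp\to\calO_\wp^{\ur}$ and $\tilde E_{\tilde\wp}$-descent apply, and that the transition maps in the tower are finite \'etale so that passing to the limit commutes with all the operations involved. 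Once this is granted, the proof is the formal argument sketched above, parallel to the reconstruction of $\Sh_{K_p}(G,X)_{\calO_\wp}$ itself; indeed, the statement for mathematical objects is proved by exactly the same descent, now carried along the object $\calP^\circ$ instead of the structure sheaf.
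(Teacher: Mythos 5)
Your proposal is correct and follows essentially the same approach as the paper: one direction restricts to the neutral connected component (after passing to the limit over $K^p$), and the other direction forms the contracted product $\calP^\circ \times_{\calE_{G,\tilde E_{\tilde\wp}}} (\calG \times \Gal(E_\wp^\ur/\tilde E_{\tilde\wp}))$ and descends, exactly as in the two-line proof in the paper. The minor notational slip where you write $\times_{\calE_{G,\wp}}$ rather than $\times_{\calE_{G,\tilde E_{\tilde\wp}}}$ in the induction should be corrected, but it does not affect the argument.
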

\begin{proof}
As above, given $\calP$, we can recover $\calP^\circ$ by taking inverse limit with respect to the open compact subgroup $K^p$ and then restricting to the connected component $\Sh_{K_p}(G,X)^\circ_{?^\ur}$.  Conversely, we can recover $\calP$ from $\calP^\circ$ through the isomorphism $
\calP_{?^\ur} \cong \calP^\circ \times_{\calE_{G, \tilde E_{\tilde \wp}}} (\calG \times \Gal(E_\wp^\ur/\tilde E_{\tilde \wp}))$
and then use Galois descent if needed.
\end{proof}

\begin{remark}
\label{R:no Galois action}
If one does not consider the Galois action, Theorem~\ref{T:structure of calE_G,p} below implies that
\[
\Sh_{K_p}(G,X)_{\calO_\wp^\ur} \cong \Sh_{K_p}(G,X)_{\calO_\wp^\ur}^\circ \times_{\big(G^\der(\AAA^{\infty,p}) \ast_{G^\der(\QQ)^{(p)}_+} G^\ad(\QQ)^{+,(p)} \big)}\calG,
\]
and the same applies to the mathematical objects.
\end{remark}

\begin{lemma}
\label{L:nu(G(Q)->>T(Q)}
We have $\nu(G(\QQ)_+^{(p)}) = T(\QQ)^{\dagger, (p)}$.
\end{lemma}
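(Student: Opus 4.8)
The plan is to prove the equality $\nu(G(\QQ)_+^{(p)}) = T(\QQ)^{\dagger,(p)}$ by two inclusions, the easy one being $\subseteq$. For this, recall that $T(\QQ)^{\dagger,(p)} = T(\QQ)^\dagger \cap T(\Zp)$ and $T(\QQ)^\dagger = T(\RR)^\dagger \cap T(\QQ)$ with $T(\RR)^\dagger = \mathrm{Im}(Z(\RR) \to T(\RR))$. Given $g \in G(\QQ)_+^{(p)} = G(\RR)_+ \cap G(\QQ) \cap K_p$, we certainly have $\nu(g) \in T(\QQ)$ and, since $\nu(K_p) = T(\Zp)$ by Hypothesis~\ref{H:hypo on G}, we get $\nu(g) \in T(\Zp)$. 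It remains to check $\nu(g) \in T(\RR)^\dagger$: by definition of $G(\RR)_+$, the image of $g$ in $G^\ad(\RR)$ lies in $G^\ad(\RR)^+$; choosing a lift $z \in Z(\RR)$ of a suitable element (using that $G(\RR) = Z(\RR)\cdot G^\der(\RR)$-type considerations, or directly that the sequence $Z(\RR) \to G(\RR) \to G^\ad(\RR)$ has image the identity component's worth of $G^\ad(\RR)^+$ coming from $Z$), one shows $\nu(g)$ differs from something in $\mathrm{Im}(Z(\RR))$ by $\nu(G^\der(\RR))$, but $G^\der \subseteq \ker\nu$, so in fact $\nu(g) \in \mathrm{Im}(Z(\RR)\to T(\RR)) = T(\RR)^\dagger$. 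This gives $\subseteq$.

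For the reverse inclusion $T(\QQ)^{\dagger,(p)} \subseteq \nu(G(\QQ)_+^{(p)})$, the key input is that $G^\der$ is simply connected (Hypothesis~\ref{H:hypo on G}), so that $H^1(\QQ_v, G^\der)$ vanishes for all finite $v$ (Kneser) and the only obstruction lives at the archimedean place. Start with $t \in T(\QQ)^{\dagger,(p)}$. Since $t \in T(\RR)^\dagger = \mathrm{Im}(Z(\RR) \to T(\RR))$ and $Z \to T$ is surjective with kernel inside $G^\der$... more precisely use the exact sequence $1 \to G^\der \to G \xrightarrow{\nu} T \to 1$. Over $\QQ$, the element $t$ lifts to some $g_0 \in G(\QQ)$ provided $t$ maps to zero in $H^1(\QQ, G^\der)$; by the simply-connectedness and the Hasse principle for simply connected groups (vanishing of $H^1(\QQ, G^\der) \to \prod_v H^1(\QQ_v, G^\der)$, with all local terms trivial except possibly $v = \infty$), this reduces to a condition at $\infty$, which is exactly guaranteed by $t \in T(\RR)^\dagger$ (so that a lift in $G(\RR)$ exists, and moreover one landing in $G(\RR)_+$). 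Thus there is $g_0 \in G(\QQ)$ with $\nu(g_0) = t$; by adjusting $g_0$ by an element of $G^\der(\QQ)$ one arranges $g_0 \in G(\RR)_+$, hence $g_0 \in G(\QQ)_+$. Finally, to also achieve $p$-integrality: $\nu(g_0) = t \in T(\Zp)$, and since $\nu(K_p) = T(\Zp)$ there is $k \in K_p$ with $\nu(k) = t$, so $g_0 k^{-1} \in G^\der(\Qp)$; strong approximation for the simply connected group $G^\der$ (using (SV3), so $G^\der$ is not a product of compact-at-$\infty$ factors, hence strong approximation away from $\infty$ holds, or at least away from $p$) lets us modify $g_0$ by an element of $G^\der(\QQ)$ to land in $K_p$ at $p$ without changing $\nu$, while staying in $G(\RR)_+$. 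This yields the desired preimage in $G(\QQ)_+^{(p)}$.

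The main obstacle I expect is the archimedean bookkeeping in the reverse inclusion: one must be careful that "$t \in T(\RR)^\dagger$" is precisely the condition needed to kill the archimedean component of the relevant Galois cohomology class (or equivalently to guarantee a lift in the correct real connected component $G(\RR)_+$), and that the subsequent correction by $G^\der(\QQ)$ — to fix both the real sign and the $p$-integrality simultaneously — can be done without disturbing $\nu$. The tool here is strong approximation for the simply connected semisimple group $G^\der$, which holds because of (SV3); this is standard but needs to be invoked with the right set of places ($\{\infty\}$ or $\{\infty, p\}$). All local cohomology vanishing statements for $G^\der$ simply connected are classical (Kneser, Harder), so the only genuinely paper-specific point is matching the definitions of $T(\RR)^\dagger$, $G(\RR)_+$, and $K_p$ with $\nu(K_p) = T(\Zp)$.
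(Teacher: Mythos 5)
Your proposal uses the same key ingredients as the paper (simple connectedness of $G^\der$, vanishing of $H^1(\QQ_p,G^\der)$, strong approximation), but organizes them differently, and one step needs repair.

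The paper's proof starts from the known fact $\nu(G(\QQ)_+)=T(\QQ)^\dagger$ (cf.\ Subsection~\ref{A:geometric connected components}; this is the Deligne/Milne background result on geometric connected components) and then obtains the $p$-integral version by a short diagram chase, comparing the short exact sequence $1\to G^\der(\QQ)_+\to G(\QQ)_+\to T(\QQ)^\dagger\to 1$ with the sequence $1\to G^\der(\Qp)/K^\der_p\to G(\Qp)/K_p\to T(\Qp)/T(\Zp)\to 1$, using surjectivity of the left vertical map (strong approximation) and exactness of the bottom row (which uses $H^1(\Qp,G^\der)=0$ and Hypothesis~\ref{H:hypo on G}). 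You instead re-prove both inclusions directly, in effect re-deriving the cited background fact along the way. Your argument for the reverse inclusion $T(\QQ)^{\dagger,(p)}\subseteq\nu(G(\QQ)_+^{(p)})$ is essentially correct: lift $t$ to $G(\QQ)$ via the Hasse principle (the only local obstruction sits at $\infty$, and $t\in T(\RR)^\dagger=\mathrm{Im}(Z(\RR))$ gives a lift $z\in Z(\RR)\subseteq G(\RR)_+$, so the class $\delta(t)\in H^1(\RR,G^\der)$ vanishes), and then correct $p$-integrality by strong approximation in $G^\der$. (One small point: you say you ``adjust $g_0$ by an element of $G^\der(\QQ)$ to land in $G(\RR)_+$''; in fact no adjustment is needed, because any lift of $t$ in $G(\RR)$ differs from $z\in Z(\RR)$ by an element of $G^\der(\RR)$, and $G^\der(\RR)$ is connected hence contained in $G(\RR)_+$.)

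The weak step is the forward inclusion. To conclude $\nu(g)\in T(\RR)^\dagger$ for $g\in G(\RR)_+$, what you need is precisely the identity $G(\RR)_+=Z(\RR)\cdot G^\der(\RR)$, equivalently $\nu(G(\RR)_+)=T(\RR)^\dagger$. Your phrase ``using that $G(\RR)=Z(\RR)\cdot G^\der(\RR)$-type considerations'' is not correct as stated (e.g.\ for $G=\GL_2$ one has $Z(\RR)G^\der(\RR)=\GL_2(\RR)^+\subsetneq\GL_2(\RR)$), and the rest of the sentence does not yield a proof; it essentially restates what is to be shown. This is exactly the nontrivial fact the paper imports rather than derives. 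The gap is genuine but quickly fillable: since $G^\der$ is simply connected, $G^\der(\RR)$ is connected, so its image in $G^\ad(\RR)$ lies in $G^\ad(\RR)^+$; moreover $G^\der\to G^\ad$ is an isogeny, so $G^\der(\RR)\to G^\ad(\RR)$ is open, and a connected open subgroup of the identity component must be the whole identity component. Hence $G^\der(\RR)\twoheadrightarrow G^\ad(\RR)^+$, and for $g\in G(\RR)_+$ one finds $h\in G^\der(\RR)$ with the same image in $G^\ad(\RR)$, so $h^{-1}g\in Z(\RR)$ and $g\in G^\der(\RR)Z(\RR)$. Either insert this short argument, or cite the Deligne fact directly as the paper does; with that fix your proposal is a complete proof, different in organization from the paper's but relying on the same underlying inputs.
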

\begin{proof}
By Subsection~\ref{A:geometric connected components}, we have $\nu(G(\QQ)_+) = T(\QQ)^\dagger$.
The lemma follows from taking the kernels of the following morphism of exact sequences
\[
\xymatrix{
1 \ar[r] &
G^\der(\QQ)_+ \ar[r] \ar@{->>}[d] &
G(\QQ)_+ \ar[r] \ar[d] &
T(\QQ)^\dagger \ar[r] \ar[d] & 1\\
1 \ar[r] &
G^\der(\QQ_p)/K^\der_p \ar[r] &
G(\QQ_p)/K_p \ar[r] &
T(\Qp) / T(\Zp) \ar[r] & 1
.}
\]
Here, the left vertical arrow is surjective by the strong approximation theorem for the simply-connected group $G^\der(\QQ)$.
The bottom sequence is exact because the corresponding sequences are exact both for $\Qp$ (because $H^1(\Qp, G^\der)=0$) and for $\Zp$ (by Hypothesis~\ref{H:hypo on G}).
\end{proof}

The following structure theorem for $\calE_{G, \wp}$ is the key to transfer integral canonical models of Shimura varieties for one group to that for another group.

\begin{theorem}
\label{T:structure of calE_G,p}
For a finite unramified extension $\tilde E_{\tilde \wp}$ of $E_\wp$, we have a natural short exact sequence.
\begin{equation}
\label{E:structure of E_p}
1 \longrightarrow G^\der(\AAA^{\infty,p}) \ast_{G^\der(\QQ)^{(p)}_+} G^\ad(\QQ)^{+,(p)}
\longrightarrow \calE_{G, \tilde E_{\tilde \wp}} \longrightarrow \Gal(E_\wp^\ur / \tilde E_{\tilde \wp}) \longrightarrow 1.
\end{equation}
\end{theorem}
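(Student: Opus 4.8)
The plan is to construct the two arrows of \eqref{E:structure of E_p} directly and then verify injectivity, surjectivity and exactness at the middle separately. For the right arrow I take the restriction to $\calE_{G,\tilde E_{\tilde\wp}}$ of the second projection $\calG\times\Gal(E_\wp^\ur/E_\wp)\to\Gal(E_\wp^\ur/E_\wp)$. Since $\calE_{G,\tilde E_{\tilde\wp}}$ is by definition the fibre product of $\nu\ast\mathrm{triv}\colon\calG\to T(\QQ)^{\dagger,(p),\cl}\backslash T(\AAA^{\infty,p})$ with $\sigma\mapsto\gothRec_\wp(\sigma)^{-1}\colon\Gal(E_\wp^\ur/\tilde E_{\tilde\wp})\to T(\QQ)^{\dagger,(p),\cl}\backslash T(\AAA^{\infty,p})$, surjectivity of the right arrow is equivalent to surjectivity of $\nu\ast\mathrm{triv}$, which is immediate from surjectivity of $\nu\colon G(\AAA^{\infty,p})\to T(\AAA^{\infty,p})$; the latter holds because $H^1(\QQ_\ell,G^\der)=0$ for every finite $\ell$, $G^\der$ being simply connected. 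The same fibre-product description shows that the kernel of the right arrow is canonically $\ker(\nu\ast\mathrm{triv})\subset\calG$, so everything reduces to producing a natural isomorphism between $\ker(\nu\ast\mathrm{triv})$ and $G^\der(\AAA^{\infty,p})\ast_{G^\der(\QQ)^{(p)}_+}G^\ad(\QQ)^{+,(p)}$, which will be the left arrow.

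To compute $\ker(\nu\ast\mathrm{triv})$ I unwind the star extension $\calG=(H\rtimes\Delta)/\Gamma$ with $H=G(\AAA^{\infty,p})/Z(\QQ)^{(p),\cl}$, $\Delta=G^\ad(\QQ)^{+,(p)}$, $\Gamma=G(\QQ)^{(p)}_+/Z(\QQ)^{(p)}$; since $\nu\ast\mathrm{triv}$ is induced by $[h,\delta]\mapsto\bar\nu(h)$, its kernel is $(H'\rtimes\Delta)/\Gamma$, where $H'\subset H$ is the image of $\nu^{-1}\!\big(T(\QQ)^{\dagger,(p),\cl}\big)$. Using Lemma~\ref{L:nu(G(Q)->>T(Q)} ($\nu(G(\QQ)^{(p)}_+)=T(\QQ)^{\dagger,(p)}$) together with the openness of $\nu$ and the equality $\ker\nu=G^\der(\AAA^{\infty,p})$, one rewrites $\nu^{-1}(T(\QQ)^{\dagger,(p),\cl})$ as the closure of $G^\der(\AAA^{\infty,p})\cdot G(\QQ)^{(p)}_+$ inside $G(\AAA^{\infty,p})$; when $(G,X)$ is an honest Shimura datum, strong approximation for the simply connected group $G^\der$ away from $\{p,\infty\}$ (available by (SV3)) gives $\overline{G^\der(\QQ)^{(p)}_+}=G^\der(\AAA^{\infty,p})$ and lets one see that $(H'\rtimes\Delta)/\Gamma$ is generated by the images of $G^\der(\AAA^{\infty,p})$ and of $G^\ad(\QQ)^{+,(p)}$, with $G^\der(\QQ)^{(p)}_+$ as the amalgamating subgroup; comparing presentations yields the left arrow. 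Injectivity of the left arrow is the delicate point here: one must show that an element of $G(\QQ)^{(p)}_+$ whose image in $H$ lands in the image of $G^\der(\AAA^{\infty,p})$ already lies, modulo the finite centre $Z\cap G^\der$, in $G^\der(\QQ)^{(p)}_+$, which one extracts from the morphism of exact sequences used in the proof of Lemma~\ref{L:nu(G(Q)->>T(Q)} and the fact that $G^\der$ is closed in $G$. For a weak Shimura datum strong approximation is unavailable, but then $\Sh^\circ$ is a finite set, $H'$ contains no closure beyond the image of $G^\der(\AAA^{\infty,p})$ already visible, and the same bookkeeping applies directly.

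The main obstacle is the matching of the various topological closures in this reduction: one wants the \emph{bare} star extension $G^\der(\AAA^{\infty,p})\ast_{G^\der(\QQ)^{(p)}_+}G^\ad(\QQ)^{+,(p)}$ to map \emph{onto} $\ker(\nu\ast\mathrm{triv})$ — not merely onto a dense subgroup — and this, together with the interference of the finite centre $Z\cap G^\der$ and the need to treat non-hyperspecial level structures, mildly ramified groups, and weak (discrete) Shimura data uniformly, is precisely the point where the argument must go beyond the hyperspecial, regular-Shimura-variety setting of \cite{kisin, deligne2}. It is for this reason that I would carry out the whole reduction in the fibre-product language set up above, so that each of these cases costs only a short extra verification. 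Once the isomorphism of middle terms is in hand, exactness of \eqref{E:structure of E_p} follows at once from the surjectivity and kernel computations of the first paragraph.
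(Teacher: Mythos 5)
Your skeleton matches the paper's: interpret $\calE_{G,\tilde E_{\tilde\wp}}$ as a fibre product, take the right arrow to be the second projection, identify its kernel with $\ker\big(\nu\ast\mathrm{triv}\colon\calG\to T(\QQ)^{\dagger,(p),\cl}\backslash T(\AAA^{\infty,p})\big)$, and reduce to identifying that kernel with the bare star extension. (You also make explicit the surjectivity of the right arrow via $H^1(\QQ_\ell,G^\der)=0$; the paper passes this over silently, so that is a useful observation.) Where you diverge — and where there is a genuine gap — is in the step that removes the topological closures from the expression
\[
\big(\,(G(\QQ)_+^{(p)} G^\der(\AAA^{\infty,p}))^\cl \big/ Z(\QQ)^{(p),\cl}\,\big) \ast_{G(\QQ)_+^{(p)}/Z(\QQ)^{(p)}} G^\ad(\QQ)^{+,(p)}.
\]
You invoke strong approximation for $G^\der$, which gives $\overline{G^\der(\QQ)^{(p)}_+}=G^\der(\AAA^{\infty,p})$ in the regular Shimura case. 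But this controls only $G^\der$. The quantity that actually needs to be controlled is the closure $Z(\QQ)^{(p),\cl}$ appearing in the \emph{denominator}, and more precisely whether the image of $T(\QQ)^{\dagger,(p),\cl}/T(\QQ)^{\dagger,(p)}$ is accounted for by $Z(\QQ)^{(p),\cl}/Z(\QQ)^{(p)}$. Strong approximation says nothing about the centre. The paper instead introduces the finite group $Z'=Z\cap G^\der$, builds the parallel four-term exact sequences with and without closures, and reduces everything to the single assertion — supplied by Deligne's lemma \cite[\S~2.0.10]{deligne2}, which applies because $\ker(Z\to T)$ is finite — that closure commutes with image for such isogenies: $\mathrm{Im}(Z(\QQ)^\cl\to T(\QQ)^\cl)\cong(\mathrm{Im}(Z(\QQ)\to T(\QQ)))^\cl$. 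This is the ingredient your argument is missing, and it is what makes the argument work uniformly.

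The second problem is the weak Shimura datum case. Under (SV3$'$) the group $G^\der(\RR)$ is compact, so strong approximation for $G^\der$ away from $\infty$ (or $\{p,\infty\}$) simply fails, and your appeal to it is unavailable. Your comment that ``$H'$ contains no closure beyond the image of $G^\der(\AAA^{\infty,p})$ already visible'' is an assertion, not an argument: in the weak case $G^\der(\QQ)^{(p)}_+$ need not be dense in $G^\der(\AAA^{\infty,p})$, and $T(\QQ)^{\dagger,(p)}$ is still generally a proper dense subgroup of $T(\QQ)^{\dagger,(p),\cl}$, so the closure phenomenon persists and must be dealt with. One of the explicit aims of this section of the paper is to treat the regular and the discrete (weak) cases by a single argument; using strong approximation in the regular case and an unspecified workaround in the weak case undermines precisely that uniformity. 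To fix the proposal, replace the strong-approximation step with the finiteness-of-$Z'$ diagram chase together with the Deligne lemma; with that, the rest of your reduction (the star-extension bookkeeping, the injectivity discussion via the fibre of $Z\cap G^\der$) goes through.
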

\begin{proof}
By the definition of $\calE_{G,\tilde E_{\tilde \wp}}$, it fits into the following short  exact sequence
\[
1 \longrightarrow \Ker\Big (\widetilde \calG \to T(\QQ)^{\dagger,(p), \cl} \big \backslash  T(\AAA^{\infty,p}) \Big)
\longrightarrow \calE_{G, \tilde E_{\tilde \wp}} \longrightarrow \Gal(E_\wp^\ur / \tilde E_{\tilde \wp}) \longrightarrow 1.
\]
By Lemma~\ref{L:nu(G(Q)->>T(Q)}, the kernel above is isomorphic to
\begin{equation}
\label{E:expression of kernel}
\big(\,
\big(G(\QQ)_+^{(p)}   G^\der(\AAA^{\infty,p})\big)^\cl 
\big/ Z(\QQ)^{(p), \cl}
\big) \ast_{G(\QQ)_+^{(p)} / Z(\QQ)^{(p)}} G^\ad(\QQ)^{+,(p)},
\end{equation}
where both closures are taken   inside $G(\AAA^{\infty, p})$.

We claim that we can remove the two completions.
Indeed, put $Z' = Z \cap G^\der$ and $Z'(\QQ)^{(p)} = Z'(\QQ) \cap Z(\QQ)^{(p)}$; the latter is a finite group. Consider the commutative diagram of exact sequences
\begin{tiny}
\[
\xymatrix@R=15pt@C=10pt{
1 \ar[r] & Z'(\QQ)^{(p)} \ar[r] \ar@{=}[d] & Z(\QQ)^{(p)} \times G^\der(\AAA^{\infty,p}) \ar[d] \ar[r] & G(\QQ)_+^{(p)} G^\der(\AAA^{\infty,p}) \ar[d] \ar[r] & T(\QQ)^{\dagger,(p)} \big/ \mathrm{Im}\big(Z(\QQ)^{(p)} \to T(\QQ)^{(p)}\big) \ar[r] \ar[d] & 1
\\
1 \ar[r] & Z'(\QQ)^{(p), \cl} \ar[r] &{Z(\QQ)^{(p), \cl}} \times G^\der(\AAA^{\infty,p}) \ar[r] & \big(G(\QQ)^{(p)}_+ G^\der(\AAA^{\infty,p})\big)^\cl \ar[r] & T(\QQ)^{\dagger, (p), \cl} \big/\mathrm{Im}\big({Z(\QQ)^{(p), \cl}} \to {T(\QQ)^{(p), \cl}}\big) \ar[r] & 1.
}
\]\end{tiny}
By diagram chasing, it suffices to prove that the right vertical arrow is an isomorphism.
Since the kernel of $Z \to T$ is finite, \cite[\S~2.0.10]{deligne2} implies that $\mathrm{Im}({Z(\QQ)^\cl} \to {T(\QQ)^\cl}) \cong (\mathrm{Im}(Z(\QQ) \to T(\QQ)))^\cl$ and the right vertical arrow is an isomorphism.

Now, the exact sequence \eqref{E:structure of E_p} follows from a series of tautological isomorphisms
\begin{align*}
&\big( G(\QQ)_+^{(p)} \cdot  G^\der(\AAA^{\infty,p}) / Z(\QQ)^{(p)} \big) \ast_{G(\QQ)^{(p)}_+ / Z(\QQ)^{(p)}} G^\ad(\QQ)^{+,(p)} \\ \cong\ & \Big[ \big(G^\der(\AAA^{\infty,p}) \ast_{G^\der(\QQ)_+^{(p)}} G(\QQ)_+^{(p)}) / Z(\QQ)^{(p)} \Big] \ast_{G(\QQ)_+^{(p)} / Z(\QQ)^{(p)}} G^\ad(\QQ)^{+,(p)} \\  \cong\ & \Big[ G^\der(\AAA^{\infty,p}) \ast_{G^\der(\QQ)_+^{(p)}} \big( G(\QQ)_+^{(p)} / Z(\QQ)^{(p)} \big) \Big] \ast_{G(\QQ)_+^{(p)} / Z(\QQ)^{(p)}} G^\ad(\QQ)^{+,(p)} \\ \cong \ & G^\der(\AAA^{\infty,p}) \ast_{G^\der(\QQ)_+^{(p)}} G^\ad(\QQ)^{+, (p)}.\tag*{\qedhere} 
\end{align*}
\end{proof}

\begin{cor}
\label{C:Sh(G)^circ_Zp independent of G}
Let $\varphi: G \to G'$ be a homomorphism of two reductive groups over $\QQ$ satisfying Hypothesis~\ref{H:hypo on G}, which induces isomorphisms between the derived and adjoint groups as well as isomorphisms $G^{?}(\Q)^{(p)}\cong G'^{?}(\QQ)^{(p)}$ for $?=\der,\ad$.
A $G^\ad(\RR)^+$-conjugacy class $X^+$ of homomorphisms $h: \SSS \to G_\RR$ induces a $G'^\ad(\RR)^+$-conjugacy class $X'^+$ of homomorphisms $h': \SSS \to G'_\RR$.
Put $X = G(\RR) \cdot X^+$ and $X' = G'(\RR) \cdot X'^+$.
Then, for any field $\tilde E_{\tilde \wp}$ containing both $E_\wp$ and $E'_{\wp'}$ and unramified over them, there exist a natural isomorphism of groups $\calE_{G, \tilde E_{\tilde \wp}} \xrightarrow{ \cong} \calE_{G', \tilde E_{\tilde \wp}}$ and
a natural isomorphism of geometric connected components of Shimura varieties
$\Sh_{K_p}(G, X)_{\tilde E_{\tilde \wp}^\ur}^\circ \cong \Sh_{K'_p}(G', X')_{\tilde E_{\tilde \wp}^\ur}^\circ$, equivariant for
the natural actions of the  groups $\calE_{G, \tilde E_{\tilde \wp}} \xrightarrow{ \cong} \calE_{G', \tilde E_{\tilde \wp}}$.

As a corollary, if the Shimura variety for one of $G$ or $G'$ admits an integral canonical model and both $E_\wp$ and $E'_{\wp'}$ are unramified extensions of $\QQ_p$, then the other Shimura variety admits an integral canonical model.

Moreover, when there are canonical integral models, we have an equivalence of categories between the category of $\calG \times \Gal(E_\wp^\ur/\tilde E_{\tilde \wp})$-equivariant mathematical objects $\calP$ over the tower of Shimura varieties $\Sh_{K_pK^p}(G,X)_?$ (for $? = \calO_{\tilde \wp}$ or $\FF_{\tilde \wp}$) and the  category of $\calG' \times \Gal(E_\wp^\ur/\tilde E_{\tilde \wp})$-equivariant mathematical objects $\calP'$ over the tower of Shimura varieties $\Sh_{K'_pK'^p}(G',X')_{?'}$ (for $?' = \calO_{\tilde \wp'}$ or $\FF_{\tilde \wp'}$).
\end{cor}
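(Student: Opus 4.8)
The plan is to derive Corollary~\ref{C:Sh(G)^circ_Zp independent of G} almost formally from the structural results already in place, namely Theorem~\ref{T:structure of calE_G,p} and Corollary~\ref{C:mathematical objects equivalence}. First I would observe that, by Subsection~\ref{A:connected integral model}, the connected Shimura variety $\Sh_{K_p}(G,X)^\circ_{\calO_\wp^\ur}$ (together with its $\CC$-points $G^\der(\QQ)^{(p),\cl}_+ \backslash (X^+ \times G^\der(\AAA^{\infty,p}))$) depends only on the datum $(X^+, G^\der, G^\ad, K_p^\der, K_p^\ad)$, all of which are preserved by the given homomorphism $G \to G'$. Hence there is a tautological identification $\Sh_{K_p}(G,X)^\circ_{?^\ur} \cong \Sh_{K'_p}(G',X')^\circ_{?^\ur}$ as schemes (resp.\ as pro-schemes with the $\calE$-action once we match the groups); for the integral version one uses that the connected component was defined inside an integral canonical model over $\calO_\wp^\ur$, and since $E_\wp, E'_{\wp'}$ are unramified over $\Qp$ both $\calO_\wp^\ur = \calO_{\wp'}^{\prime\ur} = \widehat{\ZZ_p^\ur}$ and there is no discrepancy in the base.

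Next I would construct the isomorphism $\calE_{G, \tilde E_{\tilde \wp}} \xrightarrow{\cong} \calE_{G', \tilde E_{\tilde \wp}}$. By Theorem~\ref{T:structure of calE_G,p}, each of these groups sits in a short exact sequence with kernel $G^\der(\AAA^{\infty,p}) \ast_{G^\der(\QQ)^{(p)}_+} G^\ad(\QQ)^{+,(p)}$ (resp.\ the primed analogue) and quotient $\Gal(E_\wp^\ur/\tilde E_{\tilde \wp})$. Since $G \to G'$ induces isomorphisms on derived and adjoint groups and their $p$-integral elements, the two kernels are canonically identified. The quotients are both $\Gal(E_\wp^\ur/\tilde E_{\tilde \wp})$ by the hypothesis that $\tilde E_{\tilde \wp}$ contains and is unramified over both reflex fields, so $\Gal(E_\wp^\ur/E_\wp) = \Gal(E_\wp^{\prime\ur}/E'_{\wp'})$ after identifying $E_\wp^\ur = \widehat{\Qp^\ur} = E_\wp^{\prime\ur}$. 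I would then check that the two extensions are compatible by comparing the defining conditions: an element $(g,\sigma) \in \calG \times \Gal$ lies in $\calE_{G,\tilde E_{\tilde\wp}}$ iff $(\nu \ast \mathrm{triv})(g) = \gothRec_\wp(\sigma)^{-1}$, and the point is that $\gothRec_\wp(G,X)$ and $\gothRec_\wp(G',X')$, projected to the adjoint/derived data, agree — indeed on the connected component the $\nu$-map is trivial by construction, so the relevant compatibility is automatic on the kernel, and one only needs that the reciprocity maps restrict to compatible maps into the common target $T(\QQ)^{\dagger,(p),\cl}\backslash T(\AAA^{\infty,p})$ versus $T'(\QQ)^{\dagger,(p),\cl}\backslash T'(\AAA^{\infty,p})$ — but in fact the cleanest route is to note that $\calE_{G,\tilde E_{\tilde\wp}}$ acts on $\Sh_{K_p}(G,X)^\circ_{?^\ur}$ and this action, together with the extension structure, is intrinsic to $(X^+, G^\der, G^\ad)$ plus the Galois data, so the identification of kernels and quotients automatically upgrades to an identification of the group extensions.

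With $\calE_{G,\tilde E_{\tilde \wp}} \cong \calE_{G',\tilde E_{\tilde \wp}}$ and the equivariant isomorphism of connected components established, the statement about integral canonical models follows: if the Shimura variety for, say, $G'$ admits an integral canonical model $\Sh_{K'_p}(G',X')_{\calO_{\wp'}'}$, then $\Sh_{K'_p}(G',X')^\circ_{\calO_{\wp'}^{\prime\ur}}$ carries an $\calE_{G',\tilde E_{\tilde \wp}}$-action, hence via the identification an $\calE_{G,\tilde E_{\tilde \wp}}$-action on $\Sh_{K_p}(G,X)^\circ_{\calO_\wp^\ur}$, and then the reconstruction recipe of Subsection~\ref{A:connected integral model} — forming $\Sh_{K_p}(G,X)^\circ_{\calO_\wp^\ur} \times_{\calE_{G,\wp}} (\calG \times \Gal(E_\wp^\ur/E_\wp))$ and applying Galois descent along the quasi-projective pieces — produces an integral canonical model over $\calO_\wp$; one verifies the extension property and the generic fiber identification by transport of structure from the $G'$-side, using that everything in sight is limits of quasi-projective schemes with finite \'etale transition maps. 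Finally, the equivalence of categories of equivariant mathematical objects is immediate from Corollary~\ref{C:mathematical objects equivalence} applied to both $G$ and $G'$ and composed through the common category of $\calE_{G,\tilde E_{\tilde \wp}} = \calE_{G',\tilde E_{\tilde \wp}}$-equivariant objects over the common connected Shimura variety.

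The main obstacle I anticipate is not the existence of the various identifications but the bookkeeping of compatibility of the Galois/reciprocity data: one must make sure that under the chosen isomorphism $\iota : \CC \cong \Qpb$ the completions $E_\wp$ and $E'_{\wp'}$ sit inside the common $\tilde E_{\tilde \wp}$ in a way compatible with the reflex-field comparison, and that the maps $\gothRec_\wp(G,X)$ and $\gothRec_\wp(G',X')$ induce the \emph{same} map after passing to the connected-component level — this last point is where (SV3)/(SV3)$'$ and the factorization of $\Rec$ through the relevant tori must be invoked carefully, and it is the only place where something could genuinely go wrong rather than just be tedious.
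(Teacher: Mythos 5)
Your proposal is correct and takes essentially the same route as the paper: derive the first part from Theorem~\ref{T:structure of calE_G,p} together with the discussion in Subsection~\ref{A:connected integral model}, obtain the integral model statement by the reconstruction recipe $\Sh_{K_p}(G,X)^\circ_{\calO_\wp^\ur} \times_{\calE_{G,\wp}} (\calG \times \Gal)$ followed by descent via the extension property, and deduce the final equivalence from Corollary~\ref{C:mathematical objects equivalence}. The one place you correctly flag as delicate — matching the two extension structures of $\calE_{G,\tilde E_{\tilde\wp}}$ and $\calE_{G',\tilde E_{\tilde\wp}}$, which have canonically identified kernels and quotients but a priori different extension data coming from the two reciprocity maps — is also elided by the paper; your suggested resolution (view both groups through their faithful action on the common connected Shimura variety, so the extension is intrinsic to $(X^+,G^\der,G^\ad,K_p^\der,K_p^\ad)$ plus the Galois datum) is exactly the Deligne-style argument the paper implicitly invokes by its citation to \cite[\S~2.0]{deligne2}, and is the right way to close the gap.
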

\begin{proof}
The first part follows from Theorem~\ref{T:structure of calE_G,p} and the discussion in Subsection~\ref{A:connected integral model}.
For the second part, the existence of integral canonical model over $\tilde E_{\tilde \wp}$ follows from the first part and the discussion at the end of Subsection~\ref{A:connected integral model}.  The extension property allows one to further descend the integral canonical model to $\calO_\wp$ (or $\calO_{\wp'}$).
The last part follows from Corollary~\ref{C:mathematical objects equivalence}.
\end{proof}

\section{Integral canonical models of quaternionic Shimura varieties}\label{Section:Integral-model}

Classically, the integral model for a quaternionic Shimura variety is defined by passing to a unitary Shimura variety, as is done in the curve case by  Carayol \cite{carayol}.
As pointed out earlier that we will encounter some groups which are not quasi-split at $p$, Kisin's general work \cite{kisin} unfortunately does not apply.
We have to work out a generalization of Carayol's  construction for completeness. This will also be useful later when discussing the construction of the Goren-Oort stratification.

We tailor the choice of the unitary group to our application of Helm's isogeny trick later. In particular, we will assume that certain places above $p$ to be inert in the CM extension.

\subsection{Quaternionic Shimura varieties}
\label{S:quaternionic-shimura-varieties}
Recall the notation from \ref{SS:notation-F}.
Let $\ttS$ be an even subset of places of $F$.
Put $\ttS_\infty = \ttS \cap \Sigma_\infty$ and $\ttS_p = \ttS\cap \Sigma_p$.
Let $B_\ttS$ be the quaternion algebra over $F$ ramified precisely at $\ttS$.
Let $G_{\ttS}$ denote the reductive group $\Res_{F/\QQ}(B^\times_\ttS)$.
Then $G_{\ttS,\RR}$ is isomorphic to
\[
\prod_{\tau \in \ttS_\infty} \HH^\times \times
 \prod_{\tau \in\Sigma_\infty -\ttS_\infty}
\GL_{2, \RR}.
\]
We define the {Deligne homomorphism} to be $h_\ttS: \SSS \to G_{\ttS,\RR}$, sending $z = x +\ii y$ to $(z_{G_\ttS}^\tau)_{\tau \in \Sigma_\infty}$,
where $z_{G_\ttS}^\tau = 1$ if $\tau \in \ttS_\infty$ and
$z_{G_\ttS}^\tau = \big(\begin{smallmatrix}
x&y\\ -y&x
\end{smallmatrix}\big)$ if
$\tau \in \Sigma_\infty -\ttS_\infty$.
Let $\gothH_\ttS$ denote the $G_\ttS(\RR)$-conjugacy class of the homomorphism $h_\ttS$; it is isomorphic to the product of $\#(\Sigma_\infty - \ttS_\infty)$ copies of $\gothh^\pm = \PP^1(\CC) - \PP^1(\RR)$.
We put $\gothH_\ttS^+ = (\gothh^+)^{\Sigma_\infty - \ttS_\infty}$, where $\gothh^+$ denotes the upper half plane.

We will consider the following type of open compact subgroups of $G_\ttS(\AAA^\infty)$: $K = K^pK_p$, where $K^p$ is an open compact subgroup of $B^\times_\ttS(\AAA_F^{\infty, p})$ and $K_p = \prod_{\gothp \in \Sigma_p} K_\gothp$ with $K_\gothp$ an open compact subgroup of $B_\ttS^\times(F_\gothp)$.
%We will consider the following type of open compact subgroups of $G_\ttS(\AAA^\infty)$: $K = K^pK_p = \prod_{\gothl \in \Sigma - \Sigma_\infty} K_\gothl$, where $K_\gothl$ for each non-archimedean place of $\gothl$ is an open compact subgroup of $B^\times_\ttS(F_\gothl)$, and $K_p = \prod_{\gothp \in \Sigma_p} K_\gothp$.

From this point onward, we write $\Sh_K(G)$ instead of $\Sh_K(G,X)$ for Shimura varieties when the choice of $X$ is clear.  Associated to the data above,
there is a Shimura variety $\Sh_K(G_\ttS)$ whose $\CC$-points are
\[
\Sh_K(G_\ttS)(\CC) = G_\ttS(\QQ) \backslash ( \gothH_\ttS \times  G_\ttS(\AAA^\infty))/ K.
\]
The \emph{reflex field} $F_\ttS$ is a subfield of $\CC$ characterized as follows:
an element $\sigma \in \Aut(\CC/\QQ)$ fixes $F_\ttS$ if and only if the subset $\ttS_\infty$ of $\Sigma_\infty $ is preserved under the action of $\sigma$ by post-composition.
Following Subsection~\ref{A:Shimura varieties}, we put $\Sh_{K_p}(G_\ttS) = \varprojlim_{K^p} \Sh_{K^pK_p}(G_\ttS)$. (Note that the level structure at $p$ is fixed in the inverse limit.)
%Let $\Sh_{K_p}(G_\ttS)^\circ_{\overline \QQ}$ denote the connected component of $\Sh(G_\ttS)_{\overline \QQ}$ whose $\CC$-points contain the image of $\gothH_\ttS^+ \times \{1\}$.
%It is clear that $F_\ttS$ is a  subfield of the Galois closure $F^{\Gal}$ of $F$ over $\QQ$ inside $\CC$.

Put $T_F = \Res_{F/\QQ}\GG_m$.
The reduced norm on $B_\ttS$ induces a homomorphism $\Nm = \Nm_{B_\ttS / F}: G_\ttS \to T_F$.
%The composition $\nu\circ h_S: \SSS_\RR \to T_\RR$ is then given by $z \mapsto (z_T^\tau)_{\tau \in \Sigma_\infty}$, where $z_T^\tau = 1$ if $\tau \in S_\infty$ and $z_T^\tau = |z|^{-2}$ if $\tau \in \Sigma_\infty -S_\infty$.
This homomorphism induces a map
\[
\pi_0^\geom(\Sh_K(G_\ttS)) \longrightarrow  T_F(\QQ) \backslash ( T_F(\AAA^\infty) \times \{\pm 1\}^g) / \Nm (K),
\]
which is an isomorphism if $\ttS_\infty \subsetneq \Sigma_\infty$.
We will make the Shimura reciprocity law (Subsection~\ref{A:reciprocity law}) explicit for $\Sh_K(G_\ttS)$ later when it is in use.

\subsection{Level structure at $p$}\label{S:level-structure-at-p}

We fix an isomorphism $\iota_p: \CC \simeq \overline \QQ_p$.
For each $\gothp \in \Sigma_p$, let $\Sigma_{\infty/\gothp}$ denote the subset of $\Sigma_\infty$ consisting of real embeddings which, when composed with $\iota_p$, induce the $p$-adic place $\gothp$.
We put $\ttS_{\infty/\gothp} = \ttS \cap \Sigma_{\infty/\gothp}$. Similarly, we can view the reflexive field $F_\ttS$ as a subfield of $\overline \QQ_p$ via $\iota_p$, which induces a $p$-adic place $\wp$ of $F_\ttS$.  We use $\calO_\wp$ to denote the valuation ring and $k_\wp$ the residue field.

In this paper, we  always make the following assumption on $\ttS$:

\begin{hypo}
\label{H:B_S-splits-at-p}
If $B_\ttS$ does not split at a $p$-adic place $\gothp$ of $F$, then $\ttS_{\infty/\gothp}= \Sigma_{\infty/\gothp}$.
\end{hypo}

For each $\gothp\in \Sigma_{p}$, we now specify the level structure $K_{\gothp}\subset B_{\ttS}^\times(F_\gothp)$ of $\Sh_{K}(G_{\ttS})$ to be considered in this paper. We distinguish four types of the prime $\gothp \in \Sigma_p$:

\begin{itemize}

\item {\it Types $\alpha$ and $\alpha^\sharp$:} $B_{\ttS}$ splits at $\gothp$ and the cardinality $\# (\Sigma_{\infty/\gothp}-\ttS_{\infty/\gothp})$ is even.
We fix an identification $B^{\times}_{\ttS}(F_{\gothp})\simeq \GL_2(F_{\gothp})$. We take $K_{\gothp}$ to be
\begin{itemize}
\item
either $\GL_2(\cO_{\gothp})$, or
\item
$\Iw_\gothp =
\big(
\begin{smallmatrix}
\calO_\gothp ^\times &  \calO_\gothp\\
\gothp\calO_\gothp & \calO_\gothp^\times
\end{smallmatrix}
\big)$ which we allow only when $\Sigma_{\infty/\gothp} = \ttS_{\infty/\gothp}$.

\end{itemize}
We name the former case as type $\alpha$ and the latter as type $\alpha^\sharp$.  (Under our definition, when $\Sigma_{\infty/\gothp} = \ttS_{\infty/\gothp}$, the type of $\gothp$ depends on the choice of the level structure.)

\item {\it Type $\beta$:}
$B_{\ttS}$ splits at $\gothp$ and
the cardinality $\#(\Sigma_{\infty/\gothp}-\ttS_{\infty/\gothp})$ is odd.
We fix an identification $B_{\ttS}^{\times}(F_{\gothp})\simeq \GL_2(F_{\gothp})$. We take $K_{\gothp}$ to be $\GL_2(\cO_{\gothp})$.

\item {\it Type $\beta^\sharp$:}
$B_\ttS$ ramifies at $\gothp$ and $\ttS_{\infty/\gothp} = \Sigma_{\infty/\gothp}$. In this case, $B_{\ttS}\otimes_{F}F_{\gothp}$ is the division quaternion algebra $B_{F_{\gothp}}$ over $F_{\gothp}$.  Let $\cO_{B_{F_{\gothp}}}$ be the maximal order of $B_{F_{\gothp}}$. We take $K_{\gothp}$ to be $\cO_{B_{F_{\gothp}}}^{\times}$.
\end{itemize}

The aim of this section is to construct an integral canonical model of $\Sh_K(G_{\ttS})$ over $\calO_\wp$ with $K_{p}=\prod_{\gothp|p}K_{\gothp}$ specified above.
For this, we need to introduce an auxiliary CM extension and a unitary group.

\subsection{Auxiliary CM extension}
\label{S:CM extension}

We choose a CM extension $E$ over $F$ such that
\begin{itemize}
\item
every place in $\ttS$ is inert in $E/F$; and
\item
a place $\gothp \in \Sigma_p$ is split in $E/F$ if it is of type $\alpha$ or $\alpha^\sharp$, and is inert in $E/F$ if it is of type $\beta$ or $\beta^\sharp$.
\end{itemize}
We remark that our construction slightly differs from \cite{carayol} in that Carayol requires all places above $p$ to split in $E/F$.
For later convenience, we fix some totally negative element $\gothd \in \calO_F$ coprime to $p$ so that $E = F(\sqrt{\gothd})$.  (The construction will be independent of such a choice.)

Let $\Sigma_{E, \infty}$ denote the set of complex embeddings of $E$.  We have a natural two-to-one map $\Sigma_{E, \infty} \to \Sigma_\infty$.
For each $\tau \in \Sigma_\infty$, we often use $\tilde \tau $ to denote a complex embedding of $E$ extending $\tau$, whose complex conjugate is denoted by $\tilde \tau^c$.

We fix a choice of a subset $\tilde \ttS_\infty \subseteq \Sigma_{E,\infty}$ which consists of, for each $\tau \in \ttS_\infty$, a choice exactly one lift $\tilde \tau\in \Sigma_{E,\infty}$.
This choice is equivalent to a collection of the numbers $s_{\tilde \tau} \in \{0,1,2\}$ for all $\tilde \tau \in \Sigma_{E, \infty}$ such that
\begin{itemize}
\item
if $\tau \in \Sigma_\infty - \ttS_\infty$, we have $s_{\tilde \tau} = 1$ for all lifts $\tilde \tau $ of $\tau$;
\item
if $\tau \in \ttS_\infty$ and $\tilde \tau$ is the lift in $\tilde \ttS_\infty$, we have $s_{\tilde \tau} = 0$ and $s_{\tilde \tau^c} = 2$.
\end{itemize}

We put $\tilde\ttS=(\ttS,\tilde\ttS_{\infty})$.  
Consider  the  torus  $T_{E, \tilde \ttS} = \Res_{E/\Q}\GG_m$ together with the   following choice of the Deligne homomorphism:
\[
\xymatrix@R=0pt@C=50pt{
h_{E, \tilde \ttS}\colon
\SSS(\RR) = \CC^\times \ar[r] &
T_{E, \tilde \ttS}(\RR) = \bigoplus_{\tau \in \Sigma_\infty} (E \otimes_{F, \tau}\RR)^\times \simeq \bigoplus_{\tau\in\Sigma_\infty} \CC^\times\\
z\ar@{|->}[r] & (\bar z_{E, \tau})_\tau.
}
\]
Here $\bar z_{E, \tau} = 1$ if $\tau \in \Sigma_\infty -\ttS_\infty$ and $\bar z_{E, \tau} = \bar z$ otherwise, where, in the latter case, the isomorphism $(E \otimes_{F, \tau}\RR)^\times \simeq \CC^\times$ is given by the lift $\tilde \tau \in \tilde \ttS_\infty$.
The reflex field $E_{\tilde \ttS}$ is the subfield of $\CC$ corresponding to the subgroup of $\Aut(\C/\Q)$ which stabilizes the set $\tilde \ttS_\infty \subset \Sigma_{E, \infty}$. It contains $F_\ttS$ as a subfield.
The isomorphism $\iota_p: \CC \simeq \overline \QQ_p$ determines a $p$-adic place $\tilde \wp$ of $E_{\tilde \ttS}$. We use $\calO_{\tilde \wp}$ to denote the valuation ring of the completion of $E_{\tilde \ttS}$ at $\tilde \wp$, and $k_{\tilde \wp}$ its residue field. 
Note  that $[k_{\tilde \wp}: \FF_p]$ is always even whenever there is a place $\gothp\in \Sigma_p$ of type $\beta$.

We take the level structure $K_E$ to be $K_E^pK_{E,p}$, where $K_{E, p} = (\calO_E \otimes_\ZZ \ZZ_p)^\times$ and $K_E^p$ is an open compact subgroup of $\AAA_E^{\infty,p,\times}$.
This gives rise to a Shimura variety $\Sh_{K_E}(T_{E, \tilde \ttS})$ and its limit $\Sh_{K_{E,p}}(T_{E, \tilde \ttS}) = \varprojlim_{K_E^p}\Sh_{K_{E,p}K_E^p}(T_{E, \tilde \ttS})$. They have integral canonical models $\bfSh_{K_E}(T_{E, \tilde \ttS})$ and $\bfSh_{K_{E,p}}(T_{E, \tilde \ttS})$ over $\calO_{\tilde \wp}$ as specified in Subsection \ref{S:integral model weak Shimura datum}.

We also consider the product group $ G_\ttS \times T_{E, \tilde \ttS}$ with the product Deligne homomorphism
\[
\tilde h_{\tilde \ttS} = h_\ttS \times h_{E, \tilde \ttS} \colon \SSS(\R) = \CC^\times \longrightarrow (G_\ttS\times T_{E, \tilde \ttS})(\RR).
\]
This gives rise to the product Shimura varieties:
\begin{align*}
\Sh_{K \times K_E}(G_\ttS \times T_{E,\tilde \ttS}) &= \Sh_K(G_\ttS) \times_{F_{\ttS,\wp}}
\Sh_{K_E}(T_{E, \tilde \ttS});\\
\Sh_{K_p \times K_{E,p}}(G_\ttS \times T_{E,\tilde \ttS}) &= \Sh_{K_p}(G_\ttS) \times_{F_{\ttS,\wp}}
\Sh_{K_{E,p}}(T_{E, \tilde \ttS}) .
\end{align*}

Let $Z = \Res_{F/\QQ}\GG_m$ denote the center of $G_\ttS$.
Put $G''_{\tilde \ttS} = G_\ttS \times_Z T_{E, \tilde \ttS}$ which is the quotient of $G_\ttS \times T_{E, \tilde \ttS}$ by $Z$ embedded anti-diagonally as $z \mapsto (z, z^{-1})$.
The corresponding Deligne homomorphism $h''_{\tilde \ttS}: \SSS(\RR) \to G''_{\tilde \ttS}(\R)$ is the one induced by $\tilde h_{\tilde \ttS}$.
We will consider open compact subgroups $K'' \subseteq G''_{\tilde \ttS}(\AAA^\infty)$ of the form $K''^pK''_p$, where $K''^p$ is an open compact subgroup of $G''_{\tilde \ttS}(\AAA^{\infty,p})$ and $K''_p $ is an open compact subgroup of $G''_{\tilde \ttS}(\Qp)$.
Finally, the $G''_{\tilde \ttS}(\RR)$-conjugacy class of $h''_{\tilde \ttS}$ can be canonically identified with $\gothH_\ttS$.
We then get the Shimura variety $\Sh_{K''}(G''_{\tilde \ttS})$ and its limit $\Sh_{K''_p}(G''_{\tilde \ttS})$ over the reflex field $E_{\tilde \ttS}$.  The set of $\CC$-points of $\Sh_{K''}(G''_{\tilde \ttS})$ is
\[
\Sh_{K''}(G''_{\tilde \ttS})(\CC) = G''_{\tilde \ttS}(\QQ)\backslash (\gothH_\ttS \times G''_{\tilde \ttS}(\AAA^\infty) ) / K''.
\]

\subsection{Unitary Shimura varieties}
\label{S:unitary-shimura}
We now introduce the unitary group. Consider the morphism
\[
\xymatrix@R=0pt@C=10pt{
\nu = \Nm_{B/F} \times \Nm_{E/F}: &
G''_{\tilde \ttS}=G_\ttS \times_Z T_E \ar[rr]&&
T = \Res_{F/\QQ} \GG_m\\
& (g, z) \ar@{|->}[rr]&&
\Nm(g) z\bar z.
}
\]
Viewing $\GG_m$ naturally as a subgroup of $T = \Res_{F/\QQ}\GG_m$,
we define $G'_{\tilde \ttS}$ to be the reductive group $\nu^{-1}(\GG_m)$. This will be our auxiliary unitary group, whose associated Shimura variety will provide $\Sh_K(G_{ \ttS})$ an integral canonical model.
We will occasionally use the algebraic group $G'_{{\tilde \ttS}, 1} = \Ker(\nu)$, but we view it as a reductive group over $F$.

Note that the Deligne homomorphism $h''_{\tilde \ttS} : \SSS(\RR) \to G''_{\tilde \ttS}(\R)$ factors through a homomorphism $h'_{\tilde \ttS}: \SSS(\R) \to G'_{\tilde \ttS}(\R)$.
The $G'_{\tilde \ttS}(\RR)$-conjugacy class of $h'_{\tilde \ttS}$ can be canonically identified with $\gothH_\ttS$.

We will consider open compact subgroups of $G'_{\tilde \ttS}(\AAA^\infty)$ of the form $K' = K'_pK'^p$,
where $K'_p$ is an open compact subgroup of $G'_{\tilde \ttS}(\QQ_p)$ (to be specified later in Subsection~\ref{S:level-structure}) and $K'^p$ is an open compact subgroup of $G'_{\tilde \ttS}(\AAA^{\infty, p})$.
We will always take $K'^p$ to be sufficiently small so that $K'$ is \emph{neat} and hence the moduli problem we encounter later would be representable by a fine moduli space.
Given the data above,
we have a Shimura variety $\Sh_{K'}(G'_{\tilde \ttS})$ whose $\CC$-points are given by
\[
\Sh_{K'}(G'_{\tilde \ttS})(\CC) = G'_{\tilde \ttS}(\QQ) \backslash (\gothH_\ttS \times  G'_{\tilde \ttS}(\AAA^\infty) )/ K'.
\]
The Shimura variety $\Sh_{K'}(G'_{\tilde \ttS})$ is defined over the reflex field $E_{\tilde \ttS}$.
We put  $\Sh_{K'_p}(G'_{\tilde \ttS}) = \varprojlim_{K'^p} \Sh_{K'_pK'^p}(G'_{\tilde \ttS})$.

The upshot is the following lemma, which verifies the conditions listed in Corollary~\ref{C:Sh(G)^circ_Zp independent of G}.  This allows us to bring the integral canonical models of the unitary Shimura varieties to that of the quaternionic Shimura varieties.

\begin{lemma}
\label{L:compatibility of derived group and adjoint group}
The natural diagram of  morphisms of groups
\begin{equation}\label{E:morphism-of-groups}
G_{\ttS}\leftarrow G_{\ttS}\times T_{E, \tilde \ttS}\ra G''_{\tilde \ttS} =  G_{\ttS}\times_{Z}T_{E, \tilde \ttS}\leftarrow G'_{\tilde \ttS}
\end{equation}
\begin{itemize}
\item[(1)] are compatible with the Deligne homomorphisms, and
\item[(2)] induce isomorphisms on their associated derived and adjoint groups.
\end{itemize}
\end{lemma}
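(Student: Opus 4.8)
The plan is to verify the two assertions separately, treating the three arrows of \eqref{E:morphism-of-groups} one at a time. For part (1), the compatibility with Deligne homomorphisms is essentially built into the construction: the Deligne homomorphism $\tilde h_{\tilde\ttS} = h_\ttS \times h_{E,\tilde\ttS}$ on $G_\ttS \times T_{E,\tilde\ttS}$ was defined precisely so that its first component is $h_\ttS$ and so that it descends to $h''_{\tilde\ttS}$ on the quotient $G''_{\tilde\ttS} = (G_\ttS\times T_{E,\tilde\ttS})/Z$, where $Z$ is embedded anti-diagonally; one checks that $Z(\RR)$-anti-diagonal elements of the form $(z,z^{-1})$ are killed by $\tilde h_{\tilde\ttS}$ composed with the projection to the quotient because $h_\ttS$ sends the $\RR$-torus into the center compatibly with $h_{E,\tilde\ttS}$ (here one uses that on $\Sigma_\infty - \ttS_\infty$ the component of $h_{E,\tilde\ttS}$ is trivial while on $\ttS_\infty$ it is $z$, matching the scalar behaviour). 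Finally, that $h''_{\tilde\ttS}$ factors through $h'_{\tilde\ttS}: \SSS \to G'_{\tilde\ttS,\RR}$ is exactly what was recorded in Subsection~\ref{S:unitary-shimura}: one must check $\nu\circ h''_{\tilde\ttS}$ lands in $\GG_{m}\subset T_F$, i.e. that $\Nm(z^\tau_{G_\ttS})\cdot z_{E,\tau}\bar z_{E,\tau}$ is $\RR$-rational and independent of $\tau$ — both sides equal $z\bar z = |z|^2$ on every archimedean component, which is the requirement.

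For part (2), I would argue as follows. The arrow $G_\ttS \times T_{E,\tilde\ttS} \to G_\ttS$ is projection onto a direct factor whose complementary factor $T_{E,\tilde\ttS}$ is a torus; hence it induces an isomorphism on derived groups (a torus has trivial derived group) and on adjoint groups (a torus has trivial image in the adjoint group). The arrow $G_\ttS \times T_{E,\tilde\ttS} \to G''_{\tilde\ttS}$ is the quotient by the central subtorus $Z$ embedded anti-diagonally; quotienting a group by a central torus does not change the derived group up to isogeny — more precisely $(G_\ttS\times T_{E,\tilde\ttS})^{\der} = G_\ttS^{\der}\times 1 \hookrightarrow G''_{\tilde\ttS}$ maps isomorphically onto $(G''_{\tilde\ttS})^{\der}$ because $G_\ttS^{\der}$ meets the central torus $Z$ trivially as a subgroup scheme up to the finite intersection $Z\cap G_\ttS^{\der}$, and likewise the adjoint quotients agree since both $G_\ttS\times T_{E,\tilde\ttS}$ and $G''_{\tilde\ttS}$ have the same image in $\PGL$-type quotient, namely $G_\ttS^{\ad} = \Res_{F/\QQ}\PGL_{1,B_\ttS}$. (Concretely: $(G''_{\tilde\ttS})^{\ad} = G''_{\tilde\ttS}/Z(G''_{\tilde\ttS})$, and $Z(G''_{\tilde\ttS})$ is the image of $Z(G_\ttS)\times T_{E,\tilde\ttS}$, so the adjoint quotient of $G''_{\tilde\ttS}$ is $G_\ttS/Z(G_\ttS) = G_\ttS^{\ad}$.) For the last arrow $G'_{\tilde\ttS}\hookrightarrow G''_{\tilde\ttS}$, note that $G'_{\tilde\ttS} = \nu^{-1}(\GG_m)$ where $\nu: G''_{\tilde\ttS}\to T_F$ and $\GG_m\subset T_F = \Res_{F/\QQ}\GG_m$; the derived group $(G''_{\tilde\ttS})^{\der} = G_\ttS^{\der}$ already lies in $\Ker\nu\subseteq G'_{\tilde\ttS}$ because reduced norms of norm-one quaternions and $\Nm_{E/F}$ of the trivial second component are trivial, so $(G'_{\tilde\ttS})^{\der} = (G''_{\tilde\ttS})^{\der}$; and for the adjoint groups one observes that $G'_{\tilde\ttS}$ and $G''_{\tilde\ttS}$ differ only by a central torus (the cokernel of $G'_{\tilde\ttS}\to G''_{\tilde\ttS}\xrightarrow{\nu}T_F/\GG_m$ lives entirely in the torus part), so passing to adjoint groups kills the difference.

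I expect the only mildly delicate point — hence the ``main obstacle'' — to be the bookkeeping for the arrow $G'_{\tilde\ttS}\hookrightarrow G''_{\tilde\ttS}$: one must check carefully that the central torus $T_{E,\tilde\ttS}$ (inside $G''_{\tilde\ttS}$) intersects $G'_{\tilde\ttS}$ in a subtorus of the same dimension-modulo-$G_\ttS^{\der}$, equivalently that the composite $T_{E,\tilde\ttS}\hookrightarrow G''_{\tilde\ttS}\xrightarrow{\nu} T_F/\GG_m$ is surjective, which amounts to surjectivity of $\Nm_{E/F}: \Res_{E/F}\GG_m\to \Res_{F/F}\GG_m$ onto $T_F/\GG_m$ — true because $E/F$ is a quadratic extension, so $\Nm_{E/F}$ is surjective on $F$-points Zariski-locally and the cokernel of $\Nm_{E/F}: \Res_{E/\QQ}\GG_m\to T_F$ is the norm-one torus which still surjects modulo the split $\GG_m$ only after one checks dimensions. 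This is a one-line dimension count once set up, and the rest of part (2) reduces to the standard fact that quotienting by, or taking preimages of subtori under, central-torus-valued maps leaves the derived and adjoint groups unchanged; no serious difficulty is anticipated.
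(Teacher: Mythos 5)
Your proposal is correct in substance and takes the only natural route; the paper's own proof is literally the single word ``Straightforward,'' so you are simply supplying the details the authors chose to omit. Parts (1) and the first two arrows of (2) are exactly right.

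One remark on the final paragraph, which is muddier than it needs to be and contains a small slip. You conflate kernel and cokernel: the cokernel of $\Nm_{E/F}\colon \Res_{E/\QQ}\GG_m \to \Res_{F/\QQ}\GG_m$ is trivial (the norm for a separable quadratic extension is surjective as a map of algebraic groups); the \emph{kernel} is the norm-one torus. More to the point, the surjectivity-onto-$T_F/\GG_m$ bookkeeping is unnecessary. Once you know $(G''_{\tilde\ttS})^{\der}\subseteq \Ker\nu \subseteq G'_{\tilde\ttS}$, the argument for adjoint groups is purely formal for reductive groups: $G'_{\tilde\ttS}$ contains $(G''_{\tilde\ttS})^{\der}$, and for any reductive $H$ the composite $H^{\der}\to H \to H^{\ad}$ is surjective; hence $G'_{\tilde\ttS}\to (G''_{\tilde\ttS})^{\ad}$ is surjective, and its kernel is $G'_{\tilde\ttS}\cap Z(G''_{\tilde\ttS})=Z(G'_{\tilde\ttS})$ (the last equality because the centralizer of the common derived group is the center in each). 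This gives $(G'_{\tilde\ttS})^{\ad}\cong (G''_{\tilde\ttS})^{\ad}$ directly, with no dimension count and no reference to the norm map at all.
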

\begin{proof}
Straightforward.
\end{proof}

\subsection{PEL Shimura data}\label{S:PEL-Shimura-data}
We put $D_\ttS = B_\ttS \otimes_F E$. It is isomorphic to $\rmM_2(E)$ under Hypothesis~\ref{H:B_S-splits-at-p}. This is a quaternion algebra over $E$ equipped with an involution $l \to \bar l$ given by the tensor product of the natural involution on $B_\ttS$ and the complex conjugation on $E$.
Let $D_\ttS^\sym$ denote the subsets of \emph{symmetric} elements, i.e. those elements $\delta \in D_\ttS$ such that $\delta = \bar \delta$.
For any element $\delta\in (D_{\ttS}^\sym)^{\times}$, we  can define a new involution on $D_\ttS$ given by $l \mapsto l^* = \delta^{-1}\bar l \delta$. In the following Lemma~\ref{L:property-PEL-data}, we will specify a convenient choice of such a $\delta$.

Let $V$ be the underlying $\QQ$-vector space of $D_\ttS$,
with the natural left $D_\ttS$-module structure.
Define  a pairing $\psi_{E}: V\times V\ra E$ on $V$  by
\begin{equation}\label{Equ:pairing-E}
\psi_E(v, w) = \Tr_{D_\ttS/E}(\sqrt{\gothd} \cdot v \delta w^*), \quad \quad
v, w \in V.
\end{equation}
It is easy to check that $\psi_E$ is skew-hermitian over $E$ for $*$, i.e. $\overline{\psi_{E}(v,w)}=-\psi_E(w,v)$ and $\psi_E(lv, w) = \psi_E(v, l^*w)$ for $l \in D_\ttS$ and $v, w \in V$. We define the bilinear form
$$\psi=\Tr_{E/\Q}\circ \psi_{E}\colon V\times V\lra \Q,$$
which is skew-symmetric and hermitian for $*$.
One checks easily that the subgroup consisting of elements $l \in D^\times_\ttS$ satisfying $\psi(vl, wl) = c(l)\psi(v, w)$ for some $c(l) \in \QQ^\times$ is exactly the subgroup $G'_{\tilde \ttS} \subset D^\times_\ttS$.
\emph{We make the above right action of $G'_{\tilde \ttS}$ on $V$ into a left action by taking the transpose action.} This is different from the convention used in \cite{carayol} which used the inverse action, because taking the transpose action is naturally compatible with the setup of Hilbert modular varieties (see Subsection~\ref{S:comparison-Hilbert}), and is also compatible with our earlier choice of Deligne's homomorphism.

The group $G'_{\tilde \ttS}$ is identified  with the $D_\ttS$-linear unitary of group of $V$ with similitudes in $\QQ^\times$, i.e. for each $\Q$-algebra $R$, we have
\begin{equation}\label{Equ:description-G'}
G'_{\tilde \ttS}(R)=\{g\in \End_{D_{\ttS}\otimes_{\Q}R}(V\otimes_{\Q}R) = D_\ttS^{\mathrm{op}}\;|\; \psi(v\,{}^t\!g, w\,{}^t\!g)=c(g)\psi(v,w)\; \text{with }c(g)\in R^{\times}\}.
\end{equation}

We describe $D_{\ttS,\gothp} = D_\ttS\otimes_{F}F_{\gothp}$ by distinguishing three cases according to the types of $\gothp\in \Sigma_{p}$ in Subsection~\ref{S:level-structure-at-p}:
\begin{itemize}
\item {\it Types $\alpha$ or $\alpha^\sharp$:} In this case, the place $\gothp$ splits into two primes $\gothq$ and $\bar{\gothq}$ in $E$.  We have natural isomorphisms $F_\gothp \cong E_{\gothq} \cong E_{\bar \gothq}$. We fix an isomorphism $B_{\ttS}\otimes_{F}F_{\gothp}\simeq \rmM_2(F_{\gothp})$ as above, then $D_{\ttS,\gothp} \simeq  \rmM_2(E_{\gothq})\oplus \rmM_2(E_{\bar{\gothq}})$. Under these identification, we put $\cO_{B_{\ttS}, \gothp}=\rmM_2(\cO_{\gothp})$ and $\cO_{D_{\ttS, \gothp}}=\rmM_2(\cO_{\gothq})\oplus \rmM_2(\cO_{\bar\gothq})$.

\item {\it Type $\beta$:} In this case, the place $\gothp$ is inert in  $E/F$ and let $\gothq$ denote the  unique place in $E$ above $\gothp$. Using the fixed isomorphism  $B_{\ttS}\otimes_{F}F_{\gothp}\simeq \rmM_2(F_{\gothp})$, we have
$D_{\ttS,\gothp}\simeq \rmM_2(E_{\gothq})$. We put $\cO_{B_{\ttS}, \gothp}=\rmM_2(\cO_{\gothp})$ and $\cO_{D_{\ttS}, \gothp}=\rmM_2(\cO_{\gothq})$.

\item {\it Type $\beta^\sharp$:} Let $\gothq$ be the unique place in $E$ above $\gothp$. The division quaternion algebra $B_{F_{\gothp}}=B_{\ttS}\otimes_{F}F_{\gothp}$ over $F_{\gothp}$ is generated by an element $\varpi_{B_{F_\gothp}}$ over $E_{\gothq}$, with the relations $\varpi_{B_{F_\gothp}}^2 = p$ and $\varpi_{B_{F_\gothp}} a = \bar a \varpi_{B_{F_\gothp}}$ for $a \in E_\gothq$. We identify $B_{F_\gothp} \otimes_{F_\gothp} E_\gothq$ with $ \rmM_2(E_\gothq)$ via the map
\begin{equation}
\label{E:involution-on-quaternion-embedding}
(a+b\varpi_{B_{F_\gothp}}) \otimes c \longmapsto
\big( \begin{smallmatrix}
ac & bc\\ p\bar b c & \bar a c
\end{smallmatrix} \big).
\end{equation}
This also identifies $D_{\ttS ,\gothp}$ with $\rmM_2(E_\gothq)$. We put $\cO_{B_{\ttS}, \gothp}=\cO_{B_{F_\gothp}}$, and take $\cO_{D_{\ttS}, \gothp}$ to be the preimage of $\rmM_2(\cO_\gothq)$ in $D_{\ttS}\otimes_F F_{\gothp}$.

\end{itemize}
We put $\calO_{D_\ttS, p} = \prod_{\gothp \in \Sigma_p} \calO_{D_\ttS, \gothp}$.

\begin{lemma}
\label{L:property-PEL-data}
\begin{itemize}
\item[(1)] We can choose the symmetric element $\delta \in (D_\ttS^\sym)^\times$ above such that
\begin{itemize}
\item[(a)] $\delta \in \calO_{D_\ttS, \gothp}^\times$ for each $\gothp \in \Sigma_p$ not of type $\beta^\sharp$,
and $\delta \in
\big(
\begin{smallmatrix} p^{-1} &0\\0&1
\end{smallmatrix}
\big) \calO_{D_\ttS, \gothp}^\times$ for each $\gothp \in \Sigma_p$ of type $\beta^\sharp$, and
\item[(b)]
the following (symmetric) bilinear form on $V_\RR$ is positive definite.
\[
(v, w) \mapsto \psi\big(v, w\cdot h'_{\tilde \ttS}(\ii)\big).
\]
\end{itemize}

\item[(2)] Through the homomorphism $h'_{\tilde \ttS}: \SSS(\R) \to G'_{\tilde \ttS}(\R)$, $h'_{\tilde \ttS}(\ii)$ acts on the vector space $V_\RR$ and gives it a Hodge structure of type $\{(-1, 0), (0, -1)\}$.
For $l \in D_\ttS$, we have
\[
\tr(l; V_\CC / F^0V_\CC) =  \big( \sum_{\tilde \tau \in \Sigma_{E,\infty}}s_{\tilde \tau} \tilde \tau \big) (\Tr_{D_\ttS/E}(l)).
\]
The reflex field $E_{\tilde \ttS}$ is the subfield of $\CC$ generated by these traces for all $l \in D_\ttS$.

\item[(3)] With the choice of $\delta$ in (1), the group $G'_{\tilde \ttS,1}$ is unramified at $\gothp \in \Sigma_p$ not of type $\beta^\sharp$,  and it  is non-quasi-split at $\gothp \in \Sigma_p$ of type $\beta^\sharp$.  Moreover, $\calO_{D_\ttS, p}$ is a maximal $*$-invariant lattice of $D_\ttS(\QQ_p)$ (up to scaling).
\end{itemize}
\end{lemma}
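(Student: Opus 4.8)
The plan is to prove part~(2) first---it depends only on the Deligne homomorphism $h'_{\tilde\ttS}$, not on $\delta$, and its signature data feeds into part~(1)---then to produce $\delta$ in part~(1) by a weak approximation argument, and finally to deduce part~(3) by examining the local PEL datum place by place using the $\delta$ constructed in~(1).

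\textbf{Part~(2).} Decompose $V_\CC=\bigoplus_{\tilde\tau\in\Sigma_{E,\infty}}V_{\tilde\tau}$, where $V_{\tilde\tau}$, the summand on which $E$ acts through $\tilde\tau$, is identified with $D_\ttS\otimes_{E,\tilde\tau}\CC\cong\rmM_2(\CC)$ viewed as a left module over itself, so $\ell\in D_\ttS$ acts on $V_{\tilde\tau}$ with trace $2\,\tilde\tau(\Tr_{D_\ttS/E}(\ell))$. Since $h'_{\tilde\ttS}$ lands in $G'_{\tilde\ttS}\subset D_\ttS^\times$ acting by right translation, $h'_{\tilde\ttS}(\ii)$ commutes with left multiplication by $D_\ttS$, so $F^0V_\CC$ is a $D_\ttS\otimes\CC$-submodule and $F^0V_\CC\cap V_{\tilde\tau}\cong(\CC^2)^{\oplus(2-s_{\tilde\tau})}$ for a well-defined $s_{\tilde\tau}\in\{0,1,2\}$. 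A direct look at $h_\ttS$ and $h_{E,\tilde\ttS}$ shows these are the numbers of Subsection~\ref{S:CM extension}: at $\tau\notin\ttS_\infty$ the $\GL_{2,\RR}$-factor of $h_\ttS$ is the standard Deligne homomorphism, giving $s_{\tilde\tau}=s_{\tilde\tau^c}=1$; at $\tau\in\ttS_\infty$ the $\HH^\times$-factor of $h_\ttS$ is trivial, so $h'_{\tilde\ttS}(\ii)$ acts on $V_\tau$ through the scalar $E$-component $h_{E,\tilde\ttS}$, forcing $V_{\tilde\tau}$ into a single Hodge piece with $s_{\tilde\tau}=0$ for $\tilde\tau\in\tilde\ttS_\infty$ and $s_{\tilde\tau^c}=2$. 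Summing over $\tilde\tau$ yields the trace formula; the weight homomorphism acts by a single character at every archimedean place, as one reads off from $h_\ttS$ and $h_{E,\tilde\ttS}$, so the Hodge structure is pure of type $\{(-1,0),(0,-1)\}$; and the subfield of $\CC$ generated by the traces is the fixed field of the stabilizer in $\Aut(\CC/\QQ)$ of the function $\tilde\tau\mapsto s_{\tilde\tau}$, i.e. of $\tilde\ttS_\infty$, which is $E_{\tilde\ttS}$.

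\textbf{Part~(1).} Since $D_\ttS^\sym$ is the $+1$-eigenspace of the $\QQ$-linear involution $\ell\mapsto\bar\ell$, it is a $\QQ$-vector space, hence dense in $D_\ttS^\sym\otimes_\QQ(\QQ_p\times\RR)$. It therefore suffices to exhibit, at each place of $\{v\mid v\mid p\}\cup\{\infty\}$, a symmetric element satisfying the prescribed open condition; the global $\delta$ chosen from the nonempty open intersection is then automatically invertible. At $\gothp$ of type $\alpha,\alpha^\sharp$ or $\beta$ one may take $\delta_\gothp=1$, which is symmetric, a unit, and normalizes the maximal order $\calO_{D_\ttS,\gothp}$, so $\ell\mapsto\delta_\gothp^{-1}\bar\ell\delta_\gothp$ preserves it; at $\gothp$ of type $\beta^\sharp$ one checks from the embedding~\eqref{E:involution-on-quaternion-embedding} that $\big(\begin{smallmatrix}p^{-1}&0\\0&1\end{smallmatrix}\big)$ is symmetric for $\ell\mapsto\bar\ell$ and normalizes $\calO_{D_\ttS,\gothp}$. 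The existence of a symmetric $\delta_\infty$ making $(v,w)\mapsto\psi(v,w\,h'_{\tilde\ttS}(\ii)^{-1})$ positive definite is the polarizability of the PEL datum: decomposing along $\Sigma_\infty$, at $\tau\notin\ttS_\infty$ this is classical Riemann positivity for the standard $\GL_2$-datum (which pins down the requisite connected component of symmetric invertible $\delta_\infty$), while at $\tau\in\ttS_\infty$ the condition $s_{\tilde\tau}\in\{0,2\}$ from~(2) means $V_\tau$ carries no signature constraint and one need only match the sign of $\delta_\infty$ with that of $\sqrt{\gothd}$ (a positive multiple of $\pm\ii$ at $\tau$, as $\gothd$ is totally negative). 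These conditions being independent, such a global $\delta$ exists.

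\textbf{Part~(3) and the main difficulty.} Fix the $\delta$ from~(1). At types $\alpha,\alpha^\sharp$ the prime $\gothp$ splits in $E$, $D_{\ttS,\gothp}\cong\rmM_2(E_\gothq)\oplus\rmM_2(E_{\bar\gothq})$, projection to one factor identifies $G'_{\tilde\ttS,1}(F_\gothp)$ with $\GL_2(F_\gothp)$, and $\delta\in\calO_{D_\ttS,\gothp}^\times$ (together with $\gothd$ being prime to $p$ and $E/F$, $F/\QQ$ unramified at $p$) makes $\calO_{D_\ttS,\gothp}$ self-dual for $\psi$; so $G'_{\tilde\ttS,1}$ is unramified at $\gothp$ and $\calO_{D_\ttS,\gothp}$ is a maximal $*$-invariant lattice. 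At type $\beta$ the prime is inert with $E_\gothq/F_\gothp$ unramified, $D_{\ttS,\gothp}\cong\rmM_2(E_\gothq)$, $G'_{\tilde\ttS,1}\otimes F_\gothp$ is the quasi-split unitary group for $E_\gothq/F_\gothp$, and again $\delta$ a unit makes $\calO_{D_\ttS,\gothp}$ self-dual, hence unramified with $\calO_{D_\ttS,\gothp}$ maximal $*$-invariant. At type $\beta^\sharp$ the prime is inert and $B_\ttS$ ramifies; here $G'_{\tilde\ttS,1}\otimes F_\gothp$ is the non-quasi-split (hence ramified) unitary group attached to the hermitian $\calO_{D_\ttS,\gothp}$-lattice inside the division-based algebra $D_{\ttS,\gothp}$, and the twisting factor $\big(\begin{smallmatrix}p^{-1}&0\\0&1\end{smallmatrix}\big)$ in $\delta$ makes the form on $\calO_{D_\ttS,\gothp}$ non-unimodular in a precisely controlled way, while $\calO_{D_\ttS,\gothp}$ remains $*$-invariant and maximal among such lattices. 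I expect this last case to be the main obstacle: it is exactly where the present setup departs from Carayol's (who allows only primes above $p$ that split in $E$), so one must push the bookkeeping through the twisted embedding~\eqref{E:involution-on-quaternion-embedding}---verifying symmetry of $\big(\begin{smallmatrix}p^{-1}&0\\0&1\end{smallmatrix}\big)$, the $*$-stability and maximality of $\calO_{D_\ttS,\gothp}$, and the ramification of the local unitary group.
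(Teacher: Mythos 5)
Your approach is essentially the paper's own: parts (1) and (2) are handled by the same weak-approximation and Carayol-style signature calculations, and part (3) is a place-by-place case analysis. The argument for types $\alpha$, $\alpha^\sharp$, and $\beta$ is sound — in particular the observation that a $*$-self-dual lattice forces unramifiedness of the local unitary group is the right mechanism, and it correctly handles the $\beta$ case without the paper's explicit hyperbolic-plane computation.

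The gap is exactly where you anticipate: the $\beta^\sharp$ case of (3). You assert that $G'_{\tilde\ttS,1}\otimes F_\gothp$ is ``the non-quasi-split (hence ramified) unitary group'' and that the $\big(\begin{smallmatrix}p^{-1}&0\\0&1\end{smallmatrix}\big)$-twist makes the form ``non-unimodular in a precisely controlled way,'' but neither claim is established. Non-unimodularity of the particular lattice $\calO_{D_\ttS,\gothp}$ is not sufficient — that lattice could simply fail to be self-dual while a different lattice is. What must be shown is that the two-dimensional Hermitian $E_\gothq$-space $\gothe D_{\ttS,\gothp}$ is \emph{anisotropic}, equivalently that its discriminant represents the nontrivial class in $F_\gothp^\times/\Nm(E_\gothq^\times)$. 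The paper does this by writing down the form explicitly (it comes out to $a\bar a'/p - b\bar b'$, Gram matrix $\big(\begin{smallmatrix}1/p&0\\0&-1\end{smallmatrix}\big)$, discriminant $-1/p$ of odd valuation, hence nonnorm since $E_\gothq/F_\gothp$ is unramified); this is the step you outline but leave undone. The paper also streamlines this computation by noting that the isomorphism class of $G'_{\tilde\ttS,1,F_\gothp}$ is independent of the choice of $\delta$ among admissible ones, so one may compute with the literal $\delta=\big(\begin{smallmatrix}p^{-1}&0\\0&1\end{smallmatrix}\big)$ rather than the possibly inexplicit global $\delta$ from (1); without this remark your calculation would a priori carry an unknown unit factor $u$ (though one can salvage this since changing $\delta$ by $u\in\calO_{D_\ttS,\gothp}^\times$ multiplies the discriminant by a norm).

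A minor wording issue in part (1): for the $\beta^\sharp$ prime you say $\big(\begin{smallmatrix}p^{-1}&0\\0&1\end{smallmatrix}\big)$ ``normalizes'' $\calO_{D_\ttS,\gothp}$, which is false in the literal sense ($\delta^{-1}\rmM_2(\calO_\gothq)\delta\neq\rmM_2(\calO_\gothq)$). What is true and needed is that the combined involution $\ell\mapsto\delta^{-1}\bar\ell\delta$ preserves $\calO_{D_\ttS,\gothp}$ — here $\delta$ precisely cancels the $p^{\pm1}$ factors introduced by the bar-involution formula in~\eqref{E:involution-on-quaternion-embedding}. This is a correctable slip, not a logical gap, but the phrasing should be fixed.
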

\begin{proof}
(1) Since $F$ is dense in $F \otimes_\QQ \QQ_p \oplus F \otimes_\QQ \RR$, the symmetric elements of $V$ is dense in the symmetric elements of $V \otimes_\QQ \Qp \oplus V \otimes_\QQ \RR$.  The conditions at places above $p$ is clearly open and non-empty; so are the conditions at archimedean places, which follows from the same arguments in \cite[2.2.4]{carayol}.

(2) This follows from the same calculation as in \cite[2.3.2]{carayol}.

(3) 
We first remark that $G'_{\tilde\ttS, 1, F_\gothp}$ does not depend on the particular choice of $\delta$, and hence we may use a convenient $\delta$ to ease the computation.  We discuss each of the types separately.

If $\gothp$ is of type $\alpha$ or $\alpha^\sharp$, $G'_{\tilde\ttS,1}(F_\gothp)$ is isomorphic to the kernel of $\GL_2(F_\gothp) \times_{F_\gothp^\times}(E_{\gothq}^\times \times E_{\bar \gothq}^\times) \to F_\gothp^\times$ given by $(l, x, y) \mapsto \Nm(l)xy$.
Hence $l \mapsto (l, \Nm(l)^{-1}, 1)$ induces an isomorphism $\GL_2(F_\gothp) \to G'_{\ttS,1}(F_\gothp)$.  They are of course unramified.

If $\gothp$ is of type $\beta$, when we identify $D_{\ttS, \gothp}$ with $\rmM_2(E_\gothq)$, the convolution $l \mapsto \bar l$ is given by $\big(\begin{smallmatrix}
a& b\\ c& d
\end{smallmatrix} \big) \mapsto \big(\begin{smallmatrix}
\bar d& -\bar b\\ -\bar c& \bar a
\end{smallmatrix} \big)$ for $a, b, c, d \in E_\gothq$.
We take the element $\delta$ to be $\big(
\begin{smallmatrix}
0&1\\ 1&0
\end{smallmatrix}
\big)$.
The Hermitian form on $\rmM_2(E_\gothq)$ is then given by
% $\psi(v,w) = \Tr_{E_\gothq/F_\gothp}(\sqrt{\gothd} \cdot \langle v,w\rangle)$, where
\[
\langle v,w
\rangle
= \tr_{\rmM_2(E_\gothq) / E_\gothq}(v\bar w\delta) = -a\bar b' + b \bar a' +c \bar d' - d \bar c',
\quad v=
\big(\begin{smallmatrix}
a& b\\ c& d
\end{smallmatrix} \big)\textrm{ and }
w=
\big(\begin{smallmatrix}
a'& b'\\ c'& d'
\end{smallmatrix} \big).
\]
One checks easily that $\gothe = \big(\begin{smallmatrix}
1 &0 \\0&0
\end{smallmatrix}
\big)$ is invariant under the $*$-involution.  So $D_{\ttS, \gothp}$ is isomorphic to $(\gothe D_{\ttS,\gothp})^{\oplus 2}$ as a $*$-Hermitian space and $G'_{\tilde \ttS,1,F_\gothp}$ is the unitary group for $\gothe D_{\ttS, \gothp}$.  It is clear from the expression above that $\gothe D_{\ttS, \gothp}$ is a hyperbolic plane (\cite[Example~3.2]{minguez}).  Hence $G'_{\tilde \ttS,1, F_\gothp}$ being the unitary group of such Hermitian space is unramified.

If $\gothp$ is of type $\beta^\sharp$, the identification of $D_{\ttS, \gothp}$ with $\rmM_2(E_\gothq)$ using \eqref{E:involution-on-quaternion-embedding} implies that the convolution $l \mapsto \bar l$ is given by
\[
\big(\begin{smallmatrix}
a& b\\ c& d
\end{smallmatrix} \big) \mapsto \big(\begin{smallmatrix}
\bar a& -\bar c/p\\ -p\bar b& \bar d
\end{smallmatrix} \big) \quad \textrm{ for }a, b, c, d \in E_\gothq.
\]
We take the element $\delta$ to be $\big(
\begin{smallmatrix} p^{-1} &0\\0&1
\end{smallmatrix}
\big)$.
The Hermitian form  on $\rmM_2(E_\gothq)$ is then given by
% $\psi(v,w) = \Tr_{E_\gothq/F_\gothp}(\sqrt{\gothd} \cdot \langle v,w\rangle)$, where
\begin{equation}
\label{E:Hermitian-type-gamma}
\langle v,w\rangle
= \Tr_{\rmM_2(E_\gothq) / E_\gothq}( v\bar w\delta) = a\bar a'/p -b \bar b' - c\bar c'/p + d \bar d',
\quad v=
\big(\begin{smallmatrix}
a& b\\ c& d
\end{smallmatrix} \big)\textrm{ and }
w=
\big(\begin{smallmatrix}
a'& b'\\ c'& d'
\end{smallmatrix} \big).
\end{equation}
Similar to above, $\gothe = \big(
\begin{smallmatrix}
1&0\\0&0
\end{smallmatrix}
\big)$ is invariant under $*$-involution, and $D_{\ttS, \gothp}$ is isomorphic to $(\gothe
D_{\ttS, \gothp})^{\oplus 2}$ as $*$-Hermitian spaces.
The unitary group $G'_{\tilde \ttS,1,F_\gothp}$ is just the usual unitary group of $\gothe
D_{\ttS, \gothp}$.  But the Hermitian form there takes the form of $a\bar a'/p - b\bar b'$, which is a typical example of anisotropic plane (\cite[Example~3.2]{minguez}).
So $G'_{\tilde \ttS,1,F_\gothp}$ is a non-quasi-split unitary group.

To see that $\calO_{D_\ttS,p}$ is a maximal $*$-stable lattice, it suffices to prove it for $\calO_{D_\ttS, \gothp}$ for each $\gothp \in \Sigma_p$.  When $\gothp$ is of type $\alpha, \alpha^\sharp$, or $\beta$, this is immediate. When $\gothp$ is of type $\beta^\sharp$, we write $\delta$ as $\big(
\begin{smallmatrix} p^{-1} &0\\0&1
\end{smallmatrix}
\big) u$ for $u \in  \calO_{D_\ttS, \gothp}^\times$.  The involution $*$ is given by
\[
\big(\begin{smallmatrix}
a& b\\ c& d
\end{smallmatrix} \big) \mapsto
u^{-1}\big(
\begin{smallmatrix} p &0\\0&1
\end{smallmatrix}
\big) \big(\begin{smallmatrix}
\bar a& -\bar c/p\\ -p\bar b& \bar d
\end{smallmatrix} \big)
\big(
\begin{smallmatrix} p^{-1} &0\\0&1
\end{smallmatrix}
\big) u  = u^{-1} \big(\begin{smallmatrix}
\bar a& -\bar c\\ -\bar b& \bar d
\end{smallmatrix} \big) u
 \quad \textrm{ for }a, b, c, d \in E_\gothq.
\]
It is then clear that $\calO_{D_\ttS,\gothp}$ is a maximal $*$-stable lattice.
\end{proof}

\subsection{Level structures at $p$ in the unitary case}
\label{S:level-structure}

We specify our choice for $K'_p$ corresponding to the level structure $K_p=\prod_{\gothp|p}K_{\gothp}\subset \prod_{\gothp|p} (B_{\ttS}\otimes_{F}F_{\gothp})^{\times}$ considered in Subsection \ref{S:level-structure-at-p}.

By \eqref{Equ:description-G'}, giving an element $g_p\in G_{\tilde \ttS}'(\Q_p)$ is equivalent to giving tuples $(g_{\gothp})_{\gothp\in \Sigma_{p}}$ with $g_{\gothp}\in \End_{D_{\ttS}\otimes_{F}F_{\gothp}}(V\otimes_{F}F_{\gothp})$ such that there exists  $\nu(g_p)\in \Q_{p}^{\times}$ independent of $\gothp$ satisfying
$$
\psi_{E, \gothp}(g_{\gothp}v, g_{\gothp}w)=\nu(g_p)\psi_{E, \gothp}(v,w), \quad  \forall v,w \in V\otimes_F F_{\gothp},
$$
where $\psi_{E, \gothp}$ is the base change of $\psi_E$ to $V\otimes_F F_{\gothp}= D_{\ttS,\gothp}$. In the following, we will give a chain of lattices $\Lambda_{\gothp}^{(1)}\subseteq \Lambda_{\gothp}^{(2)}$ in $D_{\ttS,\gothp}$ for each $\gothp$, and define $K_p'\subseteq G_{\tilde \ttS}'(\Q_p)$ to be the subgroup consisting of the elements $(g_{\gothp})_{\gothp\in \Sigma_{p}}$ with $g_{\gothp}$ belonging to the stabilizer of $\Lambda^{(1)}_{\gothp}\subseteq \Lambda_{\gothp}^{(2)}$ and with $\nu(g_p)\in \ZZ_{p}^{\times}$ independent of $\gothp$.

\begin{itemize}
\item When $\gothp$ is of type $\alpha$, we take $\Lambda_\gothp^{(1)}=\Lambda_{\gothp}^{(2)}$ to be $\calO_{D_\ttS, \gothp}$.
\item When $\gothp$ is of type $\alpha^\sharp$, we take
\[
\Lambda_\gothp^{(1)} =
\left( \begin{smallmatrix}
\calO_\gothq & \gothq \\
\calO_\gothq & \gothq
\end{smallmatrix} \right)\oplus\left( \begin{smallmatrix}
\calO_{\bar \gothq} & \calO_{\bar \gothq} \\
\calO_{\bar \gothq} & \calO_{\bar \gothq}
\end{smallmatrix}\right)
\quad \textrm{ and } \quad \Lambda_\gothp^{(2)} =\left( \begin{smallmatrix}
\calO_\gothq & \calO_\gothq \\
\calO_\gothq & \calO_\gothq
\end{smallmatrix}\right)
\oplus\left(
\begin{smallmatrix}
\bar \gothq^{-1} &\calO_{\bar \gothq}   \\
\bar \gothq^{-1} & \calO_{\bar \gothq}
\end{smallmatrix}\right).
\]

\item When  $\gothp$ is of type $\beta$, we take $\Lambda_{\gothp}^{(1)} =\Lambda_{\gothp}^{(2)} =\calO_{D_{\ttS},\gothp}$.

\item When $\gothp$ is of type $\beta^\sharp$, we take
\[
\Lambda_\gothp^{(1)} = \left(
\begin{smallmatrix}
\gothq & \calO_\gothq\\
\gothq & \calO_\gothq
\end{smallmatrix}
\right)
\subseteq
\Lambda_\gothp^{(2)} = \left(
\begin{smallmatrix}
\calO_\gothq & \calO_\gothq\\
\calO_\gothq & \calO_\gothq
\end{smallmatrix}
\right).
\]
Note that, these two lattices are dual to each other under the Hermitian form \eqref{E:Hermitian-type-gamma}.
\end{itemize}
Similarly, we give the level structure at $p$ for the Shimura variety associated to the group $G''_{\tilde \ttS}$: take $K''_{ p}$ to be the image of $K_p \times K_{E,p} $ under the natural map $(G_\ttS \times T_{E,\tilde \ttS})(\Qp) \to G''_{\tilde \ttS}(\Qp)$.

\begin{lemma}
\label{L:compatibility of derived group and adjoint group2}
The Shimura data for $G_\ttS, G_\ttS \times T_{E,\tilde \ttS}, G''_{\tilde \ttS},$ and $G'_{\tilde \ttS}$ satisfy Hypothesis~\ref{H:hypo on G}.  Moreover,
The natural diagram of  morphisms of groups
\begin{equation}\label{E:morphism-of-groups2}
G_{\ttS}\leftarrow G_{\ttS}\times T_{E, \tilde \ttS}\ra G''_{\tilde \ttS} = G_{\ttS}\times_{Z}T_{E,\tilde \ttS}\leftarrow G'_{\tilde \ttS}
\end{equation}
induce isomorphisms on the $p$-integral points of the derived and adjoint groups.
\end{lemma}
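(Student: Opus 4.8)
The plan is to reduce both assertions to explicit computations at the places above $p$, treating the four groups in the order in which they appear in \eqref{E:morphism-of-groups2}. For Hypothesis~\ref{H:hypo on G} the datum axioms (SV1)--(SV3), or their weak variant (SV3)$'$ when the adjoint group is compact at infinity, hold by the construction of the Deligne homomorphisms in Subsections~\ref{S:quaternionic-shimura-varieties}--\ref{S:unitary-shimura}, so what remains is simple-connectedness of the derived group together with $\nu(K_p) = T(\Zp)$ and $K_p\cap Z(\Qp) = Z(\Zp)$. By Lemma~\ref{L:compatibility of derived group and adjoint group}(2) the four groups all have derived group $G_\ttS^\der = \Res_{F/\QQ}\SL_1(B_\ttS)$, the Weil restriction of the norm-one subgroup of $B_\ttS^\times$; this is an inner form of $\SL_{2,F}$ and Weil restriction preserves simple-connectedness, so the first point is clear.

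For the two level conditions I would argue group by group. For $T_{E,\tilde\ttS}$ both are tautologies, since $\nu$ is the identity and $K_{E,p} = (\calO_E\otimes_\ZZ\Zp)^\times$ is the maximal compact. For $G_\ttS$ with $K_p = \prod_\gothp K_\gothp$ one inspects the four types of $\gothp$, using that the reduced norm carries $\GL_2(\calO_\gothp)$, $\Iw_\gothp$ (by the determinant), and $\calO_{B_{F_\gothp}}^\times$ (by surjectivity of the reduced norm on units of a maximal order in a local division algebra) onto $\calO_\gothp^\times$, and that the only scalars contained in $K_\gothp$ are the unit scalars. For the product $G_\ttS\times T_{E,\tilde\ttS}$ the conditions are inherited coordinatewise. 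For $G''_{\tilde\ttS}$ and $G'_{\tilde\ttS}$ I would use that the kernels of the relevant isogenies --- the anti-diagonal $Z = \Res_{F/\QQ}\GG_m$ and the split $\GG_m\subseteq T_F$ --- are induced tori, unramified at $p$; since $p$ is unramified in $F$ these extend to tori over $\Zp$ with vanishing $H^1_{\mathrm{fppf}}$, so the relevant short exact sequences stay exact on $\Zp$-points and carry maximal compacts to maximal compacts. Together with the description of $K''_p$ as the image of $K_p\times K_{E,p}$, of $K'_p$ through the lattice chains $\Lambda^{(1)}_\gothp\subseteq\Lambda^{(2)}_\gothp$, and the identity $(\calO_E\otimes\Zp)^\times\cap(F\otimes_\QQ\Qp)^\times = (\calO_F\otimes\Zp)^\times$ (valid because $p$ is unramified in $E/F$), the two conditions then propagate along \eqref{E:morphism-of-groups2}.

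For the statement about $p$-integral points, the isomorphisms of derived and adjoint groups are already Lemma~\ref{L:compatibility of derived group and adjoint group}(2), so the task is to match $K_p^\der = K_p\cap G^\der(\Qp)$ and $K_p^\ad = $ (the image of $K_p$ in $G^\ad(\Qp)$) across the three arrows. For $G_\ttS\times T_{E,\tilde\ttS}\to G_\ttS$ this is immediate. For $G_\ttS\times T_{E,\tilde\ttS}\to G''_{\tilde\ttS}$, an element of $G_\ttS^\der(\Qp)$, lifted to $(g,1)$, lies in $K''_p$ exactly when $g$ differs from an element of $K_p$ by a scalar in $(\calO_E\otimes\Zp)^\times\cap(F\otimes\Qp)^\times = (\calO_F\otimes\Zp)^\times\subseteq K_p$, so $K''^\der_p = K_p^\der$ and, passing to images, $K''^\ad_p = K_p^\ad$. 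For $G'_{\tilde\ttS}\hookrightarrow G''_{\tilde\ttS}$ one invokes that the lattice chains in Subsection~\ref{S:level-structure} were chosen so that, under the identifications $D_{\ttS,\gothp}\cong\rmM_2(E_\gothq)$ of Subsection~\ref{S:PEL-Shimura-data}, stabilizing $\Lambda^{(1)}_\gothp\subseteq\Lambda^{(2)}_\gothp$ corresponds to the chosen level $K_\gothp$ on $B_\ttS^\times(F_\gothp)$: transparent for types $\alpha,\beta$, where the two lattices both equal $\calO_{D_\ttS,\gothp}$, and obtained by unwinding the explicit descriptions in Subsections~\ref{S:PEL-Shimura-data} and~\ref{S:level-structure} for types $\alpha^\sharp$ and $\beta^\sharp$. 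Hence $K'_p$ and $K''_p$ have the same intersection with $G_\ttS^\der(\Qp)$ and the same image in $G_\ttS^\ad(\Qp)$, which finishes the comparison.

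The main obstacle is the group $G'_{\tilde\ttS}$: because its level structure is defined through lattice chains rather than group-theoretically, matching $(K'_p)^\der$ and $(K'_p)^\ad$ to $K_p^\der$ and $K_p^\ad$ --- and equally the verification of $\nu(K'_p) = T'(\Zp)$ and $K'_p\cap Z'(\Qp) = Z'(\Zp)$ --- forces one to unwind the explicit isomorphisms $D_{\ttS,\gothp}\cong\rmM_2(E_\gothq)$ and compute lattice stabilizers by hand, the ramified case (type $\beta^\sharp$) and the Iwahori case (type $\alpha^\sharp$) being the delicate ones; the remaining torus bookkeeping is routine once the induced-torus observation is in place.
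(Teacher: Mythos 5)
Your proposal is correct and follows essentially the same route as the paper: the paper's own proof is the one-line remark that everything is ``straightforward from definition'' followed by a case-by-case list of $K_\gothp^\der$ and $K_\gothp^\ad$ for $\gothp$ of types $\alpha/\beta$, $\alpha^\sharp$, and $\beta^\sharp$, and your argument is simply a fleshed-out version of that same place-by-place inspection (including the part you identify as the delicate step, matching the lattice-chain stabilizer $K'_p$ to $K_p$ through the identifications of Subsections~\ref{S:PEL-Shimura-data} and~\ref{S:level-structure}). One small remark: the identity $(\calO_E\otimes\Zp)^\times\cap(F\otimes_\QQ\Qp)^\times=(\calO_F\otimes\Zp)^\times$ you invoke holds for any finite extension at the level of local rings of integers (integrally closed plus $\calO_E\cap F=\calO_F$), so the unramifiedness of $p$ in $E/F$ is not what it hinges on, though it is of course true under the paper's standing hypotheses.
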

\begin{proof}
This is  straightforward from definition.
In fact,  both $K_p^\ad = \prod_{\gothp \in \Sigma_p}K_\gothp^\ad$ and $K_p^\der = \prod_{\gothp \in \Sigma_p} K_\gothp^\der$ are products and we give the description case by case:
\begin{itemize}
\item if $\gothp$ is of type $\alpha$ or $\beta$, then $K^\der_\gothp  = \SL_2(\calO_\gothp)$ and $K^\ad_\gothp = \PGL_2( \calO_\gothp)$;
\item if $\gothp$ is of type $\alpha^\sharp$, then $K^\der_\gothp  = \SL_2( \calO_\gothp) \cap \big(
\begin{smallmatrix}
\calO_\gothp^\times & \calO_\gothp\\ \gothp & \calO_\gothp^\times
\end{smallmatrix}
\big)$ and $K^\ad_\gothp = \big(
\begin{smallmatrix}
\calO_\gothp^\times & \calO_\gothp\\ \gothp & \calO_\gothp^\times
\end{smallmatrix}
\big) / \calO_\gothp^\times$; and
\item if $\gothp$ is of type $\beta^\sharp$, $K^\der_\gothp$ and $K^\ad_\gothp$ are the maximal compact open subgroups of $(B_\ttS^\times)^\der(F_\gothp)$ and $(B_\ttS^\times)^\ad(F_\gothp)$, respectively.\qedhere
\end{itemize}
\end{proof}

\begin{cor}
\label{C:comparison of shimura varieties}
The natural morphisms between Shimura varieties
\begin{equation}
\label{E:morphisms of Shimura varieties}
\Sh_{K_p}(G_\ttS)\longleftarrow\Sh_{K_p \times K_{E, p}}(G_\ttS \times T_{E,\tilde \ttS}) \longrightarrow \Sh_{K''_p}(G''_{\tilde \ttS}) \longleftarrow\Sh_{K'_p}(G'_{\tilde \ttS})
\end{equation}
induce isomorphisms on the geometric connected components.  Moreover, the groups $\calE_{G, \tilde \wp}$ defined in \ref{A:connected integral model} (and made explicit below) are isomorphic for each of the groups; and \eqref{E:morphisms of Shimura varieties} is equivariant for the actions of $\calE_{G, \tilde \wp}$'s on the geometric connected components.
Moreover, if one of the Shimura varieties admits an integral canonical model; so do the others.
\end{cor}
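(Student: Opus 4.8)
The plan is to deduce Corollary~\ref{C:comparison of shimura varieties} from Corollary~\ref{C:Sh(G)^circ_Zp independent of G}, applied separately to each of the three arrows making up the diagram \eqref{E:morphism-of-groups2}: the projection $G_\ttS \times T_{E,\tilde \ttS}\to G_\ttS$, the central quotient $G_\ttS \times T_{E,\tilde \ttS}\to G''_{\tilde \ttS}$, and the inclusion $G'_{\tilde \ttS}\hookrightarrow G''_{\tilde \ttS}$. Each of these is a homomorphism of connected reductive groups over $\QQ$, and all that has to be checked is the hypothesis list of Corollary~\ref{C:Sh(G)^circ_Zp independent of G}: that source and target satisfy Hypothesis~\ref{H:hypo on G}, that the prescribed $G^\ad(\RR)^+$-conjugacy class of Deligne homomorphisms on the source maps to the chosen one on the target, and that the map induces isomorphisms on the derived and adjoint groups together with their $p$-integral subgroups.

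All of these inputs are already in hand. Lemma~\ref{L:compatibility of derived group and adjoint group2} records that the four Shimura data (for $G_\ttS$, $G_\ttS \times T_{E,\tilde \ttS}$, $G''_{\tilde \ttS}$ and $G'_{\tilde \ttS}$) satisfy Hypothesis~\ref{H:hypo on G}; Lemma~\ref{L:compatibility of derived group and adjoint group}(1) gives the compatibility of the Deligne homomorphisms along each arrow, all the relevant conjugacy classes being identified with $\gothH_\ttS$; Lemma~\ref{L:compatibility of derived group and adjoint group}(2) gives the isomorphisms on derived and adjoint groups; and the case-by-case description in the proof of Lemma~\ref{L:compatibility of derived group and adjoint group2} gives the isomorphisms on the $p$-integral subgroups of those groups. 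As the common base I would take the completion $E_{\tilde \ttS,\tilde \wp}$, which contains $F_{\ttS,\wp}$; because $p$ is unramified in $F$ and the places of $F$ above $p$ are unramified in $E/F$, the Galois closures of $F$ and of $E$ over $\QQ$ are unramified at $p$, so the reflex fields $F_\ttS \subseteq E_{\tilde \ttS}$ (hence their completions) are unramified over $\QQ_p$, which is exactly what is needed to invoke both the connected-component part and the integral-model part of Corollary~\ref{C:Sh(G)^circ_Zp independent of G}.

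Carrying this out arrow by arrow, Corollary~\ref{C:Sh(G)^circ_Zp independent of G} produces, for each arrow, a natural isomorphism of the two corresponding groups $\calE$ relative to the common base $E_{\tilde \ttS,\tilde \wp}$, together with a compatible isomorphism of the two geometric connected components $\Sh^\circ$ over the maximal unramified extension of $E_{\tilde \ttS,\tilde \wp}$; composing the three identifications shows that all four $\calE$-groups agree and that the whole diagram \eqref{E:morphisms of Shimura varieties} is equivariant on geometric connected components. Finally, the last clause of Corollary~\ref{C:Sh(G)^circ_Zp independent of G}, together with the reconstruction recipe of Subsection~\ref{A:connected integral model} (an integral model is recovered from its connected component equipped with the $\calE$-action, via Galois descent), shows that an integral canonical model over one of these unramified reflex completions propagates to one over each of the others.

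Within this corollary there is essentially no obstacle: the content sits entirely in the two lemmas just cited and in Corollary~\ref{C:Sh(G)^circ_Zp independent of G}, so the argument is really bookkeeping --- keeping track of the directions of the three arrows and of the tower $F_\ttS \subseteq E_{\tilde \ttS}$ of reflex fields. The genuine work lies outside the corollary: it is in producing an integral canonical model on one vertex of the diagram, namely for the unitary group $G'_{\tilde \ttS}$ via the PEL moduli problem built from the data of Lemma~\ref{L:property-PEL-data} and the lattice chains of Subsection~\ref{S:level-structure}; once that model exists, Corollary~\ref{C:comparison of shimura varieties} is precisely what transports it to the quaternionic Shimura variety $\Sh_{K_p}(G_\ttS)$.
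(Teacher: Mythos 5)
Your proposal is correct and follows exactly the same route as the paper: the paper's proof consists of a single sentence invoking Corollary~\ref{C:Sh(G)^circ_Zp independent of G} with the hypotheses verified by Lemmas~\ref{L:compatibility of derived group and adjoint group} and~\ref{L:compatibility of derived group and adjoint group2}, which is precisely your argument, and your added bookkeeping (applying the corollary arrow-by-arrow along \eqref{E:morphism-of-groups2}, taking $E_{\tilde\ttS,\tilde\wp}$ as the common unramified base, and noting that the reflex fields sit in Galois closures unramified at $p$) simply makes explicit what the paper leaves implicit.
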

\begin{proof}
This follows from Corollary~\ref{C:Sh(G)^circ_Zp independent of G} for which the conditions are verified in Lemmas~\ref{L:compatibility of derived group and adjoint group} and \ref{L:compatibility of derived group and adjoint group2}.
\end{proof}

\subsection{Structure groups for connected Shimura varieties}
\label{S:structure group}
In order to apply the machinery developed in Section~\ref{Section:Sh Var}, we now make explicit the structure groups $\calG$ in \eqref{E:calG} and $\calE_{G, \tilde \wp}$ in Subsection~\ref{A:connected integral model} in the case of our interest.

We use $\calG_\ttS$ (resp. $\calG'_{\tilde \ttS}$, $\calG''_{\tilde \ttS}$)  to denote the group defined in \eqref{E:calG} for $G = G_\ttS$ (resp. $ G'_{\tilde \ttS}$, $G''_{\tilde \ttS}$).
Explicitly, since the center of $G_\ttS$ is $\Res_{F/\QQ}\GG_m$, we have $G^\ad_\ttS(\QQ) = B_\ttS^\times / F^\times$.  Taking the positive and $p$-integral part as in Lemma~\ref{L:nu(G(Q)->>T(Q)}, we have $G_\ttS^\ad(\QQ)^{+, (p)} = B_\ttS^{\times, >0, (p)} / \calO_{F, (p)}^\times$ where the superscript $>0$ means to take the elements whose reduced norm is positive for all real embeddings.
It follows that $\calG_\ttS = G_\ttS(\AAA^{\infty, p}) / \calO_{F, (p)}^{\times, \cl}$.  The same argument applies to  $G''_{\tilde \ttS}$ whose center is $\Res_{E/\QQ}\GG_m$, and shows that  
$\calG''_{\tilde \ttS} = G''_{\tilde \ttS}(\AAA^{\infty, p}) \big/ \calO_{E,(p)}^{\times, \cl}$.
Determination of $\calG'_{\tilde \ttS}$ is more subtle.
By Lemmas~\ref{L:compatibility of derived group and adjoint group} and \ref{L:compatibility of derived group and adjoint group2}, we have $(G'_{\tilde \ttS})^\ad(\QQ)^{+,(p)} = (G''_{\tilde \ttS})^\ad(\QQ)^{+,(p)}$.
So if we use $Z'_{\tilde \ttS}$ to denote the center of $G'_{\tilde \ttS}$, then we have
\begin{align}
\label{E:structure group description}
\calG'_{\tilde \ttS}
 &= 
\big(G'_{\tilde \ttS}(\AAA^{\infty,p})  \big/ Z'_{\tilde \ttS}(\QQ)^{(p), \cl}\big) \ast_{G'_{\tilde \ttS}(\QQ)^{(p)}_+/Z'_{\tilde \ttS}(\QQ)^{(p)}} (G'_{\tilde \ttS})^\ad(\QQ)^{+,(p)}
\\
\nonumber
&= 
\big(G'_{\tilde \ttS}(\AAA^{\infty,p})  \big/ Z'_{\tilde \ttS}(\QQ)^{(p), \cl}\big) \ast_{G'_{\tilde \ttS}(\QQ)^{(p)}_+/Z'_{\tilde \ttS}(\QQ)^{(p)}} \big( G''_{\tilde \ttS}(\QQ)_+^{(p)} / \calO_{E, (p)}^\times \big)
\\
\nonumber
 &= G'_{\tilde \ttS}(\AAA^{\infty, p}) G''_{\tilde \ttS}(\QQ)_+^{(p)}  \big/ \calO_{E, (p)}^{\times, \cl}.
\end{align}
The subgroup $G'_{\tilde \ttS}(\AAA^{\infty, p}) G''_{\tilde \ttS}(\QQ)_+^{(p)}$ can be characterized by the following commutative diagram of exact sequence as the pullback of the right square.
\begin{equation}
\label{E:description of G'' G'}
\xymatrix{
1 \ar[r] & \ar@{=}[d] G'_{\tilde
 \ttS,1}(\AAA^{\infty,p}) \ar[r] &G''_{\tilde \ttS}(\QQ)_+^{(p)} G'_{\tilde \ttS}(\AAA^{\infty,p}) \ar[r] \ar@{^{(}->}[d] &
\calO_{F, (p)}^\times (\AAA^{\infty,p})^\times\ar[r] \ar@{^{(}->}[d] & 1
\\1 \ar[r]&
G'_{{\tilde \ttS},1}(\AAA^{\infty,p}) \ar[r] & G''_{\tilde \ttS}(\AAA^{\infty,p}) \ar[r] & 
(\AAA_F^{\infty,p})^\times \ar[r] &1.
}
\end{equation}

We use $\calE_{G, \ttS, \wp}$ to denote
the group $\calE_{G, \tilde \wp}$ defined in Subsection~\ref{A:connected integral model}.  
As an abstract group, it is isomorphic for all groups $G_\ttS$, $G'_{\tilde \ttS}$, and $G''_{\tilde \ttS}$.
But we point out that it is important (see Remark~\ref{R:quaternionic Shimura reciprocity not compatible}) to know how they sit as subgroups  of $\calG_\ttS \times \Gal_{k_\wp}$, $\calG'_{\tilde \ttS} \times \Gal_{k_{\tilde \wp}}$ and $\calG''_{\tilde \ttS} \times \Gal_{k_{\tilde \wp}}$, respectively, according to the Shimura reciprocity map.

\subsection{Integral models of unitary Shimura varieties}\label{S:integral-unitary}
%We refer to \ref{SS:notation-F} and \ref{SS:notation for E} for the notation on the $p$-adic embeddings of $F$ and $E$ after chosen an isomorphism $\iota_p: \CC \xrightarrow{\cong} \overline \QQ_p$.
%The isomorphism also determines a $p$-adic place $\wp$ of the reflexive field $E_\ttS$.  Let $\calO_{E_{\ttS},\wp}$ denote the valuation ring of $E_{\ttS,\wp}$, which is an unramified extension of $\ZZ_p$. Similarly, we have $F_{\ttS,\wp}$ and $\cO_{F_{\ttS},\wp}$.

We choose a finite extension $k_0$ of $k_{\tilde \wp}$ that contains all residual fields $k_\gothq$ for any $p$-adic place $\gothq$ of $E$.  Then the ring of Witt vectors $W(k_0)$ may be viewed as a subring of $\overline \QQ_p$, containing $\calO_{\tilde \wp}$ as a subring.

We fix an order $\calO_{D_\ttS}$ of $D_\ttS$ stable under the involution $l \mapsto l^*$ such that  $\calO_{D_\ttS} \otimes_{\calO_F} \calO_{F,p} \simeq \calO_{D_\ttS, p}$.  Recall that $V$ is the abstract $\QQ$-vector space $D_\ttS$. We choose and fix an $\calO_{D_\ttS}$-lattice $\Lambda$ of $V$ such that,
\begin{itemize}
\item
for each $\gothp \in \Sigma_p$, we have $\Lambda \otimes_{\calO_F} \calO_\gothp \cong \Lambda_\gothp^{(1)}$, and
\item if we put $\widehat{\Lambda}^{(p)} : = \Lambda \otimes_\ZZ \widehat \ZZ^{(p)}$ as a lattice of $V \otimes_\QQ \AAA^{\infty, p}$, we have
\begin{equation}
\label{E:Lambda-dual}
\widehat \Lambda^{(p)} \subseteq \widehat \Lambda^{(p),\vee}\textrm{ under the bilinear form } \psi, \textrm{ or equivalently, } \psi(\widehat \Lambda^{(p)}, \widehat \Lambda^{(p)}) \subseteq \widehat{\ZZ}^{(p)}.
\end{equation}
\end{itemize}
We call such $\Lambda$ \emph{admissible}.

\begin{theorem}
\label{T:unitary-shimura-variety-representability}
Let $K'_p$ be the open compact subgroup of $G_{\tilde \ttS}'(\Q_p)$ considered in Subsection \ref{S:level-structure}, and $K'^p\subset G_{\tilde \ttS}'(\AAA^{\infty, p})$ be  sufficiently small so that $K'=K'^pK'_p$  is neat.
Then there exists a unique \emph{smooth} quasi-projective scheme $\bfSh_{K'}(G'_{\tilde \ttS})$ over $W(k_0)$
representing the functor that sends a locally noetherian $W(k_0)$-scheme $S$ to the set of isomorphism classes of tuples $(A, \iota, \lambda,  \alpha_{K'})$, as described as follows.
\begin{itemize}
\item[(a)] $A$ is an abelian scheme over $S$ of dimension $4g$
equipped with an embedding $\iota: \calO_{D_\ttS} \to \End_S(A)$ such that the characteristic polynomial of the endomorphism $\iota(b)$ on $\Lie(A/S)$ for $b \in \calO_E$  is given by
\[
\prod_{\tilde \tau \in \Sigma_{E,\infty}} \big(x - \tilde \tau(b)\big) ^{2s_{\tilde \tau}}.
\]

\item[(b)] $\lambda:A \to A^\vee$ is a polarization of $A$, such that
\begin{itemize}
\item[(b1)] the Rosati involution associated to $\lambda$ induces the involution $l \mapsto l^*$ on $\calO_{D_\ttS}$,

\item[(b2)] $(\Ker \lambda)[p^\infty] $ is a  finite flat closed subgroup scheme contained in $\prod_{\gothp \textrm{ of type } \beta^\sharp}A[\gothp]$ such that $(\Ker\lambda)[p^\infty] \cap A[\gothp]$ for each $\gothp$ of type $\beta^\sharp$ has rank $ (\#k_{\gothp})^{4}$ and

\item[(b3)]  the cokernel of $\lambda_*: H_1^\dR(A / S) \to H_1^\dR(A^\vee/S)$ is a locally free module of rank two over
\[
\bigoplus_{\gothp \textrm{ of type  }\beta^\sharp} \calO_S \otimes_{\ZZ_p} (\calO_E \otimes_{\calO_F} k_\gothp).
\]
\end{itemize}

\item[(c)] $ \alpha_{K'}$ is a pair  $( \alpha^p_{K'^p}, \alpha_p)$ defined as follows:

\begin{itemize}
\item[(c1)] For each connected component $S_i$ of $S$, we choose a geometric point $\bar{s}_i$, and let $T^{(p)}(A_{\bar{s}_i})$ be the product of $l$-adic Tate modules of $A$ at $\bar{s}_i$ for all $l\neq p$. Then $\alpha^p_{K'^p}$ is a collection of $\pi_1(S_i, \bar{s}_i)$-invariant $K'^p$-orbit of pairs $(\alpha^p_i, \nu(\alpha^p_i))$, where $\alpha^p_i$ is an $\calO_{D_\ttS}\otimes_\ZZ \widehat{\ZZ}^{(p)}$-linear isomorphism $\widehat \Lambda^{(p)} \xrightarrow{\sim} T^{(p)}( A_{\bar s_i})$ and $\nu(\alpha^p_i)$ is an isomorphism $\widehat{\Z}^{(p)}\xra{\sim} \widehat \Z^{(p)}(1)$ such that the following diagram commute:
\[
\xymatrix{
\widehat \Lambda^{(p)}\times \widehat \Lambda^{(p)}\ar[rr]^-{\psi}\ar[d]_{\alpha^p_i\times \alpha_i^p} &&\widehat{\Z}^{(p)}\ar[d]^{\nu(\alpha_i^p)}\\
T^{(p)}(A_{\bar s_i})\times T^{(p)}(A_{\bar s_i}) \ar[rr]^-{\lambda-\mathrm{Weil}} && \widehat{\Z}^{(p)}(1).
}
\]

\item[(c2)] For each prime $\gothp\in \Sigma_p$ of type $\alpha^\sharp$, let $\gothq$ and $\bar\gothq$ be the two primes of $E$ above $\gothp$. Then $\alpha_p$ is a collection of $\calO_{D_\ttS}$-stable closed finite flat subgroups $\alpha_{\gothp}=H_{\gothq}\oplus H_{\bar{\gothq}} \subset A[\gothq]\oplus  A[\bar\gothq]$ of order $(\#k_\gothp)^4$ such that $H_\gothq$ and $H_{\bar\gothq}$ are dual to each other  under the perfect pairing
\[A[\gothq]\times A[\bar\gothq]\ra \mu_p\]\
induced by the polarization $\lambda$.

\end{itemize}
\end{itemize}
%By the choice of our $k_0$, each connected component of $\bfSh_{K'}(G'_\ttS)$ is geometrically connected of dimension equal to $\#\Sigma_\infty - \#\ttS_\infty$.
By Galois descent, the moduli space $\bfSh_{K'}(G'_{\tilde \ttS})$ can be defined over $\calO_{ \tilde \wp}$.
Moreover, if the ramification set $\ttS_{\infty}$ is non-empty, $\bfSh_{K'}(G'_{\tilde \ttS})$ is projective.
\end{theorem}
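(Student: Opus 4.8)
The plan is to follow the standard construction of PEL-type moduli schemes in the style of Kottwitz, in the variant tailored to quaternionic Shimura varieties by Carayol~\cite{carayol}, taking care of the two features beyond the hyperspecial case: the Iwahori-type level $\alpha_p$ at places of type $\alpha^\sharp$, and the polarization of non-trivial degree at places of type $\beta^\sharp$. Throughout one uses that $W(k_0)$ contains every residue field $k_\gothq$ of $E$ above $p$, so that $\calO_E\otimes_{\ZZ_p}W(k_0)$ — and hence every relevant de Rham or Dieudonn\'e module of an object over a $W(k_0)$-scheme — decomposes as a product indexed by $\Sigma_{E,\infty}$. For representability: since $K'^p$ is neat, a quadruple $(A,\iota,\lambda,\alpha_{K'})$ admits no non-trivial automorphisms, so it suffices to represent the functor. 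Forgetting the $p$-component $\alpha_p$, the functor of $(A,\iota,\lambda,\alpha^p_{K'^p})$ is carved out of Mumford's quasi-projective moduli space of polarized abelian schemes of dimension $4g$ with prime-to-$p$ level structure and polarization of the fixed degree dictated by (b2): the $\calO_{D_\ttS}$-action, the condition (b1) on the Rosati involution, the condition (a) on $\Lie(A)$, and the Weil-pairing compatibility of (c1) are successive (locally) closed conditions. Adjoining $\alpha_p$ at the type-$\alpha^\sharp$ places is a relative finite-flat-subgroup-scheme datum, represented by a closed subscheme of a twisted relative Grassmannian in the Rapoport--Zink style, relatively projective over the previous space. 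Hence the functor is representable by a quasi-projective $W(k_0)$-scheme, unique by Yoneda's lemma.

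Smoothness is proved by deformation theory. By Grothendieck--Messing and Serre--Tate, a deformation of $(A,\iota,\lambda,\alpha_p)$ along a square-zero thickening is the same as a lift of the Hodge filtration $\omega_{A}\subset H^{1}_{\dR}(A)$ compatible with the $\calO_{D_\ttS}$-action and the polarization pairing, preserving the chosen subgroup at type-$\alpha^\sharp$ places. Morita equivalence reduces the $\calO_{D_\ttS}$-action, and together with the product decomposition above this reduces the deformation problem to a \emph{local model} computation at each place $\gothp\in\Sigma_p$ separately. For $\gothp$ of type $\alpha$ or $\beta$ the signature at each relevant embedding is $1$, so the local model is a product of Grassmannians of rank-one submodules of rank-two modules, hence smooth. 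For $\gothp$ of type $\alpha^\sharp$ the equality $\ttS_{\infty/\gothp}=\Sigma_{\infty/\gothp}$ forces every $s_{\tilde\tau}$ to be $0$ or $2$, so the $\gothp$-part of the Hodge filtration is rigid and $A[\gothp^\infty]$ is ordinary; hence $\alpha_p$ defines a finite \'etale cover and adds nothing to the local deformation problem. For $\gothp$ of type $\beta^\sharp$ one is precisely in Carayol's situation of \emph{special formal $\calO_{D_\ttS,\gothp}$-modules}, where the conjunction of (a), (b2), (b3) rigidifies the relevant Dieudonn\'e module strongly enough that the deformation functor is formally smooth, as in \cite{carayol}. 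Patching the local contributions gives formal smoothness of $\bfSh_{K'}(G'_{\tilde\ttS})$ over $W(k_0)$. \textbf{The main obstacle} is this last case: at a ramified place the local model is not a Grassmannian, and one must exploit the special structure of these formal $\calO_{D_\ttS,\gothp}$-modules — and therefore Hypothesis~\ref{H:B_S-splits-at-p} — to see that deformations are unobstructed; this is exactly why \cite{kisin} does not apply directly.

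It remains to descend and to establish projectivity. Every defining condition — in particular the characteristic polynomial in (a) — has coefficients in $\calO_{\tilde\wp}$; this is the assertion in Lemma~\ref{L:property-PEL-data}(2) that $E_{\tilde\ttS}$ is the reflex field. Hence the moduli functor already makes sense on $\calO_{\tilde\wp}$-schemes, its restriction to $W(k_0)$-schemes being the one just represented, and effective \'etale Galois descent along $W(k_0)/\calO_{\tilde\wp}$ (valid for quasi-projective schemes) produces the model over $\calO_{\tilde\wp}$. One checks by the standard comparison of the moduli and the double-coset descriptions that the generic fibre recovers $\Sh_{K'}(G'_{\tilde\ttS})$, up to the finite set $\ker^{1}(\QQ,G'_{\tilde\ttS})$ which can be absorbed into the level: a complex point corresponds to an abelian variety with PEL structure of Hodge type $h'_{\tilde\ttS}$, whose $\Lie$-type is matched with (a) via Lemma~\ref{L:property-PEL-data}(2), and the similitude isomorphism $\nu(\alpha^p_i)$ records that the similitude factor lies in $\QQ^{\times}$. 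Finally, if $\ttS_\infty\neq\emptyset$ then $B_\ttS$ ramifies at a real place, so $B_\ttS$ is a division algebra and $(G'_{\tilde\ttS})^\der\cong\Res_{F/\QQ}(B_\ttS^{1})$, the restriction of scalars of the reduced-norm-one subgroup, has a compact archimedean factor and is $\QQ$-anisotropic. Consequently $V$ contains no non-zero proper $\calO_{D_\ttS}$-stable $\psi$-isotropic $\QQ$-subspace, so by the theory of N\'eron models any abelian scheme carrying this PEL structure over the fraction field of a discrete valuation ring has good reduction; since the rigid prime-to-$p$ level and the finite-flat $p$-level then extend uniquely, $\bfSh_{K'}(G'_{\tilde\ttS})$ satisfies the valuative criterion of properness, and being quasi-projective it is projective.
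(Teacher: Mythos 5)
The structure of your argument (representability via Lan/Mumford, formal smoothness by Grothendieck--Messing computed place by place, Galois descent, properness by valuative criterion) matches the paper's, but there is one genuine misreading in the smoothness step, at places $\gothp$ of type $\beta^\sharp$, and a smaller imprecision at the split places.

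You write that at $\gothp$ of type $\beta^\sharp$ ``one is precisely in Carayol's situation of special formal $\calO_{D_\ttS,\gothp}$-modules'' and that one must exploit their special structure to see that deformations are unobstructed. That is not what is happening. Carayol's Drinfeld-type analysis is needed when the quaternion algebra ramifies at a $p$-adic place at which the abelian variety still has a non-trivial (signature $1$) Hodge filtration to deform. The whole point of Hypothesis~\ref{H:B_S-splits-at-p} in this paper is to rule exactly that case out: it forces $\ttS_{\infty/\gothp}=\Sigma_{\infty/\gothp}$, hence $s_{\tilde\tau}\in\{0,2\}$ for every $\tilde\tau$ over $\gothp$, so $\omega^\circ_{A_0^\vee,\tilde\tau}$ is either $0$ or all of $H_1^{\mathrm{dR}}(A_0)^\circ_{\tilde\tau}$ and the crystalline lift is \emph{unique}. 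Nothing remains to deform at those places, so there is no local-model subtlety at all and no Drinfeld/Carayol machinery is invoked or needed; conditions (b2)--(b3) are then checked to be preserved by the unique lift (the paper does this by the criterion of flatness by fibers and the compatibility $\lambda_{*}^{\mathrm{dR}} = \lambda_{0,*}^{\mathrm{cris}}$), not used as an input to rigidify a special formal module. The reason Kisin's result does not apply is that $G'_{\tilde\ttS,1}$ is a ramified group at those $\gothp$ and the level is not hyperspecial --- not that the deformation theory there is hard. As stated, your reasoning, taken at face value, cites a mechanism that would actually fail (special formal modules have a one-dimensional deformation space, which would contradict the codimension count), so this is a genuine gap even though your conclusion is correct. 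A smaller imprecision: at $\gothp$ of type $\alpha$ or $\beta$ you say ``the signature at each relevant embedding is $1$,'' but this holds only for the $\tau$ in $\Sigma_{\infty/\gothp}-\ttS_{\infty/\gothp}$; the ones in $\ttS_{\infty/\gothp}$ have signature $0$ or $2$ and contribute rigidly, exactly as over $\beta^\sharp$.

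On properness you take a genuinely different (and valid) route from the paper: you argue that $\ttS_\infty\neq\emptyset$ forces $B_\ttS$ to be a division algebra, hence $(G'_{\tilde\ttS})^{\mathrm{der}}\cong\Res_{F/\QQ}B_\ttS^{1}$ is $\QQ$-anisotropic, so $V$ admits no proper $\calO_{D_\ttS}$-stable totally $\psi$-isotropic subspace and the degeneration theory forbids any non-trivial torus part in the special fiber of the N\'eron model. The paper instead runs the semi-stable reduction argument directly: the cocharacter group $X_*(\TT)_\QQ$ of the torus part is an $\rmM_2(E)$-module of dimension $\le 4g$, hence $0$ or the simple module, and in the latter case matching its $E$-trace with the Kottwitz signature condition forces $\ttS_\infty=\emptyset$. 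Both are correct; yours buys a reusable group-theoretic criterion, the paper's is more self-contained and does not require unwinding the degeneration filtration.
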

We will postpone the proof of this theorem after Notation~\ref{N:notation-reduced}. 
The intuition behind the proof is the following. It is well known that the corresponding moduli problem of hyperspecial level is representable by a quasi-projective smooth scheme over $W(k_0)$. 
In our situation, the  hyperspecial level occurs merely at primes $\gothp\in \Sigma_p$ of type $\alpha^{\#}$ or $\beta^{\#}$, but the condition $\Sigma_{\infty/\gothp}=\ttS_{\infty/\gothp}$ for such primes $\gothp$ implies that the extra levels at those primes are representable by finite \'etale maps over the hyperspecial moduli. Hence, the resulting moduli problem is still smooth.

\subsection{Deformation theory}\label{S:deformation}
 We recall briefly the crystalline deformation theory of abelian varieties due to Serre-Tate and Grothendieck-Messing. This will be used in the proof of Theorem~\ref{T:unitary-shimura-variety-representability}.

We start with a general situation. Let $S$ be a $\Z_p$-scheme on which $p$ is locally nilpotent, and $S_0\hra S$ be a closed immersion whose ideal sheaf $\calI$ is equipped with a divided power structure compatible with that on $p\Z_p$, e.g. $S_0= \Spec k\hra S= \Spec k[\epsilon]/ (\epsilon^2)$ with $k$ a perfect field of characteristic $p$. Let $(S_0/\Z_p)_{\cris}$ be the crystalline site of $S_0$ over $\Spec \Z_p$, and $\cO^\cris_{S_0/\Z_p}$ be the structure sheaf.
%Let $\mathtt{AV}_{S_0}$ (resp. $\mathtt{AV}_{S}$) denote the category of abelian varieties over $S_0$ (resp. over $S$).
Let $A_0$ be  an abelian scheme over $S_0$, and $H^{\cris}_1(A_0/S_0)$ be the \emph{dual} of the relative crystalline cohomology $H^1_{\cris}(A_0/S_0)$ (or isomorphically $H^{\cris}_1(A_0/S_0)=H^1_{\cris}(A_0^\vee/S_0)$). Then $H^{\cris}_1(A_0/S_0)$ is a crystal of locally  free $\cO^\cris_{S_0/\Z_p}$-modules whose evaluation $H^{\cris}_1(A_0/S_0)_{S}$ at the pd-embedding  $S_0\hra S$ is a locally free $\cO_S$-module. We have a canonical isomorphism $H^{\cris}_1(A_0/S_0)_{S}\otimes_{\cO_S}\cO_{S_0}\simeq H^{\dR}_1(A_0/S_0)$, which is the \emph{dual} of the relative de Rham cohomology of $A_0/S_0$. For each abelian scheme $A$  over $S$ with $A\times_S S_0\simeq A_0$, we have a canonical Hodge filtration
\[
0\ra \omega_{A^\vee/S}\ra H^{\cris}_1(A_0/S_0)_{S}\ra \Lie(A/S)\ra 0.
\]
Hence, $\omega_{A^\vee/S}$ gives rise to a local direct factor of  $H^{\cris}_1(A_0/S_0)_{S}$ that lifts the subbundle $\omega_{A_0^\vee/S_0}\subseteq H_1^{\dR}(A_0/S_0)$. Conversely,
the  theory of deformations of abelian schemes says that  knowing this lift of subbundle is also enough to recover $A$ from $A_0$.
More precisely, let $\ttAV_{S}$ be the category of abelian schemes over $S$, and let $\ttAV^+_{S_0}$ denote the category of pairs $(A_0, \omega)$, where $A_0$ is an abelian scheme over $S_0$ and $\omega$ is a subbundle of $H_1^\cris( A_0/S_0)_{S}$ that lifts $\omega_{A_0^\vee/S_0} \subseteq H_1^\dR(A_0/S_0)$.
The main theorem of the crystalline deformation theory (cf. \cite[pp. 116--118]{grothendieck},   \cite[Chap. II \S 1]{mazur-messing}) says that \emph{the natural functor $\ttAV_{S} \to \ttAV_{S_0}^+$ given by $A\mapsto (A\times_{S}S_0, \omega_{A^\vee/S})$ is an equivalence of categories.}

Let $A$ be a deformation of $A_0$ corresponding to a direct factor $\omega\subseteq H_1^{\cris}(A_0/S_0)_S$  that lifts $\omega_{A_0^\vee/S_0}$. If $A_0$ is equipped with an action $\iota_0$ by a certain algebra $R$, then $\iota_0$ deforms to an action $\iota$ of $R$ on $A$ if and only if $\omega_{S}\subseteq H_1^{\cris}(A_0/S_0)_S$ is $R$-stable. Let $\lambda_0:A_0\ra A_0^\vee$ be a polarization. Then $\lambda_0$ induces a natural alternating pairing \cite[5.1]{bbm}
\[
\langle\ ,\ \rangle_{\lambda_0}\colon H^{\cris}_1(A_0/S_0)_S\times H^{\cris}_1(A_0/S_0)_S\ra \cO_S,
\]
which is perfect if $\lambda_0$ is prime-to-$p$. Then there exists a (necessarily unique) polarization $\lambda: A\ra A^\vee$ that lifts $\lambda_0$ if and only if $\omega_S$ is isotropic for $\langle\ ,\ \rangle_{\lambda_0}$ by \cite[2.1.6.9, 2.2.2.2, 2.2.2.6]{lan}.

\begin{notation}
\label{N:notation-reduced}
Before going to the proof of Theorem~\ref{T:unitary-shimura-variety-representability}, we introduce some notation. Recall that we have an isomorphism $\cO_{D_{\ttS},p}\simeq \rmM_2(\cO_{E}\otimes \Z_p)$. We denote by $\gothe\in \cO_{D_{\ttS},p}$ the element corresponding to
 $\bigl(
\begin{smallmatrix}1 &0\\0&0\end{smallmatrix}
\bigr)$
 in $\rmM_2(\cO_{E}\otimes \Z_p)$.
 For $S$  a $W(k_0)$-scheme and $M$  an  $\cO_S$-module locally free of finite rank equipped with an action of $\cO_{D_{\ttS},p}$, we call $M^{\circ}:=\gothe M$ \emph{the reduced part} of $M$. We have $M=(M^{\circ})^{\oplus 2}$ by Morita equivalence. Moreover, the $\calO_E$-action induces a canonical decomposition
 \[
M^{\circ}=\bigoplus_{\tilde \tau\in \Sigma_{E, \infty}} M^{\circ}_{\tilde\tau},
\]
where $\calO_E$ acts on each factor $M^\circ_{\tilde \tau}$ by $\tilde \tau: \calO_E \to W(k_0)$.

Let $A$ be an abelian scheme over $S$ carrying an action of $\cO_{D_{\ttS}}$.
The construction above gives rise to locally free  $\calO_S$-modules $\omega^\circ_{A/S}$, $\Lie(A/S)^\circ$,  and $ H_1^\dR(A/S)^\circ $, which are  of rank $\frac12\dim A$, $\frac12\dim A$, and $\dim A$, respectively. We call them the \emph{reduced invariant differential $1$-forms}, the \emph{reduced Lie algebra},  and the \emph{reduced de Rham homology} of $A$ respectively.
For each $\tilde \tau \in \Sigma_{E, \infty}$, we have a \emph{reduced Hodge filtration} in $\tilde \tau$-component
\begin{equation}\label{Equ:reduced-Hodge}
0\ra \omega_{A^\vee/S, \tilde \tau}^{\circ}\ra  H_1^\dR(A/S)^\circ_{\tilde \tau}\ra \Lie(A/S)^{\circ}_{\tilde \tau}\ra 0.
\end{equation}
If the abelian scheme $A$ comes from the moduli problem in Theorem~\ref{T:unitary-shimura-variety-representability}, the dimensions of these three factors are $2-s_{\tilde \tau}$, $2$,  and $s_{\tilde \tau}$, respectively, where the number $s_{\tilde \tau}$ is defined in Subsection~\ref{S:CM extension}.
\end{notation}

\begin{proof}[Proof of Theorem~\ref{T:unitary-shimura-variety-representability}]
The representability of $\bfSh_{K'}(G'_{\tilde \ttS})$ by a quasi-projective scheme over $W(k_0)$ is well known (cf. for instance  \cite[1.4.13, 2.3.3, 7.2.3.10]{lan}). 
To show the smoothness of $\bfSh_{K'}(G'_{\tilde \ttS})$, it suffices to prove that it  is formally smooth over $W(k_0)$.
 Let $R$ be a noetherian $W(k_0)$-algebra, $I\subset R$ be an ideal with $I^2=0$, and $R_0=R/I$.  We need to show that, every point $x_0=(A_0, \iota_0, \lambda_0, \alpha_{K',0})$ of $\bfSh_{K'}(G'_{\tilde \ttS})$ with values in $R_0$ lifts to an $R$-valued point $x$ of $\bfSh_{K'}(G'_{\tilde \ttS})$.
  We apply the deformation theory recalled in  \ref{S:deformation}. The relative crystalline homology $H_1^{\cris}(A_0/R_0)$ is naturally equipped with an action of $\cO_{D_{\ttS}}\otimes \Z_p$. Let $H_1^{\cris}(A_0/R_0)^{\circ}:=\gothe H_1^{\cris}(A_0/R_0)$ be its reduced part, and $H_1^{\cris}(A_0/R_0)^{\circ}_{R}$ be its evaluation on $R$. This is a  free $R\otimes\cO_E$-module of rank $4[F:\Q]$, and we have  a canonical decomposition
\[
H_1^{\cris}(A_0/R_0)^{\circ}_{R}= \bigoplus_{\tilde \tau\in \Sigma_{E,\infty}} H_1^{\cris}(A_0/R_0)^{\circ}_{R,\tilde \tau}.
\]
 The polarization $\lambda_0$ on $A_0$ induces a pairing
\begin{equation}\label{Equ:pairing-R}
H_1^{\cris}(A_0/R_0)_{R, \tilde \tau}^{\circ}\times H_1^{\cris}(A_0/R_0)_{R,\tilde \tau^c}^{\circ}\lra R,
\end{equation}
which is perfect for $\tilde \tau\in \Sigma_{E, \infty/\gothp}$ with $\gothp$ not of type $\beta^\sharp$.
 By the deformation theory \ref{S:deformation}, giving a deformation of $(A_0, \iota_0)$ to  $R$  is equivalent to giving, for each $\tilde \tau \in \Sigma_{E, \infty}$, a direct summand $\omega_{R, \tilde \tau}^{\circ}\subseteq H_1^{\cris}(A_0/R_0)^{\circ}_{R, \tilde \tau}$  which lifts $\omega_{A_0^\vee/R_0, \tilde \tau}^{\circ}$. Let $\gothp\in \Sigma_{p}$ with $\tilde \tau\in \Sigma_{E, \infty/\gothp}$. We distinguish several cases:

\begin{itemize}
\item If $\tilde \tau$ restricts to $\tau \in \ttS_\infty$, $\Lie(A_0/R_0)^{\circ}_{\tilde \tau}$ has rank $s_{\tilde \tau} \in \{0,2\}$ by the determinant condition (a). By duality or the Hodge filtration \eqref{Equ:reduced-Hodge}, $\omega_{A_0^\vee/R_0, \tilde \tau}^\circ$ has rank $2-s_{\tilde{\tau}}$, i.e. $\omega_{A_0^\vee/R_0, \tilde{\tau}}^\circ=0$ when $s_{\tilde \tau} =2$ and $\omega_{A_0^\vee/R_0, \tilde \tau}^{\circ}\cong H_1^{\dR}(A_0/R_0)^{\circ}_{\tilde \tau}$ when $s_{\tilde \tau} = 0$. Therefore, $\omega_{R, \tilde \tau}^{\circ}=0$ or $\omega_{R, \tilde \tau}^{\circ}=H_1^{\cris}(A_0/R_0)_{R, \tilde \tau}^{\circ}$  is the unique lift in these cases respectively.

\item If $\tilde \tau$ restricts to $\tau\in \Sigma_{\infty}-\ttS_{\infty}$, then $\omega_{A_0^\vee/R_0,\tilde \tau}^{\circ}$ and $\omega_{A_0^\vee/R_0, \tilde \tau^c}^{\circ}$ are both of rank 1 over $R_0$,  and we have $\omega_{A_0^\vee/R_0, \tilde \tau}^\circ=(\omega_{A_0^\vee/R_0, \tilde \tau^c}^\circ)^{\perp}$ under the perfect pairing between $H_1^{\dR}(A_0/R_0)^{\circ}_{\tilde \tau}$ and $H_1^{\dR}(A_0/R_0)_{\tilde \tau^c}^\circ$ induced by $\lambda_0$.  (Note that $\tau \in \Sigma_{\infty/\gothp}- \ttS_\infty$ means that $\gothp$ is not of type $\beta^\sharp$ and hence the Weil pairing is perfect.)
Within each pair $\{\tilde \tau, \tilde \tau^c\}$, we can take an arbitrary direct summand  $\omega_{R,\tilde \tau }^{\circ} \subseteq H_1^{\cris}(A_0/R_0)_{R, \tilde \tau}^\circ$ which lifts $\omega_{A_0^\vee/R_0, \tilde \tau}^{\circ}$, and let $\omega_{R, \tilde \tau^c}^\circ$ be the orthogonal complement of $\omega_{R, \tilde \tau^c}^\circ$ under the perfect pairing \eqref{Equ:pairing-R}. By the Hodge filtration \eqref{Equ:reduced-Hodge}, such choices of $(\omega_{R, \tilde \tau}^\circ, \omega_{R, \tilde \tau^c}^{\circ})$ form a torsor under the group
\[
\Hom_{R_0}(\omega_{A_0^\vee/R_0, \tilde \tau}^{\circ}, \Lie(A_0)_{\tilde \tau}^\circ)\otimes I\cong \Lie(A_0)_{\tilde \tau}^{\circ}\otimes_{R_0} \Lie(A_0)_{\tilde \tau^c}^\circ \otimes I,
\]
where in the second isomorphism, we have used the fact that $\Lie(A_0^\vee)^\circ_{\tilde \tau}\simeq \Lie(A_0)^\circ_{\tilde \tau^c}$.

\end{itemize}

We take  liftings $\omega_{R, \tilde \tau}^\circ$ for each $\tilde \tau \in \Sigma_{E, \infty}$ as above, and let $(A, \iota)$ be the corresponding deformation to $R$ of $(A_0, \iota_0)$. It is clear that $\bigoplus_{\tau\in \Sigma_{\infty}}(\omega_{R,\tilde{\tau}}^{\circ}\oplus \omega_{R, \tilde{\tau}^c}^{\circ})$ is isotropic for the pairing on $H^{\cris}_1(A_0/R_0)^{\circ}_R$ induced by $\lambda_0$. Hence,  the polarization $\lambda_0$ lifts uniquely to a polarization $\lambda: A\ra A^\vee$ satisfying condition (b1) in the statement of the Theorem. By the criterion of flatness by fibers \cite[11.3.10]{ega}, $\Ker(\lambda)$ is a finite flat group scheme over $R$, and the condition (b2) is thus satisfied. Condition (b3) follows from the fact that the morphism $\lambda_*:H_1^{\dR}(A/R)\ra H_1^{\dR}(A/R)$ is the same as $\lambda_{0,*}: H_1^{\cris}(A_0/R_0)_R\ra H_1^{\cris}(A_0^\vee/R_0)_R$ under the canonical isomorphism $H_1^{\dR}(B/R)\simeq  H_1^{\cris}(B_0/R_0)_R$ for $B=A_0, A_0^\vee$.

We have to show moreover that the level structure $\alpha_{K',0}=(\alpha^p_0, \alpha_{p,0})$ extends uniquely to $A$. It is clear for $\alpha^{p}_0$. For $\alpha_{p,0}$, let $H_0=\prod_{\gothp\text{ of type } \alpha^\sharp} \alpha_{\gothp}$ be the product of the closed subgroups in the data of $\alpha_{p,0}$.
 Let  $f_0:A_0\ra B_0=A_0/H_0$ be the canonical isogeny. It suffices to show that $B_0$ and $f_0$ deform to $R$. The abelian variety $B_0$ is equipped with an induced action of $\cO_{D_{\ttS}}$, a polarization $\lambda_{B_0}$ satisfying conditions (a) and (b). The isogeny $f_0$ induces canonical isomorphisms $H^{\cris}_{1}(A_0/R_0)_{R, \tilde \tau}^\circ\cong H_1^{\cris}(B_0/R_0)_{R,\tilde \tau}^\circ$ for $\tilde \tau \in \Sigma_{E,\infty/\gothp}$ with $\gothp$ not of type $\alpha^\sharp$. So for such primes $\gothp$ and $\tilde \tau \in \Sigma_{\infty/\gothp}$, the liftings $\omega^{\circ}_{R,\tilde \tau}$ chosen above give the liftings of $\omega_{B_0^\vee/R_0, \tilde \tau}^\circ\subset H_1^\dR(B_0/R_0)_{\tilde \tau}^\circ$.
For $\tilde \tau\in \Sigma_{\infty/\gothp}$ with $\gothp$ of type $\alpha^\sharp$, we note that at each closed point $x$ of $R_0$, $\omega_{B_0^\vee/k_x, \tilde \tau}^\circ$ is either trivial or isomorphic to the whole $H_1^{\dR}(B_0/k_x)_{\tilde \tau}^{\circ}$ as in the case for $A_0$. Hence the same holds for $R_0$ in place of $k_x$.  Therefore, $\omega_{B_0^\vee/R_0, \tilde \tau}^\circ$ admits a unique lift to a direct summand of $H^{\cris}_1(B_0/R_0)_{R, \tilde \tau}^{\circ}$. Such choices of liftings of $\omega_{B_0^\vee/R_0,\tilde \tau}^{\circ}$ give rise to a deformation $B/R$ of $B_0/R_0$.
It is clear that $f_0 : A_0 \to B_0$ also lifts to an isogeny $f: A \to B$.  Then the kernel of $f$ gives the required lift of $H_0$.
This concludes the proof of the smoothness of $\bfSh_{K'}(G'_{\tilde \ttS})$.

The dimension of $\bfSh_{K'}(G'_{\tilde \ttS})$ follows from the calculation of the tangent bundle of $\bfSh_{K'}(G'_{\tilde \ttS})$ as the following corollary shows.
When the ramification set $\ttS_{\infty}$ is non-empty, it is a standard argument to use valuative criterion to check that $\bfSh_{K'}(G'_{\tilde \ttS})$ is proper.
We will postpone the proof to Proposition~\ref{Prop:smoothness}, where a more general statement is proved. (One can check that there is no loop-hole in our argument.)
\end{proof}

\begin{cor}\label{C:deformation}
Let  $S_0\hra S$ be a closed immersion of locally noetherian $k_0$-schemes with ideal sheaf $\calI$ such that $\calI^2=0$.  Let $x_0=(A_0, \iota_0, \lambda_0, \bar{\alpha}_{K',0} )$ be an $S_0$-valued point of $\bfSh_{K'}(G'_{\tilde \ttS})$. Then the set-valued sheaf  of local deformations of $x_0$ to $S$  form a torsor under the group
\[
\bigoplus_{\tau\in \Sigma_{\infty}-\ttS_{\infty}} \bigl(\Lie(A_0)_{\tilde \tau}^{\circ}\otimes \Lie(A_0)_{\tilde \tau ^c}^{\circ}\bigr)\otimes \calI.
%=\bigoplus_{\tau\in \Sigma_{\infty}-\ttS_{\infty}}\bigl(\omega_{A_0/S_0, \tilde \tau}^{\circ, *}\otimes \omega_{A_0/S_0, \tilde \tau^c}^{\circ, *}\bigr)\otimes \calI.
\]
In particular, the tangent bundle $\calT_{\bfSh_{K'}(G'_{\tilde \ttS})}$ of $\bfSh_{K'}(G_{\tilde \ttS}')$ is canonically isomorphic to
\[
 \bigoplus_{\tau\in \Sigma_{\infty}-\ttS_{\infty}} \Lie(\bfA')^\circ_{\tilde \tau}\otimes \Lie(\bfA')^\circ_{\tilde \tau^c}
\]
where $\bfA' = \bfA'_{{\tilde \ttS}, K'}$ denotes the universal abelian scheme over $\bfSh_{K'}(G'_{\tilde \ttS})$.
\end{cor}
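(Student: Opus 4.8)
The plan is to read this corollary off from the infinitesimal analysis already carried out inside the proof of Theorem~\ref{T:unitary-shimura-variety-representability}, now retaining the torsor structure rather than merely asserting that lifts exist. Since the claim concerns sheaves on $S_0$, I would first reduce to the affine case $S_0 = \Spec R_0$, $S = \Spec R$ with $I = \Gamma(S_0,\calI)$ and $I^2 = 0$, everything being functorial in the pair $(R_0 \hookrightarrow R)$ so that the local torsors glue to the asserted torsor of $\calO_{S_0}$-modules. By Serre--Tate/Grothendieck--Messing (Subsection~\ref{S:deformation}), a deformation of $(A_0,\iota_0)$ to $R$ is the same as a choice, for each $\tilde\tau \in \Sigma_{E,\infty}$, of a direct summand $\omega^\circ_{R,\tilde\tau} \subseteq H_1^\cris(A_0/R_0)^\circ_{R,\tilde\tau}$ lifting $\omega^\circ_{A_0^\vee/R_0,\tilde\tau}$, subject to the constraint that $\omega^\circ_{R,\tilde\tau^c}$ be the orthogonal complement of $\omega^\circ_{R,\tilde\tau}$ under the pairing \eqref{Equ:pairing-R} whenever the latter is perfect (i.e.\ whenever $\tilde\tau$ lies over $\tau$ with $\gothp$ not of type $\beta^\sharp$). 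As established there, for $\tilde\tau$ over $\tau \in \ttS_\infty$ the lift is unique (the rank of $\omega^\circ_{A_0^\vee/R_0,\tilde\tau}$ being $0$ or $2$), contributing nothing; while for each $\tau \in \Sigma_\infty - \ttS_\infty$ the pair $(\omega^\circ_{R,\tilde\tau},\omega^\circ_{R,\tilde\tau^c})$ is pinned down by $\omega^\circ_{R,\tilde\tau}$ alone, and these lifts form a torsor under
\[
\Hom_{R_0}\!\big(\omega^\circ_{A_0^\vee/R_0,\tilde\tau},\ \Lie(A_0/R_0)^\circ_{\tilde\tau}\big) \otimes_{R_0} I \;\cong\; \big(\Lie(A_0/R_0)^\circ_{\tilde\tau} \otimes_{R_0} \Lie(A_0/R_0)^\circ_{\tilde\tau^c}\big) \otimes_{R_0} I,
\]
the identification using $\Lie(A_0^\vee)^\circ_{\tilde\tau} \cong \Lie(A_0)^\circ_{\tilde\tau^c}$ together with $\omega_{B/S} = \Lie(B/S)^\vee$. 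I would then recall from the proof of the theorem that, once $(A,\iota)$ is chosen, the polarization $\lambda_0$ lifts uniquely (the relevant $\omega^\circ$ is automatically isotropic) and both components $\alpha^p_0$, $\alpha_{p,0}$ of the level structure lift uniquely (for $\alpha_{p,0}$, by lifting the auxiliary isogeny $A_0 \to B_0$). Assembling the contributions gives the first assertion: the deformations of $x_0$ to $S$ form a torsor under $\bigoplus_{\tau \in \Sigma_\infty - \ttS_\infty}\big(\Lie(A_0)^\circ_{\tilde\tau}\otimes\Lie(A_0)^\circ_{\tilde\tau^c}\big)\otimes\calI$.

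For the tangent bundle, I would specialize the first part to the universal situation. Take $S_0 = \bfSh_{K'}(G'_{\tilde\ttS})$ with $x_0$ the universal tuple, and let $S = S_0[\calI] = \Spec_{S_0}(\calO_{S_0}\oplus\calI)$ be the trivial square-zero extension by a quasi-coherent sheaf $\calI$. Smoothness of $\bfSh_{K'}(G'_{\tilde\ttS})$ (Theorem~\ref{T:unitary-shimura-variety-representability}) shows that the lifts of $x_0$ along $S_0 \hookrightarrow S$ form a torsor---based at the constant lift coming from $S_0[\calI]\to S_0$---under $\Hom_{\calO_{S_0}}(\Omega_{\bfSh_{K'}(G'_{\tilde\ttS})},\calI) = \Gamma\big(S_0, \calT_{\bfSh_{K'}(G'_{\tilde\ttS})}\otimes\calI\big)$; on the other hand, the first part identifies the same pointed set with a torsor under $\Gamma\big(S_0, (\bigoplus_{\tau}\Lie(\bfA')^\circ_{\tilde\tau}\otimes\Lie(\bfA')^\circ_{\tilde\tau^c})\otimes\calI\big)$. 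Comparing the two simply-transitive actions on this common pointed set and using naturality in $\calI$ yields the canonical isomorphism $\calT_{\bfSh_{K'}(G'_{\tilde\ttS})} \cong \bigoplus_{\tau \in \Sigma_\infty - \ttS_\infty}\Lie(\bfA')^\circ_{\tilde\tau}\otimes\Lie(\bfA')^\circ_{\tilde\tau^c}$. Taking ranks (each summand is a line bundle) recovers $\dim \bfSh_{K'}(G'_{\tilde\ttS}) = \#(\Sigma_\infty - \ttS_\infty)$, completing the dimension part of Theorem~\ref{T:unitary-shimura-variety-representability}.

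I do not expect a serious obstacle: the mathematical content is entirely contained in the smoothness argument, and what remains is bookkeeping---checking that the local deformation torsors are formed compatibly with restriction to opens and with change of the square-zero ideal so that they sheafify, and that the isomorphism $\Hom(\omega^\circ_{A_0^\vee,\tilde\tau},\Lie(A_0)^\circ_{\tilde\tau})\cong\Lie(A_0)^\circ_{\tilde\tau}\otimes\Lie(A_0)^\circ_{\tilde\tau^c}$ is the one respecting the $\calO_E$-grading and the $A_0 \leftrightarrow A_0^\vee$ duality. If anything requires care it is this last $\calO_E$-equivariant duality identification, but it is already implicit in the proof of Theorem~\ref{T:unitary-shimura-variety-representability}.
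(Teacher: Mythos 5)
Your proposal is correct and follows essentially the same route as the paper: read off the torsor structure from the infinitesimal analysis in the proof of Theorem~\ref{T:unitary-shimura-variety-representability} (unique lifts for $\tilde\tau$ over $\ttS_\infty$, a torsor under $\Hom(\omega^\circ_{A_0^\vee,\tilde\tau},\Lie(A_0)^\circ_{\tilde\tau})\otimes\calI$ for $\tau\in\Sigma_\infty-\ttS_\infty$ with $\omega^\circ_{\tilde\tau^c}$ determined by duality, and uniqueness of the lifts of $\lambda_0$ and $\alpha_{K',0}$), then apply the result to the universal case. Your spelled-out comparison via the trivial square-zero extension $S_0[\calI]$ in the tangent-bundle step is a harmless elaboration of what the paper compresses into ``applying this to the universal case.''
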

\begin{proof}
A deformation of $x_0$ is determined by the liftings $\omega^\circ_{S, \tilde \tau}\subseteq H^{\cris}_1(A_0/S_0)_{S, \tilde \tau}^\circ$ of  $\omega_{A_0^\vee/S_0, \tilde \tau}^\circ$ for $\tilde \tau\in \Sigma_{E,\infty}$.
From the proof of Theorem~\ref{T:unitary-shimura-variety-representability}, we see that the choices for  $\omega_{S, \tilde \tau}^\circ$ are unique if $\tilde \tau$ restricts to $\tau\in \ttS_{\infty}$.
For $\tau\in \Sigma_{\infty}-\ttS_{\infty}$, the possible liftings $\omega_{S, \tilde \tau}^\circ$ and $\omega_{S, \tilde \tau^c}^\circ$ determine each other, and  form a torsor under the group
\[
\Hom_{\cO_{S_0}}(\omega_{A_0^\vee/S_0, \tilde \tau}^\circ, \Lie(A_0)_{\tilde \tau}^\circ) \otimes_{\cO_{S_0}}\calI\simeq \Lie(A_0)_{\tilde \tau}^\circ \otimes \Lie(A_0)^\circ_{\tilde \tau^c}\otimes_{\cO_{S_0}} \calI.
\]
The statement for the local lifts of $x_0$ to $S$ follows immediately. Applying this to the universal case, we obtain the second part of the Corollary.
\end{proof}

\begin{remark}
We remark that the moduli space $\bfSh_{K'}(G'_{\tilde \ttS})$ does not depend on the choice of the admissible lattice $\Lambda$ in Subsection~\ref{S:integral-unitary}; but the  universal abelian scheme $\bfA'$ does in the following way.
If $\Lambda_1$ and $\Lambda_2$ are two admissible lattices, we put $\widehat{\Lambda}_i^{(p)}: = \Lambda_i \otimes_\ZZ \widehat{\ZZ}^{(p)}$, and we use $\bfA'_i$ to denote the corresponding universal abelian variety over  $\bfSh_{K'}(G'_{\tilde \ttS})$ and $\bar \bbalpha^p_{K'^p, i}$ to denote  the universal level structure (away from $p$), for $i = 1,2$.

Then
 there is a natural prime-to-$p$ quasi-isogeny $\eta: \bfA'_1 \dashrightarrow \bfA'_2$ such that
\[
\xymatrix@C=50pt{
\widehat \Lambda^{(p)}_1 \ar@{-->}[d]
\ar[r]^-{\bar \bbalpha^p_{K'^p,1}}_-\cong &
T^{(p)}(
\bfA'_1)
\ar@{-->}[d]^{T^{(p)}(\eta)}\\
\widehat \Lambda^{(p)}_2
\ar[r]^-{\bar \bbalpha^p_{K'^p, 2}}_-\cong &
T^{(p)}(
\bfA'_2)
}
\]
is a commutative diagram up the action of $K'^p$, where the left vertical arrow is the isogeny of lattices inside $V \otimes_\QQ \AAA^{\infty,p}$.
(For more detailed discussion, see \cite[1.4.3]{lan}.)
\end{remark}

\begin{cor}
\label{C:integral-model-quaternion}
The integral model $\bfSh_{K'}(G'_{\tilde \ttS})$ defined in Theorem~\ref{T:unitary-shimura-variety-representability} gives an  integral canonical model $\bfSh_{K'_p}(G'_{\tilde \ttS})$ of $\Sh_{K'_p}(G'_\ttS)$.  Consequently, the quaternionic  Shimura variety $\Sh_{K_p}(G_\ttS)$ admits an integral canonical model over $\calO_{\wp}$.  Similarly, the Shimura varieties $\Sh_{K_p \times K_{E,p}}(G_\ttS \times T_{E, \tilde \ttS})$ and $\Sh_{K''_p}(G''_{\tilde \ttS})$ both admit integral canonical models over $\calO_{\tilde \wp}$.
The geometric connected components of these integral canonical models are canonically isomorphic.
\end{cor}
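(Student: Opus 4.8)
The plan is to establish the integral canonical model property for only one of the four groups --- the unitary group $G'_{\tilde\ttS}$ --- and then to propagate it to the others using the transfer results of Section~\ref{Section:Sh Var}. First I would observe that most of the statement is already available: by Corollary~\ref{C:comparison of shimura varieties}, the Shimura varieties attached to $G_\ttS$, $G_\ttS\times T_{E,\tilde\ttS}$, $G''_{\tilde\ttS}$ and $G'_{\tilde\ttS}$ have canonically (and $\calE_{G,\ttS,\wp}$-equivariantly) identified geometric connected components, and if one of them admits an integral canonical model so do the others. Since Theorem~\ref{T:unitary-shimura-variety-representability} furnishes a \emph{smooth} $\calO_{\tilde\wp}$-model $\bfSh_{K'}(G'_{\tilde\ttS})$ with finite \'etale transition maps as $K'^p$ shrinks, and since the identification of its generic fibre with the canonical model $\Sh_{K'}(G'_{\tilde\ttS})$ is the standard PEL comparison, the only point that needs a genuine argument is the extension property (Subsection~\ref{S:extension property}) for $\bfSh_{K'_p}(G'_{\tilde\ttS}) = \varprojlim_{K'^p}\bfSh_{K'^pK'_p}(G'_{\tilde\ttS})$. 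Once this is in hand, Corollary~\ref{C:comparison of shimura varieties} immediately yields integral canonical models for $G''_{\tilde\ttS}$ and $G_\ttS\times T_{E,\tilde\ttS}$ over $\calO_{\tilde\wp}$ and for $G_\ttS$ over $\calO_\wp$, together with the asserted canonical isomorphisms on geometric connected components; the hypotheses of Corollary~\ref{C:Sh(G)^circ_Zp independent of G} underlying this hold because $p$ is unramified in $F$ and in $E$, so the reflex fields $F_{\ttS,\wp}$ and $E_{\tilde\ttS,\tilde\wp}$ are unramified over $\QQ_p$, and because of Lemmas~\ref{L:compatibility of derived group and adjoint group} and~\ref{L:compatibility of derived group and adjoint group2}.

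To verify the extension property, I would take a smooth $\calO_{\tilde\wp}$-scheme $S$, put $S_\eta = S\otimes_{\calO_{\tilde\wp}}\mathrm{Frac}(\calO_{\tilde\wp})$, and start from a morphism $f\colon S_\eta\to\Sh_{K'_p}(G'_{\tilde\ttS})$, which by the moduli interpretation is a compatible family over the levels $K'^p$ of tuples $(A_\eta,\iota_\eta,\lambda_\eta,\alpha_{K',\eta})$ over $S_\eta$. The essential step is to extend the abelian scheme $A_\eta$ to an abelian scheme over the regular scheme $S$; I would do this by the argument of Moonen \cite[Corollary~3.8]{moonen96} (compare \cite{kisin}): localize at the codimension-$\leq 1$ points of $S$, extend over the generic points of the special fibre using N\'eron models and a good-reduction criterion from $p$-adic Hodge theory, and then glue over the regular base $S$ by a purity statement --- this is the step that requires $p>2$. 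Granting the extension of $A_\eta$ to $A/S$, the $\calO_{D_\ttS}$-action and the polarization extend uniquely by the N\'eron mapping property and rigidity, and conditions (b1)--(b3) of Theorem~\ref{T:unitary-shimura-variety-representability} persist because they are fibrewise conditions that can be checked with the criterion of flatness by fibres, exactly as in the proof of that theorem; the prime-to-$p$ level structures extend because they are sections of lisse \'etale sheaves on $S$ that are already defined on the dense open $S_\eta$, hence on all of $S$ by normality; and the level data at primes of type $\alpha^\sharp$ extends by taking schematic closures of the relevant finite flat subgroup schemes of $A[\gothq]\oplus A[\bar\gothq]$. I should flag that, although $K'_p$ is not hyperspecial for primes of type $\alpha^\sharp$ or $\beta^\sharp$ so that the results of \cite{kisin} do not apply directly, this argument only uses that we already have a \emph{smooth} integral model arising from a moduli problem.

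The hard part will be precisely this extension property: everything else is bookkeeping with the constructions of Sections~\ref{Section:Sh Var} and~\ref{Section:Integral-model}, while the extension of an abelian scheme with PEL and prime-to-$p$ level data across the special fibre of a smooth $\calO_{\tilde\wp}$-scheme is the one place where the deep input --- $p$-adic Hodge theory and the hypothesis $p>2$ --- is needed.
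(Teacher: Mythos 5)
Your overall architecture matches the paper's: prove the extension property for the PEL moduli space $\bfSh_{K'_p}(G'_{\tilde\ttS})$ and then propagate to $G_\ttS$, $G_\ttS\times T_{E,\tilde\ttS}$ and $G''_{\tilde\ttS}$ via Corollary~\ref{C:comparison of shimura varieties}. For the $\ttS_\infty\neq\Sigma_\infty$ part, your outline (Moonen's argument to extend $A$, $\iota$, $\lambda$, $\alpha^p$; then handle the $p$-adic level data) is correct in spirit, though for the $\alpha^\sharp$ level structure the paper proceeds differently: rather than taking schematic closures of the finite flat subgroup schemes --- which does not obviously produce a flat group scheme satisfying the required duality condition over a general smooth base --- it first establishes the extension property for the auxiliary moduli space $\bfSh_{\widetilde K'_p}(G'_{\tilde\ttS})$ \emph{without} the $p$-level structure, notes (from the rigidity of $\alpha_p$ in the deformation-theoretic proof of Theorem~\ref{T:unitary-shimura-variety-representability}) that the forgetful map $\bfSh_{K'_p}(G'_{\tilde\ttS})\to\bfSh_{\widetilde K'_p}(G'_{\tilde\ttS})$ is finite \'etale, and then extends the section by the usual argument that a generic section of a finite \'etale cover of a normal separated scheme extends. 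Your schematic-closure shortcut would at minimum need to be justified.

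The genuine gap is that your proposal silently assumes $\ttS_\infty\neq\Sigma_\infty$ and therefore omits the weak Shimura datum case entirely. When $\ttS_\infty=\Sigma_\infty$, the variety $\Sh_{K'_p}(G'_{\tilde\ttS})$ is zero-dimensional, and the very meaning of ``canonical model'' (Subsection~\ref{S:integral model weak Shimura datum}) is that the Galois group acts via the reciprocity map $\gothRec_{Z'}$ on the center; there is no ``standard PEL comparison'' to invoke and the extension property is vacuous. What one actually has to check in this case is that the geometric Frobenius $\Frob_{\tilde\wp}$ acts on the $\overline\FF_p$-points of the moduli space $\bfSh_{K'}(G'_{\tilde\ttS})$ in exactly the way prescribed by $\gothRec_{Z',\tilde\wp}(\varpi_{\tilde\wp})$. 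The paper carries this out explicitly: it computes the image of $\varpi_{\tilde\wp}$ under the reciprocity map (giving \eqref{E:image of rec}) and matches it against the effect of pulling back the moduli data by $\Frob_{\tilde\wp}$, tracked on the Dieudonn\'e modules $\tilde\calD(A)_\gothq$ place by place. Without this verification the corollary is not established when $\ttS_\infty=\Sigma_\infty$, which is precisely the case one needs for the discrete Shimura varieties $\overline{\Sh}_{B^\times_{\{\infty_1,\dots\}}}$ appearing throughout the rest of the paper.
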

\begin{proof}
We first assume that $\ttS_\infty \neq \Sigma_\infty$. We need  to verify that for any smooth $\cO_{\tilde \wp}$-scheme $S$,  any morphism $s_0:S\otimes_{\cO_{\tilde \wp}} E_{\tilde \ttS, \tilde \wp}\ra \Sh_{K'_p}(G'_{\tilde \ttS})$ extends to a morphism $s: S\ra \bfSh_{K'_p}(G'_{\tilde \ttS})$.
Explicitly, we have to show that a tuple $(A, \iota,\lambda_, \alpha^p\alpha_p)$ over $S\otimes_{\cO_{\tilde \wp}}E_{{\tilde \ttS},\tilde \wp}$ extends to a similar tuple over $S$. Here, $\alpha^p\alpha_p$ is the projective limit in $K'^{p}$ of  level structures $\alpha^p_{K'^p} \alpha_p$  as in Theorem~\ref{T:unitary-shimura-variety-representability}(c).
The same arguments as \cite[Corollary 3.8]{moonen96} apply to proving the existence of  extension of $A$, $\iota$, $\lambda$, and the prime-to-$p$ level structure $\alpha^p$.  It remains to extend the level structure $\alpha_p$.
Let $\bfSh_{\widetilde {K}'_{p}}(G'_{\tilde \ttS})$ denote the  similar moduli space as $\bfSh_{K'_{p}}(G_{\tilde \ttS}')$ by forgetting the $p$-level structure $\alpha_p$. 
We have seen in the proof of Theorem~\ref{T:unitary-shimura-variety-representability} that there is no local deformation of $\alpha_p$, which means the forgetful map  $\bfSh_{K'_{p}}(G'_{\tilde \ttS})\ra \bfSh_{\widetilde {K}'_{p}}(G'_{\tilde \ttS})$ is finite and \'etale.
The discussion above shows that $\bfSh_{\widetilde {K}'_{p}}(G'_{\tilde \ttS})$ satisfies the extension property. Hence,  there exists a morphism $\tilde s: S\ra \bfSh_{\widetilde{K}'_p}(G'_{\tilde \ttS})$ such that the square of the following   diagram
 \[
 \xymatrix{
 S\otimes_{\cO_{\tilde \wp}}E_{\tilde \ttS,\tilde \wp}\ar[r]^{s_0}\ar[d]& \bfSh_{K'_p}(G'_{\tilde \ttS})\ar[d]\\
 S\ar@{-->}[ur]^{s} \ar[r]^{\tilde s} &\bfSh_{\widetilde{K}'_p}(G'_{\tilde \ttS})
 }
 \]
 is commutative.
 We have to show that there exists a  map $s$ as the dotted arrow that makes the  whole diagram commutative. Giving such a map $s$ is equivalent to giving a section of the finite \'etale  cover $S\times_{\bfSh_{ \widetilde{K}'_{\ttS}}(G'_{\tilde \ttS})}\bfSh_{{K}'_{\ttS}}(G'_{\tilde \ttS})\ra S$ extending the section corresponding to $s_0$. Since a section of a finite \'etale cover of separated schemes is an open and closed immersion, the existence of $s$ follows immediately.
    The existence of integral canonical models for $\Sh_{K_p}(G_{\ttS})$, $\Sh_{K''_p}(G''_{\tilde \ttS})$ and $\Sh_{K_p\times K_{E,p}}(G_{\ttS}\times T_{E, \tilde \ttS})$  follows from Corollary~\ref{C:Sh(G)^circ_Zp independent of G}.
    
When $\ttS_\infty = \Sigma_\infty$, we need to show that the action of the arithmetic Frobenius $\sigma_{\tilde \wp}$  on the moduli space $\bfSh_{K'}(G'_{\tilde \ttS})$ is given  by  the  reciprocity law as in Subsection~\ref{S:integral model weak Shimura datum}.
%{\color{blue}[I don't care about Frobenius away from $p$, because I am not talking about canonical model, but only the integral one.]}
Put $n_{\tilde \wp} = [k_{\tilde \wp}: \FF_p]$.
Let $\gothRec_{Z'}: \Gal_{E_{\tilde \ttS}} 
\to Z'(\QQ)^\cl\backslash Z'(\AAA^\infty)/Z'(\Zp)$ denote the reciprocity map defined in Subsection~\ref{S:integral model weak Shimura datum}, where $Z'$ is the center of $G'_{\tilde \ttS}$ which is the algebraic group associated to the subgroup of  $E^\times$ consisting of elements with norm to $F^{\times}$ lying in $\Q^{\times}$.
By definition, $\gothRec_{Z'}(\sigma_{\tilde \wp})$ is the image of $\varpi_{\tilde \wp}^{-1}$ under the composite of
\[
\gothRec_{Z', \tilde \wp}\colon E_{\tilde\ttS,\tilde \wp}^\times / \calO_{\tilde \wp}^\times \xrightarrow{\Rec_{Z'}(G'_{\tilde \ttS}, \gothH_\ttS)} Z'(\Qp) /Z'(\Zp)
\]
and the natural map  $Z'(\Qp)/Z'(\Zp) \ra Z'(\QQ)^\cl\backslash Z'(\AAA^\infty)/Z'(\Zp)$.
Explicitly, one has 
\[
Z'(\QQ_p) = \big\{ \big( (x_\gothp)_{\gothp \in \Sigma_p}, y\big)\,\big|\, y \in \QQ_p^\times,\ x_\gothp \in E_{\gothp}^\times, \textrm{ and }\Nm_{E_\gothp/F_\gothp}(x_\gothp) = y\big\}.
\]
We note that there is no $p$-adic  primes of $F$ of type $\beta$, and the valuation of $y$ determines the valuation of $x_\gothp$ for $\gothp$ of type $\beta^{\sharp}$. 
For each prime $\gothp\in \Sigma_p$ of type $\alpha$ or $\alpha^{\sharp}$, choose a place $\gothq$ of $E$ above $\gothp$, then the map 
$((x_{\gothp})_{\gothp}, y)\mapsto (\mathrm{val}_p(y), (\mathrm{val}_{p}(x_{\gothq}))_{\gothp})$ defines an isomorphism  
$$\xi:Z'(\Qp)/Z'(\Zp)\xrightarrow\cong \ZZ \times \prod_{\gothp \textrm{ of type $\alpha$ or $\alpha^{\sharp}$}} \ZZ,$$
 where we have written $x_{\gothp}=(x_{\gothq},x_{\bar\gothq})$ for each prime $\gothp\in \Sigma_p$ of type $\alpha$ or $\alpha^{\sharp}$.
By the definition of $\gothRec_{Z', \tilde \wp}$ in Subsection~\ref{S:integral model weak Shimura datum} using $h'_{\tilde \ttS}$, we see that $\xi\circ\gothRec_{Z', \tilde \wp}(\varpi_{\tilde \wp}^{-1})$ is equal to 
\begin{equation}
\label{E:image of rec}
\big(-n_{\tilde \wp}, (-\#\tilde \ttS_{\infty / \bar \gothq} \cdot n_{\tilde \wp} / f_\gothp)_{\gothp}\big),
\end{equation}
where $n_{\tilde \wp} = [k_{\tilde \wp}: \FF_p]$ and $f_\gothp$ is the inertia degree of $\gothp $ in $F/\QQ$. (Note that the homomorphism $h_{E, \tilde \ttS}$ sends $z$ to $\bar z$ with respect to the embeddings $\tilde \tau \in \tilde \ttS_\infty$.)

On the other hand, $\sigma_{\tilde \wp}$ takes a closed point $x = (A, \iota, \lambda, \alpha_{K'})$ of $\bfSh_{K'}(G'_{\tilde \ttS})_{\overline \FF_p}$ to $\sigma_{\tilde \wp}(x) = (\sigma_{\tilde \wp}^*(A), \iota', \lambda', \Frob_{\tilde \wp}\circ \alpha_{K'} )$, where $\sigma_{\tilde \wp}^*(A)$ denotes the pullback of $A$ via the Frobenius $\sigma_{\tilde \wp}=\sigma^{n_{\tilde\wp}}$ on the residue field $\kappa(x)$, equipped with the induced  $\cO_{D_{\ttS}}$-action and the polarization, and $\Frob_{\tilde \wp}:A\ra \sigma_{\tilde \wp}^*A$ is the relative Frobenius map.
For a $p$-adic prime  $\gothp$ of $F$ (or of $E$),  denote by  $\tcD(A)_{\gothp}$ the covariant Dieudonn\'e  module of $A[\gothp^{\infty}]$.
We observe that, if $\gothp$ is a prime of $F$ of type $\beta$, then 
\[
\tilde\calD(\sigma^*_{\tilde \wp}(A))_\gothp=p^{-n_{\tilde \wp}/2} V^{n_{\tilde \wp}}\tilde \calD(A)_\gothp,
\]
and if $\gothp$ is of type $\alpha$ or $\alpha^{\sharp}$ with $\gothq$ a place of $E$ above $\gothp$, then 
\[
 \tilde \calD(\sigma^*_{\tilde \wp}(A))_\gothq
 =
 p^{-\#\tilde \ttS_{\infty / \bar \gothq} \cdot n_{\tilde \wp} / f_\gothp}V^{n_{\tilde \wp}}\tilde \calD(A)_\gothq.
\]
Let  $g_{p}\in Z'(\QQ_p)$ be an element such that $\xi(\bar g_p)$ is given by \eqref{E:image of rec}, where $\bar g_p$ denotes the image of $g_p$ in $Z'(\QQ_p)/Z'(\ZZ_p)$.  
Then via the isogeny $\Frob_{\tilde \wp}:A\ra \sigma_{\tilde\wp}^*A$, which corresponds to $V^{n_{\tilde \wp}}: \tcD(A)\ra \tcD(\sigma^*_{\tilde\wp}A)$, $\tcD(\sigma_{\tilde \wp}^*A)$ is identified  with the lattice $g_p\tcD(A)$ of $\tcD(A)[1/p]$. 
This agrees with the computation of  $\gothRec_{Z', \tilde \wp}(\varpi_{\tilde \wp}^{-1})$ above.
\end{proof}

%{\color{blue}[I am intended to kill the following definition you had earlier.]}
%\begin{defn}\label{D:integral-quaternion-finite}
%For an open compact subgroup $K^p\subseteq G_{\ttS}(\AAA^{\infty,p})$, we define  $\bfSh_{K^pK_p}(G_{\ttS})$ to be the quotient of $\bfSh_{K_p}(G_{\ttS})$ by $K^p$.  It is an  integral model over $\cO_{\wp}$ of $\Sh_{K^pK_p}(G_{\ttS})$, and smooth if $K^p$ is sufficiently small.
%\end{defn}

The rest of this section is devoted to understanding how to pass the universal abelian varieties on $\bfSh_{K'_p}(G'_{\tilde \ttS})$ to other Shimura varieties, as well as natural partial Frobenius morphisms among these varieties and their compatibility with the abelian varieties.

\subsection{Actions on universal abelian varieties in the unitary case}
\label{S:abel var in unitary case}

We need to extend the usual tame Hecke algebra action on the universal abelian variety $\bfA'_{K'_p}$ over  $\bfSh_{K'_p}(G'_{\tilde \ttS})$ to the action of a slightly bigger group $\widetilde G_{\tilde \ttS}: = G'_{\tilde \ttS}(\AAA^{\infty, p}) G''_{\tilde \ttS}(\QQ)_+^{(p)}$.
We follow \cite{saito}. Take an element $\tilde g \in \widetilde G_{\tilde \ttS}$ and  let $K'^p_1$ and $K'^p_2$ be two open compact subgroups of $G'_{\tilde \ttS}(\AAA^{\infty, p})$ such that $\tilde g^{-1} K'^p_1 \tilde g \subseteq K'^p_2$ (note that $G''_{\tilde \ttS}$ normalizes $G'_{\tilde \ttS}$).
We put $K'_i = K'^p_iK'_p$ for 
$i=1,2$.
Then starting from the universal
abelian variety $\bfA'_{K'_1}$ together with the tame level structure $\bar \bbalpha_{K'^p_1}^p: \widehat \Lambda^{(p)} \xrightarrow{\cong}
T^{(p)} \bfA'_{K'_1}$, we may obtain an abelian variety $\bfB'$ over $\bfSh_{K'_1}(G'_{\tilde \ttS})$, together with a prime-to-$p$ quasi-isogeny $\eta: \bfA'_{K'_1} \to \bfB' $ and a tame level structure such that the following diagram commutes
\[
\xymatrix@C=40pt{
& \widehat \Lambda^{(p)} \ar[r]^-{\bar \bbalpha^p_{K'^p_1}}_-\cong
\ar@{-->}[d]
&
T^{(p)}(
\bfA'_{K'_1})
\ar@{-->}[d]^{T^{(p)}(\eta)}
\\
\widehat \Lambda^{(p)} \ar[r]^-{\cdot \tilde g^{-1}}_-\cong
&
\tilde g\widehat \Lambda^{(p)} \ar[r]^-\cong
&
T^{(p)}(\bfB'),
}
\]
where the left vertical arrow is the natural quasi-isogeny as lattices inside $V \otimes_\QQ \AAA^{\infty,p}$.
Since $\tilde g^{-1}K'^p_1 \tilde g \subseteq K'^p_2$, we may take the $K'^p_2$-orbit of the composite of the bottom homomorphism as the tame level structure on $\bfB'$.

One can easily transfer other data in the moduli problem of Theorem~\ref{T:unitary-shimura-variety-representability} to $\bfB'$, \emph{except for the polarization} which we make the modification as follows:
since $\tilde g \in G'_{\tilde \ttS}(\AAA^{\infty, p}) G''_{\tilde \ttS}(\QQ)_+^{(p)}$, we have $\nu(\tilde g) \in \calO_{F, (p)}^{\times,>0} \cdot \AAA_\QQ^{\infty, p,\times} = \calO_{F, (p)}^{\times,>0} \cdot \widehat \ZZ^{(p),\times}$.
We can then write $\nu(\tilde g)$ as the product $\nu^+_{\tilde g} \cdot u$ for $\nu^+_{\tilde g} \in \calO_{F, (p)}^{\times,>0}$ and $u \in \widehat \ZZ^{(p),\times}$.
In fact, $\nu^+_{\tilde g}$ is uniquely determined by this restriction.  We take the polarization on $\bfB'$ to be the composite of a sequence of quasi-isogenies:
\[
\lambda_{\bfB'}: \bfB' \xleftarrow{\quad} \bfA'_{K'_1} \xrightarrow{\nu^+_{\tilde g} \lambda_{\bfA'}} \big(\bfA'_{K'_1}\big)^\vee \xrightarrow{\quad} \bfB'^\vee.
\]
Such modification ensures that $\bfB'$ satisfies condition (c1) of Theorem~\ref{T:unitary-shimura-variety-representability}.

The moduli problem then implies that $\bfB' \cong (H_{ \tilde g})^*(\bfA'_{K'_2})$ for a uniquely determined morphism $H_{\tilde g}:\bfSh_{K'_1}(G'_{\tilde \ttS})\to
\bfSh_{K'_2}(G'_{\tilde \ttS})$. This gives the action of $\widetilde G_{\tilde \ttS}$ on $\Sh_{K'_p}(G'_{\tilde \ttS})$.
Moreover, we have a quasi-isogeny
\[
H_{\tilde g}^\#: \bfA'_{K'_1} \xrightarrow{\eta} \bfB' \cong
(H_{\tilde g})^*(\bfA'_{K'_2})\]
giving rise to an equivariant action of $\widetilde G_{\tilde \ttS}$ on the universal abelian varieties $\bfA'_{K'_p}$ over $\bfSh_{K'_p}(G'_{\tilde \ttS})$.

One easily checks that the action of diagonal $ \calO_{E,(p)}^\times$ on the Shimura variety $\bfSh_{K'_p}(G'_{\tilde \ttS})$ is trivial, and hence we have an action of $\calG'_{\tilde \ttS}= \widetilde G_{\tilde \ttS} / \calO_{E,(p)}^{\times, \cl}$ on $\bfSh_{K'_p}(G'_{\tilde \ttS})$.  However, the action of $\calO_{E,(p)}^\times$ on the universal abelian variety $\bfA'_{K'_p}$ is \emph{not} trivial.  So the latter does not carry a natural action of $\calG'_{\tilde \ttS}$.  So our earlier framework for Shimura varieties does not apply to the universal abelian varieties directly.
However, we observe that, by the construction at the end of Subsection~\ref{A:connected integral model},
\[
\bfSh_{K''_p}(G''_{\tilde \ttS}) = \bfSh_{K'_p}(G'_{\tilde \ttS}) \times_{\calG'_{\tilde \ttS}} \calG''_{\tilde \ttS} = \bfSh_{K'_p}(G'_{\tilde \ttS}) \times_{\widetilde G_{\tilde \ttS}} G''_{\tilde \ttS}(\AAA^{\infty,p}).
\]
So 
\[
\bfA''_{K''_p}: = \bfA'_{K'_p}\times_{\calG'_{\tilde \ttS}} \calG''_{\tilde \ttS} = \bfA'_{K'_p} \times_{\widetilde G_{\tilde \ttS}} G''_{\tilde \ttS}(\AAA^{\infty,p})
\]
gives a natural family of abelian variety over $\bfSh_{K''_p}(G''_{\tilde \ttS})$.
We will not discuss families of abelian varieties over the quaternionic Shimura variety $\bfSh_{K_p}(G_\ttS)$ (except when $\ttS =\emptyset$).

\subsection{Automorhpic $l$-adic systems on $\bfSh_{K''_p}(G''_{\tilde \ttS})$ and its geometric interpretation}

By a \emph{multiweight}, we mean a tuple $(\underline{k}, w) =((k_\tau)_{\tau\in\Sigma_\infty}, w) \in \NN^{[F:\Q]} \times \NN$  such that $ k_\tau \geq 2$ and $w \equiv k_\tau \pmod 2$ for each $\tau$.
We also fix a section of the natural map $\Sigma_{E, \infty} \to \Sigma_\infty$, that is to fix a extension $\tilde \tau$ to $E$ of each real embedding $\tau \in \Sigma_\infty$ of $F$. Use $\tilde \Sigma$ to denote the image of this section.
In this subsection, we use $\tilde \tau$ to denote this chosen lift of $\tau$.
We fix a subfield $L$ of $\overline \QQ \subset \CC$ containing all embeddings of $E$, as the coefficient field.

Let $\gothl$ be a finite place of $L$ over a prime $l$ with $l \neq p$.
Fix an isomorphism $\iota_\gothl:\CC \simeq \overline L_\gothl  $.

Consider the injection
\begin{small}
\[
G''_{\tilde \ttS} \times_\QQ L =
\big( \Res_{F/\QQ}(B_\ttS^\times) \times_{\Res_{F/\QQ}\GG_m} \Res_{E/\QQ}\GG_m \big) \times_\QQ L \hookrightarrow
\Res_{E/\QQ} D^\times_\ttS \times_\QQ L
 \cong
\prod_{\tau \in \Sigma_\infty}
\GL_{2, L, \tilde \tau} \times
\GL_{2, L, \tilde \tau^c},
\]
\end{small}
where $E^\times$ acts on $\GL_{2, L, \tilde \tau}$ (resp. $\GL_{2, L, \tilde \tau^c}$) through $\tilde \tau$ (resp. $\tilde \tau^c$).
For a multiweight $(\underline k, w)$, we consider the following representation of $G''_{\tilde \ttS} \times_\QQ L$:
\[
\rho''^{(\underline{k}, w)}_{\tilde \ttS, \widetilde \Sigma} = \bigotimes_{\tau \in \widetilde \Sigma} \rho_\tau^{(k_\tau, w)} \circ \check\pr_{\tilde \tau} \quad \textrm{for }i = 1, 2 \textrm{ with} \quad
\rho_\tau^{(k_\tau, w)} =
\Sym^{k_\tau-2} \otimes
\det{}^{\frac{w-k_\tau}{2}}
,
\]
where $\tau$ is the restriction of $\tilde \tau$ to $F$, and $\check\pr_{\tilde \tau}$  is the \emph{contragradient} of the natural projection to the $\tilde \tau$-component of $G''_{\tilde \ttS} \times_\QQ L \hookrightarrow \Res_{E/\QQ} D_\ttS^\times \times_\QQ L$.
Note that $\rho''^{(\underline k, w)}_{\tilde \ttS, \widetilde \Sigma}$ is trivial on the maximal anisotropic $\R$-split subtorus of the center of $G''_{\tilde \ttS}$, i.e. $\Ker(\Res_{F/\Q}\GG_m \to \GG_m)$.
By \cite[Ch. III, \S 7]{milne}, $\rho''^{(\underline k, w)}_{\tilde \ttS, \widetilde \Sigma}$ corresponds to a lisse $L_\gothl$-sheaf $\scrL''^{(\underline{k}, w)}_{\tilde \ttS, \widetilde \Sigma}$ over the Shimura variety $\bfSh_{K''}(G''_{\tilde \ttS})$ compatible as the level structure changes.

We now give a geometric interpretation of this automorphic $l$-adic sheaf on $\bfSh_{K''}(G''_{\tilde \ttS})$.
For this, we fix an isomorphism  $D_\ttS\simeq \rmM_2(E)$ and let $\gothe = \big(
\begin{smallmatrix}
1&0\\0&0
\end{smallmatrix}\big)\in \rmM_2(\cO_E)$ denote the idempotent element.
Let $\bfA'' = \bfA''_{\tilde \ttS, K''}$ denote
the natural family of abelian varieties constructed in Subsection~\ref{S:abel var in unitary case}.
Let $V(\bfA'')$ denote the $l$-adic Tate module of $\bfA''$.
We then have a decomposition
\[
V(\bfA'') \otimes_{\QQ_l} L_\gothl \cong \bigoplus_{\tau \in\Sigma_\infty}
\big(V(\bfA'')_{\tilde \tau} \oplus
V(\bfA'')_{\tilde \tau^c} \big) = \bigoplus_{\tau \in\Sigma_\infty}
\big(V(\bfA'')^{\circ, \oplus 2}_{\tilde \tau} \oplus
V(\bfA'')^{\circ, \oplus 2}_{\tilde \tau^c} \big),
\]
where $V(\bfA'')_{\tilde \tau}$ (resp. $V(\bfA'')_{\tilde \tau^c}$) is the component where $\calO_E$ acts through $\iota_\gothl \circ\tilde \tau$ (resp. $\iota_\gothl \circ \tilde \tau^c$), and $V(\bfA'')_{\tilde \tau}^\circ: = \gothe  V(\bfA'')_{\tilde \tau}$ (resp. $V(\bfA'')_{\tilde \tau^c}^\circ := \gothe  V(\bfA'')_{\tilde \tau^c}$) is a lisse $L_\gothl$-sheaf of rank $2$. For a multiweight $(\underline k, w)$, we put
\[
\calL^{(\kb,w)}_{\widetilde\Sigma}(\bfA'')=\bigotimes_{\tau \in \widetilde \Sigma} \bigg( \Sym^{k_\tau -2} V(\bfA'')_{\tilde \tau}^{\circ, \vee} \otimes (\wedge^2
V(\bfA'')_{\tilde \tau}^{\circ, \vee})^{\frac{w-k_\tau}2}
\bigg).
\]
Note that the duals on the Tate modules mean that we are  essentially taking the relative first \'etale \emph{cohomology}.
The moduli interpretation implies that we have a canonical isomorphism
\[
\scrL''^{(\underline k, w)}_{\tilde \ttS, \widetilde \Sigma} \cong
\calL_{\widetilde \Sigma}^{(\underline k, w)}(\bfA''_{\tilde \ttS}).
\]

\subsection{Twisted Partial Frobenius}
\label{S:partial Frobenius}

The action of the twisted partial Frobenius and its compatibility with the GO-strata description will be important to later applications in \cite{tian-xiao2}.
We first define the twisted partial Frobenius on the universal abelian scheme $\bfA'_{\tilde \ttS} = \bfA'_{\tilde \ttS, K'}$ over the unitary Shimura variety $\bfSh_{K'}(G_{\tilde \ttS}')$.

Fix $\gothp \in \Sigma_p$.
We define an action of
$\sigma_\gothp $ on $\Sigma_{E,\infty}$ as follows: for $\tilde\tau\in \Sigma_{E,\infty}$, we put
\begin{equation}\label{E:defn-sigma-gothp}
\sigma_{\gothp}\tilde\tau=\begin{cases}\sigma \circ\tilde\tau & \text{if }\tilde \tau\in \Sigma_{E,\infty/\gothp},\\
\tilde\tau &\text{if }\tilde\tau\notin \Sigma_{E,\infty/\gothp},
\end{cases}
\end{equation}
where $\Sigma_{E,\infty/\gothp}$ denotes the lifts of places in $\Sigma_{\infty/\gothp}$.
%For each $\tilde\tau\in \Sigma_{E,\infty}$, we put $\sigma_{\gothp}s_{\tilde\tau}=s_{\sigma^{-1}_{\gothp}\tilde\tau}$.
Note that $\sigma_{\gothp}$ induces a natural action on $\Sigma_{\infty}$,  and $\prod_{\gothp\in \Sigma_p}\sigma_{\gothp}=\sigma$ is the Frobenius action.
Let $\sigma_{\gothp}\tilde \ttS$ denote the image of $\tilde \ttS$ under $\sigma_{\gothp}$. Note that a prime $\gothp'\in \Sigma_p$ has the same type with respect to $\tilde \ttS$ or $\sigma_{\gothp}\tilde \ttS$.
We fix an isomorphism  $B_{\sigma_{\gothp}\ttS}\otimes\AAA^{\infty}\simeq B_{\ttS}\otimes\AAA^{\infty}$, which  induces in turns an  isomorphism $G'_{\sigma_\gothp\tilde \ttS}(\AAA^{\infty})\simeq G'_{\tilde \ttS}(\AAA^{\infty})$. We regard $K'$ as an open subgroup of $G'_{\sigma_{\gothp}\tilde \ttS}(\AAA^{\infty})$ and have a well-defined unitary Shimura variety $\bfSh_{K'}(G'_{\sigma_{\gothp}\tilde \ttS})$.
We also point out that the reflex fields for $G'_{\tilde \ttS}$ and for $G'_{\sigma_\gothp \tilde \ttS}$ have the same completion at $\tilde \wp$.

Let $S$ be a locally noetherian $k_{\tilde \wp}$-scheme and let $(A, \iota, \lambda, \bar \alpha_{K'})$ be an $S$-point on $\bfSh_{K'}(G'_{\tilde \ttS})_{k_{\tilde \wp}}$.
We will define a new  $S$-point $(A',\iota', \lambda', \bar{\alpha}_{K'})$ on $\bfSh_{K'}(G'_{\sigma^2_\gothp \tilde \ttS})_{k_{\tilde \wp}}$ as follows.
The kernel of the relative $p^2$-Frobenius $\Fr_{A}\colon A \to A^{(p^2/S)}$ carries an action of $\calO_F$, and we denote by  $\Ker_{\gothp^2}$ its $\gothp$-component. We put $A'=(A/\Ker_{\gothp^2})\otimes_{\cO_F} \gothp$ with its induced action by $\cO_{D_{\ttS}}$. 
It also comes equipped with a quasi-isogeny $\eta$ given by the composite 
\[
\eta: A \longrightarrow A/\Ker_{\gothp^2} \longleftarrow (A/\Ker_{\gothp^2})\otimes_{\cO_F} \gothp  =A'.
\]
It induces canonical isomorphisms of $p$-divisible groups $A'[\gothq^{\infty}]\simeq A[\gothq^{\infty}]$ for $\gothq\in \Sigma_{p}$ with $\gothq\neq \gothp$, and  $A'[\gothp^{\infty}]\simeq A[\gothp^{\infty}]^{(p^2)}$. 
From this, one can easily check the signature condition for $A'$.
We define the polarization $\lambda'$ to be the quasi-isogeny defined by the
composite
\begin{equation}
\label{E:polarization for partial frob}
A' \xleftarrow{\ \eta\ } A \xrightarrow{\ \lambda\ } A^\vee \xleftarrow{\ \eta^\vee\ } A'^\vee.
\end{equation}
We have to check that $\lambda'$ is a genuine isogeny, and it verifies condition Theorem~\ref{T:unitary-shimura-variety-representability}(b) on $\lambda'$ at prime $\gothp$. By flatness criterion by fibers, it suffices to do this after base change to every geometric point of $S$. We may thus suppose that $S=\Spec(k)$ for an algebraically closed field $k$ of characteristic $p$. Let $\tcD(A)_{\gothp}$ be the covariant Dieudonn\'e module of the $p$-divisible group $A[\gothp^{\infty}]$, and define $\tcD(A')_{\gothp}$ similarly. By definition, we have
\[
\tcD(A')_{\gothp}=p\tcD(A/\Ker_{\gothp^2})_{\gothp}=pV^{-2}\tcD(A)_{\gothp}=p^{-1}F^2\tcD(A)_{\gothp},
\]
where $pV^{-2}\tcD(A)_{\gothp}$ means the inverse image of $\tcD(A)_{\gothp}$ under  the bijective endomorphism $V^{2}$ on $\tcD(A)_{\gothp}[1/p]$. Applying the Dieudonn\'e functor to \eqref{E:polarization for partial frob}, we get
\[
\lambda'_*: 
\tcD(A')_{\gothp}=pV^{-2}\tcD(A)_{\gothp}
\xleftarrow{\ \eta_*\ }
\tcD(A)_{\gothp} \xrightarrow{\ \lambda_*\ } \tcD(A^\vee)_{\gothp}
\xleftarrow{\ \eta^\vee_*\ }\tilde \calD(A'^\vee) = p^{-1}F^2\tcD(A^\vee)_{\gothp}.
\]
Now it is easy to see that $\lambda'$ is an isogeny, and  the condition in Theorem~\ref{T:unitary-shimura-variety-representability}(b) on $\lambda'$ follows from that on $\lambda$.
The tame level structure $\bar\alpha'_{K'}$ is given by the composition
\[
\widehat\Lambda^{(p)} \xrightarrow{\alpha_{K'}} T^{(p)}A \xrightarrow \cong
T^{(p)}(A/\Ker_{\gothp^2})
\xleftarrow \cong
T^{(p)}((A / \Ker_{\gothp^2}) \otimes_{\calO_F} \gothp) = T^{(p)}(A').
\]
We are left to define the subgroups $\alpha'_{\gothp'}$ for all $\gothp' \in \Sigma_p$ of type $\alpha^\sharp$.
The definition is clear for $\gothp'\neq \gothp$, since $A'[\gothp'^{\infty}]$ is canonically identified with $A[\gothp'^{\infty}]$. Assume thus $\gothp' = \gothp$ is  of type $\alpha^\#$.  In the data of $\alpha'_\gothp = H'_\gothq \oplus H'_{\bar \gothq}\subseteq A'[\gothp]$, the subgroup $H'_{\bar\gothq}$ is determined as the orthogonal complement of $H'_{\gothq}$ under the Weil-pairing on $A[\gothp]$. Therefore, it suffices to construct $H'_{\gothq}$, or equivalently an $\cO_{D_{\ttS}}$-isogeny $f':A'\ra B'=A'/H'_{\gothq}$ with kernel in $A'[\gothq]$ of degree $\#k_{\gothp}^2$.
Let $f:A\ra B=A/H_{\gothq}$ be the isogeny  given by $\alpha_{\gothp}$.
We write $\Ker_{\gothp^2, B}$ for the $\gothp$-component of the kernel of the relative $p^2$-Frobenius $B \to B^{(p^2)}$.
It is easy to see that we have a natural isogeny $f_{\gothp^2}: A / \Ker_{\gothp^2} \to B / \Ker_{\gothp^2, B}$.
Then $H'_\gothq$ is defined to be the kernel of
\[
f_{\gothp^2} \otimes 1:\ A' = (A / \Ker_{\gothp^2}) \otimes \gothp \longrightarrow (B / \Ker_{\gothp^2, B}) \otimes \gothp =: B',
\]
and $\alpha'_\gothp$ is the direct sum of $H'_\gothq$ and its orthogonal dual  $H'_{\bar \gothq}$.

To sum up, we obtain a morphism
\begin{equation}\label{E:twist-partial-Frob}
\gothF'_{\gothp^2}: \bfSh_{K'}(G'_{\tilde \ttS})_{k_{\tilde \wp}} \to \bfSh_{K'}(G'_{\sigma^2_\gothp \tilde \ttS})_{k_{\tilde \wp}}.
\end{equation}
In all cases, we call the morphism $\gothF'_{\gothp^2}$ the \emph{twisted partial Frobenius map} on the unitary Shimura varieties.
Moreover, if $\bfA'_{\tilde \ttS}$ and $\bfA'_{\sigma^2_{\gothp}\tilde \ttS}$ are respectively the universal abelian schemes over $\bfSh_{K'}(G'_{\tilde \ttS})$ and $\bfSh_{K'}(G'_{\sigma^2_{\gothp}\tilde \ttS})$, 
we have the following universal quasi-isogeny:
\begin{equation}\label{E:universal-isog-Frob}
\eta'_{\gothp^2}\colon
\bfA'_{\tilde \ttS, k_{\tilde \wp}}
\longrightarrow \gothF'^{*}_{\gothp^2}(\bfA'_{\sigma^2_{\gothp}\tilde \ttS, k_{\tilde \wp}}).
\end{equation}
It is clear from the definition that $(\gothF'_{\gothp^2}, \eta'_{\gothp^2})$'s for different $\gothp \in \Sigma_p$ commute with each other.  

Let $S_p: \bfSh_{K'}(G_{\tilde\ttS}')\ra \bfSh_{K'}(G'_{\tilde\ttS})$ be the automorphism defined by $(A,\iota,\lambda,\bar\alpha_{K'})\mapsto (A,\iota,\lambda, p\bar\alpha_{K'})$. It is clear that $S_p^*\bfA'_{\tilde\ttS}\cong \bfA'_{\tilde\ttS}$. Hence, $S_p$ induces an automorphism of the cohomology groups $H^{\star}_{\rig}(\bfSh_{K'}(G'_{\tilde\ttS}), \scrD^{(\kb,w)}_{\tSigma}(\bfA'_{\tilde\ttS,k_0}))$, still denoted by $S_p$. If  
\[
F^2_{\bfSh_{K'}(G'_{\tilde \ttS})_{k_{\tilde \wp}}/k_{\tilde \wp}}\colon
\bfSh_{K'}(G'_{\tilde \ttS})_{k_{\tilde \wp}} \longrightarrow
\bfSh_{K'}(G'_{\sigma^2\tilde \ttS})_{k_{\tilde \wp}}\simeq \bfSh_{K'}(G'_{\tilde\ttS})_{k_{\tilde\wp}}^{(p^2)}
\]
denotes the relative $p^2$-Frobenius, then we have 
$
S_p^{-1}\circ F^2_{\bfSh_{K'}(G'_{\tilde \ttS})_{k_{\tilde \wp}}/k_{\tilde \wp}} = \prod_{\gothp\in\Sigma_p} \gothF_{\gothp^2}.
$
Similarly, if $[p]: \bfA'^{(p^2)}_{\tilde \ttS} \ra \bfA'^{(p^2)}_{\tilde \ttS}$ denotes the multiplication by $p$ and  
\[
F^2_{A}
\colon \bfA'_{\tilde \ttS, k_{\tilde \wp}} \to (F^2_{\bfSh_{K'}(G'_{\tilde \ttS})_{k_{\tilde \wp}}/k_{\tilde \wp}})^*
(\bfA'_{\sigma^2\tilde \ttS, k_{\tilde \wp}}) \cong \bfA'^{(p^2)}_{\tilde \ttS, k_{\tilde \wp}}
\]
denotes the $p^2$-Frobenius homomorphism,  we have
$
[p]^{-1}\circ F_{A}^2 = \prod_{\gothp \in \Sigma_p}
\eta'_{\gothp^2}.
$

Finally, we note that all the discussions above are equivariant with respect to the action of the Galois group and the action of $\widetilde G_{\tilde \ttS} = G''_{\tilde\ttS}(\QQ)^{+, (p)} G'_{\tilde\ttS}(\AAA^{\infty, p}) \simeq
G''_{\sigma_\gothp^2\tilde\ttS}(\QQ)^{+, (p)}
G'_{\sigma_\gothp^2\tilde\ttS}(\AAA^{\infty, p})$ when passing to the limit. (The isomorphism follows from the description of the group $\widetilde G_{\tilde \ttS}$ in \eqref{E:description of G'' G'}.)
So applying $-\times_{\widetilde G_{\tilde \ttS}} G''_{\tilde \ttS}(\AAA^{\infty, p})$ to the construction gives the following.

\begin{prop}
\label{P:product of partial Frobenius}
Let $\bfA''_{\tilde \ttS}$ denote the natural family of abelian varieties over $\bfSh_{K''_p}(G''_{\tilde \ttS})$. We identify the level structure for $G''_{\tilde \ttS}$ with that of $G''_{\sigma_\gothp^2\tilde \ttS}$ similarly.
Then for each $\gothp \in \Sigma_p$, we have a $G''_{\tilde \ttS}(\AAA^{\infty, p})$-equivariant  natural \emph{twisted partial Frobenius morphism} and an quasi-isogeny of family of abelian varieties:
\[
\gothF''_{\gothp^2}\colon
\bfSh_{K''_p}(G''_{\tilde \ttS})_{k_{\tilde \wp}} \longrightarrow
\bfSh_{K''_p}(G''_{\sigma_\gothp^2\tilde \ttS})_{k_{\tilde \wp}} \quad \textrm{and} \quad
\eta''_{\gothp^2}
\colon
\bfA''_{\tilde \ttS, k_{\tilde \wp}}
\longrightarrow \gothF''^{*}_{\gothp^2}(\bfA''_{\sigma^2_{\gothp}\tilde \ttS, k_{\tilde \wp}}).
\]
This induces a natural $G''_{\tilde \ttS}(\AAA^{\infty, p})$-equivariant homomorphism of \'etale cohomology groups:
\[
\xymatrix{
H^*_\et\big(
\bfSh_{K''_p}(G''_{\sigma_\gothp^2\tilde \ttS})_{\overline \FF_p}, \calL_{\tilde \Sigma}^{(\underline k, w)}(\bfA''_{\sigma_\gothp^2 \tilde \ttS})\big)
\ar[rr]^-{\gothF''^* _{\gothp^2}}
\ar[drr]_{\Phi_{\gothp^2}}
&&
H^*_\et\big(
\bfSh_{K''_p}(G''_{\tilde \ttS})_{\overline \FF_p}, \calL_{\tilde \Sigma}^{(\underline k, w)}(\gothF''^* _{\gothp^2}\bfA''_{\sigma_\gothp^2 \tilde \ttS})\big)
\ar[d]^{\eta''^*_{\gothp^2}}
\\
&&
H^*_\et\big(
\bfSh_{K''_p}(G''_{\tilde \ttS})_{\overline \FF_p}, \calL_{\tilde \Sigma}^{(\underline k, w)}(\bfA''_{\tilde \ttS})\big).
}
\]
Moreover, we have an equality of morphisms
\[
\prod_{\gothp \in \Sigma_p} \Phi_{\gothp^2} = S_p^{-1} \circ F^2
\colon
H^*_\et\big(
\bfSh_{K''_p}(G''_{\sigma^2\tilde \ttS})_{\overline \FF_p}, \calL_{\tilde \Sigma}^{(\underline k, w)}(\bfA''_{\sigma^2 \tilde \ttS})\big) \longrightarrow
H^*_\et\big(
\bfSh_{K''_p}(G''_{\tilde \ttS})_{\overline \FF_p}, \calL_{\tilde \Sigma}^{(\underline k, w)}(\bfA''_{ \tilde \ttS})\big),
\]
where $F^2$ is  the relative $p^2$-Frobenius and $S_p$ is the Hecke action given by multiplication by $\underline p^{-1}$.
Here $\underline p$ is the idele element which is $p$ at all places above $p$ and $1$ elsewhere.
\end{prop}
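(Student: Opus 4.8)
The plan is to deduce the whole proposition formally from the corresponding statements on the unitary Shimura variety $\bfSh_{K'}(G'_{\tilde \ttS})$ established in Subsection~\ref{S:partial Frobenius}, by applying the base-change functor $- \times_{\widetilde G_{\tilde \ttS}} G''_{\tilde \ttS}(\AAA^{\infty, p})$ that was used in Subsection~\ref{S:abel var in unitary case} to produce $\bfA''_{\tilde \ttS}$ out of $\bfA'_{\tilde \ttS}$. First I would note that the twisted partial Frobenius $\gothF'_{\gothp^2}$ of \eqref{E:twist-partial-Frob} together with the universal quasi-isogeny $\eta'_{\gothp^2}$ of \eqref{E:universal-isog-Frob} are equivariant for the action of $\widetilde G_{\tilde \ttS}$, once one identifies $\widetilde G_{\tilde \ttS} \cong \widetilde G_{\sigma_\gothp^2 \tilde \ttS}$ via the description in \eqref{E:description of G'' G'}; this equivariance is already part of that construction. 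Applying $- \times_{\widetilde G_{\tilde \ttS}} G''_{\tilde \ttS}(\AAA^{\infty, p})$ then produces the $G''_{\tilde \ttS}(\AAA^{\infty, p})$-equivariant morphism $\gothF''_{\gothp^2}$ and quasi-isogeny $\eta''_{\gothp^2}$ with exactly the sources and targets asserted in the statement.

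Next I would set $\Phi_{\gothp^2} = (\eta''_{\gothp^2})^* \circ (\gothF''_{\gothp^2})^*$ and check it is well-defined on cohomology of the automorphic sheaves. The sheaf $\calL^{(\underline k, w)}_{\tilde \Sigma}(-)$ is built functorially from the prime-to-$p$ Tate module together with its $\cO_{D_\ttS}$-action, so $(\gothF''_{\gothp^2})^* \calL^{(\underline k, w)}_{\tilde \Sigma}(\bfA''_{\sigma_\gothp^2 \tilde \ttS}) = \calL^{(\underline k, w)}_{\tilde \Sigma}\big((\gothF''_{\gothp^2})^* \bfA''_{\sigma_\gothp^2 \tilde \ttS}\big)$; and since $\eta'_{\gothp^2}$, hence $\eta''_{\gothp^2}$, is fiberwise composed of a $p$-power isogeny and a twist by the invertible ideal $\gothp$ --- both supported above $p$ --- it induces an $\cO_{D_\ttS}$-equivariant isomorphism on $l$-adic Tate modules for every $l \neq p$, hence an isomorphism $\calL^{(\underline k, w)}_{\tilde \Sigma}\big((\gothF''_{\gothp^2})^* \bfA''_{\sigma_\gothp^2 \tilde \ttS}\big) \cong \calL^{(\underline k, w)}_{\tilde \Sigma}(\bfA''_{\tilde \ttS})$. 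The $G''_{\tilde \ttS}(\AAA^{\infty, p})$-equivariance of $\Phi_{\gothp^2}$ is then inherited from that of the pair $(\gothF''_{\gothp^2}, \eta''_{\gothp^2})$.

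Finally, for the product identity I would use that the pairs $(\gothF''_{\gothp^2}, \eta''_{\gothp^2})$ for distinct $\gothp \in \Sigma_p$ commute, this being inherited from the commutativity of $(\gothF'_{\gothp^2}, \eta'_{\gothp})$ on the $G'$-side. Multiplying the two identities $\prod_{\gothp} \gothF'_{\gothp^2} = S_p^{-1} \circ F^2_{\bfSh_{K'}(G'_{\tilde \ttS})_{k_{\tilde \wp}}/k_{\tilde \wp}}$ and $\prod_{\gothp} \eta'_{\gothp} = [p]^{-1} \circ F^2_A$ already proved in Subsection~\ref{S:partial Frobenius}, transporting them through the base-change functor, and then applying $\calL^{(\underline k, w)}_{\tilde \Sigma}(-)$ and passing to \'etale cohomology, yields $\prod_{\gothp} \Phi_{\gothp^2} = S_p^{-1} \circ F^2$, where $F^2$ is the operator induced by the relative $p^2$-Frobenius on $\bfSh_{K''_p}(G''_{\sigma^2 \tilde \ttS})_{k_{\tilde \wp}} \cong \bfSh_{K''_p}(G''_{\tilde \ttS})^{(p^2)}_{k_{\tilde \wp}}$ together with the $p^2$-Frobenius isogeny on the universal abelian scheme, and where the passage from $[p]^{-1}$ to the central Hecke normalization $S_p$ is exactly the identity $S_p^* \bfA''_{\tilde \ttS} \cong \bfA''_{\tilde \ttS}$ coming from scaling the prime-to-$p$ level structure by $p$. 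The step I expect to need the most care is not any single deep point but the bookkeeping: tracking the canonical identifications $\sigma_\gothp^2 \tilde \ttS \leftrightarrow \tilde \ttS$ of index sets, the induced identifications of the Shimura varieties and of their universal abelian schemes under the $p^2$-Frobenius twist, and verifying that all of these are compatible with the automorphic sheaves, so that the displayed triangle commutes and the final equality holds on the nose rather than up to a scalar; once the normalization of $S_p$ is pinned down on the $G'$-side, this is routine but must be carried out consistently.
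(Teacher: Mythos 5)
Your proposal is correct and matches the paper's approach exactly: the paper's proof of this proposition is literally the one sentence ``This is clear from the construction,'' referring to the $\widetilde G_{\tilde \ttS}$-equivariance of the pair $(\gothF'_{\gothp^2}, \eta'_{\gothp^2})$ and the two identities $\prod_{\gothp} \gothF'_{\gothp^2} = S_p^{-1}\circ F^2$ and $\prod_{\gothp} \eta'_{\gothp^2} = [p]^{-1}\circ F^2_A$ established in Subsection~\ref{S:partial Frobenius}, together with the transfer functor $-\times_{\widetilde G_{\tilde\ttS}} G''_{\tilde\ttS}(\AAA^{\infty,p})$. You have correctly identified all the ingredients — the equivariance, the fact that $\calL^{(\underline k, w)}_{\tilde\Sigma}$ only sees the prime-to-$p$ Tate module so the $p$-quasi-isogeny $\eta''_{\gothp^2}$ induces an isomorphism of coefficient sheaves, and the role of $S_p$ in absorbing the $[p]^{-1}$ — and merely fleshed out what the authors considered routine.
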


\begin{proof}
This is clear from the construction.
\end{proof}

\subsection{Comparison with the Hilbert modular varieties}\label{S:comparison-Hilbert}
 When $\ttS=\emptyset$, we have $G_{\emptyset}=\Res_{F/\Q}(\GL_{2,F})$ and $K_p=\GL_2(\cO_{F}\otimes_{\Z} \Z_p)$. It is well known that  $\Sh_{K_p}(G_{\emptyset})=\varprojlim_{K^p}\Sh_{K^pK_p}(G_{\emptyset})$ is  a projective system of Shimura varieties defined over $\Q$, and it parametrizes polarized Hilbert-Blumenthal abelian varieties  (HBAV for short) with prime-to-$p$ level structure.    Using this moduli interpretation, one can construct  an integral canonical model  over $\Z_p$ of $\Sh_{K_p}(G_{\emptyset})$ as in \cite{rap,lan}. By the uniqueness of the integral canonical model, we know that this classical integral model is isomorphic to $\bfSh_{K_p}(G_{\emptyset})$ constructed in Corollary~\ref{C:integral-model-quaternion}. For this ``abstract'' isomorphism to be useful in applications, we need to relate the universal HBAV $\calA$ on $\bfSh_{K_p}(G_{\emptyset})$ and the  abelian scheme $\bfA''_\emptyset$ on $\bfSh_{K''_p}(G''_\emptyset)$ constructed at the end of Subsection~\ref{S:abel var in unitary case}.

Let $G_{\emptyset}^\star \subseteq G_{\emptyset}$ be the inverse image of $\G_{m,\Q}\subseteq T_{F}=\Res_{F/\Q}(\G_{m,\Q})$ via the determinant map $\nu: G_{\emptyset}\ra T_{F}$.  The homomorphism $h_{\emptyset}: \C^{\times}\ra G_{\emptyset}(\R)$ factors through $G^\star_\emptyset(\R)$.
We can talk about the Shimura variety associated to $(G^\star_{\emptyset}, h_{\emptyset})$. We put $K^\star _p=K_p\cap G^\star_{\emptyset}(\Q_p)$. Then by Corollary~\ref{C:Sh(G)^circ_Zp independent of G}, $\Sh_{K_p}(G_{\emptyset})$ and $\Sh_{K_{p}^\star}(G^\star_{\emptyset})$ have isomorphic neutral connected components $\Sh_{K_p}(G_{\emptyset})^\circ_{\Q_{p}^{\ur}}\simeq \Sh_{K^*_p}(G^\star_\emptyset)^{\circ}_{\Q_p^{\ur}}$.
The Shimura variety $\Sh_{K^\star_p}(G^\star_\emptyset)$ is of PEL-type, and the universal abelian scheme on $\Sh_{K^\star_p}(G^\star_\emptyset)^\circ_{\Q_p^{\ur}}$ is identified with that on $\Sh_{K_p}(G_{\emptyset})^\circ_{\Q_p^{\ur}}$ via the isomorphism above.
Actually, if $K^p\subseteq \GL_2(\AAA_F^{\infty,p})$ is an open compact subgroup  such that $\det(K^p\cap \cO_{F}^{\times})=\det(K^p)\cap \cO_{F}^{\times, +}$,  where $\cO^{\times, +}_{F}$ denotes the set of totally positive units of $F$, then $\Sh_{K^pK_p}(G_{\emptyset})$ is isomorphic to  a finite union of $\Sh_{K^{\star p}K^\star_{p}}(G^\star_{\emptyset})$ for some appropriate tame level structure $K^{\star p}$'s (see \cite[4.2.1]{hida} or \cite[Proposition 2.4]{tian-xiao2}).

We now describe the Hilbert moduli problem that defines an integral canonical model of $\Sh_{K^\star_p}(G_{\emptyset}^\star)$. Let $\widehat{\cO}_F^{(p)}=\prod_{v\nmid p\infty}\cO_{F_v}$, and put $\widehat\Lambda_{F}^{(p)}=\widehat{\cO}_F^{(p)}e_1\oplus \widehat{\cO}_F^{(p)}\gothd_F^{-1} e_{2}$. We endow $\widehat \Lambda_{F}^{(p)}$ with the  symplectic form
\[
\psi_F( a_1 e_1+a_2e_2, b_1e_1+b_2e_2)=\Tr_{F/\Q}(a_2b_1-a_1b_2)\in \widehat{\Z}^{(p)}\quad
\]
{for }$a_1, b_1\in \widehat{\cO}_F^{(p)}$, and $ a_2,b_2\in \gothd_F^{-1}\widehat{\cO}^{(p)}_F$.
It is an elementary fact that every rank two free $\widehat{\cO}_F^{(p)}$-module together with a perfect $\widehat{\Z}^{(p)}$-linear $\cO_F$-hermitian symplectic form is isomorphic to $(\widehat\Lambda^{(p)}_F, \psi_F)$.

Let $K^\star_p=\GL_2(\cO_{F}\otimes \Z_p)\cap G^\star_{\emptyset}(\Q_p)$ be as above. For an  open compact subgroup $K^{\star p}$, we put $K^\star=K^{\star p}K_p^\star$. Assume that $K^{\star p}$ stabilizes the lattice $\widehat \Lambda_F^{(p)}$. We consider the functor that associates to each  locally noetherian $\Z_p$-scheme $S$ the set of isomorphism classes of quadruples $(A,\iota,\lambda, \alpha_{K^{\star p}})$, where
\begin{enumerate}
\item $(A,\iota)$ is  a HBAV, i.e. an abelian scheme $A/S$ of dimension $[F:\QQ]$ equipped with a homomorphism $\iota: \cO_F\hra \End_S(A)$,

\item  $\lambda : A\ra A^\vee$ is an $\cO_F$-linear $\Z_{(p)}^{\times}$-polarization in the sense of \cite[1.3.2.19]{lan},

\item choosing a geometric point $\bar s_i$ for each connected component $S_i$ of $S$,  $\alpha_{K^{\star p}}$ is a collection of $\pi_1(S_i,\bar{s_i})$-invariant $K^{\star p}$-orbits of $\widehat{\cO}_F^{(p)}$-linear isomorphisms $\widehat\Lambda^{(p)}_F\xra{\sim} T^{(p)}(A_{\bar{s_i}})$, sending the symplectic pairing $\psi_F$ on the former to the $\lambda$-Weil pairing on the latter for some identification of $\widehat \ZZ^{(p)}$ with $\widehat \ZZ^{(p)}(1)$.
\end{enumerate}
This functor is representable by a quasi-projective and smooth scheme $\bfSh_{K^\star}(G^\star_{\emptyset})$ over $\Z_p$ such that $\bfSh_{K^\star}(G_{\emptyset}^\star)_{\Q_p}\simeq \Sh_{K^\star}(G_{\emptyset}^\star)$ \cite{rap} and \cite[1.4.1.11]{lan}. By the same arguments of \cite[Corollary~3.8]{moonen96}, it is easy to see that $\bfSh_{K^\star_p}(G_{\emptyset}^\star)$ satisfies the extension property \eqref{S:extension property}.
This then gives rise to an integral canonical model $\bfSh_{K_p}(G_\emptyset)$ of $\Sh_{K_p}(G_\emptyset)$.
We could pull back the universal abelian variety $\calA^\star$ over $\bfSh_{K^\star_p}(G^\star_\emptyset)$ to a family of abelian variety over $\bfSh_{K_p}(G_\emptyset)$ using \cite[4.2.1]{hida} cited above.  But we prefer to do it more canonically  following the same argument as in Subsection~\ref{S:abel var in unitary case}.  More precisely, there is a natural equivariant action of $\widetilde G^\star := G_\emptyset(\Q)^{(p)}_+ \cdot G^\star(\AAA^{\infty, p})$ on the universal abelian variety $\calA^\star$ over $\bfSh_{K^\star_p}(G^\star _\emptyset)$.
Then 
\begin{equation}
\label{E:A from A*}
\calA : = \calA^\star \times_{\widetilde G^\star} \GL_2(\AAA^{\infty,p})
\end{equation}
gives a natural family of abelian variety over $ \bfSh_{K_p}(G_\emptyset)$.

The natural homomorphism $\GL_{2,F}\ra \GL_{2,F}\times_{F^{\times}}E^{\times}$ induces a closed immersion of algebraic groups $G^\star_{\emptyset}\ra G'_{\emptyset}$ compatible with the Deligne's homomorphisms $h_{\emptyset}$ and $h_{\emptyset}'$.
 Therefore, one obtains a map of (projective systems of) Shimura varieties $f: \Sh_{K^\star_p}(G^\star_{\emptyset})\ra \Sh_{K_p'}(G'_{\emptyset})$ which induces an isomorphism of neutral connected component $\Sh_{K^\star_p}(G^\star_{\emptyset})^{\circ}_{\Q_p^{\ur}}\simeq \Sh_{K'_p}(G'_{\emptyset})^{\circ}_{\Q_p^{\ur}}$. We will extend $f$ to  a map of integral models $\bfSh_{K^\star_p}(G^\star_{\emptyset})\ra\bfSh_{K_p'}(G'_{\emptyset})$.

Now, let $E$ be a CM extension of $F$ unramified at $p$ as before. In the process of constructing the pairing $\psi$ on $D_\emptyset$, we may take $\delta_\emptyset$ to be $\big(\begin{smallmatrix}
0 & -1/\sqrt{\gothd} \\ 1/\sqrt{\gothd} &0
\end{smallmatrix}\big)$ which is coprime to $p$, where $\gothd$ is the totally negative element chosen in \ref{S:PEL-Shimura-data}.
It is easy to check that it satisfies the conditions in Lemma~\ref{L:property-PEL-data}(1),
and  the $*$-involution given by $\delta_{\emptyset}$ on $D_\emptyset=\rmM_2(E)$ is given by $\big(\begin{smallmatrix}
a&b\\c&d
\end{smallmatrix}\big) \mapsto \big(\begin{smallmatrix}
\bar a&\bar c\\\bar b&\bar d
\end{smallmatrix}\big)$ for $a,b, c, d \in E$.
The $\ast$-hermitian pairing on $\rmM_2(E)$ is given by
\begin{align*}
\psi(v, w) &= \Tr_{\rmM_2(E) / \QQ}\Big( v \bar w
\big(\begin{smallmatrix}
0 & -1 \\ 1 &0
\end{smallmatrix}\big)
\Big), \textrm{ for } v = \big(\begin{smallmatrix}
a_v & b_v \\ c_v & d_v
\end{smallmatrix}\big) \textrm{ and } w = \big(\begin{smallmatrix}
a_w & b_w \\ c_w & d_w
\end{smallmatrix}\big) \in \rmM_2(E)\\
&=\Tr_{E/\QQ} \big(
b_v \bar a_w -a_v \bar b_w + d_v \bar c_w - c_v \bar d_w \big).
\end{align*}
In defining the PEL data for $G'_\emptyset$, we  take the $\calO_{D_\emptyset} $-lattice $\Lambda$ to be
$
\begin{pmatrix}
\calO_E & \gothd_F^{-1}\calO_E \\
\calO_E & \gothd_F^{-1}\calO_E
\end{pmatrix}
%\otimes_{\ZZ}\widehat{\ZZ}^{(p)}
$. Clearly $\widehat\Lambda^{(p)}=\Lambda\otimes_{\Z}\widehat{\Z}^{(p)}$ satisfies $\widehat\Lambda^{(p)} \subseteq \widehat\Lambda^{(p),\vee}$ for the bilinear form $\psi$ above.
Moreover, if we equip $\Lambda_{F}^{(p)} \otimes_{\calO_F} \calO_E$ with the symplectic form $\psi_{E}=\psi_F(\Tr_{E/F}(\bullet), \Tr_{E/F}(\bullet))$, then $(\widehat\Lambda^{(p)},\psi) $ is isomorphic to $ ((\widehat{\Lambda}_{F}^{(p)} \otimes_{\calO_F} \calO_E)^{\oplus 2}, \psi_E^{\oplus 2})$ as a $\ast$-hermitian symplectic $\rmM_2(\calO_E)$-module.

\begin{prop}
\label{P:integral-HMV-unitary}
For any open compact subgroup $K'^p$ of $G'_\emptyset(\AAA^{\infty, p})$, we put $K^{\star p} = K'^p \cap G^\star_\emptyset(\AAA^{\infty, p})$. Then we have a canonical morphism
$$
\bff: \bfSh_{K^{\star p}K^\star_p}(G^\star _\emptyset) \ra \bfSh_{K'^pK'_p}(G'_\emptyset)
$$
such that, if $\calA$ and $\bfA'_\emptyset$ denote respectively  the universal abelian scheme  on $\bfSh_{K^{\star p}K^\star _p}(G^\star _{\emptyset})$ and  that  on $\bfSh_{K'^pK'_p}(G'_{\emptyset})$, then we have an isomorphism of abelian schemes $\bff^*\bfA'_\emptyset \cong (\calA\otimes_{\cO_F}\cO_E)^{\oplus 2}$ compatible with the natural action of $\rmM_2(\cO_E)$ and polarizations on both sides.
By passing to the limit, the morphism $\bff$ induces an isomorphism  between the integral models of connected Shimura varieties $\bfSh_{K_p^\star }(G^\star _\emptyset)_{\Zp^\ur}^\circ \cong \bfSh_{K_p'}(G'_\emptyset )_{\Zp^\ur}^\circ$.

\end{prop}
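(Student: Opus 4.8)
The plan is to define $\bff$ directly as a morphism of moduli functors, to read off the comparison of universal abelian schemes from this construction, and then to deduce the statement on connected components from Corollary~\ref{C:Sh(G)^circ_Zp independent of G}. First I would construct $\bff$ on points. Given a locally noetherian $\Zp$-scheme $S$ and an $S$-point $(A,\iota,\lambda,\alpha_{K^{\star p}})$ of $\bfSh_{K^{\star p}K^\star_p}(G^\star_\emptyset)$, set $A':=(A\otimes_{\cO_F}\cO_E)^{\oplus 2}$, where $A\otimes_{\cO_F}\cO_E$ is the abelian scheme of relative dimension $2g$ obtained by the Serre tensor construction along $\cO_F\hookrightarrow\cO_E$ (legitimate since $\cO_E$ is locally free of rank $2$ over $\cO_F$), so that $A'$ has relative dimension $4g$; let $\rmM_2(\cO_E)\cong\cO_{D_\emptyset}$ act on $A'$ through Morita equivalence, which defines $\iota'$. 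Since $\ttS=\emptyset$ all the numbers $s_{\tilde\tau}$ equal $1$, and the reduced Lie algebra $\gothe\Lie(A'/S)=\Lie(A/S)\otimes_{\cO_F}\cO_E$ is locally free of rank one over $\cO_E\otimes_\ZZ\cO_S$; hence $\Lie(A'/S)$ satisfies the determinant condition of Theorem~\ref{T:unitary-shimura-variety-representability}(a). For the polarization I would take $\lambda'$ to be the one induced by $\lambda$ together with the symmetric element $\delta_\emptyset=\big(\begin{smallmatrix}0&-1/\sqrt{\gothd}\\ 1/\sqrt{\gothd}&0\end{smallmatrix}\big)$ and the identification $(\widehat\Lambda^{(p)},\psi)\cong\bigl((\widehat\Lambda_F^{(p)}\otimes_{\cO_F}\cO_E)^{\oplus 2},\psi_E^{\oplus 2}\bigr)$ of $*$-Hermitian symplectic $\rmM_2(\cO_E)$-modules recalled just before the proposition; here one uses that $\delta_\emptyset$ is prime to $p$ (Lemma~\ref{L:property-PEL-data}(1)) and that for $\ttS=\emptyset$ no $p$-adic place of $F$ is of type $\beta^\sharp$, so that $\Ker\lambda'$ is prime to $p$. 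Consequently conditions (b2) and (b3) of Theorem~\ref{T:unitary-shimura-variety-representability} are vacuous, while (b1) reduces to the assertion that the Rosati involution of $\lambda'$ is the $*$-involution $l\mapsto\delta_\emptyset^{-1}\bar l\,\delta_\emptyset$. Finally the prime-to-$p$ level structure $\alpha'_{K'^p}$ is obtained from $\alpha_{K^{\star p}}$ by tensoring with $\cO_E$, doubling, and transporting through the same module identification; as no $p$-adic place of type $\alpha^\sharp$ occurs here, condition (c2) is empty. The hypothesis $K^{\star p}=K'^p\cap G^\star_\emptyset(\AAA^{\infty,p})$ guarantees that this assignment respects $K'^p$-orbits, and functoriality in $S$ yields the morphism $\bff$.

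By the very construction of $\bff$ through the moduli problem, pulling back the universal object gives $\bff^*\bfA'\cong(\calA\otimes_{\cO_F}\cO_E)^{\oplus 2}$, compatibly with the $\rmM_2(\cO_E)$-actions and with the polarizations (the polarization on the right being the $\lambda'$ constructed above); this step is tautological. For the last assertion I would invoke Corollary~\ref{C:Sh(G)^circ_Zp independent of G}: the inclusion $G^\star_\emptyset\hookrightarrow G'_\emptyset$ is compatible with $h_\emptyset$ and $h'_\emptyset$, and since $(G^\star_\emptyset)^{\der}=(G_\emptyset)^{\der}$ and $(G^\star_\emptyset)^{\ad}=(G_\emptyset)^{\ad}$, Lemmas~\ref{L:compatibility of derived group and adjoint group} and \ref{L:compatibility of derived group and adjoint group2} show that it induces isomorphisms on the derived and adjoint groups and on their $p$-integral points (the groups $K^\star_p$ and $K'_p$ having the same image in the adjoint group and the same intersection with the derived group at each $\gothp$). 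Both $\bfSh_{K^\star_p}(G^\star_\emptyset)$ and $\bfSh_{K'_p}(G'_\emptyset)$ are integral canonical models (by Subsection~\ref{S:comparison-Hilbert} and Corollary~\ref{C:integral-model-quaternion} respectively), so Corollary~\ref{C:Sh(G)^circ_Zp independent of G} provides a natural isomorphism $\bfSh_{K^\star_p}(G^\star_\emptyset)^\circ_{\Zp^{\ur}}\cong\bfSh_{K'_p}(G'_\emptyset)^\circ_{\Zp^{\ur}}$, equivariant for the structure groups. On the generic fibre this isomorphism restricts, on connected components, to the morphism induced by the group inclusion, which coincides with the inverse limit over the tame levels of the morphisms $\bff$; by normality (indeed smoothness) of the integral models it follows that $\bff$, restricted to the neutral connected component at infinite level, agrees with this isomorphism, and is therefore an isomorphism.

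I expect the genuinely delicate point to be the construction of $\lambda'$ and the verification of condition (b1) of Theorem~\ref{T:unitary-shimura-variety-representability}: one must follow the element $\sqrt{\gothd}$ through the Serre tensor duality $(A\otimes_{\cO_F}\cO_E)^\vee\cong A^\vee\otimes_{\cO_F}\cO_E$ and through the explicit formula for $\psi$ on $\rmM_2(E)$ displayed before the proposition, in order to check that the Rosati involution attached to $\lambda'$ is exactly $l\mapsto\delta_\emptyset^{-1}\bar l\,\delta_\emptyset$, and that the $\lambda'$-Weil pairing corresponds to $\psi$ under the chosen module identification. This is the bookkeeping already implicit in the choices of $\delta_\emptyset$, $\Lambda$, $\psi$ and $\psi_E$ recalled in the paragraph preceding the proposition, but it is the one step requiring real care; everything else (the determinant condition, the vacuity of the $p$-primary conditions, the level structures, and the descent of the abelian-scheme comparison) is formal.
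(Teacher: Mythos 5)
Your proposal reproduces the paper's proof essentially verbatim: you build $\bff$ on the moduli side by $A\mapsto (A\otimes_{\cO_F}\cO_E)^{\oplus 2}$ with the induced $\rmM_2(\cO_E)$-action, polarization, and level structure, read off the comparison of universal abelian schemes as a tautology, and pass to connected components via the transfer machinery of Corollary~\ref{C:Sh(G)^circ_Zp independent of G} together with the extension property of the integral models. The paper's own proof follows exactly this outline (invoking the extension property directly, plus the remark that $\bff$ recovers the group-theoretic morphism $f$ on the generic fiber via complex uniformization, which you also note), so there is nothing substantive to contrast.
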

\begin{proof}
By Galois descent, it is enough to work over $W(k_0)$ for $k_0$ in Theorem~\ref{T:unitary-shimura-variety-representability}.
Let $S$ be a locally noetherian $W(k_0)$-scheme, and $x=(A,\iota, \lambda, \alpha_{K^{\star p}})$ be an $S$-valued point of $\bfSh_{K^{\star p}K^\star _p}(G_{\emptyset}^\star )$. We define its image  $f(x)=(A', \iota', \lambda', \alpha_{K'^p})$ as follows.  We take $A'=(A\otimes_{\cO_F}\cO_E)^{\oplus 2}$ equipped with the naturally induced action $\iota'$ of $\rmM_2(\cO_E)$. It is clear that $\Lie(A')_{\tilde\tau}$  is an $\cO_{S}$-module locally free of rank $2$ for all $\tilde\tau\in \Sigma_{E,\infty}$. The   prime-to-$p$ polarization $\lambda'$ on $A'$ is defined to be
\[
\lambda': A'\xra{\sim} (A\otimes_{\cO_F}\cO_E)^{\oplus 2}\xra{(\lambda\otimes 1)^{\oplus 2}} (A^\vee\otimes_{\cO_F}\cO_E)^{\oplus 2}\cong A'^\vee \otimes \delta_{E/F} \to A'^\vee,
\]
where $\delta_{E/F}$ is the different ideal of $E$ over $F$. We define the $K'^p$-level structure  to be the $K'^p$-orbit of the isomorphism
\[
\alpha_{K'^p}\colon \widehat{\Lambda}^{(p)}\xra {\cong} (\widehat{\Lambda}^{(p)}_F\otimes_{\cO_F}\cO_E)^{\oplus 2}\xra{\alpha_{K^{\star p}}^{\oplus 2}} \big (T^{(p)}(A_{\bar s})\otimes_{\cO_F}\cO_E\big)^{\oplus 2}\simeq T^{(p)}(A'_{\bar s}).
\]
By the discussion before the Proposition, it is clear that $\alpha_{K'^p}$ sends the symplectic form $\psi$ on the left hand side to the $\lambda'$-Weil pairing on the right. This defines the morphism $\bff$ from $\bfSh_{K^{\star p}K^\star _p}(G^\star _{\emptyset})$ to $\bfSh_{K'^pK'_p}(G'_{\emptyset})$. By looking at the complex uniformization, we  note that $\bff$ extends the morphism $f: \Sh_{K^{\star p}K^\star _p}(G^\star _{\emptyset})_{\Q_p}\ra \Sh_{K'^pK'_p}(G'_{\emptyset})_{\Q_p}$ defined previously by group theory. Since both $\bfSh_{K^\star _p}(G^\star _{\emptyset})$ and $\bfSh_{K'_{p}}(G'_{\emptyset})$ satisfy the extension property \ref{S:extension property}, it follows that $\bff$ induces an isomorphism   $\bfSh_{K_p^\star }(G^\star _\emptyset)_{\Zp^\ur}^\circ \simeq \bfSh_{K_p'}(G'_\emptyset )_{\Zp^\ur}^\circ$.
\end{proof}

   \begin{cor}\label{C:integral-HMV-unitary}
Let $\calA$ denote the universal HBAV over $\bfSh_{K_p}(G_{\emptyset})$, and  $\bfA''_\emptyset$ be the family of abelian varieties over $\bfSh_{K''_p}(G''_{\emptyset})$  defined in Subsection~\ref{S:abel var in unitary case}.  Then under the natural morphisms of Shimura varieties
\begin{equation}
\label{E:morphisms of Shimura varieties HMV}
\bfSh_{K_p}(G_\emptyset) \xleftarrow{\ \mathbf{pr}_1\ }
\bfSh_{K_p \times K_{E,p}}
(G_\emptyset \times T_{E, \emptyset}) \xrightarrow{\ \bbalpha \ }
\bfSh_{K''_p}(G''_\emptyset),
\end{equation}
one has an isomorphism of abelian schemes over $\bfSh_{K_p \times K_{E,p}}
(G_\emptyset \times T_{E, \emptyset})$
\begin{equation}
\label{E:comparison abelian varieties over HMV and unitary}
\bbalpha^*\bfA''_\emptyset \cong (\mathbf{pr}_1^*\calA\otimes_{\cO_F}\cO_E)^{\oplus 2}
\end{equation}
compatible with the action of $\rmM_2(\cO_E)$ and prime-to-$p$ polarizations.
   \end{cor}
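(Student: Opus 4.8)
The plan is to deduce the statement from Proposition~\ref{P:integral-HMV-unitary} by the standard device of first reducing everything to the neutral geometric connected component and then propagating the resulting isomorphism over the full Shimura variety via the equivariance machinery of Section~\ref{Section:Sh Var}. First I would record that all the Shimura varieties in sight — $\bfSh_{K_p}(G_\emptyset)$, $\bfSh_{K_p\times K_{E,p}}(G_\emptyset\times T_{E,\emptyset})$, $\bfSh_{K''_p}(G''_\emptyset)$, $\bfSh_{K'_p}(G'_\emptyset)$ and $\bfSh_{K^\star_p}(G^\star_\emptyset)$ — have canonically identified neutral connected components over $\Zp^\ur$ (Corollary~\ref{C:comparison of shimura varieties}, Corollary~\ref{C:Sh(G)^circ_Zp independent of G}, and the discussion in Subsection~\ref{S:comparison-Hilbert}), since these depend only on the common derived and adjoint groups and the induced $p$-level structures; denote this common pro-scheme by $\bfSh^\circ$. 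In particular the morphisms $\mathbf{pr}_1$ and $\bbalpha$ of \eqref{E:morphisms of Shimura varieties HMV} restrict to the identity on $\bfSh^\circ$. By Corollary~\ref{C:mathematical objects equivalence} it then suffices to produce the asserted isomorphism, with its $\rmM_2(\cO_E)$-structure and polarization, over $\bfSh^\circ$ in a way compatible with the action of the structure group $\calE$.

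Next I would identify the two abelian schemes over $\bfSh^\circ$. From the construction at the end of Subsection~\ref{S:abel var in unitary case}, $\bfA''_\emptyset = \bfA'_{K'_p}\times_{\widetilde G_\emptyset}G''_\emptyset(\AAA^{\infty,p})$, and restricting this induction construction to the preimage of $1$ under the $\nu$-map recovers $\bfA'_{K'_p}$ on the neutral component; hence $\bbalpha^*\bfA''_\emptyset|_{\bfSh^\circ}\cong \bfA'_{K'_p}|_{\bfSh^\circ}$. Likewise, by \eqref{E:A from A*} one has $\calA=\calA^\star\times_{\widetilde G^\star}\GL_2(\AAA^{\infty,p})$, so $\mathbf{pr}_1^*\calA|_{\bfSh^\circ}\cong\calA^\star|_{\bfSh^\circ}$; both identifications respect the $\cO_E$- (resp. $\cO_F$-) actions and polarizations. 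Now Proposition~\ref{P:integral-HMV-unitary}, applied at every tame level, gives a morphism $\bff$ inducing the identity on $\bfSh^\circ$ together with $\bff^*\bfA'_{K'_p}\cong(\calA^\star\otimes_{\cO_F}\cO_E)^{\oplus 2}$ compatibly with the $\rmM_2(\cO_E)$-action and polarizations; restricting to $\bfSh^\circ$ and chaining the three isomorphisms yields $\bbalpha^*\bfA''_\emptyset|_{\bfSh^\circ}\cong(\mathbf{pr}_1^*\calA|_{\bfSh^\circ}\otimes_{\cO_F}\cO_E)^{\oplus 2}$, which is \eqref{E:comparison abelian varieties over HMV and unitary} restricted to $\bfSh^\circ$.

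Finally I would spread this out: since the isomorphism just built is assembled from $\bff$, which is Hecke-equivariant across all tame levels, and from the tautological induction isomorphisms, it is equivariant for the relevant action of $\calE$; by Corollary~\ref{C:mathematical objects equivalence} it therefore descends to an isomorphism of abelian schemes over all of $\bfSh_{K_p\times K_{E,p}}(G_\emptyset\times T_{E,\emptyset})$, compatible with $\rmM_2(\cO_E)$-actions and prime-to-$p$ polarizations.

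The step I expect to be the main obstacle is the bookkeeping of the polarizations. The action of $\widetilde G_\emptyset$ on $\bfA'$ (and of $\widetilde G^\star$ on $\calA^\star$) twists the polarization by the totally positive similitude factor $\nu^+_{\tilde g}$ introduced in Subsection~\ref{S:abel var in unitary case}, and one must verify that these normalizations are matched under $\bff$ and under the operation $(-)\otimes_{\cO_F}\cO_E$ followed by doubling — this comes down to the explicit choice of $\delta_\emptyset$ and of the form $\psi$ (and the isomorphism $(\widehat\Lambda^{(p)},\psi)\cong((\widehat\Lambda^{(p)}_F\otimes_{\cO_F}\cO_E)^{\oplus 2},\psi_E^{\oplus 2})$) in Subsection~\ref{S:comparison-Hilbert} — together with checking that $\bbalpha^*\bfA''_\emptyset$ really carries the $\calE$-equivariant structure required to invoke Corollary~\ref{C:mathematical objects equivalence}. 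Everything else is a formal consequence of the connected-component formalism.
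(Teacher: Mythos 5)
Your proposal is correct and takes essentially the same route the paper has in mind: reduce to the common neutral connected component, apply Proposition~\ref{P:integral-HMV-unitary} there, and propagate by equivariance via Corollary~\ref{C:mathematical objects equivalence}. The paper's own proof is just the one-line assertion that this ``follows immediately from the constructions of $\calA$ and $\bfA''_\emptyset$ and the proposition above,'' so your write-up is a faithful unpacking of it; the polarization bookkeeping you flag is indeed the point that requires care, and it goes through by the explicit choices of $\delta_\emptyset$, $\psi$, and the lattice in Subsection~\ref{S:comparison-Hilbert}.
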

\begin{proof}
This follows from the constructions of $\calA$ and $\bfA''_\emptyset$ and the proposition above.
\end{proof}

\subsection{Comparison of the twisted partial Frobenius}

Keep the notation as in Subsection~\ref{S:comparison-Hilbert}.
The Shimura variety $\bfSh_{K^\star }(G_\emptyset^\star )_{\Fp}$ also admits a twisted partial Frobenius $\Phi_{\gothp^2}$ for each $\gothp \in \Sigma_p$ which we define as follows.
Let $S$ be a locally noetherian $ \FF_p$-scheme.  Given an $S$-point $(A, \iota, \lambda, \alpha_{K^{\star p}})$ of $\bfSh_{K^\star }(G_\emptyset^\star)_{\Fp}$, we associate a new point $(A', \iota', \lambda', \alpha'_{K^{\star p}})$:
\begin{itemize}
\item
$A' = A / \Ker_{\gothp^2} \otimes_{\calO_F} \gothp$, where $\Ker_{\gothp^2}$ is the $\gothp$-component of the kernel of relative Frobenius homomorphism $\Fr^2_A: A \to A^{(p^2)}$; it is equipped with the induced $\calO_F$-action $\iota'$;
\item
using the natural quasi-isogeny $\eta: A \to A'$,
$\lambda'$ is given by the composite of quasi-isogenies $A' \xleftarrow{\eta} A \xrightarrow{\lambda} A^\vee \xrightarrow{\eta^\vee} A'^\vee$ (which is a $\ZZ_{(p)}^\times$-isogeny by the same argument as in Subsection~\ref{S:partial Frobenius});
\item
$\bar \alpha'_{K^{\star p}}$ is
the composite $\widehat\Lambda_F^{(p)} \xrightarrow{\bar \alpha_{K^{\star p}}} T^{(p)}(A) \xleftarrow{\eta} T^{(p)}(A')$.
\end{itemize}

The construction above gives rise to a \emph{twisted partial Frobenius morphism}
\[
\gothF^\star_{\gothp^2}: \bfSh_{K^\star }(G^\star _\emptyset)_{\FF_p} \longrightarrow
\bfSh_{K^\star }(G^\star _\emptyset)_{\FF_p} \quad \textrm{and} \quad
\eta_{\gothp^2}: \calA_{\FF_p} \to (\gothF^\star_{\gothp^2})^*\calA_{\FF_p}.
\]
Using the formalism of Shimura varieties (Corollary~\ref{C:Sh(G)^circ_Zp independent of G} and more specifically \eqref{E:A from A*}), it gives rise to a \emph{twisted partial Frobenius morphism}
\[
\gothF^\emptyset_{\gothp^2}: \bfSh_{K}(G_\emptyset)_{\FF_p} \longrightarrow
\bfSh_{K}(G_\emptyset)_{\FF_p} \quad \textrm{and} \quad
\eta_{\gothp^2}^\emptyset: \calA_{\FF_p} \to (\gothF^\emptyset_{\gothp^2})^*\calA_{\FF_p}.
\]

\begin{cor}
The 
twisted partial Frobenius morphism $\gothF''_{\gothp^2}$ on $\bfSh_{K''_p}(G''_\emptyset)_{\FF_p}$ and the twisted partial Frobenius $\gothF^\emptyset_{\gothp^2}$ on $\bfSh_{K_p}(G_\emptyset)_{\Fp}$ are compatible, in the sense that there exists a morphism $\tilde \gothF_{\gothp^2}$ so that both squares in the commutative diagram are Cartesian.
\[
\xymatrix{
\bfSh_{K_p}(G_\emptyset)_{\Fp}
\ar[d]^{\gothF^\emptyset_{\gothp^2}} & \ar[l]_-{\mathbf{pr}_1}
\bfSh_{K_p \times K_{E,p}}
(G_\emptyset \times T_{E, \emptyset})_{\Fp} \ar[r]^-{ \bbalpha  }
\ar[d]^{\tilde \gothF_{\gothp^2}}
 &
\bfSh_{K''_p}(G''_\emptyset)_{\Fp}
\ar[d]^{\gothF''_{\gothp^2}}
\\
\bfSh_{K_p}(G_\emptyset)_{\Fp} & \ar[l]_-{\mathbf{pr}_1}
\bfSh_{K_p \times K_{E,p}}
(G_\emptyset \times T_{E, \emptyset})_{\Fp} \ar[r]^-{ \bbalpha  } &
\bfSh_{K''_p}(G''_\emptyset)_{\Fp}
}
\]
Moreover, $\eta^\emptyset_{\gothp^2}$ is compatible with $\eta''_{\gothp^2}$ in the sense that the following diagram commutes.
\[
\xymatrix{
\bbalpha^*\bfA''_{\emptyset,\FF_p} \ar[d]^{\bbalpha^*(\eta'' _{{\gothp^2}})} \ar[r]^-{\eqref{E:comparison abelian varieties over HMV and unitary}} & (\mathbf{pr}_1^* \calA_{\FF_p} \otimes_{\calO_F} \calO_E)^{\oplus 2} \ar[d]_{\eta^\emptyset_{\gothp^2} \otimes 1}\\
\bbalpha^*\gothF''^*_{\gothp^2} \bfA''_{\emptyset, \FF_p}
\ar[r]^-{\eqref{E:comparison abelian varieties over HMV and unitary}} & \big(\mathbf{pr}_1^*\gothF_{\gothp^2}^{\emptyset,*}( \calA_{\FF_p}) \otimes_{\calO_F} \calO_E \big)^{\oplus 2}
}
\]
\end{cor}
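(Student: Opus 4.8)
The plan is to build $\tilde\gothF_{\gothp^2}$ out of the product structure of the middle Shimura variety, to deduce the commutativity of the two squares (and the compatibility of the quasi-isogenies) from Corollary~\ref{C:integral-HMV-unitary} together with the fact that the twisted partial Frobenius recipe commutes with the Serre tensor construction, and finally to obtain Cartesian-ness from a degree comparison that is itself forced by commutativity.

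First I would observe that, since $\ttS=\emptyset$, the torus $T_{E,\emptyset}$ carries the \emph{trivial} Deligne homomorphism $h_{E,\emptyset}$ (all $s_{\tilde\tau}=1$), so by the discussion in Subsection~\ref{S:CM extension} the middle Shimura variety is a product
\[
\bfSh_{K_p\times K_{E,p}}(G_\emptyset\times T_{E,\emptyset})_{\FF_p}\;=\;\bfSh_{K_p}(G_\emptyset)_{\FF_p}\times_{\FF_p}\bfSh_{K_{E,p}}(T_{E,\emptyset})_{\FF_p},
\]
with $\mathbf{pr}_1$ the projection onto the first factor. I would then simply set $\tilde\gothF_{\gothp^2}:=\gothF^\emptyset_{\gothp^2}\times\mathrm{id}$. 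With this definition the left square commutes tautologically and is Cartesian, being literally the base change of $\gothF^\emptyset_{\gothp^2}$ along $\mathbf{pr}_1$.

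For the right square, both $\bbalpha\circ\tilde\gothF_{\gothp^2}$ and $\gothF''_{\gothp^2}\circ\bbalpha$ are morphisms into the moduli scheme $\bfSh_{K''_p}(G''_\emptyset)$, so it suffices to match the pullbacks of the universal tuple $(\bfA''_\emptyset,\iota'',\lambda'',\alpha_{K''})$ with all its PEL data. By \eqref{E:comparison abelian varieties over HMV and unitary} one has $\bbalpha^*\bfA''_\emptyset\cong(\mathbf{pr}_1^*\calA\otimes_{\calO_F}\calO_E)^{\oplus2}$ compatibly with the $\rmM_2(\calO_E)$-action, polarizations, and level structures. The key point I would establish is that the recipe of Subsection~\ref{S:partial Frobenius}, namely $B\mapsto(B/\Ker_{\gothp^2,B})\otimes_{\calO_F}\gothp$ together with the induced polarization, $\calO_F$-action and prime-to-$p$ level structure, commutes both with the Serre tensor construction $-\otimes_{\calO_F}\calO_E$ and with $(-)^{\oplus2}$: the relative $p^2$-Frobenius and the $\calO_F$-isotypic decomposition of the $p$-divisible group are functorial for $-\otimes_{\calO_F}\calO_E$, and the prime-to-$p$ level structure is carried along the canonical isomorphism $T^{(p)}(B)\xrightarrow{\sim}T^{(p)}((B/\Ker_{\gothp^2,B})\otimes_{\calO_F}\gothp)$ (in particular it is insensitive to the $T_{E,\emptyset}$-coordinate). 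Applying this to both sides of \eqref{E:comparison abelian varieties over HMV and unitary} yields $\tilde\gothF_{\gothp^2}^*\bbalpha^*\bfA''_\emptyset\cong(\mathbf{pr}_1^*\gothF_{\gothp^2}^{\emptyset,*}\calA\otimes_{\calO_F}\calO_E)^{\oplus2}\cong\bbalpha^*\gothF''^*_{\gothp^2}\bfA''_\emptyset$ with matching structures, hence $\bbalpha\circ\tilde\gothF_{\gothp^2}=\gothF''_{\gothp^2}\circ\bbalpha$; and the very same computation, keeping track of the universal quasi-isogenies — $\eta^\emptyset_{\gothp^2}$ and $\eta''_{\gothp^2}$ are by construction induced by the natural chain $B\to B/\Ker_{\gothp^2,B}\leftarrow(B/\Ker_{\gothp^2,B})\otimes_{\calO_F}\gothp$ — shows that $\eta''_{\gothp^2}$ corresponds to $\eta^\emptyset_{\gothp^2}\otimes1$ under \eqref{E:comparison abelian varieties over HMV and unitary}, which is precisely the asserted commutative square of abelian schemes. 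For Cartesian-ness of the right square, I would form the fibre product $P$ of $\gothF''_{\gothp^2}$ and $\bbalpha$ and the comparison morphism $\sigma$ from the middle Shimura variety to $P$; since $\bbalpha$ is finite étale (it arises at finite level from the quotient construction of Subsection~\ref{A:connected integral model} for the central quotient $G_\emptyset\times T_{E,\emptyset}\twoheadrightarrow G''_\emptyset$), $P$ is again smooth over $\FF_p$ and $\sigma$ is finite étale, and comparing degrees in the commuting square $\bbalpha\circ\tilde\gothF_{\gothp^2}=\gothF''_{\gothp^2}\circ\bbalpha$ (both $\bbalpha$'s having the same degree) forces $\deg\tilde\gothF_{\gothp^2}=\deg\gothF''_{\gothp^2}$, so $\sigma$ has degree $1$ and is therefore an isomorphism.

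The step I expect to be the real work is the PEL bookkeeping in the third paragraph: checking that \eqref{E:comparison abelian varieties over HMV and unitary} genuinely propagates through the twisted partial Frobenius at the level of the \emph{full} data — in particular the polarizations, whose similitude factors must be tracked through the Serre tensor construction, and the prime-to-$p$ level structures — rather than merely for the underlying abelian schemes. Once that compatibility is in hand, commutativity and Cartesian-ness are formal, with the only remaining ingredient being the elementary observation that the degree of $\gothF''_{\gothp^2}$ (equivalently, via the factorization $\prod_{\gothp\in\Sigma_p}\gothF_{\gothp^2}=S_p^{-1}\circ F^2$ of Proposition~\ref{P:product of partial Frobenius}, of the relative $p^2$-Frobenius broken up over $\Sigma_p$) is well defined and unchanged under $\bbalpha$.
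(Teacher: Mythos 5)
Your proposal is correct and essentially runs parallel to the paper's very terse proof ("follows from the definitions and Proposition~\ref{P:integral-HMV-unitary}"), but you make the content explicit in a moduli-theoretic way and use a degree count to get Cartesian-ness, where the paper is implicitly relying on the $\calE$-equivariant formalism of Subsections~\ref{A:connected integral model}--\ref{S:transfer math obj}. Your key moves — recognizing that $h_{E,\emptyset}$ is trivial so that the middle Shimura variety decomposes as a product with the discrete torus factor, setting $\tilde\gothF_{\gothp^2}=\gothF^\emptyset_{\gothp^2}\times\mathrm{id}$, and deducing commutativity of the right square by verifying that the partial-Frobenius recipe ($B\mapsto (B/\Ker_{\gothp^2,B})\otimes_{\calO_F}\gothp$, plus the induced polarizations and level structures) commutes with $(-)\otimes_{\calO_F}\calO_E$ and $(-)^{\oplus 2}$ as in Corollary~\ref{C:integral-HMV-unitary} — are the same ingredients the authors have in mind. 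The approaches are therefore equivalent in substance; yours is more hands-on at the level of moduli data, the paper's is more formal.

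The one place I would tighten is the degree argument for Cartesian-ness of the right square. You assert that the comparison morphism $\sigma$ from the middle Shimura variety to the fibre product $P$ is finite \'etale and of degree $1$, hence an isomorphism — but the degree count $\deg\tilde\gothF_{\gothp^2}=\deg\gothF''_{\gothp^2}$ gives $\deg\sigma=1$ only if you already know $\sigma$ is surjective (a finite \'etale morphism which is not surjective onto some connected component of $P$ has degree $0$ there, so component bookkeeping does not come for free). The surjectivity is in fact easy here: $\gothF^\emptyset_{\gothp^2}$ and $\gothF''_{\gothp^2}$ are twists of relative $p^2$-Frobenii, hence bijective on $\overline\FF_p$-points; combined with the commutativity you just proved, this forces $\sigma$ to be bijective on $\overline\FF_p$-points, and since $\sigma$ is a morphism of finite \'etale $N$-schemes (so itself finite \'etale) between smooth $\FF_p$-schemes, it is an isomorphism. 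With that supplement, the argument is complete, and in fact you no longer need the degree comparison at all.
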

\begin{proof}
This follows from the definition of the partial Frobenii in various situations and the comparison Proposition~\ref{P:integral-HMV-unitary} above.
\end{proof}

\section{Goren-Oort Stratification}
\label{Section:defn of GOstrata}

We define an analog of the Goren-Oort stratification on the special fibers of quaternionic Shimura varieties.  This is first done for unitary Shimura varieties and then pulled back to the quaternionic ones.  Unfortunately, the definition a priori depends on the auxiliary choice of CM field (as well as the signatures $s_{\tilde \tau}$).
In the case of Hilbert modular variety, we show that our definition of the GO-strata agrees with Goren-Oort's original definition  in \cite{goren-oort} (and hence does not depend on the auxiliary choice of data).

\subsection{Notation}
\label{S:GO-notation}
Keep  the notation as in the previous sections.
Let $k_0$ be a finite extension of $\Fp$ containing all residue fields of $\calO_E$ of characteristic $p$.
%Let $\ttS$ be an even subset of places of $F$ satisfying Hypothesis~\ref{H:B_S-splits-at-p}.
%Let $K' \subseteq G'_{\tilde \ttS}(\AAA^{\infty})$ be an open compact subgroup with $K'_p$ given by the rule \ref{S:level-structure}, and  
Let $X':=\bfSh_{K'}(G'_{\tilde \ttS})_{k_0}$ denote the base change  to $k_0$ of the Shimura variety $\bfSh_{K'}(G'_{\tilde \ttS})$ considered in Theorem~\ref{T:unitary-shimura-variety-representability}.%, where $k_0$ denotes a finite  extension of $\FF_p$ containing all residue fields $k_\gothq$ of the field $E$.  %For most of this section, we fix a sufficiently small tame level structure $K'^p \subset G'_\ttS(\AAA^{\infty,p})$ in the discussion.
%To simplify notation, we put $X' = \bfSh_{K'^pK'_p}(G'_\ttS)_{k_0}$.

Recall that $\gothe \in \calO_{D_\ttS,p}$ corresponds to $\big(
\begin{smallmatrix}
1&0\\0&0
\end{smallmatrix}
\big)$ when identifying  $\calO_{D_\ttS,p}$ with $\rmM_2(\calO_{E,p})$.
For an abelian scheme $A$ over a locally noetherian $k_0$-scheme $S$ carrying an action of $\cO_{D_{\ttS}}$, we have the reduced module of invariant differential 1-forms $\omega_{A/S}^{\circ}$, the reduced Lie algebra $\Lie(A/S)^\circ$, and the reduced de Rham homology $H^{\dR}_1(A/S)^{\circ}$ defined in Subsection~\ref{N:notation-reduced}. Their $\tilde{\tau}$-components $\omega_{A/S, \tilde{\tau}}^{\circ}$, $\Lie(A/S)_{\tilde{\tau}}^{\circ}$ and $H_1^{\dR}(A/S)^{\circ}_{\tilde{\tau}}$ for $\tilde{\tau}\in \Sigma_{E,\infty}$, fit in an exact sequence, called the \emph{reduced Hodge filtration}, 
\[
0\ra \omega_{A^\vee/S,\tilde{\tau}}^{\circ}\ra H^{\dR}_1(A/S)^{\circ}_{\tilde{\tau}}\ra \Lie(A/S)^\circ_{\tilde{\tau}}\ra 0.
\]
% Suppose that $S$ has characteristic $p$.
Let $A^{(p)}$ denote the base change of $A$ via the absolute Frobenius on $S$.  The Verschiebung  $\Ver:A^{(p)}\ra A$ and the Frobenius morphism $\Fr : A\ra A^{(p)}$ induce respectively  maps of coherent sheaves on $S$:
\[
F_A: H_1^{\dR}(A/S)^{\circ, (p)}\ra H_1^{\dR}(A/S)^{\circ}\quad \text{and } V_A: H_1^{\dR}(A/S)^{\circ}\ra H_1^{\dR}(A/S)^{\circ, (p)},
\]
which are compatible with the action of $\cO_E$. Here, for a coherent $\cO_S$-module,  $M^{(p)}$ denotes the base change $M\otimes_{\cO_S, F_{\mathrm{abs}}} \cO_S$.   If there is no confusion, we drop the subscript $A$ from the notation and simply write $F$ and $V$ for the two maps. (Although the letter $F$ also stands for the totally real field, we think this should not cause confusion.) Moreover, we have 
\[
\Ker(F)=\im(V)=(\omega_{A^\vee/S}^{\circ})^{(p)}\quad \textrm{ and } \quad \im(F)=\Ker(V)\cong \Lie(A^{(p)}/S)^{\circ}.\]

 %We fix an sufficiently small open compact $K'^p\subseteq G'_{\tilde \ttS}(\AAA^{p,\infty})$, and put $X'=\bfSh_{K'^pK'_p}(G'_{\tilde \ttS})_{k_0}$ to simplify the notation.
 
Let $(A, \iota, \lambda, \alpha_{K'})$ be an $S$-valued point of $X'=\bfSh_{K'}(G'_{\tilde \ttS})_{k_0}$. By Kottwitz' determinant condition \ref{T:unitary-shimura-variety-representability}(a), for each $\tilde{\tau}\in \Sigma_{E,\infty}$, $\Lie(A/S)^{\circ}_{\tilde{\tau}}$ is a locally free $\cO_{S}$-module of rank $s_{\tilde{\tau}}$.  (The numbers $s_{\tilde \tau}$ are defined as in Subsection~\ref{S:CM extension}.)
By duality, this implies that $\omega_{A^\vee/S,\tilde{\tau}}^{\circ}$ is locally  free of rank  $s_{\tilde{\tau}^c}=2-s_{\tilde{\tau}}$.
Moreover, when $\tau \in \Sigma_{\infty/\gothp}$ with $\gothp$ not of type $\beta^\sharp$, the universal polarization $\lambda$ induces an isomorphism of locally free $\cO_{S}$-modules
\begin{equation}\label{Equ:duality-omega}
\omega_{A^\vee/S,\tilde{\tau}}^\circ\cong \omega_{A/S,\tilde{\tau}^c}^\circ.
\end{equation}

\subsection{Essential Frobenius and essential Verschiebung}
\label{N:essential frobenius and verschiebung}
We now define two very important morphisms: essential Frobenius and essential Verschiebung; we will often encounter later their variants for crystalline homology and Dieudonn\'e modules, for which we shall simply refer to the similar construction given here.

Let $(A,\iota, \lambda, \alpha_{K'})$ be as above. 
For $\tilde \tau \in \Sigma_{E, \infty}$ lifting a place $\tau \in \ttS_\infty$,
we define the \emph{essential Frobenius} to be
\begin{align}
\label{E:defintion of Fes}
F_\es =F_{A,\es, \tilde \tau}: (H^{\dR}_1(A/S)_{\sigma^{-1}\tilde{\tau}}^{\circ})^{ (p)}&=H_1^{\dR}(A^{(p)}/S)^{\circ}_{\tilde{\tau}}\longrightarrow H_1^{\dR}(A/S)^{\circ}_{\tilde{\tau}}
\\
\nonumber
x&\longmapsto{
\left\{
\begin{array}{ll}
F(x) & \textrm{when }s_{\sigma^{-1} \tilde \tau} = 1\textrm{ or }2;\\
V^{-1}(x) & \textrm{when }s_{\sigma^{-1} \tilde \tau} = 0.\\
\end{array}
\right.}
\end{align}
Note that in the latter case, the morphism $V: H_1^{\dR}(A/S)^{\circ}_{\tilde{\tau}} \xra{\sim} H_1^{\dR}(A^{(p)}/S)^{\circ}_{\tilde{\tau}}$ is an isomorphism by Kottwitz' determinant condition.

Similarly, we define the \emph{essential Verschiebung} to be 
\begin{align}
\label{E:definition of Ves}
V_\es =V_{A,\es, \tilde \tau}: H_1^{\dR}(A/S)^{\circ}_{\tilde{\tau}}&\longrightarrow
H_1^{\dR}(A^{(p)}/S)^{\circ}_{\tilde{\tau}}
=(H_1^{\dR}(A/S)_{\sigma^{-1}\tilde{\tau}}^{\circ})^{(p)}
\\
\nonumber
x&\longmapsto{
\left\{
\begin{array}{ll}
V(x) & \textrm{when }s_{\sigma^{-1} \tilde \tau} = 0\textrm{ or }1;\\
F^{-1}(x) & \textrm{when }s_{\sigma^{-1} \tilde \tau} = 2.\\
\end{array}
\right.}
\end{align}
Here, in the latter case, the morphism $F: H_1^{\dR}(A^{(p)}/S)^{\circ}_{\tilde{\tau}} \to H_1^{\dR}(A/S)^{\circ}_{\tilde{\tau}}$ is an isomorphism.

When no confusion arises, we may suppress the subscript $A$ and/or $\tilde \tau$ from  $F_{A, \es, \tilde\tau}$ and $V_{A,\es,\tilde\tau}$.

Thus, if $s_{\sigma^{-1}\tilde \tau} =0$ or $2$, both $F_{\es, \tilde \tau}:H_1^{\dR}(A^{(p)}/S)^{\circ}_{\tilde{\tau}}\ra H_1^{\dR}(A/S)^{\circ}_{\tilde{\tau}}$ and $V_{\es, \tilde \tau}: H_1^{\dR}(A/S)^{\circ}_{\tilde{\tau}}\ra  H_1^{\dR}(A^{(p)}/S)^{\circ}_{\tilde{\tau}}$  are isomorphisms, and both $F_{\es, \tilde \tau}V_{\es, \tilde \tau}$ and $V_{\es, \tilde \tau}F_{\es, \tilde \tau}$ are isomorphisms.  
When $s_{\sigma^{-1}\tilde \tau}=1$, we usually prefer to write the usual Frobenius and Verschiebung.

We will also use composites of Frobenii and Verschiebungs:
\begin{align}
\label{E:Ves n}
V_{\es, \tilde \tau}^n: &H_1^\mathrm{dR}(A/S)^\circ_{\tilde \tau}\xra{V_{\es, \tilde \tau}} H^{\dR}_1(A^{(p)}/S)_{\tilde{\tau}}^{\circ}\xra{V_{\es, \sigma^{-1}\tilde \tau}^{(p)}} 
\cdots \xra{V_{\es, \sigma^{1-n}\tilde \tau}^{(p^{n-1})}}
H^{\dR}_1(A^{(p^{n})}/S)_{\tilde{\tau}}^{\circ},
\\
\label{E:Fes n}
F_{\es, \tilde \tau}^n:& H^{\dR}_1(A^{(p^{n})}/S)_{\tilde{\tau}}^{\circ}
\xra{F_{\es, \sigma^{1-n} \tilde \tau}^{(p^{n-1})}}
 H^{\dR}_1(A^{(p^{n-1})}/S)_{\tilde{\tau}}^{\circ} \xra{F_{\es, \sigma^{2-n}\tilde \tau}^{(p^{n-2})}}
  \cdots \xra{F_{\es,\tilde\tau}}
 H_1^\mathrm{dR}(A/S)^\circ_{\tilde \tau}.
\end{align}

Suppose now $S=\Spec(k)$ is the spectrum of a perfect field of characteristic $p>0$. Let $\tcD_{A}$ denote the \emph{covariant} Dieudonn\'e module of $A[p^\infty]$. We have a canonical decomposition $\tcD_A=\bigoplus_{\tilde\tau\in \Sigma_{E,\infty}}\tcD_{A,\tilde\tau}$. 
We put $\tcD_{A,\tilde\tau}^{\circ}=\gothe\cdot \tcD_{A,\tilde\tau}$. Then we  define the essential Frobenius and  essential Verschiebung
\[
F_\es = F_{A,\es, \tilde \tau}: \tcD^{\circ}_{A,\sigma^{-1}\tilde\tau}\ra \tcD^{\circ}_{A,\tilde\tau}\quad \text{and}\quad V_{\es} = V_{A, \es, \tilde \tau}:\tcD_{A,\tilde\tau}^{\circ}\ra \tcD^{\circ}_{A,\sigma^{-1}\tilde\tau}
\]
 in the same way as in \eqref{E:defintion of Fes} and \eqref{E:definition of Ves} for $H_1^{\dR}(A/S)^{\circ}_{\tilde\tau}$. 
The morphisms $F_{A,\es, \tilde \tau}$ and $V_{A,\es, \tilde \tau}$ on $H_1^{\dR}(A/S)^{\circ}_{\tilde\tau}$ can be recovered from those on $\tcD^{\circ}_{A,\tilde\tau}$ by reduction modulo $p$.

%\begin{notation}
%\label{N:tau- and tau+}
%For $\tau \in \Sigma_\infty-\ttS_\infty$, we define $n_{\tau} = n_{\tau, \ttS}$ to be the positive integer $n$ such that $\sigma^{-1}\tau, \dots, \sigma^{-n+1}\tau \in \ttS_\infty$ and $\sigma^{-n}\tau\notin \ttS_{\infty}$.
%To simplify our notation, we write $\tau^-$ for $\sigma^{-n_\tau} \tau$.
%If $\tilde \tau \in \Sigma_{E, \infty}$ lifts $\tau$, we put $\tilde \tau^-:= \sigma^{-n_\tau} \tilde\tau$.
%We also write $\tau^+$ to be the unique place in $ \Sigma_\infty-\ttS_\infty$ such that $(\tau^+)^- = \tau$, and $\tilde \tau^+$ to be the  unique place lifting $\tau^+$ and $(\tilde \tau^+)^- = \tilde \tau$.
%\end{notation}

\begin{notation}
\label{N:n tau}
For $\tau \in \Sigma_\infty-\ttS_\infty$, we define $n_{\tau} = n_{\tau, \ttS}\geq 1$ to be the  integer such that $\sigma^{-1}\tau, \dots, \sigma^{-n_\tau+1}\tau \in \ttS_\infty$ and $\sigma^{-n_{\tau}}\tau\notin \ttS_{\infty}$.
\end{notation}

\subsection{Partial Hasse invariants}\label{S:partial-Hasse}

For each $\tilde \tau$ lifting a place $\tau \in \Sigma_\infty-\ttS_\infty$, we must have $s_{\tilde \tau} = 1$. So in definition of $V_{\es, \tilde \tau}^{n_\tau}$  in \eqref{E:Ves n}, all morphisms are isomorphisms except the last one. Similarly, in the definition
 of $F_{\es, \tilde \tau}^{n_\tau}$  in \eqref{E:Fes n}, all morphisms are isomorphisms except the first one.
It is clear that $V_{\es, \tilde \tau}^{n_\tau} F_{\es, \tilde \tau}^{n_\tau} =F_{\es, \tilde \tau}^{n_\tau} V_{\es, \tilde \tau}^{n_\tau}=0$, coming from the composition of $V^{(p^{n_\tau-1})}_{\sigma^{n_\tau-1}\tilde \tau}$ and $F^{(p^{n_\tau-1})}_{\sigma^{n_\tau-1}\tilde \tau}$ in both ways. 
Note  also that the cokernels of $V_{\es, \tilde \tau}^{n_{\tau}}$ and $F_{\es, \tilde \tau}^{n_{\tau}}$ are both locally free $\cO_{X'}$-modules  of rank $1$.
 %When $n_{\tau}=1$, $V_{\es, \tilde \tau}^{n_\tau}$ and $F_{\es, \tilde \tau}^{n_\tau}$ are reduced to the usual Verschiebung and Frobenius, respectively.

The restriction of $V_{\es, \tilde \tau}^{n_\tau}$ to the line bundle $\omega^{\circ}_{A^\vee/S,\tilde{\tau}}$ induces a homomorphism
\begin{equation*}
h_{\tilde{\tau}}(A):
\omega_{A^\vee/S,\tilde{\tau}}^{\circ}\lra \omega^\circ_{A^{\vee, (p^{n_{\tau}})}/S,\tilde{\tau}}=(\omega^{\circ}_{A^\vee/S,\sigma^{-n_{\tau}}\tilde{\tau}})^{\otimes p^{n_{\tau}}}.
\end{equation*}
%Here, we put $\tilde\tau^{-}=\sigma^{-n_{\tau}}\tilde\tau$ to simplify the notation.
Applied to the universal case, this gives rise to a global section
\begin{equation}\label{Equ:partial-hasse}
h_{\tilde{\tau}}\in \Gamma(X', (\omega^{\circ}_{\bfA'^\vee/X', \sigma^{-n_{\tau}}\tilde{\tau}})^{\otimes p^{n_{\tau}}}\otimes( \omega_{\bfA'^\vee/X', \tilde{\tau}}^{\circ})^{\otimes (-1)}),
\end{equation}
where $\bfA'$ is the universal abelian scheme over $X'=\bfSh_{K'}(G'_{\tilde \ttS})_{k_0}$.
We call $h_{\tilde \tau}$ \emph{the $\tilde{\tau}$-partial Hasse invariant}.
With $\tilde{\tau}$ replaced by $\tilde{\tau}^c$ everywhere, we can define similarly a partial Hasse invariant $h_{\tilde{\tau}^c}$.
They are  analogs  of the partial Hasse invariants in the unitary case.

%:
\begin{lemma}\label{Lemma:partial-Hasse}
Let $(A,\iota, \lambda, \bar{\alpha}_{K'})$ be an $S$-valued point of $X'$ as above.  Then the following statements are equivalent for $\tilde \tau$ lifting $\tau \in \Sigma_\infty - \ttS_\infty$:
\begin{enumerate}
\item We have $h_{\tilde{\tau}}(A)=0$.

\item The image of $F_{\es, \tilde \tau}^{n_\tau}: H_{1}^{\dR}(A^{(p^{n_{\tau}})}/S)^{\circ}_{\tilde{\tau}}\ra H_1^{\dR}(A/S)_{\tilde{\tau}}^{\circ}$ is $\omega_{A^\vee/S,\tilde{\tau}}^{\circ}$.

\item We have $h_{\tilde{\tau}^c}(A)=0$.

\item The image of $F_{\es, \tilde \tau^c}^{n_\tau}:H_{1}^{\dR}(A^{(p^{n_{\tau}})}/S)^{\circ}_{\tilde{\tau}^c}\ra H_1^{\dR}(A/S)_{\tilde{\tau}^c}^{\circ}$ is $\omega_{A^\vee/S, \tilde{\tau}^c}^{\circ}$.
\end{enumerate}

\end{lemma}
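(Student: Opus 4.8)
The plan is to prove the chain of equivalences by first establishing $(1)\Leftrightarrow(2)$ (and by the same argument $(3)\Leftrightarrow(4)$, replacing $\tilde\tau$ by $\tilde\tau^c$ throughout), and then proving $(2)\Leftrightarrow(4)$ using the polarization. Since all four conditions are closed conditions on $S$ that can be checked on geometric points, I may assume $S=\Spec k$ with $k$ algebraically closed of characteristic $p$, so that I can work with the covariant Dieudonn\'e module $\tcD_A$ and its reduction; the statements for $H_1^\dR$ follow by reduction modulo $p$ as noted in Subsection~\ref{N:essential frobenius and verschiebung}.

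For $(1)\Leftrightarrow(2)$: recall $h_{\tilde\tau}(A)$ is by definition the restriction of $V_{\es,\tilde\tau}^{n_\tau}$ to $\omega^\circ_{A^\vee/S,\tilde\tau}$, landing in $(\omega^\circ_{A^\vee/S,\sigma^{-n_\tau}\tilde\tau})^{\otimes p^{n_\tau}}$. First I would observe that along the chain $\sigma^{-1}\tilde\tau,\dots,\sigma^{-n_\tau+1}\tilde\tau$ all these lifts lie over $\ttS_\infty$, so each $s$-value there is $0$ or $2$, hence all but the last of the maps composing $V_{\es,\tilde\tau}^{n_\tau}$ and $F_{\es,\tilde\tau}^{n_\tau}$ are isomorphisms (this is the content of Subsection~\ref{S:partial-Hasse}). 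The key linear-algebra fact is: inside the rank-two module $H_1^\dR(A/S)^\circ_{\tilde\tau}$, the line $\omega^\circ_{A^\vee/S,\tilde\tau}=\Ker(V)=\im(V^{(p)})$ when $s_{\tilde\tau}=1$ (which holds since $\tau\notin\ttS_\infty$), so $\im(F)$ and $\omega^\circ$ are the two distinguished lines, and $\im(F)=\omega^\circ$ exactly when $V$ restricted to $\omega^\circ$ is zero. Unwinding the composites, $h_{\tilde\tau}(A)=0$ iff $V^{(p^{n_\tau-1})}_{\sigma^{n_\tau-1}\tilde\tau}$ kills the image under $V_{\es}^{n_\tau-1}$ of $\omega^\circ_{\tilde\tau}$, which (using that the intermediate essential maps are isomorphisms) translates to $\im(F_{\es,\tilde\tau}^{n_\tau})\subseteq\omega^\circ_{A^\vee/S,\tilde\tau}$; since both are locally free of rank one (the cokernel of $F_{\es,\tilde\tau}^{n_\tau}$ being a line bundle by Subsection~\ref{S:partial-Hasse}), inclusion forces equality, giving $(2)$. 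The reverse implication runs the same computation backwards.

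For $(2)\Leftrightarrow(4)$: this is where the polarization enters. Since $\tau\in\Sigma_{\infty/\gothp}$ with $\gothp$ not of type $\beta^\sharp$ (because $\tau\notin\ttS_\infty$ forces this, by Hypothesis~\ref{H:B_S-splits-at-p}), the polarization $\lambda$ induces perfect pairings between the $\tilde\tau$- and $\tilde\tau^c$-components of $H_1^\dR$ (and of $\tcD$), and an isomorphism $\omega^\circ_{A^\vee/S,\tilde\tau}\cong\omega^\circ_{A/S,\tilde\tau^c}$ as in \eqref{Equ:duality-omega}. I would check that under this duality the essential Frobenius $F_{\es,\tilde\tau}$ and essential Verschiebung $V_{\es,\tilde\tau^c}$ are adjoint to each other (Frobenius and Verschiebung are interchanged by duality, and the case distinctions $s\leftrightarrow 2-s$ match up under $\tilde\tau\leftrightarrow\tilde\tau^c$), hence $\im(F_{\es,\tilde\tau}^{n_\tau})$ is the annihilator of $\Ker(V_{\es,\tilde\tau^c}^{n_\tau})$, and dually $\omega^\circ_{A^\vee/S,\tilde\tau}$ annihilates $\omega^\circ_{A^\vee/S,\tilde\tau^c}$. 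A short diagram chase then shows $\im(F_{\es,\tilde\tau}^{n_\tau})=\omega^\circ_{A^\vee/S,\tilde\tau}$ is equivalent to $\im(F_{\es,\tilde\tau^c}^{n_\tau})=\omega^\circ_{A^\vee/S,\tilde\tau^c}$.

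The main obstacle I anticipate is bookkeeping the case distinctions in the definitions \eqref{E:defintion of Fes}–\eqref{E:Fes n} of essential Frobenius/Verschiebung along the whole chain $\tilde\tau,\sigma^{-1}\tilde\tau,\dots,\sigma^{-n_\tau}\tilde\tau$ and verifying that duality matches $F_{\es}$ on the $\tilde\tau$-side with $V_{\es}$ on the $\tilde\tau^c$-side with the correct twists — in particular checking compatibility of the pairings \eqref{Equ:pairing-R}-type identities with the Frobenius twists $(p^i)$. Once that adjointness is pinned down, the rest is formal linear algebra with rank-one submodules of rank-two bundles.
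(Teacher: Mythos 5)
Your argument is essentially the same as the paper's: $(1)\Leftrightarrow(2)$ (and symmetrically $(3)\Leftrightarrow(4)$) from the identity $\im(F_{\es,\tilde\tau}^{n_\tau})=\ker(V_{\es,\tilde\tau}^{n_\tau})$ plus the rank count forcing equality of rank-one direct summands, and $(2)\Leftrightarrow(4)$ from the adjointness $\langle F_{\es,\tilde\tau}^{n_\tau}x,y\rangle=\langle x,V_{\es,\tilde\tau^c}^{n_\tau}y\rangle$ together with the isotropy $\bigl(\omega^\circ_{A^\vee/S,\tilde\tau}\bigr)^\perp=\omega^\circ_{A^\vee/S,\tilde\tau^c}$.

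One small caveat: your opening reduction to $S=\Spec k$ with $k$ algebraically closed should be dropped. It is not only unnecessary (the rest of your argument already works over an arbitrary $S$, as the paper's does) but also not literally valid: for a non-reduced base $S$, vanishing of a section of a line bundle and equality of two rank-one subbundles are \emph{not} detected by evaluation at geometric points. What does work, and is implicit in both your argument and the paper's, is that an inclusion of rank-one local direct summands of a rank-two locally free sheaf must be an equality (the induced surjection of rank-one quotient bundles is an isomorphism); this is the statement that replaces any pointwise check.
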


\begin{proof} The equivalences $(1)\Leftrightarrow(2)$ and $(3)\Leftrightarrow (4)$ follow from the fact that the image of $F$ coincides with the kernel of $V$. We prove now $(2)\Leftrightarrow (4)$. Let $\gothp\in \Sigma_{p}$ be the prime above $p$  so that $\tau\in \Sigma_{\infty/\gothp}$.  Since $\Sigma_{\infty/\gothp}\neq \ttS_{\infty/\gothp}$, 
$\gothp$ can not be of type $\beta^\sharp$ by Hypothesis \ref{H:B_S-splits-at-p}. We consider the following diagram:
\[
\xymatrix@C=5pt{
H_1^\dR(A^{(p^{n_{\tau}})}/S)^\circ_{\tilde{\tau}}
\ar@/^10pt/[d]^{F_{\es, \tilde \tau}^{n_\tau}}
& \times & H_1^\dR(A^{(p^{n_{\tau}})}/S)^{\circ}_{\tilde{\tau}^c}
\ar@/_10pt/[d]_{F_{\es, \tilde \tau^c}^{n_\tau}}
 \ar[rrr]^-{\langle\ , \ \rangle} &&& \calO_{S}
 \\
H_1^\dR(A/S)^{\circ}_{\tilde{\tau}}
\ar@/^10pt/[u]^{V_{\es, \tilde \tau}^{n_\tau}}
&\times&
H_1^\dR(A/S)^{\circ}_{\tilde{\tau}^c}
\ar@/_10pt/[u]_{V_{\es, \tilde \tau^c}^{n_\tau}}
\ar[rrr]^-{\langle\ , \ \rangle} &&& \calO_{S}, \ar@{=}[u]
}
\]
where the pairings $\langle\ , \ \rangle$ are induced by the polarization $\lambda$, and they are perfect because $\gothp$ is not of type $\beta^\sharp$. We have $\langle F_{\es, \tilde \tau}^{n_\tau} x, y\rangle=\langle x,V_{\es, \tilde \tau^c}^{n_\tau} y \rangle^{\sigma^{n_\tau}}$. It follows that
\[
(\omega_{A^\vee/S,\tilde{\tau}}^{\circ})^\perp=\omega_{A^\vee/S,\tilde{\tau}^c}^{\circ},\quad
\text{and}\quad \im(F_{\es, \tilde \tau}^{n_\tau})^\perp=\im(F_{\es, \tilde \tau^c}^{n_\tau}),\]
where $\perp$ means the orthogonal complement under $\langle\ ,\ \rangle$. Therefore, we have
\begin{align*}
& \text{(2) } \omega_{A^\vee/S,\tilde{\tau}}^{\circ}=\im(F_{\es, \tilde \tau}^{n_\tau})
\Longleftrightarrow  (\omega_{A^\vee/S,\tilde{\tau}}^{\circ})^\perp=\im(F_{\es, \tilde \tau}^{n_\tau}
)^\perp
\Longleftrightarrow   \text{(4) } \omega_{A^\vee/S, \tilde{\tau}^c }^{\circ}= \im (F_{\es, \tilde \tau^c}^{n_\tau}).
\end{align*}

\end{proof}

\begin{defn}\label{Defn:GO-strata}
 We fix a section $ \tau \mapsto \tilde{\tau}$ of the natural restriction map $\Sigma_{E,\infty}\ra \Sigma_{\infty}$. Let  $\ttT\subset \Sigma_{\infty}-\ttS_{\infty}$ be a subset.
We put $\bfSh_{K'}(G'_{\tilde \ttS})_{k_0,\emptyset}=X'$, and $X'_\ttT : =\bfSh_{K'}(G'_{\tilde \ttS})_{k_0, \ttT}$ to be the closed subscheme of $\bfSh_{K'}(G'_{\tilde \ttS})_{k_0}$ defined as the vanishing locus of $\{h_{\tilde{\tau}}: \tau\in \ttT\}$. Passing to the limit, we put 
 \[
 \bfSh_{K'_p}(G'_{\tilde \ttS})_{k_0,\ttT}:=\varprojlim_{K'^p}\bfSh_{K'^pK'_p}(G'_{\tilde \ttS})_{k_0,\ttT}
 \]
 We call $\{\bfSh_{K'}(G'_{\tilde \ttS})_{k_0,\ttT}: \ttT\subset \Sigma_{\infty}-\ttS_{\infty}\}$ (resp. $\{\bfSh_{K'_p}(G'_{\tilde \ttS})_{k_0,\ttT}: \ttT\subset \Sigma_{\infty}-\ttS_{\infty}\}$) the \emph{Goren-Oort stratification} (or GO-stratification for short) of $\bfSh_{K'}(G'_{\tilde \ttS})_{k_0}$ (resp. $\bfSh_{K'_p}(G'_{\tilde \ttS})_{k_0}$).

\end{defn}

By Lemma~\ref{Lemma:partial-Hasse}, the GO-strata $X'_\ttT$ do not depend on the choice of the section $\tau\mapsto \tilde{\tau}$. %Note that, each $\bfSh_{K_p}(G'_{\ttS})_{k_0, \ttT}$ is stable  under the action of $\widetilde G_{\tilde\ttS}=G''_{\tilde\ttS}(\Q)^{+,(p)}G'(\AAA^{\infty,p})$ defined in Subsection~\ref{S:abel var in unitary case}.

\begin{prop}\label{Prop:smoothness}
 For any subset $\ttT\subseteq \Sigma_\infty - \ttS_\infty$, the closed GO-stratum $X'_\ttT\subseteq X'$ is smooth of codimension $\#\ttT$, and the tangent bundle $\calT_{X'_\ttT}$ is the subbundle
\[
\bigoplus_{\tau\in \Sigma_\infty - (\ttS_\infty \cup \ttT)}
\bigl( \Lie(\bfA')^{\circ}_{\tilde{\tau}}\otimes \Lie(\bfA')_{\tilde{\tau}^c}^{\circ}\bigr)|_{X'_\ttT}\subseteq \bigoplus_{\tau\in \Sigma_\infty - \ttS_\infty}\bigl(\Lie(\bfA')^\circ_{\tilde{\tau}}\otimes
\Lie(\bfA')^\circ_{\tilde{\tau}^c}\bigr)|_{X'_\ttT},
\]
where the latter is identified with the restriction to $X'_\ttT$ of the tangent bundle of $X'$ computed in  Corollary~\ref{C:deformation}.
Moreover, $X'_\ttT$ is proper if $\ttS_{\infty}\cup \ttT$ is non-empty.
\end{prop}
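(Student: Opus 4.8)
The plan is to prove smoothness of $X'_\ttT$ and compute its tangent bundle by an explicit deformation-theoretic computation in the spirit of Corollary~\ref{C:deformation}, treating one partial Hasse invariant $h_{\tilde\tau}$ at a time and showing the vanishing loci are in "general position". First I would set up the local picture: let $S_0 \hookrightarrow S$ be a square-zero thickening of $\Fpb$-schemes with ideal $\calI$, and let $x_0 = (A_0, \iota_0, \lambda_0, \bar\alpha_{K',0})$ be an $S_0$-point of $X'_\ttT$. By Corollary~\ref{C:deformation}, the deformations of $x_0$ to $S$ (ignoring the closed conditions $h_{\tilde\tau}=0$) form a torsor under $\bigoplus_{\tau \in \Sigma_\infty - \ttS_\infty} (\Lie(A_0)^\circ_{\tilde\tau} \otimes \Lie(A_0)^\circ_{\tilde\tau^c}) \otimes \calI$, parametrized by the choices of Hodge-filtration lifts $\omega^\circ_{S,\tilde\tau} \subseteq H_1^\cris(A_0/S_0)^\circ_{S,\tilde\tau}$ for $\tau \notin \ttS_\infty$ (the lifts for $\tau \in \ttS_\infty$ being unique). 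The key step is then to analyze, for each fixed $\tau \in \ttT$, how the condition $h_{\tilde\tau} = 0$ cuts down this torsor. Since $s_{\tilde\tau} = 1$ for $\tau \notin \ttS_\infty$, the essential Verschiebung $V^{n_\tau}_{\es,\tilde\tau}$ (resp. essential Frobenius $F^{n_\tau}_{\es,\tilde\tau}$) has all but its last (resp. first) constituent map an isomorphism, so on crystalline homology $F^{n_\tau}_{\es,\tilde\tau}$ carries $H_1^\cris(A_0^{(p^{n_\tau})}/S_0)^\circ_{S,\tilde\tau}$ into $H_1^\cris(A_0/S_0)^\circ_{S,\tilde\tau}$ with image a line bundle direct summand, independent of the choice of $\omega^\circ_{S,\tilde\tau}$ — because $n_\tau \geq 1$ means the Frobenius factor lands in the Verschiebung kernel, which is the $(p)$-twist of $\omega^\circ$, hence already determined over $S_0$. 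The condition $h_{\tilde\tau}(A) = 0$ is precisely that $\omega^\circ_{S^\vee,\tilde\tau}$ equals this canonically-determined line $\im(F^{n_\tau}_{\es,\tilde\tau})$ (Lemma~\ref{Lemma:partial-Hasse}(2)), i.e. it pins down the lift $\omega^\circ_{S,\tilde\tau}$ uniquely.

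The main technical point, which I expect to be the real obstacle, is bookkeeping the interaction of the conditions for different $\tau \in \ttT$ together with the duality constraint $\omega^\circ_{S,\tilde\tau^c} = (\omega^\circ_{S,\tilde\tau})^\perp$ coming from the polarization (valid since no $\gothp$ with $\Sigma_{\infty/\gothp} \neq \ttS_{\infty/\gothp}$ is of type $\beta^\sharp$, by Hypothesis~\ref{H:B_S-splits-at-p}). One must check that imposing $h_{\tilde\tau}=0$ for $\tau \in \ttT$ kills exactly the direct summand $(\Lie(A_0)^\circ_{\tilde\tau} \otimes \Lie(A_0)^\circ_{\tilde\tau^c}) \otimes \calI$ of the deformation torsor and leaves the complementary summands $\bigoplus_{\tau \in \Sigma_\infty - (\ttS_\infty \cup \ttT)}(\cdots)$ free — the subtlety being that $\tilde\tau$ and $\tilde\tau^c$ contribute the \emph{same} rank-one summand (as in Corollary~\ref{C:deformation}, using $\Lie(A_0^\vee)^\circ_{\tilde\tau} \simeq \Lie(A_0)^\circ_{\tilde\tau^c}$), and that the condition $h_{\tilde\tau}=0$ and $h_{\tilde\tau^c}=0$ are equivalent (Lemma~\ref{Lemma:partial-Hasse}), so they do not overcount. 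I would argue that the map from "lifts of $x_0$ to $S$" to "lifts satisfying $h_{\tilde\tau}=0$ for all $\tau \in \ttT$" identifies the latter with a torsor under $\bigoplus_{\tau \in \Sigma_\infty - (\ttS_\infty \cup \ttT)}(\Lie(A_0)^\circ_{\tilde\tau} \otimes \Lie(A_0)^\circ_{\tilde\tau^c}) \otimes \calI$: the argument is that for $\tau \in \ttT$ the lift is forced, for $\tau \notin \ttS_\infty \cup \ttT$ it is free, and these choices are independent because the essential-Frobenius image computing $h_{\tilde\tau}$ only involves data over $S_0$, not the other lifts. This shows $X'_\ttT$ is formally smooth over $k_0$ of codimension $\#\ttT$ in $X'$, and simultaneously gives the claimed description of $\calT_{X'_\ttT}$ as a subbundle of $\calT_{X'}|_{X'_\ttT}$.

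Finally, for properness when $\ttS_\infty \cup \ttT \neq \emptyset$: if $\ttS_\infty \neq \emptyset$ then $X' = \bfSh_{K'}(G'_{\tilde\ttS})_{k_0}$ is itself projective by Theorem~\ref{T:unitary-shimura-variety-representability}, and $X'_\ttT$ is closed in it, hence projective. If $\ttS_\infty = \emptyset$ but $\ttT \neq \emptyset$, I would use the valuative criterion: given a DVR $R$ with fraction field $L$ and an $R$-point of $X'_\ttT$ over $L$, the abelian scheme $A_L$ has a polarization of degree prime to $p$ (no type $\beta^\sharp$ primes occur) and, because some $h_{\tilde\tau}$ vanishes, the reduction type is constrained — more efficiently, one observes that the GO-divisor $X'_{\{\tau\}}$ for $\tau \in \ttT$ is, by the main theorem (Theorem~\ref{T:main-thm-unitary}, which the proof of the present proposition may invoke or, to avoid circularity, one falls back on a direct Néron-model argument), a $(\PP^1)^N$-bundle over a projective Shimura variety, hence proper; then $X'_\ttT \subseteq X'_{\{\tau\}}$ is closed, hence proper. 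Since the statement here is used downstream, I would prefer the self-contained valuative-criterion route, checking that the limit point automatically lands in $X'$ by the extension property established in Corollary~\ref{C:integral-model-quaternion} and then noting the closed conditions $h_{\tilde\tau}=0$ are preserved under specialization; the hard part there is only to confirm the limiting abelian scheme still satisfies the Kottwitz determinant and polarization-degree conditions, which follows by flatness.
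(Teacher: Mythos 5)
Your treatment of smoothness and the tangent bundle is essentially the paper's argument: work with the torsor of lifts of the Hodge filtration on crystalline homology, note that for $\tau \in \ttT$ the image of $\tilde F^{n_\tau}_{\es,\tilde\tau}$ (which is determined purely by the data over $S_0$) provides the unique admissible lift of $\omega^\circ_{A_0^\vee/S_0,\tilde\tau}$, and conclude that the lifting torsor restricted to $X'_\ttT$ is exactly the direct summand indexed by $\Sigma_\infty - (\ttS_\infty \cup \ttT)$. Your worry about $\tilde\tau$ vs.\ $\tilde\tau^c$ overcounting is handled exactly as you suspect via Lemma~\ref{Lemma:partial-Hasse} and the polarization duality, as in Corollary~\ref{C:deformation}. (You should also remark that non-emptiness of $X'_\ttT$ is needed before the dimension statement means anything; the paper imports this from Helm.)

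The properness argument, however, has a genuine gap in the crucial case $\ttS_\infty = \emptyset$, $\ttT \neq \emptyset$, which is the only nontrivial case. You suggest that the limit of an $L$-point ``automatically lands in $X'$ by the extension property established in Corollary~\ref{C:integral-model-quaternion}.'' This is not correct. The extension property is a statement about maps from smooth $\cO_{\tilde\wp}$-schemes (a mixed-characteristic condition, used to \emph{characterize} the integral canonical model); it says nothing about the valuative criterion of properness for the special fiber over an equicharacteristic-$p$ DVR $R \supseteq \Fpb$. Indeed when $\ttS_\infty = \emptyset$ the special fiber $X'$ is genuinely \emph{not} proper, so nothing purely formal can force the N\'eron model of $A_L$ to be an abelian scheme. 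The actual argument is: pass to the semi-stable N\'eron model, look at the torus part $\TT$ of the special fiber of $A_R$, note that $X_*(\TT)_\QQ$ is an $\rmM_2(E)$-module of rank $\le 4g$, hence either $0$ or $E^{\oplus 2}$; if it is $E^{\oplus 2}$ then the abelian part is trivial and Verschiebung acts \emph{invertibly} on $\omega_\TT$, so \emph{no} partial Hasse invariant can vanish at $x_L$, contradicting $\ttT \neq \emptyset$. Therefore $\TT = 0$, $A_R$ is an abelian scheme, and the closed conditions then specialize. This toric-reduction obstruction is the entire content of the properness claim in the open case, and it is missing from your proposal; without it, nothing guarantees that the limiting abelian scheme exists at all, let alone satisfies the moduli conditions.
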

\begin{proof}
 We follow the same strategy as in \cite[Proposition 3.4]{helm}. First,  the same argument as \cite[Lemma 3.7]{helm} proves the non-emptyness of $X'_{\ttT}$.
We now proceed as in the proof of Corollary~\ref{C:deformation}. Let $S_0\hra S$ be a closed immersion of locally noetherian $k_0$-schemes whose ideal of definition $\calI$ satisfies $\calI^2=0$.  Consider an $S_0$-valued point $x_0=(A_0, \iota_0, \lambda_0, \bar{\alpha}_{K'})$ of $X'_{\ttT}$. To prove the smoothness of $X'_{\ttT}$, it suffices to show that, locally for the Zariski topology on $S_0$, there exists $x\in X'_{\ttT}(S)$ lifting  $x_0$. By Lemma \ref{Lemma:partial-Hasse}, we have, for every $\tau \in \ttT$,
$$
\omega_{A_0^\vee/S_0,\tilde{\tau}}^{\circ}=F_{\es, \tilde \tau}^{n_\tau}
(H_1^{\dR}(A_0^{(p^{n_{\tau}})}/S_0)^{\circ}_{\tilde{\tau}}).
$$
The reduced ``crystalline homology'' $H_1^{\cris}(A_0/S_0)_{S}^{\circ}$ is equipped with natural operators $F$ and $V$, lifting the corresponding operators on $H_1^{\dR}(A_0/S_0)^{\circ}$.
We define the composite of essential Frobenius
\[
\tilde{F}_{\es, \tilde \tau}^{n_\tau}: H_1^{\cris}(A_0^{(p^{n_{\tau}})}/S_0)^{\circ}_{S, \tilde{\tau}}\ra H_1^{\cris}(A_0/S_0)^{\circ}_{S, \tilde{\tau}}
\]
in the same manner as $F_{\es, \tilde \tau}^{n_\tau}$ on $H^{\dR}_1(A_0^{(p^{n_{\tau}})}/S_0)^{\circ}_{\tilde{\tau}}$ in Notation~\ref{N:essential frobenius and verschiebung}.
Let $\tilde{\omega}_{A_0^\vee/S_0, \tilde{\tau}}^{\circ}$ denote the image of $\tilde{F}_{\es, \tilde \tau}^{n_\tau}$ for $\tau\in \ttT$. This is a local direct factor of $H_1^{\cris}(A_0/S_0)_{S, \tilde{\tau}}^{\circ}$ that lifts $\omega_{A_0^\vee/S_0, \tilde{\tau}}^{\circ}$.  As in the proof of Theorem~\ref{T:unitary-shimura-variety-representability}, specifying a deformation $x\in X'(S)$ of $x_0$ to $S$ is equivalent to giving a local direct summand $\omega_{S,\tilde{\tau}}^{\circ}\subseteq H_1^{\cris}(A_0/S_0)_{S, \tilde{\tau}}^{\circ}$ that lifts $\omega_{A_0^\vee/S_0, \tilde{\tau}}^{\circ}$ for each $\tau \in \Sigma_{\infty}-\ttS_{\infty}$. By Lemma \ref{Lemma:partial-Hasse}, such a deformation $x$ lies in $X'_{\ttT}$ if and only if $\omega_{S,\tilde{\tau}}^{\circ}=\tilde{\omega}_{A_0^\vee/S_0, \tilde{\tau}}^{\circ}$ for all $\tau \in \ttT$. 
Therefore, to give a deformation of $x_0$ to $S$ in $X'_{\ttT}$, we just need to specify the liftings $\omega_{S,\tilde{\tau}}^{\circ}$ of $\omega_{A_0^\vee/S_0, \tilde{\tau}}^{\circ}$ for $\tau\in \Sigma_{\infty}-(\ttS_{\infty}\cup \ttT)$. 
The set-valued sheaf of liftings $\omega_{S, \tilde{\tau}}^{\circ}$ for $\tau\in \Sigma_{\infty}-(\ttS_{\infty}\cup \ttT)$ is a torsor under the group
\[
\cHom_{\cO_{S_0}}(\omega_{A_0^\vee/S_0, \tilde{\tau}}^\circ, \Lie(A_0)_{\tilde{\tau}}^\circ)\otimes_{\cO_{S_0}}\calI\simeq \Lie(A_0)^\circ_{\tilde{\tau}}\otimes_{\cO_{S_0}} \Lie(A_0)^\circ_{\tilde{\tau}^c}\otimes_{\cO_{S_0}}\calI.
\]
Here, in the last isomorphism, we have used \eqref{Equ:duality-omega}.
The statement for the tangent bundle of $X'_{\ttT}$ now follows immediately.

 It remains to prove the properness of $X'_{\ttT}$ when  $\ttS_{\infty}\cup \ttT$ is non-empty.
  The arguments are similar to those in \cite[Proposition 3.4]{helm}.
 We use the valuative criterion of properness. 
 Let $R$ be a discrete valuation ring containing $\Fpb$ and $L$ be its fraction field. Let $x_L=(A_L, \iota, \lambda, \bar{\alpha}_{K'})$ be an $L$-valued point of $X'_{\ttT}$.
  We have to show that $x_L$ extends to  an $R$-valued point $x_R\in X'_{\ttT}$ up to a finite extension of $L$. 
  By Grothendieck's semi-stable reduction theorem, we may assume that, up to a finite extension of $L$, the N\'eron model $A_R$ of $A_L$ over $R$ has a semi-stable reduction. 
  Let $\overline{A}$ be the special fiber of $A_R$, and $\TT\subset \overline{A}$ be its torus part. 
  Since the  N\'eron model is canonical, the action of $\cO_{D_{\ttS}}$ extends uniquely to $A_R$, and hence to $\TT$. 
  The rational cocharacter group $X_*(\TT)_{\Q}:=\Hom(\GG_m,\TT)\otimes_{\Z}\Q$ is a $\Q$-vector space of dimension   at most $\dim(\overline A)= 4 g=\frac{1}{2}\dim_{\Q}(D_{\ttS})$, and equipped with an induced action of $D_{\ttS}\cong \rmM_{2}(E)$.
By the classification of $\rmM_{2}(E)$-modules, $X_*(\TT)_{\Q}$ is either $0$ or isomorphic  to  $E^{\oplus 2}$. 
 In the latter case, we have $X_*(\TT)_{\Q}\otimes L\cong \Lie(A_L)$, and  the trace of the action of  $b\in E$ on $X_*(\TT)_{\Q}$ is $2\sum_{\tilde\tau\in \Sigma_{E}}\tilde\tau(b)$, which implies that $\ttS_{\infty}=\emptyset$. 
Therefore, if   $\ttS_{\infty}\neq \emptyset$, $\TT$ has to be trivial and $A_R$ is an abelian scheme over $R$ with generic fiber $A_L$. 
The polarization $\lambda$ and level structure $\bar{\alpha}_{K'}$ extends uniquely to $A_R$ due to the canonicality of N\'eron model.
 We obtain thus a point $x_R\in X'(R)$ extending $x_L$.
  Since $X'_{\ttT}\subseteq X'$ is a closed subscheme, we see easily that $x_R\in X'_{\ttT}$.
     Now consider the case  $\ttS_{\infty}=\emptyset$ but  $\ttT$ is non-empty.
      If $X_*(\TT)_{\Q}\cong E^{\oplus 2}$, then the abelian part of $\overline{A}$ is trivial.
       Since the action of Verschiebung on $\omega_{\TT}$ is an isomorphism, the point $x_L$ cannot lie in any $X'_{\ttT}$ with $\ttT$ non-empty. Therefore, if $\ttT\neq \emptyset$, $\TT$ must be trivial, and we conclude as in the case $\ttS_{\infty}\neq \emptyset$.
\end{proof}

\begin{remark}
It seems that $X'_{\ttT}$ is still proper if $\ttS$ is  non-empty. But we do not know a convincing algebraic argument. % \liang{So we don't know the properness of the corresponding quaternionic Shimura variety???}
\end{remark}

\subsection{GO-stratification of connected Shimura varieties}
\label{S:GO-stratum connected Shimura variety}
From the definition, it is clear that the GO-stratification on $\bfSh_{K'_p}(G'_{\tilde \ttS})_{k_0}$ is compatible with the action (as described in Subsection~\ref{S:abel var in unitary case}) of the group $\calG'_{\tilde \ttS}$ (introduced in Subsection~\ref{S:structure group}). 
By Corollary~\ref{C:mathematical objects equivalence}, for each $\ttT \subseteq \Sigma_\infty -\ttS_\infty$, there is a natural scheme 
\[
\bfSh_{K'_p}(G'_{\tilde \ttS})^\circ_{\overline \FF_p, \ttT} \subseteq \bfSh_{K'_p}(G'_{\tilde \ttS})^\circ_{\overline \FF_p}
\]
equivariant for the action of $\calE_{G, k_0}$.  We call them the \emph{Goren-Oort stratification} for the connected Shimura variety.

Using the identification of connected Shimura variety in Corollary~\ref{C:comparison of shimura varieties} together with Corollary~\ref{C:mathematical objects equivalence}, we obtain \emph{Goren-Oort stratum}
$\bfSh_{K_p}(G_\ttS)_{k_0, \ttT} \subseteq \bfSh_{K_p}(G_\ttS)_{k_0}$ and $\bfSh_{K''_p}(G''_{\tilde \ttS})_{k_0, \ttT} \subseteq \bfSh_{K''_p}(G''_{\tilde \ttS})_{k_0}$, for each subset $\ttT \subseteq \Sigma_\infty- \ttS_\infty$.
Explicitly, for the latter case, we have
\[
\bfSh_{K''_p}(G''_{\tilde \ttS})_{k_0, \ttT}:=
\bfSh_{K'_p}(G'_{\tilde \ttS})_{k_0, \ttT} \times_{\widetilde G_{\tilde \ttS}} G''_{\tilde \ttS}(\AAA^{\infty,p}).
\]
Alternatively, in terms of the natural family of abelian varieties $\bfA''_{\tilde \ttS}$, the stratum $\bfSh_{K''_p}(G''_{\tilde \ttS})_{k_0, \ttT}$ is the common zero locus of partial Hasse-invariants
\[
h_{\tilde \tau}: \omega^\circ_{\bfA''_{\tilde \ttS,k_0}/\bfSh_{K''_p}(G''_{\tilde \ttS})_{k_0}, \tilde \tau} \longrightarrow \big( \omega^\circ_{\bfA''_{\tilde \ttS,k_0}/\bfSh_{K''_p}(G''_{\tilde \ttS})_{k_0}, \sigma^{-n_\tau} \tilde\tau}\big)^{\otimes p^{n_\tau}}
\]
for all $\tilde \tau$ lifting $\tau \in \ttT$.

\begin{theorem}
When $\ttS=\emptyset$, the GO-stratification on $\bfSh_{K_p}(G_{\emptyset})_{k_0}$ defined above agrees with the original definition given in \cite{goren-oort}.
Moreover, the for each subset $\ttT \subseteq \Sigma_\infty$, under the morphisms \eqref{E:morphisms of Shimura varieties HMV}, we have
\[
\mathbf{pr}_1^*(\bfSh_{K_p}(G_{\emptyset})_{k_0, \ttT}) = 
\bbalpha^*(\bfSh_{K''_p}(G''_{\emptyset})_{k_0,\ttT}),
\]
where $\bfSh_{K_p}(G_{\emptyset})_{k_0, \ttT}$ denotes the GO-stratum for $\ttT$ defined in {\it loc. cit.}
\end{theorem}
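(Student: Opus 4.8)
The plan is to deduce both assertions from a single comparison of partial Hasse invariants over the intermediate Shimura variety $Y:=\bfSh_{K_p\times K_{E,p}}(G_\emptyset\times T_{E,\emptyset})_{k_0}$. By Subsection~\ref{S:CM extension} one has $Y=\bfSh_{K_p}(G_\emptyset)_{k_0}\times_{k_\wp}\bfSh_{K_{E,p}}(T_{E,\emptyset})_{k_0}$, so $\mathbf{pr}_1$ is the base change of the structure morphism of the nonempty zero-dimensional Shimura variety $\bfSh_{K_{E,p}}(T_{E,\emptyset})_{k_0}$, hence faithfully flat; in particular a closed subscheme of $\bfSh_{K_p}(G_\emptyset)_{k_0}$ is determined by its $\mathbf{pr}_1$-pullback. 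It therefore suffices to prove the displayed identity $\mathbf{pr}_1^*(\bfSh_{K_p}(G_\emptyset)_{k_0,\ttT})=\bbalpha^*(\bfSh_{K''_p}(G''_\emptyset)_{k_0,\ttT})$ with $\bfSh_{K_p}(G_\emptyset)_{k_0,\ttT}$ read as Goren--Oort's original stratum; the first assertion then follows, since the stratification of this paper on $\bfSh_{K_p}(G_\emptyset)_{k_0}$ is, by its construction in Subsection~\ref{S:GO-stratum connected Shimura variety} and the identification of connected components in Corollary~\ref{C:comparison of shimura varieties}, exactly the closed subscheme whose $\mathbf{pr}_1$-pullback agrees with $\bbalpha^*(\bfSh_{K''_p}(G''_\emptyset)_{k_0,\ttT})$, so faithful flatness of $\mathbf{pr}_1$ forces it to coincide with Goren--Oort's. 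Finally, since both sides are intersections of divisors over $\tau$, it is enough to treat a single $\tau\in\ttT$.

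The key input is Corollary~\ref{C:integral-HMV-unitary}, which gives an isomorphism of abelian schemes over $Y$
\[
\bbalpha^*\bfA''_\emptyset\;\cong\;(\mathbf{pr}_1^*\calA\otimes_{\cO_F}\cO_E)^{\oplus 2},
\]
compatible with the $\rmM_2(\cO_E)$-actions and with the prime-to-$p$ polarizations. Applying $\gothe\cdot H^{\dR}_1(-)$, Morita equivalence together with the fact that $H^{\dR}_1$ commutes with the Serre tensor construction yields a canonical isomorphism $H^{\dR}_1(\bbalpha^*\bfA''_\emptyset)^\circ\cong H^{\dR}_1(\mathbf{pr}_1^*\calA)\otimes_{\cO_F}\cO_E$ of $\cO_E\otimes\cO_Y$-modules; decomposing over $\Sigma_{E,\infty}$ and using that $E/F$ is unramified above $p$ gives, for each lift $\tilde\tau$ of a place $\tau\in\Sigma_\infty$, a canonical isomorphism $H^{\dR}_1(\bbalpha^*\bfA''_\emptyset)^\circ_{\tilde\tau}\cong H^{\dR}_1(\mathbf{pr}_1^*\calA)_\tau$. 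I would then check that this isomorphism carries the reduced Hodge line in degree $\tilde\tau$ onto $\omega_{\mathbf{pr}_1^*\calA^\vee,\tau}$ (using the compatibility with polarizations to match the isomorphism of duals) and carries the maps $F$ and $V$ to those of $\mathbf{pr}_1^*\calA$ (functoriality of Frobenius and Verschiebung, as $\Fr$ and $\Ver$ commute with $\otimes_{\cO_F}\cO_E$ and with direct sums).

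Specializing to $\ttS=\emptyset$, one has $s_{\tilde\tau}=1$ and $n_\tau=1$ for all $\tau$, so the essential Frobenius and essential Verschiebung of Subsection~\ref{N:essential frobenius and verschiebung} reduce to the ordinary $F$ and $V$, and the partial Hasse invariant $h_{\tilde\tau}$ on the unitary side (Subsection~\ref{S:GO-stratum connected Shimura variety}) is simply the restriction of $V$ to the reduced Hodge line in degree $\tilde\tau$, valued in its $p$-th tensor power in degree $\sigma^{-1}\tilde\tau$. Transporting this along the isomorphisms above, $\bbalpha^*h_{\tilde\tau}$ becomes the restriction of $V$ to $\omega_{\mathbf{pr}_1^*\calA^\vee,\tau}\subseteq H^{\dR}_1(\mathbf{pr}_1^*\calA)_\tau$. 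Since the universal HBAV $\calA$ carries a $\ZZ_{(p)}^\times$-polarization $\lambda\colon\calA\to\calA^\vee$, which is an isomorphism on $p$-divisible groups and hence identifies $\omega_{\calA^\vee}$ with $\omega_\calA$ compatibly with $V$, this equals, up to a nowhere-vanishing function, $\mathbf{pr}_1^*$ of Goren--Oort's partial Hasse invariant $h_\tau$ (the $\tau$-component of the Verschiebung map $\omega_\calA\to\omega_{\calA^{(p)}}$, equivalently the same restriction of $V$ after the identification via $\lambda$). Therefore the two sections have the same vanishing divisor, giving the desired equality for a single $\tau$; intersecting over $\ttT$ concludes.

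The step I expect to be the real work is the bookkeeping in the second paragraph: verifying that the one isomorphism of Corollary~\ref{C:integral-HMV-unitary} is simultaneously compatible with the idempotent ($\gothe$)-decomposition, with the splitting of each real place into its two lifts to $E$, with the reduced Hodge filtrations (through the polarizations), and with the $F$--$V$ structures. A secondary point requiring care, needed for the first assertion, is to confirm that the connected-component construction of Subsection~\ref{S:GO-stratum connected Shimura variety} does produce on $\bfSh_{K_p}(G_\emptyset)_{k_0}$ precisely the divisor whose $\mathbf{pr}_1$-pullback equals $\bbalpha^*(\bfSh_{K''_p}(G''_\emptyset)_{k_0,\ttT})$ (both pull back to the same stratum on $Y$, since $Y^\circ=\bfSh_{K_p}(G_\emptyset)^\circ_{k_0}=\bfSh_{K''_p}(G''_\emptyset)^\circ_{k_0}$), so that the faithful-flatness argument applies.
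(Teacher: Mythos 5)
Your proposal is correct and follows essentially the same route as the paper: both hinge on the isomorphism $\bbalpha^*\bfA''_\emptyset\cong(\mathbf{pr}_1^*\calA\otimes_{\cO_F}\cO_E)^{\oplus2}$ from Corollary~\ref{C:integral-HMV-unitary}, decompose by $\tilde\tau$-components, observe that with $\ttS=\emptyset$ the essential $F,V$ reduce to the ordinary ones, and match the partial Hasse invariants. You add two points the paper leaves implicit, both sound: the faithful flatness of $\mathbf{pr}_1$ to convert the equality of pullbacks into the equality of strata on $\bfSh_{K_p}(G_\emptyset)_{k_0}$, and the $\ZZ_{(p)}^\times$-polarization identification $\omega_{\calA^\vee}\simeq\omega_\calA$ needed to reconcile the paper's convention (using $\omega^\circ_{\bfA''^\vee}$) with Goren--Oort's (using $\omega_\calA$).
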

\begin{proof}
%The rationality of the GO-stratification on $\bfSh_{K_p}(G_{\emptyset})_{\Fpb}$ over $k_0$ comes from the fact that $h_{E}: \C^{\times}\ra T_E(\R)$ is trivial when $\ttS=\emptyset$, hence $\Sh_{K_{E,p}}(T_{E})(\Qb)$ has trivial Galois action and the section $\bfs : \bfSh_{K_p}(G_{\emptyset})_{\Fpb}\ra \bfSh_{K_pK_{E,p}}(G_{\emptyset}\times T_E)_{\Fpb}$ can be defined over $k_0$ (even over $\FF_p$).
%Let $\calA$ denote the universal Hilbert-Blumenthal abelian scheme over $X=\bfSh_{K_p}(G_{\emptyset})_{k_0}$. 
Put $X=\bfSh_{K_p}(G_{\emptyset})_{k_0}$ for simplicity.
By Proposition~\ref{P:integral-HMV-unitary}, we have an isomorphism of abelian varieties $\bbalpha^*\bfA''_{\emptyset}=(\mathbf{pr}_1^*(\calA)\otimes_{\cO_F}\cO_E)^{\oplus 2}$ on $\bfSh_{K_p\times K_{E,p}}(G_\emptyset \times T_{E, \emptyset})$.
 Let
$\omega_{\calA_{k_0}^\vee/X}=\bigoplus_{\tau\in \Sigma_{\infty}}\omega_{\calA_{k_0}^\vee/X, \tau}$
be the canonical decomposition,
where $\omega_{\calA_{k_0}^\vee/X,\tau}$ is the local direct factor on which $\cO_F$ acts via $\iota_p\circ\tau: \cO_{F}\ra\ZZ_p^\ur\twoheadrightarrow \Fpb$. Then we have a canonical isomorphism of line bundles over $\bfSh_{K_p\times K_{E,p}}(G_\emptyset \times T_{E, \emptyset})_{k_0}$
\[
\bbalpha^*\omega_{\bfA''^\vee_{k_0}/X, \tilde{\tau}}^{\circ}\simeq \mathbf{pr}_1^*\omega_{\calA_{k_0}^\vee/X,\tau},
\]
for either lift $\tilde{\tau}\in \Sigma_{E,\infty}$ of $\tau$.
Via these identifications, the (pullback of) partial Hasse invariant $\bbalpha^*(h_{\tilde{\tau}})$ defined in \eqref{Equ:partial-hasse} coincides with the pullback  via $\mathbf{pr}_1$ of the  partial Hasse invariant $h_{\tau}\in \Gamma(X, \omega_{\calA/X, \sigma^{-1}\tau}^{\otimes p}\otimes \omega_{\calA/X, \tau}^{\otimes -1})$ defined in \cite{goren-oort}. Therefore, for any $\ttT\subset \Sigma_{\infty}$, the pullback along $\mathbf{pr}_1$ of the GO-strata $X_{\ttT}\subseteq X$ defined by the vanishing of $\{h_{\tau}:\tau\in \ttT\}$ is the same as the pullback along $\bbalpha$ of the GO-stratum defined by $\{h_{\tilde{\tau}}:\tau\in \ttT\}$.
\end{proof}

\begin{remark}
It would be interesting to know, in general, whether the GO-strata on quaternionic Shimura varieties depend on the auxiliary choice of CM field $E$.
\end{remark}

To understand the ``action" of the twisted partial Frobenius on the GO-strata, we need the following.

\begin{lemma}
\label{L:partial Frobenius vs partial Hasse inv}
Let $x = (A, \iota, \lambda, \bar \alpha_{K'}) $ be a point of $X'$ with  values in a locally noetherian $k_0$-scheme $S$, and $\gothF'_{\gothp^2}(x) = (A', \iota', \lambda', \bar \alpha'_{K'})$ be the image of $x$ under the twisted partial Frobenius at $\gothp$  \eqref{S:partial Frobenius}    (which lies on another Shimura variety).  Then
 $h_{\tilde \tau} (x) = 0$ if and only if $h_{\sigma_{\gothp}^2\tilde \tau}(\gothF'_{\gothp^2}(x))=0$.
 \end{lemma}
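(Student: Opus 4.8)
The plan is to translate the vanishing of partial Hasse invariants into the equivalent Dieudonné-module statement of Lemma~\ref{Lemma:partial-Hasse}(2) and then track how the twisted partial Frobenius $\gothF'_{\gothp^2}$ acts on the relevant reduced Dieudonné modules. Since the assertion is fppf-local and both conditions are closed conditions on $S$, I would immediately reduce to the case $S = \Spec k$ with $k$ an algebraically closed field of characteristic $p$, where we may use covariant Dieudonné theory: by Lemma~\ref{Lemma:partial-Hasse}, $h_{\tilde\tau}(A) = 0$ if and only if $\im\big(F_{\es,\tilde\tau}^{n_\tau}\colon \tcD^\circ_{A^{(p^{n_\tau})},\tilde\tau} \to \tcD^\circ_{A,\tilde\tau}\big) = \omega^\circ_{A^\vee,\tilde\tau}$ (the Dieudonné-module analog of the essential Frobenius, as indicated in Notation~\ref{N:essential frobenius and verschiebung}, reduces mod $p$ to the de Rham version). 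Here I should note first that $\tau \in \Sigma_\infty - \ttS_\infty$ forces $\gothp$ not of type $\beta^\sharp$ by Hypothesis~\ref{H:B_S-splits-at-p}, so $\tilde\tau$ lifts to $\sigma^2_\gothp\tilde\tau = \sigma^2\tilde\tau$ within $\Sigma_{E,\infty/\gothp}$, the relevant pairings are perfect, and $n_{\sigma^2_\gothp\tau,\sigma^2_\gothp\ttS} = n_{\tau,\ttS}$ since the cyclic structure of $\Sigma_{\infty/\gothp}$ and the subset $\ttS_{\infty/\gothp}$ are merely rotated by $\sigma^2$.

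The key computation is the relation between $\tcD(A')_\gothp$ and $\tcD(A)_\gothp$ already established in Subsection~\ref{S:partial Frobenius}: $\tcD(A')_\gothp = pV^{-2}\tcD(A)_\gothp = p^{-1}F^2\tcD(A)_\gothp$ inside $\tcD(A)_\gothp[1/p]$, and this identification is $\calO_E$-equivariant and compatible with the $F$, $V$ operators, shifting the $\Sigma_{E,\infty/\gothp}$-grading by $\sigma^2$. Concretely, on reduced $\tilde\tau$-components one gets $\tcD^\circ_{A',\sigma^2\tilde\tau} \cong \tcD^\circ_{A,\tilde\tau}$ as modules carrying compatible $F_\es$, $V_\es$ (the twist by $p^{\pm 1}$ and by powers of $F$ does not affect which $\calO_S$-submodule the image of a composite of essential Frobenii is, because $F$ and $V$ are already isomorphisms in all the intermediate steps where $s_{\sigma^{-1}\bullet} \neq 1$, and the two ``bad'' steps — the last $F$-step in $F_\es^{n_\tau}$ and the matching $V$-step — are exactly the places where $p = FV$ appears). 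Under this identification, $\omega^\circ_{A'^\vee,\sigma^2\tilde\tau}$ corresponds to $\omega^\circ_{A^\vee,\tilde\tau}$ (this also follows from the deformation-theoretic description: the twisted partial Frobenius is defined so that the Hodge filtration on $A'$ at $\sigma^2\tilde\tau$ is the Frobenius pullback of the one on $A$ at $\tilde\tau$, which on Dieudonné modules is precisely this transport), and $F_{\es,\sigma^2\tilde\tau}^{n_\tau}$ on $\tcD^\circ_{A'^{(p^{n_\tau})},\sigma^2\tilde\tau}$ corresponds to $F_{\es,\tilde\tau}^{n_\tau}$ on $\tcD^\circ_{A^{(p^{n_\tau})},\tilde\tau}$. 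Hence the condition $\im(F_{\es,\sigma^2\tilde\tau}^{n_\tau}) = \omega^\circ_{A'^\vee,\sigma^2\tilde\tau}$ is literally the same statement as $\im(F_{\es,\tilde\tau}^{n_\tau}) = \omega^\circ_{A^\vee,\tilde\tau}$, giving the claimed equivalence via Lemma~\ref{Lemma:partial-Hasse} applied on both sides.

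The main obstacle I anticipate is bookkeeping rather than conceptual: one must carefully verify that the isomorphism $\tcD^\circ_{A',\sigma^2\tilde\tau}\cong\tcD^\circ_{A,\tilde\tau}$ really does intertwine the \emph{essential} Frobenii (not just the naive $F$'s) — in particular that the definition of $F_\es$, which branches on whether $s_{\sigma^{-1}\bullet}$ equals $0,1,2$, is preserved, using that $\sigma^2_\gothp$ permutes $\ttS_{\infty/\gothp}$ and hence preserves all the signature data $s_\bullet$ along the cycle. A clean way to avoid case-chasing is to phrase everything in terms of the single lattice chain $\tcD(A)_\gothp \supseteq V^2\tcD(A)_\gothp$ and note that $A \mapsto A'$ simply replaces $\tcD(A)_\gothp$ by $p^{-1}F^2\tcD(A)_\gothp$, an operation which commutes with forming images of essential Frobenii and with the Hodge submodule because all constructions are functorial in the $F$-$V$ module with $\calO_E$-action. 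Once that compatibility is pinned down, the lemma follows formally; I would also remark that the level structures play no role since partial Hasse invariants are insensitive to them. Finally I would observe that the argument is manifestly equivariant for the prime-to-$p$ Hecke action and descends to the $G''$ and quaternionic Shimura varieties, though for the statement as given working on $X'$ over a perfect field suffices.
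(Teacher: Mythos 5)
Your argument captures the correct mechanism and is essentially the same as the paper's. You identify (via the lattice identity $\tcD(A')_\gothp = p^{-1}F^2\tcD(A)_\gothp$) a $\sigma^2$-twisted, $F$-$V$-compatible isomorphism of $\gothp$-components, and then invoke Lemma~\ref{Lemma:partial-Hasse} on both sides; that is precisely the paper's proof. The one genuine (though easily repaired) gap is the opening reduction to $S = \Spec k$ with $k$ algebraically closed. Checking an equivalence of two closed conditions on geometric points alone does not in general identify the corresponding closed subschemes for an arbitrary noetherian $S$, which is what the lemma asserts; to make your reduction legitimate you would have to invoke Proposition~\ref{Prop:smoothness} to see both vanishing loci are reduced (in fact smooth) divisors, and even then you would be proving a statement about the universal case rather than the stated $S$-pointwise claim.

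The cleaner fix, which is how the paper proceeds, is to simply drop the reduction and run your argument directly over $S$: replace covariant Dieudonné modules by the evaluation $\cD(A[\gothp^\infty])_S$ of the reduced covariant Dieudonné crystal at the trivial pd-thickening $S\hookrightarrow S$, which is canonically $H^\dR_1(A/S)^\circ_\gothp$. Since $A'[\gothp^\infty]\cong A[\gothp^\infty]^{(p^2)}$ by construction of $\gothF'_{\gothp^2}$, functoriality of Dieudonné crystals immediately gives $H^\dR_1(A'/S)^\circ_{\tilde\tau}\cong (H^\dR_1(A/S)^\circ_{\sigma^{-2}\tilde\tau})^{(p^2)}$ compatibly with $F$ and $V$, and Lemma~\ref{Lemma:partial-Hasse} does the rest over arbitrary $S$. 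Everything else you wrote — the observation that $\gothp$ cannot be of type $\beta^\sharp$, that $n_{\sigma^2_\gothp\tau}=n_\tau$, and that the $\sigma^2$-twist does not disturb which submodule is the image of the composite of essential Frobenii — is correct and is exactly the bookkeeping the paper leaves implicit.
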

\begin{proof}
The statement is clear if $\tilde\tau\notin \Sigma_{E,\infty/\gothp}$, since $\gothF'_{\gothp^2}$ induces a canonical isomorphism of $p$-divisible groups $A[\gothq^{\infty}]\simeq A'[\gothq^{\infty}]$ for $\gothq\in \Sigma_p$ with $\gothq\neq \gothp$. Consider the case $\tilde\tau \in \Sigma_{E,\infty/\gothp}$. We claim that there exists an isomorphism 
\[
H^{\dR}_1(A'/S)^{\circ}_{\tilde\tau}\cong (H^{\dR}_1(A/S)^{\circ}_{\sigma^{-2}\tilde\tau})^{(p^2)}
\]
compatible with the action of $F$ and $V$ on both sides with $\tilde\tau\in \Sigma_{E,\infty/\gothp}$ varying. By Lemma~\ref{Lemma:partial-Hasse}, the Lemma follows from the claim immediately. It remains thus to prove the claim.
Note that the $\gothp$-component of de Rham homology  
$$
H^{\dR}_1(A'/S)_{\gothp}:=\bigoplus_{\tilde\tau\in \Sigma_{E,\infty/\gothp}}H^{\dR}_1(A'/S)^{\circ, \oplus 2}_{\tilde\tau}
$$
is canonically isomorphic to the evaluation at the trivial pd-thickening $S\hra S$ of the reduced covariant Dieudonn\'e crystal  of $A'[\gothp^{\infty}]$, which we denote by $\cD(A'[\gothp^{\infty}])_S$.  
By definition of $\gothF'_{\gothp^2}$, the $p$-divisible group $A'[\gothp^{\infty}]\cong (A/\Ker_{\gothp^2})[\gothp^{\infty}]$ is  isomorphic to the quotient of $A[\gothp^{\infty}]$ by its kernel of $p^2$-Frobenius $A[\gothp^{\infty}]\ra (A[\gothp^{\infty}])^{(p^2)}$. Therefore, by functoriality of Dieudonn\'e crystals, one has $\cD(A'[\gothp^{\infty}])_S=\cD(A[\gothp^\infty])_S^{(p^2)}$, whence the claim.
\end{proof}

One deduces immediately 
\begin{cor}
For $\overline{\Sh}_{\tilde \ttS} = \bfSh_{K'_p}(G'_{\tilde \ttS})_{k_0}$ and $\bfSh_{K''_p}(G''_{\tilde \ttS})_{k_0}$, the twisted partial Frobenius map $\gothF_{\gothp^2}: \overline{\Sh}_{\tilde \ttS} \to \overline{\Sh}_{\sigma_\gothp^2 \tilde \ttS}$ takes the subvariety $\overline{\Sh}_{\tilde \ttS, \ttT}$ to $\overline{\Sh}_{\sigma_\gothp^2\tilde  \ttS, \sigma_\gothp^2 \ttT}$ for each $\ttT\subseteq \Sigma_{\infty}-\ttS_{\infty}$. %\liang{we probably need it to be stated for a link as oppose to just partial Frobenius.}
\end{cor}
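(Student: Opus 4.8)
The plan is to verify the desired factorization on the functor of points, using Lemma~\ref{L:partial Frobenius vs partial Hasse inv} applied to the tautological point of the source stratum. It is enough to treat each finite level $K' = K'^pK'_p$ (resp.\ $K'' = K''^pK''_p$) and then pass to the inverse limit over $K'^p$ (resp.\ $K''^p$); so set $X' := \bfSh_{K'}(G'_{\tilde\ttS})_{k_0}$ and likewise for $G''_{\tilde\ttS}$.

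First I would dispose of a bookkeeping point: since $\sigma_\gothp^2$ is a bijection of $\Sigma_\infty$ carrying $\ttS_\infty$ to $(\sigma_\gothp^2\ttS)_\infty$, the set $\sigma_\gothp^2\ttT$ is again contained in $\Sigma_\infty - (\sigma_\gothp^2\ttS)_\infty$, so the stratum $\bfSh_{K'}(G'_{\sigma_\gothp^2\tilde\ttS})_{k_0,\sigma_\gothp^2\ttT}$ makes sense, and by Lemma~\ref{Lemma:partial-Hasse} it depends only on the subset $\sigma_\gothp^2\ttT$ and not on the chosen section $\tau\mapsto\tilde\tau$. Now I would treat the case $\Sh_{\tilde\ttS} = X'$: recall from Definition~\ref{Defn:GO-strata} that $X'_\ttT$ is cut out inside $X'$ by the vanishing of $\{h_{\tilde\tau} : \tau\in\ttT\}$, and analogously $\bfSh_{K'}(G'_{\sigma_\gothp^2\tilde\ttS})_{k_0,\sigma_\gothp^2\ttT}$ is cut out by the vanishing of $\{h_{\sigma_\gothp^2\tilde\tau} : \tau\in\ttT\}$. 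Take $S = X'_\ttT$ and let $x_{\mathrm{univ}}\in X'(X'_\ttT)$ be the point given by the closed immersion $X'_\ttT\hookrightarrow X'$. By construction $h_{\tilde\tau}(x_{\mathrm{univ}}) = 0$ for all $\tau\in\ttT$, so Lemma~\ref{L:partial Frobenius vs partial Hasse inv} gives $h_{\sigma_\gothp^2\tilde\tau}\big(\gothF'_{\gothp^2}(x_{\mathrm{univ}})\big) = 0$ for all $\tau\in\ttT$. Since $\gothF'_{\gothp^2}(x_{\mathrm{univ}})$ is precisely the point of $\bfSh_{K'}(G'_{\sigma_\gothp^2\tilde\ttS})_{k_0}$ with values in $X'_\ttT$ corresponding to the composite $X'_\ttT\hookrightarrow X'\xrightarrow{\gothF'_{\gothp^2}}\bfSh_{K'}(G'_{\sigma_\gothp^2\tilde\ttS})_{k_0}$, this vanishing says exactly that the composite factors through the closed subscheme $\bfSh_{K'}(G'_{\sigma_\gothp^2\tilde\ttS})_{k_0,\sigma_\gothp^2\ttT}$, which is the assertion.

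For the case $\Sh_{\tilde\ttS} = \bfSh_{K''_p}(G''_{\tilde\ttS})_{k_0}$, I would invoke the construction recalled in Subsection~\ref{S:GO-stratum connected Shimura variety}: one has $\bfSh_{K''_p}(G''_{\tilde\ttS})_{k_0,\ttT} = \bfSh_{K'_p}(G'_{\tilde\ttS})_{k_0,\ttT}\times_{\widetilde G_{\tilde\ttS}} G''_{\tilde\ttS}(\AAA^{\infty,p})$, and $\gothF''_{\gothp^2}$ is obtained from $\gothF'_{\gothp^2}$ by applying $-\times_{\widetilde G_{\tilde\ttS}} G''_{\tilde\ttS}(\AAA^{\infty,p})$, where $\widetilde G_{\tilde\ttS}$ is identified with $\widetilde G_{\sigma_\gothp^2\tilde\ttS}$ through the fixed isomorphism $B_{\sigma_\gothp\ttS}\otimes\AAA^\infty\cong B_\ttS\otimes\AAA^\infty$ (Subsection~\ref{S:partial Frobenius}, Proposition~\ref{P:product of partial Frobenius}). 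Applying this quotient functor to the inclusion $\bfSh_{K'_p}(G'_{\tilde\ttS})_{k_0,\ttT}\hookrightarrow\bfSh_{K'_p}(G'_{\sigma_\gothp^2\tilde\ttS})_{k_0,\sigma_\gothp^2\ttT}$ established above yields the claim for $G''_{\tilde\ttS}$. The whole argument is formal once Lemma~\ref{L:partial Frobenius vs partial Hasse inv} is in hand; there is no substantial obstacle, the only thing to watch being the compatibility of the section $\tau\mapsto\tilde\tau$ and the action of $\sigma_\gothp^2$ on indexing sets, absorbed by Lemma~\ref{Lemma:partial-Hasse}, and the compatibility of the level groups $\widetilde G_{\tilde\ttS}$ and $\widetilde G_{\sigma_\gothp^2\tilde\ttS}$.
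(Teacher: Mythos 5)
Your proposal is correct and takes essentially the same approach as the paper, which simply states that the corollary ``follows immediately'' from Lemma~\ref{L:partial Frobenius vs partial Hasse inv}; your functor-of-points formulation via the tautological $X'_\ttT$-valued point and the subsequent $-\times_{\widetilde G_{\tilde\ttS}} G''_{\tilde\ttS}(\AAA^{\infty,p})$ descent is precisely the unwinding of that immediate deduction.
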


\section{The global geometry of the GO-strata: Helm's isogeny trick}\label{Section:GO-geometry}

In this section,  we will prove that each closed GO-stratum  of the special fiber of the unitary Shimura variety  defined in Definition~\ref{Defn:GO-strata} is a $(\PP^1)^N$-bundle over the special fiber of another unitary Shimura variety for some appropriate integer $N$.  This then allows us to deduce the similar result for the case of quaternionic Shimura varieties.

This section is largely inspired by Helm's pioneer work \cite{helm}, where he considered the case when $p$ splits in $E_0/\QQ$ and $\ttS$ is ``sparse'' (we refer to \textit{loc. cit.} for the definition of sparse subset; essentially, this means that, for any $\tau \in \Sigma_\infty$, $\tau$ and $\sigma\tau$ cannot belong to $\ttS$ simultaneously.)

\subsection{The associated quaternionic Shimura data for a GO-stratum}\label{S:quaternion-data-T}

We first introduce the recipe for  describing general GO-strata.  We recommend first reading the light version of the same recipe in the special case of Hilbert modular varieties, as explained in the introduction~\ref{S:intro GO-strata}, before diving into the general but more complicated definition below.

Keep the notation as in the previous sections.
%Let $\bfSh_{K}(G_{\ttS})_{\Fpb}$ be the special fiber of a quaternionic Shimura variety as in Definition~\ref{D:integral-quaternion-finite}, where $K=K^pK_p$ with $K_p\subseteq G_{\ttS}(\Q_p)$ of type \ref{S:level-structure-at-p}, and $K^p\subseteq G_{\ttS}(\AAA^{\infty,p})$ is sufficiently small. 
Let $\ttT$ be a subset of $\Sigma_{\infty}-\ttS_{\infty}$.
Our main theorem will say that the Goren-Oort stratum $\bfSh_K(G_\ttS)_{\overline \FF_p, \ttT}$ is a $(\PP^1)^N$-bundle over $\bfSh_{K_\ttT}(G_{\ttS(\ttT)})_{\overline{\FF}_p}$ for some $N \in \ZZ_{\geq0}$, some even subset $\ttS(\ttT)$ of places of $F$, and open compact subgroup  $K_{\ttT}\subseteq G_{\ttS(\ttT)}(\AAA^{\infty})$.

 We describe the set $\ttS(\ttT)$ now. For each prime $\gothp\in \Sigma_{p}$, we put $\ttT_{/\gothp}=\ttT\cap \Sigma_{\infty/\gothp}$. We define first a subset $\ttT'_{/\gothp}\subseteq \Sigma_{\infty/\gothp}\cup \{\gothp\} $ containing $\ttT_{/\gothp}$ which depends on the types of $\gothp$ as in Subsection~\ref{S:level-structure-at-p} and we   put
 \begin{equation}\label{Equ:defn-S-T}
\ttT'=\coprod_{\gothp\in \Sigma_p}\ttT'_{/\gothp},\quad\text{and }\quad\ttS(\ttT)=\ttS \sqcup \ttT'.
\end{equation}
We separate the discussion into several cases:
\begin{itemize}
\item If $\gothp$ is of type $\alpha^\sharp$ or type $\beta^\sharp$ for $\bfSh_{K}(G_{\ttS})$, we put $\ttT'_{/\gothp} = \emptyset$.

\item If $\gothp$ is of type $\alpha$ for $\ttS$, i.e. $(\Sigma_{\infty/\gothp}-\ttS_{\infty/\gothp})$ has even cardinality. We distinguish two cases:
\begin{itemize}

\item (Case $\alpha 1$) $\ttT_{/\gothp}\subsetneq \Sigma_{\infty/\gothp}-\ttS_{\infty/\gothp}$. We  write $\ttS_{\infty/\gothp}\cup \ttT_{/\gothp}=\coprod C_i$ as a disjoint union  of chains. Here, by a chain $C_i$, we mean that there exists $\tau_i\in \ttS_{\infty/\gothp}\cup \ttT_{/\gothp}$ and an integer $m_i\geq 0$ such that $C_i=\{\sigma^{-a}\tau_i: 0\leq a\leq m_i\}$ belong to $\ttS_{\infty/\gothp} \cup \ttT_{/\gothp}$ and $\sigma\tau_i, \sigma^{-m_i-1}\tau_i\notin (\ttS_{\infty/\gothp}\cup \ttT_{/\gothp})$.
We put $\ttT'_{/\gothp} = \coprod_i C'_i$, where
\[
C'_i: =
\left\{ 
\begin{array}{ll}
C_i \cap \ttT_{/\gothp} & \textrm{if } \#(C_i \cap \ttT_{/\gothp}) \textrm{ is even};
\\
(C_i\cap \ttT_{/\gothp})\cup \{\sigma^{-m_i-1}\tau_i\}& \textrm{if } \#(C_i \cap \ttT_{/\gothp}) \textrm{ is odd}.
\end{array}
\right.
\]

For example, if $\Sigma_{\infty/\gothp}=\{\tau_0, \sigma^{-1}\tau_0,\dots, \sigma^{-9}\tau_0\}$, $\ttS_{\infty/\gothp}=\{\sigma^{-2}\tau_0, \sigma^{-6}\tau_0\}$, and $\ttT_{/\gothp}=\{\sigma^{-3}\tau_0, \sigma^{-5}\tau_0, \sigma^{-7}\tau_0\}$, then $\ttS_{\infty/\gothp} \cup \ttT_{/\gothp}$ is separated into two chains $C_1 = \{\sigma^{-2}\tau_0, \sigma^{-3} \tau_0\}$ and $C_2 = \{\sigma^{-5}\tau_0, \sigma^{-6}\tau_0, \sigma^{-7}\tau_0\}$. We have
$
\ttT'_{/\gothp}=\{\sigma^{-3}\tau_0,\sigma^{-4}\tau_0, \sigma^{-5}\tau_0, \sigma^{-7}\tau_0\}.
$
An alternative way to understand the partition of $\ttT_{/\gothp}$ is to view it as a subset of $\Sigma_{\infty/\gothp} - \ttS_{\infty/\gothp}$ with the cycle structure inherited from $\Sigma_{\infty/\gothp}$. Then $C_i \cap \ttT_{/\gothp}$ is just to group elements of $\ttT_{/\gothp}$ into connected subchains.

\item (Case $\alpha 2$) $\ttT_{/\gothp}= \Sigma_{\infty/\gothp}-\ttS_{\infty/\gothp}$. We put $\ttT'_{/\gothp}=\ttT_{/\gothp}$.
\end{itemize}

\item If $\gothp$ is of type $\beta$ for $\ttS$, i.e. $(\Sigma_{\infty/\gothp}-\ttS_{\infty/\gothp})$ has  odd cardinality and $B_{\ttS}$ splits at $\gothp$. We distinguish two cases:
\begin{itemize}
\item (Case $\beta 1$)  $\ttT_{/\gothp}\subsetneq \Sigma_{\infty/\gothp}-\ttS_{\infty/\gothp}$. In this case, we define $\ttT'_{/\gothp}$ using the same rule as in Case $\alpha 1$.

\item (Case $\beta 2$)  $\ttT_{/\gothp}=\Sigma_{\infty/\gothp}-\ttS_{\infty/\gothp}$.   We put $\ttT'_{\gothp}=\ttT_{/\gothp}\cup \{\gothp\}$.
\end{itemize}

%\item $\gothp$ is of type $\beta^\sharp$ for $\ttS$, i.e. $\gothp\in \ttS$ and $\ttS_{\infty/\gothp}=\Sigma_{\infty/\gothp}$. We necessarily have $\ttT_{/\gothp}=\emptyset$ and we put $\ttT'_{/\gothp}=\emptyset$.

\end{itemize}

In either case, we put $\ttT'_{\infty/\gothp} = \ttT'_{/\gothp} \cap \Sigma_\infty$. It is equal to $\ttT'_{/\gothp}$ unless in case $\beta2$.

 It is easy to see that each $\ttT'_{/\gothp}$ has even cardinality. Therefore, $\ttS(\ttT)$ is also an even set, and it defines a quaternion algebra $B_{\ttS(\ttT)}$ over $F$. Note that $\ttS(\ttT)$ still satisfies  Hypothesis~\ref{H:B_S-splits-at-p}.

 Let $G_{\ttS(\ttT)}=\Res_{F/\Q}(B_{\ttS(\ttT)}^{\times})$ be the algebraic group over $\Q$ associated to $B_{\ttS(\ttT)}^{\times}$. We fix an isomorphism $B_{\ttS}\otimes_{F}F_{\gothl}\simeq B_{\ttS(\ttT)}\otimes_{F}F_{\gothl}$ whenever $\{\gothl\}\cap \ttS=\{\gothl\}\cap\ttS(\ttT)$. We  define an open compact subgroup  $K_{\ttT}=K_{\ttT}^pK_{\ttT, p}\subseteq G_{\ttS(\ttT)}(\AAA^{\infty})$ determined by $K$ as follows.
\begin{itemize}
\item We put $K_{\ttT}^p=K^p$. This makes sense, because $B_{\ttS}\otimes_{F}F_{\gothl}\simeq B_{\ttS(\ttT)}\otimes_{F}F_{\gothl}$ for any finite place $\gothl$ prime to $p$.

\item For $K_{\ttT, p}=\prod_{\gothp\in \Sigma_p}K_{\ttT, \gothp}$, we take $K_{\ttT,\gothp}=K_{\gothp}$, unless we are in case $\alpha 2$ or $\beta2 $.
\begin{itemize}

\item If $\gothp$ is of type  $\alpha 2$ for $\Sh_{K}(G_{\ttS})$, we have $B_{\ttS(\ttT)}\otimes_{F}F_{\gothp}\simeq B_{\ttS}\otimes_{F}F_{\gothp}\simeq \rmM_2(\cO_{F_{\gothp}})$.
 We take $K_{\ttT, \gothp}=K_{\gothp}$ if $\ttT_{/\gothp}=(\Sigma_{\infty/\gothp}-\ttS_{\infty/\gothp})=\emptyset$, and $K_{\ttT, \gothp}=\Iw_{\gothp}$ if $\ttT_{/\gothp}\neq \emptyset$.

\item If we are in case $\beta 2$ (and $\beta^\sharp$), $B_{\ttS(\ttT)}$ is ramified at $\gothp$. We take $K_{\ttT, \gothp}=\cO_{B_{F_{\gothp}}}^{\times}$, where $\cO_{B_{F_{\gothp}}}$ is the unique maximal order of the division algebra over $F_{\gothp}$ with invariant $1/2$.
\end{itemize}
\end{itemize}

The level $K_{\ttT}$ fits into the framework considered in Subsection~\ref{S:level-structure-at-p}. We obtain thus a quaternionic Shimura variety $\Sh_{K_{\ttT}}(G_{\ttS(\ttT)})$, and its integral model $\bfSh_{K_{\ttT}}(G_{\ttS(\ttT)})$ is given by Corollary~\ref{C:integral-model-quaternion}. Note that
\begin{itemize}
\item if we are in case $\alpha 1$ above,  then $\gothp$ is of type $\alpha$ for the Shimura variety $\Sh_{K_{\ttT}}(G_{\ttS(\ttT)})$;

\item if we are in case $\alpha 2$ above, then $\gothp$ is of type $\alpha^\sharp$ for $\Sh_{K_{\ttT}}(G_{\ttS(\ttT)})$ unless $\gothp$ is of type $\alpha$ for $\Sh_{K}(G_{\ttS})$ and $\ttT_{/\gothp}=\Sigma_{\infty/\gothp}-\ttS_{\infty/\gothp}=\emptyset$, in which case $\gothp$ remains of type $\alpha$ for $\Sh_{K_{\ttT}}(G_{\ttS(\ttT)})$;

\item if we are in case $\beta 1$, then $\gothp$ is of type $\beta$ for $\Sh_{K_{\ttT}}(G_{\ttS(\ttT)})$;

\item if we are in case $\beta2$ or $\beta^\sharp$ above, then $\gothp$ is of type $\beta^\sharp$ for $\Sh_{K_{\ttT}}(G_{\ttS(\ttT)})$.

\end{itemize}

\begin{theorem}\label{T:main-thm}
For a subset $\ttT\subseteq \Sigma_{\infty}-\ttS_{\infty}$,
the GO-stratum $\bfSh_{K}(G_\ttS)_{\overline \FF_p, \ttT}$ is
isomorphic to a $(\PP^1)^{I_{\ttT}}$-bundle over
$\bSh_{K_{\ttT}}(G_{\ttS(\ttT)})_{\overline \FF_p}$, where $\ttS(\ttT)$ is as described above and the index set is given by
$$
I_{\ttT}=\ttS(\ttT)_{\infty}-(\ttS_{\infty}\cup \ttT)=\bigcup_{\gothp\in \Sigma_p}(\ttT'_{\infty/\gothp}-\ttT_{/\gothp}).
$$
Moreover, this isomorphism is compatible with the action of  $G_{\ttS}(\AAA^{\infty,p})$, when taking the limit over open compact subgroups $K^p\subseteq G_{\ttS}(\AAA^{\infty,p})$.
\end{theorem}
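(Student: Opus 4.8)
The strategy is to first prove the analogous statement on the unitary side (this is the substance of Section~\ref{Section:GO-geometry}, to be stated as Theorem~\ref{T:main-thm-unitary}) and then transfer it to the quaternionic side using the formalism of Section~\ref{Section:Sh Var}. On the unitary side, the core construction is Helm's isogeny trick: given a point $(A, \iota, \lambda, \bar\alpha_{K'})$ on the GO-stratum $X'_\ttT = \bfSh_{K'}(G'_{\tilde\ttS})_{k_0,\ttT}$, one uses the vanishing of the partial Hasse invariants $h_{\tilde\tau}$ for $\tau \in \ttT$ (characterized in Lemma~\ref{Lemma:partial-Hasse} in terms of the image of the essential Frobenius $F^{n_\tau}_{\es,\tilde\tau}$ landing inside $\omega^\circ_{A^\vee/S,\tilde\tau}$) to construct a canonical quasi-isogeny $A \dashrightarrow B$ to an abelian variety $B$ with an $\calO_{D_{\ttS(\ttT)}}$-action whose Kottwitz signature is dictated by $\ttS(\ttT)$; the key point is that along each chain $C_i$ the rule producing $C'_i$ (even case: $C_i \cap \ttT$; odd case: adjoin $\sigma^{-m_i-1}\tau_i$) is exactly what is forced so that the modified Dieudonné lattice at $\gothp$ is stable and defines a point on $\bfSh_{K_\ttT}(G'_{\tilde\ttS(\ttT)})$. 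This gives a moduli space $Y'_\ttT$ parametrizing $(A, B, A\dashrightarrow B)$ with forgetful maps $X'_\ttT \xleftarrow{\cong} Y'_\ttT \to \bfSh_{K_\ttT}(G'_{\tilde\ttS(\ttT)})_{k_0}$, and one checks the left map is an isomorphism by verifying it is bijective on $\overline\FF_p$-points and an isomorphism on tangent spaces (using Corollary~\ref{C:deformation} and Proposition~\ref{Prop:smoothness}); the right map is shown to be a $(\PP^1)^{I_\ttT}$-bundle, where for each $\tau \in I_\ttT$ the $\PP^1$-factor records the choice of a line in a rank-two reduced de Rham homology bundle (the Dieudonné module at the extra ramified place), with the index set $I_\ttT = \ttS(\ttT)_\infty - (\ttS_\infty \cup \ttT)$ counted exactly as in the theorem statement.

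The transfer to the quaternionic side then proceeds as follows. First I would use the compatibility of GO-strata with the morphisms \eqref{E:morphisms of Shimura varieties} established in Section~\ref{Section:defn of GOstrata}: the GO-stratum $\bfSh_{K''_p}(G''_{\tilde\ttS})_{k_0,\ttT}$ equals $\bfSh_{K'_p}(G'_{\tilde\ttS})_{k_0,\ttT}\times_{\widetilde G_{\tilde\ttS}}G''_{\tilde\ttS}(\AAA^{\infty,p})$ and is the common zero locus of the partial Hasse invariants on the natural family $\bfA''_{\tilde\ttS}$. Since the isomorphism $X'_\ttT \cong (\PP^1)^{I_\ttT}$-bundle over $\bfSh_{K_\ttT}(G'_{\tilde\ttS(\ttT)})_{k_0}$ from the unitary statement is canonical and $\calG'_{\tilde\ttS}$-equivariant (the $\PP^1$-bundle arising from a vector bundle pulled back from the base), it descends along $-\times_{\widetilde G_{\tilde\ttS}}G''_{\tilde\ttS}(\AAA^{\infty,p})$ to give the corresponding $(\PP^1)^{I_\ttT}$-bundle statement for $\bfSh_{K''_p}(G''_{\tilde\ttS})_{k_0,\ttT}$ over $\bfSh_{K''_{\ttT,p}}(G''_{\tilde\ttS(\ttT)})_{k_0}$. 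Then, applying Corollary~\ref{C:comparison of shimura varieties} and Corollary~\ref{C:mathematical objects equivalence} (the equivalence of categories of $\calE_{G,\tilde E_{\tilde\wp}}$-equivariant mathematical objects over connected Shimura varieties, which match for $G_\ttS$ and $G''_{\tilde\ttS}$ since they share derived and adjoint groups and their $p$-integral points), one descends further to the quaternionic Shimura variety: the $(\PP^1)^{I_\ttT}$-bundle is itself such a mathematical object, so it transports to a $(\PP^1)^{I_\ttT}$-bundle $\bfSh_K(G_\ttS)_{\overline\FF_p,\ttT}$ over $\bfSh_{K_\ttT}(G_{\ttS(\ttT)})_{\overline\FF_p}$. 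The compatibility with the $G_\ttS(\AAA^{\infty,p})$-action as $K^p$ varies is automatic from the equivariance built into every step: the tame Hecke action is part of the $\calG$-action on connected components, and the construction of $Y'_\ttT$ and both forgetful maps are manifestly functorial in $K'^p = K^p$.

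\textbf{Main obstacle.} The hard part will be the unitary-side argument, specifically verifying that the canonical isogeny $A \dashrightarrow B$ does land on the correct unitary Shimura variety $\bfSh_{K_\ttT}(G'_{\tilde\ttS(\ttT)})$ — i.e., checking that the modified Dieudonné lattices at each $\gothp$ of types $\alpha$ or $\beta$ are $\calO_{D_{\ttS(\ttT)}}$-stable with the prescribed signature, polarization degree (condition (b2)/(b3) of Theorem~\ref{T:unitary-shimura-variety-representability}), and level structure — and conversely that every such pair arises this way, with the fibers of the forgetful map being exactly $(\PP^1)^{I_\ttT}$. This requires a careful case analysis organized along the chains $C_i$ inside each $\Sigma_{\infty/\gothp}$, tracking how the essential Frobenius and Verschiebung act on the reduced de Rham homology, and matching the combinatorics of the rule defining $C'_i$ against the condition that the new Hodge filtration be compatible with the action of $\calO_{D_{\ttS(\ttT)},\gothp}$; the cases $\alpha 2$ and $\beta 2$ (where the level structure changes to Iwahori, or the quaternion algebra becomes ramified at $\gothp$) are the most delicate and must be handled separately, mirroring but substantially extending Helm's treatment of the sparse case. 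The remaining steps — bijectivity on geometric points, the tangent space computation, and the two descent arguments — are then comparatively routine given the machinery already set up in Sections~\ref{Section:Sh Var}–\ref{Section:defn of GOstrata}.
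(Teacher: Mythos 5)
Your proposal is correct and takes essentially the same route as the paper: prove Theorem~\ref{T:main-thm-unitary} on the unitary side via Helm's isogeny trick (auxiliary moduli $Y'_\ttT$, bijection on geometric points plus tangent-space isomorphism against $X'_\ttT$, then identification with the $(\PP^1)^{I_\ttT}$-bundle), and transfer to the quaternionic side via the connected-component formalism of Section~\ref{Section:Sh Var} and Corollaries~\ref{C:comparison of shimura varieties}, \ref{C:mathematical objects equivalence} and \ref{C:main-thm-product}. The one structural detail you elide is that the paper does not literally parametrize a quasi-isogeny $A\dashrightarrow B$ but rather a pair of genuine $p$-isogenies $A\xrightarrow{\phi_A} C \xleftarrow{\phi_B} B$ through an intermediate abelian variety $C$ with kernels killed by $p$ (conditions (iii)--(ix) of Subsection~\ref{S:moduli-Y_S}), which is what makes the moduli functor $Y'_\ttT$ representable and the conditions on the de Rham/Dieudonn\'e kernels stateable; and it introduces a third moduli space $Z'_\ttT$ to make the $(\PP^1)^{I_\ttT}$-bundle structure explicit before proving $Y'_\ttT\cong Z'_\ttT$. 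Your identification of the delicate part — the case-by-case chain analysis verifying that the modified lattices land on $\bfSh_{K'_\ttT}(G'_{\tilde\ttS(\ttT)})$ with the correct signature, polarization degree, and level structure, with the $\alpha 2$ and $\beta 2$ cases the trickiest — matches the paper's actual difficulty.
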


As an example, the situation $I_{\ttT}=\emptyset$ happens exactly when $\ttT'_{\infty/\gothp}=\ttT_{/\gothp}$ for every $\gothp$, which means that, for each $\gothp$,  either $\ttT_{/\gothp}$ is divided into a union of chains of \emph{even} number of ``consecutive" elements of $\Sigma_{\infty/\gothp}-\ttS_{\infty/\gothp}$, or $\ttT_{/\gothp}=\Sigma_{\infty/\gothp}$. 

Theorem~\ref{T:main-thm} will follow from the analogous statements (Theorem~\ref{T:main-thm-unitary} and Corollary~\ref{C:main-thm-product}) in the unitary case.  But note Remark~\ref{R:quaternionic Shimura reciprocity not compatible}.

\subsection{The signatures at infinity for the unitary Shimura varieties}
\label{S:tilde S(T)}
In order to describe the unitary Shimura data associated to $\Sh_{K_{\ttT}}(G_{\ttS(\ttT)})$ as in Subsections~\ref{S:CM extension} and \ref{S:unitary-shimura}, we need to pick a lift $\tilde \ttS(\ttT)$ of the set $\ttS(\ttT)$ to embeddings of $E$.
More precisely, we will define a subset $\tilde \ttS(\ttT)_\infty = \coprod_{\gothp \in \Sigma_p} \tilde \ttS(\ttT)_{\infty/\gothp}$, where $\tilde \ttS(\ttT)_{\infty/\gothp}$ consists of exactly one lift $\tilde \tau \in \Sigma_{E, \infty}$ for each $\tau \in \ttS(\ttT)_{\infty/\gothp}$.
Then we put $\tilde \ttS(\ttT) = (\ttS(\ttT), \tilde \ttS(\ttT)_\infty)$.
So we just need to assign such choices of lifts.
\begin{itemize}
\item
When $\tau \in \ttS(\ttT)_{\infty/\gothp}$ belongs to $\ttS_{\infty/\gothp}$, we choose its lift $\tilde \tau \in \Sigma_{E, \infty}$ be the one that belongs to $\tilde \ttS$.
\end{itemize}
We now specify our choices of the lifts in $\tilde\ttS(\ttT)_{\infty/\gothp}$ for the elements of   $ \ttT'_{\infty/\gothp}$, which are collectively denoted by $\tilde \ttT'_{/\gothp}$.  We separate into cases and use freely the notation from Subsection~\ref{S:quaternion-data-T}.
There is nothing to do if $\gothp$ is of type $\alpha^\sharp$ or type $\beta^\sharp$ (for $\ttS$).

\begin{itemize}
\item $\gothp$ is of type $\alpha$ (for $\ttS$). In this case, $\gothp$ splits into two primes $\gothq$ and $\gothq^c$ in $E$.
For a place $\tau \in \Sigma_{\infty/\gothp}$, we use $\tilde \tau$ to denote its lift to $\Sigma_{E, \infty}$ which corresponds to the $p$-adic place $\gothq$.
\begin{itemize}
\item (Case $\alpha1$) 
For a chain  $C_i=\{\sigma^{-a}\tau_i, 0\leq a\leq m_{i}\}\subseteq \ttS_{\infty/\gothp}\cup\ttT_{/\gothp}$ and  the corresponding set $C'_i =\{ \sigma^{-a_1}\tau_i, \dots, \sigma^{-a_{r_i}}\tau_i\}$ as defined in \ref{S:quaternion-data-T} with some  $0\leq a_{1}< \dots<a_{r_i}\leq m_i+1$ (note that $r_i$ is always even by construction),  we put 
\[
\tilde C'_i = \{ \sigma^{-a_1}\tilde \tau_i, \sigma^{-a_2}\tilde \tau^c_i, \sigma^{-a_3}\tilde \tau_i, \dots, \sigma^{-a_{r_i}}\tilde \tau^c_i\}.
\]
Set $\tilde \ttT'_{/\gothp} = \coprod_i \tilde C'_i$.
\item (Case $\alpha2$) 
We need to fix $\tau_0 \in \ttT_{/\gothp} = \Sigma_{\infty/\gothp}-\ttS_{\infty/\gothp}$ and write $\ttT_{/\gothp}$ as $\{\sigma^{-a_1}\tau_0, \dots, \sigma^{-a_{2r}}\tau_0\}$ for integers $0 = a_1 < \cdots < a_{2r} \leq f_\gothp-1$.  We put
\[
\tilde \ttT'_{/\gothp} = \{\sigma^{-a_1}\tilde \tau_0, \sigma^{-a_2}\tilde \tau^c_0, \sigma^{-a_3}\tilde \tau_0, \dots, \sigma^{-a_{2r}}\tilde \tau^c_0\}.
\]

\end{itemize}
\item $\gothp$ is of type $\beta$ (for $\ttS$).  In this case, $\gothp$ is inert in $E/F$, and  we do not have  a canonical choice for the  lift $\tilde \tau$ of an embedding $\tau$.
\begin{itemize}
\item (Case $\beta1$) 
In this case, we fix a partition of  the preimage of $C'_i$ under the map $\Sigma_{E, \infty/\gothp} \to \Sigma_{\infty/\gothp}$ into two chains $\tilde C''_i \coprod \tilde C''^c_i$, where
\[
\tilde C''_i = \{\sigma^{-a_1} \tilde \tau_i, \dots, \sigma^{-a_{r_i}} \tilde \tau_i\}, \quad \textrm{and} \quad \tilde C''^c_i = \{\sigma^{-a_1} \tilde \tau^c_i, \dots, \sigma^{-a_{r_i}} \tilde \tau^c_i\}.
\]
Here, the choice of $\tilde\tau_i$ is arbitrary, and  $r_i$ is always even by construction.
We put
\[
\tilde C'_i :=\{ \sigma^{-a_1}\tilde \tau_i, \sigma^{-a_2}\tilde \tau^c_i, \sigma^{-a_3}\tilde \tau_i, \dots, \sigma^{-a_{r_i}}\tilde \tau^c_i\}.
\]
Finally, we set $\tilde \ttT'_{/\gothp} = \coprod_i \tilde C'_i$.
\item (Case $\beta2$) 
We fix an element $\tilde \tau_0 \in \Sigma_{E, \infty/\gothp}$.
Then the preimage of $\ttT'_{/\gothp}$ under the natural map $\Sigma_{E, \infty/\gothp} \to \Sigma_{\infty/\gothp}$ can be written as $\{ \sigma^{-a_1}\tilde \tau_0, \dots, \sigma^{-a_{2r}}\tilde \tau_0\}$ (where $r = \# ( \Sigma_\infty - \ttS_\infty)$ is odd), with $0=a_1< \cdots < a_{2r}\leq 2f_\gothp-1$ and $a_{r+i} = a_i + f_\gothp$ for all $i$.  We put
\[
\tilde \ttT'_{/\gothp} = \{ \sigma^{-a_1} \tilde \tau_0, \sigma^{-a_3} \tilde \tau_0, \sigma^{-a_5} \tilde \tau_0, \dots, \sigma^{-a_{2r-1}} \tilde \tau_0\}.
\]
Since $r$ is odd, this consists exactly one lift of each element of $\ttT'_{\infty/\gothp}$.

\end{itemize}
\end{itemize}

Now, we can assign integers $s_{\ttT, \tilde \tau}$ according to $\tilde \ttS(\ttT)$:
\begin{itemize}
\item
if $\tau \in \Sigma_\infty - \ttS(\ttT)_\infty$, we have $s_{\tilde \tau} = 1$ for all lifts $\tilde \tau $ of $\tau$;
\item
if $\tau \in \ttS(\ttT)_\infty$ and $\tilde \tau$ is the lift in $\tilde \ttS(\ttT)_\infty$, we have $s_{\ttT,\tilde \tau} = 0$ and $s_{\ttT,\tilde \tau^c} = 2$.
\end{itemize}
We put $\tilde \ttT' = \cup_{\gothp \in \Sigma_p} \tilde \ttT'_{/\gothp}$ and $\tilde \ttT'^c$ the complex conjugations of the elements in $\tilde \ttT'$.
\\

Now we compare the PEL data for the Shimura varieties for $G'_{\tilde \ttS}$ and $G'_{\tilde \ttS(\ttT)}$.  We fix an
isomorphism $\theta_{\ttT}:D_{\ttS}\ra D_{\ttS(\ttT)}$ that induces isomorphism between $\cO_{D_{\ttS},\gothp}$ and $\cO_{D_{\ttS(\ttT)},\gothp}$ for each $\gothp\in \Sigma_p$, where $\cO_{D_{\ttS},\gothp}$ and $\cO_{D_{\ttS(\ttT)},\gothp}$ are respectively fixed  maximal orders of $D_{\ttS}\otimes_F F_{\gothp}$ and $D_{\ttS(\ttT)}\otimes_{F}F_{\gothp}$ as  in \ref{S:PEL-Shimura-data}.

\begin{lemma}
\label{L:compare D_S with D_S(T)}
Let $\delta_{\ttS}\in (D_{\ttS}^\sym)^{\times}$ be an element satisfying Lemma~\ref{L:property-PEL-data}(1). Then there exists an element $\delta_{\ttS(\ttT)}\in (D_{\ttS(\ttT)}^{\sym})^{\times}$  satisfying the same condition with $\ttS$ replaced by $\ttS(\ttT)$ such that, if  $*_{\ttS}: l\mapsto \delta_{\ttS}^{-1}\bar{l}\delta_{\ttS}$ and $*_{\ttS(\ttT)}: l\mapsto \delta_{\ttS(\ttT)}^{-1}\bar{l}\delta_{\ttS(\ttT)}$ denote  the involutions on $D_{\ttS}$ and  $D_{\ttS(\ttT)}$ induced by $\delta_{\ttS}$ and $\delta_{\ttS(\ttT)}$ respectively, then $\theta_{\ttT}$ induces an isomorphism of algebras with positive involutions $(D_{\ttS},*_{\ttS})\xra{\sim} (D_{\ttS(\ttT)}, *_{\ttS(\ttT)})$.
\end{lemma}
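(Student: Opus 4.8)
The plan is to transport the positive involution $*_\ttS$ on $D_\ttS$ across $\theta_\ttT$ and to recognize the result as the involution $*_{\ttS(\ttT)}$ attached to a suitable symmetric element. Write $\jmath$ (resp.\ $\jmath'$) for the canonical involution $l\mapsto\bar l$ of $D_\ttS$ (resp.\ of $D_{\ttS(\ttT)}$), so that $*_\ttS=\mathrm{int}(\delta_\ttS^{-1})\circ\jmath$ with $\mathrm{int}(g)\colon x\mapsto gxg^{-1}$. First I would form $\jmath_0:=\theta_\ttT\circ\jmath\circ\theta_\ttT^{-1}$, an involution of the second kind on $D_{\ttS(\ttT)}$ which, like $\jmath'$, induces complex conjugation on the centre $E$; hence $\jmath_0\circ\jmath'$ is an $E$-algebra automorphism of $D_{\ttS(\ttT)}$, so inner by the Skolem--Noether theorem, say $\jmath_0\circ\jmath'=\mathrm{int}(w)$ for some $w\in D_{\ttS(\ttT)}^\times$. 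The relation $\jmath_0^2=\mathrm{id}$ forces $w\,\jmath'(w)^{-1}$ to be a central element of norm one to $F$, so after rescaling $w$ by an element of $E^\times$ via Hilbert's Theorem~90 we may assume $\jmath'(w)=w$, i.e.\ $w\in(D_{\ttS(\ttT)}^{\mathrm{sym}})^\times$.

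Next I would set $\delta^0:=w^{-1}\theta_\ttT(\delta_\ttS)$. A short computation gives $\jmath'(\delta^0)=\delta^0$, shows that $\theta_\ttT$ intertwines $*_\ttS$ with $*^0:=\mathrm{int}((\delta^0)^{-1})\circ\jmath'$, and shows that $*^0$ is again a positive involution, since positivity of a second-kind involution is a property of the underlying $\QQ$-algebra with involution and hence is transported by the isomorphism $\theta_\ttT$. Replacing $\delta^0$ by $\kappa\delta^0$ with $\kappa\in F^\times$ changes neither $*^0$, nor the symmetry of the element, nor the fact that $\theta_\ttT$ is an isomorphism of algebras with positive involution; conversely, the requirement $\theta_\ttT\circ *_\ttS=*_{\ttS(\ttT)}\circ\theta_\ttT$ forces $\delta_{\ttS(\ttT)}\in E^\times\delta^0$, and symmetry cuts this down to $\kappa\in F^\times$. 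Thus the problem reduces to choosing $\kappa\in F^\times$ --- and, as part of the data, to fixing the (a priori only partially constrained) isomorphism $\theta_\ttT$ with prescribed behaviour at the places above $p$, which is legitimate because the isomorphisms $D_\ttS\to D_{\ttS(\ttT)}$ compatible with the fixed maximal orders form a torsor under $D_{\ttS(\ttT)}^\times/E^\times$ to which weak approximation applies --- so that $\kappa\delta^0$ satisfies the two conditions of Lemma~\ref{L:property-PEL-data}(1) for $\ttS(\ttT)$: the prescribed integrality at each $\gothp\in\Sigma_p$, and positive-definiteness of $(v,w)\mapsto\psi_{\ttS(\ttT)}(v,w\,h'_{\tilde\ttS(\ttT)}(\ii)^{-1})$ at the archimedean places.

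The integrality at $p$ I would verify place by place, using the explicit local models from the proof of Lemma~\ref{L:property-PEL-data}(3) together with the recipe of Subsection~\ref{S:quaternion-data-T}. At a $\gothp$ whose type for $\ttS(\ttT)$ coincides with its type for $\ttS$ --- types $\alpha$, $\alpha^\sharp$, $\beta^\sharp$, as well as Cases $\alpha 1$, $\alpha 2$ (up to the level $K_\gothp$) and $\beta 1$ --- one chooses $\theta_\ttT$ locally so as to match the prescribed local $*$-structures, whence $w$ is a local unit and $\delta^0$ already has exactly the required shape. The one substantive case is $\beta 2$, where $\gothp$ passes from type $\beta$ (so $B_\ttS$ splits at $\gothp$ and $\calO_{D_\ttS,\gothp}$ is $\jmath$-stable) to type $\beta^\sharp$ (so $B_{\ttS(\ttT)}$ is a division algebra at $\gothp$ and $\calO_{D_{\ttS(\ttT)},\gothp}$ is \emph{not} $\jmath'$-stable); there one reads off from \eqref{E:involution-on-quaternion-embedding} that $w$ has reduced norm of odd valuation, so that $\delta^0\in\big(\begin{smallmatrix}p^{-1}&0\\0&1\end{smallmatrix}\big)\calO_{D_{\ttS(\ttT)},\gothp}^\times$ as required. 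Finally, once $\delta^0$ has the correct shape at every $\gothp\mid p$, weak approximation provides $\kappa\in F^\times$ that is a unit at all $\gothp\mid p$ and has arbitrarily prescribed signs at the real places; such $\kappa$ leaves the $p$-adic conditions intact, and since the relevant archimedean form changes sign with the sign of $\kappa$ at each real place independently while $*^0$ is already positive, $\kappa$ can be chosen so that the form in Lemma~\ref{L:property-PEL-data}(1) is positive definite. The main obstacle is the local analysis at the $\beta 2$ places, where the source and target algebras with involution are genuinely non-isomorphic locally, so that the interaction between the maximal orders and the $\big(\begin{smallmatrix}p^{-1}&0\\0&1\end{smallmatrix}\big)$-twist has to be unwound explicitly.
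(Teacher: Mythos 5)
Your proof shares the same core mechanism as the paper's — Skolem--Noether to compare the two involutions, Hilbert~90 to symmetrize the intertwiner, and the residual $F^\times$-freedom together with weak approximation to arrange the $p$-integrality and the archimedean positivity — but the paper organizes it differently, and the difference is not cosmetic. The paper's proof starts by choosing an \emph{auxiliary} element $\delta'_{\ttS(\ttT)}\in(D_{\ttS(\ttT)}^\sym)^\times$ already satisfying Lemma~\ref{L:property-PEL-data}(1), and then compares the two \emph{twisted} involutions $\theta_\ttT\circ *_\ttS\circ\theta_\ttT^{-1}$ and $*'_{\ttS(\ttT)}$ via an element $g$. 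Because these twisted involutions both preserve $\calO_{D_{\ttS(\ttT)},\gothp}$ (this is exactly Lemma~\ref{L:property-PEL-data}(3)), $g$ normalizes $\calO_{D_{\ttS(\ttT)},\gothp}^\times$ and hence, after scaling by $F^\times$, is a $p$-adic unit; the integrality condition on $\delta_{\ttS(\ttT)}=\delta'_{\ttS(\ttT)}g$ is then inherited for free from $\delta'_{\ttS(\ttT)}$. You instead compare the \emph{untwisted} canonical involutions $\jmath_0=\theta_\ttT\jmath\theta_\ttT^{-1}$ and $\jmath'$ and construct $\delta^0=w^{-1}\theta_\ttT(\delta_\ttS)$ directly, which forces a hands-on local analysis.

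That local analysis is where there is a genuine gap: your claim that at a place where the two types coincide ``one chooses $\theta_\ttT$ locally so as to match the prescribed local $*$-structures, whence $w$ is a local unit'' is false at type-$\beta^\sharp$ places. At such a place $\jmath=\jmath'$ does \emph{not} preserve $\rmM_2(\calO_\gothq)$, and one computes $w=u\jmath(u)$ (up to $E^\times$) for $\theta_\ttT=\mathrm{int}(u)$, $u\in\GL_2(\calO_\gothq)$; taking $u=\bigl(\begin{smallmatrix}0&1\\1&0\end{smallmatrix}\bigr)$ gives $w=\bigl(\begin{smallmatrix}-p&0\\0&-p^{-1}\end{smallmatrix}\bigr)$, manifestly not a unit. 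The final element $\delta^0=w^{-1}\theta_\ttT(\delta_\ttS)$ does still land in $\bigl(\begin{smallmatrix}p^{-1}&0\\0&1\end{smallmatrix}\bigr)\calO_{D_{\ttS(\ttT)},\gothp}^\times$ in this example, but only because the $p^{-1}$-twist in $\delta_\ttS$ cancels the non-integrality of $w$; your proof never makes this cancellation visible, and the analogous cancellation at a $\beta2$ place is similarly not a formal consequence of ``$w$ has reduced norm of odd valuation'' (note that $\bigl(\begin{smallmatrix}p^{-1}&0\\0&1\end{smallmatrix}\bigr)\calO_{D_{\ttS(\ttT)},\gothp}^\times$ is not characterized by the determinant valuation). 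A second, lesser issue: the lemma fixes $\theta_\ttT$ in advance, so the freedom to ``re-fix $\theta_\ttT$ with prescribed behaviour at $p$'' that you invoke is not available as stated; in the paper's framework it is not needed, because the auxiliary $\delta'_{\ttS(\ttT)}$ does the work that your local normalization of $\theta_\ttT$ was trying to do. For the archimedean positivity your argument is fine and is essentially what the paper does, with \cite[Lemma~2.11]{kottwitz} supplying the precise statement that two positive involutions on $D_{\ttS(\ttT),\RR}$ are conjugate.
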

 \begin{proof}
 We choose first an arbitrary ${\delta}'_{\ttS(\ttT)}\in (D_{\ttS(\ttT)}^{\sym})^{\times}$ satisfying Lemma~\ref{L:property-PEL-data}(1). Let $*'_{\ttS(\ttT)}$ denote the involution $l\mapsto (\delta'_{\ttS(\ttT)})^{-1}\bar{l}\delta'_{\ttS(\ttT)}$ on $D_{\ttS(\ttT)}$.
 By Skolem-Noether theorem, there exists $g\in D^{\times}_{\ttS(\ttT)}$ such that $\theta_{\ttT}(x)^{*'_{\ttS(\ttT)}}=g\theta_{\ttT}(x^{*_\ttS})g^{-1}$ for all $x\in D_{\ttS}$. Since both $*_{\ttS}^2$ and $*'^2_{\ttS(\ttT)}$ are identity, we get $g^{*'_{\ttS(\ttT)}}=g\mu$ for some $\mu\in E^{\times}$ with $\bar\mu\mu=1$.
  By Hilbert 90, we can write $\mu={\lambda}/{\bar\lambda}$ for some $\lambda\in E^{\times}$. Up to replacing $g$ by $g\lambda$, we may assume that $g^{*'_{\ttS(\ttT)}}=g$, or equivalently, $\overline{\delta'_{\ttS(\ttT)}g}=\delta'_{\ttS(\ttT)}g$ and hence $\delta'_{\ttS(\ttT)}g \in (D_{\ttS(\ttT)}^\sym)^\times$.
   Note that we  still have the freedom to modify $g$ by an element of $F^{\times}$ without changing $*_{\ttS(\ttT)}$.
   We claim that, up to such a modification on $g$, $\delta_{\ttS(\ttT)}=\delta'_{\ttS(\ttT)}g$ will answer the question.
    Indeed, by construction, $\theta_{\ttT}$ is an $*$-isomorphism, i.e.  $\theta_{\ttT}(x)^{*_{\ttS(\ttT)}}=\theta_{\ttT}(x^{*_\ttS})$.
Note that $\theta_{\ttT}$ sends $\cO_{D_{\ttS},\gothp}$ isomorphically to $\cO_{D_{\ttS(\ttT)},\gothp}$ for every $\gothp\in \Sigma_p$, and both lattices are invariant under the involutions $*_\ttS$ and $*'_{\ttS(\ttT)}$, respectively. 
So up to modifying $g$ by an element of $F^{\times}$, we may assume that $g\in \cO_{D_{\ttS(\ttT)},\gothp}^{\times}$ for all $\gothp\in \Sigma_p$.
Then it is clear that $\delta_{\ttS(\ttT)}$ satisfies Lemma~\ref{L:property-PEL-data}(1)(a), since so does $\delta'_{\ttS(\ttT)}$ by assumption.
It remains to prove that, up to multiplying $g$ by an element of $\calO_{F,(p)}^{\times}$, the hermitian form 
     $$
(v,w)\mapsto \psi_{\delta_{\ttS(\ttT)}}(v,w h'_{\tilde\ttS(\ttT)}(\bfi))=\Tr_{D_{\ttS(\ttT),\R}/\R}(\sqrt{\gothd}vh'_{\tilde\ttS(\ttT)}(\bfi)^{-1}\bar{w}\delta_{\ttS(\ttT)})
     $$
     on $D_{\ttS(\ttT),\R}:=D_{\ttS(\ttT)}\otimes_{\Q}\R$ is positive definite, where $\psi_{\delta_{\ttS(\ttT)}}$  is the $*_{\ttS(\ttT)}$-hermitian alternating form on $D_{\ttS(\ttT)}$ defined as in Subsection \ref{S:PEL-Shimura-data}.
     Since the elements $\delta_{\ttS}$ and  $\delta'_{\ttS(\ttT)}$ satisfy similar positivity conditions by assumption, we get two semi-simple $\R$-algebras with positive involution $(D_{\ttS,\R}, *_{\ttS})$ and $(D_{\ttS(\ttT),\R}, *_{\ttS(\ttT)})$.
By \cite[Lemma~2.11]{kottwitz}, there exists an element $b\in D^{\times}_{\ttS(\ttT), \R}$ such that  $b\theta_{\ttT}(x^{*_{\ttS}})b^{-1}=(b\theta_{\ttT}(x)b^{-1})^{*'_{\ttS(\ttT)}}$. It follows that  $g=b^{*'_{\ttS(\ttT)}}b\lambda$ with $\lambda\in (F\otimes_{\Q} \R)^{\times}$. 
Up to multiplying  $g$ by an element of $\calO_{F,(p)}^{\times}$, we may assume that $\lambda$ is totally positive so that $\lambda=\xi^2$ with $\xi\in (F\otimes_{\Q} \R)^{\times}$.
 Then, up to replacing $b$ by $b\xi$, we have   $g=b^{*'_{\ttS(\ttT)}}b$. Then the positivity of the form $\psi_{\delta_{\ttS(\ttT)}}$ follows immediately from the positivity of $\psi_{\delta'_{\ttS(\ttT)}}$, and the fact that $\psi_{\delta_{\ttS(\ttT)}}(v,wh'_{\tilde\ttS(\ttT)}(\bfi))=\psi_{\delta'_{\ttS(\ttT)}}(bv, bwh'_{\tilde\ttS(\ttT)}(\bfi))$. 
	     %  \liang{I am still a little worried about this; where did we used the fact that we are working with $\ttS(\ttT)$ but not some other ramified set???}
	      % \yichao{I think, we didn't use any specific properties of $\ttS(\ttT)$. We just need an isomorphism $\theta_{\ttT}:D_{\ttS}\cong D_{\ttS(\ttT)}$ to start with.}
	        \end{proof}

\begin{lemma}
\label{L:comparison of Hermitian space}
We keep the choice of $\delta_{\ttS(\ttT)}$ as in Lemma~\ref{L:compare D_S with D_S(T)}.
Then there exists an isomorphism $\Theta_\ttT: D_\ttS(\AAA^{\infty,p}) \to D_{\ttS(\ttT)}(\AAA^{\infty, p})$ of skew $*$-Hermitian spaces compatible with the actions of $D_\ttS$ and $D_{\ttS(\ttT)}$, respectively.
\end{lemma}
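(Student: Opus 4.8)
The plan is to reduce the claimed isomorphism to a place-by-place comparison over $\QQ_\ell$ for each prime $\ell \neq p$, exploiting that skew-Hermitian spaces over $D_\ttS \otimes_\QQ \QQ_\ell$ are classified by discrete invariants. First I would observe that $D_\ttS$ and $D_{\ttS(\ttT)}$ differ only at archimedean places and at the $p$-adic places: by construction $\ttS$ and $\ttS(\ttT)$ have the same finite-prime-to-$p$ components (the difference $\ttS(\ttT)-\ttS = \ttT'$ lives over $p$ and over $\infty$). Hence for every finite place $v \nmid p$ of $F$, the fixed isomorphisms $B_\ttS \otimes_F F_v \cong B_{\ttS(\ttT)} \otimes_F F_v$ of Subsection~\ref{S:quaternion-data-T} give $D_{\ttS, v} \cong D_{\ttS(\ttT), v}$ as algebras with involution, via (a localization of) $\theta_\ttT$ tensored appropriately, compatibly with the $*$-involutions coming from $\delta_\ttS$ and $\delta_{\ttS(\ttT)}$ as arranged in Lemma~\ref{L:compare D_S with D_S(T)}.

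Next I would carry out the comparison of the skew-Hermitian forms. Both $V \otimes_\QQ \AAA^{\infty,p} = D_\ttS(\AAA^{\infty,p})$ and $D_{\ttS(\ttT)}(\AAA^{\infty,p})$ are free rank-one modules over $D_\ttS \otimes_\QQ \AAA^{\infty,p} \cong D_{\ttS(\ttT)} \otimes_\QQ \AAA^{\infty,p}$ equipped with the $*$-skew-Hermitian pairings $\psi_E$ of the shape $\Tr_{D/E}(\sqrt{\gothd}\, v \delta w^*)$. A rank-one skew-Hermitian form over a quaternion algebra with its canonical involution is determined up to isomorphism by the class of $\delta$ modulo the norm subgroup, i.e. by $\delta$ up to multiplication by $\Nm_{D/E}(D^\times)$ and by $E^{\times}$-scaling (the similitude factor); this is a restricted local-global statement, place by place, since $\AAA^{\infty,p}$ is a restricted product. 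At each place $v \nmid p$, $\theta_\ttT$ matches $\delta_\ttS$ with $\delta_{\ttS(\ttT)}$ up to the required equivalence by the construction in Lemma~\ref{L:compare D_S with D_S(T)} (indeed $\theta_\ttT$ was chosen to be a $*$-isomorphism and to carry maximal orders to maximal orders), so the forms $\psi_{E,v}$ and $\psi_{E,v}'$ are isometric via a $D$-linear map $\Theta_{\ttT,v}$; for all but finitely many $v$ one takes $\Theta_{\ttT,v}$ to carry $\widehat\Lambda^{(p)}$-lattice components to lattice components, so the local isometries assemble into a global $\Theta_\ttT$ over $\AAA^{\infty,p}$. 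Finally I would record that $\Theta_\ttT$ is $D$-linear by construction, hence induces the desired identification compatible with the $D_\ttS$- and $D_{\ttS(\ttT)}$-module structures, and that it may be adjusted to respect the similitude character valued in $(\AAA_\QQ^{\infty,p})^\times$.

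The main obstacle I anticipate is bookkeeping the local discrepancy of $\delta_\ttS$ versus $\delta_{\ttS(\ttT)}$ at the $p$-adic places — but here the statement is only over $\AAA^{\infty,p}$, so those places are excluded and the difference between $\ttS$ and $\ttS(\ttT)$ over $p$ (a change of ramification type at certain $\gothp$) is invisible; thus the genuine content is just that the archimedean signature change does not affect the prime-to-$p$ adelic skew-Hermitian space. Concretely the only subtlety is checking that the Hasse/Hilbert-symbol invariants of the two forms agree at the finite primes $v \nmid p$, which follows because $\theta_\ttT$ is a $*$-algebra isomorphism matching $\delta_\ttS$ and $\delta_{\ttS(\ttT)}$ up to norms and $E^\times$-scaling at those places, exactly the freedom built into Lemma~\ref{L:compare D_S with D_S(T)}. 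I would therefore expect the proof to be short: invoke Lemma~\ref{L:compare D_S with D_S(T)}, localize, classify rank-one skew-Hermitian forms over the local quaternion algebras, and glue.
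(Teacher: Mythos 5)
Your approach is genuinely different from the paper's, but it has a gap at the crucial step, so let me address both points.

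The paper's proof is short and indirect: after Morita equivalence it works with two-dimensional $E_v$-Hermitian spaces, invokes that over a nonarchimedean local field such a space is determined up to isometry by its unitary group, and then uses the \emph{group-theoretic} description of $G'_{\tilde\ttS,1}$ as $\Ker(\nu)$ (which depends only on $B_\ttS$, $E$, $F$, not on $\delta_\ttS$) to deduce that $G'_{\tilde\ttS,1,v}\cong G'_{\tilde\ttS(\ttT),1,v}$ for $v\nmid p\infty$, because $B_{\ttS,v}\cong B_{\ttS(\ttT),v}$ at those places. The local Hermitian spaces are then automatically isometric, and a restricted-product argument finishes. You instead attempt a direct comparison of the elements $\delta_\ttS$ and $\delta_{\ttS(\ttT)}$. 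This is a valid alternative route, but it forces you to do by hand exactly the bookkeeping that the unitary-group description sidesteps.

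The gap is in the sentence ``at each place $v \nmid p$, $\theta_\ttT$ matches $\delta_\ttS$ with $\delta_{\ttS(\ttT)}$ up to the required equivalence by the construction in Lemma~\ref{L:compare D_S with D_S(T)}.'' That lemma only says that $\theta_\ttT$ is a $*$-algebra isomorphism; unwinding the definition of the involutions shows that this is equivalent to $\theta_\ttT(\delta_\ttS)=\lambda\,\delta_{\ttS(\ttT)}$ for \emph{some} $\lambda\in F^\times$. Thus $\theta_\ttT$ produces a \emph{similitude} of Hermitian forms with multiplier $\lambda$, not an isometry — and indeed your own formulation of the classifying invariant (``$\delta$ up to \dots and by $E^\times$-scaling (the similitude factor)'') is the similitude-class invariant, not the isometry-class invariant that Lemma~\ref{L:comparison of Hermitian space} requires. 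To close the argument you would need to add the observation that, at every finite place $v$, rescaling a two-dimensional $E_v$-Hermitian form by $\lambda\in F_v^\times$ does not change its isometry class (the discriminant moves by $\lambda^2\in\Nm_{E_v/F_v}(E_v^\times)$, and dimension plus discriminant classifies in the nonarchimedean case). You never make that observation, and without it your argument proves only that the two adelic skew-Hermitian spaces are similar, which is weaker than what the lemma asserts. As a smaller point, the scalar can only lie in $F^\times$ (not all of $E^\times$), because $\delta$ must remain $*$-symmetric; the discriminant computation uses this.
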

\begin{proof}
This could be done explicitly.  We however prefer a sneaky quick proof.
Under Morita equivalence, we are essentially working with two-dimensional Hermitian spaces and the associated unitary groups.  It is well known that, over a nonarchimedean local field, there are exactly two Hermitian spaces and the associated unitary groups are not isomorphic (see e.g. \cite[3.2.1]{minguez}).  In our situation, we know that $G'_{\tilde \ttS,1, v} \cong G'_{\tilde \ttS(\ttT),1,v}$ for any place $v \nmid p\infty$.  It follows that the associated Hermitian spaces at $v$ are isomorphic.  The lemma follows.
\end{proof}

\begin{cor}
\label{C:identify structure groups}
The isomorphisms $\theta_\ttT$ and $\Theta_\ttT$ induce an isomorphism $\theta'_\ttT:\calG'_{\tilde \ttS} \xrightarrow{\cong}\calG'_{\tilde \ttS(\ttT)}$. Moreover $\theta'_\ttT \times \mathrm{id}$ takes the subgroup $\calE_{G, \ttS, k_0} \subset\calG'_{\tilde \ttS} \times \Gal_{k_0}$ to the subgroup $\calE_{G, \ttS(\ttT), k_0}\subset\calG'_{\tilde \ttS(\ttT)} \times \Gal_{k_0}$.
\end{cor}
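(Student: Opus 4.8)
The plan is to carry this out in the same spirit as the proof of Corollary~\ref{C:comparison of shimura varieties}, by reducing everything to the explicit description of the structure groups in Subsection~\ref{S:structure group} and tracking how $\theta_\ttT$ and $\Theta_\ttT$ act on each constituent. First I would recall from \eqref{E:structure group description} that $\calG'_{\tilde \ttS} = G'_{\tilde \ttS}(\AAA^{\infty, p}) G''_{\tilde \ttS}(\QQ)_+^{(p)} / \calO_{E, (p)}^{\times, \cl}$, and similarly for $\tilde \ttS(\ttT)$; the key point is that all the ingredients on the right-hand side are determined by the $*$-Hermitian space $V \otimes_\QQ \AAA^{\infty,p}$ over $D_\ttS(\AAA^{\infty,p})$ together with its integral structure at the auxiliary finite places, and by the algebra $D_\ttS$ with its positive involution $*_\ttS$. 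Now $\theta_\ttT$ (an isomorphism of $\QQ$-algebras with positive involution, by Lemma~\ref{L:compare D_S with D_S(T)}) identifies $D_\ttS$ with $D_{\ttS(\ttT)}$ respecting the chosen maximal orders at $p$, and $\Theta_\ttT$ (Lemma~\ref{L:comparison of Hermitian space}) identifies the skew $*$-Hermitian spaces over $\AAA^{\infty,p}$ compatibly with the algebra actions. Together they give an isomorphism $G'_{\tilde \ttS}(\AAA^{\infty,p}) \xrightarrow{\cong} G'_{\tilde \ttS(\ttT)}(\AAA^{\infty,p})$ and, using $\theta_\ttT$ on the $\QQ$-points side, an isomorphism $G''_{\tilde \ttS}(\QQ) \xrightarrow{\cong} G''_{\tilde \ttS(\ttT)}(\QQ)$; one then checks these are compatible over their common overgroup (the points of $\Res_{E/\QQ}D^\times$), carry $p$-integral elements to $p$-integral elements (because the maximal orders are matched), carry positive elements to positive elements (the involutions and hence the reduced norms are matched), and carry $\calO_{E,(p)}^\times$ to $\calO_{E,(p)}^\times$. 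Passing to the quotient and taking closures gives $\theta'_\ttT: \calG'_{\tilde \ttS} \xrightarrow{\cong} \calG'_{\tilde \ttS(\ttT)}$.

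For the second assertion, I would first invoke Lemmas~\ref{L:compatibility of derived group and adjoint group} and \ref{L:compatibility of derived group and adjoint group2}: the derived and adjoint groups and their $p$-integral elements for $G'_{\tilde \ttS}$ agree with those for $G_\ttS$ (and $G_{\ttS(\ttT)}$), and $\theta_\ttT$ restricted to the $p$-adic places is, on each of these groups, literally the identification built into Subsection~\ref{S:quaternion-data-T} of $B_\ttS \otimes_F F_\gothp$ with $B_{\ttS(\ttT)} \otimes_F F_\gothp$ whenever $\{\gothp\} \cap \ttS = \{\gothp\} \cap \ttS(\ttT)$, and is the relevant explicit Morita-type isomorphism at the remaining places. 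Hence the normal subgroup $G^\der(\AAA^{\infty,p}) \ast_{G^\der(\QQ)^{(p)}_+} G^\ad(\QQ)^{+,(p)}$ appearing in Theorem~\ref{T:structure of calE_G,p} is carried isomorphically to its counterpart. By the very definition of $\calE_{G, \ttS, k_0}$ in Subsection~\ref{A:connected integral model} as the fiber product cutting out pairs $(g, \sigma)$ with $(\nu \ast \mathrm{triv})(g) = \gothRec_{k_0}(\sigma)^{-1}$, it then suffices to check that $\theta'_\ttT$ intertwines the $\nu$-maps $\calG'_{\tilde \ttS} \twoheadrightarrow T'(\QQ)^{\dagger,(p),\cl}\backslash T'(\AAA^{\infty,p})$ and that the reciprocity maps $\gothRec_{k_0}$ for $G'_{\tilde \ttS}$ and $G'_{\tilde \ttS(\ttT)}$ agree under the induced identification of abelianizations; both follow because the abelianization $T'$ of $G'$ and the Deligne cocharacter $h'r$ depend only on the CM field $E$ and on the signature data, and $\tilde \ttS(\ttT)$ restricts to the same $E$-structure at $p$ as $\tilde \ttS$ does (the reflex field completion at $\tilde \wp$ is the same, as noted in Subsection~\ref{S:quaternion-data-T}). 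Therefore $\theta'_\ttT \times \mathrm{id}$ takes $\calE_{G, \ttS, k_0}$ isomorphically onto $\calE_{G, \ttS(\ttT), k_0}$.

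The main obstacle I anticipate is the bookkeeping at the places above $p$: one must verify that the specific chain of lattices $\Lambda^{(1)}_\gothp \subseteq \Lambda^{(2)}_\gothp$ defining $K'_p$ in Subsection~\ref{S:level-structure} is carried by $\theta_\ttT$ to the corresponding chain defining $K'_{\ttT,p}$ for $G'_{\tilde \ttS(\ttT)}$ — this is case-by-case according to how the type of $\gothp$ changes (cases $\alpha1, \alpha2, \beta1, \beta2$ of Subsection~\ref{S:quaternion-data-T}), and the signatures $s_{\ttT,\tilde\tau}$ from Subsection~\ref{S:tilde S(T)} feed into which lattice chain is the correct one. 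However, since $\theta'_\ttT$ and the groups $\calG, \calE$ in \emph{loc.\ cit.} are built from data \emph{away} from $p$ (the level $K_p$ and its variants are fixed, not varying, in the tower), this $p$-adic matching is only needed to know that the two sides are genuinely the "same" PEL datum, and does not enter the definition of $\calG'$ or $\calE_{G,k_0}$; so in fact the argument above is essentially formal once Lemmas~\ref{L:compare D_S with D_S(T)} and \ref{L:comparison of Hermitian space} are in hand. I would therefore present the proof as: assemble $\theta'_\ttT$ from $\theta_\ttT$ and $\Theta_\ttT$ using the description \eqref{E:structure group description}, then invoke Theorem~\ref{T:structure of calE_G,p} together with the coincidence of reciprocity maps to conclude the $\calE$-statement.
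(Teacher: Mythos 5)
Your overall plan matches the paper's: assemble $\theta'_\ttT$ from the identifications $\theta_\ttT$, $\Theta_\ttT$ via the descriptions \eqref{E:structure group description} and \eqref{E:description of G'' G'}, and then reduce the $\calE$-statement to checking that the $\nu$-maps and the reciprocity maps $\gothRec_{k_0}$ agree. That part of the architecture is sound. (Your worry about the $p$-adic level matching is indeed harmless, but for a reason you only implicitly gesture at: \eqref{E:description of G'' G'} shows that the subgroup $G''_{\tilde\ttS}(\QQ)_+^{(p)}G'_{\tilde\ttS}(\AAA^{\infty,p})$ of $G''_{\tilde\ttS}(\AAA^{\infty,p})$ is cut out purely by a similitude condition away from $p$, so the dependence on $K_p$ disappears after multiplying by $G'(\AAA^{\infty,p})$.)

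The gap is in the justification for why $\gothRec_{k_0}(G'_{\tilde\ttS})$ and $\gothRec_{k_0}(G'_{\tilde\ttS(\ttT)})$ coincide. You write that ``the abelianization $T'$ of $G'$ and the Deligne cocharacter $h'r$ depend only on the CM field $E$ and on the signature data, and $\tilde \ttS(\ttT)$ restricts to the same $E$-structure at $p$ as $\tilde\ttS$ does (the reflex field completion at $\tilde\wp$ is the same, as noted in Subsection~\ref{S:quaternion-data-T}).'' But no such claim is made in Subsection~\ref{S:quaternion-data-T}, and it is not true in the form stated: the reflex fields $E_{\tilde\ttS}$ and $E_{\tilde\ttS(\ttT)}$ are in general genuinely different (e.g., $E_{\tilde\emptyset}=\QQ$ while $E_{\tilde\ttS(\ttT)}$ for a nonempty $\ttT$ is usually a nontrivial number field), so are their $p$-adic completions and residue fields, and the signature vectors $(s_{\tilde\tau})$ and $(s_{\ttT,\tilde\tau})$ are different as well. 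In particular the Deligne cocharacters $h'_{\tilde\ttS}r$ and $h'_{\tilde\ttS(\ttT)}r$ are \emph{not} the same cocharacter of $T'$ over the same field, so the coincidence of reciprocity maps is not a formality. What actually saves the day is precisely the carefully engineered alternating choice of lifts $\tilde\tau$, $\tilde\tau^c$ in Subsection~\ref{S:tilde S(T)}: one has to compute the two maps $\gothRec_{k_0}$ on $\Gal_{k_0}$ and observe that the alternation of $\gothq$- and $\bar\gothq$-lifts in each $\tilde C'_i$ makes the contributions balance out, so that the two cocharacters push forward to the \emph{same} element of $T'(\QQ)^{\dagger,(p),\cl}\backslash T'(\AAA^{\infty,p})$ even though neither the reflex field nor the raw signature data agrees. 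The paper states exactly this — ``the choice of signatures in Subsection~\ref{S:tilde S(T)} ensures that the reciprocity map $\gothRec_{k_0}$ for both Shimura data are the \emph{same} at $p$'' — as the substantive input. Your proposal instead asserts it as a formal consequence of an equality that does not hold, so this step needs to be replaced by the actual computation (or at least by a correct statement of which quantity is preserved by the alternation).
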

\begin{proof}
The first statement follows from the description of the two groups in \eqref{E:structure group description} and \eqref{E:description of G'' G'} and the interpretation of these groups as certain automorphic groups of the skew $*$-Hermitian spaces.
The second statement follows from the description of both subgroups in Subsection~\ref{S:structure group} and the observation that the choice of signatures in Subsection~\ref{S:tilde S(T)} ensures that the reciprocity map $\gothRec_{k_0}$ for both Shimura data are the \emph{same} at $p$.
\end{proof}

\subsection{Level structure of $\bfSh_{K'_{\ttT}}(G'_{\tilde \ttS(\ttT)})$}
\label{S:level structure tilde ttS(ttT)}
We now specify the level structure $K_{\ttT}'\subseteq G'_{\tilde \ttS(\ttT)}(\AAA^{\infty})$.
\begin{itemize}
\item For the prime-to-$p$ level,
since $\Theta_\ttT$ induces an isomorphism $G_{\tilde \ttS(\ttT)}'(\AAA^{\infty,p}) \simeq G_{\tilde \ttS}'(\AAA^{\infty, p})$, the subgroup $K'^{p}\subseteq G_{\tilde \ttS}'(\AAA^{\infty,p})$ corresponds   to a subgroup $K_{\ttT}'^{p}\subseteq G_{\tilde \ttS(\ttT)}'(\AAA^{\infty,p})$.

\item For $K'_{\ttT, p}$, we take it as the open compact subgroup of $G_{\tilde \ttS(\ttT)}'(\Q_p)$ corresponding to $K_{\ttT,p}\subseteq G_{\tilde \ttS(\ttT)}(\Q_p)$ by the rule in Subsection~\ref{S:level-structure}. 
According to the discussion there, it suffices to choose a chain of lattices $\Lambda^{(1)}_{\ttT,\gothp}\subset \Lambda^{(2)}_{\ttT,\gothp}$ in $D_{\ttS(\ttT)}\otimes_{F}F_{\gothp}$ for each $\gothp\in \Sigma_{p}$. Using the isomorphism $\theta_{\ttT}$,  we can identify $D_{\ttS(\ttT)}\otimes_F F_{\gothp}$ with $D_{\ttS}\otimes_F F_{\gothp}$, and hence with $\rmM_2(E \otimes_F F_\gothp)$.
\begin{itemize}
\item For $\gothp\in \Sigma_{p}$ with $K_{\ttT,\gothp}=K_{\gothp}$, we take $\Lambda_{\ttT, \gothp}^{(1)}\subseteq \Lambda_{\ttT,\gothp}^{(2)}$ to be the same as the chain $\Lambda_{\gothp}^{(1)}\subseteq \Lambda_{\gothp}^{(2)}$ for defining $K_{\gothp}'\subset G'_{\ttS}(\Q_p)$.
\item For $\gothp\in \Sigma_{p}$ with $K_{\ttT, \gothp}\neq K_{\gothp}$,  then $K_{\ttT, \gothp}$ is either Iwahori subgroup of $\GL_2(\cO_{F_{\gothp}})$  or $\cO_{B_{F_{\gothp}}}^{\times}$. We take then $\Lambda_{\ttT,\gothp}^{(1)}\subsetneq \Lambda_{\ttT,\gothp}^{(2)}$ to be the corresponding  lattice as in Subsection~\ref{S:level-structure} that defines the Iwahori level at $\gothp$.
%\liang{Needs to work on a bit later.}
\end{itemize}

\end{itemize}
%\liang{add: ``
We also specify  the lattices we use for both Shimura varieties: if $\Lambda_\ttS$ denotes the chosen lattice of $D_\ttS$, we choose the lattice of $D_{\ttS(\ttT)}$ to be $\Lambda_{\ttS(\ttT)} =\theta_\ttT(\Lambda_\ttS)$.
%}
With these data, we have a unitary Shimura variety $\Sh_{K_{\ttT}'}(G_{\tilde \ttS(\ttT)}')$ over the reflex field $E_{\tilde \ttS(\ttT)}$, which is the field corresponding to the Galois group  fixing the subset $\tilde \ttS(\ttT) \subseteq \Sigma_{E, \infty}$.
  To construct an integral model of $\Sh_{K'_{\ttT}}(G'_{\tilde \ttS(\ttT)})$, we need to choose an order $\cO_{D_{\ttS(\ttT)}}$.
Let $\cO_{D_{\ttS}}$ be the order stable under $*$ and maximal at $p$ used to define the integral model $\bSh_{K'}(G_{\tilde \ttS}')$. We put $\cO_{D_{\ttS(\ttT)}}=\theta_{\ttT}(\cO_{D_{\ttS}})$.  For any $\gothp\in \Sigma_{p}$,   both $\cO_{D_{\ttS}, \gothp}$ and $\cO_{D_{\ttS(\ttT)}, \gothp}$ can be identified with $\rmM_2(\cO_{E}\otimes_{\cO_F}\cO_{F_{\gothp}})$.

We have now all the PEL-data needed for  Theorem~\ref{T:unitary-shimura-variety-representability}, which assures that $\Sh_{K_{\ttT}'}(G_{\tilde \ttS(\ttT)}')$ admits an integral model $\bSh_{K_{\ttT}'}(G_{\tilde \ttS(\ttT)}')$ over $W(k_0)$. Using $\bSh_{K_{\ttT}'}(G_{\tilde \ttS(\ttT)}')$, we can construct an integral model $\bSh_{K_{\ttT}}(G_{\ttS(\ttT)})$ of the quaternionic Shimura variety  $\Sh_{K_{\ttT}}(G_{\ttS(\ttT)})$.

\begin{theorem}\label{T:main-thm-unitary}
For a subset $\ttT\subseteq \Sigma_{\infty}-\ttS_{\infty}$, let  $\bfSh_{K'}(G'_{\tilde \ttS})_{k_0,\ttT}\subseteq \bfSh_{K'}(G'_{\tilde \ttS})_{k_0}$ denote the GO-stratum defined in Definition~\ref{Defn:GO-strata}. Let $I_{\ttT}$ be as in Theorem~\ref{T:main-thm}. Then we have the following:

\begin{itemize}
\item[(1)]
(Description)
$\bfSh_{K'}(G'_{\tilde \ttS})_{k_0,\ttT}$ is isomorphic to a $(\PP^1)^{I_{\ttT}}$-bundle over $\bSh_{K'_{\ttT}}(G'_{\tilde \ttS(\ttT)})_{k_0}$.

\item[(2)]
(Compatibility of abelian varieties)
  Let $\pi_{\ttT}: \bfSh_{K'}(G'_{\tilde \ttS})_{k_0,\ttT}\ra \bSh_{K'_{\ttT}}(G'_{\tilde \ttS(\ttT)})_{k_0}$ denote the projection of the $(\PP^1)^{I_{\ttT}}$-bundle  in \emph{(1)}. The abelian schemes
  $\bfA'_{\tilde \ttS,k_0}$ and $\pi_{\ttT}^*\bfA'_{\tilde \ttS(\ttT),k_0}$ over $\bfSh_{K'}(G'_{\tilde \ttS})_{k_0}$ are isogenous,  where $\bfA'_{\tilde \ttS,k_0}$ and $\bfA'_{\tilde \ttS(\ttT),k_0}$ denote respectively the universal abelian varieties over $\bfSh_{K'}(G'_{\tilde \ttS})_{k_0,\ttT}$ and  over $\bSh_{K'_{ \ttT}}(G'_{\tilde \ttS(\ttT)})_{k_0}$.
  
  \item[(3)]
(Compatibility with Hecke action)
Taking the limit over the open compact subgroups $K'^p\subseteq G'_{\tilde \ttS}(\AAA^{\infty,p})$, the isomorphism as well as the isogeny of abelian varieties are compatible with the action of the Hecke correspondence given by $\widetilde G_{\tilde \ttS} = G''_{\tilde \ttS}(\QQ)^{+,(p)} G'_{\tilde \ttS}(\AAA^{\infty, p}) \cong \widetilde G_{\tilde \ttS(\ttT)}$.

\item[(4)]
(Compatibility with partial Frobenius)
The description in (1) is compatible with the action of the twisted partial Frobenius (Subsection~\ref{S:partial Frobenius}) in the sense that we have a commutative diagram
\[
\xymatrix@C=50pt{
\bfSh_{K'}(G'_{\tilde \ttS})_{k_0, \ttT} \ar[r]_-{\xi^\mathrm{rel}} \ar[rd]_{\pi_{\ttT}}
\ar@/^15pt/[rr]^-{\gothF'_{\gothp^2, \tilde \ttS}}
&
\gothF'^*_{\gothp^2, \tilde \ttS(\ttT)}(
\bfSh_{K'}(G'_{\sigma_\gothp^2\tilde \ttS})_{k_0, \sigma_\gothp^2\ttT})
) \ar[d] \ar[r]_-{\gothF'^*_{\gothp^2, \tilde \ttS(\ttT)}}
&
\bfSh_{K'}(G'_{\sigma_\gothp^2\tilde \ttS})_{k_0, \sigma_\gothp^2\ttT} \ar[d]^{\pi_{\sigma_{\gothp}^2\ttT}}
\\
&
\bfSh_{K'_\ttT}(G'_{\tilde \ttS(\ttT)})_{k_0}
\ar[r]^-{\gothF'_{\gothp^2, \tilde \ttS(\ttT)}} &
\bfSh_{K'_{\sigma_\gothp^2\ttT}}(G'_{\sigma_\gothp^2(\tilde \ttS(\ttT))})_{k_0},
}
\]
where  the square is Cartesian, we added subscripts to the partial Frobenius to indicate the corresponding base scheme, and the morphism $\xi^\mathrm{rel}$ is a morphism whose restriction to each fiber $\pi_{\ttT}^{-1}(x)=(\PP^1_x)^{I_{\ttT}}$ is the product of the relative  $p^2$-Frobenius of the  $\PP^1$'s indexed by $I_{\ttT}\cap \Sigma_{\infty/\gothp}=\ttT'_{\infty/\gothp}-\ttT_{/\gothp}$, and  the identity on the other $\PP^1_x$'s.
\end{itemize}

\end{theorem}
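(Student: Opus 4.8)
The plan is to factor the isomorphism in (1) through an auxiliary moduli space, in the spirit of Helm's isogeny trick. First I would construct the putative target bundle: over $Z := \bfSh_{K'_{\ttT}}(G'_{\tilde \ttS(\ttT)})_{k_0}$, carrying its universal abelian scheme $\bfA'_{\tilde \ttS(\ttT)}$, for each $\tau \in I_{\ttT}$ one builds a rank-two locally free sheaf $\calE_\tau$ out of the reduced de Rham homology $H_1^{\dR}(\bfA'_{\tilde \ttS(\ttT)}/Z)^\circ_{\tilde\tau}$ --- concretely the image of a suitable iterate of the essential Verschiebung, which the signature condition at the relevant place forces to be a full-rank submodule --- and sets $Z'_{\ttT} := \prod_{\tau \in I_{\ttT}} \PP(\calE_\tau)$, the fibre product over $Z$ of these $\PP^1$-bundles, with tautological sublines $\ell_\tau \subseteq \pi^*\calE_\tau$. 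This is visibly a $(\PP^1)^{I_{\ttT}}$-bundle over $Z$.

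Next I would introduce the intermediate space $Y'_{\ttT}$: the moduli functor on locally noetherian $k_0$-schemes $S$ parametrizing triples $(x,y,\phi)$ with $x \in \bfSh_{K'}(G'_{\tilde \ttS})_{k_0,\ttT}(S)$, $y \in Z(S)$, and $\phi$ an $\calO_{D_\ttS}$-linear quasi-isogeny between the corresponding universal abelian schemes of a fixed $p$-power type --- supported only at the primes occurring in $\ttT'$, of the minimal degree forced by the change of signatures from $\tilde\ttS$ to $\tilde\ttS(\ttT)$ --- compatible with polarizations (up to the bookkeeping of Subsection~\ref{S:abel var in unitary case}) and prime-to-$p$ level structures. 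Forgetting $(y,\phi)$ gives $p_1 : Y'_{\ttT} \to \bfSh_{K'}(G'_{\tilde \ttS})_{k_0,\ttT}$, and forgetting $x$ while recording the sublines $\phi$ cuts out in the $\calE_\tau$'s gives $p_2 : Y'_{\ttT} \to Z'_{\ttT}$:
\[
\bfSh_{K'}(G'_{\tilde \ttS})_{k_0,\ttT} \xleftarrow{\ p_1\ } Y'_{\ttT} \xrightarrow{\ p_2\ } Z'_{\ttT}.
\]
Part (1) is then reduced to showing $p_1$ and $p_2$ are isomorphisms, with $\pi_{\ttT} := \pi \circ p_2 \circ p_1^{-1}$.

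To prove $p_1,p_2$ are isomorphisms I would check bijectivity on $\overline\FF_p$-points and on tangent spaces. Over $\overline\FF_p$, Dieudonn\'e theory converts the construction of $(y,\phi)$ from $x$ into a lattice problem inside $\tcD(A)[1/p]^\circ$, which splits over $\Sigma_p$ and then over the $\sigma$-cycles $\Sigma_{E,\infty/\gothp}$, and I would run it $\gothp$-by-$\gothp$ following the case division ($\alpha1$, $\alpha2$, $\beta1$, $\beta2$, and the $\sharp$-types where nothing happens) of Subsection~\ref{S:quaternion-data-T}: along a chain $C_i$ the vanishing of the partial Hasse invariants $h_{\tilde\tau}$, $\tau\in\ttT$ (Lemma~\ref{Lemma:partial-Hasse}), pins the Hodge line at $\tilde\tau$ to $\Image(F^{n_\tau}_{\es})$, which is precisely what lets one propagate the $\alpha_p$-type (signature $0$/$2$) along the chain by a uniquely determined isogeny, the only freedom being a line in a two-dimensional Dieudonn\'e space at the single extra place of $I_{\ttT}$ created when the $\ttT$-part of the chain has odd length --- a point of the associated $\PP^1$. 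This yields mutually inverse bijections between the $\overline\FF_p$-points of the three spaces, compatible with prime-to-$p$ Hecke. For tangent spaces, all three schemes are smooth over $k_0$ (for $\bfSh_{K'}(G'_{\tilde \ttS})_{k_0,\ttT}$ by Proposition~\ref{Prop:smoothness}; for $Z'_{\ttT}$ since it is a $\PP^1$-power bundle over the smooth $Z$; for $Y'_{\ttT}$ by rerunning the Grothendieck--Messing argument of Theorem~\ref{T:unitary-shimura-variety-representability} with the quasi-isogeny carried along, which has no infinitesimal deformation once $A$ deforms), so it suffices to match the explicit tangent bundles: that of $\bfSh_{K'}(G'_{\tilde \ttS})_{k_0,\ttT}$ is $\bigoplus_{\tau\notin\ttS_\infty\cup\ttT}\Lie(\bfA')^\circ_{\tilde\tau}\otimes\Lie(\bfA')^\circ_{\tilde\tau^c}$ by Proposition~\ref{Prop:smoothness}, while on the $Z'_{\ttT}$ side one gets the analogous sum over $\tau\notin\ttS(\ttT)_\infty$ pulled back from $Z$ together with the relative $\PP^1$-directions indexed by $I_{\ttT}$, and these correspond term-by-term. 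Since $p_1,p_2$ are proper (the isogeny type being fixed), bijective, and isomorphisms on every tangent space with smooth target, they are isomorphisms, giving (1). Part (2) is then the universal quasi-isogeny $\phi$ over $Y'_{\ttT}\cong\bfSh_{K'}(G'_{\tilde \ttS})_{k_0,\ttT}$, and part (3) is immediate from the moduli descriptions together with the identification $\widetilde G_{\tilde \ttS}\cong\widetilde G_{\tilde \ttS(\ttT)}$ and Corollary~\ref{C:identify structure groups}.

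For part (4): Lemma~\ref{L:partial Frobenius vs partial Hasse inv} already shows $\gothF'_{\gothp^2}$ sends $\bfSh_{K'}(G'_{\tilde \ttS})_{k_0,\ttT}$ into $\bfSh_{K'_\ttT}(G'_{\sigma_\gothp^2\tilde \ttS})_{k_0,\sigma_\gothp^2\ttT}$, and since it replaces $\tcD(A)_\gothp$ by $\tcD(A)_\gothp^{(p^2)}$, tracing the construction of $(y,\phi)$ shows it acts on the base factor by $\gothF'_{\gothp^2,\tilde\ttS(\ttT)}$ and on the $\PP^1$-factors indexed by $I_{\ttT}\cap\Sigma_{\infty/\gothp}$ by their relative $p^2$-Frobenius, leaving the remaining $\PP^1$-factors untouched --- exactly the asserted Cartesian square with the factor $\xi^{\mathrm{rel}}$. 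The main obstacle throughout is the closed-point bijection: one must verify, uniformly in the type of $\gothp$ and especially in the inert case $\beta$ where $F$-places have no canonical lift to $E$, that the lattice modifications dictated by the vanishing Hasse invariants land one at an abelian variety with precisely the signatures $\tilde\ttS(\ttT)$ fixed in Subsection~\ref{S:tilde S(T)}, that the even/odd-chain combinatorics correctly predicts which places acquire a $\PP^1$ of choices, and that the polarization and the maximality of the $p$-adic lattices are preserved; everything else is deformation theory already developed in Sections~\ref{Section:Integral-model} and \ref{Section:defn of GOstrata}.
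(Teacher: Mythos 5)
Your proposal follows essentially the same route as the paper: introduce an auxiliary isogeny moduli space $Y'_{\ttT}$ with two forgetful maps to $X'_{\ttT} = \bfSh_{K'}(G'_{\tilde\ttS})_{k_0,\ttT}$ and to a $(\PP^1)^{I_\ttT}$-bundle $Z'_{\ttT}$ over $\bfSh_{K'_\ttT}(G'_{\tilde\ttS(\ttT)})_{k_0}$, then prove both maps are isomorphisms by a Dieudonn\'e-lattice computation on $\overline\FF_p$-points (organized $\gothp$-by-$\gothp$ and chain-by-chain via the case split of Subsection~\ref{S:quaternion-data-T}) together with a Grothendieck--Messing comparison of tangent spaces, finally deducing (2)--(4) from the universal quasi-isogeny, the Hecke formalism, and Lemma~\ref{L:partial Frobenius vs partial Hasse inv}. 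The only superficial departure is that you encode the quasi-isogeny directly as $\phi: A \dashrightarrow B$ and present $Z'_\ttT$ as a product of projectivizations of rank-two sheaves $\PP(\calE_\tau)$, whereas the paper factors the quasi-isogeny as a roof of two genuine $p$-isogenies $A\to C\leftarrow B$ through a third abelian scheme $C$ and describes $Z'_\ttT$ via sub-line-bundles $J^\circ_{\tilde\tau}\subseteq H_1^{\dR}(B/S)^\circ_{\tilde\tau}$; these are the same construction in different clothing.
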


The proof of this theorem will occupy the rest of this section and concludes in Subsection~\ref{S:End-of-proof}.
We first state a corollary.

\begin{cor}
\label{C:main-thm-product}
%Let  $\overline{Sh}_{\tilde \ttS}$ denote the Shimura variety $ \bfSh_{K''_p}(G''_{\tilde \ttS})_{k_0}$ (resp. $\bfSh_{K_p}(G_\ttS)_{\overline \FF_p}$), and $\overline{Sh}_{\tilde \ttS(\ttT)}$ the Shimura variety $ \bfSh_{K''_{\ttT,p}}(G''_\ttS)_{k_0}$ (resp. $\bfSh_{K_{\ttT,p}}(G_\ttS)_{\overline \FF_p}$). Then the following hold:
\begin{enumerate}
\item The Goren-Oort stratum $\bfSh_{K'_p}(G'_{\tilde \ttS})^\circ_{\overline \FF_p, \ttT}$ is  isomorphic to a $(\PP^1)^{I_\ttT}$-bundle over $\bfSh_{K'_p}(G'_{\tilde \ttS(\ttT)})^\circ_{\overline \FF_p}$, equivariant for the action of $\calE_{G, \ttS, \tilde \wp} \cong \calE_{G, \ttS(\ttT), \tilde \wp}$ (which are identified as in Corollary~\ref{C:identify structure groups}) with trivial action on the fibers.

\item The GO-stratum $\bfSh_{K''_p}(G''_{\tilde \ttS})_{k_0,\ttT}$  is isomorphic to a $(\PP^1)^{I_\ttT}$-bundle over $\bfSh_{K''_p}(G''_{\tilde \ttS(\ttT)})_{k_0}$, such that the natural projection $\pi_{\ttT}: \bfSh_{K''_p}(G''_{\tilde \ttS})_{k_0,\ttT} \to\bfSh_{K''_p}(G''_{\tilde \ttS(\ttT)})_{k_0}$ is equivariant for the tame Hecke action.

\item The abelian schemes $\bfA''_{\tilde \ttS,k_0}$ and $\pi_{\ttT}^*(\bfA''_{\tilde \ttS(\ttT),k_0})$  over $\bfSh_{K''_p}(G''_{\tilde \ttS})_{k_0, \ttT}$ are isogenous.

\item The following diagram is commutative
\[
\xymatrix@C=50pt{
\bfSh_{K''_p}(G''_{\tilde \ttS})_{k_0, \ttT} \ar[r]_-{\xi^\mathrm{rel}} \ar@/^15pt/[rr]^-{\gothF''_{\gothp^2,\tilde \ttS}} \ar[dr]_{\pi_{\ttT}}
&
\gothF_{\gothp^2, \tilde \ttS(\ttT)}^*(\bfSh_{K''_p}(G''_{\sigma_\gothp^2\tilde \ttS})_{k_0, \sigma_{\gothp}^2 \ttT} \ar[r]_-{\gothF''^*_{\gothp^2, \tilde \ttS(\ttT)}} \ar[d]
&
\bfSh_{K''_p}(G''_{\sigma_\gothp^2\tilde \ttS})_{k_0, \sigma_{\gothp}^2 \ttT} \ar[d]^{\pi_{\tilde \ttS(\ttT)}}
\\
&
\bfSh_{K''_{\ttT,p}}(G''_{\tilde \ttS(\ttT)})_{k_0}
\ar[r]^-{\gothF''_{\gothp^2, \tilde \ttS(\ttT)}} 
&
\bfSh_{K''_{\sigma_\gothp^2\ttT,p}}(G''_{\sigma_\gothp^2(\tilde \ttS(\ttT))})_{k_0}
}
\]
where the square is Cartesian, $\gothF''_{\gothp^2, \tilde \ttS}$ and $\gothF''_{\gothp^2, \tilde \ttS(\ttT)}$ denote the twisted partial Frobenii (Subsection~\ref{S:partial Frobenius}) on $\bfSh_{K''_p}(G''_{\tilde \ttS})_{k_0}$ and $\bfSh_{K''_{\ttT,p}}(G''_{\tilde \ttS(\ttT)})_{k_0}$ respectively,  and $\xi^\mathrm{rel}$ is a morphism whose restriction to each fiber $\pi_{\ttT}^{-1}(x)=(\PP^1_x)^{I_{\ttT}}$ is the product of  the relative  $p^2$-Frobenius of the  $\PP^1_x$'s indexed by $I_{\ttT}\cap \Sigma_{\infty/\gothp}=\ttT'_{\infty/\gothp}-\ttT_{/\gothp}$, and  the identity on the other $\PP^1_x$'s.
\end{enumerate}
\end{cor}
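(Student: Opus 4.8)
The plan is to derive every item of Corollary~\ref{C:main-thm-product} from the corresponding item of Theorem~\ref{T:main-thm-unitary} by transporting it along the functor $-\times_{\widetilde G_{\tilde\ttS}}G''_{\tilde\ttS}(\AAA^{\infty,p})$ (for parts (2)--(4)) and along the passage to the neutral connected component (for part (1)); the only ingredient not already contained in Theorem~\ref{T:main-thm-unitary} is the comparison of structure groups. First I would recall the dictionary: from Subsection~\ref{S:abel var in unitary case} one has $\bfSh_{K''_p}(G''_{\tilde\ttS})=\bfSh_{K'_p}(G'_{\tilde\ttS})\times_{\widetilde G_{\tilde\ttS}}G''_{\tilde\ttS}(\AAA^{\infty,p})$ and $\bfA''_{\tilde\ttS}=\bfA'_{\tilde\ttS}\times_{\widetilde G_{\tilde\ttS}}G''_{\tilde\ttS}(\AAA^{\infty,p})$, and from Subsection~\ref{S:GO-stratum connected Shimura variety} one has $\bfSh_{K''_p}(G''_{\tilde\ttS})_{k_0,\ttT}=\bfSh_{K'_p}(G'_{\tilde\ttS})_{k_0,\ttT}\times_{\widetilde G_{\tilde\ttS}}G''_{\tilde\ttS}(\AAA^{\infty,p})$; the same identities hold with $\tilde\ttS$ replaced by $\tilde\ttS(\ttT)$ and by $\sigma_\gothp^2\tilde\ttS$. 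By Corollary~\ref{C:identify structure groups}, together with the isomorphisms $\theta_\ttT$ and $\Theta_\ttT$ of Lemmas~\ref{L:compare D_S with D_S(T)} and \ref{L:comparison of Hermitian space}, there are compatible isomorphisms $\widetilde G_{\tilde\ttS}\cong\widetilde G_{\tilde\ttS(\ttT)}$, $G''_{\tilde\ttS}(\AAA^{\infty,p})\cong G''_{\tilde\ttS(\ttT)}(\AAA^{\infty,p})$ and $\calE_{G,\ttS,\tilde\wp}\cong\calE_{G,\ttS(\ttT),\tilde\wp}$, compatibly with the embeddings into $\calG''_{\tilde\ttS}\times\Gal_{k_0}$ from Subsection~\ref{S:structure group} and with the reciprocity maps at $p$; by \eqref{E:description of G'' G'} the same holds after applying $\sigma_\gothp^2$.

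For parts (2), (3) and (4) I would then apply $-\times_{\widetilde G_{\tilde\ttS}}G''_{\tilde\ttS}(\AAA^{\infty,p})$ to the data of Theorem~\ref{T:main-thm-unitary}. By parts (3) and (4) of that theorem the $(\PP^1)^{I_\ttT}$-bundle $\pi_\ttT$, the isogeny $\bfA'_{\tilde\ttS,k_0}\to\pi_\ttT^*\bfA'_{\tilde\ttS(\ttT),k_0}$, and the whole twisted partial Frobenius diagram are $\widetilde G_{\tilde\ttS}$-equivariant as $K'^p$ varies. Since the operation $-\times_{\widetilde G_{\tilde\ttS}}G''_{\tilde\ttS}(\AAA^{\infty,p})$ is, \'etale-locally on the base, a base change along the finite \'etale transition maps of the tower, it preserves the property of being a $(\PP^1)^{I_\ttT}$-bundle, the property of a morphism of abelian schemes being an isogeny, and the fibrewise description of $\xi^{\mathrm{rel}}$ as a product of relative $p^2$-Frobenii on the $\PP^1$'s indexed by $\ttT'_{\infty/\gothp}-\ttT_{/\gothp}$; renaming the targets via Step~1 then gives the $G''_{\tilde\ttS}(\AAA^{\infty,p})$-equivariant statements (2), (3), (4), with the cartesian square in (4) being the image of the cartesian square in Theorem~\ref{T:main-thm-unitary}(4).

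For part (1) I would instead restrict Theorem~\ref{T:main-thm-unitary}(1) to a connected component. In the terminology of Subsection~\ref{S:transfer math obj}, $\pi_\ttT$ together with the universal abelian schemes is a morphism of $\calG'_{\tilde\ttS}\times\Gal_{k_0}$-equivariant mathematical objects over the tower $\bfSh_{K'_pK'^p}(G'_{\tilde\ttS})_{k_0}$; restricting along Corollary~\ref{C:mathematical objects equivalence} to the neutral connected component, which depends only on the derived and adjoint data (Subsection~\ref{A:connected integral model}), produces a $(\PP^1)^{I_\ttT}$-bundle $\bfSh_{K'_p}(G'_{\tilde\ttS})^\circ_{\overline\FF_p,\ttT}\to\bfSh_{K'_p}(G'_{\tilde\ttS(\ttT)})^\circ_{\overline\FF_p}$, equivariant for the identified groups $\calE_{G,\ttS,\tilde\wp}\cong\calE_{G,\ttS(\ttT),\tilde\wp}$; the induced action on the $(\PP^1)^{I_\ttT}$-fibres is trivial because it is already trivial on the fibres of $\pi_\ttT$ over $\bfSh_{K'}(G'_{\tilde\ttS})_{k_0,\ttT}$.

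The step I expect to require the most care — though it is bookkeeping rather than geometry — is checking that the morphisms produced in Theorem~\ref{T:main-thm-unitary} are equivariant for the \emph{full} groups in play ($\widetilde G_{\tilde\ttS}$, $\calG'_{\tilde\ttS}$, and hence $\calE_{G,\ttS,\tilde\wp}$), not merely for the spherical Hecke algebra, so that the two transfer operations above apply verbatim; this equivariance is precisely the content of parts (3) and (4) of that theorem, so once Theorem~\ref{T:main-thm-unitary} is in hand the present corollary is formal.
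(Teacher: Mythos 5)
Your proposal is correct and is essentially the paper's intended argument spelled out in full; the paper simply declares the corollary an immediate consequence of Theorem~\ref{T:main-thm-unitary} (its proof cites ``Corollary~\ref{C:main-thm-product} above,'' which is a self-reference and must be a typo for Theorem~\ref{T:main-thm-unitary}) together with the explicit construction of $\bfA''_{\tilde\ttS}$ and $\bfA''_{\tilde\ttS(\ttT)}$ in Subsection~\ref{S:abel var in unitary case}. Your unwinding---transport along $-\times_{\widetilde G_{\tilde\ttS}}G''_{\tilde\ttS}(\AAA^{\infty,p})$ for parts (2)--(4), restriction to the neutral component via Corollary~\ref{C:mathematical objects equivalence} for part (1), using Corollary~\ref{C:identify structure groups} to identify the structure groups and reciprocity data, and Theorem~\ref{T:main-thm-unitary}(3)--(4) to supply the needed $\widetilde G_{\tilde\ttS}$- and Frobenius-equivariance---is exactly the formal machinery the authors have in mind.
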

\begin{proof}
This is an immediate consequence of Corollary~\ref{C:main-thm-product} above.
The claims regarding the universal abelian varieties follows from the explicit construction of $\bfA''_{\tilde \ttS}$ and $\bfA''_{\tilde \ttS(\ttT)}$ in Subsection~\ref{S:abel var in unitary case}.
\end{proof}

\begin{remark}
\label{R:quaternionic Shimura reciprocity not compatible}
We emphasize that the analogous of Corollary~\ref{C:main-thm-product}(ii) for quaternionic Shimura varieties only holds over $\overline \FF_p$.
This is because the subgroups $\calE_{G, \ttS, \wp}$ and $\calE_{G, \ttS(\ttT), \wp}$, although abstractly isomorphic, sit in $\calG_\ttS \times \Gal_{k_0} \cong \calG_{\ttS(\ttT)} \times \Gal_{k_0}$ as \emph{different} subgroups. 
The two Deligne homomorphisms are different.
\end{remark}

The rest of this section is devoted to the proof of Theorem~\ref{T:main-thm-unitary}, which concludes in Subsection~\ref{S:End-of-proof}.

\subsection{Signature changes}\label{S:Delta-pm}
The basic idea of proving Theorem~\ref{T:main-thm-unitary} is to find a quasi-isogeny between the two universal abelian varieties $\bfA_{\tilde \ttS}$ and $\bfB: =\bfA_{\tilde \ttS(\ttT)}$ (over an appropriate base).
We view this quasi-isogeny as two genuine isogenies $ \bfA_{\tilde \ttS} \xrightarrow{\phi} \bfC \xleftarrow{\phi'} \bfB $ for some abelian variety $\bfC$. Each isogeny is characterized by the set of places $\tilde\tau\in \Sigma_{E,\infty}$ where the isogeny does not induce an isomorphism of the $\tilde\tau$-components of the de Rham cohomology of the abelian varieties.  We define these two subsets $\tilde \Delta(\ttT)^+$ and $\tilde \Delta(\ttT)^-$ of $\Sigma_{E, \infty}$ now, as follows.
As before, $ \tilde \Delta(\ttT)^\pm = \coprod_{\gothp \in \Sigma_p} \tilde \Delta(\ttT)_{/ \gothp}^\pm$ for subsets $\tilde \Delta(\ttT)_{/\gothp}^\pm \subseteq \Sigma_{E, \infty/\gothp}$.
%\liang{For the compatibility of notation, my $\tilde \ttS(\ttT)^+$ is your lifts of $\ttS(\ttT)''$ in $\tilde \Sigma_{E, \infty}^c$; and $\tilde \ttS(\ttT)^-$ is your lifts of $\ttS(\ttT)''$ in $\tilde \Sigma_{E, \infty}$.}
When $\gothp$ is of type $\alpha^\sharp$ or $\beta^\sharp$ for $\ttS$, we set $\tilde \Delta(\ttT)_{/\gothp}^\pm =\emptyset$.
For the other two types, we use the notation in Subsection~\ref{S:tilde S(T)} in the corresponding cases (in particular our convention on $\tilde \tau$ and $a_j$'s):
\begin{itemize}
\item (Case $\alpha1$)  Put
\[
\tilde C^-_i: = \bigcup_{\substack{j \text{ odd}\\ 1\leq j\leq r_i}} \{\sigma^{-\ell}\tilde \tau_i: a_{j}\leq \ell\leq a_{j+1}-1\}.
\]
We set $\tilde \Delta(\ttT)_{/\gothp}^- = \coprod_i \tilde C^-_i$ and $\tilde \Delta(\ttT)_{/\gothp}^+ =(\tilde \Delta(\ttT)_{/\gothp}^-)^c$.
\item (Case $\alpha2$) 
Put
\[
\tilde \Delta(\ttT)_{/\gothp}^- : = \bigcup_{1\leq i\leq r} \big\{ \sigma^{-l}\tilde\tau_0: a_{2i-1} \leq l< a_{2i} \big\}; \quad \textrm{and}\quad
\tilde \Delta(\ttT)_{/\gothp}^+: = (\tilde \Delta(\ttT)_{/\gothp}^-)^c.
\]
\item (Case $\beta1$) 
Put
\[
\tilde C_i^-: = \bigcup_{\substack{j \text{ odd}\\ 1\leq j\leq r_i}} \{\sigma^{-\ell}\tilde \tau_i: a_{j}\leq \ell\leq a_{j+1}-1\}.
\]
We set $\tilde\Delta(\ttT)_{/\gothp}^- = \coprod_i \tilde C^-_i$ and $\tilde \Delta(\ttT)_{/\gothp}^+=(\tilde \Delta(\ttT)_{/\gothp}^-)^c$.  (Formally, this is the same recipe as in case $\alpha1$, but the choice of $\tilde \tau_i$ is less determined; see Subsection~\ref{S:tilde S(T)}.)
\item (Case $\beta2$) 
Put
\[
\tilde \Delta(\ttT)_{/\gothp}^- : = \bigcup_{1\leq i\leq r} \big\{ \sigma^{-l}\tilde\tau_0: a_{2i-1} \leq l< a_{2i} \big\}.
\]
\emph{Unlike in all other cases, we put $\tilde \Delta(\ttT)_{/\gothp}^+=\emptyset$.}
\end{itemize}
We always have $\tilde \Delta(\ttT)^+ \cap \tilde \Delta(\ttT)^- = \emptyset$.

\begin{notation}
We use $\ttT_E$ (resp. $\ttT'_E$) to denote the preimage of $\ttT$ (resp. $\ttT'$) under the map $\Sigma_{E, \infty} \to \Sigma_\infty$.
\end{notation}

The following two lemmas follow from the definition by a case-by-case check.
\begin{lemma}
\label{L:distance to T'}
For each $\tilde \tau  \in \tilde \Delta(\ttT)^+$ (resp. $\tilde \Delta(\ttT)^-$), let $n$ be the unique positive integer such that 
$\tilde \tau, \sigma^{-1}\tilde \tau, \dots, \sigma^{1-n} \tilde \tau$ all belong to $\tilde \Delta(\ttT)^+$ (resp. $\tilde \Delta(\ttT)^-$) but 
$\sigma^{-n}\tilde \tau$ does not.
Then, for this $n$, $\sigma^{-n}\tilde \tau \in \ttT'_E$.
Moreover, if $\tilde \tau$ also belongs to $\ttT'_E$, then $n$ equals to the number $n_\tau$ introduced in Subsection~\ref{S:partial-Hasse}.
\end{lemma}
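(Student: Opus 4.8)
The plan is to prove both assertions by unwinding, one $p$-adic place at a time, the explicit combinatorial recipes for $\tilde \Delta(\ttT)^\pm$, for $\ttT'$, and for the signatures set up in Subsections~\ref{S:quaternion-data-T}, \ref{S:tilde S(T)} and \ref{S:Delta-pm}. First I would record that everything decomposes as a disjoint union over $\Sigma_p$: $\tilde \Delta(\ttT)^\pm=\coprod_{\gothp}\tilde \Delta(\ttT)^\pm_{/\gothp}$, $\ttT'_E$ is the preimage of $\coprod_{\gothp}\ttT'_{\infty/\gothp}$, and $\sigma^{\pm1}$ preserves each $\Sigma_{E,\infty/\gothp}$ (and, when $\gothp$ splits in $E/F$, each of the two sub-cycles over $\gothq$ and $\gothq^c$). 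Hence it is enough to fix one $\gothp$ and one $\tilde\tau\in\Sigma_{E,\infty/\gothp}$. If $\gothp$ is of type $\alpha^\sharp$ or $\beta^\sharp$ then $\tilde\Delta(\ttT)^\pm_{/\gothp}=\emptyset$ and there is nothing to do, so only the four cases $\alpha1,\alpha2,\beta1,\beta2$ need attention; the ``$+$'' assertion is moreover vacuous in case $\beta2$, where $\tilde\Delta(\ttT)^+_{/\gothp}=\emptyset$.

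The heart of the matter is the observation that in each of these four cases $\tilde\Delta(\ttT)^-_{/\gothp}$ (and, likewise, $\tilde\Delta(\ttT)^+_{/\gothp}$, which is its set of complex conjugates) is, inside the relevant $\sigma$-cycle, a disjoint union of half-open arcs $\{\sigma^{-\ell}\tilde\kappa : a\leq\ell<b\}$ whose left endpoints $\sigma^{-a}\tilde\kappa$ are precisely the odd-indexed marked embeddings $\sigma^{-a_j}\tilde\tau_i$ (or their conjugates) occurring in the definition of $\tilde\ttT'_{/\gothp}$, and whose right endpoints $\sigma^{-b}\tilde\kappa$ again lie in the preimage of $\ttT'_{\infty/\gothp}$ in $\Sigma_{E,\infty}$. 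Granting this, for $\tilde\tau$ inside such an arc the maximal descending run staying inside $\tilde\Delta(\ttT)^\pm$ terminates exactly at the right endpoint of that arc, giving $\sigma^{-n}\tilde\tau\in\ttT'_E$; the only things to check are that consecutive arcs are genuinely separated (strict inequality of the $a_j$'s) and that in cases $\alpha1,\beta1$ the chains $C_i$ do not exhaust $\Sigma_{\infty/\gothp}$ (ensured by $\ttT_{/\gothp}\subsetneq\Sigma_{\infty/\gothp}-\ttS_{\infty/\gothp}$), so that an exit point exists at all. This establishes assertion~(a).

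For the refinement~(b), once $\tilde\tau\in\ttT'_E$ as well, it must be a left endpoint $\sigma^{-a_j}(\,\cdot\,)$ with $j$ odd, so $n=a_{j+1}-a_j$ is the gap to the next marked embedding; the remaining task is to identify this gap with $n_\tau$ (where $\tau$ is the image of $\tilde\tau$ in $\Sigma_\infty$). I would check, case by case, that the embeddings $\sigma^{-a}\tau$ with $a_j<a<a_{j+1}$ all lie in $\ttS_\infty$ while $\sigma^{-a_{j+1}}\tau\notin\ttS_\infty$: in cases $\alpha1,\beta1$ this uses that $C'_i$ is by construction contained in $\ttT_{/\gothp}$ together with at most the terminal element $\sigma^{-m_i-1}\tau_i$, which lies outside $\ttS_\infty\cup\ttT$, so that the intermediate members of the chain $C_i$ are forced into $\ttS_{\infty/\gothp}$; in cases $\alpha2,\beta2$ it follows directly from $\ttT_{/\gothp}=\Sigma_{\infty/\gothp}-\ttS_{\infty/\gothp}$, with the mild extra care of reducing exponents modulo $f_\gothp$ in case $\beta2$. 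Comparing with the definition of $n_\tau$ in Subsection~\ref{S:partial-Hasse} then finishes the proof. The main obstacle is purely bookkeeping, concentrated in the ``type $1$'' cases, where $\ttS_\infty$ and $\ttT$ interleave within a single chain $C_i$ and one must track which lift ($\tilde\tau_i$ versus $\tilde\tau_i^c$) labels each marked embedding; once the dictionary relating the indexing $a_1<\cdots<a_{r_i}$ of $C'_i$, the arcs of $\tilde\Delta(\ttT)^\pm$, and the integer $n_\tau$ is written out explicitly, the verification is mechanical.
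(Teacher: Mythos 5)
Your proposal is correct and follows exactly the route the paper leaves implicit: the paper states only that Lemmas~\ref{L:distance to T'} and~\ref{L:property of Delta} ``follow from the definition by a case-by-case check,'' and your argument is precisely that check, organized around the observation that $\tilde\Delta(\ttT)^\pm_{/\gothp}$ is a disjoint union of half-open arcs $[a_j, a_{j+1})$ (for $j$ odd) inside the relevant $\sigma$-cycle, with the exit point $\sigma^{-a_{j+1}}(\cdot)$ landing in the preimage of $\ttT'_\infty$. The supporting facts you invoke — that $C_i \cap \ttT_{/\gothp} \subseteq C'_i$ and $C'_i \cap \ttS_\infty = \emptyset$ in cases $\alpha1, \beta1$, that $\ttT_{/\gothp} = \Sigma_{\infty/\gothp} - \ttS_{\infty/\gothp}$ in cases $\alpha2, \beta2$, and the periodicity $a_{r+i} = a_i + f_\gothp$ in case $\beta2$ — are exactly what is needed to identify $a_{j+1} - a_j$ with $n_\tau$. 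No gap.
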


\begin{lemma}
\label{L:property of Delta}
\emph{(1)} 
If both $\tilde \tau$ and $\sigma \tilde \tau$ belong to $\tilde \Delta(\ttT)^+$ (resp. $\tilde \Delta(\ttT)^-$), then $\tilde \tau|_F$ belongs to $\ttS_\infty$.

\emph{(2)}
If $\tilde\tau \in \tilde \Delta(\ttT)^-$ but $\sigma \tilde \tau \notin \tilde \Delta(\ttT)^-$, then $\tilde \tau \in \tilde \ttT'$.

\emph{(3)}
If $\tilde\tau \notin \tilde \Delta(\ttT)^-$ but $\sigma \tilde \tau \in \tilde \Delta(\ttT)^-$, then $\tilde \tau \in \tilde \ttT'^c$.
\end{lemma}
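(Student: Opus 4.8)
\textbf{Proof proposal for Lemma~\ref{L:property of Delta}.}

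The plan is to verify all three statements by a direct case-by-case inspection of the explicit definitions of $\tilde\Delta(\ttT)^\pm_{/\gothp}$ in Subsection~\ref{S:Delta-pm}, using the parametrization of the relevant sets in Subsections~\ref{S:quaternion-data-T} and~\ref{S:tilde S(T)}. Throughout, I would fix a place $\gothp \in \Sigma_p$ and restrict attention to $\Sigma_{E,\infty/\gothp}$, since $\tilde\Delta(\ttT)^\pm$, $\ttS_\infty$, $\tilde\ttT'$ and $\tilde\ttT'^c$ all decompose as disjoint unions over $\gothp$. For $\gothp$ of type $\alpha^\sharp$ or $\beta^\sharp$ (for $\ttS$) all three sets are empty, so nothing is to prove; thus I only need to handle the four cases $\alpha1$, $\alpha2$, $\beta1$, $\beta2$.

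First I would prove (1). In cases $\alpha1$, $\beta1$ the set $\tilde\Delta(\ttT)^-_{/\gothp}$ is a union of sub-chains $\{\sigma^{-\ell}\tilde\tau_i : a_j \le \ell \le a_{j+1}-1\}$ with $j$ odd, and $\tilde\Delta(\ttT)^+_{/\gothp}$ is the complement; in cases $\alpha2$, $\beta2$ similarly $\tilde\Delta(\ttT)^-_{/\gothp}$ is a union of intervals $\{\sigma^{-l}\tilde\tau_0 : a_{2i-1}\le l < a_{2i}\}$. The key observation is that the chains $C_i$ (resp. the index set in cases $\alpha2$, $\beta2$) are built from $\ttS_{\infty/\gothp}\cup\ttT_{/\gothp}$, and the ``boundary'' exponents $a_j$ are precisely the positions of elements of $\ttT_{/\gothp}$ (together, in case $\alpha1$, $\beta1$, with the endpoint $\sigma^{-m_i-1}\tau_i$ added when $\#(C_i\cap\ttT_{/\gothp})$ is odd). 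Hence if $\tilde\tau$ and $\sigma\tilde\tau$ both lie in $\tilde\Delta(\ttT)^+$ (resp. $\tilde\Delta(\ttT)^-$), then $\sigma\tilde\tau$ is an \emph{interior} point of a maximal run, so its restriction $\tau = \sigma\tilde\tau|_F$ lies strictly inside a chain $C_i$ and is not one of the distinguished boundary embeddings; one then checks that every interior point of a chain $C_i$ that is not in $\ttT_{/\gothp}$ must lie in $\ttS_{\infty/\gothp}$ (that is exactly what a chain is). Handling $\tilde\Delta(\ttT)^+$ requires the slightly fussier observation that the complement of the union of the odd sub-chains is the union of the even sub-chains \emph{plus the "outside" of the chains}, and in the outside region consecutive elements $\tilde\tau,\sigma\tilde\tau$ force $\tau \in \ttS_{\infty/\gothp}$ as well, because the outside of $\coprod C_i$ inside $\Sigma_{\infty/\gothp}$ consists of runs bounded by $\ttS$-or-$\ttT$ elements; this is where I expect the bookkeeping to be most delicate, and I would treat case $\alpha2$/$\beta2$ (where $\tilde\tau_0$ ranges over a full cycle and the exponents wrap around mod $f_\gothp$ or $2f_\gothp$) separately and carefully.

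Next, (2) and (3) are the statements that the $\tilde\Delta(\ttT)^-$-boundaries sit inside $\tilde\ttT'$ and $\tilde\ttT'^c$ respectively. Here the point is purely combinatorial: by construction the ``starts'' of the intervals defining $\tilde\Delta(\ttT)^-$ (indices $a_{2i-1}$, i.e. $j$ odd in cases $\alpha1$, $\beta1$) and their ``ends'' are exactly the elements $\sigma^{-a_j}\tilde\tau_i$ (or $\sigma^{-a_j}\tilde\tau_i^c$), and these are precisely the elements listed in the definition of $\tilde C'_i$ (hence of $\tilde\ttT'_{/\gothp}$) in Subsection~\ref{S:tilde S(T)}, with the alternation $\tilde\tau_i, \tilde\tau_i^c, \tilde\tau_i,\dots$ chosen so that: an element $\tilde\tau$ with $\tilde\tau\in\tilde\Delta(\ttT)^-$, $\sigma\tilde\tau\notin\tilde\Delta(\ttT)^-$ is an \emph{end} of an odd sub-chain, which by the alternation pattern is a $\tilde\tau_i$-type (unbarred) embedding — i.e. lies in $\tilde\ttT'$ — whereas an element $\tilde\tau\notin\tilde\Delta(\ttT)^-$, $\sigma\tilde\tau\in\tilde\Delta(\ttT)^-$ is (one step before) a \emph{start}, which is a barred embedding, i.e. lies in $\tilde\ttT'^c$. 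I would verify this alternation is consistent by tracking the parity of the index $j$ against the choice of $\tilde\tau_i$ vs.\ $\tilde\tau_i^c$ in $\tilde C'_i$, and in case $\beta2$ using the relation $a_{r+i}=a_i+f_\gothp$ to see that the wrap-around is compatible (recall $\tilde\Delta(\ttT)^+_{/\gothp}=\emptyset$ there, so (2),(3) are the only constraints). The main obstacle, as noted, is not conceptual but the careful matching of the parity conventions across the three subsections; once the dictionary between ``$j$ odd / start / end of sub-chain'' and ``unbarred / barred lift'' is pinned down, all three statements follow by inspection. I would therefore organize the write-up as a single table or a short enumeration over the four cases, checking (1), (2), (3) simultaneously in each.
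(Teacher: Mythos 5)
Your overall strategy — restrict to each $\gothp$, treat the four cases $\alpha1$, $\alpha2$, $\beta1$, $\beta2$ explicitly, and identify the boundary points of the $\tilde\Delta(\ttT)^-$-intervals with the distinguished elements of $\tilde\ttT'$ — is the right one, and your handling of parts~(2) and~(3) is sound. However, there is a genuine misreading that breaks part~(1): you interpret $\tilde\Delta(\ttT)^+_{/\gothp}=(\tilde\Delta(\ttT)^-_{/\gothp})^c$ as the \emph{complement} of $\tilde\Delta(\ttT)^-_{/\gothp}$ in $\Sigma_{E,\infty/\gothp}$. In the paper's conventions, the superscript $c$ denotes \emph{complex conjugation} (compare $\tilde\tau^c$ in \S\ref{SS:notation for E}): $\tilde\Delta(\ttT)^+_{/\gothp}$ is the set of conjugates $\sigma^{-\ell}\tilde\tau_i^c$ with $\ell$ in the same intervals, not the set-theoretic complement. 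This is confirmed by the computation in the proof of Lemma~\ref{L:dimension count}, where membership in $\tilde\Delta(\ttT)^+$ is tested precisely on elements of the form $\sigma^{-a_j}\tilde\tau_i^c$.

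With your reading, part~(1) is actually false: the complement of $\tilde\Delta(\ttT)^-_{/\gothp}$ contains entire runs of consecutive elements whose restrictions lie outside $\ttS_{\infty/\gothp}\cup\ttT_{/\gothp}$ (the region outside the chains $C_i$), so ``both $\tilde\tau$ and $\sigma\tilde\tau$ in $\tilde\Delta(\ttT)^+$'' certainly does not force $\tau\in\ttS_\infty$ there. Your claim that ``the outside of $\coprod C_i$ inside $\Sigma_{\infty/\gothp}$ consists of runs bounded by $\ttS$-or-$\ttT$ elements'' is exactly the place this goes wrong: those runs are bounded by such elements but their \emph{interiors} lie in $\Sigma_{\infty/\gothp}\setminus(\ttS_{\infty/\gothp}\cup\ttT_{/\gothp})$, so the conclusion fails. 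The situation you identified as the ``most delicate'' bookkeeping is in fact the symptom of the misreading. Once you use the correct definition, there is nothing delicate: the $\tilde\Delta(\ttT)^+$ half of~(1) follows from the $\tilde\Delta(\ttT)^-$ half by applying complex conjugation (which commutes with $\sigma$ and with restriction to $F$), and in case $\beta2$ the claim for $\tilde\Delta(\ttT)^+$ is vacuous because $\tilde\Delta(\ttT)^+_{/\gothp}=\emptyset$. For the $\tilde\Delta(\ttT)^-$ half of~(1), your observation that an interior point of a maximal sub-chain $\{\sigma^{-\ell}\tilde\tau_i: a_j\le \ell\le a_{j+1}-1\}$ has $\ell$ strictly between the distinguished positions $a_j,a_{j+1}$, hence restricts to a point of $C_i$ that is not in $\ttT_{/\gothp}$ and is therefore in $\ttS_{\infty/\gothp}$, is correct as stated. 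The paper itself offers no written proof (both Lemmas~\ref{L:distance to T'} and~\ref{L:property of Delta} are introduced as following ``by a case-by-case check''), so there is no contrast with an official argument — but your write-up needs the definitional correction before it is right.
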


\subsection{Description of the strata $\bfSh_{K'}(G'_{\tilde \ttS})_{k_0, \ttT}$ via isogenies}
\label{S:moduli-Y_S}
To simplify the notation, we put $X'=\bfSh_{K'}(G'_{\tilde \ttS})_{k_0}$ and $X'_{\ttT}=\bfSh_{K'}(G'_{\tilde \ttS})_{k_0,\ttT}$ for a subset $\ttT\subseteq \Sigma_{\infty}-\ttS_{\infty}$. We will first prove statement (1) of Theorem~\ref{T:main-thm-unitary}. 
Following the idea of Helm \cite{helm}, we introduce  auxiliary moduli spaces $Y'_{\ttT}$ and $Z'_{\ttT}$ and   establish isomorphisms
\begin{equation}\label{E:sequence-moduli}
\xymatrix{
X'_\ttT  &Y'_\ttT\ar[l]^-{\cong}_-{\eta_1} \ar[r]^-{\eta_2}_-{\cong} &Z'_\ttT},
\end{equation}
where  $Z'_\ttT$ is a $(\PP^1)^{I_{\ttT}}$-bundle over the special fiber of  $\bSh_{K'_{\ttT}}(G'_{\tilde \ttS(\ttT)})_{k_0}$.

Recall that we have fixed an isomorphism $\theta_{\ttT}: (D_{\ttS},*_{\ttS})\xra{\sim}(D_{\ttS(\ttT)}, *_{\ttS(\ttT)})$ of simple algebras over $E$ with positive involution, and put $\cO_{D_{\ttS(\ttT)}}=\theta_{\ttT}(\cO_{D_{\ttS}})$. To ease the notation, we identify $\cO_{D_{\ttS(\ttT)}}$ with $\cO_{D_{\ttS}}$ via $\theta_{\ttT}$, and denote them by $\cO_D$ when there is no confusions.

We start now to describe $Y'_\ttT$: It is the moduli space over $k_0$ which attaches to a locally noetherian $k_0$-scheme $S$ the set of isomorphism classes of  $(A, \iota_A,\lambda_A,  \alpha_{K'}, B, \iota_B, \lambda_B, \beta_{K'_\ttT}, C, \iota_C; \phi_A, \phi_B)$, where
\begin{itemize}
\item[(i)]
$(A, \iota_A, \lambda_A, \alpha_{K'})$ is an element in $X'_\ttT(S)$.

\item[(ii)]
$(B, \iota_B, \lambda_B, \beta_{K'_\ttT})$ is an element in $\bfSh_{K'_\ttT}(G'_{\tilde \ttS(\ttT)})(S)$.

\item[(iii)]
 $C$ is an abelian scheme over $S$ of dimension $4g$, equipped with an embedding $\iota_C: \calO_D \to \End_S(C)$,

\item[(iv)]
 $\phi_A: A \to C$ is an $\calO_D$-isogeny whose kernel is killed by $p$, such that the induced map
$$
\phi_{A,*,\tilde{\tau}}: H_1^{\dR}(A/S)^\circ_{\tilde{\tau}} \to H_1^\dR(C/S)^\circ_{\tilde{\tau}}
$$
is an isomorphism  for $\tilde{\tau}\in \Sigma_{E,\infty}$ \emph{unless} $\tilde{\tau}\in \tilde\Delta(\ttT)^+$, in which case, we require that
\begin{equation}\label{E:condition-phi-A}
\Ker(\phi_{A,*,\tilde{\tau}})=\im(F_{A,\es, \tilde \tau}^{n}),
\end{equation}
where $n$ is as determined in Lemma~\ref{L:distance to T'}, and $F_{A,\es, \tilde \tau}^n$ is defined in \eqref{E:Fes n}.
(When $\tau$ itself belongs to $\ttT'$, the number $n$ equals to $n_\tau$ introduced in Subsection~\ref{S:partial-Hasse}. In this case, condition \eqref{E:condition-phi-A} is equivalent to saying that $\Ker(\phi_{A,*,\tilde\tau})=\omega^\circ_{A^\vee/S,\tilde\tau}$.)

\item[(v)]
$\phi_B: B\to C$ is an $\calO_D$-isogeny whose kernel is killed by $p$ such that
$\phi_{B,*,\tilde{\tau}}: H_1^\dR(B/S)_{\tilde{\tau}}^\circ \to H_1^\dR(C/S)_{\tilde{\tau}}^\circ$ is an isomorphism for $\tilde{\tau}\in \Sigma_{E,\infty}$ \emph{unless} $\tilde{\tau}\in \tilde{\Delta}(\ttT)^-$, in which case, we require that $\im(\phi_{B,*,\tilde{\tau}})$ is equal to $\phi_{A,*,\tilde{\tau}}(\im(F_{A,\es, \tilde \tau}^n))$, where $n$ is as determined in Lemma~\ref{L:distance to T'}. (Note that as $\tilde \tau \notin \tilde \Delta(T)^+$, $\phi_{A, *, \tilde \tau}$ is an isomorphism by (iv).)

\item[(vi)] The tame level structures are compatible, i.e. $T^{(p)}(\phi_A)\circ \alpha^p_{K'^p} = T^{(p)}(\phi_B) \circ \alpha^p_{K'^p_\ttT}$ as maps from $\widehat \Lambda^{(p)}_\ttS \cong \widehat \Lambda^{(p)}_{\ttS(\ttT)}$ to $T^{(p)}(C)$, modulo $K'^p$, if we identify the two lattices naturally as in Subsection~\ref{S:level structure tilde ttS(ttT)}.

\item[(vii)] If $\gothp$ is a prime of type $\alpha^\sharp$ for the original quaternionic Shimura variety $\Sh_{K}(G_{\ttS})$, then $\alpha_{\gothp}$ and $\beta_{\gothp}$ are compatible, i.e. $\phi_{A}(\alpha_{\gothp})=\phi_{B}(\beta_{\gothp})$, where $\alpha_{\gothp}\subseteq A[\gothp]$ denotes the closed finite flat group scheme given by Theorem~\ref{T:unitary-shimura-variety-representability}(c2). (Note that $\phi_A$ and $\phi_B$ induce isomorphisms between $A[\gothp]$, $C[\gothp]$, and $B[\gothp]$ for a prime $\gothp$ of type $\alpha^\sharp$ for $\ttS$.)

\item[(viii)] Let $\gothp$ be a prime in Case $\alpha2$, splitting into $\gothq \bar \gothq$ in $E$.
Then $\beta_\gothp = H_\gothq \oplus H_{\bar \gothq}$.  If $\phi_{\gothq}: B\ra B'_{\gothq}=B/H_{\gothq}$ the canonical isogeny. Then the kernel of  the  induced map $\phi_{\gothq,*}:H^{\dR}_1(B/S)^{\circ}_{\tilde{\tau}}\ra H^{\dR}_1(B'_{\gothq}/S)^\circ_{\tilde\tau}$   coincides with that of $\phi_{B,*}: H^{\dR}_1(B/S)^\circ_{\tilde\tau}\ra H^{\dR}_1(C/S)^\circ_{\tilde\tau}$ for all $\tilde\tau\in \tilde \Delta(\ttT)_{/\gothp}^-$.

\item[(ix)]
We have the following commutative diagram
\[
\xymatrix{
A \ar[r]^-{\phi_{A}}\ar[d]_{\lambda_A} & C & B   \ar[l]_-{\phi_B}\ar[d]^{\lambda_B}\\
A^\vee  &
C^\vee\ar[l]_-{\phi_A^\vee} \ar[r]^-{\phi_B^\vee}
&  B^\vee.
}
\]
\end{itemize}

%We point out an immediate property of this moduli problem.
%When $\tau \in \ttS''_{/\gothp}$ for some $\gothp \in \Sigma_p$ \emph{not of type $\gamma$},
%we have $\sigma \tau, \sigma^{-1}\tau \notin \ttS''_{/\gothp}$ (Subsection~\ref{S:quaternionic-data-for-S}(3)).
%Hence, in this case, the condition (v) on $\phi_A$ implies that it induces
%\begin{itemize}
%\item an isomorphism from $\omega^\circ_{A/S, p_\tau} \subset H^1_\dR(C/S)_{p_\tau}^\circ$ to $\omega^\circ_{C/S, p_\tau} \subset H^1_\dR(C/S)_{p_\tau}^\circ$, and
%\item an isomorphism from $\Fr^*(H^1_\dR(A^{(p)} /S)^\circ_{p_\tau})$  to $\Fr^*(H^1_\dR(C^{(p)} /S)^\circ_{p_\tau})$.
%\end{itemize}
%Similarly, the condition (vi) on $\phi_B$ implies that $\phi_B$ induces
%\begin{itemize}
%\item an isomorphism from $\omega^\circ_{B/S, q_\tau} \subset H^1_\dR(B/S)_{q_\tau}^\circ$ to $\omega^\circ_{C/S, q_\tau} \subset H^1_\dR(C/S)_{q_\tau}^\circ$, and
%\item an isomorphism from $\Fr^*(H^1_\dR(B^{(p)} /S)^\circ_{q_\tau})$  to $\Fr^*(H^1_\dR(C^{(p)} /S)^\circ_{q_\tau})$.
%\end{itemize}

\begin{remark}
 Compared to \cite{helm}, our moduli problem appears to be more complicated.
This is because we allow places above $p$ of $F$ to be inert in the CM extension $E$. It is clear that $B$ is quasi-isogenous to $A$. So when $S$ is the spectrum of a perfect field $k$, the \emph{covariant} Dieudonn\'e module $\tilde{\calD}_{B}$ is a $W(k)$-lattice in  $\tilde{\calD}_A[1/p]$. The complicated conditions (v) and (vi) can be better understood by looking at $\tilde{\calD}_{B}$ (See the proof of  Proposition~\ref{P:Y_S=X_S} below).

\end{remark}

\begin{prop}
\label{P:Y_S=X_S}
The natural forgetful functor
\[
\eta_1\colon (A, \iota_A,\lambda_A,  \alpha_{K'}, B, \iota_B, \lambda_B, \beta_{K'_\ttT}, C, \iota_C; \phi_A, \phi_B)
\mapsto (A, \iota_A,\lambda_A,  \alpha_{K'})
\]
induces an isomorphism $\eta_1\colon Y'_\ttT \to X'_\ttT$.
\end{prop}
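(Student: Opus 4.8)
The plan is to construct an inverse to $\eta_1$ by showing that, given a point $(A, \iota_A, \lambda_A, \alpha_{K'}) \in X'_\ttT(S)$, all of the remaining data $(B, \iota_B, \lambda_B, \beta_{K'_\ttT}, C, \iota_C; \phi_A, \phi_B)$ is \emph{uniquely determined} and actually exists. The key observation is that $\phi_A \colon A \to C$ is forced: its kernel must be the unique $\calO_D$-stable finite flat subgroup scheme of $A[p]$ whose reduced Dieudonné/de Rham description in each $\tilde\tau$-component is prescribed by condition (iv). More precisely, over a perfect field $k$, working with the covariant Dieudonné module $\tilde\calD_A$, one defines the lattice $\tilde\calD_C \subseteq \tilde\calD_A[1/p]$ component-by-component: $\tilde\calD_{C,\tilde\tau}^\circ = \tilde\calD_{A,\tilde\tau}^\circ$ for $\tilde\tau \notin \tilde\Delta(\ttT)^+$, and $\tilde\calD_{C,\tilde\tau}^\circ = (F_{A,\es}^n)^{-1}$ applied appropriately (equivalently $p^{-1}$ times the image lattice) for $\tilde\tau \in \tilde\Delta(\ttT)^+$, using Lemma~\ref{L:distance to T'} to pin down $n$. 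One must check this lattice is $F$- and $V$-stable (so that it defines a genuine $p$-divisible group quotient hence an abelian variety $C$ with $\calO_D$-action), and this is where Lemma~\ref{L:property of Delta}(1) enters: whenever both $\tilde\tau$ and $\sigma\tilde\tau$ lie in $\tilde\Delta(\ttT)^+$, the place $\tilde\tau|_F$ lies in $\ttS_\infty$, so $s_{\sigma^{-1}\tilde\tau} \in \{0,2\}$ and the essential Frobenius/Verschiebung are isomorphisms in the relevant range, making the modifications at adjacent components compatible.

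\textbf{Key steps.} First, I would reduce to the case $S = \Spec k$ with $k$ a perfect field (for existence and uniqueness of the closed point $C$, $\phi_A$), then bootstrap to general locally noetherian $S$ using the crystalline deformation theory of Subsection~\ref{S:deformation}: the subgroup $\Ker\phi_A \subseteq A[p]$ is cut out by the kernel of $F_{\es}^n$ on reduced de Rham homology, which is a locally free condition, so $C$ and $\phi_A$ spread out. Second, define $\lambda_C$ on $C$ via descent of $\lambda_A$ — one checks $\phi_A^\vee \circ \lambda_C \circ \phi_A$ is a multiple of $\lambda_A$ using the self-duality built into the definition of $\tilde\Delta(\ttT)^\pm$ together with the polarization pairing (the symmetry $\tilde\tau \leftrightarrow \tilde\tau^c$ in the recipe of Subsection~\ref{S:Delta-pm} is designed exactly so that the modification lattice is isotropic up to the right power of $p$). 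Third, recover $B$ as the ``other half'' of the quasi-isogeny: set $\tilde\calD_B \subseteq \tilde\calD_C[1/p]$ by the dual recipe dictated by condition (v), using $\tilde\Delta(\ttT)^-$; verify via Lemma~\ref{L:property of Delta}(1) (now for $\tilde\Delta(\ttT)^-$) that $\tilde\calD_B$ is a Dieudonné lattice, and via Lemmas~\ref{L:property of Delta}(2),(3) that the resulting signature at each $\tilde\tau$ matches $s_{\ttT,\tilde\tau}$, i.e. $B$ satisfies Kottwitz's determinant condition (a) of Theorem~\ref{T:unitary-shimura-variety-representability} for the datum $\tilde\ttS(\ttT)$. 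Fourth, transport the level structures: $\alpha^p_{K'^p}$ transfers through $T^{(p)}(\phi_A)$ and $T^{(p)}(\phi_B)$ to give $\beta^p_{K'^p_\ttT}$ (condition (vi) is an identity, not extra data), and the $\gothp$-level data $\beta_\gothp$ for primes of type $\alpha^\sharp$ or in Case $\alpha2$ are recovered from $\alpha_\gothp$ — for Case $\alpha2$ one uses condition (viii) to reconstruct $H_\gothq \subseteq B[\gothp]$ from the kernel of $\phi_{B,*}$, checking this is consistent with the Iwahori-type lattice chain chosen in Subsection~\ref{S:level structure tilde ttS(ttT)}. Finally, verify that $(B, \iota_B, \lambda_B, \beta_{K'_\ttT})$ indeed satisfies condition (b) of Theorem~\ref{T:unitary-shimura-variety-representability} at each $\gothp$ (the polarization degree at $\beta^\sharp$-type primes, using that no such primes arise in Case $\beta2$ unless $\gothp$ was already of type $\beta^\sharp$ for $\ttS$), so that it is a genuine $S$-point of $\bfSh_{K'_\ttT}(G'_{\tilde\ttS(\ttT)})$.

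\textbf{Main obstacle.} The hard part will be the \textbf{exhaustiveness/well-definedness of the $F,V$-stability}: verifying that the lattices $\tilde\calD_C$ and $\tilde\calD_B$ defined cycle-by-cycle through the intricate case analysis of Subsections~\ref{S:tilde S(T)} and~\ref{S:Delta-pm} are genuinely stable under Frobenius and Verschiebung, and — more subtly — that the essential-Frobenius modification at a component $\tilde\tau \in \tilde\Delta(\ttT)^+$ is \emph{compatible} with the modification (or non-modification) at $\sigma^{-1}\tilde\tau$, since $F_{\es}^n$ threads through intermediate components. This is precisely the content that Lemmas~\ref{L:distance to T'} and~\ref{L:property of Delta} are engineered to supply, so the proof amounts to organizing a careful but essentially bookkeeping argument around these two lemmas; I do not expect a genuine conceptual difficulty, only the need to handle each of the four cases ($\alpha1$, $\alpha2$, $\beta1$, $\beta2$) plus the trivial types $\alpha^\sharp$, $\beta^\sharp$ separately and to track the polarization pairing through all of them. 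A secondary check is that the functor is fully faithful — i.e. an isomorphism of tuples inducing an isomorphism of the underlying $(A,\ldots)$ is automatically an isomorphism of everything — which follows once uniqueness of $C$, $B$, $\phi_A$, $\phi_B$ is established.
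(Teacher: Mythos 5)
Your overall strategy matches the paper's in substance: the existence-and-uniqueness of $(C,\phi_A,B,\phi_B,\dots)$ over perfect fields via Dieudonn\'e-module modifications (this is the paper's Lemma~\ref{L:Y_T=X_T-1}), and the handling of infinitesimal deformations via the crystalline theory of Subsection~\ref{S:deformation} (the paper's Lemma~\ref{L:Y_T-X_T-tangent}). The key lemmas you invoke (\ref{L:distance to T'}, \ref{L:property of Delta}) and the $F,V$-stability bookkeeping you flag as the main difficulty are exactly what the paper works through.

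Two points where your plan diverges or slips. First, the ``bootstrap to general locally noetherian $S$'' step is not rigorous as stated: having the inverse on perfect-field points and on square-zero thickenings does not, by itself, produce a functorial inverse on arbitrary $S$. The paper closes this gap cleanly by invoking that both $Y'_\ttT$ and $X'_\ttT$ are representable by finite-type $k_0$-schemes (Mumford), so bijectivity on closed points plus isomorphism on tangent spaces already forces $\eta_1$ to be an isomorphism of schemes. You should replace the vague ``spread out'' with this representability appeal; constructing an actual functorial inverse over general $S$ (which would require producing the finite flat subgroup $\ker\phi_A\subseteq A[p]$ from crystalline data over $S$) is both harder and unnecessary. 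Second, your parenthetical ``no such primes arise in Case $\beta2$ unless $\gothp$ was already of type $\beta^\sharp$ for $\ttS$'' is false: in Case $\beta2$ the prime $\gothp$ is of type $\beta$ (not $\beta^\sharp$) for $\ttS$ but \emph{becomes} type $\beta^\sharp$ for $\ttS(\ttT)$. So you genuinely must verify that the polarization $\lambda_B$ acquires exactly the prescribed $p$-primary degree and cokernel (conditions (b2),(b3) of Theorem~\ref{T:unitary-shimura-variety-representability}) at such $\gothp$; the paper does this by exploiting the asymmetry $\tilde\Delta(\ttT)^+_{/\gothp}=\emptyset\neq\tilde\Delta(\ttT)^-_{/\gothp}$ and showing that the pairing $\langle\,,\,\rangle_{\lambda_A}$ restricted to $\tilde\calD^\circ_{B,\tilde\tau}\times\tilde\calD^\circ_{B,\tilde\tau^c}$ has cokernel $k$. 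This is not a case that can be waved away. (Your introduction of $\lambda_C$ is harmless but superfluous: the moduli problem for $Y'_\ttT$ imposes no polarization on $C$, and the paper defines $\lambda_B$ directly as a composite of quasi-isogenies through $C$ and $A$.)
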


\begin{proof}

%When  $\ttT$ is empty, then $A\xrightarrow{\phi_A} C\xleftarrow{\phi_B} B$ are isomorphisms compatible with all structures, and the Proposition is trivial.\liang{do we really need to discuss this empty case?}
%  We assume thus that $\ttT$ is nonempty for the rest of the proof. 
By the general theory of moduli spaces of abelian schemes due to Mumford, $Y'_{\ttT}$ is representable by an $k_0$-scheme of finite type. Hence, to prove the proposition, it suffices to show that the natural map $Y'_{\ttT}\ra X'_{\ttT}$ induces a bijection on closed points and the tangent spaces at each closed point. The proposition will follow thus from Lemmas~\ref{L:Y_T=X_T-1} and \ref{L:Y_T-X_T-tangent} below.
This is a long and tedious book-keeping check, essentially following the ideas of \cite[Proposition~4.4]{helm}.
\end{proof}

\begin{lemma}\label{L:Y_T=X_T-1}
Let $x=(A, \iota_{A}, \lambda_{A}, \alpha_{K'}) \in X'_\ttT(k)$ for a perfect field $k$.
Then there exist \emph{unique} $(B, \iota_B,\lambda_{B}, \beta_{K'_\ttT}, C, \iota_C; \phi_A, \phi_B)$ such that $(A, \iota_{A}, \lambda_{A}, \alpha_{K'}, B, \iota_B,\lambda_{B}, \beta_{K'_\ttT}, C, \iota_C; \phi_A, \phi_B)\in Y'_{\ttT}(k)$.
\end{lemma}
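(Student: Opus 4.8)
The plan is to work entirely with covariant Dieudonn\'e modules, since $x$ has values in a perfect field $k$ and everything in the moduli problem of $Y'_\ttT$ is determined by semilinear algebra data on $\tilde\calD_A$. Write $M = \tilde\calD_A^\circ = \bigoplus_{\tilde\tau\in\Sigma_{E,\infty}} M_{\tilde\tau}$ for the reduced covariant Dieudonn\'e module, equipped with its $F$, $V$, and the perfect alternating pairings coming from $\lambda_A$ on the pairs $\{M_{\tilde\tau}, M_{\tilde\tau^c}\}$ for $\tilde\tau$ not above a type $\beta^\sharp$ prime. First I would reformulate conditions (iii)--(v) and (ix): giving $(C,\iota_C,\phi_A)$ with $\phi_A$ killed by $p$ and satisfying the $\tilde\tau$-component conditions \eqref{E:condition-phi-A} is the same as giving an $\calO_D\otimes W(k)$-lattice $N = \tilde\calD_C^\circ$ with $pM \subseteq N \subseteq M$, stable under $F$ and $V$, such that $N_{\tilde\tau} = M_{\tilde\tau}$ for $\tilde\tau\notin\tilde\Delta(\ttT)^+$ and $N_{\tilde\tau} = \Fes^n(M_{\sigma^{-n}\tilde\tau}^{(\ast)}) + pM_{\tilde\tau}$-type prescription for $\tilde\tau\in\tilde\Delta(\ttT)^+$ (using Lemma~\ref{L:distance to T'} to locate where the chain of $\tilde\Delta(\ttT)^+$ exits, which is in $\ttT'_E$). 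The point is that this lattice $N$ is \emph{uniquely} determined by $M$: on each $\tilde\Delta(\ttT)^+$-chain the image of the iterated essential Frobenius is forced, and one checks using $\Fes\Ves = \Ves\Fes = $ (the $p$-power map on the relevant component, cf. Subsection~\ref{S:partial-Hasse}) that the resulting $N$ is indeed $F$- and $V$-stable. This handles existence and uniqueness of $(C,\iota_C,\phi_A)$; then $\iota_C$ is induced, and compatibility with $\lambda$ in (ix) forces a unique quasi-polarization data.

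Next I would recover $(B,\iota_B,\lambda_B,\phi_B)$ from $N$. Condition (v) says $\tilde\calD_B^\circ =: M'$ is a lattice with $\phi_{B,*}\colon M' \to N$ an isomorphism outside $\tilde\Delta(\ttT)^-$ and, on $\tilde\Delta(\ttT)^-$-components, $\im(\phi_{B,*,\tilde\tau})$ prescribed in terms of $\phi_{A,*}\Fes^n(\cdots)$. So $M'$ is obtained from $N$ by, component-by-component along $\tilde\Delta(\ttT)^-$-chains, replacing $N_{\tilde\tau}$ by a specified sublattice; Lemma~\ref{L:distance to T'} again says these chains exit into $\ttT'_E$, and Lemma~\ref{L:property of Delta}(2)(3) controls the boundary behaviour so that $M'$ is again $F$- and $V$-stable with the correct signature (namely $s_{\ttT,\tilde\tau}$, matching Theorem~\ref{T:unitary-shimura-variety-representability}(a) for $G'_{\tilde\ttS(\ttT)}$ — this is the key sanity check that the recipe for $\tilde\ttS(\ttT)$ in Subsection~\ref{S:tilde S(T)} and $\tilde\Delta(\ttT)^\pm$ in Subsection~\ref{S:Delta-pm} are consistent). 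Again $M'$ is uniquely determined. The polarization $\lambda_B$ is recovered as the unique quasi-polarization of $B$ making diagram (ix) commute and landing in the prescribed kernel type of Theorem~\ref{T:unitary-shimura-variety-representability}(b); one verifies condition (b2)/(b3) at type $\beta^\sharp$ and type $\alpha^\sharp$ primes using that $\tilde\Delta(\ttT)^\pm$ is empty at type $\beta^\sharp$, and the Case $\alpha2$ prescription (viii) for type $\alpha^\sharp$.

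Then I would construct the level structures: $\alpha^p_{K'^p}$ transports to $\beta^p_{K'^p_\ttT}$ via $T^{(p)}(\phi_B)^{-1}\circ T^{(p)}(\phi_A)$ (both isogenies are prime-to-$p$ on Tate modules away from $p$, so this is an isomorphism of lattices), and (vi) then holds by construction; this is the unique choice. For $\gothp$ of type $\alpha^\sharp$ for $\Sh_K(G_\ttS)$, condition (vii) together with the fact that $\phi_A,\phi_B$ are isomorphisms at $\gothp$-components (since $\tilde\Delta(\ttT)_{/\gothp}^\pm=\emptyset$) transports $\alpha_\gothp$ to a unique $\beta_\gothp$. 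For $\gothp$ in Case $\alpha2$ the subgroup $\beta_\gothp = H_\gothq\oplus H_{\bar\gothq}$ is pinned down by condition (viii): $H_\gothq$ is forced to be the subgroup of $B[\gothq]$ whose Dieudonn\'e module realizes the prescribed kernels in the $\tilde\Delta(\ttT)_{/\gothp}^-$-components, and $H_{\bar\gothq}$ is its Weil-orthogonal complement; one checks the order is $(\#k_\gothp)^4$ by counting lengths of the lattice modifications. Finally one must check that the tuple $(B,\iota_B,\lambda_B,\beta_{K'_\ttT})$ so produced actually lies in $\bfSh_{K'_\ttT}(G'_{\tilde\ttS(\ttT)})(k)$, i.e. satisfies \emph{all} conditions of Theorem~\ref{T:unitary-shimura-variety-representability} for the new PEL datum — the determinant condition is the signature check above, the polarization conditions were just verified, and the level conditions at $p$ match the chain of lattices $\Lambda^{(1)}_{\ttT,\gothp}\subseteq\Lambda^{(2)}_{\ttT,\gothp}$ of Subsection~\ref{S:level structure tilde ttS(ttT)} by the explicit lattice bookkeeping.

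\textbf{Main obstacle.} The genuinely delicate part is not the existence/uniqueness of $C$ and $B$ as abstract Dieudonn\'e lattices — that is forced — but verifying that $B$ satisfies the \emph{signature} (Kottwitz determinant) condition for $G'_{\tilde\ttS(\ttT)}$ with the numbers $s_{\ttT,\tilde\tau}$, and that the polarization degree conditions (b2), (b3) at type $\alpha^\sharp$ and $\beta^\sharp$ primes come out exactly right. This is precisely where the intricate combinatorial definitions of $\tilde\ttS(\ttT)_\infty$ (Subsection~\ref{S:tilde S(T)}) and $\tilde\Delta(\ttT)^\pm$ (Subsection~\ref{S:Delta-pm}) were engineered, and the proof amounts to a careful case-by-case check — across Cases $\alpha1,\alpha2,\beta1,\beta2$ and the $\sharp$-types — that walking along a $\tilde\Delta(\ttT)^+$-chain, then (for the $B$-side) along a $\tilde\Delta(\ttT)^-$-chain, shifts the Hodge filtration on each $\tilde\tau$-component by exactly the amount recorded in $s_{\ttT,\tilde\tau}$, using Lemmas~\ref{L:distance to T'} and~\ref{L:property of Delta} at every chain endpoint. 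I expect this to be the longest and most error-prone step; the rest is formal once the lattice recipes are in place.
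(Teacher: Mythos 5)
Your proposal follows essentially the same path as the paper's proof: pass to covariant Dieudonn\'e modules, build the lattice $\tilde\calD_C^\circ$ as a modification of $\tilde\calD_A^\circ$ along $\tilde\Delta(\ttT)^+$-chains using iterated essential Frobenii, build the lattice $\tilde\calD_B^\circ$ as a further modification of $\tilde\calD_C^\circ$ along $\tilde\Delta(\ttT)^-$-chains, and then check $F$- and $V$-stability, the polarization conditions (b2)--(b3), the Kottwitz signature, and transport the level structures. You are also right that the signature and polarization bookkeeping across the cases is the genuinely delicate part; the paper isolates the signature computation into the separate Lemma~\ref{L:dimension count}, and the polarization check is a case-by-case argument over the prime types.

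There is, however, a direction error on the $C$-side that would need repair before this becomes a proof. For the $\calO_D$-isogeny $\phi_A\colon A\to C$ with kernel killed by $p$, the covariant Dieudonn\'e module satisfies
\[
\tilde\calD_A \subseteq \tilde\calD_C \subseteq p^{-1}\tilde\calD_A ,
\]
i.e. $\phi_{A,*}$ is an inclusion of the \emph{smaller} lattice $\tilde\calD_A$ into the \emph{larger} one $\tilde\calD_C$. You wrote $p\tilde\calD_A \subseteq \tilde\calD_C \subseteq \tilde\calD_A$ (in your notation $pM\subseteq N\subseteq M$ with $N=\tilde\calD_C^\circ$), which is the containment one would get from de Rham cohomology $H^1_\dR$ or from contravariant Dieudonn\'e theory, not from the covariant theory the paper uses. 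Correspondingly, your recipe $N_{\tilde\tau}=F_{\es}^n(M_{\sigma^{-n}\tilde\tau})+pM_{\tilde\tau}=F_{\es}^n(M_{\sigma^{-n}\tilde\tau})$ produces a colength-one \emph{sublattice} of $M_{\tilde\tau}$, whereas the intended $\tilde\calD_{C,\tilde\tau}^\circ$ for $\tilde\tau\in\tilde\Delta(\ttT)^+$ is the colength-one \emph{superlattice} $p^{-1}F_{A,\es}^n(\tilde\calD^\circ_{A,\sigma^{-n}\tilde\tau})$. Your $N$ is exactly $p$ times the correct lattice. Interestingly, your discussion of the $B$-side does have $\tilde\calD_B\subseteq\tilde\calD_C$ correct, so the $C$-side slip looks like a local sign error rather than a global confusion about covariance; but as written, the constructed $C$ would sit on the wrong side of the isogeny, condition (iv) would not be satisfied, and the subsequent kernel/image conditions in (v) and (viii) would come out wrong. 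Multiply your $N$ by $p^{-1}$, flip the containments, and the remaining steps --- the $B$-lattice, the transport of the prime-to-$p$ level structure via $T^{(p)}(\phi_B)^{-1}\circ T^{(p)}(\phi_A)$, the subgroups $\beta_\gothp$ at $\alpha^\sharp$ and Case $\alpha 2$ primes --- all line up with the paper's argument. One further small imprecision: $F_{\es}V_{\es}$ is the identity, not multiplication by $p$, on components where the signature $s_{\sigma^{-1}\tilde\tau}$ is $0$ or $2$ (it equals $p$ only when $s_{\sigma^{-1}\tilde\tau}=1$); this does not affect the strategy but should be stated correctly when you actually verify $F$,$V$-stability of the modified lattices.
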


\begin{proof}

We first recall some notation regarding Dieudonn\'e modules.
Let $\tilde \calD_A$ denote the \emph{covariant} Dieudonn\'e module of $A[p^\infty]$.
Then $\calD_A := \tilde \calD_A/p$ is  the covariant Dieudonn\'e module of $A[p]$.
Given the action of $\calO_D\otimes_{\Z}\Z_p\simeq \rmM_2(\cO_{E}\otimes_{\Z}\Z_p)$ on $A$, we have direct sum decompositions
\[
\tcD_A^\circ := \gothe \tcD_A = \bigoplus_{\tilde{\tau}\in \Sigma_{E,\infty}} \tcD_{A, \tilde{\tau}}^\circ, \quad
 \cD_A^\circ := \gothe \cD_A  = \bigoplus_{\tilde{\tau}\in \Sigma_{E,\infty}} \cD_{A, \tilde{\tau}}^{\circ},
\]
where $\gothe$ denotes the idempotent $\bigl(\begin{smallmatrix} 1&0\\0&0\end{smallmatrix}\bigr)$.
By the theory of Dieudonn\'e modules, we have canonical isomorphisms
\[
H^{\cris}_1(A/k)_{W(k)}\cong \tcD_{A},\quad H^{\dR}_1(A/k)\cong \cD_{A},
\]
compatible with all the structures.
For $\tilde{\tau}\in  \Sigma_{E,\infty}$, we have the Hodge filtration
$
0\ra \omega^{\circ}_{A^\vee,\tilde{\tau}}\ra \cD_{A,\tilde{\tau}}^{\circ}\ra \Lie(A)^{\circ}_{\tilde{\tau}}\ra 0.
$
We use $\tilde \omega^\circ_{A^\vee, \tilde \tau}$ to denote the preimage of $\omega^\circ_{A^\vee, \tilde\tau} \subseteq \cD^\circ_{A, \tilde\tau}$ under the reduction map $\tcD^\circ_{A, \tilde\tau} \twoheadrightarrow \cD^\circ_{A, \tilde\tau}$.

We first construct $C$ from $A$. For each  $\tilde{\tau}\in \Sigma_{E,\infty}$, we define a $W(k)$-module $M^{\circ}_{\tilde{\tau}}$ with $\tcD^\circ_{A,\tilde{\tau}}\subseteq M^{\circ}_{\tilde{\tau}}\subseteq p^{-1}\tcD_{A,\tilde{\tau}}^\circ$ as follows. 
We put $M^{\circ}_{\tilde{\tau}}=\tcD^{\circ}_{\tilde{\tau}}$ unless $\tilde{\tau}\in \tilde{\Delta}(\ttT)^+$.
In the exceptional case, let $n$ be the integer as determined in Lemma~\ref{L:distance to T'} (or as in property (v) of $Y'_{\ttT}$ above), and put
\[
M^{\circ}_{\tilde{\tau}}:=p^{-1}F_{A,\es}^n(\tcD^\circ_{A,\sigma^{-n}\tilde \tau}),
\]
where $ F_{A,\es}^n$ is the $n$-iteration of the essential Frobenius on $\tcD^{\circ}_{A}$ defined in Notation~\ref{N:essential frobenius and verschiebung}.

If $\tilde{\tau}\in \tilde \Delta(\ttT)^+ \cap \ttT_E$, then the number $n$ for $\tilde{\tau}$ in Lemma~\ref{L:distance to T'} coincides with $n_{\tau}$ introduced in Subsection~\ref{S:partial-Hasse}. Since the partial Hasse invariant $h_{\tilde{\tau}}(A)$ vanishes for any $\tilde{\tau}\in \ttT_E$ by the definition of $X'_{\ttT}$, we see that $M^{\circ}_{\tilde\tau}=p^{-1}\tilde\omega^\circ_{A,\tilde{\tau}}$ for $\tilde \tau \in \tilde \Delta(\ttT)^+ \cap \ttT_E$.

We now check that, for any $\tilde{\tau}\in \Sigma_{E,\infty}$,
\begin{equation}\label{E:stability-FV}
F_A(M^\circ_{\sigma^{-1}\tilde{\tau}})\subseteq M^{\circ}_{\tilde{\tau}},\quad \textrm{and} \quad V_A(M^{\circ}_{\tilde{\tau}})\subseteq M^{\circ}_{\sigma^{-1}\tilde{\tau}}.
\end{equation}
Note that we are using the genuine but not essential Frobenius and Verschiebung here.
 We distinguish several cases:

\begin{itemize}
\item  $\tilde\tau,\sigma^{-1}\tilde\tau\notin \tilde\Delta(\ttT)^+$. Then  $M^\circ_{\tilde\tau}=\tcD^\circ_{A,\tilde\tau}$ and $M^\circ_{\sigma^{-1}\tilde\tau}=\tcD^\circ_{A,\sigma^{-1}\tilde\tau}$. Hence \eqref{E:stability-FV} is clear.

\item  $\tilde{\tau}\in \tilde\Delta(\ttT)^+$ and $\sigma^{-1}\tilde{\tau}\notin \tilde\Delta(\ttT)^+$. Then we have $M^{\circ}_{\sigma^{-1}\tilde{\tau}}=\tcD^\circ_{A,\sigma^{-1} \tilde\tau}$ and $M^\circ_{\tilde{\tau}}=p^{-1}F_A(\tcD_{A,\sigma^{-1}\tilde\tau}^\circ)$. Hence $F_A(M^{\circ}_{\sigma^{-1}\tilde{\tau}})\subseteq M^\circ_{\tilde{\tau}}$ is trivial, and $V_A( M^\circ_{\tilde{\tau}})=M^{\circ}_{\sigma^{-1}\tilde{\tau}}$.

\item $\tilde{\tau},\sigma^{-1}\tilde{\tau}\in \tilde\Delta(\ttT)^+$. Let $n$ be positive integer for $\tilde{\tau}$ as in  Lemma~\ref{L:distance to T'}. Then we have
$$
M^{\circ}_{\tilde{\tau}}=p^{-1} F_{A,\es}^n(\tcD^\circ_{A, \sigma^{-n}\tilde{\tau}})=
 F_{A,\es}\big( p^{-1} F_{A,\es}^{n-1}(\tcD^\circ_{A, \sigma^{-n}\tilde{\tau}})\big) =   F_{A,\es}\big(M^{\circ}_{\sigma^{-1}\tilde\tau} \big).
$$
The inclusions \eqref{E:stability-FV} are clear from this.

\item $\tilde{\tau}\notin \tilde\Delta(\ttT)^+$ and $\sigma^{-1}\tilde{\tau}\in \tilde\Delta(\ttT)^+$. In this case, $\sigma^{-1}\tilde\tau$ must be in $\ttT_E$ by Lemma~\ref{L:distance to T'}. Hence, we have $M^{\circ}_{\tilde\tau}=\tcD^\circ_{A,\tilde\tau}$ and $M^\circ_{\sigma^{-1}\tilde{\tau}}=p^{-1}\tilde\omega^\circ_{A,\sigma^{-1}\tilde{\tau}}$ as remarked above. We see thus $F_A(M^\circ_{\sigma^{-1}\tilde{\tau}})=M^\circ_{\tilde{\tau}}$ and $V_A(M^\circ_{\tilde{\tau}})=p M^\circ_{\sigma^{-1}\tilde{\tau}}$.

\end{itemize}

Consequently, if we put $M^{\circ}=\bigoplus_{\tilde\tau\in \Sigma_{E,\infty}}M^\circ_{\tilde\tau}$ and $M=(M^{\circ})^{\oplus 2}$, then $M$ is a Dieudonn\'e module, and $\tcD_{A}\subseteq M\subseteq p^{-1}\tcD_A$ with induced $F$ and $V$ on $M$. Consider the quotient $M/\tcD_A$. It corresponds to   a finite subgroup scheme $K$ of $A[p]$ stable under the action of $\cO_D$ by the covariant Dieudonn\'e theory. We put $C=A/K$ and let   $\phi_A:A\ra C$ denote the natural quotient so that the induced map $\phi_{A,*}: \tcD_{A}\ra \tcD_{C}$ is  identified with the natural inclusion $\tcD_{A}\hra M$.  The morphisms $F_C$ and $V_C$ on $\tcD_C$ are induced from those on $\tcD_{A}[1/p]$. 
    It is clear that $C$ is equipped with a natural action $\iota_C$ by $\cO_D$, and  $\phi_{A}$ satisfies conditions (iii) and (iv) for the moduli space $Y'_{\ttT}$. Conversely, if $C$ exists, the conditions (iii) and (iv) imply that $\tcD_{C,\tilde\tau}^{\circ}$ has to coincide with $M^\circ_{\tilde\tau}$. Therefore, $C$ is uniquely determined by $A$.
We finally remark that, by construction, $\tilde \calD_{C, \tilde \tau}^\circ / \tilde \calD_{A, \tilde \tau}^\circ$ is isomorphic to $k$ if $\tilde \tau \in \tilde \Delta(\ttT)^+$ and trivial otherwise.

We now construct the abelian variety $B$ and the isogeny $\phi_B:B\ra C$. Similarly to the construction for $C$, we will first define a $W(k)$-lattice $N^\circ=\bigoplus_{\tilde{\tau}\in \Sigma_{E,\infty}}N^\circ_{\tilde{\tau}}\subseteq \tcD_{C}^{\circ}$, with $N^\circ_{\tilde\tau}=\tcD^{\circ}_{C,\tilde{\tau}}$ unless $\tilde\tau\in \tilde\Delta(\ttT)^-$.  
 In the exceptional case, we put $N^\circ_{\tilde{\tau}}={F}_{A,\es}^n(\tcD^\circ_{C,\sigma^{-n}\tilde{\tau}})$, where $n$ is the positive integer given by Lemma~\ref{L:distance to T'}. 
 Here, we view $\tcD^{\circ}_{C,\sigma^{-n}\tilde{\tau}}$ as a lattice of $\tcD^{\circ}_{A,\sigma^{-n}\tilde\tau}[1/p]$ so that ${F}_{A,\es}^n(\tcD^\circ_{C,\sigma^{-n}\tilde{\tau}})$ makes sense.
%Note that we have $\tcD^{\circ}_{C,\tilde{\tau}}\xra{\sim} \tcD^\circ_{A,\tilde{\tau}}$ for all $\tilde{\tau}\in \tSigmap$. 
Note once again that, if $\tilde{\tau}\in \tilde  \Delta(\ttT)^- \cap \ttT_E$, then  $n$ equals to $n_{\tau}$ defined in \ref{S:partial-Hasse}, and we have
$
N^\circ_{\tilde{\tau}}=\tilde{\omega}^\circ_{C, \tilde{\tau}}\simeq \tilde{\omega}^\circ_{A, \tilde{\tau}},
$
since $h_{\tilde{\tau}}(A)$ vanishes. We now check that $N^\circ$ is stable under $F_C$ and $V_C$, i.e. $F_{C}(N^\circ_{\sigma^{-1}\tilde\tau})\subseteq N^\circ_{\tilde{\tau}}$ and $V_C(N^\circ_{\tilde\tau})\subseteq N^\circ_{\sigma^{-1}\tilde\tau}$ for all $\tilde\tau\in \Sigma_{E,\infty}$. The same arguments for  $M$ above work verbatim in this case (with $\tilde \Delta(\ttT)^-$ in places of $\tilde \Delta(\ttT)^+$).
Again, we point out that, by construction, $\tilde \calD_{C, \tilde \tau}^\circ / \tilde \calD_{B, \tilde \tau}^\circ$ is isomorphic to $k$ if $\tilde \tau \in \tilde \Delta(\ttT)^-$ and trivial otherwise.

Therefore,  $N=(N^\circ)^{\oplus 2}$ is a Dieudonn\'e module such that the inclusions $p\tcD_{C}\subseteq N\subseteq \tcD_C$ respect the Frobenius and Verschiebung actions. 
In particular, the Dieudonn\'e submodule $N/p\tcD_{C}\subset \tcD_{C}/p\tcD_{C}$ is the covariant Dieudonn\'e module of a closed finite subgroup scheme $H\subset C[p]$ stable under the action of $\cO_D$. We put $B=C/H$, and define $\phi_{B}:B\ra C$ to be the isogeny such that the composite $C\ra B=C/H\xra{\phi_B} C$ is the multiplication by $p$. Then the induced morphism $\phi_{B,*}:\tcD_{B}\ra \tcD_{C}$ is identified with the natural inclusion $N\subseteq \tcD_{C}$. It is clear  that $B$ is equipped with a natural action by $\cO_D$, and the condition (v) for the moduli space $Y'_{\ttT}$ is satisfied. Conversely, if the abelian variety $B$ exists, then condition (v) implies that $\tcD_{B}^\circ$ has to be $N^\circ$ defined above. This means that $B$ is uniquely determined by $C$, thus by $A$.

To see condition (ix) of the moduli space $Y'_{\ttT}$, we consider  the quasi-isogeny:
\[
\lambda_B: B\xra{\phi_B}C\xleftarrow{\phi_A}A\xra{\lambda_A}A^\vee
\xleftarrow{\phi_{A}^\vee}C^\vee\xra{\phi_B^\vee}B^\vee.
\]
We have to show that $\lambda_B$ is a genuine isogeny, and verify that it satisfies conditions (b2) and (b3) in Theorem~\ref{T:unitary-shimura-variety-representability} for the Shimura variety $\bfSh_{K'_\ttT}(G'_{\tilde \ttS(\ttT)})$.
%Since the two left arrows are $p$-isogenies, we are reduced to showing that the corresponding quasi-isogeny of $p$-divisible groups is a genuine isogeny. By the theory of Dieudonn\'e modules, 
It is equivalent to proving that, when viewing $\tcD_{B, \tilde \tau}^\circ$ as a $W(k)$-lattice of $\tcD_{A, \tilde \tau}^\circ[1/p]$ via the quasi-isogeny $B\xra{\phi_B}C\xleftarrow{\phi_A}A$, the perfect alternating pairing
\[
\langle\ ,\ \rangle_{\lambda_A, \tilde \tau}: \tcD^\circ_{A, \tilde \tau}[1/p]\times \tcD^\circ_{A, \tilde \tau^c}[1/p]\ra W(k)[1/p]
\]
for $\tilde \tau \in \Sigma_{E, \infty/\gothp}$ induces  a perfect pairing of $\tcD_{B,\tilde{\tau}}^\circ\times \tcD_{B,\tilde\tau^c}^\circ\ra W(k)$ if  $\gothp$ is not of type $\beta^\sharp$ for $\ttS(\ttT)$, and induces an inclusion $\tilde \calD^\circ_{B, \tilde \tau^c} \subset \tilde \calD^{\circ, \vee}_{B, \tilde \tau}$ with quotient equal to $k$ if $\gothp$ is of type $\beta^\sharp$ for $\ttS(\ttT)$.
We discuss case by case.
\begin{itemize}
\item 
if $\gothp$ is of type $\beta^\sharp$ for $\ttS$, then both $\phi_A$ and $\phi_B$ induce isomorphisms on the $\gothp$-divisible groups and the statement is clear in this case.

\item  If $\gothp$ is of Case $\beta2$,  $\tilde\Delta(\ttT)^+_{/\gothp}=\emptyset$.
By the construction of $B$, we have $\tcD_{B,\tilde\tau}^{\circ}=\tcD_{A,\tilde\tau}^{\circ}$ unless $\tilde\tau\in \tilde\Delta(\ttT)^-_{/\gothp}$; in the latter case, $\tcD_{B,\tilde\tau}^{\circ}=\tilde{F}^n_{\es,\tilde\tau}(\tcD_{A,\sigma^{-n}\tilde\tau}^\circ)$ is a submodule of $\tcD_{A,\tilde\tau}^\circ$ with quotient isomorphic to $k$. 
Note that $\Sigma_{E, \infty/\gothp} = \tilde \Delta(\ttT)^-_{/\gothp} \coprod  (\tilde \Delta(\ttT)^{-}_{/\gothp})^c$. This implies that the pairing $\langle \ ,\ \rangle_{\lambda_A, \tilde \tau}$ induces an inclusion $\tilde \calD^\circ_{B, \tilde \tau^c} \subset \tilde \calD^{\circ, \vee}_{B, \tilde \tau}$ with quotient equal to $k$.

\item In all other cases, we have $\tilde \Delta(\ttT)^+_{/\gothp} = (\tilde \Delta(\ttT)^-_{/\gothp})^c$.  So
\begin{equation}\label{E:Dieud-B-2}
\tcD_{B,\tilde\tau}^\circ=
\begin{cases}
\tcD_{A,\tilde{\tau}}^\circ
 &\text{if }\tilde{\tau}\notin (\tilde\Delta(\ttT)^+_{/\gothp}\cup \tilde\Delta(\ttT)^-_{/\gothp});\\
p^{-1}{F}_{A,\es}^n(\tcD_{A,\sigma^{-n}\tilde\tau}^\circ) &\text{if }\tilde\tau\in \tilde\Delta(\ttT)^+_{/\gothp};\\
{F}_{A,\es}^n(\tcD_{A,\sigma^{-n}\tilde\tau}^\circ) &\text{if }\tilde{\tau}\in \tilde\Delta(\ttT)^-_{/\gothp}.
\end{cases}
\end{equation}
It is clear that $\langle\ ,\ \rangle_{\lambda_A}$ induces a perfect pairing on $\tcD_{B,\tilde{\tau}}^\circ\times\tcD_{B,\tilde\tau^c}^\circ$ if $\tilde\tau\notin (\tilde\Delta(\ttT)^+_{/\gothp}\cup \tilde\Delta(\ttT)^-_{/\gothp})$. If $\tilde\tau\in (\tilde\Delta(\ttT)^+_{/\gothp}\cup \tilde\Delta(\ttT)^-_{/\gothp})$, the perfect duality between $\tilde \calD_{B, \tilde \tau}^\circ$ and $\tilde \calD_{B, \tilde \tau^c}^\circ$ follows from the equality 
\[
\langle p^{-1}  F_{A,\es}^n u, F_{A,\es}^n v\rangle_{\lambda_A, \tilde \tau} = \langle u, v \rangle_{\lambda_A, \sigma^{-n} \tilde \tau}^{\sigma^n},
\]
for all $u\in \tcD^{\circ}_{A,\sigma^{-n}\tilde\tau}$ and $v\in \tcD^{\circ}_{A,\sigma^{-n}\tilde\tau^c}$.

\end{itemize}
This completes the verification of condition (viii) of the moduli space $Y'_\ttT$ and conditions (b2) and (b3) in Theorem~\ref{T:unitary-shimura-variety-representability} for $\bfSh_{K'_\ttT}(G'_{\tilde \ttS(\ttT)})$. It is also clear that $\lambda_B$ induces the involution $*_{\ttS(\ttT)}$ on $\cO_D=\cO_{D_{\ttS(\ttT)}}$.

We now check that the abelian variety $B$ has the correct signature required by the moduli space $\bfSh_{K'_{\ttT}}(G'_{\tilde \ttS(\ttT)})$.  For convenience of future reference, we put this into the following.

\begin{lemma}
\label{L:dimension count}
In the setup above, that is, knowing
\[
\dim \Coker(\phi_{A, *, \tilde \tau}) = \delta_{\tilde \Delta(\ttT)^+}(\tilde \tau)\quad \dim \Coker(\phi_{B, *, \tilde \tau}) = \delta_{\tilde \Delta(\ttT)^-}(\tilde \tau),
\]
where $\delta_\bullet(?)$ is  $1$ if $? \in \bullet$ and is $0$ if $? \notin \bullet$, we have $\dim \omega_{B^\vee/k, \tilde \tau}^\circ = 2- s_{\ttT, \tilde \tau}$ for all $\tilde \tau \in \Sigma_{E, \infty}$ if and only if $\dim \omega_{A^\vee/k, \tilde \tau}^\circ = 2- s_{ \tilde \tau}$ for all $\tilde \tau \in \Sigma_{E, \infty}$, with the numbers $s_{\ttT, \tilde \tau}$ defined as in Subsection~\ref{S:tilde S(T)}.
\end{lemma}
\begin{proof}
This is a simple dimension count.  We prove the sufficiency, and the necessity follows by reversing the argument.
Using the signature condition for the Shimura variety $X'_\ttT$, we have
\[
s_{\tilde \tau} = \dim_k \big( \tilde \calD^\circ_{A, \tilde \tau} \big/ V(\tilde\calD^\circ_{A, \sigma\tilde \tau} ) \big).
\]
Comparing this with the abelian variety $B$, we have
\[
\dim_k\frac{\tilde \calD^\circ_{B, \tilde \tau} }{ V(\tilde\calD^\circ_{B, \sigma\tilde \tau} )} =   \dim_k \frac{\tilde \calD^\circ_{A, \tilde \tau}}{ V(\tilde\calD^\circ_{A, \sigma\tilde \tau} )} - \big(
\dim_k \frac{ \tilde \calD^\circ_{C, \tilde \tau} }{ \tilde\calD^\circ_{B,\tilde \tau}}
-\dim_k \frac{ \tilde \calD^\circ_{C, \tilde \tau} }{ \tilde\calD^\circ_{A,\tilde \tau}}
\big)
+
\big(
\dim_k \frac{ \tilde \calD^\circ_{C, \sigma\tilde \tau} }{ \tilde\calD^\circ_{B,\sigma \tilde\tau}}
-\dim_k \frac{ \tilde \calD^\circ_{C, \sigma\tilde \tau} }{ \tilde\calD^\circ_{A,\sigma\tilde \tau}}
\big);
\]
here we used the fact that the quotient $\tilde \calD^\circ_{C, \sigma\tilde \tau} / \tilde \calD_{B, \sigma \tilde \tau}^\circ$ has the same dimension as $V(\tilde \calD^\circ_{C, \sigma\tilde \tau}) / V(\tilde \calD_{B, \sigma \tilde \tau}^\circ)$ and the same for $A$ in place of $B$ because $V$ is equivariant.
Using our construction of the abelian varieties $B$ and $C$, we deduce
\begin{equation}
\label{E:dimension count}
\dim_k \big( \tilde \calD^\circ_{B, \tilde \tau} \big/ V(\tilde\calD^\circ_{B, \sigma\tilde \tau} ) \big)= s_{\tilde \tau} - \big( \delta_{\tilde \Delta(\ttT)^-}(\tilde \tau) - \delta_{\tilde \Delta(\ttT)^+}(\tilde \tau) \big) + \big(\delta_{\tilde \Delta(\ttT)^-}(\sigma\tilde \tau) - \delta_{\tilde \Delta(\ttT)^+}(\sigma\tilde \tau)\big).
\end{equation}
Using the definition of $\tilde \Delta(\ttT)^\pm$, one checks case-by-case that the expression \eqref{E:dimension count} is equal to $s_{ \ttT, \tilde \tau}$.  We will only indicate the proof when $\tilde \tau \in \Sigma_{E, \infty/\gothp}$ for $\gothp$ in Case $\alpha1$, and leave the other cases as an exercise for the interested readers.
Indeed, under the notation from Subsections~ \ref{S:Delta-pm}, when $\gothp\in \Sigma_p$ is of type $\alpha1$, $\tilde \Delta(\ttT)_{/\gothp}^\pm = \coprod_i \tilde C_i^\pm$.
Then 
\[
\delta_{\tilde \Delta(\ttT)^+}(\tilde \tau)-\delta_{\tilde \Delta(\ttT)^+}(\sigma\tilde \tau)
=\left\{
\begin{array}{ll}
1&\textrm{if } \tilde \tau \textrm{ is one of }\sigma^{-a_1}\tilde \tau_i^c, \sigma^{-a_3}\tilde \tau_i^c, \dots;
\\
-1&\textrm{if } \tilde \tau \textrm{ is one of }\sigma^{-a_2}\tilde \tau_i^c, \sigma^{-a_4}\tilde \tau_i^c, \dots; \\
0&\textrm{otherwise};
\end{array}
\right.
\]
\[
\textrm{and}\
\delta_{\tilde \Delta(\ttT)^-}(\tilde \tau)-\delta_{\tilde \Delta(\ttT)^-}(\sigma\tilde \tau)
=\left\{
\begin{array}{ll}
1&\textrm{if } \tilde \tau \textrm{ is one of }\sigma^{-a_1}\tilde \tau_i, \sigma^{-a_3}\tilde \tau_i, \dots;
\\
-1&\textrm{if } \tilde \tau \textrm{ is one of }\sigma^{-a_2}\tilde \tau_i, \sigma^{-a_4}\tilde \tau_i, \dots; \\
0&\textrm{otherwise}.
\end{array}
\right.
\]
Putting these two formulas together and using the notation from Subsection~\ref{S:tilde S(T)}, we have
\[
\big(\delta_{\tilde \Delta(\ttT)^+}(\tilde \tau) -\delta_{\tilde \Delta(\ttT)^+}(\sigma\tilde \tau)
 \big) -\big(
\delta_{\tilde \Delta(\ttT)^-}(\tilde \tau)  -\delta_{\tilde \Delta(\ttT)^-}(\sigma \tilde \tau)\big)
= \delta_{\tilde \ttT'}(\tilde \tau^c)-\delta_{\tilde \ttT'}(\tilde \tau).
\]
This implies that \eqref{E:dimension count} is equal to $s_{ \ttT, \tilde \tau}$, and concludes the proof of Lemma~\ref{L:dimension count}.
\end{proof}

We now continue our proof of Lemma~\ref{L:Y_T=X_T-1} (as part of the proof of Proposition~\ref{P:Y_S=X_S}).
To fulfill condition (vi) of the moduli space $Y'_\ttT$, the tame level structure on $B$ is chosen and determined as the composite
\[
\beta^p_{K'^p_\ttT}:  \widehat \Lambda_{\ttS(\ttT)}^{(p)} \xrightarrow{\theta_\ttT^{-1}}
 \widehat \Lambda_{\ttS}^{(p)}
\xrightarrow{\alpha} T^{(p)}(A) \xrightarrow{T^{(p)}(\phi_A)}
T^{(p)}(C) \xrightarrow{T^{(p)}(\phi_B)^{-1}} T^{(p)}(B),
\]
where both $T ^{(p)}(\phi_A)$ and $T^{(p)}(\phi_B)$ are  isomorphisms because $\phi_A$ and $\phi_B$ are $p$-isogenies.

It remains to show that there exists a unique collection of subgroups $\beta_{p}$ satisfying \ref{T:unitary-shimura-variety-representability}(c2) for $\bfSh_{K'_{\ttT}}(G'_{\tilde \ttS(\ttT)})$ and properties (vii) and (viii) of $Y'_{\ttT}$. So the corresponding prime $\gothp \in \Sigma_p$ is either of type $\alpha^\sharp$ for $\ttS$ or in the Case $\alpha2$ of Subsection~\ref{S:quaternion-data-T}.
In the former case, we have $\ttT_{/\gothp} = \emptyset$, which forces $\Delta(\ttT)^\pm_{/\gothp} = \emptyset$ by definition.  So the induced morphisms $\phi_{A,\gothp}: A[\gothp^\infty] \to C[\gothp^\infty]$ and $\phi_{B,\gothp}: B[\gothp^\infty] \to C[\gothp^\infty]$ are isomorphisms.
Now, condition (vii) of the moduli space $Y'_\ttT$ determines that the level structure $\beta_\gothp$ is taken to be $\phi_{B,\gothp}^{-1}\big( \phi_{A, \gothp}(\alpha_\gothp)\big)$.

If $\gothp$ is in Case $\alpha 2$ of Subsection~\ref{S:quaternion-data-T}, the prime $\gothp$ splits into two primes $\gothq$ and $\bar\gothq$ in $E$.  
Using the polarization $\lambda_B$, we just need to show that there exists a unique subgroup $H_{\gothq}\subseteq B[\gothq]$ satisfying condition (vii).
Since $s_{\ttT, \tilde \tau} = 0$ or $2$ for $\tilde \tau \in \Sigma_{E,\infty/\gothp}$, both $ F_{B,\es,\tilde \tau}$ and $ V_{B,\es, \tilde \tau}$ are isomorphisms.
 We define a one-dimensional $k$-vector subspace $\cD_{H_{\gothq}}^\circ\subseteq \tcD_{B,\tilde\tau}^\circ/p\tcD_{B,\tilde\tau}^\circ$ for each $\tilde\tau\in \Sigma_{E,\infty/\gothq}$ as follows:
\begin{itemize}
\item If $\tilde\tau\in \tilde\Delta(\ttT)^-_{/\gothp}$, then $\tilde \calD^\circ_{B, \tilde \tau}$ is contained in $  \tilde \calD^\circ_{C, \tilde \tau}\cong\tilde \calD^\circ_{A, \tilde \tau}$ with quotient isomorphic to $k$. Put $\cD^{\circ}_{H_{\gothq}}=p\tcD^\circ_{A,\tilde\tau}/p\tcD^\circ_{B,\tilde\tau}$. %Here, we view both $\tcD^\circ_{A,\tilde\tau}$ and $\tcD^\circ_{B,\tilde\tau}$ as lattices of $\tcD^\circ_{A,\tilde\tau}[1/p]$.

\item If $\tilde\tau\notin \tilde\Delta(\ttT)^{-}_{/\gothp}$, let $n\in \NN$ be the least positive integer such that $\sigma^{-n}\tilde\tau\in \tilde\Delta(\ttT)^-_{/\gothp}$ (such $n$ exists because $\tilde \tau_0$ in Subsection~\ref{S:tilde S(T)} belongs to $\tilde \Delta(\ttT)^-_{/\gothp}$). Put 
$\cD^{\circ}_{H_{\gothq},\tilde\tau}=F^{n}_{B,\es}(\cD^{\circ}_{H_{\gothq},\sigma^{-n}\tilde\tau})$.

\end{itemize}
Put $\cD_{H_{\gothq}}=\bigoplus_{\tilde\tau\in \Sigma_{E,\infty/\gothq}}\cD^{\circ,\oplus 2}_{H_{\gothq}, \tilde\tau}$.
 Using the vanishing of the partial Hasse invariants $\{h_{\tilde\tau}(A): \tilde\tau\in {\ttT}_{E,\infty/\gothp} \}$, one checks easily that $\cD_{H_{\gothq}}\subseteq \cD_{B[\gothq]}$ is a Dieudonn\'e submodule. We define $H_{\gothq}\subseteq B[\gothq]$ as the finite subgroup scheme corresponding to $\cD_{H_{\gothq}}$ by covariant Dieudonn\'e theory. Then $\cD_{H_{\gothq}}$ is canonically identified with the kernel of the induced map
\[
\phi_{\gothq,*} \colon \cD_{B}=H^{\dR}_1(B/k)\ra \cD_{B/H_{\gothq}}=H^{\dR}_1((B/H_{\gothq})/k).
\]
Therefore,  $H_{\gothq}$ satisfies condition (viii) of the moduli space $Y'_\ttT$. This shows the existence of $H_{\gothq}$. For the uniqueness,  the condition (viii) forces the choice of $\cD_{H_{\gothq}, \tilde{\tau}}^{\circ}$ for $\tilde\tau\in \tilde{\Delta}(\ttT)^-_{/\gothp}$ and the stability under $F_B$ and $V_B$ forces the choice at other $\tilde \tau$'s. This concludes the proof that $Y'_{\ttT}\ra X'_{\ttT}$ induces a bijection on closed points.
\end{proof}

\begin{lemma}\label{L:Y_T-X_T-tangent}
The map $\eta_1: Y'_{\ttT}\ra X'_{\ttT}$ induces an isomorphism of tangent spaces at every closed point.
\end{lemma}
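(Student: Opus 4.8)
The statement is that the forgetful morphism $\eta_1: Y'_\ttT \to X'_\ttT$ of Proposition~\ref{P:Y_S=X_S} induces an isomorphism on tangent spaces at every closed point; combined with Lemma~\ref{L:Y_T=X_T-1} (bijectivity on closed points) and the fact that both are schemes of finite type over $k_0$, this will give that $\eta_1$ is an isomorphism. Since deformation theory here is controlled by lifting Hodge filtrations inside crystalline homology (Subsection~\ref{S:deformation}, Corollary~\ref{C:deformation}, Proposition~\ref{Prop:smoothness}), the plan is to compare the deformation functors of the two moduli problems over the dual numbers $k[\epsilon]/(\epsilon^2)$ (more generally over a square-zero thickening $S_0 \hookrightarrow S$ of $k_0$-schemes), and to show that specifying a lift of the point $x_0 \in X'_\ttT$ is the same data as specifying a lift of the corresponding point of $Y'_\ttT$. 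The key point is that in the proof of Lemma~\ref{L:Y_T=X_T-1} the abelian varieties $C$ and $B$, the isogenies $\phi_A, \phi_B$, the polarization $\lambda_B$, and all the level structures were shown to be \emph{uniquely determined} by $(A,\iota_A,\lambda_A,\alpha_{K'})$; so the content is to promote that uniqueness statement from closed points to first-order thickenings, i.e. to show the constructions are functorial in square-zero thickenings.

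\textbf{Key steps.} First I would fix a closed point $y_0 = (A_0,\iota_{A_0},\lambda_{A_0},\alpha_{K'},B_0,\dots,C_0;\phi_{A_0},\phi_{B_0})$ of $Y'_\ttT$ over a perfect field $k$, with image $x_0 \in X'_\ttT(k)$, and take a square-zero extension $S_0 = \Spec k \hookrightarrow S$. Given a deformation $x = (A,\iota_A,\lambda_A,\alpha_{K'})$ of $x_0$ over $S$ lying in $X'_\ttT$, I would construct the lift of $C$: by Grothendieck--Messing, the reduced crystalline homology $H_1^\cris(A_0/k)^\circ_S$ carries $F$ and $V$, and exactly as in the closed-point construction one forms the $\cO_S$-submodule $M^\circ_{\tilde\tau} \subseteq p^{-1}H_1^\cris(A_0/k)^\circ_{S,\tilde\tau}$ using the essential Frobenius $\tilde F_{\es}^n$ (which makes sense over $S$ since its source and target are already determined by $A_0$, and the defining relations \eqref{E:stability-FV} hold integrally). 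The lift $\omega_{A/S}^\circ$ of the Hodge filtration of $A$ then determines, via the inclusion $H_1^\cris(A_0/k)^\circ_S \subseteq M^\circ$, a lift of the Hodge filtration of $C_0$; by the equivalence of categories $\ttAV_S \simeq \ttAV_{S_0}^+$ this yields a unique deformation $C/S$ together with $\phi_A : A \to C$, and similarly $N^\circ \subseteq H_1^\cris(C_0/k)^\circ_S$ yields a unique $B/S$ and $\phi_B : B \to C$. Next I would check that the polarization $\lambda_B$ (the composite quasi-isogeny through $\lambda_A$) lifts to a genuine polarization over $S$: since $\omega_{A/S}^\circ$ is isotropic for the pairing induced by $\lambda_A$ (as $x$ lies on $X'_\ttT$, hence on $X'$), the image lattices defining $B$ and $C$ inherit the duality relations established at the closed point in Lemma~\ref{L:Y_T=X_T-1}, so $\lambda_B$ is isotropic in the relevant sense and lifts by the Grothendieck--Messing polarization criterion cited in Subsection~\ref{S:deformation}. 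Then the prime-to-$p$ level structures $\beta^p_{K'^p_\ttT}$ lift uniquely because $T^{(p)}(\phi_A), T^{(p)}(\phi_B)$ are isomorphisms and $\alpha^p$ lifts uniquely (étale Tate module); and the $p$-level data $\beta_p$ at primes of type $\alpha^\sharp$ or in Case $\alpha2$ lift uniquely by the same crystalline argument (the subgroup $H_\gothq$ is cut out by a direct summand of $H_1^\cris$ determined functorially, with no local deformation, cf.\ the end of the proof of Corollary~\ref{C:integral-model-quaternion}). Running the construction in reverse — starting from a deformation of $y_0$ and forgetting down to $A$ — is immediate. This exhibits a natural bijection between the set of deformations of $x_0$ over $S$ and the set of deformations of $y_0$ over $S$, functorial in $S$; taking $S = \Spec k[\epsilon]/(\epsilon^2)$ gives the isomorphism of tangent spaces.

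\textbf{Main obstacle.} The genuinely delicate step is verifying that $\omega_{A/S}^\circ$ really does determine a \emph{direct summand} lift of the Hodge filtrations of $C_0$ and $B_0$ — in other words, that the integral relations \eqref{E:stability-FV} (for $M^\circ$) and their analogue for $N^\circ$ persist over a non-reduced base and that the quotients $M^\circ/H_1^\cris(A_0/k)^\circ_S$ and $H_1^\cris(C_0/k)^\circ_S/N^\circ$ remain finite locally free, so that the corresponding subgroup schemes $K, H$ are finite flat over $S$. This is the same flatness-criterion-by-fibres bookkeeping that appeared in the proof of Theorem~\ref{T:unitary-shimura-variety-representability} and in Lemma~\ref{L:Y_T=X_T-1}; having checked it at closed points, one deduces it over $S$ since $S_0 \hookrightarrow S$ is a nilpotent thickening with the same underlying space. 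The dimension bookkeeping is exactly Lemma~\ref{L:dimension count}, which already decouples the combinatorics from the base, so no new case analysis is needed; once these flatness and summand assertions are in place the rest of the argument is formal, following \cite[Proposition~4.4]{helm}.
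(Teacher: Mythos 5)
Your overall strategy — compare deformation functors over $k[\epsilon]/(\epsilon^2)$ by using Grothendieck--Messing to reduce to lifting Hodge filtrations inside crystalline homology, and then show the deformation data of $A$ determines that of $C$ and $B$ — is the same as the paper's. But the way you propose to transfer the lift from $A$ to $C$ and $B$ has a genuine gap. You want to form, over $S=\Spec k[\epsilon]/(\epsilon^2)$, the $\cO_S$-module $M^\circ_{\tilde\tau}\subseteq p^{-1}H_1^\cris(A_0/k)^\circ_{S,\tilde\tau}$ "exactly as in the closed-point construction," and then use "the inclusion $H_1^\cris(A_0/k)^\circ_S\subseteq M^\circ$." Over the closed point this works because $M^\circ$ lives inside the $W(k)$-module $\tilde\calD^\circ_A[1/p]$, where $p$ acts invertibly. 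Over $S$, the crystalline evaluation $H_1^\cris(A_0/k)^\circ_S$ is an $\cO_S$-module on which $p$ acts as zero, so $p^{-1}H_1^\cris(A_0/k)^\circ_S$ is simply not defined, and the map $\phi^\cris_{A_0,*}:H_1^\cris(A_0/k)^\circ_S\to H_1^\cris(C_0/k)^\circ_S$ is not an inclusion (its kernel sees the finite subgroup $K_0$). So the lattice formalism does not transfer to the non-reduced base, and your "Main obstacle" paragraph — appealing to flatness by fibres along the nilpotent thickening — cannot repair this, because there is no $M^\circ$ over $S$ whose flatness you could even ask about.

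The paper's proof gets around this by never forming $M^\circ$ over $\II$. Instead it deforms the Hodge filtrations of $C_0$ and $B_0$ directly inside $H_1^\cris(C_0/k)^\circ_{\II,\tilde\tau}$ and $H_1^\cris(B_0/k)^\circ_{\II,\tilde\tau}$, via a case analysis you don't have. When neither $\tilde\tau$ nor $\sigma\tilde\tau$ lies in $\tilde\Delta(\ttT)^+$ (resp. $\tilde\Delta(\ttT)^-$), the map $\phi^\cris_{A,*,\tilde\tau}$ (resp. $\phi^\cris_{B,*,\tilde\tau}$) is an isomorphism and the deformation data is transported directly. When one of them does, the dimension count (a variant of Lemma~\ref{L:dimension count}) forces the corresponding $\omega^\circ_{C^\vee,\tilde\tau}$ or $\omega^\circ_{B^\vee,\tilde\tau}$ to have rank $0$ or $2$, so there is a unique obvious direct-summand lift — and condition (iv)/(v) is checked using that the essential Frobenius over $\II$ factors through the reduction $\II\twoheadrightarrow k$, so $F_{\es}^n(H_1^\cris(\cdot)_{\II})$ is literally the base change of its closed-fibre value. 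That "forced lift in degenerate rank" observation is the real content of the lemma and is what replaces your $M^\circ$ construction; without it, the argument does not close.
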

\begin{proof}
Let $y=(A, \iota_A,\lambda_A,  \alpha_{K'}, B, \iota_B, \lambda_B, \beta_{K'_\ttT}, C, \iota_C; \phi_A, \phi_B)$ be a closed point of $Y'_{\ttT}$ with values in a perfect field $k$, and $x=(A, \iota_A,\lambda_A,  \alpha_{K'})$ be its image in $X'_{\ttT}$. We have to show that $Y'_{\ttT}\ra X'_{\ttT}$ induces an isomorphism of $k$-vector spaces between tangent spaces: $T_{Y'_{\ttT}, y}\xra{\cong}T_{X'_{\ttT}, x}$.

Set $\II=\Spec(k[\epsilon]/\epsilon^2)$. By deformation theory,  $T_{X'_{\ttT},x}$ is identified with the $\II$-valued points $x_{\II}=(A_{\II}, \iota_{A,\II}, \lambda_{A,\II}, \alpha_{K',\II})$ of $X'_{\ttT}$ with reduction $x\in X'_{\ttT}(k)$ modulo $\epsilon$.
  In the proof of Proposition~\ref{Prop:smoothness}, we have seen that giving an $x_{\II}$ is equivalent to giving, for each $\tilde\tau\in \Sigma_{E,\infty}$, a direct factor  $\omega^{\circ}_{A^\vee,\II,\tilde\tau}\subseteq H^{\cris}_1(A/k)_{\II,\tilde\tau}^\circ$ that  lifts $\omega^\circ_{A^\vee,\tilde\tau}\subseteq H^{\dR}_1(A/k)^{\circ}_{\tilde \tau}$ and satisfies the following properties:
\begin{itemize}
\item[(a)] If $\tilde\tau\in \Sigma_{E,\infty/\gothp}$ with $\gothp$ not of type $\beta^\sharp$ for $\ttS$, then $\omega_{A^\vee,\II,\tilde\tau}^{\circ}$ and $\omega_{A^\vee,\II,\tilde\tau^c}^\circ$ are orthognal complements of each other under the perfect pairing
\[
H^{\cris}_1(A/k)_{\II,\tilde\tau}^\circ\times H^{\cris}_1(A/k)_{\II,\tilde\tau^c}^\circ \ra k[\epsilon]/\epsilon^2
\]
induced by the polarization $\lambda_A$.

\item[(b)] If $\tilde\tau \in \tilde \ttS_\infty$, then $\omega_{A^\vee,\II,\tilde\tau}^\circ=0$ and $\omega_{A^\vee,\II,\tilde\tau^c}^\circ = H^{\cris}_1(A/k)^{\circ}_{\II,\tilde\tau^c}$.

\item[(c)] If $\tilde\tau$ restricts to $\tau\in \ttT$, then $\omega_{A^\vee,\II,\tilde\tau}^{\circ}$ has to be $F_{A,\es}^{n_{\tau}}(H^\cris_1(A^{(p^{n_{\tau}})}/k)_{\II,\tilde\tau}^\circ)$, where $n_\tau$ is as introduced in Subsection~\ref{S:partial-Hasse} and $F_{A,\es}^{n_\tau}$ on the crystalline homology are defined in the same way as $F_{A, \es}^{n_\tau}$ on the de Rham homology as in Notation~\ref{N:essential frobenius and verschiebung}.
 Since we are in characteristic $p$, we have $F_{A,\es}^{n_{\tau}}(H^\cris_1(A^{(p^{n_{\tau}})}/k)_{\II,\tilde\tau}^\circ)=\omega^\circ_{A^\vee,\tilde\tau}\otimes k[\epsilon]/\epsilon^2$.
\end{itemize}
Note also that, the crystal nature of $H^{\cris}_1(A/k)$ implies that there is a canonical isomorphism
$$H^{\cris}_1(A/k)_{\II} \cong H^{\dR}_1(A/k)\otimes_k k[\epsilon]/\epsilon^2.$$

 We have to show that, given such an $x_{\II}$, or equivalently given the liftings $\omega_{A^\vee, \II,\tilde\tau}^\circ$ as above, there exist unique 
 $(B_{\II},\iota_{B,\II}, \lambda_{B,\II}, \beta_{K'_\ttT, \II}, C_{\II}, \iota_{C, \II}; \phi_{A,\II},\phi_{B,\II})
 $ over $\II$ deforming
 $(B, \iota_B,\lambda_B, \beta_{K'_\ttT}, C, \iota_C; \phi_A, \phi_B)$ such that $(A_{\II}, \iota_{A,\II}, \lambda_{A,\II}, \alpha_{K',\II}, B_{\II},  \iota_{B,\II},\lambda_{B,\II},\beta_{K'_\ttT, \II}, C_{\II}, \iota_{C, \II}; \phi_{A,\II},\phi_{B,\II})$ is an $\II$-valued point of $Y'_{\ttT}$.

We start with $C_{\II}$. To show its existence, it suffices to construct, for each $\tilde{\tau}\in \Sigma_{E,\infty}$, a direct factor $\omega_{C^\vee,\II,\tilde\tau}^\circ\subseteq H^{\cris}_1(C/k)_{\II,\tilde\tau}^\circ$ that lifts $\omega^\circ_{C^\vee, \tilde\tau}\subseteq\cD_{C,\tilde\tau}^\circ\cong  H^{\dR}_1(C/k)_{\tilde\tau}^\circ$.
\begin{itemize}
\item
When neither $\tilde \tau$ nor $\sigma \tilde \tau $ belongs to $ \tilde \Delta(\ttT)^+$, 
$\phi_{A, *, ?}: H^{\dR}_{1}(A/k)^\circ_{?}\xra{\sim}H^{\dR}_1(C/k)^\circ_{?}$  is an isomorphism for $?=\tilde\tau, \sigma\tilde\tau$.
% Since $\omega^\circ_{A^\vee, \tilde\tau}$ coincides with $\Ker (F_A: H^{\dR}_1(A/k)^\circ_{\tilde\tau}\ra H^{\dR}_1(A/k)^\circ_{\sigma\tilde\tau})$, one has a commutative diagram
%\[
%\xymatrix{\omega^{\circ}_{A^\vee,\tilde\tau}\ar@{^(->}[r] \ar[d]^{\phi_{A,*,\tilde\tau}}_{\cong}&H^{\dR}_1(A/k)_{\tilde\tau}^\circ \ar[d]^{\phi_{A,*,\tilde\tau}}_{\cong}&H^{\cris}_{1}(A/k)_{\tilde\tau}^\circ \ar[d]^{\phi_{A,*,\tilde\tau}}_{\cong} \ar@{->>}[l]\\ \omega_{C^\vee,\tilde\tau}^\circ\ar@{^(->}[r] & H^{\dR}_1(C/k)_{\tilde\tau}^\circ &H^{\cris}_{1}(C/k)_{\tilde\tau}^\circ  \ar@{->>}[l]}
%\]
%for such a $\tilde\tau$.
% One takes  corresponding to $\omega_{A^\vee, \II,\tilde\tau}\subseteq H^{\cris}_1(A/k)_{\II,\tilde\tau}^\circ$.
We take $\omega_{C^\vee,\II,\tilde\tau}^\circ\subseteq H^{\cris}_1(C/k)_{\II,\tilde\tau}^\circ$ to be the image of $\omega^\circ_{A^\vee, \II,\tilde\tau}\subseteq H^{\cris}_1(A/k)_{\II,\tilde\tau}^\circ$ under the induced morphism $\phi^\cris_{A, *, \tilde  \tau}$ on the crystalline homology.

\item
When either one of $\tilde\tau$ and $\sigma \tilde \tau$ belongs to $\tilde \Delta(\ttT)^+$, an easy dimension count argument similar to Lemma~\ref{L:dimension count} (using Lemma~\ref{L:property of Delta}) shows that $\omega_{C^\vee, \tilde \tau}^\circ$ is either $0$ or of rank $2$. So there is a unique obvious such lift $\omega_{C^\vee,\II,\tilde\tau}^\circ$.
\end{itemize}

This finishes the construction of $\omega^\circ_{C^\vee,\II,\tilde\tau}$ for all $\tilde\tau$, hence one gets a deformation $C_{\II}$ of $C$, carrying a natural action of $\calO_D$. It is clear that the map $\phi^\cris_{A,*}: H^{\cris}_1(A/k)^\circ_{\II,\tilde\tau}\ra H^{\cris}_1(C/k)^\circ_{\II,\tilde\tau}$ sends $\omega^\circ_{A^\vee,\II,\tilde\tau}$ to $\omega^\circ_{C^\vee,\II,\tilde\tau}$. Hence, $\phi_{A}$ deforms to an $\calO_D$-equivariant isogeny of abelian schemes $\phi_{A,\II}:A_{\II}\ra C_{\II}$ by \cite[2.1.6.9]{lan}. 

We check now that $\phi_{A_\II}$ satisfies condition (iv) of the moduli space $Y'_{\ttT}$. We note that the map $\phi_{A_{\II},*}: H^{\dR}_1(A_{\II}/\II)\ra H^{\dR}_1(C_{\II}/\II)$ is canonically identified with $\phi_{A,*}^{\cris}: H^\cris_1(A/k)_{\II} \to  H^{\cris}_1(C /k)_\II$ by crystalline theory, which is in turn isomorphic to the base change of $\phi_{A,*}:H^\dR_1(A/k)\ra H^{\dR}_1(C/k)$ via  $k\hra k[\epsilon]/\epsilon^2$. Let  $\tilde\tau\in \tilde \Delta(\ttT)^+$. Since the Frobenius on $k[\epsilon]/\epsilon^2$ factors as
$$
k[\epsilon]/\epsilon^2\twoheadrightarrow k\xra{x\mapsto x^p} k\hra k[\epsilon]/\epsilon^2,
$$
we see that
$$
F_{A,\es}^n(H^{\dR}_1(A_{\II}^{(p^n)}/\II)^\circ_{\tilde\tau})
=F_{A,\es}^n(H^{\dR}_1(A^{(p^n)}/k)^\circ_{\tilde\tau})\otimes_k k[\epsilon]/\epsilon^2.
$$
Hence, the kernel of $\phi_{A_{\II},*,\tilde\tau}: H^{\dR}_1(A_{\II}/\II)^\circ_{\tilde\tau}\ra H^\dR_1(C_{\II}/\II)^\circ_{\tilde\tau}$ coincides with $F_{A,\es}^n(H^{\dR}_1(A_{\II}^{(p^n)}/\II)^\circ_{\tilde\tau})$, since it is the case after reduction modulo $\epsilon$. This shows that $\phi_{A_\II}$ satisfies the condition (iv). Conversely, it is clear that, if $C_{\II}$ and $\phi_{A_\II}$ satisfy condition (iv), then they have to be of the form as above.

We show now that  there exists a unique deformation $(B_{\II}, \phi_{B_\II})$ over $\II$ of $(B,\phi_B)$ satisfying condition (vi) of the moduli space $Y'_\ttT$. To construct $B_{\II}$, one has to specify, for each $\tilde\tau\in \Sigma_{E,\infty}$ a subbundle $\omega_{B^\vee, \II, \tilde\tau}^\circ\subseteq H^{\cris}_1(B/k)_{\II,\tilde\tau}^\circ$ lifting $\omega^\circ_{B^\vee,\tilde\tau}\subseteq H^{\dR}_1(B/k)^\circ_{\tilde\tau}$.
Similar to the discussion above,
\begin{itemize}

\item
If neither $\tilde \tau$ nor $\sigma \tilde \tau$ belong to $\tilde \Delta(\ttT)^-$, then $\phi_{B, *, ?}: H^{\dR}_{1}(B/k)^\circ_{?}\xra{\sim}H^{\dR}_1(C/k)^\circ_{?}$  is an isomorphism for $?=\tilde\tau, \sigma\tilde\tau$.
We take $\omega_{B^\vee,\II,\tilde\tau}^\circ\subseteq H^{\cris}_1(B/k)_{\II,\tilde\tau}^\circ$ to be the image of $\omega^\circ_{C^\vee, \II,\tilde\tau}\subseteq H^{\cris}_1(C/k)_{\II,\tilde\tau}^\circ$ under the induced morphism $\phi_{B, *, \tilde  \tau}^{-1}$ on the crystalline homology.

\item 
If at least one of $\tilde \tau$ and $\sigma \tilde \tau$ belongs to $\tilde \Delta(\ttT)^-$, then an easy dimension count argument similar to Lemma~\ref{L:dimension count} (using Lemma~\ref{L:property of Delta}) shows that $\omega^\circ_{B^\vee, \tilde \tau}$ is either $0$ or of rank $2$. There is a unique obvious such lift $\omega^\circ_{B^\vee, \II, \tilde \tau}$.
\end{itemize}
This defines  $\omega_{B^\vee,\II, \tilde\tau}^\circ$ for all $\tilde\tau\in \Sigma_{E,\infty}$. Hence, one gets a deformation $B_{\II}$ of $B$ over $k[\epsilon]/\epsilon^2$. It is immediate from the construction that the action of $\cO_D$ lifts to $B_{\II}$, and $\phi^\cris_{B,*,\tilde\tau}: H^{\cris}_1(B/k)^{\circ}_{\II, \tilde\tau}\ra H^{\cris}_1(C/k)^\circ_{\II,\tilde\tau}$ sends $\omega_{B^\vee,\II, \tilde\tau}^\circ$ to $ \omega_{C^\vee, \II,\tilde\tau}^\circ$ for all $\tilde\tau\in \Sigma_{E,\infty}$. Hence, $\phi_{B}:B\ra C$ deforms to an $\calO_D$-equivariant isogeny $\phi_{B_\II}: B_{\II}\ra C_{\II}$.  In the same way as for $\phi_{A_{\II}}$, we prove  that $\phi_{B_\II}$ satisfies condition (v) of the moduli space $Y'_{\ttT}$, and conversely the condition (v) determines $B_{\II}$ uniquely.

Let $\langle\ ,\ \rangle_{\lambda_{B}}:H^{\cris}_1(B/k)_{\II}^\circ\times H^\cris_1(B/k)^\circ_{\II}\ra k[\epsilon]/\epsilon^2$ be the pairing induced by the polarization $\lambda_B$. To prove that $\lambda_{B}$ deforms (necessarily uniquely) to a polarization $\lambda_{B_\II}$ on $B_{\II}$, it suffices to check that $\langle\ ,\ \rangle_{\lambda_B}^\cris$ vanishes on  $\omega_{B^\vee, \II, \tilde\tau}^\circ\times \omega_{B^\vee,\II, \tilde\tau^c}^\circ$ for all $\tilde\tau\in \Sigma_{E,\infty}$ (cf. \cite[2.1.6.9, 2.2.2.2, 2.2.2.6]{lan}):

\begin{itemize}
\item If   $\tau=\tilde\tau|_F$ lies in  $\ttS(\ttT)_\infty$, this is trivial, because one of $\omega^{\circ}_{B^\vee, \II, \tilde\tau}$ and $\omega_{B^\vee, \II, \tilde\tau^c}^\circ$ is equal to $0$ and the other one is equal to $H^{\cris}_1(B/k)_{\II,\tilde\tau}^\circ$ by construction.

\item If  $\tau=\tilde\tau|_F$ is not in $\ttS(\ttT)_\infty$, then the natural isomorphism $H^\cris_1(B/k)^{\circ}_{\II,\star}\cong H^\cris_1(A/k)^\circ_{\II, \star}$ sends $\omega^\circ_{B^\vee, \II, \star}$ to $\omega^\circ_{A^\vee,\II, \star}$ for $\star=\tilde\tau, \tilde\tau^c$.  The vanishing of $\langle \ ,\ \rangle_{\lambda_B}$ on $\omega^\circ_{B^\vee,\II,\tilde\tau}\times \omega_{B^\vee, \II, \tilde\tau^c}^\circ$ follows from the similar statement with $B$ replaced by $A$.
\end{itemize}
Therefore, we see that $\lambda_B$ deforms to a polarization $\lambda_{B_{\II}}$ on $B_\II$. Since $\lambda_{B_\II,*}^\dR:H^{\dR}_1(B/\II)\ra H^{\dR}_1(B^\vee/\II)$ is canonically identified with $\lambda_{B,*}^\cris: H^{\cris}_1(B/k)_{\II}\ra H^{\cris}_1(B^\vee/k)_{\II}$, which is in turn identified with the base change of $\lambda^\dR_{B,*}$ via $k\hra k[\epsilon ]/\epsilon^2$, it is clear that condition (ii) regarding the polarization is preserved by the deformation $\lambda_{B_{\II}}$.

It remains to prove that $\beta_{K'_\ttT}$ deforms to
$\beta_{K'_\ttT, \II}$.  The deformation of the tame level structure is automatic; the deformation of the subgroup at $p$-adic places of type $\alpha^\sharp$ and $\alpha2$ is also unique, by the same argument as in Theorem~\ref{T:unitary-shimura-variety-representability}.
\end{proof}

\subsection{A lift of $I_\ttT$}
\label{S:tilde IT}
Recall that $I_\ttT$ is the subset $\ttS(\ttT)_\infty - (\ttS_\infty \cup \ttT)$ defined in Theorem~\ref{T:main-thm}.
We use $\tilde I_{\ttT}$ to denote the subset of complex embeddings of $E$ consisting of the unique lift $\tilde \tau$ of every element $\tau \in I_\ttT$, for which $\tilde \tau^c \in \tilde \ttS(\ttT)_\infty$.  We describe this set  explicitly as follows.

We write $I_{\ttT/\gothp} = I_\ttT \cap \Sigma_{\infty/\gothp}$ and $\tilde I_{\ttT/\gothp} = \tilde I_\ttT \cap \Sigma_{E, \infty/\gothp}$ for $\gothp \in \Sigma_p$. They are empty set unless $\gothp$ is of type $\alpha 1$ or $\beta 1$.
When $\gothp$ is of type $\alpha1$ or $\beta1$, using  the notation of Subsection~\ref{S:quaternion-data-T},
$I_{\ttT/\gothp}$ consists of $\sigma^{-m_i-1}\tau_i$ for all $i$ such that $\#(C_i \cap \ttT_{/\gothp})$ is odd.
In the notation of Subsection~\ref{S:tilde S(T)}, the set $\tilde I_{\ttT/\gothp}$ consists of  $\sigma^{-a_{r_i}}\tilde \tau_i$ for all $i$ such that $\#(C_i \cap \ttT_{/\gothp})$ is odd.
We remark that, in either case, for any $\tilde \tau$ lifting a place $\tau \in I_{\ttT}$, $\tilde \tau \notin \tilde \Delta(\ttT)^+ \cup \tilde \Delta(\ttT)^-$.

\subsection{Isomorphism of  $Y'_\ttT$ with $Z'_{\ttT}$}
\label{S:Y_T=Z_T}
Let $Z'_\ttT$ be the moduli space over $k_0$ representing the functor that
takes a locally noetherian $k_0$-scheme $S$ to the set of isomorphism classes of tuples $(B, \iota_B, \lambda_B, \beta_{K'_{\ttT}}, J^\circ)$, where
\begin{itemize}
\item[(i)]
$(B, \iota_B, \lambda_B, \beta_{K'_{\ttT}})$ is an $S$-valued point of   $\bfSh_{K'_{\ttT}}(G'_{\tilde \ttS(\ttT)})$.

\item[(ii)]
$J^\circ$ is the collection of sub-bundles $J^\circ_{\tilde\tau}\subseteq H^{\dR}_1(B/S)_{\tilde\tau}^\circ$ locally free of rank $1$ for each   $\tilde\tau\in \tilde I_\ttT$.\end{itemize}

It is clear that $Z'_{\ttT}$ is a $(\PP^1)^{I_{\ttT}}$-bundle over $\bfSh_{K'_{\ttT}}(G'_{\tilde \ttS(\ttT)})$.

We define a morphism $\eta_2: Y'_{\ttT}\ra Z'_{\ttT}$ as follows:
Let $S$ be a locally noetherian  $k_0$-scheme, and $x=(A, \iota_{A},\lambda_A, \alpha_{K'}, B, \iota_B, \lambda_B, \beta_{K'_\ttT}, C, \iota_C;\phi_A, \phi_B)$ be an $S$-valued point of $Y'_{\ttT}$. We define $\eta_2(x)\in Z'_{\ttT}(S)$ to be the isomorphism class  of the tuple $(B, \iota_B, \lambda_B, \beta_{K'_{\ttT}}, J^\circ)$,
where $J^\circ_{\tilde \tau}$ is given by $\phi_{B,*,\tilde\tau}^{-1}\circ \phi_{A,*,\tilde\tau}(\omega_{A^\vee,\tilde\tau}^\circ)$ for the isomorphisms
\[
\xymatrix{
H^{\dR}_1(A/S)^\circ_{\tilde\tau}\ar[r]^{\phi_{A,*,\tilde\tau}}_{\cong} &H^{\dR}_1(C/S)^\circ_{\tilde\tau} & H^{\dR}_1(B/S)^\circ_{\tilde\tau}\ar[l]^{\cong}_{\phi_{B,*, \tilde\tau}}.
}\]
Note that $\tilde \tau \notin \tilde \Delta(\ttT)^\pm$ implies that both $\phi_{A, *, \tilde \tau}$ and $\phi_{B, *, \tilde \tau}$ are isomorphisms.

\begin{prop}\label{P:Y_T=Z_T}
The morphism $\eta_2: Y'_{\ttT}\ra Z'_{\ttT}$ is an isomorphism.
\end{prop}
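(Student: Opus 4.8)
As with Proposition~\ref{P:Y_S=X_S}, both $Y'_\ttT$ and $Z'_\ttT$ are representable by $k_0$-schemes of finite type (the former by Mumford's theory, the latter manifestly as a $(\PP^1)^{I_\ttT}$-bundle over the moduli scheme $\bfSh_{K'_\ttT}(G'_{\tilde\ttS(\ttT)})_{k_0}$), so it suffices to check that $\eta_2$ is bijective on closed points and induces an isomorphism of tangent spaces at each closed point. The plan is to prove these two statements as two lemmas, paralleling Lemmas~\ref{L:Y_T=X_T-1} and \ref{L:Y_T-X_T-tangent}.

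For the bijection on closed points, fix a perfect field $k$ and a point $(B,\iota_B,\lambda_B,\beta_{K'_\ttT},J^\circ)\in Z'_\ttT(k)$. I would reconstruct $A$, $C$, $\phi_A$, $\phi_B$ and all remaining data, in the reverse direction of the construction in Lemma~\ref{L:Y_T=X_T-1}. The key point is that starting from $B$ one builds $C$ by enlarging $\tcD_{B,\tilde\tau}^\circ$ exactly at $\tilde\tau\in\tilde\Delta(\ttT)^-$ (using the essential Frobenius, as dictated by condition (v) of $Y'_\ttT$), and then builds $A$ by shrinking $\tcD_{C,\tilde\tau}^\circ$ exactly at $\tilde\tau\in\tilde\Delta(\ttT)^+$ (as dictated by condition (iv)); Lemma~\ref{L:property of Delta} guarantees these operations are compatible with $F$ and $V$, and one checks stability verbatim as in Lemma~\ref{L:Y_T=X_T-1}. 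The datum $J^\circ$ enters precisely in pinning down $\omega^\circ_{A^\vee,\tilde\tau}$ for $\tilde\tau\in\tilde I_\ttT$ (equivalently, in reconstructing the Hodge filtration of $A$ at those embeddings where it is not forced by the partial-Hasse-invariant vanishing): one sets $\omega^\circ_{A^\vee,\tilde\tau}$ to be the image of $J^\circ_{\tilde\tau}$ under $\phi_{A,*,\tilde\tau}^{-1}\circ\phi_{B,*,\tilde\tau}$, and propagates to the other $\tilde\tau$ by $F,V$-stability. One then verifies that the resulting tuple lies in $Y'_\ttT$ — in particular that the partial Hasse invariants $h_{\tilde\tau}(A)$ vanish for $\tau\in\ttT$ (this uses Lemma~\ref{Lemma:partial-Hasse} together with the construction of $C$ at $\tilde\tau\in\tilde\Delta(\ttT)^+\cap\ttT_E$), that $\lambda_A$ obtained from $\lambda_B$ via the diagram (ix) is a genuine isogeny of the correct type (the same Dieudonné-pairing computation as in Lemma~\ref{L:Y_T=X_T-1}, now read backwards, using that $h_{\tilde\tau}(A)=0$), and that the level structures $\alpha_{K'}$ and the subgroups at primes of type $\alpha^\sharp$ and in Case $\alpha2$ are uniquely determined. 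Uniqueness of the reconstruction follows because conditions (iii)--(ix) of $Y'_\ttT$ leave no freedom once $(B,J^\circ)$ is given.

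For the tangent-space isomorphism, let $y=(A,\dots;\phi_A,\phi_B)\in Y'_\ttT(k)$ with image $z=(B,\iota_B,\lambda_B,\beta_{K'_\ttT},J^\circ)\in Z'_\ttT(k)$, and set $\II=\Spec k[\epsilon]/\epsilon^2$. A tangent vector at $z$ is a lift of $(B,\iota_B,\lambda_B,\beta_{K'_\ttT})$ to $\II$ — equivalently, by the deformation theory of Subsection~\ref{S:deformation} and Corollary~\ref{C:deformation}, a choice of lifts $\omega^\circ_{B^\vee,\II,\tilde\tau}\subseteq H^{\cris}_1(B/k)^\circ_{\II,\tilde\tau}$ for $\tilde\tau\in\Sigma_{E,\infty}-\ttS(\ttT)_{E,\infty}$ subject to the orthogonality constraint — together with a choice of lifts $J^\circ_{\II,\tilde\tau}\subseteq H^{\cris}_1(B/k)^\circ_{\II,\tilde\tau}$ of $J^\circ_{\tilde\tau}$ for $\tilde\tau\in\tilde I_\ttT$. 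I would then run the reconstruction of the previous paragraph over $\II$ instead of $k$: the crystals $H^{\cris}_1(B/k)_\II$, $H^{\cris}_1(C/k)_\II$, $H^{\cris}_1(A/k)_\II$ are canonically the base changes to $\II$ of the de Rham homologies over $k$, the isogenies $\phi_A,\phi_B$ deform uniquely (by \cite[2.1.6.9]{lan}, exactly as in Lemma~\ref{L:Y_T-X_T-tangent}), and the essential-Frobenius images appearing in conditions (iv)/(v) are insensitive to $\epsilon$ (since Frobenius on $k[\epsilon]/\epsilon^2$ factors through $k$), so conditions (iv)/(v) propagate automatically. The upshot is that a lift of $(B,J^\circ)$ to $\II$ determines, and is determined by, a lift of $A$ to $\II$ inside $X'_\ttT$ — namely $\omega^\circ_{A^\vee,\II,\tilde\tau}$ is recovered from $J^\circ_{\II,\tilde\tau}$ at $\tilde\tau\in\tilde I_\ttT$ and forced elsewhere. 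Since by Proposition~\ref{P:Y_S=X_S} (via $\eta_1$) the tangent space of $Y'_\ttT$ at $y$ is identified with that of $X'_\ttT$ at $x=\eta_1(y)$, and the latter, by Proposition~\ref{Prop:smoothness}, has dimension $\#(\Sigma_\infty-(\ttS_\infty\cup\ttT))$, a dimension count comparing with $\dim Z'_\ttT=\#(\Sigma_\infty-\ttS(\ttT)_\infty)+\#I_\ttT$ — which are equal since $\ttS(\ttT)_\infty=\ttS_\infty\sqcup\ttT\sqcup I_\ttT$ — shows the injective tangent map $d\eta_2$ is an isomorphism.

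The main obstacle I anticipate is the bookkeeping in the closed-point reconstruction: one must check, case by case over the types of $\gothp\in\Sigma_p$ and the cases $\alpha1,\alpha2,\beta1,\beta2$, that the Dieudonné lattices built by enlarging/shrinking at $\tilde\Delta(\ttT)^\pm$ are $F,V$-stable, carry the polarization with the correct defect (genuine isogeny off type $\beta^\sharp$, defect $k$ at type $\beta^\sharp$ for $\ttS(\ttT)$), and yield an abelian variety with exactly the signature $(s_{\ttT,\tilde\tau})_{\tilde\tau}$ — this last being precisely the content of Lemma~\ref{L:dimension count}, which was already isolated for reuse. None of this is conceptually new beyond Lemma~\ref{L:Y_T=X_T-1}; it is the same argument read in the opposite direction, with the rank-one datum $J^\circ$ substituting for the partial-Hasse-invariant constraints that pin down the Hodge filtration of $A$. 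I would therefore organize the proof so as to quote Lemmas~\ref{L:distance to T'}, \ref{L:property of Delta} and \ref{L:dimension count} directly and keep the new verification short.
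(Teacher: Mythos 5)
Your plan follows the paper's own proof essentially step for step: representability, bijection on geometric points by reversing the Dieudonn\'e-lattice constructions of Lemma~\ref{L:Y_T=X_T-1} (build $\tilde{\calD}_C$ from $\tilde{\calD}_B$ by enlarging at $\tilde{\Delta}(\ttT)^-$, then $\tilde{\calD}_A$ from $\tilde{\calD}_C$ by shrinking at $\tilde{\Delta}(\ttT)^+$ via the polarization dual), and a tangent-space identification over $k[\epsilon]/\epsilon^2$ using the crystalline base-change and the $\epsilon$-insensitivity of essential Frobenius images, quoting Lemmas~\ref{L:distance to T'}, \ref{L:property of Delta} and \ref{L:dimension count} exactly as the paper does. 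The one cosmetic slip to watch in writing this up is the phrase about ``setting $\omega^\circ_{A^\vee,\tilde\tau}$ to be the image of $J^\circ_{\tilde\tau}$'' — over a perfect field you cannot set the Hodge filtration independently, and in the paper $J^\circ$ enters the closed-point reconstruction by building $\tilde{\calD}_{C,\tilde\tau}^\circ$ at $\tilde\tau\in\tilde{\Delta}(\ttT)^-$ as $p^{-1}F_{B,\es}^{\,\bullet}(\tilde J^\circ)$ (and via the Iwahori datum $\beta_\gothp$ in case $\alpha2$, and $\Ker\lambda_{B,*}$ in case $\beta2$, where $\tilde I_{\ttT/\gothp}=\emptyset$); the Hodge filtration of $A$ then falls out automatically, so there is nothing to ``propagate by $F,V$-stability'' as a separate step.
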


We note that Theorem~\ref{T:main-thm-unitary} follows immediately from this Proposition and Proposition~\ref{P:Y_S=X_S}.

\begin{proof}

As in the proof of Proposition~\ref{P:Y_S=X_S}, it suffices to prove that $\eta_2$ induces a bijection on  the closed points  and on tangent spaces.

\textbf{Step I.} We show first that $\eta_2$ induces a bijection on closed points. Let $z=(B,\iota_B, \lambda_B, \beta_{K'_\ttT}, J^\circ)$ be a closed point of $Z'_{\ttT}$ with values in $k=\Fpb$. We have to show that there exists a \emph{unique} point  $y=(A, \iota_{A}, \lambda_A, \alpha_{K'}, B, \iota_B, \lambda_B, \beta_{K'_\ttT}, C, \iota_C;\phi_A, \phi_B)\in Y'_{\ttT}(k)$ with $\eta_2(y)=z$. To prove this, we basically reverse the  construction in  the proof of Lemma~\ref{L:Y_T=X_T-1}.

We start by reconstructing $C$ from $B$ and $J^\circ$. We denote by $\tcD_B=(\tcD^{\circ}_B)^{\oplus 2}$ the covariant Dieudonn\'e module of $B$, and by $\tcD^\circ_B=\bigoplus_{\tilde\tau\in \Sigma_{E,\infty}}\tcD_{B,\tilde\tau}^\circ$ the canonical decomposition according to the $\calO_E$-action. We construct a Dieudonn\'e submodule  $M^\circ=\bigoplus_{\tau\in \Sigma_{E,\infty}} M^\circ_{\tilde\tau}\subseteq \tcD_{B}^\circ [1/p]$ with $\tcD^\circ_{B}\subseteq M^\circ\subseteq p^{-1}\tcD^\circ_{B}$ as follows. 
Let $\tilde\tau\in \Sigma_{E,\infty/\gothp}$ with $\gothp\in \Sigma_p$. If $\tilde\tau\notin \tilde \Delta(\ttT)^-$, we put $M^\circ_{\tilde\tau}=\tcD^\circ_{B,\tilde\tau}$. To define $M^\circ_{\tilde\tau}$ in the other case, we separate the discussion according to the type of $\gothp$.

\begin{itemize}
\item (Case $\alpha 1$ and $\beta 1$)
Recall our notation from Subsections~\ref{S:quaternion-data-T}, \ref{S:tilde S(T)}, \ref{S:Delta-pm}, and \ref{S:tilde IT}.  There are two subcases according to the parity of $\#(C_i \cap \ttT_{/\gothp})$, where $C_i$ is a chain of $\ttS_{\infty/\gothp}\cup \ttT_{/\gothp}$ as in Subsection~\ref{S:quaternion-data-T}.  (It should not be confused with the abelian variety $C$.)

\begin{itemize}

\item When $r_i=\#(C_i \cap \ttT_{/\gothp})$ is odd, $\sigma^{-m_i-1}\tilde\tau_i\in \tilde I_{\ttT/\gothp}$ so that  $J^\circ_{\sigma^{-m_i-1}\tilde\tau_i}$ is defined. 
In this case, all $\tau = \sigma^{-\ell}\tau_i$ belong to $ \ttS(\ttT)_{\infty/\gothp}$ for $0 \leq \ell  \leq m_i+1$; so $s_{\ttT, \sigma^{-\ell} \tilde \tau_i} \in \{0,2\}$ and the essential Frobenii
\[
\xymatrix{
 F_{B,\es}^{m_i+1-\ell}:\
\tcD^\circ_{B,\sigma^{-m_i-1}\tilde\tau_i}\ar[r]_-{ F_{B,\es}}^-{\cong}
&\tcD^\circ_{B,\sigma^{-m_i}\tilde\tau_i}\ar[r]^-{\cong}_-{ F_{B,\es}} &\cdots \ar[r]^-{\cong}_-{ F_{B, \es}}&\tcD^{\circ}_{B,\sigma^{-\ell}\tilde\tau_i}
}\]
are isomorphisms for such an $\ell$.
If $a_j \leq \ell < a_{j+1}$ for some odd number $j$, we put
\[
M^\circ_{\sigma^{-\ell}\tilde\tau_i}=p^{-1} F_{B,\es}^{m_i+1-\ell}(\tilde J^\circ_{\sigma^{-m_i-1}\tilde\tau_i}),
\]
where $\tilde J^\circ_{\sigma^{-m_i-1}\tilde\tau_i}$ denotes the inverse image in $\tcD_{B,\sigma^{-m_i-1}\tilde\tau_i}^\circ$ of $J^\circ_{\sigma^{-m_i-1}\tilde\tau_i}\subseteq \cD^\circ_{B,\sigma^{-m_i-1}\tilde\tau_i}$  under the natural reduction map modulo $p$. For other $\ell$, we have already defined $M^\circ_{\sigma^{-\ell}\tilde\tau_i}$ to be $\tilde \calD^\circ_{B, \sigma^{-\ell}\tilde\tau_i}$.

\item When $r_i=\#(C_i\cap \ttT_{/\gothp})$ is even, there is no $J^\circ$ involved in this construction.
Note that
all $\tau = \sigma^{-\ell}\tau_i$ belong to $\ttS(\ttT)_{\infty/\gothp}$ for $0 \leq \ell \leq m_i$. So $s_{\ttT, \sigma^{-\ell} \tilde \tau_i} \in \{0,2\}$ and in the sequence of  essential Frobenii
\[
\xymatrix{\quad\quad
 F_{B,\es}^{m_i-\ell+1}:\
\tcD^{\circ}_{B,\sigma^{-m_i-1}\tilde\tau_i}\ar[r]_-{F_{B}}&\tcD^\circ_{B,\sigma^{-m_i}\tilde\tau_i}\ar[r]_-{ F_{B,\es}}^-{\cong}
&\tcD^\circ_{B,\sigma^{-m_i+1}\tilde\tau_i}\ar[r]^-{\cong}_-{ F_{B,\es}} &\cdots \ar[r]^-{\cong}_-{ F_{B, \es}}&\tcD^{\circ}_{B,\sigma^{-\ell}\tilde\tau_i},
}
\]
all the maps except the first one are isomorphisms.
If $a_j \leq \ell < a_{j+1}$ for some odd number $j$, we put
$$
M^\circ_{\sigma^{-\ell}\tilde\tau_i}=p^{-1} F_{B,\es}^{m_i-\ell+1} (\tcD^{\circ}_{B,\sigma^{-m_i-1}\tilde\tau_i});
$$
then we have  $\dim_k(M^\circ_{\sigma^{-\ell}\tilde\tau_i}/\tcD^\circ_{B,\sigma^{-\ell}\tilde\tau_i})=1$, since the cokernel of  $F_B: \tcD^\circ_{B,\sigma^{-m_i-1}\tilde\tau_i}\ra \tcD^{\circ}_{B,\sigma^{-m_i}\tilde\tau_i}$ has dimension $1$, as $s_{\ttT,\sigma^{-m_i-1}\tilde\tau_i}=1$.
(For other $\ell$, we have already defined $M^\circ_{\sigma^{-\ell}\tilde\tau_i}$ to be $\tilde \calD^\circ_{B, \sigma^{-\ell}\tilde\tau_i}$.)
\end{itemize}

\item (Case $\alpha 2$) In this case, $\gothp$ is a prime of type $\alpha^\sharp$ for $\Sh_{K_{\ttT}}(G_{\ttS(\ttT)})$, and it splits into two primes $\gothq$ and $\bar\gothq$ in $E$. Let $H_{\gothq}\subseteq B[\gothq]$ be the closed subgroup scheme given in the data $\beta_{K'_\ttT}$. 
Let $H_{\bar \gothq}$ be its annihilator under the Weil pairing between $B[\gothq]$ and $B[\bar \gothq]$ induced by $\lambda_B$. (We collectively write $H_\gothp$ for $H_\gothq \times H_{\bar \gothq}$.)
Let $\cD^\circ_{H_\gothp}=\bigoplus_{\tilde\tau\in \Sigma_{E,\infty/\gothp}}\cD^\circ_{H_\gothp,\tilde\tau} \subseteq \cD_{B}^\circ$ be the reduced covariant Dieudonn\'e module of $H_\gothp = H_{\gothq} \times H_{\bar \gothq}$. Then each $\cD^\circ_{H_{\gothp},\tilde\tau}$ is necessarily one-dimensional over $k$ for all $\tilde\tau\in \Sigma_{E,\infty/\gothp}$. For $\tilde\tau\in \tilde\Delta(\ttT)^-_{/\gothp}$, we define
$$
M^\circ_{\tilde\tau} =p^{-1}\tilde\cD^\circ_{H_\gothp,\tilde\tau},
$$
where  $\tcD^\circ_{H_\gothp,\tilde\tau}$ denotes the inverse image in $\tcD^\circ_{B,\tilde\tau}$ of the subspace $\cD^\circ_{H_{\gothp},\tilde\tau}\subseteq \cD_{B,\tilde\tau}^\circ$.  (We have defined $M^\circ_{\tilde \tau} = \tilde D^\circ_{B, \tilde \tau}$ for $\tilde \tau \notin \tilde \Delta(\ttT)^-_{/\gothp}$ before.)

\item(Case $\beta 2$)  In this case, $\gothp$ is a prime of type $\beta^\sharp$ for $\Sh_{K_{\ttT}}(G_{\ttS(\ttT)})$.  For $\tilde\tau\in \tilde \Delta(\ttT)^-_{/\gothp}$, let $\lambda_{B,*,\tilde\tau}: \cD^{\circ}_{B,\tilde\tau}\ra \cD^\circ_{B^\vee,\tilde\tau}$ be the morphism induced by the polarization $\lambda_B$. By Theorem~\ref{T:unitary-shimura-variety-representability}(b3), $J^\circ_{\tilde\tau}:=\Ker(\lambda_{B,*,\tilde\tau})$ is a $k$-vector space of dimension $1$.
We set $M^\circ_{\tilde\tau}=p^{-1}\tilde J^\circ_{\tilde\tau}$ for such $\tilde \tau$, where $\tilde J^\circ_{\tilde \tau}$ is the preimage of $J^\circ_{\tilde \tau}$ under the reduction map $\tilde \calD^\circ_{B, \tilde \tau} \to \calD^\circ_{B, \tilde \tau}$. 
Note that when viewing $\tcD^\circ_{B^\vee,\tilde\tau}$ as a lattice of $\tcD^\circ_{B,\tilde\tau}[1/p]$ using the polarization, we have $M^\circ_{\tilde\tau} =\tcD^\circ_{B^\vee,\tilde\tau}$.
(We have defined $M^\circ_{\tilde \tau} = \tilde D^\circ_{B, \tilde \tau}$ for $\tilde \tau \notin \tilde \Delta(\ttT)^-_{/\gothp}$ before.)

\end{itemize}

This concludes the definition of $M^\circ\subseteq p^{-1}\tcD^\circ_{B}$. One checks easily that $M^\circ$ is stable under $F_B$ and $V_B$. Consider the quotient Dieudonn\'e modules
$$
M/\tcD_{B}=(M^\circ/\tcD^\circ_{B})^{\oplus 2}\subseteq p^{-1}\tcD_{B}/\tcD_B\cong \cD_B.
$$
 Then $M/\tcD_B$ corresponds to a closed finite group scheme $G\subseteq B[p]$ stable under the action of $\cO_D$. We put $C=B/G$ with the induced $\calO_D$-action, and define $\phi_B:B\ra C$ as the canonical $\calO_D$-equivariant isogeny. Then the natural induced map $\phi_{B,*}:\tcD^\circ_{B}\ra \tcD^\circ_{C}$ is identified with the inclusion $\tcD^\circ_B\hra M^\circ$. 
%If we denote by $\tilde\omega^{\circ}_{C^\vee,\tilde\tau}\subseteq \tcD^\circ_{C,\tilde\tau}$ the inverse image of $\omega^\circ_{C^\vee,\tilde\tau}\subseteq \cD^\circ_{C,\tilde\tau}$, then one checks easily that
% \begin{equation}\label{E:tilde-omega-C} \phi_{B,*, \tilde \tau}^{-1}(\tilde\omega^\circ_{C,\tilde\tau})= \tcD^\circ_{B,\tilde\tau}\quad \text{for $\tilde\tau\in \tilde\ttS(\ttT)''_{/\gothp}\cap \tilde\ttT'_{/\gothp}$.}
% \end{equation}

We now construct $A$ from $C$. Similar to above, we first define a $W(k)$-lattice $L^\circ = \bigoplus_{\tilde \tau\in \Sigma_{E, \infty}} L^\circ_{\tilde \tau} \subseteq \tilde \calD_C^\circ$, with $L^\circ_{\tilde \tau} = \tilde \calD_{C, \tilde \tau}^\circ$ unless $\tilde \tau \in \tilde \Delta(\ttT)^+$.
If $\tilde \tau \in \tilde \Delta(\ttT)^+$, then the corresponding $p$-adic place $\gothp \in \Sigma_p$ cannot be of type $\beta2$ or $\beta^\sharp$. 
In this case, we identify $\tcD_{B}^\circ[1/p]$ with $\tcD_C^\circ[1/p]$ so that $\tcD_B^\circ$ and $\tcD^\circ_C$ are both viewed as  $W(k)$-lattices in $\tcD_B^\circ[1/p]$. The polarization $\lambda_B$ induces a perfect pairing 
\[
\langle\ ,\ \rangle_{\lambda_B}\colon \tcD^\circ_{B,\tilde\tau}[1/p]\times \tcD^\circ_{B,\tilde\tau^c}[1/p]\ra W(k)[1/p],
\]
which induces a perfect pairing between $\tcD^\circ_{B,\tilde\tau}$ and $\tcD^\circ_{B,\tilde\tau^c}$.
We put (for $\tilde \in \tilde \Delta(\ttT)^+$)
 $$
L^\circ_{\tilde\tau}=\tilde \calD^{\circ,\vee}_{C,\tilde\tau^c}: =\{v\in \tcD^\circ_{C,\tilde\tau}[1/p]: \langle v, w\rangle_{\lambda_B}\in W(k),\text{ for all } w\in \tilde \calD^\circ_{C,\tilde\tau^c} \}.
$$
Note that $\tilde \tau \in \tilde \Delta(\ttT)^+$ always implies that $\tilde \tau^c \in \tilde \Delta(\ttT)^-- \tilde \Delta(\ttT)^+$. So $\tilde \calD^\circ_{C, \tilde \tau} = \tilde \calD^\circ_{B, \tilde \tau}$ and $\tilde \calD^\circ_{C, \tilde \tau^c} \supset \tilde \calD^\circ_{B, \tilde \tau^c}$ with quotient isomorphic to $k$.
This implies that $L^\circ_{\tilde \tau} \subseteq \tilde \calD_{C, \tilde \tau}^\circ$ with quotient isomorphic to $k$.

 As usual, one verifies that $L^\circ$ is stable under $F_B$ and $V_B$ (because it is equal to either $\tilde \calD_{C, \tilde \tau}^\circ$ or $\tilde \calD_{C, \tilde \tau^c}^{\circ, \vee}$ in various cases), and we put $L=(L^\circ)^{\oplus 2}$. The quotient Dieudonn\'e module $L/p\tcD_{C}$ corresponds to a closed subgroup scheme $K\subseteq C[p]$ stable under the action of $\cO_D$. We put $A=C/K$ equipped with the induced $\calO_D$-action, and define $\phi_A\colon A\ra C$ as the canonical $\calO_D$-equivariant isogeny with kernel $C[p]/K$. Then $\phi_{A,*}:\tcD_{A}\ra \tcD_C$ is identified with the natural inclusion $L\hra \tcD_C$.

We define $\lambda_A:A\ra A^\vee$ to be the quasi-isogeny:
\[
\lambda_A: A\xra{\phi_A} C \xleftarrow{\phi_B} B\xra{\lambda_B}B^\vee\xleftarrow{\phi_{B}^\vee} C^\vee
\xra{\phi_A^\vee}A^\vee,
\]
and we will verify that $\lambda_A$ is  a genuine isogeny (hence a polarization since $\lambda_B$ is) satisfying condition (b) of the moduli space $\bfSh_{K'_p}(G'_{\tilde \ttS})$ as in Theorem~\ref{T:unitary-shimura-variety-representability}. 
We may identify $\tcD_{A}^\circ[1/p]$ and $\tcD^\circ_{A^\vee}[1/p]$ with $\tcD_{B}^\circ[1/p]$, and view both $\tcD^\circ_{A,\tilde\tau}$ and $\tcD^\circ_{A^\vee,\tilde\tau}$ as lattices of $\tcD^\circ_{B,\tilde\tau}[1/p]$.
It suffices to show that we have a natural inclusion
$$
\tcD^\circ_{A^\vee, \tilde\tau} \subseteq (\tcD^\circ_{A,\tilde\tau^c})^\vee= \big\{v\in \tcD^\circ_{B,\tilde\tau}[1/p]: \langle v, w\rangle_{\lambda_B}\in W(k)\text{ for all }w\in \tcD^\circ_{A,\tilde\tau^c}\big\},
$$
which is an isomorphism unless $\tilde \tau$ induces a $p$-adic place of type $\beta^\sharp$ for $\Sh_{K}(G_{\ttS})$ in which case it is an inclusion with quotient $k$.
\begin{itemize}
\item
By the construction of $A$, this is clear for $\tilde \tau \in \tilde \Delta(\ttT)^+$ and hence for all their complex conjugates (as the duality is reciprocal).
\item
For all places $\tilde \tau \in \Sigma_{E,\infty/ \gothp}$ such that  $\gothp$ is  not of type $\beta2$ and $\tilde \tau \notin \tilde \Delta(\ttT)^\pm$, we know that $\tilde \tau^c \notin \tilde \Delta(\ttT)^\pm$.  So $\tilde \calD^\circ_{A, ? } = \tilde \calD^\circ_{B,?}$ for $? = \tilde \tau, \tilde \tau^c$ under the identification.  The  statement is clear. Note that this includes the case that $\gothp$ is a prime of type $\beta^\sharp$ for $\Sh_{K}(G_{\ttS})$.
\item
The only case left is when $\tilde \tau \in \Sigma_{E, \infty/\gothp}$ for $\gothp$ of type $\beta2$. In this case, $\tcD^\circ_{A,\tilde\tau}=\tcD^\circ_{C,\tilde\tau}$ which is the dual of $\tcD^\circ_{C,\tilde\tau^c}$ for all $\tilde\tau\in \Sigma_{E,\infty/\gothp}$ by construction.
\end{itemize}

This concludes the verification of that $\lambda_A$ is isogeny satisfying condition (b) of Theorem~\ref{T:unitary-shimura-variety-representability} for the moduli space $\bfSh_{K'_p}(G'_{\tilde \ttS})$.

We now  define the level structure $\alpha_{K'}=\alpha^p\alpha_p$ on $A$. For the prime-to-$p$ level structure $\alpha^p$, we define it to be the $K_{\ttT}'$-orbit of the isomorphism class:
\[
\alpha^p\colon\Lambda^{(p)}\xrightarrow[\beta^p]{\sim} T^{(p)}(B)\xrightarrow[\phi_{B, *}]{\sim}  T^{(p)}(C) \xleftarrow[\phi_{A,*}]{\sim} T^{(p)}(A).
\]
We take the closed subgroup scheme $\alpha_\gothp\subseteq A[\gothp]$ for each $\gothp\in \Sigma_p$ of type $\alpha^\sharp$ for $\ttS$ (and hence for $\ttS(\ttT)$) to be the subgroup scheme corresponding to $\beta_\gothp$ under the sequence of isomorphisms (note that $\tilde \Delta(\ttT)^\pm_{/\gothp} = \emptyset$ for $\gothp$ of type $\alpha^\sharp$)
of $p$-divisible groups
$$
A[\gothp^\infty] \xrightarrow[\ \cong\ ]{\phi_{A}} C[\gothp^\infty]
\xleftarrow[\ \cong\ ]{\phi_B} B[\gothp^\infty].$$
 It is clear that $\alpha_{K'}$ verifies condition (c) in Theorem~\ref{T:unitary-shimura-variety-representability}.

This finishes the construction of all the data $y=(A,\iota_A,\lambda_A,\alpha_{K'}, B, \iota_B,\lambda_B, \beta_{K'_\ttT}, C, \iota_C;\phi_A,\phi_B)$. To see that $y$ is indeed a $k$-point of $Y'_{\ttT}$, we have to check that $y$ satisfies the conditions (i)-(ix) for $Y'_{\ttT}$ in Subsection~\ref{S:moduli-Y_S}. Conditions (ii), (iii), and (vi)-(ix) being clear from our construction, it remains to check (i), (iv) and (v).
Moreover, the  Kottwitz signature condition Theorem~\ref{T:unitary-shimura-variety-representability}(a) follows from Lemma~\ref{L:dimension count} immediately.  So for property (i), it remains to show that the partial Hasse invariant $h_{\tilde\tau}(A)$ vanishes if $\tau = \tilde\tau|_F\in \ttT$.

We now check these properties (i), (iv), and (v) in various cases.
For this, we identify $\tilde \calD_A^\circ[\frac1p]$, $\tilde \calD_B^\circ[\frac 1p]$, and $\tilde \calD_C^\circ[\frac 1p]$ via $\phi_{A, *}$ and $\phi_{B, *}$.

(1) Assume $\gothp$ is a prime of case $\alpha1$ or $\beta1$.
We keep the notation as before.
If $\tilde \tau$ does not lift a place belonging to some chain $C_i$ inside $\ttS_{\infty/\gothp}\cup \ttT_{/\gothp}$, the conditions (i), (iv), and (v) trivially hold.
So we assume $\tilde \tau|_F \in C_i$ for some $C_i$.

If $r_i=\#(\ttT_{/\gothp}\cap C_i)$ is odd, unwinding our earlier construction gives, for $0 \leq \ell \leq m_i+1$, 
\begin{align*}
&\tilde \calD_{A, \sigma^{-\ell}\tilde \tau_i}^\circ = \tilde \calD_{C, \sigma^{-\ell}\tilde \tau_i}^\circ = \left\{
\begin{array}{ll}
p^{-1} F_{B,\es}^{m_i+1-\ell}(\tilde J^\circ_{\sigma^{-m_i-1}\tilde \tau_i}) & \textrm{ if }a_j \leq \ell< a_{j+1} \textrm{ for some odd }j,
\\
\tilde \calD_{B, \sigma^{-\ell}\tilde \tau_i}^\circ & \textrm{ otherwise;}
\end{array}
\right.
\\
&\tilde \calD_{A, \sigma^{-\ell}\tilde \tau_i^c}^\circ = \left\{
\begin{array}{ll}
p F_{B,\es}^{m_i+1-\ell}(\tilde J^{\circ,\perp}_{\sigma^{-m_i-1}\tilde \tau_i}) & \textrm{ if }a_j \leq \ell< a_{j+1} \textrm{ for some odd }j,
\\
\tilde \calD_{B, \sigma^{-\ell}\tilde \tau_i^c}^\circ & \textrm{ otherwise; and}
\end{array}
\right.\\
& \tilde \calD_{C, \sigma^{-\ell}\tilde \tau_i^c}^\circ = \tilde \calD_{B, \sigma^{-\ell}\tilde \tau_i^c}^\circ \textrm{ for all } \ell.
\end{align*}
For condition (v) in Subsection~\ref{S:moduli-Y_S}, it is  trivial unless $\tilde \tau = \sigma^{-\ell} \tilde \tau_i$ for some $\ell \in [a_j, a_{j+1})$ with $j$ odd. In the exceptional case, it is equivalent to proving that (for the $n$ as in condition (v))
\[
\tilde \calD^\circ_{B, \sigma^{-\ell} \tilde \tau_i}  =  F_{A, \es}^n (\tilde \calD_{A, \sigma^{-\ell - n} \tilde \tau_i}^\circ).
\]
Note that  $n = a_{j+1}-\ell$,  it follows that $\tcD_{A,\sigma^{-\ell-n}\tilde\tau_i}^{\circ}=\tcD_{B,\sigma^{-a_{j+1}}\tilde\tau_i}^{\circ}$ by definition. As $s_{\ttT, \sigma^{-a_{j+1}}\tilde\tau_i}=2$ and $s_{\sigma^{-a_{j+1}}\tilde\tau_i}=1$,  
    $F_{A,\es}^n(\tilde \calD_{A, \sigma^{-\ell - n} \tilde \tau_i}^\circ)$ coincides with $F_{B,\es}^n(\tilde \calD_{B, \sigma^{-a_{j+1}}\tilde\tau_i}^\circ )$ by the definition of essential Frobenius.
The desired equality follows from the fact that $F_{B,\es}^n(\tilde \calD_{B, \sigma^{-a_{j+1}}\tilde\tau_i}^\circ )=\tcD^\circ_{B,\sigma^{-\ell}\tilde\tau_i}$.

Similarly, condition (iv) is trivial unless $\tilde \tau = \sigma^{-\ell} \tilde \tau_i^c$ for some $\ell \in [a_j, a_{j+1})$ with $j$ odd. In the exceptional case, it is equivalent to the following equality (for the $n$ as in condition (iv))
\[
p\tilde \calD^\circ_{B, \sigma^{-\ell} \tilde \tau_i^c}  = \tilde F_{A, \es}^n (\tilde \calD_{A, \sigma^{-\ell - n} \tilde \tau_i^c}^\circ).
\]
But $n = a_{j+1}-\ell$ by definition; so $\tilde \calD_{A, \sigma^{-\ell - n} \tilde \tau_i^c}^\circ = \tilde \calD^\circ_{B, \sigma^{-a_{j+1}}\tilde \tau_i^c}$. 
Since $s_{\ttT, \sigma^{-a_{j+1}}\tilde \tau^c_i} = 0$ and $s_{\sigma^{-a_{j+1}}\tilde \tau^c_i} = 1$, the essential Frobenius of $A$ at $\sigma^{-a_{j+1}}\tilde \tau^c_i$ is defined to be $F_A$ while that of $B$ at $\sigma^{-a_{j+1}}\tilde \tau^c_i$ is defined to be $V_{B}^{-1}$. Therefore,  $F_{A, \es, \sigma^{-\ell}\tilde \tau_i^c}^n$ is the same as $pF_{B, \es, \sigma^{-\ell}\tilde \tau_i^c}^n$. The equality above is now clear.

We now check the vanishing of partial Hasse invariants $h_{\sigma^{a_j}\tilde\tau_i}(A)$  with $1\leq j\leq r_i-1$. 
By Lemma~\ref{Lemma:partial-Hasse}, it suffices to show that,  for any $j = 1, \dots, r_i-1$ and setting $a_0 = -1$, the image of 
\[
 F_{A, \es}^{a_{j+1}- a_{j-1}}: \tcD^\circ_{A, \sigma^{-a_{j+1}} \tilde \tau_i} \ra  \tcD^\circ_{A, \sigma^{-a_{j-1}}\tilde \tau_i} 
\]
is contained in $p\tcD^\circ_{A, \sigma^{-a_{j-1}}\tilde \tau_i}$. First,  regardless of the parity of $j$, we find easily that
$ F_{A, \es}^{a_{j+1}- a_{j-1}}=pF_{B, \es}^{a_{j+1}- a_{j-1}}$ as maps from $\tcD^\circ_{A, \sigma^{-a_{j+1}} \tilde \tau_i}$ to $\tcD^\circ_{A, \sigma^{-a_{j-1}} \tilde \tau_i}$
by checking carefully the dependence of the essential Frobenii on the signatures.
Now if $j$ is odd, then 
 $$
 \tcD^{\circ}_{A,\sigma^{-\ell}\tilde\tau_i}=\tcD^{\circ}_{B,\sigma^{-\ell}\tilde\tau_i}\quad \text{for  }\ell=a_{j+1} \textrm{ and }a_{j-1}.
 $$
Hence one gets $F_{A, \es}^{a_{j+1}- a_{j-1}}(\tcD^{\circ}_{A,\sigma^{-a_{j+1}}\tilde\tau_i})=p\tcD^\circ_{A, \sigma^{-a_{j-1}} \tilde \tau_i}$, since $F_{B, \es}^{a_{j+1}- a_{j-1}}(\tcD^{\circ}_{B,\sigma^{-a_{j+1}}\tilde\tau_i})=\tcD^\circ_{B, \sigma^{-a_{j-1}} \tilde \tau_i}$.
 If $j$ is even, then 
 \[
 \tcD^{\circ}_{A,\sigma^{-\ell}\tilde\tau_i}=p^{-1} F_{B,\es}^{m_i+1-\ell}(\tilde J^\circ_{\sigma^{-m_i-1}\tilde \tau_i}),\quad \text{for } \ell=a_{j+1}\textrm{ and } a_{j-1}.
 \]
It is also clear that $F_{A, \es}^{a_{j+1}- a_{j-1}}(\tcD^{\circ}_{A,\sigma^{-a_{j+1}}\tilde\tau_i})=p\tcD^\circ_{A, \sigma^{-a_{j-1}} \tilde \tau_i}$.

If $r_i=\#(\ttT_{/\gothp}\cap C_i)$ is even, all conditions can be proved in exactly the same way, except replacing $ F_{B, \es}^{m_i+1-\ell}(\tilde J^\circ_{\sigma^{-m_i-1}\tilde \tau_i})$ by 
$ F_{B,\es}^{m_i-\ell} (F_B(\tcD^{\circ}_{B,\sigma^{-m_i-1}\tilde\tau_i}))$ and the proof of  the vanishing of the last Hasse invariant $h_{\sigma^{-a_{r_i}}\tilde \tau_i}(A)$ needs a small modification.
In fact,  we have 
\[
\tcD^{\circ}_{A,\sigma^{-\ell}\tilde\tau_i}=\begin{cases}
\tcD^{\circ}_{B,\sigma^{-\ell}\tilde\tau_i} & \text{for } a_{r_i}\leq \ell\leq m_i+1,\\
 p^{-1} F_{B,\es}^{m_i-\ell} (F_B(\tcD^{\circ}_{B,\sigma^{-m_i-1}\tilde\tau_i})) &\text{for } a_{r_{i}-1}\leq \ell < a_{r_i}.
 \end{cases}
 \]
Note that the number $n_{\sigma^{-a_{r_{i}}}\tau_i}$ defined in \ref{S:partial-Hasse} is equal to $m_i+1-a_{r_i}$, and the essential Frobenius $F_{A,\es}: \tcD^{\circ}_{A,\sigma^{-a_{r_i}}\tilde \tau_i}\ra \tcD^{\circ}_{A,\sigma^{-a_{r_i}+1}\tilde \tau_i}$ is simply $F_A$.  
We have
\[
 F_{A}
 F_{A, \es}^{m_i+1-a_{r_i}}(\tilde \calD^\circ_{A, \sigma^{-m_i-1}\tilde \tau_i} ) = F_{A, \es}^{m_i+2-a_{r_i}} (\tilde \calD^\circ_{A, \sigma^{-m_i-1}\tilde \tau_i} ) = p\tilde \calD^\circ_{A, \sigma^{-a_{r_i}+1}\tilde\tau_i}.
\]
This verifies the vanishing of $h_{\sigma^{-a_{r_i}}\tilde \tau_i}(A)$.

(2) Assume that $\gothp$ is a prime of Case $\alpha2$.
 We write $H_\gothp = H_{\gothq}\oplus H_{\bar\gothq}$ and $\tcD^\circ_{H_{\gothq},\tilde\tau}\subseteq \tcD^\circ_{B,\tilde\tau}$ as before.  We have
\[
\tcD^\circ_{A,\tilde\tau}=
\begin{cases}p^{-1}\tcD^\circ_{H_{\gothp},\tilde\tau} &\textrm{if }\tilde\tau \in \tilde \Delta(\ttT)_{/\gothp}^-,
\\
p(\tcD^\circ_{H_{\gothp},\tilde\tau^c})^\vee &\textrm{if }\tilde\tau \in \tilde \Delta(\ttT)_{/\gothp}^+,
\\
\tcD^\circ_{B,\tilde\tau} &\text{otherwise},
\end{cases}
\quad \textrm{and} \quad
\tcD^\circ_{C,\tilde\tau}=
\begin{cases}p^{-1}\tcD^\circ_{H_{\gothp},\tilde\tau} &\textrm{if }\tilde\tau \in \tilde \Delta(\ttT)_{/\gothp}^-,
\\
\tcD^\circ_{B,\tilde\tau} &\text{otherwise}.
\end{cases}
\]
The same arguments as in (1) allows us to check conditions (i), (iv) and (v).

(3) Assume now that $\gothp$ is prime of Case $\beta2$ in Subsection~\ref{S:quaternion-data-T}.  For each $\tilde\tau\in \Sigma_{E,\infty/\gothp}$, let  $\lambda_{B,*,\tilde\tau}: \tcD^\circ_{B,\tilde\tau}\ra \tcD^\circ_{B^\vee,\tilde\tau}$ be the map induced by the polarization $\lambda_B$.
By condition (b3) of Theorem~\ref{T:unitary-shimura-variety-representability}, its cokernel has  dimension $1$ over $k$.
When viewing $\tcD^\circ_{B^\vee,\tilde\tau}$ as a lattice of $\tcD^\circ_{B,\tilde\tau}[1/p]$ via $\lambda^{-1}_{B,*,\tilde\tau}$, we have
\[
\tcD^\circ_{A,\tilde\tau} = \tcD^\circ_{C,\tilde\tau}=
\begin{cases}
\tcD^\circ_{B^\vee,\tilde\tau} &\text{if }\tilde\tau\in \tilde \Delta(\ttT)_{/\gothp}^-,\\
\tcD^\circ_{B,\tilde\tau} &\text{otherwise}.
\end{cases}
\]
The same argument as in (1) allows us to check conditions (i), (iv) and (v).  This then concludes the proof of Step I.

\vspace{10pt}

\textbf{Step II:} Let $y=(A, \iota_A,\lambda_A,  \alpha_{K'}, B,\iota_B, \lambda_B,  \beta_{K'_\ttT}, C, \iota_C; \phi_A, \phi_B)\in Y'_{\ttT}$ be a closed point with values in $k=\Fpb$, and $z=\eta_2(y)=(B, \iota_B, \lambda_B, \beta_{K'_\ttT}, J^\circ)\in Z'_{\ttT}$. We prove that $\eta_2: Y_{T}'\ra Z'_{\ttT} $ induces a bijection of tangent spaces $\eta_{2,y}: T_{Y'_{\ttT}, y}\xra{\cong} T_{Z'_{\ttT},z}$. We follow the same strategy as in Lemma~\ref{L:Y_T-X_T-tangent}.

Set $\II=\Spec(k[\epsilon]/\epsilon^2)$. The tangent space $T_{Z'_{\ttT},z}$ is identified with the set of deformations $z_{\II}=(B_{\II}, \iota_{B,\II}, \lambda_{B,\II}, \beta_{K'_\ttT,\II}, J^\circ_{\II})\in Z'_{\ttT}(\II)$ of $z$, where $J^\circ_{\II}$ is the collection of sub-bundles $J^\circ_{\II,\tilde\tau}\subseteq H^\dR_{1}(B_{\II}/\II)^\circ_{\tilde\tau}=H^{\cris}_{1}(B/k)_{\II,\tilde\tau}^{\circ}$ for each $\tilde\tau\in \tilde I_\ttT$. We have to show that every point $z_\II$ lifts uniquely to a deformation $y_{\II}=(A_{\II}, \iota_{A_\II}, \lambda_{A_{\II}},  \alpha_{K',\II}, B_{\II}, \iota_{B_{\II}},\lambda_{B_{\II}},  \beta_{K'_\ttT,\II}, C_{\II}, \iota_{C_{\II}}; \phi_{A_{\II}}, \phi_{B_{\II}})\in Y'_{\ttT}(\II)$ with $\eta(y_{\II})=z_{\II}$.

We start with $C_{\II}$ and $\phi_{B_{\II}}$. For $\tilde\tau\in \Sigma_{E,\infty}$, denote by $$
\phi_{B,*,\tilde\tau}^{\cris}: H^\cris_1(B/k)^{\circ}_{\II,\tilde\tau}\ra H^\cris_1(C/k)^\circ_{\II,\tilde\tau}
$$
 the natural morphism induced by $\phi_{B}$, and by $\phi^{\dR}_{B,*,\tilde\tau}$  the analogous map between the de Rham homology $H^{\dR}_1$. The crystalline nature of $H^{\cris}_1$ implies that $\phi^{\cris}_{B,*,\tilde\tau}=\phi^{\dR}_{B,*,\tilde\tau}\otimes_k k[\epsilon]/\epsilon^2$.
  To construct $C_{\II}$ and $\phi_{B,\II}$ it suffices to specify, for each $\tilde\tau\in \Sigma_{E,\infty}$, a sub-bundle $\omega^\circ_{C^\vee,\II,\tilde\tau}\subseteq H^{\cris}_1(C/k)^\circ_{\II,\tilde\tau}$ which lifts $\omega^\circ_{C^\vee,\tilde\tau}$ and satisfies
  \begin{equation}\label{E:omega-inclusion-C}
  \phi^{\cris}_{B,*,\tilde\tau}(\omega^{\circ}_{B^\vee_{\II},\tilde\tau})\subseteq \omega^\circ_{C^\vee,\II,\tilde\tau}.
  \end{equation}
 We distinguish a few cases:
  \begin{enumerate}
  \item
  If neither $\tilde\tau$ nor $\sigma\tilde\tau$ belong to $\tilde\Delta(\ttT)^-$, both $\phi^{\cris}_{B,*,\tilde\tau}$ and $\phi^{\cris}_{B,*,\sigma\tilde\tau}$ are isomorphisms.
  It follows that $\phi_{B,*,\tilde\tau}^{\dR}(\omega^\circ_{B^\vee,\tilde\tau})=\omega^\circ_{C^\vee, \tilde\tau}$. Hence we  have to take $\omega^{\circ}_{C^\vee,\II, \tilde\tau}=\phi^{\cris}_{B,*, \tilde\tau}(\omega^\circ_{B^\vee_{\II},\tilde\tau})$.

\item If both $\tilde \tau, \sigma \tilde \tau \in \tilde \Delta(\ttT)^-$, then $\tau=\tilde\tau|_{F}\in \ttS_{\infty/\gothp}$ by Lemma~\ref{L:property of Delta}.  A simple dimension count similar to Lemma~\ref{L:dimension count} implies that
  $
  \dim_{k}(\omega^\circ_{C^\vee, \tilde\tau})=\dim_{k}(\omega^\circ_{B^\vee,\tilde\tau})\in \{0,2\}.
  $
  We  take $\omega^{\circ}_{C^\vee,\II, \tilde\tau}$ to be $0$ or $H^{\cris}_1(C/k)^\circ_{\II, \tilde\tau}$ correspondingly, and \eqref{E:omega-inclusion-C} trivially holds.

\item
If $\tilde \tau \in \tilde I_\ttT$, property of the morphism $\eta_2$ forces $\omega^{\circ}_{C^\vee, \II,\tilde\tau}=\phi^{\cris}_{B,*,\tilde\tau}(J^\circ_{B_{\II},\tilde\tau})$.
\item
For all other $\tilde \tau$, $\tilde \tau|_F$ must belong to $\ttT$.  Let $n$ be the number associated to $\tilde\tau $  as in Lemma~\ref{L:distance to T'}.  
By the vanishing of the partial Hasse invariant on $A$ at $\tilde \tau$, we have $\omega^\circ_{C^\vee, \tilde \tau} = F_{C,\es}^n(H^{\dR}_1(C/k)^\circ_{\sigma^{-n}\tilde \tau}).$
We take
\[
\omega^\circ_{C^\vee, \II, \tilde \tau} = F_{C,\es}^n(H_1^\cris(C^{(p^n)}/k)^\circ_{\II, \tilde \tau}).
\]
This is not a forced choice now, but it will become one when we have constructed the lift $A_\II$ and require $A_\II$ to have vanishing partial Hasse invariant.

Since $F_{B,\es}^n: H_1^\cris(B^{(p^n)}/k)^\circ_{\II, \tilde \tau} \to H_1^\cris(B/k)^\circ_{\II, \tilde \tau}$ is an isomorphism, we conclude that \eqref{E:omega-inclusion-C} holds for $\tilde \tau$.

  \end{enumerate}

     We now construct $A_\II$ and the isogeny $\phi_{A_{\II}}:A_{\II}\ra C_{\II}$.  As usual, we have to specify, for each $\tilde\tau\in \Sigma_{E,\infty}$,
     a sub-bundle $\omega^\circ_{A^\vee,\II,\tilde\tau}\subseteq H^{\cris}_1(A/k)^\circ_{\II,\tilde\tau}$ that lifts $\omega^{\circ}_{A^\vee,\tilde\tau}$ and satisfies $\phi_{A,*,\tilde\tau}^\cris(\omega^\circ_{A^\vee,\II,\tilde\tau})\subseteq \omega^\circ_{C^\vee,\II,\tilde\tau}$.
 Let $\gothp\in \Sigma_p$ be the prime such that $\tilde\tau\in \Sigma_{E,\infty/\gothp}$.

\begin{itemize}
\item If neither $\tilde \tau$ nor $\sigma \tilde \tau$ belong to $\tilde \Delta(\ttT)^+$, then $\phi_{A, *, \tilde \tau}^\dR$ and hence $\phi^\cris_{A, *, \tilde \tau}$ is an isomorphism.
We are forced to take  $\omega^{\circ}_{A^\vee,\II, \tilde\tau}=(\phi^{\cris}_{A,*,\tilde\tau})^{-1}(\omega^\circ_{C^\vee, \II, \tilde\tau})$. In particular, if $\tilde\tau\in \tilde I_{\ttT}$,  we have $\omega^\circ_{A^\vee,\II, \tilde\tau}=(\phi^{\cris}_{A,*,\tilde\tau})^{-1}\phi^{\cris}_{B,*,\tilde\tau}(J^{\circ}_{B_{\II},\tilde\tau})$.

\item In all other cases, we must have $\tilde \tau \in \Sigma_{E, \infty/\gothp}$ for $\gothp$ not of type $\beta2$ or $\beta^\sharp$.
Then we have to take $\omega^\circ_{A^\vee,\II,\tilde\tau}$ to be the orthogonal complement of $\omega^\circ_{A^\vee,\II, \tilde\tau^c}$ (which is already defined in the previous case) under the perfect pairing
     \[
     \langle\ ,\ \rangle_{\lambda_A}\colon H^{\cris}_1(A/k)^\circ_{\II, \tilde\tau}\times H^{\cris}_{1}(A/k)^\circ_{\II,\tilde\tau^c}\ra k[\epsilon]/\epsilon^2
     \]
induced by the polarization $\lambda_A$. It is clear that $\omega^\circ_{A^\vee, \II, \tilde\tau}$ is a lift of $\omega^\circ_{A^\vee, \tilde\tau}$.
It remains to show that $\phi^\cris_{A, *, \tilde \tau}(\omega^\circ_{A^\vee, \II, \tilde \tau}) \subseteq \omega^\circ_{C^\vee, \II, \tilde \tau}$.
We consider the following commutative diagram
\begin{equation}
\label{E:dualization ACB}
\xymatrix@C=5pt{
H_1^\cris(A/k)^\circ_{\II,\tilde{\tau}}
\ar[d]^{\phi^{\cris}_{A,*,\tilde\tau}}
 & \times & H_1^\cris(A/k)^\circ_{\II,\tilde{\tau}^c}\ar[d]^{\phi^{\cris}_{A,*,\tilde\tau^c}}_\cong
 \ar[rrrr]^-{\langle\ , \ \rangle_{\lambda_A}} &&&& \II
 \\
 H_1^\cris(C/k)^\circ_{\II,\tilde{\tau}} & \times & H_1^\cris(C/k)^\circ_{\II,\tilde{\tau}^c}
 \\
H_1^\cris(B/k)^\circ_{\II,\tilde{\tau}} \ar[u]_{\phi^{\cris}_{B,*,\tilde\tau}}^\cong
& \times & 
\ar[u]_{\phi^{\cris}_{B,*,\tilde\tau^c}}
H_1^\cris(B/k)^\circ_{\II,\tilde{\tau}^c}\ar[rrrr]^-{\langle\ , \ \rangle_{\lambda_B}} &&&& \II,\ar@{=}[uu]
}
\end{equation}
where both duality pairings are perfect.
By our choice of $\tilde \tau$, we have $\tilde \tau, \sigma \tilde \tau \notin \tilde \Delta(\ttT)^-$ and $\tilde \tau^c, \sigma \tilde \tau^c \notin \tilde \Delta(\ttT)^+$; so both $\phi^\cris_{A, *, \tilde \tau^c}$ and $\phi^\cris_{B, *, \tilde \tau}$ in \eqref{E:dualization ACB}
are isomorphisms and they induce isomorphisms on the reduced  differentials.
Using the diagram \eqref{E:dualization ACB} of perfect duality, the inclusion $\phi^\cris_{A, *, \tilde \tau}(\omega^\circ_{A^\vee, \II, \tilde \tau}) \subseteq \omega^\circ_{C^\vee, \II, \tilde \tau}$ is equivalent to the inclusion $\phi^\cris_{B, *, \tilde \tau^c}(\omega^\circ_{B,\II, \tilde \tau^c}) \subseteq 
\omega^\circ_{C,\II, \tilde \tau^c}$, which was already checked.
 \end{itemize}

By construction, the $\tilde\tau$-partial Hasse invariant of  $A_\II$ vanishes if $\tilde\tau\in \tilde\Delta(\ttT)^-$ and $\tilde\tau|_F\in \ttT$. The duality guarantees the vanishing of Hasse invariants at their conjugate places.  This condition conversely forces the uniqueness of our choice of $C_\II$ and $A_\II$, as indicated earlier.
From the construction, $\omega^\circ_{A^\vee,\II}=\bigoplus_{\tilde\tau\in \Sigma_{E,\infty}}\omega^\circ_{A^\vee,\II,\tilde\tau}$ is isotropic  under the paring on $H^\cris_{1}(A/k)^\circ_{\II}$ induced by $\lambda_A$.
This concludes checking condition (1) of Subsection~\ref{S:moduli-Y_S}.

The lift of the level structure $\alpha_{K', \II}$ is automatic for the tame part, and can be done in a unique way as in the proof of Theorem~\ref{T:unitary-shimura-variety-representability}.

It then remains to check that  $\phi_{A_\II}$ and $\phi_{B_\II}$ satisfy conditions (iv) and (v) of Subsection \ref{S:moduli-Y_S}. For condition (v),
it is obvious except when $\tilde \tau \in \tilde \Delta(\ttT)^-$, in which case,
\[
\mathrm{Im}(\phi^\cris_{B, *, \tilde \tau}) = \mathrm{Im}(\phi^\dR_{B, *, \tilde \tau}) \otimes_k \II, \quad \textrm{and} \quad
\phi_{A, *, \tilde \tau}^\cris(\mathrm{Im}(F_{\es,A_\II, \tilde \tau}^n)) = \phi_{A, *, \tilde \tau}^\dR(\mathrm{Im}(F_{\es,A, \tilde \tau}^n)) \otimes_k \II,
\]
where $n>1$ is the number determined in Lemma~\ref{L:distance to T'}.
So condition (v) for the lift follows from that for $\phi_A: A \to C$.  (Note that $n\geq 1$ implies that the image of the essential image is determined by the reduction.)
Exactly the same argument proves condition (iv).

This concludes Step II of the proof of Proposition~\ref{P:Y_T=Z_T}.
\end{proof}

\subsection{End of proof of Theorem~\ref{T:main-thm-unitary}}
\label{S:End-of-proof}  
Statement (1) of Theorem~\ref{T:main-thm-unitary} follows from Propositions~\ref{P:Y_S=X_S} and \ref{P:Y_T=Z_T}. Statements (2) and (3) are clear from the proof of (1). %For statement (3), consider the universal object $(\calA, \iota_{\calA}, \lambda_{\calA},  \alpha_{K'}, \calB, \iota_{\calB}, \lambda_{\calB}, \beta_p, \calC, \iota_{\calC}; \phi_{\calA}, \phi_{\calB})$ on $Y'_{\ttT}\xra{\sim}X'_{\ttT}$. Then we have $\calA=\bfA'_{\ttS,k_0}|_{X'_{\ttT}}$, and $\calB=\pi_{\ttT}^*(\bfA'_{\ttS(\ttT),k_0})$. It is clear that $\calA$ and $\calB$ are isogenous by construction.
It remains to prove statement (4), namely the compatibility of the twisted partial Frobenius. We use $X'_{\tilde \ttS,\ttT}$, $Y_{\tilde \ttS,\ttT}'$ and $Z'_{\tilde \ttS,\ttT}$ to denote the original $X'_{\ttT}$, $Y'_{\ttT}$ and $Z'_{\ttT}$  in \eqref{S:moduli-Y_S} to indicate their dependence on $\tilde \ttS$. We will define a twisted partial Frobenius 
\[
\gothF'_{\gothp^2,\ttS}:Y'_{\tilde \ttS,\ttT}\ra Y'_{\sigma^2_{\gothp}\tilde \ttS,\sigma^2_{\gothp}\ttT}
\] 
compatible via $\eta_1$ with the $\gothF'_{\gothp^2,\tilde \ttS}$ on $\bfSh_{K'}(G'_{\tilde \ttS})_{k_0}$ defined in Subsection~\ref{S:partial Frobenius}. 
For an $S$-valued point  $x=(A, \iota_A,\lambda_A,  \alpha_{K'}, B, \iota_B, \lambda_B, \beta_p, C, \iota_C; \phi_A, \phi_B)$ of $Y'_{\tilde \ttS,\ttT}$, its image 
\[
\gothF'_{\gothp^2,\tilde \ttS}(x)=(A', \iota_{A'},\lambda_{A'},  \alpha'_{K'}, B', \iota_{B'}, \lambda_{B'}, \beta'_p, C', \iota_{C'}; \phi_{A'}, \phi_{B'}).
\] 
is given as follows.
Here,  for $G=A,B,C$, we put $G'=(G/\Ker_{G,\gothp^2})\otimes_{\cO_F}\gothp$ where $\Ker_{G',\gothp^2}$ is the $\gothp$-component of the $p^2$-Frobenius of $G$.
 The induced structures $(\iota_{A'},\lambda_{A'}, \alpha'_{K'},\iota_{B'},\lambda_{B'},\beta'_{p})$ are defined in the same way as in \ref{S:partial Frobenius}.
  The isogenies $\phi_{A'}: A'\ra C'$ and $\phi_{B'}:B'\ra C'$ are constructed from $\phi_{A}$ and $\phi_B$ by the functoriality of $p^2$-Frobenius. 
We have to  prove that the induced map on de Rham homologies $\phi_{A',*,\tilde\tau}$ and $\phi_{B',*,\tilde\tau}$ satisfy the required conditions in (v) and (vi) of \ref{S:moduli-Y_S}.  If $\tilde\tau\in \Sigma_{E,\infty/\gothp'}$ with $\gothp'\neq \gothp$, this is clear, because the $p$-divisible groups $G'[\gothp'^{\infty}]$ are canonically identified with $G[\gothp'^{\infty}]$ for $G=A,B,C$. Now consider the case $\gothp'=\gothp$. As in  the proof of Lemma~\ref{L:partial Frobenius vs partial Hasse inv},  for $G=A,B,C$, the  $p$-divisible group $G'[\gothp^{\infty}]$ is isomorphic to the base change of $G[\gothp^{\infty}]$ via the $p^2$-Frobenius on $S$. One deduces thus isomorphisms of de Rham homology groups
\begin{equation}\label{E:isom-dR-ABC}
H^{\dR}_1(G'/S)^{\circ}_{\tilde\tau}=(H^{\dR}_1(G/S)^{\circ}_{\sigma^{-2}\tilde\tau})^{(p^2)},
\end{equation}
which is compatible with $F$ and $V$ as $\tilde\tau\in \Sigma_{E,\infty/\gothp}$ varies, and compatible with $\phi_{A',*,\tilde\tau}$ and $\phi_{B',*,\tilde\tau}$ by functoriality. Hence, the required properties on $\phi_{A',*,\tilde\tau}$ and $\phi_{B',*,\tilde\tau}$ follow from those on $\phi_{A,*,\sigma^{-2}\tilde\tau}$ and $ \phi_{B,*,\sigma^{-2}\tilde\tau}$.
 This finishes the construction of $\gothF'_{\gothp^2}$ on $Y'_{\tilde \ttS,\ttT}$.
Via the isomorphism $\eta_2:Y'_{\tilde \ttS,\ttT}\xra{\sim}Z'_{\tilde \ttS,\ttT}$ proved in \ref{P:Y_T=Z_T}, $\gothF'_{\gothp^2,\ttS}$ induces a map $\gothF'_{\gothp^2,\ttS}:Z'_{\tilde \ttS,\ttT}\ra Z'_{\sigma^2_{\gothp}\tilde \ttS,\sigma^2_{\gothp}\ttT}$.
   Let $z=(B,\iota_{B},\lambda_{B},\beta_{K'_{\ttT}}, J^{\circ})$ be an $S$-valued point of $Z'_{\tilde \ttS,\ttT}$ as described in \ref{S:Y_T=Z_T}. Then its image $\gothF'_{\gothp^2,\tilde \ttS}(z)$ is given by $(B',\iota_{B'},\lambda_{B'},\beta'_{K'_{\ttT}}, J^{\circ,\prime})\in Z'_{\sigma^2_{\gothp}\ttS,\sigma^{2}\ttT}$, where $(B',\iota_{B'},\lambda_{B'}, \beta'_{K'_{\ttT}})$ are defined as in \ref{S:partial Frobenius}, and $J^{\circ,\prime}$ is the collection of line bundles 
$J^{\circ,\prime}_{\tilde\tau}\subseteq H^{\dR}_1(B'/S)^{\circ}_{\tilde\tau}$ for each $\tilde\tau\in \bigcup_{\gothp'\in\Sigma_{p}}\sigma^2_{\gothp}(\tilde I_{\ttT_{\gothp'}})$ given as follows. For $\tilde\tau\in \tilde I_{\ttT_{\gothp'}}$ with $\gothp'\neq \gothp$, we have $J^{\circ,\prime}_{\tilde\tau}=J^{\circ}_{\tilde\tau}$ since $H^{\dR}_1(B'/S)^{\circ}_{\tilde\tau}$ is canonically identified with $H^{\dR}_1(B/S)^{\circ}_{\tilde\tau}$. For $\tilde\tau\in \sigma_\gothp^2(\tilde I_{\ttT_{\gothp}})$, we have
   $J^{\circ,\prime}_{\tilde\tau}=(J^{\circ}_{\sigma^{-2}\tilde\tau})^{(p^2)}$, which makes sense thanks to the isomorphism \eqref{E:isom-dR-ABC} for $G=B'$. After identifying $\bfSh_{K'}(G'_{\tilde \ttS})_{k_0,\ttT}=X'_{\tilde \ttS,\ttT}$ with $Z'_{\tilde \ttS,\ttT}$, the projection $\pi_{\ttT}:Z'_{\tilde \ttS,\ttT}\to \bfSh_{K'_{\ttT}}(G'_{\tilde \ttS(\ttT)})_{k_0}$ is given by $(B,\iota_{B},\lambda_{B},\beta_{K'_{\ttT}}, J^{\circ})\mapsto (B,\iota_{B},\lambda_{B},\beta_{K'_{\ttT}})$. It is clear that we have a commutative diagram:
   
   \[
\xymatrix@C=50pt{
Z'_{\tilde \ttS,\ttT}\ar[r]_-{\xi^\mathrm{rel}} \ar[rd]_{\pi_{\ttT}}
\ar@/^15pt/[rr]^-{\gothF'_{\gothp^2, \tilde \ttS}}
&
\gothF'^*_{\gothp^2, \ttS(\ttT)}(Z'_{\sigma^2_{\gothp}\tilde \ttS,\sigma^2_{\gothp}\ttT}) \ar[d] \ar[r]_-{\gothF'^*_{\gothp^2, \tilde \ttS(\ttT)}}
&
Z'_{\sigma^{2}_{\gothp}\tilde \ttS,\sigma^2_{\gothp}\ttT} \ar[d]^{\pi_{\sigma_{\gothp}^2\ttT}}
\\
&
\bfSh_{K'_\ttT}(G'_{\tilde \ttS(\ttT)})_{k_0}
\ar[r]^-{\gothF'_{\gothp^2, \tilde \ttS(\ttT)}} &
\bfSh_{K'_\ttT}(G'_{\sigma_\gothp^2(\tilde \ttS(\ttT))})_{k_0},
}
\]
where $\xi^{\mathrm{rel}}$ is given by $(B,\iota_{B},\lambda_{B},\beta_{K'_{\ttT}}, J^{\circ})\mapsto (B,\iota_{B},\lambda_{B},\beta_{K'_{\ttT}}, J^{\circ,\prime})$ with $J^{\circ,\prime}$ defined above. This proves  statement (4)  immediately.

\section{Ampleness of modular line bundles}
\label{Section:GO divisors}
In this section, we suppose that $F\neq \Q$. We will apply Theorem~\ref{T:main-thm-unitary} to prove some necessary conditions for the ampleness  of certain modular line bundles on quaternionic/unitary Shimura varieties. 
In this section, let $X' = \bSh_{K'}(G_{\tilde \ttS}')_{k_0}$ be a unitary Shimura variety over $k_0$ considered in Subsection~\ref{S:GO-notation}. This is a smooth and quasi-projective variety over $k_0$, and projective if $\ttS_{\infty}\neq\emptyset$.
Let $(\bfA',\iota,\lambda, \alpha_{K'})$ be the universal abelian scheme over $X'$.
For each $\tilde\tau\in \Sigma_{E,\infty}$, the $\cO_{X'}$-module $\omega^{\circ}_{\bfA'^\vee/X', \tilde \tau}$ is   locally free of rank $2-s_{\tilde\tau}$. It is a line bundle if $\tilde \tau|_F$  belongs to $ \Sigma_{\infty}-\ttS_\infty$.

%For Goren-Oort divisors, we can deduce further detailed information regarding the description of Theorem~\ref{T:main-thm-unitary}.
%We focus on the following two topics:
%\begin{itemize}
%\item
%ampleness  of automorphic line bundles on quaternionic Shimura varieties, and
%\item
%the description of the composition of the embedding of GO strata and the bundle morphism $%\pi_\ttT$.
%\end{itemize}

\subsection{Rational Picard group}

For a variety $Y$ over $k_0$, we write $\Pic(Y)_\Q$ for $\Pic(Y)\otimes_\ZZ \QQ$.  For a line bundle  $\calL$ on $Y$, we denote by $[\calL]$ its class in $\Pic (Y)_{\Q}$.

\begin{lemma}\label{L:omega-Pic}
\emph{(1)} For any $\tilde\tau\in\Sigma_{E,\infty}$ lifting a place  $\tau \in \Sigma_\infty-\ttS_\infty$, we have equalities
\[
[\omega^\circ_{\bfA'^\vee/X',\tilde\tau}]=[\omega^\circ_{\bfA'^\vee/X',\tilde\tau^c}]=[\omega^\circ_{\bfA'/X',\tilde\tau}]=[\omega^\circ_{\bfA'/X',\tilde\tau^c}].
\]

\emph{(2)} For any $\tilde\tau\in \Sigma_{E,\infty}$, we have $[\wedge^2_{\cO_{X'}} H_{1}^\dR(\bfA'/X')^\circ_{\tilde\tau}]=0.$

\emph{(3)}
Let $X'^*$ denote the minimal compactification of $X'$ (which is just $X'$ if $\ttS_{\infty} \neq \emptyset$).
Then the natural morphism $j: \Pic(X'^*) \to \Pic(X')$ is injective.
Moreover, for each $\tilde \tau \in \Sigma_{E, \infty}$ lifting a place  $\tau \in  \Sigma_\infty - \ttS_\infty$,  $[\omega^\circ_{\bfA'^\vee/X', \tilde \tau}]$ belongs to the image of $j_\QQ: \Pic(X'^*)_\QQ \to \Pic(X')_\Q$.

\end{lemma}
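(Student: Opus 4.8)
\textbf{Proof proposal for Lemma~\ref{L:omega-Pic}.}

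The plan is to treat the three parts in order, each reducing to a statement about the universal abelian scheme and its Hodge bundle, together with a standard extension argument over the minimal compactification.

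For part (1), the key input is the polarization $\lambda$ and the behaviour of the essential Frobenius and Verschiebung. By \eqref{Equ:duality-omega}, the polarization already gives $\omega^\circ_{\bfA'^\vee/X',\tilde\tau}\cong \omega^\circ_{\bfA'/X',\tilde\tau^c}$ whenever $\tau\in\Sigma_\infty-\ttS_\infty$ (the relevant $\gothp$ is automatically not of type $\beta^\sharp$). Likewise $[\omega^\circ_{\bfA'^\vee/X',\tilde\tau^c}]=[\omega^\circ_{\bfA'/X',\tilde\tau}]$. So it suffices to identify $[\omega^\circ_{\bfA'^\vee/X',\tilde\tau}]$ with $[\omega^\circ_{\bfA'^\vee/X',\tilde\tau^c}]$. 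For this I would use that the Frobenius $F:H_1^\dR(\bfA'/X')^{\circ,(p)}_{\tilde\tau}\to H_1^\dR(\bfA'/X')^\circ_{\tilde\tau}$ and Verschiebung $V$ relate the line bundle $\omega^\circ$ at $\tilde\tau$ to its twist at $\sigma^{-1}\tilde\tau$ up to a power of $p$; more precisely, running around the cycle $\Sigma_{E,\infty/\gothp}$, the composite of all the essential Frobenii is an isomorphism $(\omega^\circ_{\bfA'^\vee/X',\tilde\tau})^{\otimes p^{?}}\to$ (a twist of itself) up to tensoring by a structure sheaf, which in $\Pic(X')_\Q$ forces all the classes $[\omega^\circ_{\bfA'^\vee/X',\sigma^{-j}\tilde\tau}]$ to be equal. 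A cleaner route: the Hasse invariant $h_{\tilde\tau}$ is a section of $(\omega^\circ_{\bfA'^\vee/X',\sigma^{-n_\tau}\tilde\tau})^{\otimes p^{n_\tau}}\otimes(\omega^\circ_{\bfA'^\vee/X',\tilde\tau})^{\otimes -1}$, but since on the \emph{open} stratum this section need not be invertible I would instead argue via the line bundle $\wedge^2$ in part (2) and the exact sequence of the reduced Hodge filtration. Indeed, from part (2), $[\omega^\circ_{\bfA'^\vee/X',\tilde\tau}]+[\Lie(\bfA'/X')^\circ_{\tilde\tau}]=[\wedge^2H_1^\dR(\bfA'/X')^\circ_{\tilde\tau}]=0$ whenever $s_{\tilde\tau}=1$; and $\Lie(\bfA'/X')^\circ_{\tilde\tau}\cong\omega^\circ_{\bfA'/X',\tilde\tau}{}^{\vee}$... matching duals and using (b1) of Theorem~\ref{T:unitary-shimura-variety-representability} to pass between $\bfA'$ and $\bfA'^\vee$ then yields all four equalities. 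I expect this bookkeeping, rather than any deep idea, to be the main nuisance.

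For part (2), the point is that the reduced first de Rham homology $H_1^\dR(\bfA'/X')^\circ$ carries the Gauss--Manin connection, so its determinant is a line bundle with integrable connection on a variety in characteristic $p$; alternatively and more robustly, one uses that $\wedge^2 H_1^\dR(\bfA'/X')_{\tilde\tau}$ is identified (via the polarization pairing \eqref{Equ:pairing-R} together with the Tate twist in the Weil pairing) with the constant sheaf $\cO_{X'}$ up to the component of the similitude character, which is trivialized by the very definition of the unitary similitude group $G'_{\tilde\ttS}$ (the similitude factor lies in $\GG_m\subseteq T_F$). Concretely: the pairing $\langle\ ,\ \rangle_{\lambda}$ restricted to the $\tilde\tau$-component pairs $H_1^\dR(\bfA'/X')^\circ_{\tilde\tau}$ perfectly with $H_1^\dR(\bfA'/X')^\circ_{\tilde\tau^c}$, giving $[\wedge^2 H_1^\dR(\bfA'/X')^\circ_{\tilde\tau}]=-[\wedge^2H_1^\dR(\bfA'/X')^\circ_{\tilde\tau^c}]$; combined with the Frobenius/Verschiebung isomorphisms relating the $\tilde\tau$- and $\sigma\tilde\tau$-components up to $p$-th powers (which are trivial in $\Pic_\Q$), one gets that $[\wedge^2H_1^\dR(\bfA'/X')^\circ_{\tilde\tau}]$ is both equal to and the negative of a fixed class, hence zero. (Here the rationalization of the Picard group is essential, since the $p$-power twists are not seen.)

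For part (3), injectivity of $j:\Pic(X'^*)\to\Pic(X')$: when $\ttS_\infty\neq\emptyset$ there is nothing to prove since $X'^*=X'$; when $\ttS_\infty=\emptyset$, the boundary $X'^*\setminus X'$ has codimension $\geq 2$ (the minimal compactification of a Hilbert-type variety of dimension $g=[F:\QQ]>1$ adds cusps of dimension $0$, or more generally boundary components of codimension $\geq 2$ since $F\neq\QQ$), and $X'^*$ is normal, so restriction of Weil divisor classes is injective; I would cite the standard fact that for a normal variety removing a closed subset of codimension $\geq 2$ induces an isomorphism on divisor class groups, hence an injection $\Pic(X'^*)\hookrightarrow\Pic(X')$. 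For the second assertion, that $[\omega^\circ_{\bfA'^\vee/X',\tilde\tau}]$ extends to $X'^*$ rationally: the Hodge bundle $\omega^\circ_{\bfA'^\vee/X'}$ extends to a line bundle on the toroidal compactification via the canonical extension of the semi-abelian scheme, and this canonical extension descends to $X'^*$ after passing to a suitable power (this is the standard theory of automorphic line bundles on minimal compactifications, as in the work of Chai--Faltings / Lan). So $[\omega^\circ_{\bfA'^\vee/X',\tilde\tau}]$ lies in the image of $j_\QQ$. The main obstacle here is simply citing the right compactification statement precisely; the geometry is routine. I would close by noting the whole lemma only uses standard facts about Hodge bundles plus the codimension-$\geq 2$ boundary, with the rationalization absorbing all $p$-power discrepancies.
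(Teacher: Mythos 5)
Your proposal contains a structural circularity and a technical gap that together undermine parts (1) and (2).

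For (1), you start with the polarization identification $\omega^\circ_{\bfA'^\vee/X',\tilde\tau}\cong \omega^\circ_{\bfA'/X',\tilde\tau^c}$, which is correct, and then try to deduce $[\omega^\circ_{\bfA'^\vee/X',\tilde\tau}]=[\omega^\circ_{\bfA'^\vee/X',\tilde\tau^c}]$ from part (2). But the paper proves (2) as a \emph{consequence} of (1). Your route would be legitimate only if you had an independent proof of (2) — and your sketch of (2) does not go through, as explained below. You also dismiss the partial Hasse invariant route on the grounds that $h_{\tilde\tau}$ ``need not be invertible on the open stratum'', but invertibility is irrelevant: the point is that $h_{\tilde\tau}$ and $h_{\tilde\tau^c}$ vanish on the \emph{same} divisor $X'_\tau$ (Lemma~\ref{Lemma:partial-Hasse}), so the two line bundles of which they are sections have equal classes in $\Pic(X')_\QQ$. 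This gives the linear relation $p^{n_{\tau}}[\omega^\circ_{\sigma^{-n_\tau}\tilde\tau}]-[\omega^{\circ}_{\tilde\tau}] =p^{n_{\tau}}[\omega^{\circ}_{\sigma^{-n_\tau}\tilde\tau^{c}}]-[\omega^{\circ}_{\tilde\tau^c}]$, and inverting the resulting matrix over $\QQ$ forces the desired equality. This is the missing idea.

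For (2), your argument hinges on ``Frobenius/Verschiebung isomorphisms relating the $\tilde\tau$- and $\sigma\tilde\tau$-components up to $p$-th powers''. When $\tau\in\Sigma_\infty-\ttS_\infty$ (i.e.\ signature $s_{\tilde\tau}=1$), neither $F$ nor $V$ is an isomorphism on $H_1^\dR(\bfA'/X')^\circ_{\tilde\tau}$ — each has rank one at every point, so $\wedge^2 F=\wedge^2 V=0$ and you get no relation between $[\wedge^2 H_1^\dR(\bfA'/X')^\circ_{\tilde\tau}]$ and its Frobenius twist this way. The Frobenius ``loop'' argument only works when the essential Frobenii are isomorphisms, which requires $s_{\tilde\tau}\in\{0,2\}$; the paper indeed uses exactly that trick, but only for $\tilde\tau$ lifting $\ttS_\infty$ (and to close the loop at a $\gothp$ of type $\alpha^\sharp$ or $\beta^\sharp$), while for $\tilde\tau$ lifting $\Sigma_\infty-\ttS_\infty$ it plugs in (1) via the Hodge filtration. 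So your proposed independent proof of (2) breaks exactly at the case that matters.

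For (3), the injectivity argument (normality and codimension $\geq 2$) matches the paper. For the extension statement you invoke toroidal compactifications and canonical extensions of the Hodge bundle; this is a valid route but much heavier than what's needed. The paper instead observes that, by the matrix computation in (1), each $[\omega^\circ_{\bfA'^\vee/X',\tilde\tau}]$ is a \emph{positive} rational combination of the divisor classes $[\calO_{X'}(X'_\tau)]$; since each non-ordinary stratum $X'_\tau$ is closed in $X'^*$ and disjoint from the cusps, each $\calO_{X'}(X'_\tau)$ extends across the boundary directly. If you fix (1) along the paper's lines, you can recycle the same matrix to get this lighter proof of (3).
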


\begin{proof}
(1)  Suppose that $\tau \in \Sigma_{\infty/\gothp} - \ttS_{\infty/\gothp}$ for $\gothp \in \Sigma_\gothp$.  Clearly, $\gothp$ is not of type $\alpha^\sharp$ or $\beta^\sharp$.
The equality  $[\omega^\circ_{\bfA'^\vee/X',\tilde\tau}]=[\omega^\circ_{\bfA'/X',\tilde\tau^c}]$ follows from the isomorphism $\omega^\circ_{\bfA'^\vee/X',\tilde\tau}\cong \omega^\circ_{\bfA'/X',\tilde\tau^c} $ thanks to the polarization $\lambda$ on $\bfA'$.
 To prove the equality $[\omega^\circ_{\bfA'^\vee/X',\tilde\tau}]=[\omega^\circ_{\bfA'^\vee/X',\tilde\tau^c}]$, we consider the partial Hasse invariants $h_{\tilde\tau}\in \Gamma\big(X', (\omega^\circ_{\bfA'^\vee/X',\sigma^{-n_\tau}\tilde\tau})^{\otimes p^{n_{\tau}}}\otimes \omega^{\circ,\otimes (-1)}_{\bfA'^\vee/X',\tilde\tau}\big)$,
 and  $h_{\tilde\tau^c}$ defined similarly with $\tilde\tau$ replaced by $\tilde\tau^c$. By Lemma~\ref{Lemma:partial-Hasse} and  Proposition~\ref{Prop:smoothness}, the vanishing of $h_{\tilde\tau}$ and $h_{\tilde\tau^c}$ define the same divisor: $X'_{\tau}\subseteq X'$.
 Hence, for each $\tilde\tau\in \Sigma_{E,\infty}$ lifting some $ \tau \in \Sigma_{\infty/\gothp} -\ttS_{\infty/\gothp}$, we have an equality
\begin{equation}\label{E:equality-tau-tau-c}
p^{n_{\tau}} [\omega^\circ_{\bfA'^\vee/X',\sigma^{-n_\tau}\tilde\tau}]- [\omega^{\circ}_{\bfA'^\vee/X',\tilde\tau}] =p^{n_{\tau}} [\omega^{\circ}_{\bfA'^\vee/X',\sigma^{-n_\tau}\tilde\tau^{c}}]-[\omega^{\circ}_{\bfA'^\vee/X',\tilde\tau^c}].
\end{equation}
Let $C$ be the square matrix with coefficients in $\Q$, whose rows and columns are labeled by those places $\tilde \tau \in \Sigma_{E, \infty}$ lifting a place $\tau \in \Sigma_{\infty/\gothp} - \ttS_{\infty/\gothp}$, and whose $(\tilde \tau_1, \tilde \tau_2)$-entry is
\[
c_{\tilde \tau_1, \tilde \tau_2}=\begin{cases} -1&\text{if } \tilde \tau_1=\tilde \tau_2,\\
p^{n_{\tau_2}}&\text{if }\tilde \tau_1=\sigma^{-n_{\tau_2}}\tilde \tau_2,\\
0&\text{otherwise}.
\end{cases}
\]
%\liang{One needs to consider lifts  $\tilde \tau_i$ here; as the corresponding cycle may not be the same as that of $\tau_i$.  I added in $\pm1$ to say that }
One checks easily that $C$ is invertible, hence it follows from  \eqref{E:equality-tau-tau-c} that $[\omega^\circ_{\bfA'^\vee/X',\tilde\tau}]=[\omega^\circ_{\bfA'^\vee/X',\tilde\tau^c}]$.

(2) Assume first that $\tilde\tau\in \Sigma_{E,\infty}$ lifts some $\tau \in \Sigma_{\infty/\gothp}-\ttS_{\infty}$. From the Hodge filtration $0\ra \omega^\circ_{\bfA'^\vee,\tilde\tau}\ra H^\dR_1(\bfA'/X')^\circ_{\tilde\tau}\ra \Lie(\bfA'/X')^\circ_{\tilde\tau}\ra 0$, one deduces that
 \[
 [\wedge^2_{\cO_{X'}}H^{\dR}_1(\bfA'/X')_{\tilde\tau}^\circ]=[\omega^\circ_{\bfA'^\vee/X',\tilde\tau}]+[\Lie(\bfA'/X')_{\tilde\tau}^\circ].
 \]
 Then statement (2) follows from (1) and that $[\Lie(\bfA'/X')^\circ_{\tilde\tau}]=-[\omega^\circ_{\bfA'/X',\tilde\tau}]=-[\omega^\circ_{\bfA'^\vee/X',\tilde\tau^c}]$. Consider now the  case when $\tilde\tau\in \Sigma_{E,\infty}$ lifts some $\tau \in \ttS_{\infty/\gothp}$ for a place $\gothp$ of type $\alpha$ or $\beta$. Then there is an integer $m\geq 1$ such that $\sigma^m\tau\in \Sigma_{\infty}-\ttS_{\infty}$ and $\sigma^{i}\tau\in \ttS_{\infty}$ for all $0\leq i\leq m-1$, and we have a sequence of isomorphisms
 \[
 \xymatrix{
 H^\dR_1(\bfA'/X')^{\circ, (p^m)}_{\tilde\tau}\ar[r]^-{F_{\bfA',\es}}_-{\cong} &H^\dR_1(\bfA'/X')^{\circ, (p^{m-1})}_{\sigma\tilde\tau}\ar[r]^-{F_{\bfA',\es}}_-{\cong} &\cdots\ar[r]^-{F_{\bfA',\es}}_-{\cong} &H^\dR_1(\bfA'/X')^\circ_{\sigma^m\tilde\tau}.
 }
 \]
 From this, one gets
 $$
 p^m[\wedge^2_{\cO_{X'}}H^\dR_1(\bfA'/X')^\circ_{\tilde\tau}]=[\wedge^2_{\cO_{X'}}H^\dR_1(\bfA'/X')^\circ_{\sigma^m\tilde\tau}]=0.$$
 
Finally, if $\tilde\tau\in \Sigma_{E,\infty/\gothp}$ for a place $\gothp$ of type $\alpha^\sharp$ or $\beta^\sharp$ and if $m$ is the inertia degree of $\gothp$ over $p$, then the sequence of isomorphism
 \[
 \xymatrix{
 H^\dR_1(\bfA'/X')^{\circ, (p^{2m})}_{\tilde\tau}\ar[r]^-{F_{\bfA',\es}}_-{\cong} &H^\dR_1(\bfA'/X')^{\circ, (p^{2m-1})}_{\sigma\tilde\tau}\ar[r]^-{F_{\bfA',\es}}_-{\cong} &\cdots\ar[r]^-{F_{\bfA',\es}}_-{\cong} &H^\dR_1(\bfA'/X')^\circ_{\sigma^{2m}\tilde\tau}
 }
 \]
gives rise to an equality
\[
p^{2m}[\wedge^2_{\cO_{X'}}H^\dR_1(\bfA'/X')^\circ_{\tilde\tau}]=[\wedge^2_{\cO_{X'}}H^\dR_1(\bfA'/X')^\circ_{\sigma^{2m}\tilde\tau}]=[\wedge^2_{\cO_{X'}}H^\dR_1(\bfA'/X')^\circ_{\tilde\tau}],
\]
as $\sigma^{2m}\tilde \tau = \tilde \tau$.  This forces $[\wedge^2_{\cO_{X'}}H^\dR_1(\bfA'/X')^\circ_{\tilde\tau}] = 0$.

(3) If $X'$ is a Shimura curve, then $X'^*=X'$ as $F\neq \Q$.  Assume now that $X'$ has dimension at least $2$. The injectivity of $j: \Pic(X'^*) \to \Pic(X')$ follows from the fact that $X'^*$ is normal \cite[Proposition~7.2.4.3]{lan},  and that the complement $X'^*-X'$  has codimension $\geq 2$.
Recall from (1) that the partial Hasse invariant defines the class as described in \eqref{E:equality-tau-tau-c}.
Note that the inverse matrix of $C$ has all entries positive.
It follows that each $[\omega^\circ_{\bfA'^\vee/X', \tilde\tau_0}]$ for $\tilde \tau_0$ lifting $\tau_0 \in \Sigma_\infty- \ttS_\infty$ is a positive linear combination of $\calO_{X'}(X'_\tau)$'s.
Let $X'^{\mathrm{n-ord}}=\bigcup_{\tau\in \Sigma_{\infty}-\ttS_{\infty}}X'_{\tau}\subseteq X'$  be the union of the Goren-Oort strata of codimension $1$. 
Since $X'^{\mathrm{n-ord}}$ is closed in $X'^*$ and is disjoint from the cusps, each line bundle $\calO_{X'}(X'_\tau)$ extends to a line bundle $\calO_{X'^*}(X'_\tau)$.
By linear combination, each $[\omega^\circ_{\bfA'^\vee/X', \tilde\tau_0}]$ extends to a class in $\Pic(X'^*)_\QQ$.
\end{proof}

\begin{notation}
For any $\tau \in \Sigma_{\infty}-\ttS_{\infty}$, we put  $[\omega_{\tau}]=[\omega^\circ_{\bfA'/X',\tilde\tau}]$ for simplicity, where $\tilde \tau$ is a lift of $\tau$. This is a well defined element in $\Pic (X'^*)_\Q$ by Lemma~\ref{L:omega-Pic}.
\end{notation}

\begin{prop}
\label{P:normal bundle}
Let $\gothp$ be a $p$-adic place such that $\#(\Sigma_{\infty/\gothp} - \ttS_{\infty/\gothp}) >1$.
When $\ttT$ consists of a single element $\tau \in \Sigma_{\infty/\gothp}$ with $\gothp$ not of type $\beta2$, let $n_\tau = n_{\tau,\ttS}$ be as in Subsection~\ref{S:partial-Hasse}.  Let $N_{X'_{\ttT}}(X')$ denote the normal bundle of the embedding  $X'_{\ttT} \hookrightarrow X'$. Then the equality $[N_{X'_{\ttT}}(X')]=[\calO(-2p^{n_\tau})]$ holds  in $\Pic(X'_{\ttT})_{\Q}$,  where $\calO(1)$ is the canonical quotient bundle of the $\PP^1$-bundle $\pi_\tau: X'_{\ttT}=\bfSh_{K'}(G'_{\tilde \ttS})_{k_0, \ttT}  \to \bfSh_{K'}(G'_{\tilde \ttS(\ttT)})_{k_0} $, and $\calO(-2p^{n_\tau})$ is the dual of  $\calO(1)^{\otimes2 p^{n_\tau}}$.
\end{prop}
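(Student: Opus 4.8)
The plan is to compute the class $[N_{X'_\ttT}(X')]$ of the normal bundle of the Goren--Oort divisor by two complementary routes and match them. Since $\ttT = \{\tau\}$ has a single element, $X'_\ttT \subseteq X'$ is a smooth divisor (Proposition~\ref{Prop:smoothness}), and its normal bundle is $\calO_{X'}(X'_\ttT)|_{X'_\ttT}$. By the definition of the stratum, $X'_\ttT$ is the vanishing locus of the partial Hasse invariant $h_{\tilde \tau}$, which by \eqref{Equ:partial-hasse} is a section of the line bundle $(\omega^\circ_{\bfA'^\vee/X', \sigma^{-n_\tau}\tilde\tau})^{\otimes p^{n_\tau}}\otimes (\omega^\circ_{\bfA'^\vee/X',\tilde\tau})^{\otimes(-1)}$; since $X'_\ttT$ is cut out transversally (indeed smooth of the expected codimension), the vanishing order of $h_{\tilde\tau}$ along $X'_\ttT$ is $1$, hence
\[
[N_{X'_\ttT}(X')] = \big(p^{n_\tau}[\omega^\circ_{\bfA'^\vee/X',\sigma^{-n_\tau}\tilde\tau}] - [\omega^\circ_{\bfA'^\vee/X',\tilde\tau}]\big)\big|_{X'_\ttT}
\]
in $\Pic(X'_\ttT)_\QQ$. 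So the first step is to reduce the statement to an identity about the restrictions of $[\omega^\circ_{\bfA'^\vee/X',\tilde\tau}]$ and $[\omega^\circ_{\bfA'^\vee/X',\sigma^{-n_\tau}\tilde\tau}]$ to the stratum.

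The second step is to identify those restrictions via the $\PP^1$-bundle structure $\pi_\tau: X'_\ttT \to \bfSh_{K'}(G'_{\tilde\ttS(\ttT)})_{k_0}$ of Theorem~\ref{T:main-thm-unitary}(1). Here $I_\ttT = \{\tau\}$ (a single element, since $\#(\Sigma_{\infty/\gothp}-\ttS_{\infty/\gothp})>1$ and $\ttT$ is a singleton forces the chain $C_1$ to have $\#(C_1\cap\ttT)=1$ odd), and the $\PP^1$ is the projectivization of $H^\dR_1(\bfB/\bfSh)^\circ_{\tilde\tau}$ in the moduli description $Z'_\ttT$ of Subsection~\ref{S:Y_T=Z_T}; the tautological subbundle $J^\circ_{\tilde\tau}\subseteq \pi_\tau^*H^\dR_1(\bfB/\bfSh)^\circ_{\tilde\tau}$ has the property, by the construction of $\eta_1, \eta_2$ and the description of $Y'_\ttT$, that under the isogenies $\phi_A, \phi_B$ it becomes $\omega^\circ_{\bfA'^\vee/X',\tilde\tau}$ on $X'_\ttT$. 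Thus $\omega^\circ_{\bfA'^\vee/X',\tilde\tau}|_{X'_\ttT}$ is the tautological subbundle of a rank-$2$ bundle pulled back from the base, so its class is $[\calO(-1)] + (\text{pullback class})$; dually $[\calO(1)]$ is the relative quotient bundle. Concretely, one uses Lemma~\ref{L:omega-Pic}(2) ($[\wedge^2 H^\dR_1]=0$) to write $[J^\circ_{\tilde\tau}] = -[\calO(1)] + \frac12\cdot 0$-type relations — more precisely, from $0 \to J^\circ_{\tilde\tau}\to \pi_\tau^* H^\dR_{1,\tilde\tau}^\circ \to \calO(1)\to 0$ one gets $[\omega^\circ_{\bfA'^\vee/X',\tilde\tau}|_{X'_\ttT}] = [\pi_\tau^*\text{something}] - [\calO(1)]$. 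Meanwhile, for the embedding $\sigma^{-n_\tau}\tilde\tau$: by Lemma~\ref{L:distance to T'}, $\sigma^{-n_\tau}\tilde\tau \in \ttT'_E$ is the "exit point" of $\tilde\Delta(\ttT)^-$, and by the construction of $Y'_\ttT$ (condition (v)) together with the vanishing of $h_{\tilde\tau}$ the isogeny $\phi_B$ is an isomorphism at $\sigma^{-n_\tau}\tilde\tau$, so $\omega^\circ_{\bfA'^\vee/X',\sigma^{-n_\tau}\tilde\tau}|_{X'_\ttT}$ is a pullback from the base, contributing nothing to the $\calO(1)$-degree. Combining, $[N_{X'_\ttT}(X')]$ has $\calO(1)$-degree $-p^{n_\tau}\cdot 0 - (-1)\cdot(\text{contribution from }\omega_{\tilde\tau})$; I expect the bookkeeping to give $\calO(1)$-degree $= -2p^{n_\tau}$, with the factor $2$ coming from the dual pair $\{\tilde\tau,\tilde\tau^c\}$: the polarization $\lambda$ forces $\omega^\circ_{\bfA'^\vee/X',\tilde\tau^c}|_{X'_\ttT}$ to also vary along the fiber (by Lemma~\ref{L:omega-Pic}(1), its class equals $[\omega_{\tilde\tau}]$), and the partial Hasse invariant $h_{\tilde\tau^c}$ at the conjugate place (which cuts out the same divisor, Lemma~\ref{Lemma:partial-Hasse}) contributes a second copy of $[\calO(1)]$.

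The cleanest way to pin down the constant $2$ is probably to restrict everything to a single $\PP^1$-fiber $\PP^1_x = \pi_\tau^{-1}(x)$ and compute $\deg(N_{X'_\ttT}(X')|_{\PP^1_x})$ directly: on the fiber, all classes pulled back from the base become trivial, $[\omega^\circ_{\bfA'^\vee/X',\sigma^{-n_\tau}\tilde\tau}]|_{\PP^1_x} = 0$, and $[\omega^\circ_{\bfA'^\vee/X',\tilde\tau}]|_{\PP^1_x}$ is the tautological $\calO(-1)$ of the $\PP^1$ that parametrizes lines in the $2$-dimensional fiber of $H^\dR_1(\bfB)^\circ_{\tilde\tau}$. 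Then
\[
\deg\big(N_{X'_\ttT}(X')|_{\PP^1_x}\big) = -p^{n_\tau}\cdot 0 - \deg\big(\calO(-1)\big) = 1,
\]
wait — this would give $+1$, not $-2p^{n_\tau}$, which signals that the honest relation between $\omega^\circ_{\bfA'^\vee/X',\tilde\tau}$ on $X'_\ttT$ and the tautological subbundle is not a naive equality but is twisted by the isogenies $\phi_A,\phi_B$ and must be tracked on $H^\dR_1$ of $\bfA'$ versus $\bfB$; the correct relation, read off from the explicit lattice constructions $M^\circ_{\tilde\tau} = p^{-1}F_{\es}^{n}(\cdot)$ in the proof of Lemma~\ref{L:Y_T=X_T-1} and the dual place $\tilde\tau^c$, should involve $\omega^\circ_{\bfA'^\vee/X',\tilde\tau}|_{X'_\ttT} = \calO(p^{n_\tau})\otimes(\text{pullback})$ on one side and its conjugate giving another $\calO(p^{n_\tau})$, whence $[N] = -[\calO(p^{n_\tau})] - [\calO(p^{n_\tau})] = [\calO(-2p^{n_\tau})]$. \textbf{The main obstacle} is exactly this: carefully determining how the pullback $\pi_\tau^*\omega^\circ_{\bfB^\vee/\bfSh,\tilde\tau}$ and the line bundles $\omega^\circ_{\bfA'^\vee/X',\tilde\tau}, \omega^\circ_{\bfA'^\vee/X',\sigma^{-n_\tau}\tilde\tau}$ on $X'_\ttT$ are related through the chain of isogenies $\bfA' \to \bfC \leftarrow \bfB$ and the essential-Frobenius twists, keeping track of the factors of $p^{n_\tau}$ and the doubling from the dual place $\tilde\tau^c$ forced by the polarization. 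Once that dictionary is written down, the degree computation on a fiber — using $\deg\calO_{\PP^1}(\calO(-1))=-1$, Lemma~\ref{L:omega-Pic}(1)(2), and the fact that $[\omega_{\sigma^{-n_\tau}\tau}]$ restricts trivially to fibers — is immediate and yields $[N_{X'_\ttT}(X')] = [\calO(-2p^{n_\tau})]$.
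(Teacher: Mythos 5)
Your overall strategy — compute $[N_{X'_\ttT}(X')]$ as the restriction of $p^{n_\tau}[\omega^\circ_{\bfA'^\vee/X',\sigma^{-n_\tau}\tilde\tau}]-[\omega^\circ_{\bfA'^\vee/X',\tilde\tau}]$ and then identify these two classes on $X'_\ttT$ via the $\PP^1$-bundle structure — is the same as the paper's, and your final answer is right, but there is a substantive confusion in the middle that you never actually resolve correctly.

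The mix-up is which of the two classes is the tautological $\calO(-1)$. Since $\ttT=\{\tau\}$ and $\#(\Sigma_{\infty/\gothp}-\ttS_{\infty/\gothp})>1$, the index set is $I_\ttT=\{\sigma^{-n_\tau}\tau\}$, so in the moduli space $Z'_\ttT$ of Subsection~\ref{S:Y_T=Z_T} the tautological sub-line bundle $J^\circ$ lives at the embedding $\sigma^{-n_\tau}\tilde\tau$, not at $\tilde\tau$. Both $\phi_A$ and $\phi_B$ are isomorphisms at $\sigma^{-n_\tau}\tilde\tau$ (it lies outside $\tilde\Delta(\ttT)^\pm$), so $J^\circ_{\sigma^{-n_\tau}\tilde\tau}\cong\omega^\circ_{\bfA'^\vee/X',\sigma^{-n_\tau}\tilde\tau}|_{X'_\ttT}$, which is therefore the \emph{varying} line $\calO(-1)$, exactly the opposite of what you wrote (``$\omega^\circ_{\bfA'^\vee/X',\sigma^{-n_\tau}\tilde\tau}|_{X'_\ttT}$ is a pullback from the base''). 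This is why your first fiber-degree computation came out as $+1$.

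Your attempted fix then invokes a ``doubling from the dual place $\tilde\tau^c$ forced by the polarization,'' but that is not where the factor $2$ comes from, and that route does not actually close the gap. The correct second ingredient is the relation $[\omega_\tau]|_{X'_\ttT}=-p^{n_\tau}[\omega_{\sigma^{-n_\tau}\tau}]|_{X'_\ttT}$, which you need to derive honestly: on $X'_\ttT$ the vanishing of $h_{\tilde\tau}$ gives (Lemma~\ref{Lemma:partial-Hasse}) an isomorphism $\omega^\circ_{\bfA'^\vee/X',\tilde\tau}\cong H_1^\dR(\bfA'/X')^{\circ,(p^{n_\tau})}_{\tilde\tau}\big/\,(\omega^\circ_{\bfA'^\vee/X',\sigma^{-n_\tau}\tilde\tau})^{(p^{n_\tau})}$, and then $[\wedge^2 H_1^\dR]=0$ (Lemma~\ref{L:omega-Pic}(2)) converts this into the stated Picard identity. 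Combining with $[\omega_{\sigma^{-n_\tau}\tau}]|_{X'_\ttT}=[\calO(-1)]$, you get $[N]=p^{n_\tau}[\omega_{\sigma^{-n_\tau}\tau}]-[\omega_\tau]=2p^{n_\tau}[\omega_{\sigma^{-n_\tau}\tau}]=[\calO(-2p^{n_\tau})]$. The factor of $2$ is thus one copy of $\calO(-p^{n_\tau})$ from the Frobenius twist in the Hasse invariant and one from the forced identification of $\omega_\tau$ with the cokernel — not from $\tilde\tau^c$ or the polarization.
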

\begin{proof}
By the construction in 
Subsection~\ref{S:Y_T=Z_T}, the set $\tilde I_\ttT = \{\sigma^{-n_\tau}\tilde \tau\}$ for a specific lift $\tilde \tau$ of $\tau$.  We have
\[
J^\circ_{\sigma^{-n_\tau}\tilde \tau} = \phi_{B, *, \sigma^{-n_\tau}\tilde \tau}^{-1} \circ \phi_{A, *, \sigma^{-n_\tau}\tilde \tau}(\omega^\circ_{\bfA^\vee, \sigma^{-n_\tau}\tilde \tau}),
\]
in terms of the moduli description of $Y'_\ttT\cong X'_\ttT$.
So the restriction of $[\omega_{ \sigma^{-n_\tau} \tau}]$ to $X'_{\ttT}$ is $[\calO(-1)]$.
The Goren-Oort stratum $X'_{\ttT}$ is defined as the zero locus of 
\[
h_{\tilde \tau}: \omega^\circ_{\bfA'^\vee/X', \tilde \tau} \to (\omega^\circ_{\bfA'^\vee / X', \sigma^{-n_\tau} \tilde \tau})^{\otimes p^{n_\tau}}.
\]
So firstly, the class of $N_{X'_{\ttT}}(X')$ in $\Pic(X'_{\ttT})_{\Q}$ is given by the restriction of $p^{n_\tau}[\omega_{\sigma^{-n_\tau}\tau}] - [\omega_{\tau}]$ to $X'_{\ttT}$, and secondly, on $X'_{\ttT}$ we have an isomorphism
\[
\omega^\circ_{\bfA'^\vee/X', \tilde \tau} \xrightarrow{\cong} H_1^\dR(\bfA'/X')^{\circ, (p^{n_\tau})}_{\tilde \tau} /  (\omega^\circ_{\bfA'^\vee / X', \sigma^{-n_\tau} \tilde \tau})^{\otimes p^{n_\tau}}.
\]
This implies that $[\omega_\tau]$ equals to $-p^{n_\tau}[\omega_{\sigma^{-n_\tau}\tau}]$ in $\Pic(X'_{\ttT})_{\Q}$.

To sum up, we have equalities in $\Pic(X'_{\ttT})_{\Q}$:
\begin{align*}
[N_{X'_{\ttT}}(X')]&=p^{n_\tau}[\omega_{\sigma^{-n_\tau}\tau}] - [\omega_{\tau}]\\
&=2p^{n_\tau}[\omega_{\sigma^{-n_\tau}\tau}]=[\calO(-2p^{n_\tau})].\tag*{\qedhere}
\end{align*} 
\end{proof}

\begin{theorem}\label{T:ampleness}
Let $\underline t=(t_{\tau})\in \Q^{\Sigma_{\infty}-\ttS_{\infty}}$. If the element $[\omega^{\underline t}]=\sum_{\tau\in \Sigma_{\infty}-\ttS_{\infty}}t_{\tau}[\omega_{\tau}]$ of $\Pic(X')_{\Q}$ is ample, then 
  \begin{equation}\label{E:condition-ample}
 p^{n_{\tau}}t_{\tau}> t_{\sigma^{-n_\tau}\tau}\quad \text{(and $t_{\tau}>0$) for all }\tau.
  \end{equation}
Here, we put the second condition in parentheses, because it follows from the first one.
\end{theorem}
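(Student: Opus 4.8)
The plan is to prove the necessity of \eqref{E:condition-ample} by restricting the ample class $[\omega^{\underline t}]$ to the one-dimensional fibers of the $\PP^1$-bundles furnished by Theorem~\ref{T:main-thm-unitary}. Fix $\tau \in \Sigma_\infty - \ttS_\infty$, say $\tau \in \Sigma_{\infty/\gothp}$, and let $\ttT = \{\tau\}$; to apply Proposition~\ref{P:normal bundle} I first need $\#(\Sigma_{\infty/\gothp} - \ttS_{\infty/\gothp}) > 1$ and $\gothp$ not of type $\beta 2$. Since the theorem assumes $F \neq \QQ$, if $\#(\Sigma_{\infty/\gothp} - \ttS_{\infty/\gothp}) = 1$ then there is another place $\gothq \neq \gothp$ with $\Sigma_{\infty/\gothq}-\ttS_{\infty/\gothq}$ nonempty, and one instead restricts to a GO-stratum associated to a singleton in $\Sigma_{\infty/\gothq}$ together with a careful bookkeeping of how $[\omega_\tau]$ behaves there; but the cleanest route is to handle first the ``generic'' case $\#(\Sigma_{\infty/\gothp}-\ttS_{\infty/\gothp}) \geq 2$ and then reduce the remaining cases to it. In the generic case, $\gothp$ is of type $\alpha$ or $\beta$ (not $\alpha^\sharp,\beta^\sharp,\beta 2$), so $\ttT = \{\tau\}$ falls into Case $\alpha 1$ or $\beta 1$, $I_\ttT$ has exactly one element $\sigma^{-n_\tau}\tau$ (where $n_\tau = n_{\tau,\ttS}$), and $X'_\ttT$ is a genuine $\PP^1$-bundle $\pi_\tau \colon X'_\ttT \to \bfSh_{K'_\ttT}(G'_{\tilde\ttS(\ttT)})_{k_0}$.

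The key computation is the restriction of the classes $[\omega_{\tau'}]$ to a fiber $\PP^1_x = \pi_\tau^{-1}(x)$. From the proof of Proposition~\ref{P:normal bundle} (more precisely the moduli description in Subsection~\ref{S:Y_T=Z_T}), on $X'_\ttT$ the line bundle $\omega^\circ_{\bfA'^\vee, \sigma^{-n_\tau}\tilde\tau}$ pulls back from the base \emph{twisted by} $\calO(-1)$: indeed $J^\circ_{\sigma^{-n_\tau}\tilde\tau} = \phi_{B,*}^{-1}\phi_{A,*}(\omega^\circ_{\bfA^\vee,\sigma^{-n_\tau}\tilde\tau})$ is the tautological line, so $[\omega_{\sigma^{-n_\tau}\tau}]|_{\PP^1_x} = [\calO(-1)]$; and on $X'_\ttT$ the vanishing of $h_{\tilde\tau}$ gives $[\omega_\tau] = -p^{n_\tau}[\omega_{\sigma^{-n_\tau}\tau}]$ in $\Pic(X'_\ttT)_\QQ$, hence $[\omega_\tau]|_{\PP^1_x} = [\calO(p^{n_\tau})]$. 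For all \emph{other} $\tau' \in \Sigma_\infty - \ttS_\infty$ with $\tau' \notin \{\tau, \sigma^{-n_\tau}\tau\}$, the line bundle $\omega^\circ_{\bfA'^\vee,\tilde\tau'}$ is pulled back from the base via $\pi_\tau$ up to the isogeny $\phi_A,\phi_B$ (which induce isomorphisms on the relevant $\tilde\tau'$-components), so $[\omega_{\tau'}]|_{\PP^1_x} = 0$. One subtlety: whether $\sigma^{-n_\tau}\tau$ can equal $\tau$ — it cannot, since $n_\tau \geq 1$ and distinctness of the embeddings in one cycle handles it unless the cycle has length $1$, which again is excluded by $F \neq \QQ$ combined with $\#(\Sigma_{\infty/\gothp}-\ttS_{\infty/\gothp})\geq 2$. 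Putting this together, $\deg\!\big([\omega^{\underline t}]|_{\PP^1_x}\big) = t_\tau \cdot p^{n_\tau} + t_{\sigma^{-n_\tau}\tau}\cdot(-1) = p^{n_\tau} t_\tau - t_{\sigma^{-n_\tau}\tau}$. Since $[\omega^{\underline t}]$ is ample and $\PP^1_x$ is a complete curve inside $X'$ (using that $X'_\ttT$ is a closed subscheme, which is proper over $\bfSh_{K'_\ttT}(\cdots)$, so each fiber is a complete $\PP^1$), this degree must be strictly positive, yielding $p^{n_\tau} t_\tau > t_{\sigma^{-n_\tau}\tau}$.

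Finally I must deduce $t_\tau > 0$ for all $\tau$ from the system of inequalities $p^{n_\tau} t_\tau > t_{\sigma^{-n_\tau}\tau}$. Iterating $\tau \mapsto \sigma^{-n_\tau}\tau$ within a single $p$-adic cycle $\Sigma_{\infty/\gothp}-\ttS_{\infty/\gothp}$ eventually returns to $\tau$: writing the orbit as $\tau = \tau^{(0)}, \tau^{(1)} = \sigma^{-n_{\tau^{(0)}}}\tau^{(0)}, \dots, \tau^{(m)} = \tau$, one gets $t_{\tau^{(m)}} < p^{n_{\tau^{(m-1)}}}t_{\tau^{(m-1)}} < \cdots$, i.e. $t_\tau < \big(\prod_j p^{n_{\tau^{(j)}}}\big) t_\tau = p^{\#(\Sigma_{\infty/\gothp}-\ttS_{\infty/\gothp})} t_\tau$, which forces $t_\tau > 0$ (if $t_\tau \leq 0$ the left side would be $\geq$ the right side). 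I expect the main obstacle to be the bookkeeping in the degenerate cases — primes $\gothp$ with $\#(\Sigma_{\infty/\gothp}-\ttS_{\infty/\gothp}) = 1$ (where one cannot form a $\PP^1$-bundle over $\gothp$-data and must argue via a different place, or directly via the normal bundle of the divisor $X'_\tau$ together with Lemma~\ref{L:omega-Pic}), and checking carefully that the restriction of each $[\omega_{\tau'}]$ to the fiber is as claimed, which is where the precise combinatorics of $\tilde\Delta(\ttT)^\pm$ and $\tilde I_\ttT$ from Subsections~\ref{S:Delta-pm}--\ref{S:tilde IT} enters. Everything else is a short degree computation on $\PP^1$ and elementary manipulation of inequalities.
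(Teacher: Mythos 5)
Your generic case (where $\#(\Sigma_{\infty/\gothp}-\ttS_{\infty/\gothp})\geq 2$) is correct and matches the paper's argument exactly: restrict $[\omega^{\underline t}]$ to a $\PP^1$-fiber of $\pi_\tau$, use the degree computation $[\omega_{\sigma^{-n_\tau}\tau}]|_{\PP^1_x}=[\calO(-1)]$, $[\omega_\tau]|_{\PP^1_x}=[\calO(p^{n_\tau})]$, $[\omega_{\tau'}]|_{\PP^1_x}=0$ otherwise, and read off $p^{n_\tau}t_\tau - t_{\sigma^{-n_\tau}\tau}>0$. The iteration around the cycle of $\Sigma_{\infty/\gothp}-\ttS_{\infty/\gothp}$ to extract $t_\tau>0$ is also fine (minor typo: the exponent should be $\#\Sigma_{\infty/\gothp}$, not $\#(\Sigma_{\infty/\gothp}-\ttS_{\infty/\gothp})$, but the conclusion stands).

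However, the degenerate case $\Sigma_{\infty/\gothp}-\ttS_{\infty/\gothp}=\{\tau\}$ is a genuine gap. In that case $\sigma^{-n_\tau}\tau=\tau$, so the inequality you need is literally $t_\tau>0$, and there is no nontrivial cycle to iterate around; moreover your suggested fallback of ``restricting to a GO-stratum at a different place $\gothq$'' cannot produce an inequality involving $t_\tau$ at all, since restriction to a $\PP^1$-fiber of $\pi_{\tau'}$ with $\tau'\in\Sigma_{\infty/\gothq}$ only sees the coefficients $t_{\tau'}$ and $t_{\sigma^{-n_{\tau'}}\tau'}$, never $t_\tau$. (It can also happen that no such $\gothq$ exists, e.g.\ when $\Sigma_\infty-\ttS_\infty=\{\tau\}$ and $X'$ is a Shimura curve.) The paper handles the degenerate case with a different argument: it restricts to the GO-stratum $X'_{\ttT_\tau}$ with $\ttT_\tau=\Sigma_\infty-(\ttS_\infty\cup\{\tau\})$, which by Theorem~\ref{T:main-thm-unitary} is a Shimura curve; on this curve the vanishing of all partial Hasse invariants at $\tau'\neq\tau$ forces $i_\tau^*[\omega_{\tau'}]=0$, so $i_\tau^*[\omega^{\underline t}]=t_\tau\, i_\tau^*[\omega_\tau]$, and one then exploits the ampleness of $\det(\omega)$ on the Shimura curve to show $i_\tau^*[\omega_\tau]$ is itself ample, forcing $t_\tau>0$. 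This use of the full ``complementary'' stratum and the ampleness of $\det(\omega)$ is the missing ingredient in your proposal.
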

%The second condition in \eqref{E:condition-ample} in fact follows from the first one.

\begin{proof}
 Assume that $[\omega^{\underline t}]$ is ample. Let $\tau\in \Sigma_{\infty}-\ttS_{\infty}$, and $\gothp\in \Sigma_p$ be the prime of $F$ such that $\tau\in \Sigma_{\infty/\gothp}$.  We distinguish two cases:
\begin{itemize}
\item $\Sigma_{\infty/\gothp}-\ttS_{\infty/\gothp}=\{\tau\}$. Condition \eqref{E:condition-ample} for $\tau$ is  simply $t_{\tau}>0$. We consider the GO-stratum $X'_{\ttT_{\tau}}$ with $\ttT_{\tau}=\Sigma_{\infty}-(\ttS_{\infty}\cup \{\tau\})$. Then $X'_{\ttT_{\tau}}$ is isomorphic to  a Shimura curve by Theorem~\ref{T:main-thm-unitary}, and let $i_{\tau}: X'_{\ttT_{\tau}}\ra X'$ denote the canonical embedding. For any $\tilde\tau'\in \Sigma_{E,\infty}-\ttS_{E,\infty}$ with restriction $\tau'=\tilde\tau'|_{F}\neq \tau$. Let
\begin{equation}\label{E:definition-F-tau}
F_{\bfA',\es,\tilde\tau'}^{n_{\tau'}}: H^{\dR}_1(\bfA'^{(p^{n_{\tau}})}/X')^\circ_{\tilde\tau'}\ra H^{\dR}_1(\bfA'/X')^\circ_{\tilde\tau'}.
\end{equation}
be the $n_{\tau'}$-th iteration of the essential Frobenius in Subsection~\ref{S:partial-Hasse}. We  always have $\Ker(F_{\bfA',\es,\tilde\tau'}^{n_{\tau'}})=(\omega^\circ_{\bfA'^\vee/X',\sigma^{-n_{\tau'}}\tilde\tau'})^{(p^{n_{\tau'}})}$.
The vanishing of $h_{\tilde\tau'}$ on $X'_{\ttT_{\tau}}$ is equivalent to
$$
\im(F_{\bfA',\es,\tilde\tau'}^{n_{\tau'}})|_{X'_{\ttT_{\tau}}}=(\omega^\circ_{\bfA'^\vee/X',\tilde\tau'})|_{X'_{\ttT_{\tau}}}.
$$
Therefore, one deduces an equality in $\Pic(X'_{\ttT_{\tau}})_{\Q}$:
\[
p^{n_{\tau}}i_{\tau}^*[\omega_{\sigma^{-n_{\tau'}}\tau'}]+i^*_{\tau}[\omega_{\tau'}]=0.
\]
Letting $\tau'$ run through the set $\Sigma_{\infty/\gothq}-\ttS_{\infty/\gothq}$ with $\gothq\neq \gothp$, one obtains that $i_{\tau}^*[\omega_{\tau'}]=0$. Therefore,  we have $i_{\tau}^*[\omega^{\underline t}]=t_{\tau} i_{\tau}^*[\omega_{\tau}]$, which is ample on $X'_{\ttT_\tau}$ since so is $[\omega^{\underline t}]$ on $X'_{\ttT_\tau}$ by assumption. By the ampleness of $\det(\omega)=\bigotimes_{\tilde\tau'\in \Sigma_{E,\infty}}\omega^\circ_{\bfA',\tilde\tau'}$ on $X'$ and hence on  $X'_{\ttT_{\tau}}$, we see that $i^*_{\tau}[\omega_{\tau}]$ is ample on $X'_{\ttT_{\tau}}$. It follows  that $t_{\tau}>0$.

\item $\Sigma_{\infty/\gothp}-\ttS_{\infty/\gothp}\neq\{\tau\}$. Consider the GO-stratum $X'_{\tau}$ given by $\ttT=\{\tau\}$, then $\ttS(\ttT)=\ttS\cup \{\tau,\sigma^{-n_{\tau}}\tau\}$ in the notation of Subsection~\ref{S:quaternion-data-T}. By Propositions~\ref{P:Y_S=X_S} and \ref{P:Y_T=Z_T}, $X'_\tau$ is isomorphic to a $\PP^1$-bundle over $\bSh_{K'_{\tau}}(G'_{\tilde \ttS(\ttT)})_{k_0}$. Let $\pi: X'_{\tau}\ra \bSh_{K'_{\tau}}(G'_{\tilde \ttS(\ttT)})_{k_0}$ denote the natural projection. The ampleness of $[\omega^{\underline t}]$ on $X'$ implies the ampleness of its restriction to each closed fiber $\PP^1_s$ of $\pi$. 
By the proof of Proposition~\ref{P:normal bundle}, we have
\[
\omega^\circ_{\bfA'^\vee/X',\tilde\tau'}|_{\PP^1_{s}}\simeq
 \begin{cases}
 \cO_{\PP^1_{s}}(-1) &\text{if $\tau'=\sigma^{-n_{\tau}}\tau$};\\
 \cO_{\PP^1_{s}}(p^{n_{\tau}}) &\text{if } \tau'=\tau;\\
 \cO_{\PP^1_{s}} &\text{otherwise}.
 \end{cases}
\]
The relation~\eqref{E:condition-ample} follows immediately.\qedhere
\end{itemize}
\end{proof}

Since the Hilbert modular varieties and the unitary Shimura varieties have the same neutral geometric connected components, the following is  an immediate corollary of Theorem~\ref{T:ampleness}.
\begin{cor}\label{C:ampleness Hilbert}
Let $X$ denote the special fiber of the Hilbert modular variety, and $X^*$ its minimal compactification.
Then for each $\tau \in \Sigma_\infty$, the class $[\omega_\tau] \in \Pic(X)_\Q$ uniquely extends to a class $[\omega_\tau] \in \Pic(X^*)_\Q$. Moreover, $[\omega^{\underline t}] = \sum_{\tau \in \Sigma_\infty}t_\tau [\omega_\tau]$ is ample only when $t_{\tau}>0$ and  $p t_\tau > t_{\sigma^{-1}\tau}$ for all $\tau \in \Sigma_\infty$. % (This condition forces that all $t_\tau>0$.)
\end{cor}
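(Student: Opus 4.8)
The plan is to deduce Corollary~\ref{C:ampleness Hilbert} from Theorem~\ref{T:ampleness} by relating the Hilbert modular variety to the unitary Shimura variety $X' = \bfSh_{K'}(G'_{\tilde \ttS})_{k_0}$ with $\ttS = \emptyset$, via the chain of morphisms \eqref{E:morphisms of Shimura varieties HMV} together with Proposition~\ref{P:integral-HMV-unitary} and Corollary~\ref{C:integral-HMV-unitary}. First I would recall that when $\ttS = \emptyset$, every place of $F$ above $p$ is of type $\alpha$ or $\beta$ (and $n_\tau = 1$ for every $\tau\in\Sigma_\infty$, since $\ttS_\infty = \emptyset$), so the conclusion of Theorem~\ref{T:ampleness} for $X'$ reads exactly $p\, t_\tau > t_{\sigma^{-1}\tau}$ (and $t_\tau > 0$) for all $\tau$. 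The content of the corollary is to transfer the ampleness statement and the well-definedness of $[\omega_\tau] \in \Pic(X^*)_\QQ$ from $X'$ (handled in Lemma~\ref{L:omega-Pic}) to $X$.

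The key steps, in order, are as follows. (i) Use Corollary~\ref{C:integral-HMV-unitary} and Proposition~\ref{P:integral-HMV-unitary}: the comparison of abelian varieties $\bbalpha^*\bfA''_\emptyset \cong (\mathbf{pr}_1^*\calA \otimes_{\cO_F}\cO_E)^{\oplus 2}$ gives, upon taking $\gothe$-parts and $\tilde\tau$-components, a canonical isomorphism $\bbalpha^*\omega^\circ_{\bfA''^\vee_\emptyset, \tilde\tau} \cong \mathbf{pr}_1^*\omega_{\calA^\vee/X, \tau}$ of line bundles on $\bfSh_{K_p\times K_{E,p}}(G_\emptyset \times T_{E,\emptyset})_{k_0}$, for either lift $\tilde\tau$ of $\tau$; hence $[\omega_\tau]$ on $X$ is (the pullback via $\mathbf{pr}_1$ of) $[\omega^\circ_{\bfA'^\vee/X', \tilde\tau}]$ pulled back from $X'$, since $\bfA''_\emptyset$ is built from $\bfA'_\emptyset$ by $-\times_{\widetilde G}G''(\AAA^{\infty,p})$. (ii) Since $\mathbf{pr}_1$ and $\bbalpha$ are finite étale (they are towers with the level at $p$ fixed, differing only in tame level and, for $\bbalpha$, in the torus factor which contributes a finite étale cover), pullback along each induces an injection on rational Picard groups, and a class is ample on the base if and only if its pullback is ample on the cover. (iii) For the extension to $X^*$: the minimal compactification $X^*$ of $X$ and $X'^* = X'$ (or its minimal compactification) are compatible under these correspondences, since the boundary is disjoint from the non-ordinary locus where the partial Hasse invariants live; combining Lemma~\ref{L:omega-Pic}(3) — which exhibits $[\omega^\circ_{\bfA'^\vee/X',\tilde\tau}]$ as a positive rational combination of the divisor classes $\cO_{X'^*}(X'_\tau)$ extending across the cusps — with the identification in (i), each $[\omega_\tau]$ on $X$ extends uniquely to $\Pic(X^*)_\QQ$, uniqueness coming from injectivity of $\Pic(X^*)_\QQ \to \Pic(X)_\QQ$ (normality of $X^*$, boundary of codimension $\geq 2$ when $\dim X \geq 2$, and $X^* = X$ when $F$ is such that $X$ is a curve — but here $F \neq \QQ$ and $[F:\QQ]>1$ forces $\dim X \geq 2$ unless $[F:\QQ]=1$; actually for a Hilbert modular variety $\dim X = [F:\QQ] \geq 2$). (iv) Finally, if $[\omega^{\underline t}]$ is ample on $X$, then its pullback to $X'$ via the composite correspondence is ample (pullback of ample along finite morphisms between the connected components, after passing to the common geometric connected component as in Corollary~\ref{C:Sh(G)^circ_Zp independent of G}), and equals $[\omega'^{\underline t}] = \sum t_\tau [\omega^\circ_{\bfA'^\vee/X',\tilde\tau}]$ up to the identification in (i); Theorem~\ref{T:ampleness} then yields $p\,t_\tau > t_{\sigma^{-1}\tau}$ and $t_\tau > 0$.

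The main obstacle I expect is the bookkeeping in step (iii): making precise that the line bundle classes $[\omega_\tau]$ extend across the boundary of $X^*$ compatibly with the unitary side, since the morphisms $\mathbf{pr}_1$ and $\bbalpha$ are a priori defined on the open Shimura varieties and one must check they extend to the minimal compactifications (or, more economically, argue directly on $X$ that $\cO_X(X_\tau)$ extends to $\cO_{X^*}(X_\tau)$ because $X^{\mathrm{n\text{-}ord}} = \bigcup_\tau X_\tau$ is closed in $X^*$ and disjoint from the cusps, exactly as in the proof of Lemma~\ref{L:omega-Pic}(3), and then use the positive-combination formula coming from the partial Hasse invariants $h_\tau \in \Gamma(X, \omega_{\calA/X,\sigma^{-1}\tau}^{\otimes p}\otimes \omega_{\calA/X,\tau}^{\otimes -1})$). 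This direct argument on $X$ is probably cleaner and avoids discussing functoriality of minimal compactifications; the rest is a routine transfer via finite étale correspondences between connected components, already set up in Sections~\ref{Section:Sh Var} and \ref{Section:Integral-model}.
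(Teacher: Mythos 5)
Your proposal is correct and, in substance, follows the paper's approach; the paper's proof of this corollary is exactly one sentence — namely that the Hilbert modular variety and the unitary Shimura variety have the same neutral geometric connected components, so Theorem~\ref{T:ampleness} applies directly — and your four steps simply unpack what is left implicit. The identification of line bundles via Proposition~\ref{P:integral-HMV-unitary} and Corollary~\ref{C:integral-HMV-unitary}, the transfer of ampleness, and the extension to $X^*$ by rerunning the Lemma~\ref{L:omega-Pic}(3) argument directly on $X$ (via the partial Hasse invariants and the positive-inverse matrix) are all the right ingredients, and your observation that the direct argument on $X$ is cleaner than tracking the comparison across minimal compactifications is a genuine simplification worth keeping. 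One small point worth tidying: for the ampleness transfer you do not actually need to establish that $\bbalpha$ (or $\mathbf{pr}_1$) is finite \'etale in its own right — it suffices to use that these maps induce isomorphisms on neutral geometric connected components (Corollary~\ref{C:comparison of shimura varieties}) and that ampleness on a smooth proper variety with finitely many components is checked componentwise; this sidesteps any analysis of the global structure of $\bbalpha$ beyond what the paper has already established.
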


For the converse to Theorem~\ref{T:ampleness}, we have the following
\begin{conj}
\label{C:ampleness conjecture}
The conditions in Theorem~\ref{T:ampleness} and Corollary~\ref{C:ampleness Hilbert} are also sufficient for $[\omega^{\underline t}]$ to be ample.
\end{conj}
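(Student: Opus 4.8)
The plan is to deduce Conjecture~\ref{C:ampleness conjecture} from Kleiman's theorem that on a projective variety the ample cone is the interior of the nef cone, so that it suffices to prove a nef-ness statement on the \emph{closed} cone. Write $X'^{*}$ for the minimal compactification (projective; equal to $X'$ when $\ttS_{\infty}\neq\emptyset$, and with $X'^{*}\setminus X'$ a finite set of cusps — in particular zero-dimensional — when $\ttS_{\infty}=\emptyset$, so that every curve in $X'^{*}$ already lies in $X'$). Let $\overline{P}\subseteq \Pic(X'^{*})_{\Q}$ be the closed cone cut out by $t_{\tau}\geq 0$ and $p^{n_{\tau}}t_{\tau}\geq t_{\sigma^{-n_{\tau}}\tau}$ for all $\tau\in\Sigma_{\infty}-\ttS_{\infty}$, and let $P$ be its interior, the cone appearing in Theorem~\ref{T:ampleness}. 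Since $[\omega^{\underline 1}]=\sum_{\tau\notin\ttS_{\infty}}[\omega_{\tau}]$ is a positive rational multiple of $[\det\omega^{\circ}_{\bfA'^{\vee}}]$ (by Lemma~\ref{L:omega-Pic}), hence ample on $X'^{*}$, and since $\underline 1\in P$, it is enough to show: \emph{every class $[\omega^{\underline t}]$ with $\underline t\in\overline{P}$ is nef on $X'^{*}$}. Then $P=\mathrm{int}(\overline P)\subseteq\mathrm{int}(\mathrm{Nef})=\mathrm{Ample}$, which is Theorem~\ref{T:ampleness} with equality; the Hilbert case of Corollary~\ref{C:ampleness Hilbert} follows by pulling back ampleness along a finite surjection onto a union of geometric connected components, as in the proof of Corollary~\ref{C:ampleness Hilbert}. (This route avoids having to prove affineness of the $\mu$-ordinary locus $X'\setminus\bigcup_{\tau}X'_{\tau}$, which one would need for a direct Nakai--Moishezon attack on ampleness but not for nef-ness.)

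To prove the nef-ness statement I would induct on $\dim X'$, proving the stronger assertion that for \emph{every} Goren--Oort stratum $X'_{\ttT}\subseteq X'$ the restriction $[\omega^{\underline t}]|_{X'_{\ttT}}$ is nef on $X'_{\ttT}$, by downward induction on $\#\ttT$. The base cases are the zero- and one-dimensional strata; a one-dimensional stratum is a Shimura curve (recall $F\neq\Q$), and by the computations in the proof of Theorem~\ref{T:ampleness} the class $[\omega^{\underline t}]$ restricts there to a nonnegative multiple of an ample bundle, so it is nef. For the inductive step, fix an irreducible curve $C\subseteq X'_{\ttT}$. Using the matrix denoted $C$ in the proof of Lemma~\ref{L:omega-Pic}, whose inverse has nonnegative entries — a block computation over $\Sigma_{p}$ that has to be written out — each $[\omega_{\tau}]$, hence each $[\omega^{\underline t}]$ with $\underline t\in\overline P$, is a nonnegative $\Q$-combination of the effective divisors $[X'_{\tau'}]$; therefore if $C$ is not contained in any $X'_{\tau'}$ one gets $([\omega^{\underline t}]\cdot C)\geq 0$ from proper intersections. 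If instead $C$ is contained in some codimension-one component $X'_{\tau_{0}}$ of $\bigcup_{\tau'}X'_{\tau'}$ inside $X'_{\ttT}$, invoke Theorem~\ref{T:main-thm-unitary}: the stratum $X'_{\ttT\cup\{\tau_{0}\}}$ is a $(\PP^{1})^{I_{\ttT\cup\{\tau_{0}\}}}$-bundle $\pi$ over the smaller unitary Shimura variety $X'_{\tilde \ttS(\ttT\cup\{\tau_{0}\})}$, and one must establish the identity $[\omega^{\underline t}]|_{X'_{\ttT\cup\{\tau_{0}\}}}=\sum_{\tau^{*}\in I}a_{\tau^{*}}[\calO_{\tau^{*}}(1)]+\pi^{*}[\omega^{\underline t'}]$, where $a_{\tau^{*}}=p^{n_{\tau^{*}}}t_{\tau^{*}}-t_{\sigma^{-n_{\tau^{*}}}\tau^{*}}\geq 0$, $\calO_{\tau^{*}}(1)$ is the tautological quotient bundle on the $\tau^{*}$-th $\PP^{1}$-factor, and $\underline t'$, obtained from $\underline t$ by translating through the recipe of Subsection~\ref{S:quaternion-data-T}, again lies in $\overline P$ for $X'_{\tilde \ttS(\ttT\cup\{\tau_{0}\})}$. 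Granting this, $C$ is either contracted by $\pi$ — then it is a line in one of the $\PP^{1}$-factors and $([\omega^{\underline t}]\cdot C)=a_{\tau^{*}}\geq 0$ — or it dominates a curve in the base, in which case one combines the inductive nef-ness of $[\omega^{\underline t'}]$ with a nef-ness statement for $\sum_{\tau^{*}}a_{\tau^{*}}[\calO_{\tau^{*}}(1)]+\pi^{*}N$, $N$ nef.

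The main obstacle is exactly this last step, and it has two intertwined parts. The first is combinatorial bookkeeping: the recipe $\ttT\mapsto(\ttS(\ttT),I_{\ttT})$ has many cases (the types $\alpha 1,\alpha 2,\beta 1,\beta 2$ and the $\sharp$-types), and in the chain cases the transformation $\underline t\mapsto\underline t'$ mixes the coordinates $t_{\tau}$ along the combinatorial chains $C_{i}$ of Subsection~\ref{S:quaternion-data-T}; proving that $\overline P$ maps into $\overline P$ and that the displayed decomposition holds requires a uniform case analysis built on the essential-Frobenius relations of Subsection~\ref{S:partial-Hasse}, on Lemma~\ref{L:omega-Pic}, and on Proposition~\ref{P:normal bundle} — this is the "combinatorics complication" alluded to after Theorem~\ref{T:introduction ample}. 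The second part is geometric: the individual automorphic bundles $[\omega_{\tau}]$ and the tautological bundles $[\calO_{\tau^{*}}(1)]$ are \emph{not} nef (Proposition~\ref{P:normal bundle} exhibits $[\omega_{\sigma^{-n_{\tau_{0}}}\tau_{0}}]$ restricting to $\calO(-1)$ on the $\PP^{1}$-fibers), so the rank-two Hodge bundles $H^{\dR}_{1}(\bfA'/X')^{\circ}_{\tilde\tau}$ are not nef and the $(\PP^{1})^{N}$-bundle structure cannot be handled by a naive "nef on the base plus nonnegative on fibers" argument. One must instead show $\sum_{\tau^{*}}a_{\tau^{*}}[\calO_{\tau^{*}}(1)]+\pi^{*}N$ is nef by a finer argument — for instance by re-expressing $[\omega^{\underline t}]|_{X'_{\ttT\cup\{\tau_{0}\}}}$ as a nonnegative combination of the effective Goren--Oort divisors of $X'_{\ttT\cup\{\tau_{0}\}}$ together with fiber classes and recursing on curves lying in those divisors, so that a nested recursion runs inside the single induction on $\dim$; making that recursion terminate compatibly with the $\overline P\to\overline P$ statement is where I expect the genuine difficulty, and possibly the need for a new idea, to lie.
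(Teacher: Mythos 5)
The statement you were asked to prove is explicitly labelled a \emph{Conjecture}, and the paper contains no proof of it: the remark immediately following it says the authors can verify it ``for small inertia degrees (at least when all inertia degrees are $\leq 5$) \ldots using variants of Nakai-Moishezon criterion,'' but that ``the combinatorics becomes complicated when the inertia degree is large,'' and the discussion after Theorem~\ref{T:introduction ample} in the introduction likewise concedes that sufficiency is not established due to a ``combinatorics complication.'' So there is no paper argument to compare against; the correct assessment of any proposal for this statement is whether it supplies the missing idea.

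Your strategy — projectivize to $X'^*$, reduce via Kleiman's theorem to showing $\overline{P}\subseteq\mathrm{Nef}(X'^*)$, use the positivity of the inverse of the partial-Hasse matrix from Lemma~\ref{L:omega-Pic}(3) to express $[\omega^{\underline t}]$ as a nonnegative $\Q$-combination of the effective Goren--Oort divisor classes, and recurse on the stratification using the $(\PP^1)^N$-bundle structure of Theorem~\ref{T:main-thm-unitary} when a test curve sits inside a stratum — is exactly the kind of Nakai--Moishezon/nef-cone argument the authors gesture at, and the elementary parts (the closed-cone reduction, the boundary being disjoint from the GO-divisors, the $\PP^1$-fiber degree $a_{\tau^*}=p^{n_{\tau^*}}t_{\tau^*}-t_{\sigma^{-n_{\tau^*}}\tau^*}$ from Proposition~\ref{P:normal bundle}) are all sound. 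But the decomposition $[\omega^{\underline t}]|_{X'_{\ttT\cup\{\tau_0\}}}=\sum_{\tau^*} a_{\tau^*}[\calO_{\tau^*}(1)]+\pi^*[\omega^{\underline t'}]$ with $\underline t'\in\overline P$ for the smaller Shimura variety is asserted rather than proved, the nef-ness of a class $\sum a_{\tau^*}[\calO_{\tau^*}(1)]+\pi^*N$ on a $(\PP^1)^N$-bundle whose individual factors are \emph{not} nef needs more than the fiber-by-fiber check (as you note, $H_1^{\dR}(\bfA')^\circ_{\tilde\tau}$ is not nef), and the nested recursion you propose to handle curves lying in deeper strata is not shown to terminate in a way compatible with $\overline P\to\overline P$. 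You name precisely these as the difficulty, so you are not claiming to have closed the gap — and indeed the gap remains, consistent with the authors' own statement that they cannot prove the conjecture with the tools of this paper.
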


\begin{remark}
In the case of Hilbert modular surface, Corollary~\ref{C:ampleness Hilbert} and the sufficiency of the condition were proved by Andreatta and Goren \cite[Theorem~8.1.1]{andreatta-goren}, which relies heavily on some intersection theory on surfaces.  It seems difficult to generalize their method.

Using our global geometric description, it seems possible to prove, for small inertia degrees (at least when all inertia degrees are $\leq 5$), Conjecture~\ref{C:ampleness conjecture} using variants of Nakai-Moishezon criterion.  The combinatorics becomes complicated when the inertia degree is large.
\end{remark}

\section{Link morphisms}\label{Section:links}
We will introduce certain generalizations of the partial Frobenius morphisms, called the \emph{link morphisms}, on unitary Shimura varieties associated to quaternionic ones. 
These morphisms appear naturally when considering the restriction of the projection maps $\pi_{\ttT}$  in Theorem~\ref{T:main-thm-unitary} to other Goren-Oort strata. The explicit descriptions of these morphisms are essential for the application considered  in the forthcoming paper \cite{tian-xiao3}. 
For simplicity,  \emph{we will assume that $p$ is inert of degree $g$ in the totally real field $F$.}
Denote by $\gothp$ the unique prime of $F$ above $p$. 
Let $E$ be a CM extension of $F$. 
If $\gothp$ splits in $E$, fix  a prime $\gothq$ of $E$ above $\gothp$, and denote the other prime by $\bar \gothq$; if $\gothp$ is inert in $E$, we denote by $\gothq$ the unique prime of $E$ above $\gothp$.

\subsection{Links}\label{S:links}
We introduce some combinatorial objects. 
Let $n\geq 1$ be an integer.
Put $n$ points aligned equi-distantly on a horizontal section of a vertical cylinder. We label the $n$ points by the elements of $\Z/n\Z$ so that the $(i+1)$-st point is next to the $i$th point on the right. 
Let $S$ be a subset of the $n$ points above. To such an $S$, we associate a graph as follows: 
we start left to right with the plot labeled $0\in \Z/n\Z$,  and draw a \emph{plus sign} if the element  is in $S$, and a \emph{node} if it is in $\Z/n\Z-S$. 
We call such a picture a \emph{band of length $n$} associated to $S$.
 For instance, if $n=5$ and $S=\{1,3\}$, then the band is 
$
\psset{unit=0.3}
\begin{pspicture}(-.5,-0.3)(4.5,0.3)
\psset{linecolor=black}
\psdots(0,0)(2,0)(4,0)
\psdots[dotstyle=+](1,0)(3,0)
\end{pspicture}$.

Let $S'$ be another subset of $\Z/n\Z$ of the same cardinality as $S$.
 Then a link \emph{link} $\eta: S\ra S'$ is a graph of the following kind: put the band attached to $S$ on the top of the band for $S'$ in the same  cylinder; draw non-intersecting curves from  each of the nodes from the top band to a  node on the bottom band. We say a curving is turning to the \emph{left} (resp. to the \emph{right}) if it is so as moving from the top band to the bottom band.
If a curve travels $m$-numbers of points (of both plus signs and nodes) to the \emph{right} (resp. \emph{left}), we say the displace of this curve is $m$ (resp. $-m$).
When both $S$ and $S'$  are equal to  $\Z/n\Z$ (so that there are no nodes at all), then we say  that $\eta:S\ra S'$ is the trivial link.
 We define the \emph{total displacement} of a link $\eta$ as the sum of the displacements of all curves  in $\eta$. For example, if $n=5$, $S=\{1, 3\}$ and $S'=\{1, 4\}$, then 
\begin{equation}
\label{E:left turn link}
\psset{unit=0.3} \eta = 
\begin{pspicture}(-.5,-0.3)(5,2.3)
\psset{linecolor=red}
\psset{linewidth=1pt}
\psbezier(0,2)(1,1)(3,1)(3,0)
\psbezier(2,2)(2,1)(3.5,1.5)(4.5,0.5)
\psbezier(-0.5,1.3)(0.5,.3)(2,1)(2,0)
\psarc{-}(-0.5,0){0.5}{0}{90}
\psarc{-}(4.5,2){0.5}{180}{270}
\psset{linecolor=black}
\psdots(0,2)(2,2)(4,2)
\psdots(0,0)(2,0)(3,0)
\psdots[dotstyle=+](1,2)(3,2)
\psdots[dotstyle=+](1,0)(4,0)
\end{pspicture}.
\end{equation}
is a link from $S$ to $S'$, and   its total displacement is $v(\eta)=3+3+3=9$. 

For a link $\eta: S\ra S'$, we denote by $\eta^{-1}: S'\ra S$ the link obtained by flipping the picture about the equator of the cylinder. For two links $\eta: S\ra S'$ and $\eta':S'\ra S''$, we define the composition of $\eta'\circ\eta: S\ra S''$ by putting the picture of $\eta$ on the top of the picture of $\eta'$ and identify the nodes corresponding  to $\eta'$.  It is obvious that $v(\eta^{-1})=-v(\eta)$ and $v(\eta'\circ\eta)=v(\eta')+v(\eta)$.

\subsection{Links for a subset of places of $F$ or $E$}
We return to the setup of Notation~\ref{S:Notation-for-the-paper}, and recall that $p$ is inert in $F$. 
%Recall that  $\Sigma_{\infty}$ is identified with the set of $p$-adic embeddings via the isomorphism  $\iota_p:\CC\cong \Qpb$. 
 We fix an isomorphism $\Sigma_{\infty}\cong \Z/g\Z$ so that $i\mapsto i+1$ corresponds to the action of Frobenius on $\Sigma_{\infty}$.

  For an even subset $\ttS$ of places in $F$, we have the \emph{band} for $\ttS$ when applying Subsection~\ref{S:links} to  the subset $\ttS_{\infty}$ of $\Sigma_{\infty}$. 
Let $\ttS'$ be another even subset of places of $F$ such that $\#\ttS_{\infty}=\#\ttS'_{\infty}$ and  $\ttS'$ contains the same finite places of $F$ as $\ttS$ does.
   A link  $\eta$ from the band for $\ttS$ to that for $\ttS'$ is denoted by $\eta: \ttS\ra \ttS'$. When $\ttS'=\ttS$ and $\ttS_{\infty}=\Sigma_{\infty}$,  $\eta:\ttS\ra\ttS'$ is necessarily the trivial link (so that there are no curves at all).  
    
    The Frobenius action on $\Sigma_{\infty}$ defines a link   $\sigma: \ttS\ra \sigma(\ttS)$, in which all curves turn to the right with displacement 1; the total displacement of this link $\sigma$ is $v(\sigma)=g-\#\ttS_{\infty}$.
     Here, $\sigma(\ttS)$ denotes the subset of places of $F$ whose finite part is the same  as $\ttS$, and whose infinite part is  the image of Frobenius on $\ttS_{\infty}$. 

\begin{notation}\label{S:notation-n-tau}
Recall the definition of $n_\tau$ for $\tau \in \Sigma_\infty - \ttS_\infty$ from Notation~\ref{N:n tau}.
For simplicity, we write $\tau^-$ for $\sigma^{-n_\tau} \tau$, and we use $\tau^+$ to denote the unique place in $\Sigma_\infty-\ttS_\infty$ such that $\tau = (\tau^+)^-  = \sigma^{-n_{\tau^+}}\tau^+$.
 When there are several $\ttS$ involved, we will write $n_{\tau}(\ttS)$ for $n_{\tau}$ to emphasize its dependence on $\ttS$. 
     
   %  We use  $\eta_{\ttS}$ to denote the following link morphism from $\ttS$ to itself: 
    %  it links each $\tau\in \Sigma_{\infty}-\ttS_{\infty}$ to $\tau^+$ with displacement $n_{\tau^+}$; we call it the \emph{fundamental link}. The total displacement of $\eta_{\ttS}$ is $g=[F:\Q]$, and every link from $\ttS$ to itself is an iteration  of $\eta_{\ttS}$ (or $\eta_\ttS^{-1}$). \liang{Do we need $\eta_\ttS$ in this paper?  If not, just get rid of this.}
\end{notation}

\subsection{Link morphisms}\label{S:link-morphisms}

 %This is somewhat a generalization of the partial Frobenius in that we only apply Frobenius to some elements of $\ttS_\infty$.
 Let $\eta: \ttS\ra \ttS'$ be a link of two even subsets of places of $F$. 
 If $\ttS_{\infty}\neq \Sigma_{\infty}$,  we  denote by $m(\tau)$  the displacement of the curve  at $\tau$ in the link $\eta$ for each $\tau\in \Sigma_{\infty}-\ttS_{\infty}$,
and put $m(\tau) =0$ for $\tau \in \ttS_\infty$.

   Let $\bfSh_{K'}(G'_{\tilde \ttS})_{k_0}$ and $\bfSh_{K'}(G'_{\tilde\ttS'})_{k_0}$ denote  the special fibers of some unitary Shimura varieties  of type considered in Subsection~\ref{S:PEL-Shimura-data}. (There is no restriction on the signatures, i.e. the sets $\tilde \ttS_\infty$ and $\tilde  \ttS'_\infty$ that lift $\ttS_\infty$ and $\ttS'_\infty$; but we fix them.)
    Here, we have fixed (compatible) isomorphisms $\cO_D: = \cO_{D_{\ttS}}\cong \cO_{D_{\ttS'}}\cong  \rmM_{2\times 2}(\cO_E)$ and $G'_{\tilde\ttS}(\AAA^{\infty})\cong G'_{\tilde\ttS'}(\AAA^{\infty})$, and regard $K'$ as an open compact subgroups of both of the groups; this is possible because $\ttS$ and $\ttS'$ have the same finite part, and the argument in Lemma~\ref{L:compare D_S with D_S(T)} applies verbatim in this situation.
     Note that $K'_p$ is  assumed to be hyperspecial as in Subsection~\ref{S:PEL-Shimura-data}.
     Let $\bfA'_{\tilde\ttS, k_0}$ be the universal abelian scheme over $\bfSh_{K'}(G'_{\tilde \ttS})_{k_0}$.
      For a point $x$ of $\bfSh_{K'}(G'_{\tilde\ttS})_{k_0}$ with values in a perfect field $k(x)$, we denote by $\bfA'_{\tilde\ttS,x}$ the base change  of $\bfA'_{\tilde\ttS}$ to $x$, and $\tcD(\bfA'_{\tilde\ttS, x})^{\circ}$ the reduced part of the \emph{covariant} Dieudonn\'e module of $\bfA'_{\tilde\ttS,x}$ (cf. Subsection~\ref{S:GO-notation} and the proof of Lemma~\ref{L:Y_T=X_T-1}).
       For each $\tilde\tau\in \Sigma_{E, \infty} -  \ttS_{E,\infty}$ %with image $\tau\in \ttS^c_{\infty}$, 
       %let $n_{\tau}\geq 1$ denote the least integer such that $\sigma^{-n_{\tau}}\tau\in \ttS_{\infty}^c$. 
   we have the essential Frobenius  map defined in \ref{N:essential frobenius and verschiebung}:
   \[
   F_{\bfA',\es}: \tcD(\bfA'_{\tilde \ttS,x})^{\circ}_{\sigma^{-1}\tilde\tau}\ra \tcD(\bfA'_{\tilde \ttS,x})_{\tilde\tau}^{\circ}.
   \]      
    Finally,   recall  that a \emph{$p$-quasi-isogeny} of abelian varieties means  a quasi-isogeny of the form $f_1\circ f_2^{-1}$, where  $f_1$ and $f_2$ are isogenies  of $p$-power order. The degree of the quasi-isogeny is $\deg f_1 / \deg f_2$.
    %   We will use freely Notation~\ref{S:GO-notation} for Dieudonn\'e modules.

 \begin{defn}\label{D:link-morphism}
 Assume that $m(\tau)\geq 0$ for each $\tau\in \Sigma_\infty$, i.e. all curves (if any) in $\eta$ are either straight lines or all turning to the right. 
    Let $n$ be an integer. 
    If $\gothp$ is inert in $E$, we assume that $n=0$.   A \emph{link morphism of indentation degree $n$} associated to  $\eta$ on $\bfSh_{K'}(G'_{\tilde\ttS})_{k_0}$ (if exists)   is a morphism of varieties 
   $$
     \eta'_{(n),\sharp}: \bfSh_{K'}(G'_{\tilde\ttS})_{k_0}\ra \bfSh_{K'}(G'_{\tilde\ttS'})_{k_0}
     $$
     together with a  $p$-quasi-isogeny of abelian varieties 
   $$\eta'^{\sharp}_{(n)}: \bfA'_{\tilde\ttS, k_0}\ra \eta'^{*}_{(n),\sharp}(\bfA'_{\tilde\ttS',k_0}),$$ 
   such that the following conditions are satisfied:
   \begin{itemize}
   \item[(1)]  $\eta'_{(n),\sharp}$ induces a  bijection  on geometric points.
   \item[(2)] The quasi-isogeny $\eta'^{\sharp}_{(n)}$ is compatible with the actions of $\cO_D$, level structures, and the polarizations on both abelian varieties.
\item[(3)] 
There exists, for  each $\tilde\tau\in \Sigma_{E, \infty} -  \ttS_{E,\infty}$, some $t_{\tilde\tau}\in \Z$, such that, for every $\overline \FF_p$-point $x$ of $\bfSh_{K'}(G'_{\tilde \ttS})_{k_0}$ with image $x'=\eta'_{(n),\sharp}(x)$, 
   \[
  \eta'^{\sharp}_{(n), *}\big(F_{\es,\bfA'_{\tilde\ttS,x}}^{m(\tau)}(\tcD(\bfA'_{\tilde \ttS,x})^\circ_{\tilde\tau})\big)=p^{t_{\tilde\tau}}\tcD(\bfA'_{\tilde\ttS',x'})^\circ_{\sigma^{m(\tau)}\tilde\tau},
   \]
where $\tau\in \Sigma_{\infty}$ is the image of $\tilde\tau$.

   \item[(4)]  The quasi-isogeny of $\gothq$-divisible group 
   \[
   \eta'^{\sharp}_{(n),\gothq}:\bfA'_{\tilde\ttS, k_0}[\gothq^{\infty}]\ra \eta'^{*}_{(n),\sharp}\bfA'_{\tilde\ttS', k_0}[\gothq^{\infty}]
   \]
   has degree $p^{2n}$. Here, our convention for $\gothq$ is as at the beginning of this section. In particular, if $\gothp$ splits in $E$, then the quasi-isogeny on the  $\bar\gothq^{\infty}$-divisible groups
   \[
   \eta'^{\sharp}_{(n),\bar\gothq}:\bfA'_{\tilde\ttS, k_0}[\bar\gothq^{\infty}]\ra \eta'^*_{(n),\sharp}\bfA'_{\tilde\ttS', k_0}[\bar\gothq^{\infty}]
   \]
     has necessarily  degree $p^{-2n}$.  Here, the exponent $2n$ is due to the fact that $\bfA'_{k_0}[\gothq^{\infty}]$ is two copies of its reduced part $\bfA'_{k_0}[\gothq^{\infty}]^{\circ}$.
   
      \end{itemize}
      \end{defn}

        Let $\eta_i:\ttS_i\ra \ttS_{i+1}$ for $i=1,2$ be two links with all curves turning to the right, and 
  let $(\eta'_{i,\sharp},\eta'^{\sharp}_i)$ for $i=1,2$ be a link morphism of indentation degree $n_i$ on $\bfSh_{K'_p}(G'_{\tilde \ttS_i})_{k_0}$  attached to $\eta_{i}$. 
The composition of $(\eta'_{2,\sharp},\eta'^{\sharp}_{2})$ with $(\eta'_{1,\sharp},\eta'^{\sharp}_{1})$ defined by  
\[
\eta'_{12,\sharp}: \bfSh_{K'_p}(G'_{\tilde\ttS_1})_{k_0}\xra{\eta'_{1,\sharp}}\bfSh_{K'_p}(G'_{\tilde\ttS_2})_{k_0}\xra{\eta'_{2,\sharp}}\bfSh_{K'_p}(G'_{\tilde\ttS_3})_{k_0}
\]
and 
\[
\eta'^{\sharp}_{12}: \bfA'_{\tilde\ttS_1,k_0}\xra{\eta'^{\sharp}_{1}}\eta'^{*}_{1,\sharp}(\bfA'_{\tilde\ttS_2,k_0})\xra{\eta'^{*}_{1,\sharp}(\eta'^{\sharp}_{2})}\eta'^{*}_{12,\sharp}(\bfA'_{\tilde\ttS_3,k_0}),
\]
is a link morphism attached to the composed link $\eta_{12} :=\eta_2\circ\eta_1$ with  indentation degree $n_1+n_2$.   
   
    \subsection{Variants}
   The formulation of link morphisms on $\bfSh_{K'}(G'_{\tilde \ttS})_{k_0}$ is  compatible with changing   the tame level $K'^p$. 
    By taking the inverse limit of $K'^p$,  one can  define a link morphism on $\bfSh_{K'_p}(G'_{\tilde \ttS})_{k_0}$ associated to $\eta$ in the obvious way.  
       
  One can  define similarly a link morphism  of indentation degree $n$ on $\bfSh_{K''_p}(G''_{\tilde \ttS})_{k_0}$ as  a pair $(\eta''_{(n), \sharp}, \eta''_{(n),\sharp})$, where 
  \[
  \eta''_{(n),\sharp}: \bfSh_{K''_p}(G''_{\tilde\ttS})_{k_0}\ra \bfSh_{K''_p}(G''_{\tilde\ttS'})_{k_0}
  \]
  is a morphism of varieties and 
  \[
  \eta''^{\sharp}_{(n)}: \bfA''_{\tilde\ttS, k_0}\ra \eta''^{*}_{(n),\sharp}(\bfA''_{\tilde\ttS',k_0})
  \]
  is a  $p$-quasi-isogeny of abelian schemes such that same conditions (1)-(4) in \ref{D:link-morphism} are satisfied (except the primes is replaced by double primes). Here, $\bfA''_{\tilde\ttS, k_0}$ is the family of abelian varieties constructed in Subsection~\ref{S:abel var in unitary case}.

      \begin{example}
     (1) Consider the second iteration of the Frobenius link $\sigma^2=\sigma_{\gothp}^2: \ttS\ra \sigma^{2}(\ttS)$. The twisted (partial) Frobenius map \eqref{E:twist-partial-Frob}
      \[
      \gothF'_{\gothp^2}:
       \bfSh_{K'}(G'_{\tilde \ttS})_{k_{0}} \to \bfSh_{K'}(G'_{\sigma^2 \tilde \ttS})_{k_{0}}
      \]
      together with the isogeny $\eta'_{\gothp^2}$ 
defined in \eqref{E:universal-isog-Frob} is a link morphism associated to $\sigma^2$; the indentation degree is $0$ if $\gothp$ is inert in $E/F$, and is $2\#\tilde \ttS_{\infty/\bar \gothq} - 2\#\tilde \ttS_{\infty/ \gothq}$ if $\gothp$ splits in $E/F$. %\liang{I computed it again, I found the answer to be $2\#\tilde \ttS_{\infty/\gothq} - 2\#\tilde \ttS_{\infty/ \bar \gothq}$...}
      
      (2)  Assume that $\ttS_\infty = \Sigma_\infty$ and $\gothp\notin\ttS$ (so that $\gothp$  splits in $E$ by our choice of $E$).
        The Shimura variety $\bfSh_{K'}(G'_{\tilde \ttS})_{k_0}$ is just a finite union of closed points.
         Let $\tau_0\in \Sigma_{\infty}$, and  $\tilde \tau_0\in \tilde \ttS_\infty$ be  the  lift of $\tau_0$ with signature $s_{\tilde\tau_0}=0$. 
          We assume that  $\sigma^{-1}\tilde \tau_0 \notin \tilde \ttS_\infty$ (so  that $\sigma^{-1}\tilde \tau_0^c \in \tilde \ttS_\infty$).
          Let $\tilde \ttS'$ denote the subset of places of $F$ containing the same finite places as $ \tilde \ttS$ and such that  $\tilde \ttS'_\infty = \tilde \ttS_\infty \cup \{\tilde \tau_0^c, \sigma^{-1}\tilde\tau_0\} \backslash \{ \tilde \tau_0, \sigma^{-1}\tilde \tau_0^c\}$.
          Let $\ttS'$ be the subset of places of $F$ defined by the restriction of $\tilde\ttS'_{\infty}$.
Then there exists a link morphism  $(\delta'_{\tau_0,\sharp}, \delta'^{\sharp}_{\tau_0})$ from $\bfSh_{K'}(G'_{\tilde \ttS})_{k_0}$ to  $\bfSh_{K'}(G'_{\tilde \ttS'})_{k_0}$ associated to the trivial link $\ttS\ra \ttS'$ defined as follows; its indentation is $0$ if $\gothp$ is inert in $E/F$,  is $2$ if $\gothp$ splits in $E/F$ and $\tilde \tau$ induces the $p$-adic place $\gothq$, and is $-2$ if $\gothp$ splits in $E/F$ and $\tilde \tau$ induces the $p$-adic place $\bar \gothq$.%\liang{I thought this was $2$ and $-2$ instead; we can talk about this.}

It suffices to define $\delta'_{\tau_0,\sharp}$ on the geometric closed points, as both Shimura varieties are zero-dimensional.
For each $\overline \FF_p$-point $x = (A, \iota ,\lambda_{A}, \alpha_{K'})  \in \bfSh_{K'}(G'_{\tilde \ttS})(\overline \FF_p)$,
let $\tilde \calD^\circ_{A,\tilde \tau}$ denote the $\tilde\tau$-component of the reduced  covariant Dieudonn\'e module of $A$ for each $\tilde \tau \in  \Sigma_{E, \infty}$. 
 We put $M^\circ_{\tilde \tau} = \tilde \calD^\circ_{A,\tilde \tau}$ for $\tilde \tau \neq \tilde \tau_0, \tilde \tau_0^c$ and 
 $$M^\circ_{\tilde \tau_0^c} = p\tilde \calD^\circ_{A,\tilde \tau_0^c},\quad M^\circ_{\tilde \tau_0} = \frac 1p\tilde \calD^\circ_{A,\tilde \tau_0}\subseteq \tcD^{\circ}_{A,\tilde\tau_0}[\frac{1}{p}].
 $$
It is straightforward to check that the signature condition implies that $M_{\tilde\tau}$'s are stable under the actions of Frobenius and Verschiebung of $\tcD^{\circ}_{A,\tilde\tau_0}[1/p]$.  
As in the proof of Proposition~\ref{P:Y_S=X_S}, this gives rise to an abelian variety $B$ of dimension $4g$ with an action of $\calO_D$ and an $\calO_D$-quasi-isogeny $\phi:B \to A$ such that the induced morphism on Dieudonn\'e modules  $\phi_*: \tcD^{\circ}_{B,\tilde\tau}\ra \tcD^{\circ}_{A,\tilde\tau}[1/p]$ is identified with the natural inclusion $M^\circ_{\tilde\tau}\hra \tcD^{\circ}_{A,\tilde\tau}[1/p]$ for all $\tilde\tau\in \Sigma_{E,\infty}$.
The polarization  $\lambda_{A}$ induces naturally a polarization $\lambda_{B}$ on $B$ such that $\lambda_B=\phi^\vee\circ \lambda_A\circ\phi$, since $M^\circ_{\tilde\tau}$ is the dual lattice of $M^\circ_{\tilde\tau^c}$ for every $\tilde\tau\in \Sigma_{E,\infty}$.
When $\gothp$ is of type $\alpha_2$, then $K'_p$ is the Iwahoric subgroup and 
the  level structure at $\gothp$ is equivalent to the data of a collection of submodules $L^\circ_{\tilde \tau} \subset \tilde \calD^\circ_{A,\tilde \tau}$ for $\tilde \tau \in \Sigma_{E, \infty/\gothq}$ which are stable under the action of Frobenius and Verschiebung morphisms and such that  $\tilde \calD^\circ_{A,\tilde \tau} / L^\circ_{\tilde \tau}$ is a one-dimensional vector space over $\overline \FF_p$ for each $\tilde \tau$.
This then gives rise to a level structure at $\gothp$ for $B_x$ by taking $L'^\circ_{\tilde \tau} = L^\circ_{\tilde \tau}$ if $\tilde \tau \neq \tilde \tau_0$, and $L'^\circ_{\tilde \tau_0} = p^{-1}L^\circ_{\tilde \tau_0}$. %\liang{Here, I am taking just half of the data so that the other one comes with duality.}
It is clear that other level structures of $A$ transfer to that of $B$ automatically.
This then defines a morphism 
$$
\delta'_{\tau_0,\sharp}: \bfSh_{K'}(G'_{\tilde \ttS})_{k_0} \to \bfSh_{K'}(G'_{\tilde \ttS'})_{k_0}.
$$
One checks easily  that one can reverse the construction to recover $A$ from $B$. So $\delta'_{\tau_0,\sharp}$ is an isomorphism, and there exists a $p$-quasi-isogeny 
$$
\delta'^{\sharp}_{\tau_0}: \bfA'_{\tilde \ttS, k_0} \to (\delta'_{\tau_0,\sharp})^*\bfA'_{\tilde \ttS', k_0},
$$
whose base change to $x$ is $\phi^{-1}: A\ra B$ constructed above. 
It is evident by construction that $(\delta'_{\tau_0,\sharp},\delta'^{\sharp}_{\tau_0})$ is a link morphism  of the prescribed indentation  associated to the trivial link $\ttS\ra \ttS'$.

      \end{example}

   The following proposition will play a crucial role in our application in \cite{tian-xiao3}.
   \begin{prop}\label{P:uniqueness-link-morphism}
   For a given link $\eta: \ttS\ra \ttS'$ with all curves (if any) turning to the right and an integer $n\in \Z$ (with $n=0$ if $\gothp$ is inert in $E$), there exists at most one link morphism  of  indentation degree $n$ from $\bfSh_{K'_p}(G'_{\tilde\ttS})_{k_0}$  to $\bfSh_{K'_p}(G'_{\tilde\ttS'})_{k_0}$ (or from $\bfSh_{K''_p}(G''_{\tilde\ttS})_{k_0}$ to $\bfSh_{K''_p}(G''_{\tilde\ttS'})_{k_0}$ ) associated to $\eta$. 
   \end{prop}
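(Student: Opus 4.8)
\textbf{Proof proposal for Proposition~\ref{P:uniqueness-link-morphism}.}
The plan is to show that a link morphism, if it exists, is rigidly determined on geometric points by the combinatorial data $\eta$ and $n$, and then to promote this pointwise uniqueness to uniqueness as a morphism of schemes together with the accompanying $p$-quasi-isogeny. Suppose $(\eta'_{(n),\sharp}, \eta'^{\sharp}_{(n)})$ and $(\psi'_{(n),\sharp}, \psi'^{\sharp}_{(n)})$ are two link morphisms attached to the same $\eta$ with the same indentation degree $n$. Fix an $\overline{\FF}_p$-point $x = (A, \iota, \lambda, \alpha_{K'})$ of $\bfSh_{K'_p}(G'_{\tilde\ttS})_{k_0}$, and let $x' = \eta'_{(n),\sharp}(x)$, $x'' = \psi'_{(n),\sharp}(x)$ be its images, with universal abelian varieties $B'$ and $B''$. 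The quasi-isogenies $\eta'^{\sharp}_{(n)}$ and $\psi'^{\sharp}_{(n)}$ identify $\tilde\calD(B')^\circ[1/p]$ and $\tilde\calD(B'')^\circ[1/p]$ with $\tilde\calD(A)^\circ[1/p]$, compatibly with the $\calO_D$-action, the polarization pairing, and (away from $p$) the tame level structure. So it suffices to prove that, inside $\tilde\calD(A)^\circ[1/p]$, the lattices $\tilde\calD(B')^\circ_{\tilde\tau}$ and $\tilde\calD(B'')^\circ_{\tilde\tau}$ coincide for every $\tilde\tau \in \Sigma_{E,\infty}$, and that the level structures at $p$ agree.

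The key step is a \emph{descending-induction} argument along the displacement function $m(\tau)$ of $\eta$, mimicking the lattice constructions in the proof of Lemma~\ref{L:Y_T=X_T-1} and the Examples following Definition~\ref{D:link-morphism}. First consider the places $\tilde\tau$ lying over $\tau \in \ttS_\infty$ (where $m(\tau) = 0$): there the signature condition forces $\tilde\calD(B')^\circ_{\tilde\tau}$ to be either $0$ or the full $\tilde\calD(C)^\circ_{\tilde\tau}$ for the intermediate object, hence condition (3) of Definition~\ref{D:link-morphism} pins it down uniquely (the exponent $t_{\tilde\tau}$ being forced by the requirement that the polarization pairing behaves as in Theorem~\ref{T:unitary-shimura-variety-representability}(b) — perfect at non-$\beta^\sharp$ places, with a one-dimensional defect otherwise). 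Next, for $\tilde\tau$ over $\tau \in \Sigma_\infty - \ttS_\infty$ with $m(\tau) > 0$, condition (3) says $\eta'^{\sharp}_{(n),*}\bigl(F_{\es, A_x}^{m(\tau)}(\tilde\calD(A_x)^\circ_{\tilde\tau})\bigr) = p^{t_{\tilde\tau}}\tilde\calD(B')^\circ_{\sigma^{m(\tau)}\tilde\tau}$; since the left-hand side is intrinsic to $x$ and the quasi-isogeny, the target lattice at $\sigma^{m(\tau)}\tilde\tau$ is determined up to the integer $t_{\tilde\tau}$. The integers $t_{\tilde\tau}$ are in turn nailed down by two global constraints: the total degree of the $\gothq$-component quasi-isogeny must equal $p^{2n}$ by condition (4) (which fixes the sum $\sum_{\tilde\tau \in \Sigma_{E,\infty/\gothq}} t_{\tilde\tau}$ after accounting for the Frobenius-induced shifts), and the polarization compatibility (condition (2)) relates $t_{\tilde\tau}$ to $t_{\tilde\tau^c}$. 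Together with the $F$- and $V$-stability of the resulting lattice chain — which propagates the determination from the "nodes" $\sigma^{m(\tau)}\tilde\tau$ (which land in $\ttS'_{E,\infty}$ or are hit by the essential Frobenius from a node) to all intermediate $\tilde\tau$, exactly as in the case analysis of Lemma~\ref{L:Y_T=X_T-1} — this shows $\tilde\calD(B')^\circ = \tilde\calD(B'')^\circ$ inside $\tilde\calD(A)^\circ[1/p]$. The level structures at $p$-adic places of type $\alpha^\sharp$ or $\alpha2$ are then forced as well, since they are given by Dieudonné submodules determined by the same essential-Frobenius recipe; the tame level structure agrees because both quasi-isogenies intertwine $\alpha_{K'}$ with the same $\widehat{\ZZ}^{(p)}$-lattice map modulo $K'^p$. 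Hence $B' \cong B''$ with matching PEL data, i.e. $x' = x''$, so $\eta'_{(n),\sharp}$ and $\psi'_{(n),\sharp}$ agree on $\overline{\FF}_p$-points, and the quasi-isogenies $\eta'^{\sharp}_{(n)}$ and $\psi'^{\sharp}_{(n)}$ have the same kernel, hence are equal.

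To pass from agreement on geometric points to agreement as morphisms of schemes, I would invoke that $\bfSh_{K'_p}(G'_{\tilde\ttS})_{k_0}$ is (a limit of) reduced separated schemes of finite type over $\overline{\FF}_p$ and $\bfSh_{K'_p}(G'_{\tilde\ttS'})_{k_0}$ is separated, so two morphisms that agree on all closed points and are both given by a moduli recipe (each is determined by its effect on the universal object) must coincide; more concretely, the quasi-isogeny $\eta'^{\sharp}_{(n)} \circ (\psi'^{\sharp}_{(n)})^{-1}$ from $\eta'^*_{(n),\sharp}\bfA'_{\tilde\ttS'}$ to $\psi'^*_{(n),\sharp}\bfA'_{\tilde\ttS'}$ restricts to an isomorphism of PEL data at every point, hence by rigidity of abelian schemes and the representability in Theorem~\ref{T:unitary-shimura-variety-representability} descends to an isomorphism of the two pullback families over a dense open, extending to all of $\bfSh_{K'_p}(G'_{\tilde\ttS})_{k_0}$. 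The identical argument with double primes handles the $G''_{\tilde\ttS}$ case, using the explicit presentation $\bfSh_{K''_p}(G''_{\tilde\ttS})_{k_0} = \bfSh_{K'_p}(G'_{\tilde\ttS})_{k_0} \times_{\widetilde G_{\tilde\ttS}} G''_{\tilde\ttS}(\AAA^{\infty,p})$ from Subsection~\ref{S:abel var in unitary case}. The main obstacle I anticipate is the bookkeeping in the descending induction: one must verify, case by case over the types $\alpha1, \alpha2, \beta1, \beta2, \alpha^\sharp, \beta^\sharp$ of $\gothp$ (and, when $\gothp$ splits in $E$, separately at $\gothq$ and $\bar\gothq$), that the constraints from conditions (2), (3), (4) genuinely leave no freedom in the integers $t_{\tilde\tau}$ — in particular that the indentation degree $n$ together with the polarization duality rigidifies the "vertical shift" of the lattice chain, which is exactly the subtle point where having fixed $n$ (rather than letting it vary) is essential.
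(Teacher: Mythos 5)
Your overall plan — reduce to $G'$, show the two candidate targets coincide pointwise, then promote to an identity of morphisms and quasi-isogenies — is the same as the paper's. But the pointwise step is where the paper is considerably slicker, and where your argument, as you yourself admit, is not yet complete.

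Rather than trying to reconstruct the lattices $\tilde\calD(B')^\circ_{\tilde\tau}$ inside $\tilde\calD(A)^\circ[1/p]$ by a case-by-case recipe (à la Lemma~\ref{L:Y_T=X_T-1}), the paper forms the comparison quasi-isogeny
\[
\phi\colon \eta'^{*}_{1,\sharp}(\bfA'_{\tilde\ttS',k_0}) \xleftarrow{\ \eta'^{\sharp}_1\ } \bfA'_{\tilde\ttS,k_0} \xrightarrow{\ \eta'^{\sharp}_2\ } \eta'^{*}_{2,\sharp}(\bfA'_{\tilde\ttS',k_0})
\]
at a geometric point $x$. Condition (3) of Definition~\ref{D:link-morphism}, applied to both candidates, gives integers $u_{\tilde\tau}$ with $\phi_{x,*}\bigl(\tilde\calD(\bfA'_{\tilde\ttS',x_1})^\circ_{\tilde\tau}\bigr)=p^{u_{\tilde\tau}}\tilde\calD(\bfA'_{\tilde\ttS',x_2})^\circ_{\tilde\tau}$. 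The only bookkeeping needed is: since $\phi_{x,*}$ commutes with the essential Frobenius $F^{n_\tau}_{\es}$ on the $\ttS'$-side, whose cokernel has length one on both ends, one gets $u_{\tilde\tau}=u_{\sigma^{-n_\tau}\tilde\tau}$; iterating around the (single, since $p$ is inert in $F$) $\sigma$-cycle on $\Sigma_{E,\infty/\gothq}$ gives a single constant $u$. Condition (4) — that both $\eta'^{\sharp}_{i,\gothq}$ have degree $p^{2n}$ — then says $\phi_{x,\gothq}$ has degree $0$, so $u=0$ and $\phi_x$ is an isomorphism. This sidesteps all of your "descending induction along $m(\tau)$", the reconciliation of the $t_{\tilde\tau}$, and the separate discussion by type of $\gothp$ — precisely the part you flagged as "the main obstacle". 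Your route would ultimately work, but you have not carried it out and it duplicates the lattice computation already done once in the proof of Proposition~\ref{P:Y_S=X_S}.

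For the promotion from points to schemes, the paper's device is also a bit sharper than yours: it cites \cite[Proposition~2.9]{rapoport-zink}, which says the locus where a quasi-isogeny is an isomorphism is a \emph{closed} subscheme; since the source is reduced, being an isomorphism at every $\overline\FF_p$-point then forces $\phi$ to be an isomorphism globally, whence $\eta'_{1,\sharp}=\eta'_{2,\sharp}$ by the moduli interpretation. Your appeal to "rigidity of abelian schemes" and "descends over a dense open, extending" is not quite the right invocation — what you need is exactly this closedness of the isomorphism locus combined with reducedness, or the elementary observation that the determinant of the induced map on de Rham homology is a function that vanishes nowhere on closed points of a reduced scheme hence is a unit. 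Adopting the Rapoport--Zink formulation would make this step clean.

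The reduction of the $G''$-case to the $G'$-case and the role of the reducedness of the base you have right, and agree with the paper.
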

   
   \begin{proof}
   Since $\bfSh_{K'_p}(G'_{\tilde\ttS})_{k_0}$ and $\bfSh_{K''_p}(G''_{\tilde\ttS})_{k_0}$ have canonically isomorphic neutral connected component (and the restrictions of $\bfA'_{\tilde\ttS,k_0}$ and $\bfA''_{\tilde\ttS,k_0}$ to this neutral connected component are also canonically isomorphic), it suffices to treat the case of $\bfSh_{K'_p}(G'_{\tilde\ttS})_{k_0}$.
   Let $(\eta'_{i,\sharp}, \eta'^{ \sharp}_{i})$ for $i=1,2$ be two link morphisms of indentation degree $n$ associated to $\eta$.    By the moduli property of $\bfSh_{K'_p}(G'_{\tilde\ttS'})_{k_0}$, it suffices to show that  the $p$-quasi-isogeny of abelian varieties 
   \[
  \phi: \eta'^{ *}_{1,\sharp}(\bfA'_{\tilde\ttS',k_0}) \xleftarrow{\eta'^{\sharp}_{1}}\bfA'_{\tilde\ttS, k_0}\xra{\eta'^{\sharp}_{2}} \eta'^{ *}_{2,\sharp}(\bfA'_{\tilde\ttS',k_0})
   \]
   is an isomorphism.
   By \cite[Proposition~2.9]{rapoport-zink}, the locus where  $\phi$ is an isomorphism is a closed subscheme of $\bfSh_{K'}(G'_{\tilde\ttS})_{k_0}$. 
    As $\bfSh_{K'_p}(G'_{\tilde\ttS})_{k_0}$ is a reduced variety, $\phi$ is an isomorphism if and only if it is so after base changing to every $\overline \FF_p$-point of $\bfSh_{K'_p}(G'_{\tilde\ttS})_{k_0}$. 
%Since $\phi$ is compatible with all structures on both sides by construction, 
  %  we just need to verify that, for each geometric  point $x$ of $\bfSh_{K'_p}(G'_{\tilde\ttS})_{k_0}$ with values in an algebraically closed field $k(x)$, $\phi_x$ is an isomorphism. 
    
 Let $x$ be an $\overline \FF_p$-point of $\bfSh_{K'}(G'_{\tilde\ttS})_{k_0}$, and   put $x_i=\eta'_{i,\sharp}(x)$ for $i=1,2$. 
Consider first the case $\ttS_{\infty}\neq \Sigma_{\infty}$. 
By condition~\ref{D:link-morphism}(3), there exists  an integer $u_{\tilde\tau}$ for each $\tilde\tau\in \Sigma_{E,\infty/\gothq}-\ttS'_{E,\infty/\gothq}$ such that
    \[
    \phi_{x,*}\big(\tcD(\bfA_{\tilde\ttS',x_1})^{\circ}_{\tilde\tau}\big)=p^{u_{\tilde\tau}}\tcD(\bfA_{\tilde\ttS',x_2})^{\circ}_{\tilde\tau}.
    \]
    We claim that $u_{\tilde\tau}$ must be $0$ for all $\tilde\tau$. 
   % Let $n_{\tau}\geq 1$ be the least integer such that $\sigma^{-n_{\tau}}\tau\in \ttS^c_{\infty}$, where $\tau\in \ttS_{\infty}^c$ denotes the restriction of $\tilde\tau$.
    Note that the cokernel of 
     \[
     F^{n_{\tau}}_{\es,\bfA_{\tilde\ttS',x_i}}: \tcD(\bfA_{\tilde\ttS',x_i})^{\circ}_{\sigma^{-n_{\tau}}\tilde\tau}\ra \tcD(\bfA_{\tilde\ttS',x_i})^{\circ}_{\tilde\tau}
     \]
   has dimension $1$ over $k(x_i)$ for $i=1,2$. Since $\phi_{x,*}$ commutes with $F^{n_{\tau}}_{\es}$, we see that $u_{\tilde\tau}=u_{\sigma^{-n_{\tau}}\tilde\tau}$.
    Consequently, for all $\tilde\tau\in \Sigma_{E,\infty/\gothq}$, $u_{\tilde\tau}$  takes the same value, which we denote by $u$.
   However,   both $\eta'^{\sharp}_{1, \gothq}$ and $\eta'^{\sharp}_{2, \gothq}$ have degree $p^{2n}$ by condition~\ref{D:link-morphism}(4).
    It follows that 
    $$\phi_{x,\gothq}:\eta'^{ *}_{1,\sharp}(\bfA'_{\tilde\ttS',x_1})[\gothq^{\infty}]\ra \eta'^{ *}_{2,\sharp}(\bfA'_{\tilde\ttS',x_2})[\gothq^{\infty}] 
    $$ 
    is a quasi-isogeny of degree $0$, which forces $u$ to be $0$. Hence $\phi_{*}$ is an isomorphism. 
    
    When $\ttS_{\infty}=\Sigma_{\infty}$, we have  similarly  an integer $u_{\tilde\tau}$ for all $\tilde\tau\in \Sigma_{E,\infty}$ such that $\phi_{*}\big(\tcD(\bfA_{\tilde\ttS',x_1})^{\circ}_{\tilde\tau}\big)=p^{u_{\tilde\tau}}\tcD(\bfA_{\tilde\ttS',x_2})^{\circ}_{\tilde\tau}$. 
    Since $\bfSh_{K'}(G'_{\tilde\ttS})_{k_0}$ and $\bfSh_{K'}(G'_{\tilde\ttS'})_{k_0}$ are both zero-dimensional and 
    $$F_{\es,\bfA'_{\tilde\ttS,x_i}}: \tcD(\bfA_{\tilde\ttS',x_i})^{\circ}_{\sigma^{-1}\tilde\tau}\ra \tcD(\bfA_{\tilde\ttS',x_i})_{\tilde\tau}^{\circ}
    $$ 
    is an  isomorphism for all $\tilde\tau\in \Sigma_{E,\infty}$ and $i=1,2$,   the commutativity of $\phi_*$ with essential Frobenii show that $u_{\tilde\tau}=u_{\sigma^{-1}\tilde\tau}$.  The same arguments as above show that $\phi_*$ is an isomorphism.     
\end{proof}
   
   \begin{remark}
  This Proposition does not guarantee the existence of the  link morphism associated to a given link. However, we do expect the link morphisms to exist in general (for links with all curves turning to the right).
   \end{remark}

\subsection{Link morphisms and Hecke operators}
Assume  $\ttS_\infty = \Sigma_\infty$ so that $\gothp$ is of type $\alpha$ or $\alpha^{\sharp}$ and  the band associated to $\ttS$ consists of only plus signs. 
%If $\tilde\ttS_{\infty}'$ is another choice of liftings of $\ttS_{\infty}$, so the link morphisms will only reflect the difference between the signatures $\tilde \ttS$, i.e. the choices of lifts $\tilde \ttS_\infty$ of $\ttS_\infty$.
Let $\gothq$ and $\bar\gothq$ be the two primes of $E$ above $\gothp$.
We will focus on the compatibility of link morphisms  with the Hecke operators at $\gothq$, whose definition we recall now.

We have the following description
$$
G''_{\tilde \ttS}(\Q_p) \cong \GL_2(F_{\gothp})\times_{F_{\gothp}^{\times}}(E_{\gothq}^{\times}\times E_{\bar\gothq}^{\times})\xra{\sim} \GL_2(E_{\gothq})\times F_{\gothp}^{\times},
$$
where the last isomorphism is given by $(g, (\lambda_1,\lambda_2))\mapsto(g\lambda_1, \det(g)\lambda_{1}\lambda_2)$ for $g\in \GL_2(F_{\gothp})$ and $\lambda_1\in E_{\gothq}^{\times}$ and $\lambda_2\in E_{\bar\gothq}^{\times}$. 
Then $G'_{\tilde \ttS}(\Q_p)$ is the subgroup $\GL_2(E_{\gothq})\times \Q_p^{\times}$ of $G''_{\tilde \ttS}(\Q_p)$.
Let $\gamma_{\gothq}$ (resp. $\xi_\gothq$)  be the element of $G'_{\tilde \ttS}(\AAA^{\infty})$ which is equal to  
$$(\begin{pmatrix}p^{-1}&0\\0&p^{-1}\end{pmatrix},1)\in \GL_2(E_\gothq)\times\Q_p^{\times}
\quad \textrm{(resp. }(\begin{pmatrix}p^{-1} &0\\ 0&1\end{pmatrix}, 1)\in \GL_2(E_\gothq)\times\Q_p^{\times} \ )$$
at $p$
and is equal to $1$ at other places.
Assume that  $K'\subseteq G'_{\tilde \ttS}(\AAA^{\infty})$  is hyperspecial at $p$, i.e. $K'_p=\GL_{2}(\cO_{E_{\gothq}})\times \ZZ_p^{\times}$. 
We use $S_{\gothq}$ and $T_{\gothq}$ to denote the  Hecke correspondences on $\bfSh_{K'}(G'_{\tilde\ttS})$ defined by $K'\gamma_\gothq K'$ and $K'\xi_\gothq K'$, respectively. 
Explicitly, if $\Iw'_p= \Iw_{\gothq}\times \ZZ_p^{\times}\subseteq G'_{\tilde\ttS}(\Q_p)$ with $\Iw_{\gothq}\subseteq \GL_2(\cO_{E_{\gothq}})$ the standard Iwahoric subgroup reducing to upper triangular matrices when modulo $p$, then  the Hecke correspondence $T_{\gothq}$ is given by the following  diagram:
\begin{equation}\label{E:Hecke-T_q}
\xymatrix{
& \bfSh_{K'^p
\Iw'_p}(G'_{\tilde\ttS}) \ar[rd]^{\pr_2}\ar[ld]_{\pr_1}\\
\bfSh_{K'}(G'_{\tilde\ttS})&& \bfSh_{K'}(G'_{\tilde\ttS}),
}
\end{equation}
where $\pr_1$ is the natural projection, and $\pr_2$ is induced by the right multiplication by $\xi_{\gothq}$.
 Note that $S_{\gothq}$ is  an automorphism of $\bfSh_{K'}(G'_{\tilde\ttS})$, and  there is a natural $p$-quasi-isogeny of universal abelian schemes 
 $$
 \Phi_{S_{\gothq}}: \bfA'_{\tilde \ttS}\ra S_{\gothq}^*\bfA'_{\tilde\ttS}
 $$ 
 compatible with all structures such that the induced quasi-isogeny of $p$-divisible groups  $\Phi_{S_{\gothq}}[\gothq^{\infty}]: \bfA'_{\tilde\ttS}[\gothq^{\infty}]\ra (S_{\gothq}^*\bfA'_{\tilde\ttS})[\gothq^{\infty}]$ is the canonical isogeny with kernel $\bfA'_{\tilde\ttS}[\gothq]$.

 Similarly, the elements $\gamma_{\gothq}$ and $\xi_{\gothq}$ induce Hecke correspondences on $\bfSh_{K''}(G''_{\tilde\ttS})$, which we denote still by $S_\gothq$ and $T_\gothq$ respectively. 
% Note that $$
%Note that we have an equality $T_{\gothq}^2S_{\gothq}^{-1}=K'g_\gothq^{2}\gamma_\gothq^{-1}K' + \#k_{\gothq}+1$ in the Hecke algebra $\Z[K'\backslash ]$  lies in the natural image of $G(\AAA^{\infty})\ra G''(\AAA^{\infty})$. 

\begin{prop}
Assume that $\ttS_\infty = \Sigma_\infty$.
 Let  $\tilde \ttS_\infty$ and $\tilde \ttS'_\infty$ be two different choices of signatures in Subsection~\ref{S:CM extension}.
Suppose that there exists a link morphism $(\eta'_\sharp,\eta'^{\sharp})$ from  $ \bfSh_{K'}(G'_{\tilde \ttS})_{k_0}$ to $ \bfSh_{K'}(G'_{\tilde \ttS'})_{k_0}$ (of some indentation) associated to the trivial link $\ttS \to \ttS'$, where $K'_p= \GL_2(\cO_{E_{\gothq}})\times \Z_p^{\times}\subseteq G'(\Q_p)$ is hyperspecial. 
Then $(\eta'_{\sharp}, \eta'^{\sharp})$ lifts uniquely to a link morphism $(\eta'_{\sharp, \Iw}, \eta'^{\sharp}_{\Iw})$ on  $\bfSh_{K'^p\Iw'_p}(G'_{\tilde\ttS})_{k_0}$ such that  the following commutative diagrams are Cartesian:
\[
\xymatrix{
\bfSh_{K'}(G'_{\tilde \ttS})_{k_0} \ar[d]^{\eta'_\sharp} & \bfSh_{K'^p\Iw'_p}(G'_{\tilde \ttS})_{k_0} \ar[l]_{\pr_1}\ar[d]^{\eta'_{\sharp,\Iw}} \ar[r]^{\pr_2} & \bfSh_{K'}(G'_{\tilde \ttS})_{k_0}\ar[d]^{\eta'_{\sharp}} & \bfSh_{K'}(G'_{\tilde \ttS})_{k_0}\ar[d]^{\eta'_\sharp} \ar[r]^{S_\gothq} & \bfSh_{K'}(G'_{\tilde \ttS})_{k_0}\ar[d]^{\eta'_\sharp}
\\
\bfSh_{K'}(G'_{\tilde \ttS'})_{k_0} & \bfSh_{K'^p\Iw'_{p}}(G'_{\tilde \ttS'})_{k_0}\ar[l]_{\pr_1} \ar[r]^{\pr_2} & \bfSh_{K'}(G'_{\tilde \ttS'})_{k_0} & \bfSh_{K'}(G'_{\tilde \ttS'})_{k_0}  \ar[r]^{S_\gothq} & \bfSh_{K'}(G'_{\tilde \ttS'})_{k_0},
}
\]
where   the top and the bottom lines of the left diagram are the Hecke correspondences $T_{\gothq}$ defined above.
The same holds for the link morphism $(\eta''_\sharp,\eta''^{\sharp})$: $\bfSh_{K''}(G''_{\tilde \ttS})_{k_0}\to \bfSh_{K''}(G''_{\tilde \ttS'})_{k_0}$.
\end{prop}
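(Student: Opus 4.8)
The plan is to lift the link morphism by transporting the Iwahori datum at $\gothq$ through the link quasi-isogeny, in the same spirit as the transport of level structures in the proof of Proposition~\ref{P:Y_S=X_S}. First I would record the moduli interpretation of the source: since $\ttS_\infty = \Sigma_\infty$, the prime $\gothp$ is of type $\alpha$ for the hyperspecial $K'$ and of type $\alpha^\sharp$ for $\Iw'_p$, so by Theorem~\ref{T:unitary-shimura-variety-representability} the scheme $\bfSh_{K'^p\Iw'_p}(G'_{\tilde\ttS})_{k_0}$ is smooth (in particular reduced), and by Corollary~\ref{C:integral-model-quaternion} the forgetful morphism $\pr_1\colon \bfSh_{K'^p\Iw'_p}(G'_{\tilde\ttS})_{k_0}\to\bfSh_{K'}(G'_{\tilde\ttS})_{k_0}$ is finite \'etale, classifying an $\cO_D$-stable closed finite flat subgroup $\alpha_\gothp = H_\gothq\oplus H_{\bar\gothq}\subseteq \bfA'_{\tilde\ttS}[\gothq]\oplus\bfA'_{\tilde\ttS}[\bar\gothq]$ with $H_\gothq,H_{\bar\gothq}$ dual under the $\lambda$-Weil pairing; over an $\Fpb$-point $A$ this is a collection of $F,V$-stable submodules $L_{\tilde\tau}\subsetneq \tcD(A)^\circ_{\tilde\tau}$ ($\tilde\tau\in\Sigma_{E,\infty/\gothq}$) with one-dimensional quotient, the $\bar\gothq$-component being the dual.

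Next I would construct the lift on $\Fpb$-points. Given $x=(A,\iota_A,\lambda_A,\alpha_{K'})$ with image $x'=\eta'_\sharp(x)=(B,\iota_B,\lambda_B,\beta_{K'})$ and link quasi-isogeny $\phi=\eta'^{\sharp}_x\colon A\to B$, condition (3) of Definition~\ref{D:link-morphism} (with no essential-Frobenius terms present, since $\Sigma_\infty-\ttS_\infty=\emptyset$) gives $\phi_*(\tcD(A)^\circ_{\tilde\tau})=p^{t_{\tilde\tau}}\tcD(B)^\circ_{\tilde\tau}$, so that $\phi_*\colon \tcD(A)^\circ_{\tilde\tau}\xrightarrow{\ \sim\ }p^{t_{\tilde\tau}}\tcD(B)^\circ_{\tilde\tau}$ is an isomorphism of $\cD$-modules. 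I would set $L'_{\tilde\tau}:=p^{-t_{\tilde\tau}}\phi_*(L_{\tilde\tau})$ and check it defines an Iwahori datum on $B$: it has one-dimensional quotient, is $F,V$-stable because $\phi_*$ intertwines $F$ and $V$, and is self-dual because $\phi$ respects the polarizations (Definition~\ref{D:link-morphism}(2)). Running the construction backwards with $\phi^{-1}$ shows that $(L_{\tilde\tau})\mapsto(L'_{\tilde\tau})$ is a bijection; this exhibits $\bfSh_{K'^p\Iw'_p}(G'_{\tilde\ttS})_{k_0}$ as the fibre product $\eta'^{*}_\sharp\,\bfSh_{K'^p\Iw'_p}(G'_{\tilde\ttS'})_{k_0}$, producing the morphism $\eta'_{\sharp,\Iw}$ together with the Cartesian property of the $\pr_1$-square, while $\eta'^{\sharp}_{\Iw}:=\pr_1^{*}\eta'^{\sharp}$ is compatible with the new level structures by construction. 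Since $\eta'^{\sharp}_{\Iw}$ has the same degree on $\gothq$-divisible groups as $\eta'^{\sharp}$, the pair $(\eta'_{\sharp,\Iw},\eta'^{\sharp}_{\Iw})$ is a link morphism associated to $\ttS\to\ttS'$ of the same indentation degree.

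Then I would verify the remaining compatibilities. The map $\pr_2$ of \eqref{E:Hecke-T_q} is $\pr_1$ followed by right translation by $\xi_\gothq=(\mathrm{diag}(p^{-1},1),1)$, i.e.\ passage to a quotient abelian variety read off from $H_\gothq$; and $S_\gothq$ is the automorphism attached to the canonical $\cO_D$-quasi-isogeny $\Phi_{S_\gothq}$ with kernel $\bfA'_{\tilde\ttS}[\gothq]$. In both cases the point to check is that $\eta'^{\sharp}$ commutes with the relevant translation quasi-isogeny — which it does, both being $p$-quasi-isogenies compatible with all structures — so by the moduli property the target points agree, giving commutativity of the $\pr_2$- and $S_\gothq$-squares; the $S_\gothq$-square is then Cartesian because its horizontal arrows are isomorphisms, and the $\pr_2$-square is Cartesian by the same pullback argument used for $\pr_1$. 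Uniqueness I would deduce as in Proposition~\ref{P:uniqueness-link-morphism}: the underlying morphism is forced by the $\pr_1$-Cartesian condition, and the comparison $p$-quasi-isogeny of two lifts is an isomorphism on a closed subscheme by \cite[Proposition~2.9]{rapoport-zink}, hence everywhere, using that $\bfSh_{K'^p\Iw'_p}(G'_{\tilde\ttS})_{k_0}$ is reduced, $\pr_1$ is surjective, and the claim holds at hyperspecial level. The statement for $\bfSh_{K''}(G''_{\tilde\ttS})_{k_0}$ follows identically, or by transporting along the canonical identification of neutral connected components together with the compatibility of $\bfA'_{\tilde\ttS}$ with $\bfA''_{\tilde\ttS}$ from Subsection~\ref{S:abel var in unitary case}. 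The main obstacle is the bookkeeping in the second step — verifying that the transported flag $(L'_{\tilde\tau})$ satisfies the self-duality constraint of Theorem~\ref{T:unitary-shimura-variety-representability}(c2) and behaves correctly under the Atkin--Lehner-type quotient defining $\pr_2$ — which is a Dieudonn\'e-module computation of the kind carried out in Lemma~\ref{L:Y_T=X_T-1}.
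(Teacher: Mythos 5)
Your approach is essentially the one the paper takes: you transport the Iwahori datum through the link quasi-isogeny $\phi_*$ using Definition~\ref{D:link-morphism}(3) to rescale by $p^{t_{\tilde\tau}}$, which is exactly how the paper constructs $\tilde H'_{\tilde\tau_0}=p^{-t_{\tilde\tau_0}}\eta'^\sharp_*(\tilde H_{\tilde\tau_0})$, and you read uniqueness off $\pr_1$-Cartesianness plus Proposition~\ref{P:uniqueness-link-morphism}. There are two small but substantive divergences worth flagging. For the $S_\gothq$-square the paper avoids all moduli computation: since $S_\gothq$ is an automorphism and both $S_\gothq\circ\eta'_\sharp$ and $\eta'_\sharp\circ S_\gothq$ are link morphisms for the trivial link of the same indentation degree, they coincide by the uniqueness result (Proposition~\ref{P:uniqueness-link-morphism}); your argument ``both are $p$-quasi-isogenies compatible with all structures, hence commute'' is not a proof on its own and should be replaced by this cleaner route. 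For the $\pr_2$-square the same handwave does not work either, and here the paper genuinely does the Dieudonn\'e-module bookkeeping you flag as ``the main obstacle'': given $y$ lying over $x=(A,\ldots)$ one writes down $\tilde H_\gothq=\tfrac{1}{p}\bigoplus_{i}F_{\es,A}^i(\tilde H_{\tilde\tau_0})$, forms the quotient $B$, then compares $\tcD^\circ_{B',\tilde\tau_0}$ and $\tcD^\circ_{B'',\tilde\tau_0}$ inside $\tcD^\circ_{A,\tilde\tau_0}[1/p]$ (they both come out as $p^{-t_{\tilde\tau_0}-1}\tilde H_{\tilde\tau_0}$) and then observes that all other $\tilde\tau$-components are determined by this one via the essential Frobenius and duality. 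You correctly identified where the work is, but the sentence asserting the commutativity in advance of that work should be removed or rephrased; as written it reads as an unproved claim rather than a deferral.
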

\begin{proof}
Note that $S_\gothq$ is in fact an isomorphism of Shimura varieties; so the compatibility with $S_\gothq$-action follows from the uniqueness of link morphism by  Proposition~\ref{P:uniqueness-link-morphism}.

We prove now the existence of the lift  $(\eta'_{\sharp,\Iw},\eta'^{\sharp}_{\Iw})$, whose uniqueness is proved in  \ref{P:uniqueness-link-morphism}.
 Let $x=(A,\iota,\lambda, \alpha_{K'^p})$ be a point of $\bfSh_{K'}(G'_{\tilde\ttS})(\overline \FF_p)$. Put $x'=\eta'_{\sharp}(x)=(A',\iota',\lambda',\alpha'_{K'^p})$.
 By Definition~\ref{D:link-morphism}(3), for any $\tilde\tau\in \Sigma_{E,\infty}$, there exists $t_{\tilde\tau}\in \ZZ$ independent of $x$ such that $\eta'^{\sharp}_{*}(\tcD^{\circ}_{A,\tilde\tau})=p^{t_{\tilde\tau}}\tcD^{\circ}_{A',\tilde\tau}$.
  Fix a $\tilde\tau_0\in \Sigma_{E,\infty/\gothq}$. 
 Giving a point $y$ of $\bfSh_{K'^p\Iw'_{p}}(G'_{\tilde\ttS})_{k_0}$ with $\pr_1(y)=x$ is equivalent to giving a $W(\overline \FF_p)$-submodule $\tilde H^\circ_{\tilde\tau_0}\subseteq \tcD^{\circ}_{A,\tilde\tau_0}$ such that $F_{\es, A}^{g}(\tilde H^\circ_{\tilde\tau_0})=\tilde H^\circ_{\tilde\tau_0}$ and $\tcD^{\circ}_{A,\tilde\tau_0}/\tilde H^\circ_{\tilde\tau_0}$ is one-dimensional over $\overline \FF_p$.
 We put $\tilde H'^\circ_{\tilde\tau_0}=p^{-t_{\tilde\tau_0}}\eta'^{\sharp}_*(\tilde H^\circ_{\tilde\tau_0})\subseteq \tcD^{\circ}_{A',\tilde\tau_0}$.   
 Then one sees easily that the quotient $\tcD^{\circ}_{A',\tilde\tau_0}/\tilde H'^\circ_{\tilde\tau_0}$ is one-dimensional over $\overline \FF_p$ and  $\tilde H'^\circ_{\tilde\tau_0}$ is fixed by $F_{\es, A'}^g$. 
  This gives rise to a point $y'$ of $\bfSh_{K'^p\Iw'_p}(G'_{\tilde\ttS'})_{k_0}$ with $\pr_1(y')=x'$. 
  One defines thus $\eta_{\sharp,\Iw}'(y)=y'$, and the quasi-isogeny $\eta'^{\sharp}_{\Iw}$ as the pull-back of $\eta'^{\sharp}$ via $\pr_1$.
   It is clear by construction that $\eta'_{\sharp}\circ\pr_{1}=\pr_1\circ \eta'_{\sharp,\Iw}$.
   
   It remains to prove that $\eta'_{\sharp}\circ \pr_2=\pr_2\circ\eta'_{\sharp,\Iw}$. Let  $y=(A,\iota, \lambda, \alpha_{K'^p}, \tilde H^\circ_{\tilde\tau_0})\in \bfSh_{K'^p\Iw_p}(G'_{\tilde\ttS})(\overline \FF_p)$ be a point above $x$ as above. 
   We put $\tcD_{A,\gothq}^{\circ}: =\bigoplus_{\tilde\tau\in \Sigma_{E,\infty/\gothq}} \tcD_{A,\tilde\tau}^{\circ}$, and we define   $\tcD_{A,\bar\gothq}^{\circ}$ similarly with $\gothq$ replaced by $\bar\gothq$.
  Then  $\tilde H^\circ_{\gothq}: =\frac{1}{p}\bigoplus_{i=0}^{g-1} F_{\es, A}^{i}(\tilde H^\circ_{\tilde\tau_0})$ is a $W(\overline \FF_p)$-lattice of $\tcD^{\circ}_{A,\gothq}[1/p]$  stable under the action of $F$ and $V$.
     Let  $\tilde H^\circ_{\bar\gothq}=\tilde H^{\circ,\vee}_{\gothq}\subseteq \tcD^{\circ}_{A,\bar\gothq}[1/p]$ denote   the dual lattice of $\tilde H^\circ_{\gothq}$ under the perfect pairing between $\tcD^{\circ}_{A,\gothq}[1/p]$ and $\tcD^{\circ}_{A,\bar\gothq}[1/p]$ induced by $\lambda$. By the theory of  Dieudonn\'e modules,   there exists a unique abelian variety $B$ equipped with $\cO_D$-action $\iota_B$ together with an $\cO_D$-linear $p$-quasi-isogeny $\phi\colon B\ra A$ such that $\phi_*(\tcD_{B}^{\circ})$ is identified with the lattice $\tilde H^\circ_{\gothq}\oplus \tilde H^\circ_{\bar\gothq}$ of $\tcD_{A}^{\circ}[1/p]$.
      Note that $B$ satisfies the signature condition of $\bfSh_{K'}(G_{\tilde\ttS})_{k_0}$.
Since $\tilde H^\circ_{\gothq}$ and $\tilde H^\circ_{\bar\gothq}$ are dual to each other, the quasi-isogeny  $\lambda_B = \phi^{\vee}\circ\lambda\circ\phi\colon B\ra B^{\vee}$ is  a prime-to-$p$ polarization $\lambda_B$ on $B$. 
       We equip moreover $B$ with the $K'^p$-level structure $\beta_{K'^p}$ such that   $\alpha_{K'^p}=\phi\circ \beta_{K'^p}$.
Thus  $z: =(B,\iota_B,\lambda_B, \beta_{K'^p})$ gives rise to an $\overline \FF_p$-point of $\bfSh_{K'}(G'_{\tilde\ttS})_{k_0}$, and we have  $z=\pr_2(y)$ by the moduli interpretation of $\pr_2$.

Let $(B',\iota_{B'},\lambda_{B'},\beta'_{K'^p})$ denote the image $\eta'_{\sharp}(z)$. 
Then  $\tcD^{\circ}_{B',\tilde\tau_0}$ is identified via $(\eta'^{\sharp}_{*})^{-1}$ with the lattice $p^{-t_{\tilde\tau_0}} \tcD^{\circ}_{B,\tilde\tau_0}$ of $\tcD^{\circ}_{B,\tilde\tau_0}[1/p]$, hence with the lattice $p^{-t_{\tilde\tau_0}-1}\tilde H_{\tilde\tau_0}$ of $\tcD^{\circ}_{A,\tilde\tau_0}[1/p]$.
  By our construction of $y'=\eta'_{\sharp, \Iw}(y)$, it is easy to see that, if $\pr_2(y')=(B'',\iota_{B''},\lambda_{B''},\beta''_{K'^p})$, then $\tcD^{\circ}_{B'',\tilde\tau_0}$ can be canonical identified with $\tcD^{\circ}_{B',\tilde\tau_0}$ as lattices of $\tcD^{\circ}_{A,\tilde\tau_0}[1/p]$. 
 Since   other components $\tcD^{\circ}_{B',\tilde\tau}$ or $\tcD^{\circ}_{B'',\tilde\tau}$ for $\tilde\tau\in \Sigma_{E,\infty}$ are determined from $\tcD^{\circ}_{B',\tilde\tau_0}$ and $\tcD^{\circ}_{B'',\tilde\tau_0}$ by the same rules (i.e. stability under the essential Frobenius  and the duality), we see that $B'$ is canonically isomorphic to $B''$, compatible with all structures. This concludes the proof of $\pr_2\circ\eta'_{\sharp,\Iw}=\eta'_{\sharp}\circ\pr_2$.
\end{proof}

For the rest of this paper, we discuss two topics, whose proofs are nested together.
One topic is to understand the behavior of the description of the Goren-Oort strata under the link morphisms; the other  is to understand the restriction of the $\PP^1$-bundle description of the Goren-Oort strata to other Goren-Oort strata.

\begin{prop}
\label{P:restriction of GO strata}
Let $\tau \in \Sigma_{\infty} - \ttS_{\infty}$ be a place such that $\tau^-\neq \tau$, and let $\pi_\tau: \bfSh_{K'}(G'_{\tilde \ttS})_{k_0,\tau} \to \bfSh_{K'}(G'_{\tilde \ttS(\tau)})_{k_0}$ be  the  $\PP^1$-bundle fibration given by  Theorem~\ref{T:main-thm-unitary} for the Goren-Oort stratum defined by the vanishing of the partial Hasse invariant at $\tau$. Let $\ttT$ be a subset of $\Sigma_\infty - \ttS_\infty$ containing $\tau$.
%Let $\tau^+$ denote the place of $\Sigma_{\infty} - \ttS_{\infty}$ such that $\sigma^{-n_{\tau^+}}\tau^+ = \tau$.

\begin{itemize}
\item[(1)]
If $\tau^+ \notin \ttT$, then we put $\ttT_{\tau} = \ttT\backslash \{\tau, \tau^-\}$ and we have a commutative diagram
\begin{equation}
\label{E:good commutative diagram}
\xymatrix{
\bfSh_{K'}(G'_{\tilde \ttS})_{k_0, \ttT} \ar@{^{(}->}[r] \ar[d]&
\bfSh_{K'}(G'_{\tilde \ttS})_{k_0, \tau} \ar[d]^{\pi_\tau}
\\
\bfSh_{K'}(G'_{\tilde \ttS(\tau)})_{k_0, \ttT_{\tau}} \ar@{^{(}->}[r]&
\bfSh_{K'}(G'_{\tilde \ttS(\tau)})_{k_0}.
}
\end{equation}
If $\tau^- \in \ttT$ the left vertical arrow is an isomorphism.
If $\tau^- \notin \ttT$, this diagram is  Cartesian.

\item[(2)]
If $\tau, \tau^- \in \ttT$ and $\tau^+\neq \tau^-$, then we put $\ttT_{\tau} = \ttT\backslash \{\tau, \tau^-\}$ and $\pi_\tau$ induces a natural isomorphism
\begin{equation}
\label{E:easy projection}
\pi_\tau\colon \bfSh_{K'}(G'_{\tilde \ttS})_{k_0, \ttT} \to \bfSh_{K'}(G'_{\tilde \ttS(\tau)})_{k_0, \ttT_{\tau}}
\end{equation}
\end{itemize}
Moreover, all descriptions above are compatible with the natural quasi-isogenies on universal abelian varieties, and analogous results hold for $\bfSh_{K''}(G''_{\tilde \ttS})_{k_0}$.
\end{prop}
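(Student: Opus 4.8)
The plan is to reduce everything to the moduli-theoretic incarnation of $\pi_\tau$ produced in the proof of Theorem~\ref{T:main-thm-unitary}. By Propositions~\ref{P:Y_S=X_S} and \ref{P:Y_T=Z_T} (taken with $\ttT=\{\tau\}$), the scheme $\bfSh_{K'}(G'_{\tilde\ttS})_{k_0,\tau}$ is the moduli space $Z'_{\tau}$ of tuples $(A,\iota_A,\lambda_A,\alpha_{K'},B,\iota_B,\lambda_B,\beta_{K'_{\tau}},C,\iota_C;\phi_A,\phi_B,J^\circ)$, where $A=\bfA'_{\tilde\ttS}$, $B=\pi_\tau^{*}\bfA'_{\tilde\ttS(\tau)}$, the isogenies $\phi_A\colon A\to C$ and $\phi_B\colon B\to C$ fail to be isomorphisms on reduced de Rham homology exactly along $\tilde\Delta(\tau)^{+}$ and $\tilde\Delta(\tau)^{-}$ respectively, and $J^\circ$ is a line sub-bundle of $H^{\dR}_1(B)^\circ_{\sigma^{-n_\tau}\tilde\tau}$. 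The loci $\tilde\Delta(\tau)^{\pm}$ are contained in the lifts of $\{\tau,\sigma^{-1}\tau,\dots,\sigma^{1-n_\tau}\tau\}$ and their conjugates, and exclude the lifts of $\tau^-=\sigma^{-n_\tau}\tau$ and of $\tau^+$ (cf.\ the remark after Subsection~\ref{S:tilde IT}); in particular $\phi_A,\phi_B$ are isomorphisms in the $\sigma^{-n_\tau}\tilde\tau$-component and there $J^\circ$ is the transport of $\omega^\circ_{A^\vee,\sigma^{-n_\tau}\tilde\tau}$. The morphism $\pi_\tau$ forgets $J^\circ$, and by Theorem~\ref{T:main-thm-unitary}(2) the abelian schemes $A$ and $\pi_\tau^{*}\bfA'_{\tilde\ttS(\tau)}$ differ by the $p$-quasi-isogeny $\phi_B^{-1}\circ\phi_A$; I will compare partial Hasse invariants of the two sides through it.

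The first step is a bookkeeping step for the places far from $\tau$. For $\sigma\in\Sigma_\infty-\ttS_\infty$ with $\sigma\notin\{\tau,\tau^-,\tau^+\}$ one has $n_{\sigma,\ttS}=n_{\sigma,\ttS(\tau)}$ and the entire essential-Frobenius range of $\tilde\sigma$ avoids $\tilde\Delta(\tau)^{\pm}$, so $\phi_A,\phi_B$ give compatible isomorphisms on those components, and by Lemma~\ref{Lemma:partial-Hasse} the divisor $\{h_{\tilde\sigma}(A)=0\}$ on $\bfSh_{K'}(G'_{\tilde\ttS})_{k_0,\tau}$ is exactly $\pi_\tau^{*}\{h_{\tilde\sigma}(\bfA'_{\tilde\ttS(\tau)})=0\}$. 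When $\tau^-\notin\ttT$ we have $\ttT_{\tau}=\ttT\setminus\{\tau\}$, and the standing hypothesis $\tau^+\notin\ttT$ of (1) makes every element of $\ttT_{\tau}$ of this far type; hence $\bfSh_{K'}(G'_{\tilde\ttS})_{k_0,\ttT}$ equals $\pi_\tau^{-1}\bfSh_{K'}(G'_{\tilde\ttS(\tau)})_{k_0,\ttT_{\tau}}$ as closed subschemes, the smoothness of all strata (Proposition~\ref{Prop:smoothness}) together with the fact that $\pi_\tau$ is a $\PP^1$-bundle making the scheme-theoretic identity automatic. This is the Cartesian assertion of (1).

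Next comes the place $\tau^-$. Over a $\PP^1$-fibre of $\pi_\tau$ the component $\omega^\circ_{A^\vee,\sigma^{-n_\tau}\tilde\tau}=J^\circ$ is the tautological sub-line-bundle $\calO(-1)$, while $\omega^\circ_{A^\vee,\sigma^{-n_\tau-n_{\tau^-}}\tilde\tau}$ is pulled back from the base; so $h_{\tilde\tau^-}(A)$ restricts on each fibre to a nonzero section of $\calO(1)$ and $\{h_{\tilde\tau^-}(A)=0\}$ meets every fibre in one reduced point. By Lemma~\ref{Lemma:partial-Hasse} this locus is where $J^\circ=\im F^{n_{\tau^-}}_{\es,\sigma^{-n_\tau}\tilde\tau}$, a sub-bundle whose essential-Frobenius range lies strictly below $\tau^-$ and hence depends only on $B$ (in the case $\tau^-\in\ttT$ of (1) one has $\tau^+\ne\tau^-$, for otherwise $\tau^+\in\ttT$, contrary to hypothesis; and in (2) this is assumed — so the range does reach a complement place of $\ttS(\tau)$). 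Thus $\{h_{\tilde\tau^-}(A)=0\}$ is a section $s$ of $\pi_\tau$, realized by the morphism $B\mapsto(A,\dots;J^\circ=\im F^{n_{\tau^-}}_{\es})$, and $\pi_\tau$ restricts to an isomorphism from it onto $\bfSh_{K'}(G'_{\tilde\ttS(\tau)})_{k_0}$. Combined with the first step, $\pi_\tau$ restricts to the asserted isomorphism $\bfSh_{K'}(G'_{\tilde\ttS})_{k_0,\ttT}\xrightarrow{\sim}\bfSh_{K'}(G'_{\tilde\ttS(\tau)})_{k_0,\ttT_{\tau}}$ of (1) when $\tau^-\in\ttT$, and of (2) up to matching the Hasse condition at $\tau^+$ when $\tau^+\in\ttT$.

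That matching is the last and hardest point, needed only in (2). Since $n_{\tau^+,\ttS(\tau)}=n_{\tau^+,\ttS}+n_{\tau,\ttS}+n_{\tau^-,\ttS}$, the invariants $h_{\tilde\tau^+}(A)$ and $h_{\tilde\tau^+}(\bfA'_{\tilde\ttS(\tau)})$ involve essential Frobenii of different lengths; the claim is that they nonetheless cut out the same divisor along the section $s$. Along $s$ one has $h_{\tilde\tau}(A)=h_{\tilde\tau^-}(A)=0$, so $\omega^\circ_{A^\vee,\tilde\tau}=\im F^{n_\tau}_{\es,\tilde\tau}$ and $\omega^\circ_{A^\vee,\sigma^{-n_\tau}\tilde\tau}=\im F^{n_{\tau^-}}_{\es,\sigma^{-n_\tau}\tilde\tau}$; consequently all of $A$'s reduced de Rham data along $s$ is read off from $B$'s through $\phi_A,\phi_B$ and the modification recipe of the proof of Lemma~\ref{L:Y_T=X_T-1}. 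Substituting these two identities into $h_{\tilde\tau^+}(A)$ and comparing with $h_{\tilde\tau^+}(\bfA'_{\tilde\ttS(\tau)})$ — using that on the $\tilde\ttS(\tau)$-side the places $\tau,\sigma^{-1}\tau,\dots,\tau^-$ are ramified, hence of signature $0$ or $2$, so the essential Frobenius is an isomorphism there and the extra $n_{\tau,\ttS}+n_{\tau^-,\ttS}$ steps get absorbed — shows the two vanishing loci coincide, completing (2). Compatibility with the universal abelian varieties is built into every step, since all identifications are made through $\phi_A,\phi_B$; and the statements for $\bfSh_{K''}(G''_{\tilde\ttS})_{k_0}$ follow by applying $-\times_{\widetilde G_{\tilde\ttS}}G''_{\tilde\ttS}(\AAA^{\infty,p})$ as in Corollary~\ref{C:main-thm-product}. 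The main obstacle is exactly this bookkeeping: tracking the integers $n_{\sigma,\ttS}$, the signatures $s_{\tilde\rho}$, and the failure loci $\tilde\Delta(\tau)^{\pm}$ under $\ttS\mapsto\ttS(\tau)$, made fiddly by the fact that the lift $\tau\mapsto\tilde\tau$ used for the partial Hasse invariants need not be the lift into $\tilde\ttS$ that governs the essential Frobenius.
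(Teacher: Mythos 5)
Your proof is correct and takes essentially the same route as the paper: both arguments reduce to the moduli interpretation $X'_\tau\cong Y'_\tau\cong Z'_\tau$ from the proof of Theorem~\ref{T:main-thm-unitary}, observe that the universal quasi-isogeny is an isomorphism on reduced de Rham homology away from the lifts of $\{\tau,\dots,\tau^-\}$ (hence Hasse invariants pull back and the diagram is Cartesian off $\{\tau^-,\tau^+\}$), identify the locus $\{h_{\tilde\tau^-}=0\}$ as the tautological section of $\pi_\tau$, and settle the matching at $\tau^+$ by showing the essential Frobenii over the newly ramified range $\tau,\dots,\tau^-$ are absorbed into the longer composite on the $\ttS(\tau)$-side. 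The only presentational difference is that the paper spells out the $\tau^+$ comparison in three labelled facts (a)–(c) where you state it more compactly; the substance coincides.
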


\begin{proof}
The statements for $\bfSh_{K''}(G''_{\tilde \ttS})_{k_0}$ follow from those analogs for $\bfSh_{K'}(G'_{\tilde\ttS})_{k_0}$ by \ref{S:transfer math obj} (or in this case more explicitly by \ref{S:abel var in unitary case}). Thus, we will  just prove the proposition  for $\bfSh_{K'}(G'_{\tilde\ttS})_{k_0}$.

(1) If $\tau^+ \notin \ttT$, the prime $\gothp$ must be of type $\alpha1$ or $\beta1$.
By the proof of Proposition~\ref{P:Y_S=X_S}, the natural quasi-isogeny $\phi: \pi_\tau^*(\bfA'_{\tilde \ttS(\tau), k_0}) \to \bfA'_{\tilde \ttS,k_0}|_{ \bfSh_{K'}(G'_{\tilde\ttS})_{k_0, \tau}}$ induces an isomorphism on the (reduced) differential forms at $\tilde \tau'$ for all $\tilde \tau'\in \Sigma_{E,\infty}$ \emph{not} lifting $\tau, \sigma^{-1}\tau, \dots, \sigma^{-n_\tau}\tau = \tau^-$.
So $\pi_\tau$ induces a Cartesian square
\[
\xymatrix{
\bfSh_{K'}(G'_{\tilde \ttS})_{k_0, \ttT_{\tau}\cup\{\tau\}} \ar@{^{(}->}[r] \ar[d]^{\pi_{\tau}}&
\bfSh_{K'}(G'_{\tilde \ttS})_{k_0, \tau} \ar[d]^{\pi_\tau}
\\
\bfSh_{K'}(G'_{\tilde \ttS(\tau)})_{k_0, \ttT_{\tau}} \ar@{^{(}->}[r]&
\bfSh_{K'}(G'_{\tilde \ttS(\tau)})_{k_0}
}
\]
This already proves (1) in case $\tau^- \notin \ttT$. 
Suppose now $\tau^-\in \ttT$. 
 By Proposition~\ref{P:Y_T=Z_T}, $\bfSh_{K'}(G'_{\tilde \ttS})_{k_0, \ttT_{\tau}\cup\{\tau\}}$ is the moduli space of tuples $(B,\iota_B, \lambda_B, \beta_{K'_{\ttT_{\tau}}}; J^{\circ}_{\tilde\tau^-})$, where 
 \begin{itemize}
 \item $(B,\iota_B, \lambda_B, \beta_{K'_{\ttT_{\tau}}})$ is a point  of $\bfSh_{K'}(G'_{\tilde \ttS(\tau)})_{k_0,\ttT_{\tau}}$ with values in a scheme $S$ over $k_0$;
 \item   $J^{\circ}_{\tilde\tau^-}$ is a sub-bundle of  $H^{\dR}_1(B/S)^{\circ}_{\tilde\tau^-}$ of rank 1 (here, $\tilde\tau^-\in \Sigma_{E,\infty}$ is the specific  lift of $\tau^-$ defined  in Subsection~\ref{S:tilde IT}).
 \end{itemize}
  Then the closed subscheme $\bfSh_{K'}(G'_{\tilde \ttS})_{k_0, \ttT}$ of $\bfSh_{K'}(G'_{\tilde \ttS})_{k_0, \ttT_{\tau}\cup\{\tau\}}$ is defined by the condition: 
 $$
 J^\circ_{\tilde\tau^-} =F_{B,\es}^{n_{\tau^-}}\big(H_1^\dR(B^{(p^{n_{\tau^-}})} / S)^{\circ}_{\sigma^{-n_{\tau^-}}\tilde\tau^{-}}\big).
 $$  
This shows that the restriction of $\pi_{\tau}:\bfSh_{K'}(G'_{\tilde \ttS})_{k_0, \ttT_{\tau}\cup\{\tau\}}\ra \bfSh_{K'}(G'_{\tilde \ttS})_{k_0, \ttT_{\tau}}$ to  $\bfSh_{K'}(G'_{\tilde \ttS})_{k_0, \ttT}$ is an isomorphism.

(2)
%Consider first the case $\tau^-=\tau^+$. Then we have $\ttT=\{\tau,\tau^-\}$, and $\ttT_{\tau}=\emptyset$. The statement follows from Theorem~\ref{T:main-thm-unitary}. Suppose from now on that $\tau^+\neq \tau^-$. 
When $\tau^+\notin \ttT$, this was proved in (1). 
Assume now $\tau^+ \in \ttT$. 
To complete the proof, it suffices to prove that, for a $k_0$-scheme $S$,  the $\tau^+$-th partial Hasse invariant vanishes at an $S$-point  $x=(A, \iota_A, \lambda_A, \alpha_{K'})\in \bfSh_{K'}(G'_{\tilde \ttS})_{k_0,\{\tau\}}$ if and only if it vanishes at  $\pi_{\tau}(x)=(B, \iota_B, \lambda_B, \beta_{K'})\in \bfSh_{K'}(G'_{\tilde\ttS(\tau)})_{k_0}$.
Let $\tilde\tau$ be the lift of $\tau$ contained in $\tilde\Delta(\tau)^+$  (See Subsection~\ref{S:Delta-pm} for the notation $\tilde\Delta(\tau)^+$). Put $\tilde\tau^+=\sigma^{n_{\tau^+}}\tilde\tau$ and $\tilde\tau^-=\sigma^{-n_{\tau}}\tilde\tau$.
By Lemma~\ref{Lemma:partial-Hasse}, it suffices   to show that 
\begin{align}
\label{E:equivalent condition for triple Hasse invariant}
& 
F_{A,\es}^{n_{\tau^+}} \big( H_1^\dR(A/S)^{\circ, (p^{n_{\tau^+}})}_{\tilde \tau} \big) = \omega^\circ_{A^\vee/S, \tilde \tau^+},\\
\nonumber
 \Leftrightarrow \ & F_{B,\es }^{n_{\tau^+}+ n_\tau + n_{\tau^-}}\big(
H_1^\dR(B/S)^{\circ, (p^{n_{\tau^+}+ n_\tau + n_{\tau^-}})}_{\sigma^{-n_{\tau^-}}\tilde \tau^-}
\big) =  \omega^\circ_{B^\vee/S, \tilde \tau^+}.
\end{align}
But this follows from the following three facts.
\begin{itemize}
\item[(a)]
By the  definition of essential Frobenius in \ref{N:essential frobenius and verschiebung}, one deduces a commutative diagram 
\[
\xymatrix{H_1^\dR(A/S)_{\tilde \tau}^{\circ,(p^{n_{\tau^+}})}\ar[rr]^{F_{A,\es}^{n_{\tau^+}}}\ar[d]_{\phi_{*,\tilde\tau}} && H^{\dR}_1(A/S)_{\tilde\tau^+}^\circ \ar[d]^{\phi_{*,\tilde\tau^+}}_{\cong}\\
H_1^\dR(B/S)_{\tilde \tau}^{\circ,(p^{n_{\tau^+}})}\ar[rr]^{F_{B,\es}^{n_{\tau^+}}} && H^{\dR}_{1}(B/S)^{\circ}_{\tilde\tau^+},
}
\]
\item[(b)]
It follows from  condition (v) of the moduli description  in Subsection~\ref{S:moduli-Y_S} that 
\[
\phi_{*, \tilde\tau}(H_1^\dR(A/S)_{\tilde \tau}^\circ) = F_{ B,\es}^{n_{\tau^-}+ n_\tau} \big(H_1^\dR(B/S)^{\circ, (p^{ n_\tau + n_{\tau^-}})}_{\sigma^{-n_{\tau^-}}\tilde \tau^-}
\big).
\]
\item[(c)]
The condition  $\tau^-\neq \tau^+$ implies that the quasi-isogeny  $\phi:A\ra B$ induces an isomorphism $\phi_{*, \tilde\tau^+}:H^{\dR}_{1}(A/S)^{\circ}_{\tilde \tau^+}\cong H^{\dR}_1(B/S)^{\circ}_{\tilde \tau^+}$ preserving the Hodge filtrations, in particular identifying the submodules $\phi_{*, \tilde \tau}(\omega^\circ_{A^\vee/S, \tilde \tau^+}) = \omega^\circ_{B^\vee/S, \tilde \tau^+}$.\qedhere
\end{itemize}
\end{proof}

\subsection{Compatibility of link morphisms and the description of Goren-Oort stata}

We first recall that, although the subset $\ttS(\tau)$ is completely determined by $\ttS$ and $\tau$ as in Subsection~\ref{S:quaternion-data-T}, the lift $\tilde \ttS(\tau)_{\infty}$, which consists of all  $\tilde\tau'\in \Sigma_{E,\infty}$ with signature $s_{\tilde\tau'}=0$ (see Subsection~\ref{S:CM extension}), depends on an auxiliary choice in Subsection~\ref{S:tilde S(T)}:  a lift $\tilde\tau$ of $\tau$ to be contained in $\tilde\ttS(\tau)_{\infty}$. 
We assume that $\#(\Sigma_\infty-\ttS_\infty) \geq 2$. If $\gothp$ splits as $\gothq\bar\gothq$ in $E$ for a fixed place $\gothq$, then the $\tilde\tau$ contained  in $\tilde\ttS(\tau)_{\infty}$ is always  chosen to be the one in $\Sigma_{E,\infty/\gothq}$.
  If $\gothp$ is inert in $E$, then there are two possible choices: $\tilde\tau$ and its conjugate $\tilde\tau^c$ for a fixed lift $\tilde\tau$ of $\tau$.  In the latter case, we denote by $\tilde\tau$ the lift of $\tau$ contained in $\tilde\ttS(\tau)_{\infty}$, by $\tilde\ttS(\tau)'=(\ttS(\tau),\ttS(\tau)'_{\infty})$  the  lift of $\ttS(\tau)$ such that $\tilde\tau^c\in \tilde\ttS(\tau)'_{\infty}$, and let  $\pi_{\tau}'\colon \bfSh_{K'}(G'_{\tilde \ttS})_{k_0,\tau} \to \bfSh_{K'}(G'_{\tilde \ttS(\tau)'})_{k_0}$ be the corresponding $\PP^1$-bundle. The following proposition says that $\pi_{\tau}$ and $\pi_{\tau}'$ differ from each other by a link isomorphism.

   \begin{prop}\label{P:ambiguity-signature}
   Assume that $\gothp$ is inert in $E$.
Then there exists a link isomorphism 
$$(\eta'_{ \tilde\ttS(\tau), \tilde \ttS(\tau)', \sharp},\eta'^{\sharp}_{\tilde\ttS(\tau),\tilde\ttS(\tau)'})\colon  \bfSh_{K'}(G'_{\tilde \ttS(\tau)})_{k_0}\xra{\sim} \bfSh_{K'}(G'_{\tilde \ttS(\tau)'})_{k_0}
$$
 (of indentation degree $0$)  associated to the identity link $\eta: \ttS(\tau)\ra \ttS(\tau)$ such that the  diagram 
\[
\xymatrix{
& \bfSh_{K'}(G'_{\tilde \ttS})_{k_0, \tau}
\ar[dl]_{\pi_\tau}
\ar[dr]^{\pi'_\tau}
\\
\bfSh_{K'}(G'_{\tilde \ttS(\tau)})_{k_0} \ar[rr]_{\eta_{\tilde \ttS(\tau), \tilde \ttS(\tau)', \sharp}}^{\cong}
&& 
\bfSh_{K'}(G'_{\tilde \ttS(\tau)'})_{k_0}
}
\]
commutes.
%Moreover, the link morphism $\eta_{\tilde \ttS(\tau), \tilde \ttS(\tau)', \sharp}$ satisfies the natural cocycle condition under composition.
Similar  statements hold for $\bfSh_{K''}(G''_{\tilde \ttS(\tau)})_{k_0}$ and $\bfSh_{K''}(G''_{\tilde\ttS(\tau)'})_{k_0}$.
   \end{prop}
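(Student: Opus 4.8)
The plan is to construct the link isomorphism $\eta'_{\tilde\ttS(\tau),\tilde\ttS(\tau)',\sharp}$ directly on closed points via a Dieudonn\'e-module recipe, exactly parallel to the second example in Subsection~\ref{S:link-morphisms} (the morphism $\delta'_{\tau_0}$), and then check that it intertwines $\pi_\tau$ and $\pi'_\tau$ using the moduli description of the $\PP^1$-bundle in Proposition~\ref{P:Y_T=Z_T}. Concretely: since $\gothp$ is inert in $E$, the only difference between $\tilde\ttS(\tau)$ and $\tilde\ttS(\tau)'$ is the swap of signatures $s_{\tilde\tau}=0, s_{\tilde\tau^c}=2$ versus $s_{\tilde\tau}=2, s_{\tilde\tau^c}=0$ at the single pair $\{\tilde\tau,\tilde\tau^c\}$ lifting $\tau^-$ (recall $\tilde\tau$ here plays the role of the chosen lift in $\tilde I_\tau$; more precisely $\sigma^{-n_\tau}\tilde\tau$ is the place where the signatures are toggled). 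For an $\overline\FF_p$-point $(B,\iota_B,\lambda_B,\beta_{K'})$ of $\bfSh_{K'}(G'_{\tilde\ttS(\tau)})_{k_0}$, I would define a new lattice $M_\bullet\subseteq\tcD^\circ_B[\tfrac1p]$ by $M_{\tilde\rho}=\tcD^\circ_{B,\tilde\rho}$ for $\tilde\rho$ not lifting $\tau^-$, and $M_{\sigma^{-n_\tau}\tilde\tau}=\tfrac1p\tcD^\circ_{B,\sigma^{-n_\tau}\tilde\tau}$, $M_{\sigma^{-n_\tau}\tilde\tau^c}=p\,\tcD^\circ_{B,\sigma^{-n_\tau}\tilde\tau^c}$; the key point is that toggling the signature changes which of $F,V$ is the "essential" isomorphism at that place, and one checks that the resulting $M_\bullet$ is stable under the genuine $F,V$ of $\tcD^\circ_B[\tfrac1p]$ precisely because of the new signature pattern. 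By Dieudonn\'e theory this produces $B'$ with $\calO_D$-action, a $p$-quasi-isogeny $\phi\colon B\to B'$, and (since $M_{\tilde\rho}$ and $M_{\tilde\rho^c}$ remain dual) a prime-to-$p$ polarization $\lambda_{B'}$ with the correct signature condition for $\tilde\ttS(\tau)'$; the level structures (including the Iwahori datum at a type-$\alpha^\sharp$ prime, if any) transfer along $\phi_*$ as in the cited example. This defines $\eta'_{\tilde\ttS(\tau),\tilde\ttS(\tau)',\sharp}$ on points, and reversing the construction (swap $M_{\sigma^{-n_\tau}\tilde\tau}\leftrightarrow p^{\pm1}$) shows it is an isomorphism with inverse; the quasi-isogeny $\eta'^\sharp$ is the pullback of $\phi^{-1}$. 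That it is a link morphism of indentation $0$ associated to the identity link $\ttS(\tau)\to\ttS(\tau)$ is then immediate from the defining conditions in Definition~\ref{D:link-morphism}, since $\phi$ only rescales two conjugate components by $p^{\pm1}$ and hence has $\gothq$-degree $0$.

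Next I would verify the triangle. The cleanest way is to use the moduli description of $\bfSh_{K'}(G'_{\tilde\ttS})_{k_0,\tau}\cong Y'_\tau\cong Z'_\tau$ from Propositions~\ref{P:Y_S=X_S} and~\ref{P:Y_T=Z_T}: a point of the $\PP^1$-bundle is a tuple $(A,\dots,B,\dots,C,\dots;\phi_A,\phi_B)$ with $J^\circ_{\sigma^{-n_\tau}\tilde\tau}=\phi_{B,*}^{-1}\phi_{A,*}(\omega^\circ_{A^\vee,\sigma^{-n_\tau}\tilde\tau})\subseteq H^\dR_1(B/S)^\circ_{\sigma^{-n_\tau}\tilde\tau}$, and $\pi_\tau$ forgets $(A,C,J^\circ,\phi_A,\phi_B)$. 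The morphism $\pi'_\tau$ is built from the opposite choice of lift, and its moduli description uses $Z'_\tau$ with the line sub-bundle placed in the conjugate component $\sigma^{-n_\tau}\tilde\tau^c$ (equivalently, via the polarization, $J^\circ_{\sigma^{-n_\tau}\tilde\tau}{}^\perp$). I would show that for a fixed point of $\bfSh_{K'}(G'_{\tilde\ttS})_{k_0,\tau}$, the abelian variety $\phi(B)=B'$ produced by my recipe from $B=\pi_\tau(\text{point})$ is canonically $\pi'_\tau(\text{point})$: both have the same $C$ (the abelian variety $A$ being shared), and the two lattices agree component-by-component because they are determined from their value at $\sigma^{-n_\tau}\tilde\tau$ (resp. its conjugate) by stability under essential Frobenius and by the duality, exactly as in the last paragraph of the proof of Proposition~\ref{P:restriction of GO strata}. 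The cocycle condition under composition follows because each $\eta'_{\tilde\ttS(\tau),\tilde\ttS(\tau)',\sharp}$ is uniquely determined by its source, target and indentation degree (Proposition~\ref{P:uniqueness-link-morphism}), so any two composites with the same source/target/indentation coincide.

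Finally, the statement for the $G''$-Shimura varieties is obtained formally: $\bfSh_{K''}(G''_{\tilde\ttS(\tau)})_{k_0}$ and $\bfSh_{K'}(G'_{\tilde\ttS(\tau)})_{k_0}$ have canonically identified neutral geometric connected components with identified restrictions of the universal abelian schemes (Subsections~\ref{S:abel var in unitary case} and~\ref{S:transfer math obj}), and the construction is equivariant for the $\widetilde G_{\tilde\ttS(\tau)}=\widetilde G_{\tilde\ttS(\tau)'}$-action, so applying $-\times_{\widetilde G_{\tilde\ttS(\tau)}}G''_{\tilde\ttS(\tau)}(\AAA^{\infty,p})$ transports everything over. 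I expect the main obstacle to be the bookkeeping in the point-counting step — carefully matching the signature toggle with the switch between $F$ and $V=F^{-1}$ (resp. $V$ and $V$) in the definition of essential Frobenius at $\sigma^{-n_\tau}\tilde\tau$ and $\sigma^{-n_\tau}\tilde\tau^c$, and checking the $F,V$-stability of $M_\bullet$ in the few boundary cases (whether $\tau^-$ itself lies in $\ttT$, whether $n_\tau>1$, and the interaction with a possible type-$\alpha^\sharp$ or Iwahori level at $\gothp$) — but this is precisely the kind of case-by-case Dieudonn\'e-lattice check already carried out in Lemma~\ref{L:Y_T=X_T-1} and the example of $\delta'_{\tau_0}$, so no new idea is needed.
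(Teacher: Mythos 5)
Your construction of the link morphism is different in route from the paper's, and it has a concrete gap in the Dieudonn\'e-lattice recipe.

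The paper does not define $\eta'_{\tilde\ttS(\tau),\tilde\ttS(\tau)',\sharp}$ by a bare-hands Dieudonn\'e recipe. Instead it passes to the codimension-two stratum $\bfSh_{K'}(G'_{\tilde\ttS})_{k_0,\{\tau,\tau^-\}}$, observes via Proposition~\ref{P:restriction of GO strata}(2) (using that $\gothp$ inert forces $\#(\Sigma_\infty-\ttS_\infty)$ odd, hence $\tau^+\neq\tau^-$) that both restrictions $\pi_\tau|_{\{\tau,\tau^-\}}$ and $\pi'_\tau|_{\{\tau,\tau^-\}}$ are isomorphisms, and then \emph{defines} $\eta'_{\tilde\ttS(\tau),\tilde\ttS(\tau)',\sharp}:=\pi'_\tau\circ(\pi_\tau|_{\{\tau,\tau^-\}})^{-1}$ with $\eta'^\sharp$ the corresponding composite of the quasi-isogenies from Theorem~\ref{T:main-thm-unitary}(2). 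The signature and $F,V$-stability checks are thereby inherited automatically from Theorem~\ref{T:main-thm-unitary}, and the Dieudonn\'e computation is only used a posteriori to see that the result satisfies Definition~\ref{D:link-morphism}. Your approach, by contrast, is to write down the target lattice $M_\bullet$ by hand and build $B'$ from it — which is a valid strategy in principle, but here it carries an error.

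The error is in the rescaling recipe. You put $M_{\tilde\rho}=\tcD^\circ_{B,\tilde\rho}$ for all $\tilde\rho$ not lifting $\tau^-$, and rescale by $p^{\pm 1}$ only at $\sigma^{-n_\tau}\tilde\tau$ and its conjugate. That is two places. But passing from $\tilde\ttS(\tau)$ to $\tilde\ttS(\tau)'$ toggles the signature $s$ at \emph{four} places: the pair $\{\tilde\tau,\tilde\tau^c\}$ lifting $\tau$ changes from $(0,2)$ to $(2,0)$, and the pair $\{\sigma^{-n_\tau}\tilde\tau,\sigma^{-n_\tau}\tilde\tau^c\}$ lifting $\tau^-$ changes from $(2,0)$ to $(0,2)$, because the alternating choice of lifts in $\tilde C'_i$ flips at both $\tau$ and $\tau^-$ when one swaps $\tilde\tau\leftrightarrow\tilde\tau^c$. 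Since $s'_{\tilde\rho}$ is governed by $M_{\tilde\rho}$ and $M_{\sigma\tilde\rho}$, a rescaling supported only at $\sigma^{-n_\tau}\tilde\tau$ (and its conjugate) moves the signature at $\sigma^{-n_\tau}\tilde\tau$ and $\sigma^{-n_\tau-1}\tilde\tau$ — not at $\tilde\tau$. So your $M_\bullet$ cannot realize the signatures of $\tilde\ttS(\tau)'$ unless $n_\tau+1\equiv 0\pmod g$. Worse, for $n_\tau>1$ your $M_\bullet$ is not even $F,V$-stable: $F(M_{\sigma^{-n_\tau}\tilde\tau})\subseteq M_{\sigma^{1-n_\tau}\tilde\tau}$ would require $F(\tcD^\circ_{B,\sigma^{-n_\tau}\tilde\tau})\subseteq p\,\tcD^\circ_{B,\sigma^{1-n_\tau}\tilde\tau}$, which fails because $s_{\sigma^{-n_\tau}\tilde\tau}=2$ for $\tilde\ttS(\tau)$. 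The correct recipe, visible in the Dieudonn\'e description the paper extracts from its own construction, rescales along the whole chain $\{\sigma^{-a}\tilde\tau : 0\le a\le n_\tau-1\}$ and its conjugates, so the boundary of the rescaled region is exactly $\{\tilde\tau,\sigma^{-n_\tau}\tilde\tau\}$. Your model example $\delta'_{\tau_0}$ happens to have a length-one chain, and you transplanted the length-one rescaling to a situation where the chain has length $n_\tau$.

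The remaining pieces of your outline are fine: the cocycle assertion does indeed follow from the uniqueness of link morphisms (Proposition~\ref{P:uniqueness-link-morphism}), which is also what the paper does, and the transfer to the $G''$-Shimura varieties via identification of neutral geometric connected components is standard. But the central construction needs to be repaired, either by replacing the two-place rescaling with the full-chain rescaling and then re-doing the $F,V$-stability and signature checks, or (more economically, as the paper does) by defining the morphism as $\pi'_\tau\circ(\pi_\tau|_{\{\tau,\tau^-\}})^{-1}$ so that these checks come for free from Theorem~\ref{T:main-thm-unitary}.
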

   \begin{proof}
Consider  the closed subvariety $\bfSh_{K'}(G'_{\tilde\ttS})_{k_0, \{\tau, \tau^-\}}$ of $\bfSh_{K'}(G'_{\tilde\ttS})_{k_0, \tau}$. 
Then by  Proposition~\ref{P:restriction of GO strata} (2) (note that $\gothp$ being inert in $E$ implies that $\tau^+\neq \tau^-$), $\pi|_{\bfSh_{K'}(G'_{\tilde\ttS})_{k_0, \{\tau, \tau^-\}}}$ is an isomorphism. We put 
$$\eta'_{\tilde \ttS(\tau),\tilde \ttS(\tau)',\sharp}\colon =\pi'_{\tau}\circ(\pi|_{\bfSh_{K'}(G'_{\tilde\ttS})_{k_0, \{\tau, \tau^-\}}})^{-1} $$
    and define $\eta'^{\sharp}_{\tilde \ttS(\tau),\tilde \ttS(\tau)'}$ as the pull-back via $(\pi_{\tau}|_{\bfSh_{K'}(G'_{\tilde\ttS})_{k_0, \{\tau^-, \tau\}}})^{-1}$ of the quasi-isogeny
    \[
    \pi_{\tau}^*\bfA'_{\tilde \ttS(\tau),k_0}\ra \bfA'_{\tilde\ttS,k_0}|_{\bfSh_{K'}(G'_{\tilde\ttS})_{k_0,\{\tau,\tau^-\}}}\ra\pi'^*_{\tau}\bfA_{\tilde\ttS(\tau)',k_0},
    \] 
    where the two quasi-isogenies are given by Theorem~\ref{T:main-thm-unitary}(2).
   By Proposition~\ref{P:restriction of GO strata}(2) again, $\pi'_{\tau}|_{\bfSh_{K'}(G'_{\tilde\ttS})_{k_0, \{\tau,\tau^-\}}}$ is an isomorphism, hence so is $\eta'_{\tilde \ttS(\tau),\tilde \ttS(\tau)',\sharp}$.
    It remains to show that $(\eta'_{\tilde \ttS(\tau),\tilde \ttS(\tau)',\sharp}, \eta'^{\sharp}_{\tilde \ttS(\tau),\tilde \ttS(\tau)'})$ is  a link morphism associated to the identity link on $\ttS$. 
    Let $x$ be a geometric point of  $\bfSh_{K'}(G'_{\tilde\ttS(\tau)})_{k_0}$, and $x'=\eta'_{\tilde \ttS(\tau),\tilde \ttS(\tau)',\sharp}(x)$. 
    By construction, it is easy to see that the quasi-isogeny $\eta'^{\sharp}_{\tilde \ttS(\tau),\tilde \ttS(\tau)'}$ induces an isomorphism $\tcD(\bfA'_{\tilde\ttS(\tau), k_0,x})^{\circ}_{\tilde\tau'}\xra{\sim} \tcD(\bfA'_{\tilde\ttS(\tau)', k_0,x'})^{\circ}_{\tilde\tau'}$ for $\tilde\tau'\neq \sigma^a\tilde\tau, \sigma^a\tilde\tau^c$ with $a = 0, \dots, n_\tau-1$. In the exceptional cases, we have 
    \[
    \eta'^{\sharp}_{\tilde \ttS(\tau),\tilde \ttS(\tau)'}(\tcD(\bfA'_{\tilde\ttS(\tau), k_0,x})^{\circ}_{\tilde\tau'})=\begin{cases}
    p\tcD(\bfA'_{\tilde\ttS(\tau)', k_0,x'})^{\circ}_{\tilde\tau'} &\text{for }\tilde\tau'=\sigma^a\tilde\tau \textrm{ for }a = 0, \dots, n_\tau-1;\\
   \frac{1}{p} \tcD(\bfA'_{\tilde\ttS(\tau)', k_0,x'})^{\circ}_{\tilde\tau'} &\text{for }\tilde\tau'=\sigma^a\tilde\tau^c\textrm{ for }a = 0, \dots, n_\tau-1.
    \end{cases} 
    \]
    Hence, $(\eta'_{\tilde \ttS(\tau),\tilde \ttS(\tau)',\sharp}, \eta'^{\sharp}_{\tilde \ttS(\tau),\tilde \ttS(\tau)'})$ verifies Definition~\ref{D:link-morphism}.
%The other statements of this proposition are evident by the uniqueness of link morphisms (Proposition~\ref{P:uniqueness-link-morphism}).
  \end{proof}
  
  The following Lemma will be needed in the proof of the main result of this section. 
  
  \begin{lemma}\label{L:non-vanishing-chi}
Assume that $\ttS_\infty \neq \emptyset$.
Let $\chi(\bfSh_{K'}(G'_{\tilde\ttS})_{\overline \FF_p}): =\sum_{i=0}^{+\infty}(-1)^i\dim H^{i}_\et(\bfSh_{K'}(G'_{\tilde\ttS})_{\overline \FF_p},  \Q_{\ell})$ denote the Euler-Poincar\'e characteristic of $\bfSh_{K'}(G'_{\tilde\ttS})_{\overline \FF_p}$ for some fixed prime $\ell\neq p$.
 Then we have $\chi(\bfSh_{K'}(G'_{\tilde\ttS})_{\overline \FF_p})\neq 0$.
\end{lemma}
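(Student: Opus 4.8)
The plan is to reduce the statement to a computation of the Euler characteristic of a compact complex locally symmetric space, where the answer is forced to be nonzero by Hirzebruch proportionality.

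First I would use properness. Since $\ttS_\infty\neq\emptyset$, Theorem~\ref{T:unitary-shimura-variety-representability} and Proposition~\ref{Prop:smoothness} say that $\bfSh_{K'}(G'_{\tilde\ttS})$ is smooth and projective over $W(k_0)$; in particular $X':=\bfSh_{K'}(G'_{\tilde\ttS})_{\overline\FF_p}$ is the special fibre of a smooth proper $W(k_0)$-scheme. By smooth and proper base change for $\ell$-adic cohomology (recall $\ell\neq p$) the Betti numbers of $X'$ agree with those of the generic geometric fibre $\bfSh_{K'}(G'_{\tilde\ttS})_{\Qpb}$, and by Artin's comparison theorem the latter agree, via $\iota_p$, with the singular Betti numbers of the complex manifold $\bfSh_{K'}(G'_{\tilde\ttS})(\CC)$. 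Hence it suffices to prove $\chi(\bfSh_{K'}(G'_{\tilde\ttS})(\CC))\neq 0$.

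Next I would analyse the complex points. Because $\ttS$ contains an archimedean place, the quaternion algebra $B_\ttS$ is division, so the derived group $(G'_{\tilde\ttS})^{\der}$ (which by Lemma~\ref{L:compatibility of derived group and adjoint group} agrees with $(G_\ttS)^{\der}=\Res_{F/\QQ}\SL_1(B_\ttS)$) is $\QQ$-anisotropic. Writing $G'_{\tilde\ttS}(\AAA^\infty)$ as a finite union of double cosets one therefore obtains a decomposition $\bfSh_{K'}(G'_{\tilde\ttS})(\CC)=\coprod_j \Gamma_j\backslash\gothH_\ttS^+$ into finitely many, and at least one, \emph{compact} quotients, where $\gothH_\ttS^+\cong(\gothh^+)^{\#(\Sigma_\infty-\ttS_\infty)}$ is the Hermitian symmetric domain attached to $(G_\ttS)^{\ad}_\RR\cong\prod_{\tau\in\ttS_\infty}\bigl(\HH^\times/\RR^\times\bigr)\times\prod_{\tau\notin\ttS_\infty}\PGL_{2,\RR}$, and each $\Gamma_j$ is an arithmetic lattice. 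As $K'$ is neat, each $\Gamma_j$, and hence its image in the adjoint group, is torsion-free, so each $\Gamma_j\backslash\gothH_\ttS^+$ is a closed smooth complex manifold.

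Finally I would invoke Hirzebruch proportionality. Put $n=\#(\Sigma_\infty-\ttS_\infty)=\dim_\CC\gothH_\ttS^+$; the compact dual of $\gothH_\ttS^+=(\gothh^+)^n$ is $(\PP^1)^n$, with Euler characteristic $2^n>0$. Proportionality gives $\chi(\Gamma_j\backslash\gothH_\ttS^+)=c\cdot\mathrm{vol}(\Gamma_j\backslash\gothH_\ttS^+)$, where $c=(-1)^n\,2^n/\mathrm{vol}((\PP^1)^n)$ is a nonzero constant of sign $(-1)^n$ depending only on $\gothH_\ttS^+$, and where all the suitably normalized volumes are strictly positive. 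Summing over the nonempty finite index set then yields $\chi(X')=c\sum_j\mathrm{vol}(\Gamma_j\backslash\gothH_\ttS^+)\neq 0$; equivalently one may cite the classical fact that a compact locally symmetric space $\Gamma\backslash G/K$ has nonzero Euler characteristic exactly when $\mathrm{rank}(G)=\mathrm{rank}(K)$, which holds since the noncompact factors are of type $\PGL_{2,\RR}$, an equal-rank group. The only genuinely delicate points are the base-change step — which is precisely where the hypothesis $\ttS_\infty\neq\emptyset$ enters, to guarantee properness — and the standard verification that neatness of $K'$ makes the $\Gamma_j$ act freely, so that proportionality applies directly rather than only in an orbifold sense.
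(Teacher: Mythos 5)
Your proof is correct, but it takes a genuinely different route from the paper. You share the first step — using $\ttS_\infty\neq\emptyset$ to guarantee properness, then smooth proper base change and Artin comparison to reduce to $\chi(\bfSh_{K'}(G'_{\tilde\ttS})(\CC))$ — but from there you diverge. You decompose the complex points into finitely many compact manifold quotients $\Gamma_j\backslash\gothH_\ttS^+$ (anisotropy of the derived group from $B_\ttS$ being division, neatness of $K'$ making the action free) and then appeal to Hirzebruch proportionality, or equivalently to the classical equal-rank criterion for nonvanishing of the Euler characteristic of a compact locally symmetric space. The paper instead stays algebraic: it uses the moduli interpretation over $\CC$ to compute Chern classes of the automorphic line bundles $\omega^\circ_{\tilde\tau}$ from the Hodge exact sequence (the Gauss–Manin connection forces $c_i(H_1^\dR)=0$, giving $c_1(\omega^\circ_{\tilde\tau})^2=0$ and $c_1(\omega^\circ_{\tilde\tau})=c_1(\omega^\circ_{\tilde\tau^c})$), combines this with the tangent bundle description from Proposition~\ref{Prop:smoothness} to get $c_d(\calT)=\frac{(-1)^d}{d!}c_1(\det\omega)^d$, and then uses ampleness of $\det\omega$ plus Gauss–Bonnet to conclude. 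The trade-off: your argument buys generality and a cleaner conceptual explanation for the sign and nonvanishing (proportionality with the compact dual $(\PP^1)^n$), while the paper's argument is more self-contained within the framework of automorphic vector bundles it has already set up — in particular it feeds off the same tangent-bundle computation used throughout the rest of the proof of Theorem~\ref{T:link and Hecke operator}, so it dovetails better with the surrounding text. One small point worth making more explicit in your write-up is why neatness of $K'$ makes the image of each $\Gamma_j$ in $G^{\ad}(\RR)$ torsion-free: $\Gamma_j$ acts on $\gothH_\ttS^+$ through its image in the adjoint group, so it is the neatness (hence torsion-freeness) of that image, not of $\Gamma_j$ itself, that guarantees a manifold quotient; neatness of $K'$ does propagate to the adjoint image, but one should say so.
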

   \begin{proof}
The assumption $\ttS_\infty \neq \emptyset$ implies that the Shimura variety $\bfSh_{K'}(G'_{\tilde \ttS})$ is proper.
Consider the integral model $\bfSh_{K'}(G'_{\tilde\ttS})$ over $\cO_{\tilde\wp}$, and choose an embedding $\cO_{\tilde\wp}\hra \CC$. By the proper base change theorem and the standard comparison theorems, we have 
   $$
\chi(\bfSh_{K'}(G'_{\tilde\ttS})_{\overline \FF_p})=\chi(\bfSh_{K'}(G'_{\tilde\ttS})_{\CC}): =\sum_{i=0}^{\infty} (-1)^{i}\dim H^{i}_\mathrm{sing}(\bfSh_{K'}(G'_{\tilde\ttS})_{\CC}, \CC),
   $$
where $H^{i}_\mathrm{sing}(\bfSh_{K'}(G'_{\tilde\ttS})_{\CC}, \CC)$ denotes the  singular cohomology of $\bfSh_{K'}(G'_{\tilde\ttS})_{\CC}$ for the usual complex topology.
For each $\tilde\tau$ lifting an element of $\Sigma_{\infty}-\ttS_{\infty}$, put $\omega_{\tilde\tau}^{\circ}=\omega_{\bfA'_{\tilde\ttS,\CC},\tilde\tau}^{\circ}$ and   $\gotht_{\tilde\tau}^{\circ}=\Lie(\bfA'_{\tilde\ttS,\CC})^\circ_{\tilde\tau}=\omega^{\circ,\vee}_{\tilde\tau}$ to simplify the notation.
     They are line bundles over $\bfSh_{K'}(G'_{\tilde\ttS})_{\CC}$.
   We have a Hodge filtration
   \[
   0\ra \omega^{\circ}_{\tilde\tau^c}\ra H^{\dR}_1(\bfA'_{\tilde\ttS,\CC}/\bfSh_{K'}(G'_{\tilde\ttS})_{\CC})^{\circ}_{\tilde\tau}\ra \gotht_{\tilde\tau}^{\circ}\ra 0.
   \]
Note that $H^{\dR}_1(\bfA'_{\tilde\ttS,\CC}/\bfSh_{K'}(G'_{\tilde\ttS})_{\CC})_{\tilde\tau}^{\circ}$ is equipped with the integrable  Gauss-Manin connection so that its   Chern classes are trivial by classical Chern-Weil theory. 
  One obtains thus
   \[
  \begin{cases} c_1(\omega_{\tilde\tau^c}^{\circ})c_1(\gotht_{\tilde\tau}^{\circ})=c_2(H^{\dR}_1(\bfA'_{\tilde\ttS,\CC}/\bfSh_{K'}(G'_{\tilde\ttS})_{\CC})^{\circ}_{\tilde\tau})=0,\\
  c_1(\omega_{\tilde\tau^c}^{\circ})+c_1(\gotht^{\circ}_{\tilde\tau})=c_1(H^{\dR}_1(\bfA'_{\tilde\ttS,\CC}/\bfSh_{K'}(G'_{\tilde\ttS})_{\CC})^{\circ}_{\tilde\tau})=0,
  \end{cases}
  \Longrightarrow 
  \begin{cases}
  c_1(\omega^{\circ}_{\tilde\tau})^2=0,\\
   c_1(\omega^{\circ}_{\tilde\tau})=c_1(\omega_{\tilde\tau^c}^{\circ}),
  \end{cases}
   \]
   where $c_i(\calE)\in H^{2i}_\mathrm{sing}(\bfSh_{K'}(G'_{\tilde\ttS})_{\CC},\CC)$ denotes the $i$-th Chern class of a vector bundle $\calE$.
Let $\calT$ denote the tangent bundle of $\bfSh_{K'}(G'_{\tilde\ttS})_\CC$, and put $\det(\omega): =\bigotimes_{\tilde\tau\in \Sigma_{E,\infty}-\ttS_{E, \infty}}\omega^{\circ}_{\tilde\tau}$.
By Corollary~\ref{C:deformation}, we get 
\[
c_d(\calT)=\prod_{\tau\in \Sigma_{\infty}-\ttS_{\infty}}(-2c_1(\omega_{\tilde\tau}^{\circ})),
\]
where $\tilde\tau\in \Sigma_{E,\infty}$ is an arbitrary lift of $\tau$, and $d=\#\Sigma_{\infty}-\#\ttS_{\infty}$ is the dimension of $\bfSh_{K'}(G'_{\tilde\ttS})_{\CC}$. Note that $c_1(\omega^{\circ}_{\tilde\tau})^2=0$ and $c_1(\omega^\circ_{\tilde \tau}) = c_1(\omega^\circ_{\tilde \tau^c})$ implies that 
\[
(c_1(\det(\omega))^d = 2^d d! \prod_{\tau\in \Sigma_{\infty}-\ttS_{\infty}}(c_1(\omega_{\tilde\tau}^{\circ})) = (-1)^{d}{d!}
c_d(\calT).
\]
   It is well known that $\det(\omega)$ is ample (see \cite{lan}, for instance),  hence it follows that $c_d(\calT)\neq 0$.
   On the other hand, there exists a canonical isomorphism 
  $$\Tr\colon  H^{2d}_\mathrm{sing}(\bfSh_{K'}(G'_{\tilde\ttS})_{\CC},\CC)\xra{\sim} \CC,$$
   which sends the cycle class of a point to $1$. 
   The Lemma follows immediately from the non-vanishing of $c_d(\calT)$ and the well-known fact that  $\Tr(c_d(\calT))=\chi(\bfSh_{K'}(G'_{\tilde\ttS})_{\CC})$. 
   \end{proof}

We state now the main result of this section, which will play a crucial role in our application to Tate cycles in \cite{tian-xiao3}. % which claims that,  when restricting the description of a GO-divisor (namely those of codimension 1) to another GO-divisor where the archimedean places defining these two divisors are adjacent, we naturally obtain a link morphism, or a variant of the Hecke operator.

\begin{theorem}
\label{T:link and Hecke operator}
Keep the same notation as in Proposition~\ref{P:restriction of GO strata}, that is, let $\tau \in \Sigma_{\infty} - \ttS_{\infty}$ be a place such that $\tau^-$ is different from $\tau$ and let $\ttT$ be a subset of $\Sigma_\infty - \ttS_\infty$ containing $\tau$.

\begin{itemize}
\item[(1)]
If $\tau^- \notin \ttT$ and $\tau,\tau^+ \in \ttT$, 
we put $\ttT_{\tau^+} = \ttT \backslash \{\tau, \tau^+\}$.
 Let $\eta = \eta_{\tau^-\ra \tau^+}: \ttS(\tau^+)\ra \ttS(\tau)$ be the link  given by straight lines except sending $\tau^-$ to $\tau^+$ (to the right) with displacement $v(\eta)=n_{\tau}+n_{\tau^+}$:
\[
\psset{unit=0.3}
 \begin{pspicture}(-1.2,-0.4)(16,2.4)
\psset{linecolor=red}
\psset{linewidth=1pt}
\psline{-}(0,0)(0,2)
\psline{-}(14.4,0)(14.4,2)
\psbezier(4.8,2)(4.8,0)(9.6,2)(9.6,0)
\psset{linecolor=black}
\psdots(0,0)(0,2)(4.8,2)(9.6,0)(14.4,0)(14.4,2)
\psdots[dotstyle=+](1,0)(-1,0)(-1,2)(1,2)(3.8,0)
(3.8,2)(4.8,0)(5.8,0)(5.8,2)(8.6,0)(8.6,2)(10.6,0)(9.6,2)(10.6,2)(13.4,0)(13.4,2)(15.4,0)(15.4,2)
\psset{linewidth=.1pt}
\psdots(1.7,0)(1.7,2)(2.4,0)(2.4,2)(3.1,0)(3.1,2)(6.5,0)(7.2,0)(7.9,0)(6.5,2)(7.2,2)(7.9,2)(11.3,0)(12,0)(12.7,0)(11.3,2)(12,2)(12.7,2).
\end{pspicture}
\]
\begin{enumerate}
\item[(a)] We consider the composition  
\[
\eta'_{ \sharp}:
\bfSh_{K'}(G'_{\tilde \ttS(\tau^+)}) \xleftarrow[\cong]{\pi_{\tau^+}} \bfSh_{K'}(G'_{\tilde \ttS})_{k_0, \{\tau, \tau^+\}} \xrightarrow{i_{\tau^+}}
\bfSh_{K'}(G'_{\tilde \ttS})_{k_0, \tau}
\xrightarrow{\pi_\tau} \bfSh_{K'}(G'_{\tilde \ttS(\tau)})_{k_0},
\] and let $\eta'^{\sharp}$ denote  the natural quasi-isogeny of abelian varieties on 
 $ \bfSh_{K'}(G'_{\tilde \ttS(\tau^+)})_{k_0}$ given by  (the pull-back via $(\pi_{\tau^+|_{\bfSh_{K'}(G'_{\tilde \ttS})_{k_0, \{\tau, \tau^+\}}}})^{-1,*}$ of) 
  \[
(\pi_{\tau^+}|_{\bfSh_{K'}(G'_{\tilde \ttS})_{k_0, \{\tau, \tau^+\}}})^*\bfA'_{\tilde\ttS(\tau^+),k_0} \leftarrow i_{\tau^+}^*(\bfA'_{\tilde\ttS,k_0}|_{\bfSh_{K'}(G'_{\tilde\ttS})_{k_0, \tau}})\ra i_{\tau^+}^*\pi_{\tau}^* (\bfA'_{\tilde\ttS(\tau),k_0}).
 \]
Then $(\eta'_{ \sharp}, \eta'^{\sharp})$ is   the link morphism  associated to the link  $\eta$ of indentation degree $n=n_{\tau^+}-n_{\tau}$ if $\gothp$ splits in $E/F$ and $n=0$ if $\gothp$ is inert in $E/F$. Moreover, the following diagram 
\begin{equation}
\label{E:diagram involving link}
\xymatrix@C=55pt{
\bfSh_{K'}(G'_{\tilde \ttS})_{k_0, \ttT} \ar@{^{(}->}[r] \ar[d]_\cong
&
\bfSh_{K'}(G'_{\tilde \ttS})_{k_0, \{\tau, \tau^+\}} \ar@{^{(}->}[r]^-{i_{\tau^+}} \ar[d]^{\pi_{\tau^+}|_{\bfSh_{K'}(G'_{\tilde \ttS})_{k_0, \{\tau, \tau^+\}}}}_\cong
&
\bfSh_{K'}(G'_{\tilde \ttS})_{k_0, \tau} \ar[d]^{\pi_\tau}
\\
\bfSh_{K'}(G'_{\tilde \ttS(\tau^+)})_{k_0, \ttT_{\tau^+}} \ar@{^{(}->}[r]&
\bfSh_{K'}(G'_{\tilde \ttS(\tau^+)})_{k_0} \ar[r]^{\eta'_{ \sharp}}&
\bfSh_{K'}(G'_{\tilde \ttS(\tau)})_{k_0},
}
\end{equation}
is commutative, where the two vertical isomorphisms are given by Proposition~\ref{P:restriction of GO strata}(2).

\item[(b)] For $\tilde\tau'\in \Sigma_{E,\infty} \backslash \ttS_{E, \infty}$,  the quasi-isogeny $\eta'^{\sharp}:\bfA'_{\tilde\ttS(\tau^+), k_0}\ra \eta'^{*}_{\sharp} \bfA'_{\tilde\ttS(\tau), k_0}$ induces a canonical isomorphism 
\[
\eta'^*_{\sharp}(\Lie(\bfA'_{\tilde\ttS(\tau), k_0})_{\tilde\tau'}^{\circ}\cong 
\begin{cases}
\Lie(\bfA'_{\tilde\ttS(\tau^+), k_0})^{\circ, (p^{v(\eta)})}_{\sigma^{-v(\eta)}\tilde\tau'} &\text{if $\tilde\tau'$ is a lifting of  $\tau^+$,}\\
\Lie(\bfA'_{\tilde\ttS(\tau^+), k_0})^{\circ}_{\tau'} &\text{otherwise.}
\end{cases}
\]

\item[(c)] The morphism  $\eta'_{\sharp}$ is finite flat of degree $p^{v(\eta)} = p^{n_\tau+n_{\tau^+}}$.

\end{enumerate} 
 \item[(2)]
Assume $\Sigma_{\infty} - \ttS_{\infty} = \{\tau, \tau^-\}= \ttT$ (so that $\tau^+ = \tau^-$ and $\gothp$ is of type $\alpha2$ for $\ttT$). 
 Then
there exists a link morphism  $(\eta_{\sharp},\eta^{\sharp}): \bfSh_{K'}(G'_{\tilde \ttS(\tau^-)})_{k_0}\ra \bfSh_{K'}(G'_{\tilde \ttS(\tau)})_{k_0}$ of indentation degree $2(g-n_{\tau})=2n_{\tau^-}$ associated with the trivial link $\eta:\ttS(\tau^-)\ra \ttS(\tau)$ such that  
the  diagram
\[
\xymatrix{
& \bfSh_{K'}(G'_{\tilde \ttS})_{k_0, \{\tau, \tau^-\}} \ar[dl]_{\pi_\tau} \ar[dr]^{\pi_{\tau^-}}
\\
\bfSh_{K'}(G'_{\tilde \ttS(\tau)})_{k_0} && \bfSh_{K'}(G'_{\tilde \ttS(\tau^-)})_{k_0}\ar[rr]^{\eta_\sharp}&&
\bfSh_{K'}(G'_{\tilde \ttS(\tau)})_{k_0}.
}
\]
coincides with the Hecke correspondence $T_{\gothq}$ \eqref{E:Hecke-T_q} if we identify $\bfSh_{K'}(G'_{\tilde \ttS})_{k_0, \{\tau, \tau^-\}}$  with $\bfSh_{K'^p\Iw'_p}(G'_{\tilde\ttS(\tau)})_{k_0}$ via the isomorphism given by  Theorem~\ref{T:main-thm-unitary}.
\end{itemize}
All descriptions above are compatible with the natural tame Hecke operator actions, and  similar results  apply to $\bfSh_{K''}(G''_{\tilde \ttS})_{k_0}$.
\end{theorem}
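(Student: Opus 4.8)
The plan is to construct the pair $(\eta'_\sharp,\eta'^\sharp)$ in part (1) directly out of the $\PP^1$-bundle fibrations of Theorem~\ref{T:main-thm-unitary}: the morphism $\eta'_\sharp=\pi_\tau\circ i_{\tau^+}\circ(\pi_{\tau^+}|_{\bfSh_{K'}(G'_{\tilde\ttS})_{k_0,\{\tau,\tau^+\}}})^{-1}$ makes sense because Proposition~\ref{P:restriction of GO strata}(2) identifies $\pi_{\tau^+}$ over the codimension-two stratum $\bfSh_{K'}(G'_{\tilde\ttS})_{k_0,\{\tau,\tau^+\}}$ with an isomorphism onto $\bfSh_{K'}(G'_{\tilde\ttS(\tau^+)})_{k_0}$ (the hypothesis $(\tau^+)^+\neq(\tau^+)^-=\tau$ is forced by $\tau^-\notin\ttT\ni\tau^+$), and $\eta'^\sharp$ is the composite of the two universal quasi-isogenies of Theorem~\ref{T:main-thm-unitary}(2). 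First I would record the commutativity of \eqref{E:diagram involving link}: the left square is Proposition~\ref{P:restriction of GO strata}(2) applied to $\tau$ and to $\tau^+$, the middle square is the definition of $\eta'_\sharp$, and the rightmost triangle is the definition of $\eta'_\sharp$ together with Proposition~\ref{P:restriction of GO strata}(1); the comparison with $\bfSh_{K''}(G''_{\tilde\ttS})_{k_0}$ is then formal via Subsections~\ref{S:transfer math obj} and \ref{S:abel var in unitary case}, so I will work throughout with $G'_{\tilde\ttS}$.

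Next I would verify the four axioms of Definition~\ref{D:link-morphism}. Conditions (1) and (2) are immediate from Propositions~\ref{P:Y_S=X_S}, \ref{P:Y_T=Z_T} and the construction of the universal quasi-isogenies in Subsection~\ref{S:moduli-Y_S}. Conditions (3) and (4) are where the work lies: using the moduli description of $Y'_\ttT\cong X'_\ttT$ (conditions (iv),(v) of Subsection~\ref{S:moduli-Y_S}) and the sets $\tilde\Delta(\ttT)^\pm$ of Subsection~\ref{S:Delta-pm} for $\ttT=\{\tau\}$ and $\ttT=\{\tau^+\}$, one computes, at each $\overline\FF_p$-point and component by component, the exact lattice change effected by $\eta'^\sharp_*$ on reduced Dieudonn\'e modules; the key bookkeeping inputs are Lemma~\ref{L:distance to T'} and the commutation of the essential Frobenius with the quasi-isogenies, which is precisely what drives Step~I of the proof of Proposition~\ref{P:Y_T=Z_T}. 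Chasing these identifications produces the integers $t_{\tilde\tau'}$ of (3) and shows that the $\gothq$-component of $\eta'^\sharp$ has degree $p^{2n}$ with $n=n_{\tau^+}-n_\tau$ when $\gothp$ splits in $E/F$ and $n=0$ when $\gothp$ is inert, which proves (a). Part (b) is read off from the same computation: at a lift $\tilde\tau'$ of $\tau^+$ the composite drags the $\tilde\tau'$-component $v(\eta)=n_\tau+n_{\tau^+}$ steps around the cycle, giving the $p^{v(\eta)}$-Frobenius twist (I would use the commutative square relating $F_\es^{v(\eta)}$ to the quasi-isogenies, as in the proof of Proposition~\ref{P:restriction of GO strata}), while all other components are identified.

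For part (c): $\eta'_\sharp$ is a morphism of smooth proper $k_0$-varieties of the same dimension $\#\Sigma_\infty-\#\ttS_\infty-2$ (both $\ttS(\tau)_\infty$ and $\ttS(\tau^+)_\infty$ equal $\ttS_\infty$ plus two extra places, and $\ttS(\tau)_\infty\neq\emptyset$), and its fibers are contained in the zero scheme of $h_{\tau^+}$ restricted to a $\PP^1$-fiber of $\pi_\tau$; by the degree computation of the proof of Proposition~\ref{P:normal bundle}, where $\omega^\circ_{\bfA'^\vee,\tilde\tau}|_{\PP^1}\cong\calO(p^{n_\tau})$, $\omega^\circ_{\bfA'^\vee,\tilde\tau^-}|_{\PP^1}\cong\calO(-1)$, and all other reduced $\omega^\circ$ are trivial on the fiber, the relevant line bundle has degree $p^{n_{\tau^+}}\cdot p^{n_\tau}=p^{v(\eta)}$, so $\eta'_\sharp$ is quasi-finite of bounded fiber length; hence finite, and finite flat of degree $p^{v(\eta)}$ by miracle flatness. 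Finally, part (2) is the same construction with $\tau^+=\tau^-$ and $\ttT=\Sigma_\infty-\ttS_\infty$: here the recipe of Subsection~\ref{S:quaternion-data-T} puts $\gothp$ in case $\alpha2$, so $\bfSh_{K'}(G'_{\tilde\ttS})_{k_0,\{\tau,\tau^-\}}$ is canonically $\bfSh_{K'^p\Iw'_p}(G'_{\tilde\ttS(\tau)})_{k_0}$, and I would check that under this identification $\pi_\tau$ and $\pi_{\tau^-}$ become exactly the two projections of $T_\gothq$ in \eqref{E:Hecke-T_q} by matching the Iwahori orbit of lattices carried by the extra $\PP^1$ with the line $J^\circ$, and verifying that right multiplication by $\xi_\gothq$ realizes the node shift; the indentation degree $2(g-n_\tau)=2n_{\tau^-}$ is then the $\gothq$-degree of the composite quasi-isogeny.

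The main obstacle will be the bookkeeping in Definition~\ref{D:link-morphism}(3)--(4): tracking, uniformly in the point and in the $p$-adic type of $\gothp$ (splits versus inert, type $\alpha$ versus $\alpha^\sharp$), exactly which components of the reduced Dieudonn\'e modules are shifted by $\eta'^\sharp_*$ and by how much, and, in part (2), matching the resulting correspondence with $T_\gothq$ together with the precise indentation degree. Lemma~\ref{L:non-vanishing-chi} is available should a non-vanishing argument be needed to rule out degenerate behaviour of $\eta'_\sharp$.
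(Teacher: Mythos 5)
Your proposal is essentially on track for part~(1), and your treatment of (1)(c) is a genuinely different and in some ways cleaner argument than the paper's. The paper computes the degree of $\eta'_\sharp$ globally, via the identity $\eta'^*_\sharp c_d(\calT_\tau)=p^{v(\eta)}c_d(\calT_{\tau^+})$ on top Chern classes, taking traces, and invoking the non-vanishing of the Euler characteristic (Lemma~\ref{L:non-vanishing-chi}). You compute it fiberwise: the scheme-theoretic fiber of $\eta'_\sharp$ over a point $z$ is, via $\pi_{\tau^+}^{-1}$, the scheme-theoretic intersection $\pi_\tau^{-1}(z)\cap X'_{\{\tau,\tau^+\}}$, which is precisely the zero scheme of $h_{\tau^+}$ restricted to the $\PP^1$-fiber; since on that fiber $\omega^\circ_\tau$ restricts to $\calO(p^{n_\tau})$ and $\omega^\circ_{\tau^+}$ to $\calO$, the relevant line bundle has degree $p^{n_\tau+n_{\tau^+}}=p^{v(\eta)}$, and the degree of the zero scheme of a nonzero section on $\PP^1$ equals the line bundle degree. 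This sidesteps Lemma~\ref{L:non-vanishing-chi} entirely. One small imprecision: you write that the fibers are ``contained in'' the zero scheme of $h_{\tau^+}$; for the argument to yield the exact degree (not merely an upper bound) you need the fiber to \emph{be} the zero scheme, which it is, because $X'_{\{\tau,\tau^+\}}$ is by definition cut out by $h_{\tau^+}$ inside $X'_\tau$.

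Part~(2) is where there is a real gap. You write that ``part~(2) is the same construction with $\tau^+=\tau^-$,'' but the formula $\eta'_\sharp=\pi_\tau\circ i_{\tau^+}\circ(\pi_{\tau^+}|_{X'_{\{\tau,\tau^+\}}})^{-1}$ has no analogue here: when $\tau^+=\tau^-$, neither $\pi_\tau$ nor $\pi_{\tau^-}$ restricted to $X'_{\{\tau,\tau^-\}}$ is an isomorphism onto its target (each is a finite flat map of degree $>1$ since $\gothp$ is of type $\alpha2$ and Theorem~\ref{T:main-thm-unitary} identifies $X'_{\{\tau,\tau^-\}}$ with the Iwahori-level Shimura variety, not with the hyperspecial one), so the inverse does not exist and the construction does not carry over. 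Moreover you say ``$\pi_\tau$ and $\pi_{\tau^-}$ become exactly the two projections of $T_\gothq$,'' but they cannot both be projections of $T_\gothq$ as stated, since they land in $\bfSh_{K'}(G'_{\tilde\ttS(\tau)})_{k_0}$ and $\bfSh_{K'}(G'_{\tilde\ttS(\tau^-)})_{k_0}$ respectively, which have the \emph{same underlying set of places} $\ttS\cup\{\tau,\tau^-\}$ but \emph{different signature lifts}; the whole point of the link morphism $\eta_\sharp$ is to change the signature from $\tilde\ttS(\tau^-)$ to $\tilde\ttS(\tau)$. The paper's proof constructs $\eta_\sharp$ explicitly on $\overline\FF_p$-points by modifying the Dieudonn\'e lattice (rescaling the $\sigma^i\tilde\tau$-components by $p^{-1}$ for $1\le i\le n_{\tau^-}$ and dualizing on the $\bar\gothq$-side), and only afterwards identifies $\pr_1$ with $\pi_\tau$ and $\eta_\sharp\circ\pi_{\tau^-}$ with $\pr_2$. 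Your sketch omits this construction entirely, and the phrase ``verifying that right multiplication by $\xi_\gothq$ realizes the node shift'' is not enough to pin down $\eta_\sharp$ since you must first say what it is.
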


\begin{proof}
The statements for $\bfSh_{K''}(G''_{\tilde \ttS})_{k_0}$ follow from those analogs for $\bfSh_{K'}(G'_{\tilde\ttS})_{k_0}$ by \ref{S:transfer math obj} (or in this case more explicitly by \ref{S:abel var in unitary case}). Thus, we will  just prove the theorem  for $\bfSh_{K'}(G'_{\tilde\ttS})_{k_0}$.

(1)(a) The commutativity of the left square of \eqref{E:diagram involving link} is tautological, and the commutativity of the right square was proved in Proposition~\ref{P:restriction of GO strata}(1). 
 It remains to show that  $\pi_\tau \circ (\pi_{\tau^+}|_{\bfSh_{K'}(G'_{\tilde \ttS})_{k_0, \{\tau, \tau^+\}}})^{-1}$ is  the link morphism  $\eta'_{\sharp}$ on $\bfSh_{K'}(G'_{\tilde\ttS(\tau^+)})_{k_0}$ associated to the link $\eta=\eta_{\tau^-\ra \tau^+}$.

Let $\tilde\tau\in \Sigma_{E,\infty}$ (resp. $\tilde\tau^+$) denote the lift of $\tau$ (resp. $\tau^+$) contained in $\tilde\ttS(\tau)_{\infty}$ (resp. $\tilde\ttS(\tau^+)_{\infty}$).
By Subsection~\ref{S:tilde S(T)}, we have $\tilde\tau=\sigma^{-n_{\tau^+}}\tilde\tau^+$ if $\gothp$ is splits in $E$. 
If $\gothp$ is inert in $E$, it is also harmless to assume $\tilde\tau=\sigma^{-n_{\tau^+}}\tilde\tau^+$
  in view of Propositions~\ref{P:ambiguity-signature} and \ref{P:compatibility of link and GO}.
   Put $\tilde \tau^- =\sigma^{-n_\tau} \tilde\tau$. Let  $y=(B, \iota_{B}, \lambda_{B}, \beta'_{K'})$ be an $S$-point of  $\bfSh_{K'}(G'_{\tilde \ttS(\tau^+)})_{k_0}$ for a locally noetherian  $k_0$-scheme $S$.
 Then the preimage of $y$ under  $ \pi_{\tau^+}|_{\bfSh_{K'}(G'_{\tilde \ttS})_{k_0, \{\tau, \tau^+\}}}$ is given by  $x=(A, \iota_A, \lambda_A, \alpha_{K'})\in \bfSh_{K'}(G'_{\tilde\ttS})_{k_0, \{\tau,\tau^+\}}$, for which  there exists   a quasi-isogeny $\phi: B \to A$ such that $\phi_{*, \tilde \tau'}: H^{\dR}_1(B/S)^{\circ}_{\tilde\tau'}\ra H^{\dR}_1(A/S)^{\circ}_{\tilde\tau'}$ is  a well-defined  isomorphism for all $\tilde \tau' \in \Sigma_{E, \infty}$ except for $\tilde \tau'=\sigma^a\tilde\tau$ or $ \sigma^{a}\tilde\tau^c$ with $a = 1, \dots, n_{\tau^+}$. Note that $\sigma^a \tilde{\tau} \in \tilde \Delta(\tau^+)^-$ and $\sigma^a \tilde{\tau}^c \in \tilde \Delta(\tau^+)^+$. So in the exceptional cases, $\phi_{*,\sigma^a\tilde\tau}$ and $(\phi^{-1})_{*, \sigma^a\tilde \tau^c}$ are  well defined, where $\phi^{-1}\colon A\ra B$ denotes the  quasi-isogeny inverse to $\phi$,  and  we have (by the proof of Proposition~\ref{P:Y_T=Z_T})
\begin{align}
\label{E:Ker = F = Lie}
&\quad \Ker(\phi_{*, \sigma^a\tilde \tau}) =  F_{B,\es}^{n_\tau + a} \big( H^\dR_1(B/S)^{\circ, (p^{n_\tau+a})}_{\tilde \tau^-} \big)\cong \Lie(B/S)^{\circ, (p^{n_{\tau}+a})}_{\tilde\tau^-},\text{ and }\\
\label{E:Im = F = Lie}
&  \im ((\phi^{-1})_{*, \sigma^a\tilde \tau^c}) = F_{B,\es}^{n_\tau + a} \big( H^\dR_1(B/S)^{\circ, (p^{n_\tau+a})}_{\tilde \tau^{-,c}} \big)\cong \Lie(B/S)^{\circ,(p^{n_{\tau}+a})}_{\tilde\tau^{-,c}}.
\end{align}
If  $\pi_\tau$ sends $x=(A, \iota_A, \lambda_A, \alpha_{K'})$ to  $z=(B', \iota_{B'}, \lambda_{B'}, \beta_{K'}) \in \bfSh_{K'}(G'_{\tilde \ttS(\tau)})(S)$, there is a quasi-isogeny $\psi: A \to B'$ such that $\psi_{*, \tilde \tau'}$ is an isomorphism for all $\tilde \tau' \in\Sigma_{E, \infty}$ except for $\tilde \tau'=\sigma^b\tilde\tau^-$ or $\sigma^b\tilde\tau^{-,c}$ with $b =1, \dots, n_\tau$. In the exceptional cases, $\psi_{*, \sigma^b\tilde \tau^{-,c}}$ and $(\psi^{-1})_{*, \sigma^b\tilde \tau^{-}}$ are well defined, and 
we have (by the proof of Proposition~\ref{P:Y_S=X_S} or rather the moduli description  in Subsection~\ref{S:moduli-Y_S})
\begin{align*}
&\quad 
\Ker(\psi_{*, \sigma^b\tilde \tau^{-,c}}) =  F_{A,\es}^{ b} \big( H^\dR_1(A/S)^{\circ, (p^{b})}_{\tilde \tau^{-,c}} \big),\text{ and }\\
&  \im ((\psi^{-1})_{*, \sigma^b\tilde \tau^{-}}) = F_{A,\es}^{b} \big( H^\dR_1(A/S)^{\circ, (p^{b})}_{\tilde \tau^{-}} \big).
\end{align*}
By definition, we have $\eta'_{\sharp}(y)=z$, and the composed quasi-isogeny $\psi\circ\phi: B\ra B'$ is nothing but the base change of $\eta'^{\sharp}$ to $S$.
For later reference, we remark that $\psi$ and $\phi$ induces isomorphisms 
\begin{equation}\label{E:Lie-algebra-equality}
\Lie(B/S)^{\circ}_{\tilde\tau'}\cong \Lie(A/S)^{\circ}_{\tilde\tau'}\cong \Lie(B'/S)^{\circ}_{\tilde\tau'}
\end{equation}
 for all $\tilde\tau'$ with restriction $\tau'\in \Sigma_{\infty}-\ttS_{\infty}$  different from $\tau^-,\tau, \tau^+$, and  
\begin{small}
\begin{align}\label{E:isom-Lie-algebras}
\Lie(B/S)^{\circ,(p^{n_{\tau}+n_{\tau^+}})}_{\tilde\tau^-}&\xra[\cong]{\eqref{E:Ker = F = Lie}}\Ker(\phi_{*,\tilde\tau^+}) \cong \Coker(\phi_{*,\tilde\tau^+})\cong \Lie(A/S)^{\circ}_{\tilde\tau^+}\xra[\cong]{\psi_{*, \tilde \tau^+}} \Lie(B'/S)^{\circ}_{\tilde\tau^+},\\
\label{E:isom-Lie-algebras1}
\Lie(B/S)^{\circ,(p^{n_{\tau}+n_{\tau^{+}}})}_{\tilde\tau^{-,c}}&\xra[\cong]{\eqref{E:Im = F = Lie}}\mathrm{Im}((\phi^{-1})_{*,\tilde\tau^{+,c}})\cong \Lie(A/S)^{\circ}_{\tilde\tau^{+,c}} \xra[\cong]{\psi_{*, \tilde \tau^{+,c}}} \Lie(B'/S)^{\circ}_{\tilde\tau^{+,c}}.
\end{align}
\end{small}

%Then one can identify the covariant Dieudonn\'e module of $B$ as a lattice of $\tcD(B')\otimes_{W(k)}W(k)[1/p]$.
Consider the case when $S=\Spec(k)$ with $k$ a perfect  field containing $k_0$. Denote by $\tcD^{\circ}_{B,\tilde\tau'}$ the $\tilde\tau'$-component of the reduced covariant Dieudonn\'e module of $B$. 
From the discussion above, one sees easily that $\tcD^{\circ}_{B',\tilde\tau}=\tcD^{\circ}_{B,\tilde\tau'}$ for all $\tilde\tau'\in \Sigma_{E,\infty}$ expect for $\tilde\tau'\in\{\sigma^a\tilde\tau, \sigma^{a}\tilde\tau^c\;|\; 1\leq a\leq n_{\tau^+}\}\cup \{\sigma^b\tilde\tau^-, \sigma^b\tilde\tau^{-,c}\;|\;1\leq b\leq  n_{\tau}\}$. In the exceptional cases, we have 
\[
(\psi\circ\phi)^{-1}_*\tcD^{\circ}_{B',\tilde\tau'}=\begin{cases}
p^{-1}F_{B,\es}^{n_{\tau}+a}(\tcD^{\circ}_{B,\tilde\tau^-}) &\text{if }\tilde\tau'=\sigma^a\tilde\tau;\\
F_{B,\es}^{n_{\tau}+a}(\tcD^{\circ}_{B,\tilde\tau^{-,c}}) &\text{if }\tilde\tau'=\sigma^a\tilde\tau^c;\\
p^{-1}F_{B,\es}^{b}(\tcD^{\circ}_{B,\tilde\tau^{-,c}}) &\text{if }\tilde\tau'=\sigma^b\tilde\tau^{-,c};\\
F_{B,\es}^b(\tcD^{\circ}_{B,\tilde\tau^{-}}) &\text{if }\tilde\tau'=\sigma^b\tilde\tau^-.
\end{cases}
\]
Since the essential Frobenius $F_{B,\es}$ is bijective after inverting $p$, one sees easily that $\tcD^{\circ}_{B}$ can be recovered from $\tcD^{\circ}_{B'}$.
 This implies immediately that $\eta'_{\sharp}: \bfSh_{K'}(G'_{\tilde \ttS(\tau^+)})_{k_0} \ra \bfSh_{K'}(G'_{\tilde \ttS(\tau)})_{k_0} $ induces a bijection on $k$-valued points, i.e. $\eta'_{\sharp}$ verifies condition (1) in Definition~\ref{D:link-morphism}.
By the discussion above,  it is also obvious that conditions (2) and (3) of Definition~\ref{D:link-morphism} are also verified for $(\eta'_{\sharp},\eta'^{\sharp})$. Finally, from the formulas for $\tcD^{\circ}_{B,\tilde\tau}$, one sees easily that the degree of the quasi-isogeny 
 \[(\psi\circ\phi)_{\gothq}: B[\gothq^{\infty}]\ra B'[\gothq^{\infty}]\]
 is $2(n_{\tau^+}-n_{\tau})$ if $\gothp$ splits in $E$, and is $0$ if $\gothp$ is inert in $E$. This shows that $(\eta'_{\sharp},\eta'^{\sharp})$ is the link morphism associated to $\eta$ with the said indentation degree.
 
 Statement (1)(b) follows from the isomorphisms \eqref{E:Lie-algebra-equality}, \eqref{E:isom-Lie-algebras}, and \eqref{E:isom-Lie-algebras1} applied to the case when  $B$ is  the universal abelian scheme $\bfA'_{\tilde\ttS(\tau^+),k_0}$. 
 It remains to prove  (1)(c). The morphism $\eta'_{\sharp}$  is clearly quasi-finite, and  hence finite because $\bfSh_{K'}(G'_{\tilde\ttS(\tau^+)})_{k_0}$ is proper by Proposition~\ref{Prop:smoothness}.  
%Let $y$ be a point of $\bfSh_{K'}(G'_{\tilde\ttS(\tau^+)})$ with image $z=\eta'_{\sharp}(y)$, and let $\cO_{y}$  and $\cO_{z}$ be respectively the local rings at $y$ and $z$. 
Since both $\bfSh_{K'}(G'_{\tilde\ttS(\tau)})_{k_0}$ and $\bfSh_{K'}(G'_{\tilde\ttS(\tau^+)})_{k_0}$ are regular, we conclude  by \cite[Theorem 23.1]{matsumura} that $\eta'_{\sharp}$ is flat at every point of $\bfSh_{K'}(G'_{\tilde\ttS(\tau)})_{k_0}$. 
It remains to see that the degree of $\eta'_{\sharp}$ is $p^{v(\eta)}$. For this, one might be able to argue geometrically, but we found it more convenient to compare the top Chern classes of the tangent bundle.

Let $\calT_{\tau}$ and $\calT_{\tau^+}$ denote respectively the tangent bundle of $\bfSh_{K'}(G'_{\tilde\ttS(\tau)})_{k_0}$ and $\bfSh_{K'}(G'_{\tilde\ttS(\tau^+)})_{k_0}$, and let $d=\#\Sigma_{\infty}-\#\ttS(\tau)_{\infty}$ be the common dimension of these  Shimura varieties.
 Fix a prime $\ell\neq p$.
 For a vector bundle $\calE$ over a proper and smooth $k_0$-variety $X$, we denote by $c_i(\calE)\in H^{2i}_{\et}(X_{\overline \FF_p},\Q_{\ell})(i)$ the $i$-th Chern class of $\calE$. 
 By Proposition~\ref{Prop:smoothness}, we have 
 \[
 c_{d}(\calT_{\tau})=\prod_{\tau'\in \Sigma_{\infty}-\ttS(\tau)_{\infty}}c_1 \bigg(\Lie(\bfA'_{\tilde\ttS(\tau),k_0})^{\circ}_{\tilde\tau'}\otimes\Lie(\bfA'_{\tilde\ttS(\tau),k_0})^{\circ}_{\tilde\tau'^c}\bigg),
 \]
 where $\tilde\tau',\tilde\tau'^c\in \Sigma_{E,\infty}$ denote the two liftings of $\tau'$.
 A similar formula for $c_d(\calT_{\tau^+})$ holds with $\tau$ replaced  by $\tau^+$.
By (1)(b), we have   
\[
\eta'^{*}_{\sharp}c_{d}(\calT_{\tau})=c_d(\eta'^{*}_{\sharp}\calT_{\tau})=p^{v(\eta)}c_d(\calT_{\tau^+}).
\]
Let  
$$
\Tr_{?}\colon H^{2d}_{\et}(\bfSh_{K'}(G'_{\tilde\ttS(?)})_{\overline\FF_p},\Q_{\ell})(d)\xra{\sim} \Q_{\ell} \quad \text{for }?=\tau,\tau^+
$$ 
be  the $\ell$-adic trace map. 
  Then  we have 
  \[
  \deg(\eta'_{\sharp}) \Tr_{\tau}(c_d(\calT_{\tau}))=\Tr_{\tau}(\eta'^{*}_{\sharp}c_d(\calT_{\tau}))=p^{v(\eta)} \Tr_{\tau^+}(c_d(\calT_{\tau^+})).
  \]
  It is well known that $\Tr_{\tau}(c_d(\calT_{\tau}))=\chi(\bfSh_{K'}(G'_{\tilde\ttS(\tau)})_{\overline \FF_p})$ (see \cite[Expos\'e VII, Corollaire 4.9]{SGA5}), where 
  $$
  \chi(\bfSh_{K'}(G'_{\tilde\ttS(\tau)})_{\overline \FF_p}): =\sum_{i=0}^{2d}(-1)^i\dim H^{i}_{\et}(\bfSh_{K'}(G'_{\tilde\ttS(\tau)})_{\overline \FF_p},\Q_{\ell})
  $$
 is the ($\ell$-adic) Euler-Poincar\'e characteristic.   Hence, one obtains 
 \[
 \deg(\eta'_{\sharp}) \cdot \chi(\bfSh_{K'}(G'_{\tilde\ttS(\tau)})_{\overline \FF_p})=p^{v(\eta)} \cdot \chi(\bfSh_{K'}(G'_{\tilde\ttS(\tau^+)})_{\overline \FF_p})
 \]
 Since $\eta'_{\sharp}$ is  purely inseparable,  we have $\chi(\bfSh_{K'}(G'_{\tilde\ttS(\tau)})_{\overline \FF_p})=\chi(\bfSh_{K'}(G'_{\tilde\ttS(\tau')})_{\overline \FF_p})$. By Lemma~\ref{L:non-vanishing-chi},  we have  $\chi(\bfSh_{K'}(G'_{\tilde\ttS(\tau)}))\neq 0$, and hence $\deg(\eta'_{\sharp})=p^{v(\eta)}$.

(2) 
Note that $\gothp$ splits in $E$, and  fix a prime $\gothq$ of $E$ dividing $\gothp$.
%The two unitary groups $G'_{\tilde\ttS(\tau)}$ and $G'_{\tilde\ttS(\tau^-)}$ are almost the same except for their signatures at  the archimedean places  lifting $\tau$ and $\tau^-$.
As before, we denote by $\tilde\tau$ and $\tilde\tau^-$ the liftings of $\tau$ and $\tau^-$ in $\Sigma_{E,\infty/\gothq}$ respectively.

We define first a link morphism $(\eta_{\sharp},\eta^{\sharp}): \bfSh_{K'}(G'_{\tilde\ttS(\tau^-)})_{k_0}\ra \bfSh_{K'}(G'_{\tilde\ttS(\tau)})_{k_0}$ of indentation degree $p^{2(g-n_{\tau})}$ as follows.
Let $y=(B',\iota_{B'},\lambda_{B'},\beta_{K'})$  be an $\overline \FF_p$-point of $\bfSh_{K'}(G'_{\tilde\ttS(\tau^-)})_{k_0}$ so that $\dim \omega^\circ_{B'^\vee/k_0, \tilde \tau} = 0$ and $\dim \omega^\circ_{B'^\vee/k_0, \tilde \tau^c} = 2$. We define first a lattice $M^{\circ}_{\tilde\tau'}$ of $\tcD^{\circ}_{B',\tilde\tau'}[1/p]$ for each $\tilde\tau'\in \Sigma_{E,\infty}$ as follows: We put
\[
M^{\circ}_{\sigma^{i}\tilde\tau}=
\begin{cases}\frac{1}{p}\tcD^{\circ}_{B',\sigma^{i}\tilde\tau} &\text{for } i=1,\cdots, n_{\tau^-}=g-n_{\tau},\\
\tcD^{\circ}_{B',\sigma^i\tilde\tau} &\text{for } i=n_{\tau^-}+1,\cdots,g,
\end{cases}
\]
and $M^{\circ}_{\sigma^i\tilde\tau^{c}}=M^{\circ,\vee}_{\sigma^i\tilde\tau}$. One checks easily that $M=\bigoplus_{\tilde\tau'\in \Sigma_{E,\infty}} M^{\circ,\oplus 2}_{\tilde\tau'}$ is stable under the action of Frobenius and Verschiebung homomorphisms, hence  a Dieudonn\'e submodule of $\tcD_{B'}[1/p]$. 
As in the proof of Proposition~\ref{P:Y_S=X_S}, this gives rise to an abelian variety $B''$ equipped with an action by $\cO_{D}$ and an $\cO_D$-equivariant quasi-isogeny $\phi: B' \to B''$ such that the induced morphism on the Dieudonn\'e module $\phi^{-1}: \tilde \calD_{B''} \to \tilde \calD_{B'}[1/p]$ is identified with the inclusion   $M \subset \tcD_{B'}[1/p]$.
Since the lattice $M\subseteq \tcD_{B'}[1/p]$ is self-dual by construction, the polarization $\lambda_{B'}$ induces a prime-to-$p$ polarization $\lambda_{B''}$ on $B''$ such that $\lambda_{B'}=\phi^\vee\circ \lambda_{B''}\circ \phi$.
 Finally, the $K'^p$-level structure $\beta_{K'}$ on $B'$ induces naturally a $K'^p$-level structure $\beta''_{K'}$ on $B''$.
  Moreover, an easy computation shows that $\dim \omega^\circ_{B''^\vee/k_0, \tilde \tau} = 2$ and $\dim \omega^\circ_{B''^\vee/k_0, \tilde \tau^c} = 0$, and $\dim \omega^\circ_{B''^\vee/k_0, \tilde \tau'}=\dim \omega^\circ_{B'^\vee/k_0, \tilde \tau'}$ at other $\tilde \tau'$.
   Thus,   $(B'',\iota_{B''},\lambda_{B''},\beta''_{K'})$ is a point  of $\bfSh_{K'}(G'_{\tilde\ttS(\tau)})_{k_0}$. Let 
      $$
   \eta_{\sharp}\colon \bfSh_{K'}(G'_{\tilde\ttS(\tau^-)})_{k_0} \to \bfSh_{K'}(G'_{\tilde\ttS(\tau)})_{k_0}
   $$ 
   be the map sending $y=(B',\iota_{B'},\lambda_{B'},\beta_{K'})\mapsto (B'',\iota_{B''},\lambda_{B''},\beta''_{K'})$, and let 
   \[\eta^{\sharp}\colon \bfA'_{\tilde\ttS(\tau^-), k_0}\ra \eta_{\sharp}^*\bfA'_{\tilde\ttS(\tau), k_0}
   \]
   be the $p$-quasi-isogeny whose base change to each $y$ is $\phi: B'\ra B''$ constructed above.
    Then it is clear by construction that $(\eta_{\sharp}, \eta^{\sharp})$ is the link morphism of indentation degree ${2(g-n_{\tau})}$ associated to the trivial link from $\ttS(\tau^-)$ to $\ttS(\tau)$.

% The two projections $\pi_\tau$ and $\pi_{\tau^-}$ would have been giving rise to the Hecke correspondence at $\gothq$ if the signature sets $\tilde \ttS(\tau)_\infty$ had been equal to $\tilde \ttS(\tau)_\infty$.
%But this is not the case because $\tilde \tau \in \tilde \ttS(\tau)_\infty$ and $\tilde \tau^- \notin \tilde \ttS(\tau)_\infty$; but $\tilde \tau \notin \tilde \ttS(\tau^-)_\infty$ and $\tilde \tau^- \in \tilde \ttS(\tau^-)_\infty$.

Denote by  $\pi_{\{\tau,\tau^-\}}: \bfSh_{K'}(G'_{\tilde\ttS})_{k_0,\{\tau,\tau^-\}}\xra{\sim} \bfSh_{K'^p\Iw'_{p}}(G'_{\tilde\ttS(\tau)})_{k_0}$ the isomorphism given by Theorem~\ref{T:main-thm-unitary}. 
Let $x=(A, \iota_A, \lambda_A, \alpha_{K'})$ be an $\overline \FF_p$-point of  $\bfSh_{K'}(G'_{\tilde \ttS})_{k_0,\{\tau, \tau^-\}}$.
 Then its image $(B,\iota_{B},\lambda_{B},\beta_{K'^\gothp},\beta_{\gothp})$ under $\pi_{\{\tau,\tau^-\}}$  is characterized as follows: 
\begin{itemize}
\item[(a)] There exists an $\cO_D$-equivariant $p$-quasi-isogeny $\phi: B\ra A$ such that $\phi$ induces an isomorphism  $\phi_{*}\colon \tcD^{\circ}_{B,\tilde\tau'}\xra{\sim} \tcD^{\circ}_{A,\tilde\tau'}$ for $\tilde\tau'$ different from $ \sigma^{i}\tilde\tau^-$ with $i=1,\dots, n_{\tau}$ and   their complex conjugates.
 In the exceptional cases, we have 
\begin{align*}
&\phi_{*}(\tcD^{\circ}_{B,\sigma^i\tilde\tau^-})=\im(F_{A,\es}^{i}\colon \tcD^{\circ}_{A,\tilde\tau^-}\ra \tcD^{\circ}_{A,\sigma^i\tilde\tau^-}),
\quad \text{and} \\
&
\phi_{*}(\tcD^{\circ}_{B,\sigma^i\tilde\tau^{-,c}})=\frac{1}{p}\im(F_{A,\es}^i\colon\tcD^{\circ}_{A,\tilde\tau^{-,c}}\ra \tcD^{\circ}_{A,\sigma^i\tilde\tau^{-,c}}).
\end{align*}
\item[(b)] We have $\lambda_{B}=\phi^\vee\circ\lambda_{A}\circ\phi$, and $\beta_{K'^p}=\phi\circ\alpha_{K'^p}$.
\item[(c)]  Let $M^\circ_{\tilde\tau}\subseteq \tcD_{B,\tilde\tau}^{\circ}/p\tcD^{\circ}_{B,\tilde\tau}$ be the one-dimensional subspace given by the image of $p\tcD^{\circ}_{A,\tilde\tau}$ via $\phi_{*}^{-1}$. 
Then $M_{\tilde\tau}^\circ$ is stable under $F_{B,\es}^g$, and  $M:=\bigoplus_{i=0}^{g-1}F_{B,\es}^i(M^\circ_{\tilde\tau})^{\oplus 2}$ is a Dieudonn\'e submodule of $\calD_{B[\gothq]}=\bigoplus_{i=0}^{g-1}\tcD_{B,\sigma^{i}\tilde\tau}/p\tcD_{B,\sigma^i\tilde\tau}$. Let $H_{\gothq}$ be the subgroup scheme  of $B[\gothq]$ associated to $M$. Then the 
 Iwahoric level structure of $B$ at $p$  is given by $\beta_{\gothp}=H_{\gothq}\oplus H_{\bar\gothq}$, where $H_{\bar\gothq}\subseteq B[\bar\gothq]$ is the orthogonal complement of $H_{\gothq}$ under the natural duality between $B[\gothq]$ and $B[\bar\gothq]$. 
\end{itemize}
It is clear that the image of $x$ under $\pi_{\tau}$ is $(B,\iota_{B},\lambda_{B},\beta_{K'^p})$ by forgetting the Iwahoric level structure at $p$ of $\pi_{\{\tau,\tau^-\}}(x)$. This shows that,  via the isomorphism $\pi_{\{\tau,\tau^-\}}$, the map $\pi_{\tau}|_{\bfSh_{K'}(G'_{\tilde \ttS})_{k_0,\{\tau,\tau^-\}}}$  coincides with the projection $\pr_1$ in \eqref{E:Hecke-T_q}.

To finish the proof of (2), it remains  to show that the composition
$$
\eta_{\sharp}\circ\pi_{\tau^-}\circ\pi_{\{\tau,\tau^-\}}^{-1}: \bfSh_{K'^p\Iw'_p}(G'_{\tilde \ttS(\tau)})_{k_0}\ra \bfSh_{K'}(G'_{\tilde\ttS(\tau)})_{k_0}
$$
 is the second projection $\pr_2$ in \eqref{E:Hecke-T_q}. 
 Let $x=(A,\iota_{A},\lambda_{A},\alpha_{K'})$ be a point of $\bfSh_{K'}(G'_{\tilde\ttS})_{k_0,\{\tau,\tau^-\}}$ with image $\pi_{\{\tau,\tau^-\}}(x)=(B,\iota_B,\lambda_B,\beta_{K'^\gothp},\beta_{\gothp})$, together with the $p$-quasi-isogeny $\phi\colon B\ra A$ as described above. 
The image of $(A,\iota_{A},\lambda_{A},\alpha_{K'})$ under $\pi_{\tau^-}$ is given by 
  $(B', \iota_{B'}, \lambda_{B'}, \beta_{K'}) \in \bfSh_{K'}(G'_{\tilde \ttS(\tau^-)})_{k_0}$, which admits a quasi-isogeny $\psi: B'\ra A$ compatible with all structures such that $\psi_{*}: \tcD^{\circ}_{B',\tilde\tau'}\cong \tcD^{\circ}_{A,\tilde\tau'}$ for all $\tilde\tau'$ except for those $\tilde\tau'$'s lifting $\sigma^i\tau$ for any $i = 1, \dots, g-n_{\tau}=n_{\tau^-}$.  
In the exceptional cases, we have 
$$
\psi_{*}( \tcD^{\circ}_{B',\sigma^i\tilde\tau})=\im(F_{A,\es}^i\colon \tcD^{\circ}_{A,\tilde\tau}\ra \tcD^{\circ}_{A,\sigma^i\tilde\tau})\quad\text{and}\quad
 \psi_*(\tcD^{\circ}_{B',\sigma^i\tilde\tau^c})=\frac1 p\im(F_{A,\es}^i\colon \tcD^{\circ}_{A,\tilde\tau^c}\ra \tcD^{\circ}_{A,\sigma^i\tilde\tau^c}).
$$ 
Consider the composed isogeny $\phi^{-1}\circ\psi\colon B'\ra B$.
 Let $\tilde M^\circ_{\tilde\tau}\subseteq \tcD^{\circ}_{B,\tilde\tau}$ be the inverse image of the one-dimensional subspace $M^\circ_{\tilde\tau}\subseteq \tcD^{\circ}_{B,\tilde\tau}/p\tcD^{\circ}_{B,\tilde\tau}$ given by the image of $p\tcD^{\circ}_{A,\tilde\tau}$ as in (c) above. Then, we have 
 \[
 (\phi^{-1}\circ\psi)_*(\tcD^{\circ}_{B',\sigma^{i}\tilde\tau})=
 \begin{cases}
 F_{B,\es}^i(\tilde M^\circ_{\tilde\tau})&\text{for }i=1,\cdots, n_{\tau^-}=g-n_{\tau};\\
 \frac1pF_{B,\es}^i(\tilde M^\circ_{\tilde\tau}) &\text{for }i=n_{\tau^-}+1,\cdots, g;
 \end{cases}
 \] 
 and  $(\phi^{-1}\circ\psi)_*(\tcD^{\circ}_{B',\sigma^{i}\tilde\tau^c})$ is the orthogonal complement of $(\phi^{-1}\circ\psi)_*(\tcD^{\circ}_{B',\sigma^{i}\tilde\tau})$. 
 
 Let $(B'',\iota_{B''},\lambda_{B''},\beta''_{K'})$ be the image of $(B', \iota_{B'}, \lambda_{B'}, \beta_{K'}) \in \bfSh_{K'}(G'_{\tilde \ttS(\tau^-)})_{k_0}$ under $\eta_{\sharp}$. 
 Then, the composed $p$-quasi-isogeny $B\xra{\psi^{-1}\circ\phi} B'\xra{\eta^{\sharp}}B''$, identifies   $\tcD^{\circ}_{B'',\sigma^i\tilde\tau}$ with the lattice 
  $$
  \frac{1}{p}F_{B,\es}^{i}(\tilde M^\circ_{\tilde\tau})\subseteq \tcD^{\circ}_{B,\sigma^i\tilde\tau}[1/p],\quad \text{for all }i=1,\cdots, g.
  $$ 
Thus, one sees immediately that the map $(B,\iota_B, \lambda_B, \beta_{K'^\gothp},\beta_{\gothp})\mapsto (B'',\iota_{B''},\lambda_{B''},\beta''_{K'})$ is nothing but the second projection in \eqref{E:Hecke-T_q}. This finishes the proof of (2). 
\end{proof}

   Our last  proposition explains the compatibility of the description of the GO-divisors as in Theorem~\ref{T:main-thm-unitary}  with respect to the link morphism, especially to the link morphism appearing  in Theorem~\ref{T:link and Hecke operator}(1). 

\begin{prop}
\label{P:compatibility of link and GO}
Assume that $\#\Sigma_\infty - \#\ttS_\infty \geq 2$. 
Let $\tau_0\in \Sigma_{\infty}-\ttS_{\infty}$, and  $\eta: \ttS \to \ttS'$ be a link such that all curves are straight lines except for (possibly) one curve turning to the right, linking $\tau_0\in \Sigma_{\infty}-\ttS_{\infty}$ with $\tau'_0 = \eta(\tau_0) = \sigma^{m(\tau_0)}\tau_0$ for some integer $m(\tau_0)\geq 0$.  Assume that the link morphism $(\eta'_\sharp, \eta'^\sharp): \bfSh_{K'}(G'_{\tilde \ttS})_{k_0} \to  \bfSh_{K'}(G'_{\tilde \ttS'})_{k_0}$ of (some) indentation degree $n \in \ZZ$ associated to $\eta$ exists.
 The setup automatically implies that $\tau^+_0 = \tau'^+_0$ and $\tau_0^- = \tau'^-_0$. 
 Let $\tau\in \Sigma_\infty - \ttS_\infty$. 
 Then the following statements hold:
 \begin{enumerate}
 \item[(1)] The link morphism $\eta'_\sharp$ sends the GO-divisor $\bfSh_{K'}(G'_{\tilde \ttS})_{k_0, \tau}$ into the GO-divisor $\bfSh_{K'}(G'_{\tilde \ttS'})_{k_0, \eta(\tau)}$. 
 
 \item[(2)] Let $\eta_{\tau}:  \ttS(\tau) \to  \ttS'(\eta(\tau))$ denote the link given by removing from $\eta$ the two curves attached to $\tau$ and $\tau^-$.
  Then there exists a link morphism $(\eta'_{\tau,\sharp},\eta'^{\sharp}_{\tau})$ (of some indentation degree $m$) from $\bfSh_{K'}(G'_{\tilde \ttS(\tau)})_{k_0}$ to $\bfSh_{K'}(G'_{\tilde \ttS'(\eta(\tau))})_{k_0}$, associated to the link $\eta_\tau$
such that we have the  following commutative diagram of Shimura varieties
\begin{equation}\label{E:induced-link-morphism}
\xymatrix@C=60pt{
\bfSh_{K'}(G'_{\tilde \ttS})_{k_0, \tau} \ar[r]^-{\pi_{\tau}}  \ar[d]^{\eta'_{\sharp}} &
\bfSh_{K'}(G'_{\tilde \ttS(\tau)})_{k_0} \ar[d]^{\eta'_{\tau,\sharp}}
\\
  \bfSh_{K'}(G'_{\tilde \ttS'})_{k_0,\eta(\tau)}
  \ar[r]^-{\pi_{\eta(\tau)}}
  & \bfSh_{K'}(G'_{\tilde \ttS'(\eta(\tau))})_{k_0},
}
\end{equation}
and a similar commutative diagram of quasi-isogenies of universal abelian varieties. Moreover, the indentation degree of the link morphism $\eta'_{\tau, \sharp}$ is given by
 $$m=n+n_{\tau}-n_{\eta(\tau)}(\ttS')=\begin{cases}
0 &\text{if $\gothp$ is inert in $E/F$;}\\
n &\text{if $\gothp$ splits in $E/F$ and $\tau\neq \tau_0, \tau_0^+$;}\\
n-m(\tau_0) &\text{if $\gothp$ splits in $E/F$ and $\tau=\tau_0$;}\\
n+m(\tau_0) &\text{if $\gothp$ splits in $E/F$ and $\tau=\tau_0^+$.}
\end{cases}
$$

\item[(3)] Suppose moreover that the link $\eta$ and the link morphism $(\eta'_{\sharp}, \eta'^{\sharp})$ are  those appearing  in Theorem~\ref{T:link and Hecke operator}(1) (so our $\tilde\ttS$ being $\tilde\ttS(\tau^+)$, $\ttS'$ being $\tilde\ttS(\tau)$, and  $\tau_0$ being $\tau^-$ therein, respectively).
If $\cO_{\tau}(1)$ (resp. $\cO_{\eta(\tau)}(1)$) denotes the tautological  quotient line bundle on $\bfSh_{K'}(G'_{\tilde\ttS})_{k_0, \tau}$  (resp. on $\bfSh_{K'}(G'_{\tilde\ttS'})_{k_0, \eta(\tau)}$) for the $\PP^1$-fibration $\pi_{\tau}$ (resp. $\pi_{\eta(\tau)}$), then we have a canonical isomorphism 
\begin{equation}\label{E:link-line-bundles}
\eta'^{*}_{\sharp}(\cO_{\eta(\tau)}(1))\cong 
\begin{cases}
\cO_{\tau}(1) &\text{if } \tau\neq \tau_0^+;\\
\cO_{\tau}(p^{m(\tau_0)}) &\text{if } \tau=\tau_0^+.
\end{cases}
\end{equation}
 Moreover, the induced  link morphism $\eta'_{\tau, \sharp}$ is finite flat of degree $p^{v(\eta_{\tau})}$, i.e. it is an isomorphism if $\tau\in \{\tau^{+}_0, \tau_0\}$, and it is finite flat of degree $p^{m(\tau_0)}$ if $\tau\notin \{ \tau_0, \tau_0^{+}\}$.
 \end{enumerate}

The analogous results hold for link morphisms for $\bfSh_{K''}(G''_{\tilde \ttS})_{k_0}$'s.

\end{prop}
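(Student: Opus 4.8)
\textbf{Proof plan for Proposition~\ref{P:compatibility of link and GO}.}
The plan is to reduce everything to the moduli-theoretic description of link morphisms in Definition~\ref{D:link-morphism} together with the moduli description of the GO-strata via isogenies in Subsection~\ref{S:moduli-Y_S}, and to argue (as everywhere in this circle of ideas) by checking statements on $\overline{\FF}_p$-points and comparing reduced covariant Dieudonn\'e modules. First, for (1): the link morphism $(\eta'_\sharp, \eta'^\sharp)$ comes with a $p$-quasi-isogeny $\eta'^\sharp: \bfA'_{\tilde\ttS,k_0} \to \eta'^{*}_\sharp(\bfA'_{\tilde\ttS',k_0})$ satisfying condition~(3) of Definition~\ref{D:link-morphism}, namely $\eta'^\sharp_* \big(F_{\es}^{m(\tau)}(\tilde\calD^\circ_{\bfA'_{\tilde\ttS},\tilde\tau})\big) = p^{t_{\tilde\tau}}\tilde\calD^\circ_{\bfA'_{\tilde\ttS'},\sigma^{m(\tau)}\tilde\tau}$. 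Combined with the fact that $\eta'^\sharp_*$ commutes with $F_{\es}$ after inverting $p$, this forces the partial Hasse invariant $h_{\tilde\tau}$ of $\bfA'_{\tilde\ttS}$ at $\tilde\tau$ (which, by Lemma~\ref{Lemma:partial-Hasse}, measures whether $\mathrm{im}(F_{\es}^{n_\tau})$ lands in $\omega^\circ$) to vanish exactly when the corresponding partial Hasse invariant $h_{\sigma^{m(\tau)}\tilde\tau}$ of $\bfA'_{\tilde\ttS'}$ vanishes; since $\eta(\tau) = \sigma^{m(\tau)}\tau$ (or $\tau$ itself when $m(\tau)=0$, i.e. $\tau \in \ttS_\infty$, where the statement is vacuous), this gives $\eta'_\sharp(\bfSh_{K'}(G'_{\tilde\ttS})_{k_0,\tau}) \subseteq \bfSh_{K'}(G'_{\tilde\ttS'})_{k_0,\eta(\tau)}$. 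Here one must use the relation between $n_\tau(\ttS)$ and $n_{\eta(\tau)}(\ttS')$ dictated by the shape of $\eta$ — only the one rightward-turning curve changes these numbers, by exactly $\pm m(\tau_0)$ at $\tau_0$ and $\tau_0^+$.

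For (2): the restriction $\eta'_\sharp|_{\bfSh_{K'}(G'_{\tilde\ttS})_{k_0,\tau}}$ together with the $\PP^1$-bundle projections $\pi_\tau$ and $\pi_{\eta(\tau)}$ of Theorem~\ref{T:main-thm-unitary} furnishes a morphism $\eta'_{\tau,\sharp}$ on the base Shimura varieties once one checks that $\eta'_\sharp$ respects the $\PP^1$-fibers; this fiber-respecting property follows because the extra datum cut out by $\pi_\tau$ is the line sub-bundle $J^\circ$ on the $\tilde I_\ttT$-components of the de Rham homology, and $\eta'^\sharp$ induces isomorphisms on exactly those components (the curve of $\eta$ at $\tau$, if any, only turns to the right, so the $\tilde\tau$-component where $J^\circ$ lives is untouched, modulo a Frobenius twist at $\tau_0^+$). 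To produce the accompanying quasi-isogeny $\eta'^\sharp_\tau$ and verify it satisfies Definition~\ref{D:link-morphism}, one uses the compatibility of $\eta'^\sharp$ with the isogenies $\phi_A, \phi_B$ of the moduli problem $Y'_\ttT$ from Subsection~\ref{S:moduli-Y_S} (as in the proof of Proposition~\ref{P:Y_S=X_S} and the construction of $\gothF'_{\gothp^2}$ on $Y'_\ttT$ in Subsection~\ref{S:End-of-proof}): removing the two curves at $\tau, \tau^-$ corresponds to forgetting the pieces of $\phi_A, \phi_B$ at those embeddings. The indentation degree is then computed by bookkeeping the $\gothq$-degree of $\eta'^\sharp_\tau$ on the $\gothq$-divisible group: it equals the $\gothq$-degree of $\eta'^\sharp$ adjusted by the contributions of the removed curves, which are $0$ unless $\gothp$ splits and the removed curves involve $\tau_0$ or $\tau_0^+$, giving exactly the four-case formula $m = n + n_\tau(\ttS) - n_{\eta(\tau)}(\ttS')$. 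The well-definedness and uniqueness of $\eta'_{\tau,\sharp}$ as a link morphism then follows from Proposition~\ref{P:uniqueness-link-morphism}.

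For (3): we specialize to the link $\eta = \eta_{\tau^-\to\tau^+}$ of Theorem~\ref{T:link and Hecke operator}(1), so $m(\tau_0) = m(\tau^-) = n_\tau + n_{\tau^+}$ and by Theorem~\ref{T:link and Hecke operator}(1)(b) the quasi-isogeny $\eta'^\sharp$ identifies $\Lie(\bfA')^\circ$ components $\tilde\tau'$ by $\tilde\tau' \mapsto \tilde\tau'$ except at liftings of $\tau^+$ where it introduces a $p^{v(\eta)}$-Frobenius twist. Since $\cO_\tau(1)$ and $\cO_{\eta(\tau)}(1)$ are built (as in the proof of Proposition~\ref{P:normal bundle}, via the moduli description in Subsection~\ref{S:Y_T=Z_T}) from $[\omega_{\sigma^{-n_\tau}\tau}]$ respectively $[\omega_{\sigma^{-n_{\eta(\tau)}}\eta(\tau)}]$, the isomorphism \eqref{E:link-line-bundles} is a direct consequence: the pullback is $\cO_\tau(1)$ unless the relevant $\omega$-component is a lifting of $\tau_0^+$, in which case the Frobenius twist of (1)(b) gives $\cO_\tau(p^{m(\tau_0)})$. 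Finally, the finite flatness of $\eta'_{\tau,\sharp}$ of degree $p^{v(\eta_\tau)}$ is proved exactly as in Theorem~\ref{T:link and Hecke operator}(1)(c): $\eta'_{\tau,\sharp}$ is quasi-finite (one recovers the source Dieudonn\'e module from the target using bijectivity of $F_{\es}$ after inverting $p$), hence finite by properness of the target when $\ttS(\tau)_\infty$ is non-empty — and if $\ttS(\tau)_\infty = \emptyset$ one handles the zero-dimensional case directly — then flat by miracle flatness between regular schemes (\cite[Theorem 23.1]{matsumura}), and the degree is pinned down by the Chern-class/Euler-characteristic argument using $c_d(\calT)$, the $\ell$-adic trace map, the compatibility $\eta'^{*}_{\tau,\sharp}\calT_{\eta(\tau)} \cong p^{v(\eta_\tau)}$-twist of $\calT_\tau$ coming from part (2) applied to tangent bundles (Proposition~\ref{Prop:smoothness}), and the non-vanishing $\chi \neq 0$ of Lemma~\ref{L:non-vanishing-chi} (together with the fact that purely inseparable morphisms preserve $\chi$); when $\tau \in \{\tau_0, \tau_0^+\}$ one has $v(\eta_\tau) = 0$ so $\eta'_{\tau,\sharp}$ is an isomorphism, recovering the content of Proposition~\ref{P:ambiguity-signature}-type statements. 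The transfer to $\bfSh_{K''}(G''_{\tilde\ttS})_{k_0}$ is immediate from Subsections~\ref{S:transfer math obj} and \ref{S:abel var in unitary case}. The main obstacle I expect is the precise combinatorial bookkeeping of the numbers $n_\tau(\ttS)$ versus $n_{\eta(\tau)}(\ttS')$ and of the $\gothq$- versus $\bar\gothq$-degrees when $\gothp$ splits in $E$, since the rightward-turning curve shifts embeddings across the $\ttS$-strata and one must track how the essential Frobenius changes its meaning (genuine $F$ versus $V^{-1}$) as the signature $s_{\tilde\tau}$ toggles along the chains introduced in Subsection~\ref{S:quaternion-data-T}; everything else is a mechanical application of the already-established machinery.
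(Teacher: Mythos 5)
Your overall framework — check conditions of Definition 7.3 on $\overline{\FF}_p$-points, compare reduced covariant Dieudonn\'e modules, use the section of $\pi_\tau$ coming from the nested GO-stratum, finish with the $c_d(\mathcal{T})$/Euler-characteristic argument — is the same as the paper's, and your sketches of (1) and (3) are essentially on the right track. However, there are two genuine gaps.

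First, and most significantly, your treatment of (2) silently assumes $\tau^+ \neq \tau^-$. When $\#\Sigma_\infty - \#\ttS_\infty = 2$ (allowed by the hypothesis), one has $\tau^+ = \tau^-$, so Proposition~\ref{P:restriction of GO strata}(2) does not give an isomorphism $\pi_\tau|_{\bfSh_{K'}(G'_{\tilde\ttS})_{k_0,\{\tau,\tau^-\}}}$ (its hypothesis $\tau^+\neq\tau^-$ fails), and there is no section to pull back through. The paper handles this case by a separate, entirely direct construction of $(\eta'_{\tau,\sharp},\eta'^\sharp_\tau)$ on Dieudonn\'e modules of the zero-dimensional target Shimura varieties, splitting further into the sub-cases $\tau=\tau_0$ and $\tau=\tau_0^+$. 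Your ``fiber-respecting descent'' argument neither identifies this case nor explains how to build the quasi-isogeny $\eta'^\sharp_\tau$ there; this is a real missing piece, not a mere computational detail.

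Second, in part (1) your reduction stops one step short: you assert that condition (3) of Definition~\ref{D:link-morphism} plus commutation with $F_{\mathrm{es}}$ ``forces'' the vanishing of $h_{\eta(\tau)}$, but this is exactly what must be proved. The paper's actual argument first rewrites $h_\tau(A)=0$ as $F_{\mathrm{es},A}^{n_{\tau^+}}\circ F_{\mathrm{es},A}^{n_\tau}(\tilde\calD^\circ_{A,\tilde\tau^-}) = p\,\tilde\calD^\circ_{A,\tilde\tau^+}$, transports this through $\eta'^\sharp_*$ to get $F_{\mathrm{es},A'}^{n_{\eta(\tau^+)}(\ttS')}\circ F_{\mathrm{es},A'}^{n_{\eta(\tau)}(\ttS')}(\tilde\calD^\circ_{A',\eta(\tilde\tau^-)}) = p^u\,\tilde\calD^\circ_{A',\eta(\tilde\tau^+)}$ for \emph{some} $u\in\ZZ$, and then pins down $u=1$ by observing that the quotient always has $W(\overline\FF_p)$-length exactly $2$. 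Without that length argument, there is no reason for the partial Hasse invariant of $A'$ to vanish rather than to be a unit or divisible by $p^2$. Please supply this; as written, (1) is a restatement of the goal rather than a proof.

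Also, a small slip: you write that for $\tau$ with $m(\tau)=0$ ``the statement is vacuous'' as if this forced $\tau\in\ttS_\infty$; in fact $m(\tau)=0$ simply means the curve at $\tau$ is straight, and the statement is non-trivial for all $\tau\in\Sigma_\infty-\ttS_\infty$ — only one $\tau$ (namely $\tau_0$) has $m(\tau)>0$.
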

\begin{proof}
(1) Since $\bfSh_{K'}(G'_{\tilde\ttS})_{k_0, \tau}$ is reduced, it suffices to prove that $\eta'_{\sharp}$ sends every $\overline \FF_p$-point of $\bfSh_{K'}(G'_{\tilde\ttS})_{k_0, \tau}$ to $\bfSh_{K'}(G'_{\tilde\ttS'})_{k_0,\eta(\tau)}$.
Let $x=(A,\iota, \lambda, \alpha_{K'^p})$ be an $\overline \FF_p$-point of $\bfSh_{K'}(G'_{\tilde\ttS})_{k_0, \tau}$, and $\eta'_{\sharp}(x)=(A',\iota',\lambda',\alpha'_{K'^p})$
be its image. Let $\tilde\tau\in \Sigma_{E,\infty}$ be a place above $\tau$, and put $\tilde\tau^-=\sigma^{-n_{\tau}}\tilde\tau$ and $\tilde\tau^+ =\sigma^{n_{\tau^+}}\tilde\tau$.
By Lemma~\ref{Lemma:partial-Hasse},  the condition  $h_{\tau}(A)=0$ is equivalent to $F_{\es,A}^{n_{\tau}}(\tcD^{\circ}_{A,\tilde\tau^-})=\tilde\omega_{A^\vee,\tilde\tau}^{\circ}$, where $\tilde\omega^{\circ}_{A^\vee,\tilde\tau}\subseteq \tcD^{\circ}_{A,\tilde\tau}$ denotes the inverse image of $\omega^{\circ}_{A^\vee,\tilde\tau}\subseteq \calD^{\circ}_{A,\tilde\tau}$. 
The latter condition is in turn equivalent to  that $F_{\es, A}^{n_{\tau^+}}\circ F_{\es,A}^{n_{\tau}}(\tcD^{\circ}_{A,\tilde\tau^-})=p\tcD^{\circ}_{A,\tilde\tau^+}$.
We set $\eta(\tilde\tau^-)=\sigma^{m(\tau^-)}\tilde\tau^-$, where $m(\tau^-)$ the displacement of the curve in $\eta$ connecting $\tau^-$ and $\eta(\tau^-)$ (which equals to $0$ except when $\tau^-=\tau_0$); similarly, we put $\eta(\tilde\tau^+) = \sigma^{m(\tau^+)}\tilde \tau^+$.
 Since $\eta'^{\sharp}_{*}: \tcD^{\circ}_{A}[1/p]\ra \tcD^{\circ}_{A'}[1/p]$ commutes with  Frobenius and Verschiebung homomorphisms, one sees easily  from  condition (3) in Definition~\ref{D:link-morphism} that  $F_{\es,A'}^{n_{\eta(\tau^+)}(\ttS')} \circ F_{\es,A'}^{n_{\eta(\tau)}(\ttS')}(\tcD^{\circ}_{A',\eta(\tilde\tau^-)})=p^u\tcD^{\circ}_{A',\eta(\tilde\tau^+)}$ for some integer $u\in \Z$. Here, $n_{\eta(\tau)}(\ttS')$ is the integer  defined in Notation~\ref{N:n tau} associated to $\tau$ for the set $\ttS'$.
 But  $F_{\es,A'}^{n_{\eta(\tau^+)}(\ttS')} \circ F_{\es,A'}^{n_{\eta(\tau)}(\ttS')}(\tcD^{\circ}_{A',\eta(\tilde\tau^-)})$ is  a $W(\overline \FF_p)$-sublattice of $\tcD^{\circ}_{A,\eta(\tilde\tau^+)}$ with quotient of  length $2$ over $W(\overline \FF_p)$. Hence, the integer $u$ has to be $1$.
  By the same reasoning using Lemma~\ref{Lemma:partial-Hasse}, this is equivalent to saying that $h_{\eta(\tau)}(A')=0$, or equivalently $\eta'_{\sharp}(x)\in \bfSh_{K'}(G'_{\tilde\ttS'})_{k_0,\eta(\tau)}$.

(2)
Assume first that $\#\Sigma_{\infty}-\#\ttS_{\infty}>2$.
 By  Proposition~\ref{P:restriction of GO strata}(2), $\pi_{\tau}|_{\bfSh_{K'}(G'_{\tilde\ttS})_{k_0, \{\tau, \tau^-\}}}$ is an isomorphism.
 We define 
\[\eta'_{\tau,\sharp}\colon= \pi_{\eta(\tau)}\circ \eta'_{\sharp}\circ (\pi_{\tau}|_{\bfSh_{K'}(G'_{\tilde\ttS})_{k_0, \{\tau, \tau^-\}}})^{-1}
\] 
and $\eta'^{\sharp}_{\tau}$ as the pull-back via $(\pi_{\tau}|_{\bfSh_{K'}(G'_{\tilde\ttS})_{k_0, \{\tau^-, \tau\}}})^{-1}$ of the quasi-isogeny 
\[
\pi_{\tau}^{*}\bfA'_{\tilde\ttS(\tau), k_0}\xra{\phi_{\tau}^{-1}} (\bfA'_{\tilde\ttS, k_0}|_{\bfSh_{K'}(G'_{\tilde\ttS})_{k_0, \tau}})\xra{\eta'^{\sharp}} \eta'^{*}_{\sharp} (\bfA'_{\tilde\ttS',k_0}|_{\bfSh_{K'}(G'_{\tilde\ttS'})_{k_0,\eta(\tau)}})\xra{\eta'^*_\sharp(\phi_{\eta(\tau)})} \eta'^*_{\sharp}\pi_{\eta(\tau)}^*(\bfA'_{\tilde\ttS(\eta(\tau)), k_0})
\]
 of abelian schemes on $\bfSh_{K'}(G'_{\tilde\ttS})_{k_0, \tau}$.
  Here,  the first and the third quasi-isogenies are given by Theorem~\ref{T:main-thm-unitary}(2).
It is clear that the diagram \eqref{E:induced-link-morphism} is commutative.
 It remains to show that $(\eta'_{\tau,\sharp},\eta'^{\sharp}_{\tau})$ defines a link morphism. 
 %By a tedious but straightforward computation, it is not hard to see that $(\eta'_{\tau,\sharp},\eta'^{\sharp}_{\tau})$ is indeed a link morphism. 
Conditions (1) and (2) of Definition~\ref{D:link-morphism} being clear, condition (3) can be verified by a tedious but straightforward computation.
To see condition (4) on the indentation degree, we need only to discuss the case when $\gothp$ splits in $E/F$.
In this case, $\phi^{-1}_\tau[\gothq^\infty]$ has degree $p^{2n_{\tau}}$,
$\eta'^\sharp[\gothq^\infty]$ has degree $p^{2n}$, and $\phi_{\eta(\tau)}[\gothq^\infty]$ has degree $p^{-2n_{\eta(\tau)}(\ttS')}$. 
So the total  degree of quasi-isogeny of $\eta'^{\sharp}_{\tau}$ is $p^{2m}=p^{2n+2n_{\tau}-2n_{\eta(\tau)}(\ttS')}$.
A case-by-case discussion proves the condition (4) of Definition~\ref{D:link-morphism} on indentation degrees.

Assume now $\#\Sigma_{\infty}-\#\ttS_{\infty}=2$ so that $\Sigma_{\infty}-\ttS_{\infty}=\{\tau_0, \tau_0^+=\tau_0^{-}\}$. This implies that   $\gothp$ splits  as $\gothq\bar\gothq$ in $E$.
 Since the Shimura variety $\bfSh_{K'}(G'_{\tilde\ttS(\tau)})_{k_0}$ is zero-dimensional, we just need to define the desired link morphism $(\eta'_{\tau,\sharp}, \eta'^{\sharp}_{\tau})$ on $\overline \FF_p$-points.
   For each $\tilde\tau'\in \Sigma_{E,\infty}$ with restriction $\tau'\in \{\tau_0,\tau_0^-\}$, let $t_{\tilde\tau'}\in \Z$ denote the integer as in Definition~\ref{D:link-morphism}(3) attached to  $\tilde\tau'$ for the link morphism $(\eta'_{\sharp},\eta'^{\sharp})$.
   Let $y=(B,\iota_{B},\lambda_B, \beta_{K'^p})$ be an $\overline \FF_p$-point of $\bfSh_{K'}(G'_{\tilde\ttS(\tau)})_{k_0}$. 
    We now distinguish two cases:
 \begin{itemize}
 \item[(a)] Consider first the case $\tau=\tau_0$. 
 Let $\tilde\tau^-_0$ be the lift of $\tau_0$ in $ \Sigma_{E,\infty/\gothq}$.
 % Put $\tilde\tau_0^-=\sigma^{-n_{\tau_0}}\tilde\tau_0$.
  % Here, $n_{\tau_0}$ is the least integer $n\geq1$ such that $\sigma^{-n}\tau_0\in \Sigma_{\infty}-\ttS_{\infty}$.  
We define $M^\circ_{\tilde\tau_0^-}=p^{-t_{\tilde\tau_0}} \tcD^{\circ}_{B,\tilde\tau_0}$, and  
   $M^\circ_{\sigma^i\tilde\tau_0^-}=p^{-\delta_i}F^{i}(M^\circ_{\tilde\tau_0^-})$ for each integer $i$ with $1\leq i\leq g-1$, where $\delta_i$ denotes the number of integers $j$ with $1\leq j\leq i$ such that $\sigma^{j}\tilde\tau_0^-\in \tilde\ttS'(\eta(\tau))_{\infty}$. %(Morally, $p^{-\delta_i}F^{i}$ should be $F_{\es, B}^i$)
   Put  $M^\circ_{\gothq}=\bigoplus_{0\leq i\leq g-1}M^\circ_{\sigma^i\tilde\tau_0^-}$, and let $M^\circ_{\bar\gothq}\subseteq \tcD^{\circ}_{B,\bar\gothq}[1/p]$ be the dual lattice of $M^\circ_{\gothq}$ with respect to the pairing induced by $\lambda_B$.
    Then $M^\circ:= M^\circ_{\gothq}\oplus M^\circ_{\bar\gothq}$ is a Dieudonn\'e submodule of $\tcD^{\circ}_{B}[1/p]$. 
   By the same argument as in the proof of  Proposition~\ref{P:Y_S=X_S}, there exists a unique abelian variety $B'$ equipped with an $\cO_D$-action $\iota_{B'}$ together with a $p$-quasi-isogeny $\phi: B\ra B'$ such that the induced map  $\phi^{-1}_{*}: \tcD^{\circ}_{B'}\ra \tcD^{\circ}_{B}[1/p]$ is identified with the natural inclusion $M^\circ\hra  \tcD^{\circ}_{B}[1/p]$. 
   As usual, since $M$ is a self-dual lattice, $\lambda_{B}$ induces a prime-to-$p$ polarization $\lambda_{B'}$ such that $\phi^{\vee}\circ\lambda_{B'}\circ\phi=\lambda_{B}$. 
   We equip $B'$ with the  $K'^p$-level structure $\beta'_{K'^p}=\phi\circ\beta_{K'^p}$. By the construction, one sees also easily that $B'$ satisfies the necessary signature condition so that $y'=(B',\iota_{B'},\lambda_{B'},\beta'_{K'^p})$ is a point of $\bfSh_{K'}(G'_{\tilde\ttS'(\eta(\tau))})_{k_0}$.
  We define 
  $$\eta'_{\tau, \sharp}\colon \bfSh_{K'}(G'_{\tilde\ttS(\tau)})_{k_0}\ra \bfSh_{K'}(G'_{\tilde\ttS'(\eta(\tau))})_{k_0},\quad \text{and}\quad \eta'^{\sharp}_{\tau}\colon \bfA'_{\tilde\ttS(\tau),k_0}\ra \bfA'_{\tilde\ttS'(\eta(\tau)), k_0}
  $$ 
   by  $\eta'_{\tau, \sharp}(y)= y'$ and $\eta'^{\sharp}_{\tau, y}=\phi$. 
   It is evident that $(\eta'_{\tau,\sharp},\eta'^{\sharp}_{\tau})$ is a link morphism. 
   It remains to check the commutativity of the diagram \eqref{E:induced-link-morphism}. 
   Let $x=(A,\iota_{A},\lambda_A,\alpha_{K'^p})\in \bfSh_{K'}(G'_{\tilde\ttS})_{k_0,\tau}$ be a point above $y$,  $x'=(A',\iota_{A'},\lambda_{A'},\alpha'_{K'^p})\in \bfSh_{K'}(G'_{\tilde\ttS'})_{k_0,\eta(\tau)}$  be the image of $x$ under $\eta'_{\sharp}$. We need to prove that $\pi_{\eta(\tau)}(x')=y'$. Let $y''=(B'',\iota_{B''},\lambda_{B''},\beta''_{K'^p})\in \bfSh_{K'}(G'_{\tilde\ttS'(\eta(\tau))})_{k_0,\eta(\tau)}$ denote temporarily the point $\pi_{\eta(\tau)}(x')$.
    Denote by $\psi\colon A\ra B$ and $\psi'\colon A'\ra B''$ be the  quasi-isogenies given by Theorem~\ref{T:main-thm-unitary}.  
Then we have  $\psi_*  (\tcD^{\circ}_{A,\tilde\tau_0^-})= \tcD^{\circ}_{B,\tilde\tau_0^-}$ and $\psi'_{*}(\tcD^{\circ}_{A',\tilde\tau_0^-})=\tcD^{\circ}_{B'',\tilde\tau_0^-}$ by Subsection~\ref{S:moduli-Y_S}, and $\eta'^{\sharp}_{*}(\tcD^{\circ}_{A,\tilde\tau_0^-})=p^{t_{\tilde\tau_0^-}}\tcD^{\circ}_{A',\tilde\tau_0^-}$ by Definition~\ref{D:link-morphism}(3) as $\eta$ is a straight line at $\tau_0^-$.  
Consider the quasi-isogeny $\psi'\circ\eta'^{\sharp}\circ\psi^{-1}\circ\phi^{-1}: B'\ra B''$. It induces an isomorphism   
  $\tcD^{\circ}_{B',\tilde\tau_0^-}\xra{\sim} \tcD^{\circ}_{B'',\tilde\tau_0^-}$. 
   But the other components of the Dieudonn\'e modules  are determined by that at $\tilde\tau_0^-$, as $\ttS'(\eta(\tau))_\infty  = \Sigma_\infty$.
   It follows that $\psi'\circ\eta'^{\sharp}\circ\psi^{-1}\circ\phi^{-1}: B'\ra B''$ is an isomorphism compatible with all structures, i.e. $y''=y'$.
  The computation of the indentation degree is the same as that in the case when $\#\Sigma_\infty - \#\ttS_\infty >2$.
       
   \item[(b)] In the case  $\tau=\tau_0^+ = \tau_0^-$, the construction is similar. 
   Let  $\tilde\tau_0$ be the lift of $\tau_0$ in $\Sigma_{E,\infty/\gothq}$, and $\eta(\tilde\tau_0)=\sigma^{m(\tau_0)}\tilde\tau_0$. 
Put $M^\circ_{\eta(\tilde\tau_0)}:=p^{-t_{\tilde\tau_0}}F^{m(\tau_0)}(\tcD^{\circ}_{B,\eta(\tilde\tau_0)})$, and $M^\circ_{\sigma^{i}\eta(\tilde\tau_0)}:=p^{-\delta_i}F^{i}(M^\circ_{\eta(\tilde\tau_0)})$ for each integer $i$ with $1\leq i\leq g-1$, where $\delta_i$ is the number of integers $j$ with $1\leq j\leq i$ such that $\sigma^j\eta(\tilde\tau_0)\in \tilde\ttS'(\eta(\tau))_{\infty}$. 
Let $M^\circ_{\gothq}=\bigoplus_{0\leq i\leq g-1}M^\circ_{\sigma^{i}\eta(\tilde\tau_0)}$, and $M^\circ_{\bar\gothq}\subseteq \tcD^{\circ}_{B,\bar\gothq}[1/p]$ be the dual lattice. 
   As in case (a) above, such a lattice $M^\circ:=M^\circ_\gothq \oplus M^\circ_{\bar \gothq}$ gives rise to an  $\overline \FF_p$-point $y'=(B',\iota_{B'},\lambda_{B'},\beta'_{K'^p})$ together with a $p$-isogeny $\phi: B\ra B'$ compatible with all structures. We define the desired link morphism $(\eta'_{\tau,\sharp}, \eta'^{\sharp}_{\tau})$ such that $\eta'_{\tau,\sharp}(y)=y'$ and $\eta'^\sharp_{\tau,y}=\phi$.
    The commutativity of \eqref{E:induced-link-morphism} is proved by the same arguments as in (a). We leave the details to the reader. 
   \end{itemize}
   
 (3) %The computation of indentation degree is a straightforward  case-by-case check similar to the proof of Theorem~\ref{T:link and Hecke operator}(1). 
We note that, if $\tilde\tau^-$ is the lifting of $\tau^-$ not contained in $\tilde\ttS(\tau)_{\infty}$, then we have a canonical isomorphism  $\cO_{\tau}(1)\cong \Lie(\bfA'_{\tilde\ttS, k_0})^{\circ}_{\tilde\tau^-}$ by the construction of $\pi_{\tau}$. Similarly, one has $\cO_{\eta(\tau)}(1)\cong \Lie(\bfA'_{\tilde\ttS',k_0})^{\circ}_{\eta(\tilde\tau^-)}$. Now the isomorphism \eqref{E:link-line-bundles} follows immediately from Theorem~\ref{T:link and Hecke operator}(1)(b).
We prove now the second part of (3). 
By the construction of $\eta'_{\tau}$, it follows from Theorem~\ref{T:link and Hecke operator}(1)(b) that  $\eta'^{\sharp}_{\tau}\colon \bfA'_{\tilde\ttS(\tau),k_0}\ra \eta'^*_{\tau,\sharp}(\bfA'_{\tilde\ttS(\eta(\tau)),k_0})$ induces, for any $\tilde\tau'\in \Sigma_{E,\infty}$ lifting an element $\Sigma_{\infty}-\ttS(\eta(\tau))$,  an isomorphism 
\[
\eta'^{*}_{\tau, \sharp}(\Lie(\bfA'_{\tilde\ttS(\eta(\tau)),k_0})^{\circ}_{\tilde\tau'})\cong 
\begin{cases}
\Lie(\bfA'_{\tilde\ttS(\tau), k_0})^{\circ, (p^{m(\tau_0)})}_{\sigma^{-m(\tau_0)}\tilde\tau'} &\text{if $\tilde\tau'$ lifts $\eta(\tau_0)$,}\\
\Lie(\bfA'_{\tilde\ttS(\tau), k_0})^{\circ}_{\tilde\tau'} &\text{otherwise},
\end{cases}
\]
 If $\tau\in \{\tau_0, \tau_0^{+}\}$ or equivalently $\tau_0\in \ttS(\tau)$, then the first case above never happens. Therefore, by Proposition~\ref{Prop:smoothness}, we see that $\eta'_{\tau, \sharp}$ induces an isomorphism of tangent spaces between $\bfSh_{K'}(G'_{\tilde\ttS(\tau)})_{k_0}$ and $\bfSh_{K'}(G'_{\tilde\ttS(\eta(\tau))})_{k_0}$. 
 Since $\eta'_{\tau,\sharp}$ is bijective on the closed points by the definition of link morphism, $\eta'_{\tau, \sharp}$ is actually an isomorphism. If $\tau\notin \{\tau_0, \tau_0^+\}$ or equivalently $\tau_0\notin \ttS(\tau)$, we conclude by the  same arguments as in the proof of Theorem~\ref{T:link and Hecke operator}(1)(d). 
 \end{proof}

%\begin{remark}
%Theorem~\ref{T:link and Hecke operator}(1)-(3)  reflects the philosophy  that $\pi_\tau$ behaves well if either both $\tau, \tau^-$ are in $\ttT$, or $\tau^+ \notin\ttT$.
%In the bad case (3), it is more natural to relate $\pi_\tau$ with the better behaved morphism $\pi_{\tau^+}$.

%(2)
%If we had defined $\delta$-maps in Subsection~\ref{S:delta} for more general Shimura varieties, the same argument would prove Theorem~\ref{T:link and Hecke operator}(iv) for more general Shimura varieties, including the compatibility with Goren-Oort stratification.
%\end{remark}

\end{document}